\date{}
\title{Minimax and adaptive tests for detecting abrupt and possibly transitory changes in a Poisson process}
\author{Magalie Fromont\\
\small{{\em magalie.fromont@univ-rennes2.fr}}\\
Fabrice Grela\\
\small{{\em fabrice.grela@univ-rennes2.fr}}\\
\small{ and } \\
Ronan Le Gu\'evel\\
\small{{\em ronan.leguevel@univ-rennes2.fr}}\\
\small{{\em Univ. Rennes, Equipe de Statistique Irmar, UMR CNRS 6625}}\\
\small{{\em Place du Recteur Henri Le Moal, CS 24307, 35043 RENNES Cedex, France}}}
\theoremstyle{plain}
\newtheorem{lemma}{Lemma}
\newtheorem{proposition}[lemma]{Proposition}
\newtheorem{corollary}[lemma]{Corollary}
\theoremstyle{remark}
\newcommand{\R}{\mathbb{R}} 
\newcommand{\N}{\mathbb{N}} 
\newcommand{\calS}{\mathcal{S}}
\newcommand{\calD}{\mathcal{D}}
\newcommand{\calT}{\mathcal{T}}
\newcommand{\bbL}{\mathbb{L}}
\renewcommand{\a}{\alpha}
\newcommand{\pa}[1]{\left( \left. #1 \right. \right)} 
\newcommand{\cro}[1]{\left[ \left. #1 \right. \right]} 
\newcommand{\set}[1]{\left\{ \left. #1 \right. \right\}}
\newcommand{\abs}[1]{\left\lvert #1 \right\rvert}
\renewcommand{\P}{\mathbb{P}}
\newcommand{\1}[1]{\ensuremath{\mathds{1}_{\left\{ #1 \right\}}}}	
\newcommand{\un}[1]{\ensuremath{\mathds{1}_{ #1}}}	
\newcommand{\SR}{{\rm{SR}}}
\newcommand{\SRb}{{\rm{SR}_\beta}}
\newcommand{\mSRab}{{\rm{mSR}_{\alpha,\beta}}}
\newcommand{\hzero}{\pa{H_0}}
\newcommand{\hone}{\pa{H_1}}
\newcommand{\bbul}{\boldsymbol{\cdot}}
\begin{document}
 \maketitle

\begin{abstract}
Motivated by applications in cybersecurity and epidemiology, we consider the problem of detecting an abrupt change in the intensity of a Poisson process, characterised by a jump (non transitory change) or a bump (transitory change) from constant.
We propose a complete study from the nonasymptotic minimax testing point of view, when the constant baseline intensity is known or unknown. The question of minimax adaptation with respect to each parameter (height, location, length) of the change is tackled, leading to a comprehensive overview of the various minimax separation rate regimes. We exhibit three such regimes and identify the factors of the two phase transitions, by giving the cost of adaptation to each parameter. For each alternative hypothesis, depending on the knowledge or not of each change parameter, we propose minimax or minimax adaptive tests based on linear statistics, close to CUSUM statistics, or quadratic statistics more adapted to the $\bbL_2$-distance considered in our minimax criteria and typically more powerful in practice, as our simulation study shows. When the change location or length is unknown, our adaptive tests are constructed from a scan aggregation principle combined with Bonferroni or min-$p$ level correction, and a conditioning trick when the baseline intensity is unknown.
\end{abstract}

%
%

\section{Introduction}

As explained in the introduction of the book of Daley and Vere-Jones \cite{DaleyVJ}, historically the theory of point processes seems to emerge with the study of the first life tables and renewal processes, and of counting problems in the research of Poisson \cite{Poisson}. Since recently,  point processes are largely deployed in the epidemiology, genetics, neuroscience and communications engineering literature. At the origin of this work, we were actually interested in some applications in public health and healthcare surveillance, where a point process on a bounded interval may represent occurrences of a medical event in a particular context, and in cyber security, where it may represent a packet or session arrival process in internet traffic or occurrences of certain cyber attacks or intrusions. In these contexts, being able to define conditions for abnormal behaviours to be detectable and to detect such anomalies as efficiently as possible is of particular importance. 

\paragraph*{Change detection in a Poisson process model} Despite rather widespread debates regarding the real nature of the point process that can model observations in the above applications, the Poisson process model is the most frequently encountered in the dedicated articles, probably due to its convenient theoretical properties as well as its ability to fit the data. 
An abrupt change in the intensity of the Poisson process may reveal a significant health phenomenon when the process models epidemiological data (see \cite{Sonesson02} for a review), malicious activity or intrusion attempt when it models packet or session arrival processes in internet traffic (see \cite{Polunchenko2012}, \cite{Cao2003}, \cite{Karagiannis2004}, \cite{Vishwanath2009} or \cite{Soltani2008}), or a change of attack pattern when it models the occurrences of cyber attacks against a cyber system (see \cite{Daras2014} and \cite{Holm2013}). Another cyber security problem, considered in \cite{Soltani2017}, \cite{Soltani2020} and \cite{Wang2018cyber}, concerns communication over Poisson packet channels. In such a channel, an authorised transmitter sends packets to an authorised receiver according to a Poisson process, and a covert transmitter wishes to communicate some informations to a covert receiver on the same channel without being detected by a watchful adversary. Different models of covert transmissions have been studied by authors, treating the cases where the covert transmitter is restricted to packet insertion  or where he or she can only alter the packet timing by slowing down the incoming process to a lower rate to convey the information. The question of detectability of such covert transmissions, translated as a question of detectability of a bump in the Poisson process intensity, is clearly related to the testing minimax point of view adopted here and described below. Considering a Poisson process observed on a bounded fixed interval, we are thus interested in the problem of detecting an abrupt change in its distribution, characterised by a jump or a bump in its intensity.

This problem comes within the much more general framework of statistical change-point analysis. In view of the long history, going back to the 1940-1950's with the seminal works of Wald \cite{Wald1945}, Girshick and Rubin \cite{Girshick1952}, Page \cite{Page1954}, Fisher \cite{Fisher1958}, and the extensive literature on change-point analysis, we can not pretend to present a comprehensive state of the art. Detailed overviews will be found in the monographs of Basseville and Nikirov \cite{Basseville1993}, Carlstein et al. \cite{Carlstein1994}, Csörgö and Horváth \cite{Csorgo1997}, Brodsky and Darkhovsky \cite{Brodsky2013, Brodsky2013bis},  Tartakovsky et al. \cite{Tartakovsky2014}, and a structured and annotated bibliography in the paper by Lee \cite{Lee2010}.

Statistical change-point problems can essentially be classified into two main classes, depending on whether they are formulated as on-line or off-line change-point problems. 

On-line change-point analysis, also referred to as sequential analysis or disorder problems, generally deals with time sequences of random variables or stochastic processes, and aims at constructing a stopping time as close as possible to an unknown time of disorder or change in the distribution. For presentations of the most common performance measures and optimisation criteria used to this end, see for instance \cite{Lai2001}, \cite{Moustakides2008} or \cite{Polunchenko2012}, and references therein.

Off-line change-point analysis, also referred to as a posteriori change-point analysis, in fact raises two distinct questions : the one of detecting a given number of change-points or estimating the change-points number, and the one of estimating some or all the parameters of such change-points (jump locations and/or heights), once detected. 

Though most of these questions can be, as explained in \cite{Niu2016}, formulated or interpreted as single or multiple hypotheses testing problems, since they are usually all treated together, rather few attention seems to be paid to the testing performances themselves: detection rates results are not always explicitly stated and well formalised in the literature.

\paragraph*{A nonasymptotic minimax testing point of view}

Our work, which focuses on the question of detecting a jump or a bump in the intensity of a Poisson process, precisely aims at proposing a nonasymptotic minimax testing set-up and a guided progressive approach to construct minimax and minimax adaptive  detection procedures. It can thus also be viewed as a necessary preliminary step towards a further rigorous minimax study of multiple testing procedures designed for change-point localisation as in \cite{Niu2016}.

Let us consider a (possibly inhomogeneous) Poisson process $N=(N_t)_{t\in[0,1]}$ observed on the interval $[0,1]$, with intensity $\lambda$ with respect to some measure $\Lambda$ on $[0,1]$, and whose distribution is denoted by $P_\lambda$. As in \cite{FLRB2011} and \cite{FLRB2013}, we assume that the measure $\Lambda$ satisfies $d\Lambda(t)=Ldt$, where $L$ is a positive number. Note that when $L$ is an integer, this assumption amounts to considering the Poisson process $N$ as $L$ pooled i.i.d. Poisson processes with the same intensity $\lambda$, with respect to $dt$: $L$ can therefore be seen as a growing number when comparisons with asymptotic existing results in other frequentist models are needed. 

Depending on the intended application, and the level of knowledge on the baseline intensity of the process $N$ it induces, the questions of detecting a jump or a bump
in $\lambda$ are here formulated as problems of testing the null hypothesis $\hzero$ $"\lambda \in \calS_0"$ versus the alternative $\hone$ $"\lambda\in \calS_1"$, where $\calS_0$ is either the set of a single known constant intensity, or the set of all constant intensities on $[0,1]$, and $\calS_1$ is 
a set of alternative intensities defined as positive piecewise constant functions, with one jump or one bump. As mentioned above, the point of view that we adopt here for our theoretical study is nonasymptotic, based on minimax criteria in accordance with the Neyman-Pearson principle. So, given a first kind error level $\alpha$ in $(0,1)$, any of our (nonrandomised) tests $\phi$, with values in $\{0,1\}$ and rejecting $\hzero$ when $\phi(N)=1$, is primarily required to be of level $\alpha$, that is to satisfy 
$$\sup_{\lambda\in\calS_0} P_\lambda\pa{\phi(N)=1}\leq \alpha\enspace.$$
Then, given a second kind error level $\beta$ in $(0,1)$, any of our level $\alpha$ tests $\phi_\alpha$ is secondarily required to achieve, over the considered set of alternatives $\calS_1$, the $(\alpha,\beta)$-minimax separation rate defined as follows.

Considering the usual metric $d_2$ of $\bbL_2([0,1])$, and a level $\alpha$ test $\phi_\a$ of $\hzero$ versus $\hone$, the $\beta$-uniform separation rate of $\phi_\a$ over $\calS_1$ is defined by
\begin{equation}\label{defSR}
\SRb\pa{\phi_\a,\calS_1}=\inf\set{r>0,\  \sup_{\lambda\in \calS_1,\ d_2\pa{\lambda,\calS_0} \geq r} P_\lambda\pa{\phi_\alpha(N)=0} \leq \beta}\enspace.
\end{equation}
The corresponding $(\alpha,\beta)$-minimax separation rate over $\calS_1$ is defined by
\begin{equation}\label{defmSR}
\mSRab\pa{\calS_1}=\inf_{\phi_\a,\  \sup_{\lambda\in\calS_0} P_\lambda\pa{\phi_\alpha(N)=1}\leq \alpha}\SR_\beta\pa{\phi_\a,\calS_1}\enspace,
\end{equation}
where the infimum is taken over all possible nonrandomised level $\alpha$ tests.

A level $\alpha$ test $\phi_\alpha$ is said to be $\beta$-minimax over $\calS_1$ if $\SRb\pa{\phi_\a,\calS_1}$ achieves $\mSRab\pa{\calS_1}$, possibly up to a multiplicative constant depending on $\alpha$ and $\beta$. 

These definitions due to Baraud \cite{Baraud2002} translate, in a nonasymptotic framework, the (asymptotic) minimax testing criteria that originate in Ingster's work \cite{Ingster1982, Ingster1984, Ingster1993}, and that have now several variants in the literature, among them the asymptotic minimax testing with exact separation constants criteria introduced in \cite{Lepski2000}.

For each choice of $\calS_0$, several sets of alternatives $\calS_1$ are investigated, according to whether the jump or bump parameters are known or not. Following the terminology adopted since Spokoiny's paper \cite{Spokoiny1996}, a complete minimax adaptivity study of the problem is therefore conducted: when one of the alternative parameters is unknown at least, the corresponding minimax tests are said to be minimax adaptive with respect to this unknown parameter.

After determining lower bounds for the $(\alpha,\beta)$-minimax separation rates over all these alternative sets, we construct nonasymptotic minimax and minimax adaptive detection tests. To the best of our knowledge, no such minimax results in the present Poisson process model have already been established. 

\paragraph*{Change-point detection procedures in Poisson processes models}

References dealing with change-point detection in a Poisson process are actually mainly dedicated to the construction of optimal on-line detection rules (see e.g. \cite{Peskir2002}, \cite{Herberts2004}, \cite{Brown2006}, and  \cite{Bayraktar2006} for Bayesian approaches; \cite{DKW1953a}, \cite{DKW1953b}, \cite{Mei2011} or \cite{ElKaroui2015, Elkaroui2017} and references therein for non-Bayesian approaches), or asymptotic off-line detection tests.  On the one hand, a few off-line procedures are derived from the Bayesian perspective, such as the ones in \cite{Akman1986b}, \cite{Raftery1986a} and \cite{Raftery1994} dealing with the single change-point case, \cite{Green1995}, \cite{Young2001} or \cite{Shen2012} dealing with the multiple change-points case. On the other hand, non-Bayesian off-line procedures are numerous, due to the variety of Poisson processes convenient properties. Since the earliest procedures of Neyman and Pearson \cite{Neyman1928}, Sukhatme \cite{Sukhatme1936}, Maguire, Pearson and Wynn \cite{Maguire1952}, many contributions have been made considering the exponential distribution of the homogeneous Poisson process inter-arrivals, like 
in \cite{Matthews1985}, \cite{Worsley1986},  \cite{Siegmund1988}, or more recently \cite{Antoch2007}. 

Recalling that the Poisson process $N$ is homogeneous if and only if for every positive integer $n$, given $N_1=n$, the points of the process are independent and uniformly distributed on $[0,1]$, any test of uniformity on $[0,1]$ in a density model can be directly applied conditionally to $N_1$, or used as a source of inspiration to obtain a Poisson process adapted test of homogeneity. Closer to the tests we propose in the present work, many other existing tests for the single change-point problem are thus based on or inspired from the historical likelihood ratio, Cramér von-Mises or Kolmogorov-Smirnov statistics, with various weighting or other transforming strategies, as the ones of Rubin \cite{Rubin1961}, Lewis \cite{Lewis1965}, or Kendall and Kendall \cite{Kendall1980}.
Deshayes and Picard \cite{Deshayes1984, Deshayes1985} study the optimality of weighted Kolmogorov-Smirnov and likelihood ratio tests in the non local asymptotic sense of Bahadur \cite{Bahadur1967} and Brown \cite{Brown1971}, and their equivalence in the local asymptotic sense of Le Cam \cite{Lecam1970}. 
Asymptotic properties of point and interval change-point estimators deduced from these tests can be found in \cite{Akman1986a}, \cite{Loader1990}, and \cite{Galeano2007} where Galeano also integrates these tests in a binary segmentation algorithm to
further address multiple change-points detection. More recently, Dachian, Kutoyants and Yang \cite{Dachian2016b} (see also Yang's \cite{Yang2014} PhD thesis, and \cite{Dachian2015} and \cite{Yang2020} where tests derived from the Bayesian perspective are also proposed) and Farinetto \cite{Farinetto2017} consider a single change-point detection problem in the more general framework of inhomogeneous Poisson processes. 

\paragraph*{Testing procedures for close purposes in Poisson processes models}

On related topics, it may be worth first mentioning the foundational paper by Davies \cite{Davies1977}, whose goodness-of-fit test is also discussed in Section 13.1 of \cite{DaleyVJ}. Several procedures for testing goodness-of-fit or homogeneity of a Poisson process versus the alternative hypothesis that it has an increasing intensity have then been introduced and explored through experimental comparative studies in a series of papers by Bain et al. \cite{Bain1985}, Engelhardt et al. \cite{Engelhardt1990}, Cohen and Sackrowitz \cite{Cohen1993}, \cite{Ho1993}, \cite{Ho1995}.  Although these procedures are not initially designed to handle the change-point detection problem, they can nevertheless be applied to this end. Among them, the so-called Laplace and $Z$ tests introduced by Cox \cite{Cox1955} and Crown \cite{Crow1974}, whose extensions have been proposed in \cite{Pena1998a}, \cite{Agustin1999}, and \cite{Bhattacharjee2004}, stand out when they are used to detect a positive jump.
Fazli and Kutoyants \cite{Fazli2005},  Fazli \cite{Fazli2007}, and more recently Dachian, Kutoyants and Yang \cite{Dachian2016a} consider the goodness-of-fit testing problem where the null hypothesis corresponds to a given inhomogeneous Poisson process, and the alternatives correspond to single or one-sided parametric Poisson processes families.
The problem of testing that a point process is a given homogeneous Poisson process versus it belongs to a stationary self-exciting or stress-release point processes family is treated in \cite{Dachian2006} and \cite{Dachian2009}.

\paragraph*{Related minimax studies}


Focusing now on the minimax point of view, one can cite Ingster and Kutoyants \cite{Ingster2007} and Fromont et al.'s studies of goodness-of-fit or homogeneity tests, where the alternative hypotheses \cite{FLRB2011}, corresponding to Poisson processes with nonparametric intensities in Sobolev and Besov spaces with known and unknown smoothness parameters respectively, are however not suited for change-points detection problems. 

To find minimax tests devoted to change-points detection problems in the existing literature, it is actually needed to switch to other statistical models. 

Using the conditioning trick explained above, which enables to treat change-points detection problems in the Poisson model as particular change-points detection problems in the classical density model, Rivera and Walther \cite{Rivera2013} propose two positive bump detection tests based on scan or average aggregation of likelihood ratio statistics. Though their optimality results are not directly transposable to the minimax set-up that we consider here due to deconditioning difficulties, they nevertheless give preview of possible approaches towards more Poisson processes-specific minimax tests. 
In the classical density model, Dümbgen and Walther \cite{Dumbgen2008} had already tackled the problem of detecting local increases and decreases of the density or the failure rate. The introduced procedures, based on aggregation of local order statistics and spacings, were proved to satisfy asymptotic minimax adaptation properties.

Of course, the most complete bibliography on jump or bump detection from the minimax testing point of view lies in the basic Gaussian framework, where the observation is modelled by a Gaussian vector  $Y=(Y_1,\ldots,Y_n)$ with variance $\sigma^2I_n$. Arias-Castro et al. \cite{Arias-Castro2005} first studied the minimax separation rate for the problem of detecting a bump, that is a change in mean from zero over an interval, when considering the $\ell_2$ metric on the mean vectors, related to the $\bbL_2$-distance between the corresponding Gaussian distributions and also to the \emph{signal to noise ratio} or \emph{signal energy} often mentioned in regression models analysis.
When the height and the length of the change are unknown, they exhibited a minimax separation rate of order $\sqrt{\log n}$  with an exact constant equal to $\sqrt{2}$. In other words, they proved that no test can reliably detect $Y$ such that $\mathbb{E}\cro{Y_i}=\delta \1{i \in [\tau,\tau+\ell[}$ (with $\tau\in \{1,\ldots,n\}$, $\ell\in \{1,\ldots,n+1-\tau\}$)  unless the condition $|\delta|\sqrt{\ell} \geq \sqrt{2(1+\eta)\log n }$ with $\eta>0$ is satisfied, and they introduced minimax adaptive tests based on a scan aggregation of the Neyman-Pearson test statistics $\ell^{-1/2}\sum_{i=\tau}^{\tau+\ell-1} Y_i$ designed to detect non-zero mean, either over all the possible intervals $[\tau,\tau+\ell[$ or over intervals of dyadic type $[k2^j, (k+1)2^{j}[$. Chan and Walther \cite{Chan2013} constructed three other tests, based on the same Neyman-Pearson test statistics, but combined according to different aggregation schemes. All these tests were proved to be consistent as soon as  the refined condition $|\delta| \sqrt{\ell} \geq \sqrt{2\log (n/\ell)}+b_n$ with $b_n \to +\infty$ holds, which slightly improves  Arias-Castro et al.'s lower bound at least when $\ell/n:=\ell_n/n$ is allowed to tend to $0$ with $n$ tending to $+\infty$. A nonasymptotic counterpart of this improved lower bound has been very recently provided by Verzelen et al. \cite{Verzelen2021}. But the procedures introduced in this work go beyond the scope of the present minimax testing study as they further address the twin problems of detecting and localising multiple change-points. In the case where the change height is known, equal to $1$, Brunel \cite{Brunel2014} constructed a test based on a scanning of the shifted test statistic $\sum_{i=\tau}^{\tau+\ell-1} Y_i-\ell/2$, which is consistent as soon as $\ell/\log n\to+\infty$. Still considering the $\ell_2$ metric on the mean vectors, but considering, among piecewise monotone signals estimation problems, the special problem of detecting a jump from an unknown constant mean, Gao et al. \cite{Gao2020} obtained a lower bound of order $\sqrt{\log \log n}$. More precisely, they proved that no test can reliably detect $Y$ such that $\mathbb{E}\cro{Y_i}=\delta \1{i \in [\tau,n]}$ (with $\tau\in \{2,\ldots,n\}$)  unless $|\delta|\sqrt{(\tau-1)(n+1-\tau)/n} \!\geq \!c\sqrt{\log \log (16 n) }$.  Verzelen et al. \cite{Verzelen2021} provide a nonasymptotic lower bound equal to $\sqrt{2(1-c)(1-n^{-1/2})\log \log n}$ for $c$ in $(0,2/3)$ and $n$ large enough. As for a corresponding upper bound, Gao et al. \cite{Gao2020} refer to the asymptotic test of Csörgö and Horváth \cite{Csorgo1997}, based on the scan statistic $\max_{1\leq \tau \leq n} \sqrt{{n}/\pa{\tau(n-\tau)}} |\sum_{i=1}^\tau Y_i-({k}/{n})\sum_{i=1}^n Y_i|$, which is proved to be optimal from asymptotic inequalities in the spirit of the Iterated Logarithm Law. Notice that this scan statistic is closely related to the well-known CUSUM statistics, which have a long history in the single change-point analysis literature from Hinkley's \cite{Hinkley1970} work, as well as in multiple change-points analysis references, where they are at the core of binary segmentation approaches. Verzelen et al. \cite{Verzelen2021} introduce a test based on a max penalized CUSUM statistic, with location-dependent penalties, whose separation rate is of the optimal order  $\sqrt{2\log \log n}$ (thus proving, combined with their lower bound, that the exact constant is $\sqrt{2}$ as in the bump detection case), with possible refinement when restricting to particular change locations. 

In more complex Gaussian models, with sparse high dimensional, heterogeneous or dependence properties, it is worth mentioning at least the work of Enikeeva and Harchaoui \cite{EnikeevaHarchaoui2019}, Enikeeva et al. \cite{EnikeevaMunk2018}, Liu et al. \cite{Liu2021} and Enikeeva et al. \cite{EnikeevaMunk2020}, addressing bump detection problems from asymptotic minimax points of view, that are quite close to the one we adopt here.

Notice that we do not tackle the problem of detecting multiple change-points with more than two change-points, nor the problem of localising change-points, that  we  consider as out of the scope of the present paper and a basis for future work.

\paragraph*{Our contribution} The present work address the question of detecting a jump or a bump in the intensity of a Poisson process from the nonasymptotic minimax point of view described above. At this end, we will determine the minimax separation rates that correspond to :
\begin{description}
\item  - the detection of a change from a known or an unknown constant baseline intensity,
\item  - the detection of a non transitory change formalised as a jump in the intensity, or a transitory change formalised as a bump in the intensity, 
\item - the detection of a change with known or unknown height, length and location.
\end{description}
We will thus provide a comprehensive overview of the various minimax separation rate regimes, with a special focus on the phase transitions, and their determining factors. Considering each parameter as known or unknown, one by one, indeed enables us to precisely identify what causes such phase transitions, and the precise cost of minimax adaptation to each of these parameters. Among the main results of this study, we find a phase transition from a $\sqrt{\log \log L/L}$ order minimax separation rate for jump detection to a $\sqrt{\log L/L}$ order minimax separation rate for bump detection, when the jump or bump height, the jump or bump location and the bump length are together unknown: this phase transition is similar to the Gaussian one. But we also exhibit minimax separation rates that are not even known in the basic Gaussian model, up to our knowledge. 

For the bump detection problem, we indeed prove that the minimax separation rate is of order $\sqrt{\log L/L}$ when both location and length of the bump are unknown, whether the height is known or not, of order $\sqrt{\log \log L/L}$ as in the jump detection problem when the only location  of the bump is known (with height and length unknown), of order $\sqrt{1/L}$ in the other cases. 

For the jump detection problem, the results could be more easily anticipated: we prove that the minimax separation rate is of order $\sqrt{\log \log L/L}$ when both height and location of the jump are unknown, as in the Gaussian model, and of order $\sqrt{1/L}$ in the other cases. 

Such minimax separation rates are as usual obtained in two steps.  Lower bounds are first deduced from classical Bayesian arguments, originating from Le Cam's theory, and clearly outlined by Ingster \cite{Ingster1982, Ingster1984, Ingster1993} and Baraud \cite{Baraud2002} in an asymptotic and a nonasymptotic framework respectively. Combined with these Bayesian arguments, the Poisson processes properties, and mainly Girsanov's Lemma, are key points of the proofs.  Then, matching upper bounds are derived from the construction of minimax or minimax adaptive tests. The tests that we propose are based on either linear statistics adapted from the Neyman-Pearson test in the case where all the bump or jump parameters are known, and close to the CUSUM like statistics used in the Gaussian framework, or more novel quadratic statistics that we felt better suitable for the estimation of the distance $d_2$, considered here, between $\lambda$ and $\calS_0$. Our simulation study 
actually come to support the use of such quadratic statistics, as the corresponding tests are mostly more powerful than the tests based on linear statistics, especially when the bump or jump height is negative, that is especially for depression detection. Minimax adaptation when some change parameters are unknown is obtained from scan aggregation approaches, that all differ depending on which parameters are unknown.  The critical values involved in the scan aggregation approaches are also differently adjusted, with an additional crucial conditioning trick already used in \cite{FLRB2011} when the baseline intensity is unknown, to lead to a nonasymptotic level $\alpha$ and nonasymptotic minimax optimality. Upper bounding these critical values often was the main and most difficult point of the proofs. We had to use a wide variety of exponential and concentration inequalities, from historical 
ones due to \cite{pyke1959}  to very recent ones due to Le Guével \cite{LeGuevel2021} which are specific to suprema of counting processes and their related square martingales when dealing with detection of a change from a known baseline intensity, plus exponential inequalities for suprema or oscillations of empirical processes and $U$-statistics due to Mason, Shorack and Wellner (see \cite{ShorackWellner}) and Houdré and Reynaud-Bouret \cite{HoudreRB}, or obtained from Bernstein and Bennett's inequalities as stated in \cite{BercuDelyonRio}, refined through combination with chaining techniques.

\paragraph*{Organisation of the paper} 
Section \ref{Sec:knownbaseline} of the paper is devoted to the problem of detecting a change from a known baseline intensity, while Section \ref{Sec:unknownbaseline} deals with the problem of detecting a change from an unknown intensity. For each problem, all the possible sets of alternatives according to whether each parameter of the change (height, location and length) is known or not, including the special case where the change is non transitory (jump detection), are handled.  And for each of the resulting ten sets of alternatives, lower bounds for minimax separation rates are provided, as a preliminary basis for corresponding upper bounds (when appropriate, that is when at least the height or the length is unknown). As explained above, these upper bounds are obtained by constructing minimax or minimax adaptive tests, which are mainly based on aggregation of either linear or quadratic statistics, coupled with adjusted critical values. A simulation study is presented in Section~\ref{SimulationStudy}, whose aim is to compare linear and quadratic type tests, and also to compare them with standard tests used to detect nonhomogeneity of Poisson processes in practice. Proofs of the core results are postponed to Section \ref{Sec:Proofs}, and proofs of technical results 
mostly based on exponential inequalities and devoted to quantiles and critical values upper bounds are postponed to Section \ref{Sec:FTresults}, which also contains fundamental and general results for lower bounds.

\paragraph*{Notation} 
Concerning the Poisson Process $N=(N_t)_{t\in[0,1]}$, we use the notation $N(\tau_1,\tau_2]$ for $N_{\tau_2}-N_{\tau_1}$ for every $\tau_1,\tau_2$ in $[0,1]$. As usual, $dN$ stands for the point measure associated with $N$, and  $E_\lambda$ and $\mathrm{Var}_\lambda$ respectively stand for the expectation and the variance under $P_{\lambda}$, that is when $N$ has $\lambda$ as intensity with respect to $d\Lambda(t)=Ldt$. The distance $d_2$ has been introduced above, and associated with this distance, we consider the usual norm of $\bbL_2([0,1])$ denoted by $\|.\|_2$. 
For all $x$ and $y$ in $\R$, $x \vee y$ (resp. $x\wedge y$) denotes the maximum (resp. minimum) between $x$ and $y$, and the sign function $\mathrm{sgn}$ is defined by $\mathrm{sgn}(x)= \mathds{1}_{x>0}- \mathds{1}_{x<0}$.

All along the article, we will introduce some positive constants denoted by $C(\alpha,\beta,\ldots)$ and $L_0(\alpha,\beta,\ldots)$, meaning that they may depend on $(\alpha,\beta,\ldots)$. Though they are denoted in the same way, they may vary from one line to another. When they appear in the main results about lower and upper bounds, we do not intend to precisely evaluate them. However, some possible, probably pessimistic, explicit expressions for them are proposed in the proofs.

\section{Detecting an abrupt, possibly transitory, change in a known baseline intensity}\label{Sec:knownbaseline}

As a first step of work, and because this also addresses particular applications, we are here interested in the problem of detecting an abrupt change in the intensity of the Poisson process $N$, when its baseline is assumed to be known, equal to a positive constant function $\lambda_0$ on $[0,1]$. For the sake of simplicity, the constant function $\lambda_0$ and its value on $[0,1]$ are often confused in the following. The null hypothesis of the present section can therefore be expressed as $\hzero \ "\lambda\in \calS_0[\lambda_0]=\{\lambda_0\}"$,
while the alternative hypothesis varies according to the height, length and location of the intensity jump or bump knowledge.


In order to further cover the full range of alternatives in a unified notation, we introduce for $\delta^*$ in $ (-\lambda_0,+\infty)\setminus\{0\}$, $\tau^*$ in $ (0,1)$, $\ell^*$ in $(0,1-\tau^*]$ the set $\calS_{\delta^*,\tau^*,\ell^*}[\lambda_0]$ of intensities with a change of height $\delta^*$, location $\tau^*$ and length $\ell^*$ from $\lambda_0$:
\begin{equation}\label{allknown}
\bold{[Alt.1]}\quad \calS_{\delta^*,\tau^*,\ell^*}[\lambda_0]= \lbrace \lambda:[0,1]\to (0,+\infty),\ \forall t\in [0,1]\ \lambda(t)= \lambda_{0} + \delta^* \mathds{1}_{(\tau^*,\tau^*+\ell^*]}(t) \rbrace\enspace.\end{equation}
Testing $\hzero$ versus $\hone\  "\lambda\in \calS_{\delta^*,\tau^*,\ell^*}[\lambda_0]"$ falls within the scope of the Neyman-Pearson fundamental lemma and an Uniformly Most Powerful (UMP) test exists, thus achieving the minimax separation rate over $\calS_{\delta^*,\tau^*,\ell^*}[\lambda_0]$. Details are provided below.

Then, when the question of adaptivity with respect to unknown parameters is tackled, the unknown parameters are replaced by single, double or a triple dots in the notation $\calS_{\delta^*,\tau^*,\ell^*}[\lambda_0]$.

Notice that for any alternative intensity $\lambda= \lambda_{0} + \delta \mathds{1}_{(\tau,\tau+\ell]}$ with $\delta$ in $ (-\lambda_0,+\infty)\setminus\{0\}$, $\tau$ in $ (0,1)$, and $\ell$ in $(0,1-\tau]$, $d_2(\lambda,\calS_0[\lambda_0])=|\delta| \sqrt{\ell}$.
Hence, as soon as $\lambda$ has a known change height $\delta=\delta^*$ and a known change length $\ell=\ell^*$, the distance $d_2(\lambda,\calS_0[\lambda_0])$ is fixed, equal to $|\delta^*| \sqrt{\ell^*}$. The $\beta$-uniform separation rate of any level $\alpha$ test over $\calS_{\delta^*,\tau^*,\ell^*}[\lambda_0]$ or $\calS_{\delta^*,\bbul\bbul,\ell^*}[\lambda_0]$ as defined by \eqref{defSR} is therefore either $0$ or $+\infty$ (with the usual convention $\inf \emptyset =+\infty$), as well as the minimax separation rate. In these only two cases, studying our tests from the minimax point of view would have no sense. Nevertheless, once having ensured that their first kind error rate is at most $\alpha$, in order to follow the same line as the minimax results obtained in the other cases, we establish conditions expressed as a sufficient minimal distance $d_2(\lambda,\calS_0[\lambda_0])$, guaranteeing that their second kind error rate is at most equal to some prescribed level $\beta$. 

\subsection{Uniformly most powerful detection of a  possibly transitory change with known location and length} \label{UMPknownSec}

Let us now give more details about the above problem of testing the simple null hypothesis $\hzero \ "\lambda\in \calS_0[\lambda_0]=\{\lambda_0\}"$ versus the simple alternative hypothesis $\hone\  "\lambda\in \calS_{\delta^*,\tau^*,\ell^*}[\lambda_0]"$ with $\calS_{\delta^*,\tau^*,\ell^*}[\lambda_0]$ defined by \eqref{allknown}. Notice that for any $\lambda$ in $\calS_{\delta^*,\tau^*,\ell^*}[\lambda_0]$, then 
\[d_2(\lambda,\calS_0[\lambda_0])=|\delta^*| \sqrt{\ell^*}\enspace.\]

Given $\alpha$ in $(0,1)$, Neyman-Pearson tests of $\hzero$ versus $\hone\  "\lambda\in \calS_{\delta^*,\tau^*,\ell^*}[\lambda_0]"$ of size $\alpha$  can be constructed. To this end, we recall Girsanov's lemma (see \cite{Bremaud} for a proof).

\begin{lemma}[Girsanov] \label{lemmegirsanov} Let $N=(N_{t})_{t \in [0,1]}$ be an inhomogeneous Poisson process with jump locations $(X_j)_{j \geq 1}$, with bounded intensity $\lambda$ with respect to some measure $\Lambda$ on $[0,1]$,  and with distribution denoted by $P_\lambda$ under the probability $\P$. Assume that $\lambda_{0}$ is a bounded nonnegative function such that for every $j \geq 1,$ $\lambda_{0}(X_j)>0$ $\P$-almost surely. Then 
  \[ \frac{dP_{\lambda}}{dP_{\lambda_{0}}}(N) = \exp \left[ \int_{0}^{1} \ln \left( \frac{\lambda(t)}{\lambda_{0}(t)} \right) dN_t  - \int_{0}^{1} (\lambda(t)-\lambda_{0}(t))  d\Lambda_t \right]\enspace.\]
\end{lemma}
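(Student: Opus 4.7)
The classical route for Girsanov-type formulas on Poisson processes is to compute both $P_\lambda$ and $P_{\lambda_0}$ explicitly against a common reference measure, then take their ratio. I would first establish the following series expansion: for any bounded measurable intensity $\mu$ relative to $\Lambda$ and any nonnegative measurable functional $F$ of the Poisson path,
\begin{equation*}
E_\mu\cro{F(N)} = e^{-\int_0^1 \mu(t)\, d\Lambda_t}\sum_{n=0}^{+\infty}\frac{1}{n!}\int_{[0,1]^n} F\pa{\{t_1,\ldots,t_n\}}\prod_{j=1}^n \mu(t_j)\, d\Lambda(t_j),
\end{equation*}
where the $n=0$ term is $F(\emptyset)$. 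This is the well-known decomposition of the Poisson process: $N_1$ follows a Poisson law with parameter $\int_0^1 \mu\, d\Lambda$, and conditionally on $\{N_1=n\}$ the unordered jump points are i.i.d.\ with density $\mu/\int_0^1 \mu\, d\Lambda$ with respect to $\Lambda$. It can be obtained by piecewise-constant approximation of $\mu$ on a dyadic partition of $[0,1]$, combined with the independence of increments of $N$ on disjoint intervals and the elementary computation for a homogeneous Poisson process on each cell.

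Once the expansion is in hand, the identity to prove reduces to a bookkeeping check. Applying it twice and substituting the candidate Radon-Nikodym derivative inside the expectation against $P_{\lambda_0}$, for any nonnegative measurable functional $G$ I would write
\begin{align*}
&E_{\lambda_0}\cro{G(N)\exp\pa{\int_0^1 \ln\pa{\frac{\lambda(t)}{\lambda_0(t)}}\,dN_t - \int_0^1 \pa{\lambda(t)-\lambda_0(t)}\,d\Lambda_t}}\\
&= e^{-\int_0^1 \lambda_0\, d\Lambda}\,e^{-\int_0^1 (\lambda-\lambda_0)\, d\Lambda}\sum_{n=0}^{+\infty}\frac{1}{n!}\int_{[0,1]^n} G(\{t_1,\ldots,t_n\})\prod_{j=1}^n \lambda(t_j)\, d\Lambda(t_j),
\end{align*}
using that the atomic part $\exp\pa{\int_0^1 \ln(\lambda/\lambda_0)\,dN_t}$ equals the finite product $\prod_j \lambda(X_j)/\lambda_0(X_j)$ over the jump points of $N$, which converts each $\lambda_0(t_j)$ arising from the $P_{\lambda_0}$ expansion into $\lambda(t_j)$. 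The two exponentials combine to $e^{-\int_0^1 \lambda\, d\Lambda}$, so the right-hand side is exactly $E_\lambda\cro{G(N)}$. This identity, valid for arbitrary nonnegative measurable $G$, characterises $dP_\lambda/dP_{\lambda_0}$ and yields the claimed formula.

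The main obstacle is the first step, the derivation of the series expansion for a general bounded measurable $\mu$: the dyadic approximation argument is elementary but requires standard dominated-convergence verifications to pass from step functions to arbitrary bounded $\mu$. The hypotheses of the lemma are exactly what is needed in the bookkeeping step: the assumption $\lambda_0(X_j)>0$ $\P$-almost surely ensures that the atomic factor $\prod_j \lambda(X_j)/\lambda_0(X_j)$, equivalently the logarithmic integral $\int_0^1 \ln(\lambda/\lambda_0)\,dN_t$, is $\P$-almost surely well defined; while boundedness of $\lambda$ and $\lambda_0$ ensures that the compensator integral $\int_0^1(\lambda-\lambda_0)\,d\Lambda$ is finite, making the exponential $P_{\lambda_0}$-integrable.
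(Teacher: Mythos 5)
The paper does not give its own proof of this lemma; it is stated with a pointer to \cite{Bremaud}, where such Girsanov formulas for counting processes are typically established by a martingale argument: one shows that $L_t = \exp\big(\int_0^t \ln(\lambda/\lambda_0)\,dN - \int_0^t(\lambda-\lambda_0)\,d\Lambda\big)$ is a nonnegative local martingale under $P_{\lambda_0}$, verifies it is a true martingale with $E_{\lambda_0}[L_1]=1$, and then identifies the intensity of $N$ under the tilted measure $L_1\cdot P_{\lambda_0}$ via the predictable compensator. Your plan is correct and takes a genuinely different, more elementary route: compute both laws against the same reference via the Janossy/series representation of a finite Poisson process and check the Radon--Nikodym identity by direct substitution. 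The trade-off is that you avoid stochastic calculus entirely, but you have to establish the series expansion as a preliminary, which (as you note) is the only step needing care; if one is willing to take the "Poisson count plus i.i.d.\ conditional jump locations" description of the Poisson process as given, it is immediate, and otherwise the dyadic refinement plus dominated convergence you sketch is standard. One point you pass over a little quickly and would be worth stating explicitly: in the bookkeeping step, the $P_{\lambda_0}$ expansion integrates only over tuples with $\lambda_0(t_j)>0$ (the $\prod_j\lambda_0(t_j)$ factor kills the rest), and it is precisely the hypothesis $\lambda_0(X_j)>0$ $\P$-a.s.\ --- equivalently $\lambda\,d\Lambda \ll \lambda_0\,d\Lambda$ --- that makes the complementary set $\prod_j\lambda(t_j)\,d\Lambda(t_j)$-null, so that the resulting sum is indeed the full expansion of $E_\lambda[G(N)]$. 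This is where the hypothesis does its real work, beyond merely making $\int_0^1\ln(\lambda/\lambda_0)\,dN$ well defined.
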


From this fundamental lemma, we deduce the likelihood ratio, for $\lambda$ in $\calS_{\delta^*,\tau^*,\ell^*}[\lambda_0]$,
\begin{equation}\label{LR}
\frac{dP_{\lambda}}{dP_{\lambda_0}}(N)= \exp \left[ \ln \pa{1+ \frac{\delta^{*}}{\lambda_{0}} }N(\tau^{*},\tau^* + \ell^{*}]- \delta^{*}\ell^{*} L\right]\enspace,
\end{equation}
which leads to the following size $\alpha$ Neyman-Pearson tests:
\begin{equation}\label{NPtest}
\left\{\begin{array}{llll}
\phi_{1,\alpha}^{-}(N)&= \mathds{1}_{N(\tau^{*}, \tau^{*}+ \ell^{*}] < p_{\lambda_{0}\ell^* L}(\alpha)}&+ \gamma^-(\alpha) \mathds{1}_{N(\tau^{*},\tau^{*}+ \ell^{*}] = p_{\lambda_{0} \ell^* L}(\alpha)}&\textrm{if }\delta^{*}<0\enspace,\\
\phi_{1,\alpha}^{+}(N)&= \mathds{1}_{N(\tau^{*},\tau^{*} + \ell^{*}] > p_{\lambda_{0} \ell^* L}(1-\alpha)}&+ \gamma^+(1-\alpha) \mathds{1}_{N(\tau^{*}, \tau^{*} + \ell^{*}] = p_{\lambda_{0} \ell^* L}(1-\alpha)}&\textrm{if  }\delta^{*}>0\enspace,
\end{array}\right.
\end{equation}
where $p_{\xi}(u)$ denotes the $u$-quantile of the Poisson distribution with parameter $\xi$, and 
\begin{equation}\label{defgamma}
\gamma^-(u)= \frac{u -  P_{\lambda_0}(N(\tau^*, \tau^* + \ell^*] <p_{\lambda_{0}\ell^* L}(u))}{P_{\lambda_0}(N(\tau^{*}, \tau^{*} + \ell^{*}] =p_{\lambda_{0}\ell^* L}(u))},\quad \gamma^+(u)=1-\gamma^-(u)\enspace.
\end{equation}

\begin{proposition}[Second kind error rates control for $\bold{[Alt.1]}$]\label{bNP1}
Let $\alpha$ and $\beta$ be fixed levels in $(0,1)$, $L\geq 1$, $\lambda_{0}>0$, $\delta^*$ in $(-\lambda_0,+\infty)\setminus\{0\}$, $\tau^{*}$ in $(0,1)$ and $\ell^*$ in $(0,1- \tau^{*}]$. 

\smallskip

$(i)$ If $\delta^*>0$, the test $\phi_{1,\alpha}^{+}$ of $\hzero$ versus $\hone \ "\lambda\in \calS_{\delta^*,\tau^*,\ell^*}[\lambda_0]"$ is 
a UMP test of size $\alpha$. Moreover $P_{\lambda}(\phi_{1,\alpha}^{+}(N)=0) \leq \beta$ as soon as $\lambda$ belongs to $\calS_{\delta^*, \tau^*, \ell^*}[\lambda_0]$ with
\begin{equation}\label{cond_alt1} 
d_2\pa{\lambda,\calS_{0}[\lambda_0]} \geq \pa{\sqrt{(\lambda_{0}+ \delta^*)/\beta} + \sqrt{\lambda_{0}/\alpha}} /\sqrt{L}\enspace.
\end{equation}

$(ii)$ If $-\lambda_0<\delta^*<0$, the test $\phi_{1,\alpha}^{-}$ of $\hzero$ versus $\hone \ "\lambda\in \calS_{\delta^*,\tau^*,\ell^*}[\lambda_0]"$ is a UMP test of size $\alpha$. Moreover, $P_{\lambda}(\phi_{1,\alpha}^{-}(N)=0) \leq \beta$ as soon as $\lambda$ belongs to $\calS_{\delta^*, \tau^*, \ell^*}[\lambda_0]$ with \eqref{cond_alt1}.

\end{proposition}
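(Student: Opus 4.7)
The plan is to deduce both the UMP property and the separation condition directly from the likelihood ratio \eqref{LR}. Since the factor $\ln(1+\delta^*/\lambda_0)$ carries the sign of $\delta^*$, $dP_\lambda/dP_{\lambda_0}$ is a strictly monotone function of the count $X:=N(\tau^*,\tau^*+\ell^*]$: increasing for $\delta^*>0$, decreasing for $\delta^*<0$. The Neyman--Pearson lemma applied to this simple-versus-simple testing problem therefore yields a most powerful size $\alpha$ test rejecting for large values of $X$ in case (i) and for small values of $X$ in case (ii), with boundary randomisation given by $\gamma^\pm$ of \eqref{defgamma} tuned so that the size under $P_{\lambda_0}$ is exactly $\alpha$. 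These tests are precisely $\phi_{1,\alpha}^+$ and $\phi_{1,\alpha}^-$, and the UMP qualifier is consistent since each alternative considered here consists of the single intensity $\lambda_0+\delta^*\mathds{1}_{(\tau^*,\tau^*+\ell^*]}$.

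For the second kind error bound in case (i), note that under $P_\lambda$ with $\lambda\in\calS_{\delta^*,\tau^*,\ell^*}[\lambda_0]$, the variable $X$ is Poisson with both mean and variance equal to $(\lambda_0+\delta^*)\ell^*L$. Since $\gamma^+(1-\alpha)\in[0,1]$, relaxing the randomisation from above yields
\[
P_\lambda(\phi_{1,\alpha}^+(N)=0)\leq P_\lambda\bigl(X\leq p_{\lambda_0\ell^*L}(1-\alpha)\bigr).
\]
The plan is then to invoke Chebyshev's inequality twice: under $P_{\lambda_0}$ it gives the deterministic upper bound $p_{\lambda_0\ell^*L}(1-\alpha)\leq \lambda_0\ell^*L+\sqrt{\lambda_0\ell^*L/\alpha}$ via the definition of the Poisson quantile, and under $P_\lambda$ it gives $P_\lambda\bigl(X\leq (\lambda_0+\delta^*)\ell^*L-\sqrt{(\lambda_0+\delta^*)\ell^*L/\beta}\bigr)\leq\beta$. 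Combining the two bounds, the second kind error is at most $\beta$ as soon as
\[
\delta^*\ell^*L\geq \sqrt{(\lambda_0+\delta^*)\ell^*L/\beta}+\sqrt{\lambda_0\ell^*L/\alpha},
\]
which, after dividing by $\sqrt{\ell^*L}$, is exactly \eqref{cond_alt1} since $|\delta^*|=\delta^*$ here.

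Case (ii) is treated symmetrically: $\phi_{1,\alpha}^-$ rejects for small counts, so
\[
P_\lambda(\phi_{1,\alpha}^-(N)=0)\leq P_\lambda\bigl(X\geq p_{\lambda_0\ell^*L}(\alpha)\bigr),
\]
and the dual Chebyshev bounds $p_{\lambda_0\ell^*L}(\alpha)\geq \lambda_0\ell^*L-\sqrt{\lambda_0\ell^*L/\alpha}$ together with $P_\lambda\bigl(X\geq (\lambda_0+\delta^*)\ell^*L+\sqrt{(\lambda_0+\delta^*)\ell^*L/\beta}\bigr)\leq\beta$ lead by the same algebra to \eqref{cond_alt1}, now with $|\delta^*|=-\delta^*$. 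The argument is essentially elementary; the only minor subtlety is that the relaxation from the randomised to the non-randomised quantile must survive the discreteness of the Poisson distribution, which it does because $\gamma^\pm\in[0,1]$ and the standard definition of $p_\xi(\cdot)$ is used. I expect no genuine obstacle here: this proposition plays the role of the easy benchmark against which the more involved adaptive separation rates of the paper will later be calibrated.
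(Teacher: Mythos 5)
Your proof is correct and follows essentially the same route as the paper: the UMP property comes from the Neyman–Pearson lemma applied to the monotone likelihood ratio from Girsanov's formula, the Poisson quantile is bounded via Chebyshev under $P_{\lambda_0}$ (this is exactly Lemma~\ref{lemmequant1} in the paper), and the second kind error is then controlled by another application of Bienaym\'e--Chebyshev under $P_\lambda$, with the algebra in both cases (i) and (ii) reducing \eqref{cond_alt1} to precisely the inequality needed to combine the two bounds.
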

\emph{Comments.} Notice first that the same result holds with $P_{\lambda}(\phi_{1,\alpha}^{+}(N)=0)$ replaced by the second kind error rate $E_{\lambda}[1-\phi_{1,\alpha}^{+}(N)]$. Then, as explained above, studying the present tests from the minimax point is not really relevant. One can however notice that the uniform separation rate of a UMP test necessarily provides the minimax separation rate over any set of alternatives. Since for $\lambda$ in $\calS_{\delta^*, \tau^*, \ell^*}[\lambda_0]$, $d_2\pa{\lambda,\calS_{0}[\lambda_0]}=|\delta^*|\sqrt{\ell^*}$, the above proposition implies that if $L \geq \big(\sqrt{(\lambda_{0}+ \delta^*)/\beta} + \sqrt{\lambda_{0}/\alpha}\big)^2/ \pa{{\delta^*}^2 \ell^{*}}$,  then $P_{\lambda}(\phi_{1,\alpha}^{+}(N)=0) \leq \beta$ when $\delta^*>0$ and $P_{\lambda}(\phi_{1,\alpha}^{-}(N)=0) \leq \beta$ when $-\lambda_0<\delta^*<0$.
Therefore, in this case, the $\beta$-uniform separation rate of $\phi_{1,\alpha}^{+}$ over $\calS_{\delta^*, \tau^*, \ell^*}[\lambda_0]$ with $\delta^*>0$ is  equal to $0$, as well as the  $\beta$-uniform separation rate of $\phi_{1,\alpha}^{-}$ over $\calS_{\delta^*, \tau^*, \ell^*}[\lambda_0]$ with $-\lambda_0<\delta^*<0$, and consequently, the $(\alpha,\beta)$-minimax separation rate $\mSRab\pa{\calS_{\delta^*, \tau^*, \ell^*}[\lambda_0]}$.

\smallskip

Let us now consider the question of adaptation with respect to the change height only.

To this end, we introduce, for $\tau^*$ in $(0,1)$ and $\ell^*$ in $(0,1-\tau^*]$ the set
\begin{multline} \label{latauleknown}
\bold{[Alt.2]}\quad\calS_{\bbul,\tau^*,\ell^*}[\lambda_0]=\{ \lambda:[0,1]\to (0,+\infty),\ \exists \delta \in (- \lambda_{0}, + \infty)\setminus\lbrace 0 \rbrace,\\
\forall t\in [0,1]\quad \lambda(t) =\lambda_{0} + \delta \mathds{1}_{(\tau^*,\tau^*+\ell^*]}(t)\}\enspace,
\end{multline}
and we consider the problem of testing $\hzero$ versus $\hone\  "\lambda\in \calS_{\bbul,\tau^*,\ell^*}[\lambda_0]"$.

\smallskip

The following result gives a nonasymptotic lower bound for the $(\alpha,\beta)$-minimax separation rate over the set of alternatives $\calS_{\bbul,\tau^*,\ell^*}[\lambda_0]$ of the parametric order $L^{1/2}$, which is obtained from a now classical Bayesian approach that originates in Le Cam's theory and Ingster's work \cite{Ingster1993} in an asymptotic perspective, and that has been next adapted to the nonasymptotic perspective by Baraud \cite{Baraud2002}. For the sake of clarity and completeness, the main points of this approach are recalled in Section \ref{Keys}, and the complete proof can be found in Section \ref{Sec:Proofs}.

 \begin{proposition}[Minimax lower bound for $\bold{[Alt.2]}$] \label{LBalt2}
Let $\alpha, \beta$  in $(0,1)$ such that $\alpha+\beta<1$, $\lambda_{0}>0$, $\tau^{*}$ in $(0,1)$ and $\ell^* $ in $(0,1- \tau^{*}]$. For all $L \geq 1$, the following lower bound holds:
\[\mSRab\pa{\calS_{\bbul, \tau^*, \ell^*}[\lambda_0]} \geq \sqrt{{\lambda_{0} \log C_{\alpha, \beta}}/{L}},\textrm{ with }C_{\alpha, \beta}=1+4(1- \alpha -\beta)^2\enspace.\]
\end{proposition}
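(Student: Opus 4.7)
The plan is to follow the standard Bayesian reduction for minimax lower bounds, as recalled in Section \ref{Keys}. The guiding principle is that for any prior $\mu$ supported on $\{\lambda \in \calS_{\bbul,\tau^*,\ell^*}[\lambda_0]:\, d_2(\lambda,\calS_0[\lambda_0]) \geq r\}$, any level $\alpha$ test $\phi_\alpha$ satisfies
\[
\sup_{\lambda \in \calS_1,\, d_2(\lambda,\calS_0) \geq r} P_\lambda(\phi_\alpha = 0) \;\geq\; 1 - \alpha - \tfrac{1}{2}\sqrt{E_{\lambda_0}\!\left[(dP_\mu/dP_{\lambda_0})^2\right] - 1},
\]
where $P_\mu = \int P_\lambda\, d\mu(\lambda)$. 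Consequently, if the squared $\chi^2$-type quantity on the right is bounded by $4(1-\alpha-\beta)^2$, no level $\alpha$ test can attain second kind error $\leq \beta$ at such a separation $r$, which will yield the claimed lower bound on $\mSRab$.

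The key simplification here is that the alternative is one-parameter (only the height $\delta$ is free), so I take $\mu$ to be a \emph{single point mass} at $\lambda_\delta = \lambda_0 + \delta \mathds{1}_{(\tau^*,\tau^*+\ell^*]}$ for a well-chosen $\delta \in (-\lambda_0,+\infty)\setminus\{0\}$. Girsanov's Lemma \ref{lemmegirsanov}, already used to derive the likelihood ratio \eqref{LR}, gives
\[
\frac{dP_{\lambda_\delta}}{dP_{\lambda_0}}(N) = \exp\!\left[\ln(1+\delta/\lambda_0)\,N(\tau^*,\tau^*+\ell^*] - \delta\ell^* L\right].
\]
Since $N(\tau^*,\tau^*+\ell^*]$ is Poisson with parameter $\lambda_0 \ell^* L$ under $P_{\lambda_0}$, a direct computation of its moment generating function yields
\[
E_{\lambda_0}\!\left[\left(\tfrac{dP_{\lambda_\delta}}{dP_{\lambda_0}}\right)^{\!2}\right] = \exp\!\left(\lambda_0\ell^* L\big[(1+\delta/\lambda_0)^2 - 1\big] - 2\delta\ell^* L\right) = \exp\!\left(\delta^2 \ell^* L/\lambda_0\right).
\]

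I then choose $\delta$ so that this quantity equals exactly $C_{\alpha,\beta} = 1 + 4(1-\alpha-\beta)^2$, that is $\delta^2 \ell^* L/\lambda_0 = \log C_{\alpha,\beta}$, giving $|\delta|\sqrt{\ell^*} = \sqrt{\lambda_0 \log C_{\alpha,\beta}/L}$. For any $r$ strictly smaller than this target value, picking $\delta$ slightly below the critical height (say $\delta > 0$ with $|\delta|\sqrt{\ell^*} \in [r, \sqrt{\lambda_0 \log C_{\alpha,\beta}/L})$) keeps $\lambda_\delta$ in $\calS_{\bbul,\tau^*,\ell^*}[\lambda_0]$ at distance $\geq r$ from $\calS_0[\lambda_0]$, while ensuring $E_{\lambda_0}[(dP_{\lambda_\delta}/dP_{\lambda_0})^2] - 1 < 4(1-\alpha-\beta)^2$. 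The Bayesian inequality then forces the second kind error of any level $\alpha$ test to exceed $\beta$, so $\SRb(\phi_\alpha, \calS_{\bbul,\tau^*,\ell^*}[\lambda_0]) \geq r$. Letting $r \uparrow \sqrt{\lambda_0 \log C_{\alpha,\beta}/L}$ and taking the infimum over $\phi_\alpha$ concludes.

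The only mildly delicate point is bookkeeping the constraint $\delta \in (-\lambda_0,+\infty)\setminus\{0\}$ when $L$ is small: the critical $|\delta| = \sqrt{\lambda_0 \log C_{\alpha,\beta}/(\ell^* L)}$ may exceed $\lambda_0$, but since the condition $|\delta| < \lambda_0$ is only needed when choosing $\delta < 0$, it suffices to work throughout with $\delta > 0$, for which the constraint is automatic. No other step requires care, as the $\chi^2$-computation for Poisson processes via Girsanov is textbook. The main substantive ingredient is therefore the Bayesian reduction itself, and the main technical ingredient is Girsanov's Lemma combined with the explicit Poisson MGF.
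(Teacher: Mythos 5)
Your proposal is correct and follows essentially the same route as the paper: a point mass at $\lambda_\delta = \lambda_0 + \delta\mathds{1}_{(\tau^*,\tau^*+\ell^*]}$ with $\delta = r/\sqrt{\ell^*}$, the $\chi^2$-type moment computed via Girsanov and the Poisson MGF to give $\exp(r^2 L/\lambda_0)$, and the choice $r = \sqrt{\lambda_0\log C_{\alpha,\beta}/L}$ combined with Lemma \ref{lemmebayesien}. Your final $r\uparrow$ limiting argument is actually unnecessary, since Lemma \ref{lemmebayesien} holds with equality in the $\chi^2$ bound, but this does not affect the validity of the argument.
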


In order to prove that this lower bound is sharp with respect to $L$, we introduce two tests, whose test statistics both derive from the estimation of a certain distance between $\lambda$ in $\calS_{\bbul, \tau^*, \ell^*}[\lambda_0]$ and $\lambda_0$: the $\bbL_1$-distance is used for the first test while the $\bbL_2$-distance is used for the second test.
The first test is therefore based on a linear statistic of the Poisson process while the second one is based on a more complex quadratic statistic, which may be more adapted to our chosen performance evaluation criterion based on the $\bbL_2$-distance $d_2$. Let thus
\begin{multline} \label{test1}
\phi_{2,\alpha}^{(1)}(N)=\1{N(\tau^{*}, \tau^{*} + \ell^*] > p_{\lambda_0 \ell^* L}(1-\alpha_1)}+\gamma^+(\alpha_1)\1{N(\tau^{*}, \tau^{*} + \ell^*] = p_{\lambda_0 \ell^* L}(1-\alpha_1)}\\
+\1{N(\tau^{*}, \tau^{*} + \ell^{*}] < p_{\lambda_0 \ell^* L}(\alpha_2)}+
\gamma^-(\alpha_2)\1{N(\tau^{*}, \tau^{*} + \ell^{*}] = p_{\lambda_0 \ell^* L}(\alpha_2)}\enspace,
\end{multline}
where $\alpha_1$ and $\alpha_2 $ in $ (0,1)$ are determined by
\begin{equation} \label{bUMPU1}
\alpha_1 + \alpha_2 = \alpha \quad \textrm{and} \quad  
E_{\lambda_0} [N(\tau^{*}, \tau^{*}+ \ell^{*}] \phi_{2,\alpha}^{(1)}(N)] = \alpha E_{\lambda_0} [N(\tau^{*}, \tau^{*}+ \ell^{*}]]\enspace.\end{equation}
Remark that $E_{\lambda_0} [N(\tau^{*}, \tau^{*}+ \ell^{*}] \phi_{2,\alpha}^{(1)}(N)]$ and $E_{\lambda_0} [N(\tau^{*}, \tau^{*}+ \ell^{*}]]$ can be explicitly computed as functions of the parameters $\lambda_0,\alpha, \tau^*, \ell^*$, and that $\phi_{2,\alpha}^{(1)}(N)$ is the maximum of two UMP tests when considering the alternatives with $\delta>0$ and $\delta<0$ separately. 

\smallskip

Since our testing problem amounts to a problem of testing $"\delta=0"$ versus $"\delta \neq 0"$ in the exponential model ${dP_{\lambda}}/{dP_{\lambda_0}}(N)= \exp \left[ \ln \pa{1+{\delta}/{\lambda_{0}} }N(\tau^{*},\tau^* + \ell^{*}]- \delta\ell^{*} L\right]$, applying the result of Chapter 4.2 in \cite{Lehmann2006} allows to see that $ \phi_{2,\alpha}^{(1)}(N)$ is an Uniformly Most Powerful Unbiased (UMPU) test of size $\alpha$.

\smallskip

Let us now deal with the estimation of the $\bbL_2$-distance $d_2\pa{\lambda,\lambda_0}$ when $\lambda\in\calS_{\bbul, \tau^*, \ell^*}[\lambda_0]$. Let for $\tau_1, \tau_2$ such that $0 \leq \tau_1 < \tau_2 \leq 1$, $ \varphi_{(\tau_1,\tau_2]}=  \mathds{1}_{(\tau_1, \tau_2 ]}/\sqrt{\tau_2 - \tau_1}$, $V_{\tau_1,\tau_2}= \mathrm{Vect}(\varphi_{(\tau_1,\tau_2]})$, and $\Pi_{V_{\tau_1,\tau_2}}$ be the orthogonal projection onto $V_{\tau_1,\tau_2}$ in $\bbL_2([0,1])$. An unbiased estimator of $\Vert \Pi_{V_{\tau_1,\tau_2}} (\lambda - \lambda_{0}) \Vert_2^2$ is given by the quadratic statistic
\begin{equation} \label{estimateurSB}
 T_{\tau_1, \tau_2}(N) = \frac{1}{L^2 (\tau_2 - \tau_1)} \left(  N \left( \left.  \tau_1, \tau_2  \right] \right.^2 -N \left( \left.  \tau_1 , \tau_2  \right] \right.  \right) - \frac{2 \lambda_{0}}{L} N \left( \left.  \tau_1, \tau_2  \right] \right. + \lambda_{0}^2 (\tau_2 - \tau_1)\enspace.
 \end{equation}

We therefore consider the particular statistic $T_{\tau^*, \tau^*+\ell^*}(N)$ which is an unbiased estimator of $d_2^2(\lambda,\lambda_0)$ when $\lambda$ belongs to $\calS_{\bbul, \tau^*, \ell^*}[\lambda_0]$, leading to the test
\begin{equation} \label{test1N2}
\phi_{2,\alpha}^{(2)}(N) = \mathds{1}_{T_{\tau^*, \tau^*+\ell^*}(N) > t_{\lambda_0,\tau^*, \tau^* + \ell^*}(1-\alpha)}\enspace,
\end{equation}
where $t_{\lambda_0,\tau_1,\tau_2}(u)$ denotes the $u$-quantile of the distribution of $T_{\tau_1,\tau_2}(N)$ under $\hzero$.

\begin{proposition}[Minimax upper bound for $\bold{[Alt.2]}$] \label{UBalt2} Let $L\geq 1$, $\alpha,\beta$ in $(0,1)$, $\lambda_{0}>0$, $\tau^*$ in $(0,1)$ and $\ell^*$ in $(0,1- \tau^{*}]$. Let $\phi_{2,\alpha}^{(1/2)}$ be one of the tests $\phi_{2,\alpha}^{(1)}$ and $\phi_{2,\alpha}^{(2)}$ of $\hzero$ versus $\hone\ "\lambda\in\calS_{\bbul,\tau^*,\ell^*}[\lambda_0]"$ defined by \eqref{test1}-\eqref{bUMPU1} and \eqref{test1N2}. The test $\phi_{2,\alpha}^{(1/2)}$ is of level $\alpha$, that is $ P_{\lambda_0}(\phi_{2,\alpha}^{(1/2)}(N)=1) \leq \alpha$ (the randomised test $\phi_{2,\alpha}^{(1)}$ is even of size $\alpha$, that is  $E_{\lambda_0}[\phi_{2,\alpha}^{(1)}(N)]=\alpha$). Moreover, there exists $C(\alpha, \beta, \lambda_{0}, \tau^*, \ell^*)>0$ such that
$$ \SRb(\phi_{2,\alpha}^{(1/2)},\calS_{\bbul,\tau^*, \ell^*}[\lambda_0]) \leq {C(\alpha, \beta, \lambda_{0},\ell^*)}/{\sqrt{L}}\enspace,$$
which entails in particular $\mSRab\pa{\calS_{\bbul, \tau^*, \ell^*}[\lambda_0]} \leq C(\alpha, \beta, \lambda_{0}, \ell^*)/\sqrt{L}$.
\end{proposition}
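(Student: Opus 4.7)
The proof splits into two independent analyses, one for each candidate test; in each case we must verify the level-$\alpha$ condition and control the $\beta$-uniform separation rate. The level condition is immediate: $\phi_{2,\alpha}^{(1)}$ is UMPU of size $\alpha$ by the very construction \eqref{test1}--\eqref{bUMPU1}, following the general theory of two-sided UMPU tests in one-parameter exponential families as recalled from Lehmann and Romano, Chapter 4.2; and $\phi_{2,\alpha}^{(2)}$ is of level $\alpha$ by the very definition of the $(1-\alpha)$-quantile $t_{\lambda_0,\tau^*,\tau^*+\ell^*}(1-\alpha)$ of $T_{\tau^*,\tau^*+\ell^*}(N)$ under $P_{\lambda_0}$.

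For $\phi_{2,\alpha}^{(1)}$, the two-sided UMPU test is the sum of an upper-tail piece of size $\alpha_1$ and a lower-tail piece of size $\alpha_2$, so that it pointwise dominates both $\phi_{1,\alpha_1}^{+}(N)$ and $\phi_{1,\alpha_2}^{-}(N)$ in the notation of \eqref{NPtest}. Hence for $\lambda\in\calS_{\bbul,\tau^*,\ell^*}[\lambda_0]$ with $\delta>0$, one gets $P_\lambda(\phi_{2,\alpha}^{(1)}(N)=0)\leq P_\lambda(\phi_{1,\alpha_1}^{+}(N)=0)$, and applying Proposition \ref{bNP1}$(i)$ with $\alpha$ replaced by $\alpha_1$ yields $P_\lambda(\phi_{2,\alpha}^{(1)}(N)=0)\leq\beta$ as soon as $d_2(\lambda,\calS_0[\lambda_0])\geq(\sqrt{(\lambda_0+\delta)/\beta}+\sqrt{\lambda_0/\alpha_1})/\sqrt{L}$; the symmetric argument with $\alpha_2$ and $\phi_{1,\alpha_2}^{-}$ handles $\delta<0$. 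The delicate step is to lower-bound $\alpha_1\wedge\alpha_2$ by a positive constant depending only on $(\alpha,\lambda_0,\ell^*)$, which can be extracted from the unbiasedness identity in \eqref{bUMPU1} together with explicit estimates on the Poisson cumulative distribution function. Using the identity $|\delta|=d_2(\lambda,\calS_0[\lambda_0])/\sqrt{\ell^*}$ to absorb the $\delta$-dependence on the right-hand side then produces a rate of the form $C(\alpha,\beta,\lambda_0,\ell^*)/\sqrt{L}$.

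For $\phi_{2,\alpha}^{(2)}$, set $S=N(\tau^*,\tau^*+\ell^*]$ and $\mu_0=\lambda_0\ell^*L$; a short algebraic manipulation rewrites the statistic as $T_{\tau^*,\tau^*+\ell^*}(N)=((S-\mu_0)^2-S)/(L^2\ell^*)$. Elementary Poisson moment computations give, under $H_0$, $E_{\lambda_0}[T_{\tau^*,\tau^*+\ell^*}(N)]=0$ and $\mathrm{Var}_{\lambda_0}(T_{\tau^*,\tau^*+\ell^*}(N))=2\lambda_0^2/L^2$, so Markov's inequality applied to the square of the centred statistic yields $t_{\lambda_0,\tau^*,\tau^*+\ell^*}(1-\alpha)\leq\lambda_0\sqrt{2/\alpha}/L$. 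Under $\lambda\in\calS_{\bbul,\tau^*,\ell^*}[\lambda_0]$, the same computation delivers $E_\lambda[T_{\tau^*,\tau^*+\ell^*}(N)]=\delta^2\ell^*=d_2^2(\lambda,\calS_0[\lambda_0])$ and $\mathrm{Var}_\lambda(T_{\tau^*,\tau^*+\ell^*}(N))=4\delta^2(\lambda_0+\delta)\ell^*/L+2(\lambda_0+\delta)^2/L^2$. Chebyshev's inequality then gives
\[
P_\lambda\bigl(T_{\tau^*,\tau^*+\ell^*}(N)\leq t_{\lambda_0,\tau^*,\tau^*+\ell^*}(1-\alpha)\bigr)\leq\frac{\mathrm{Var}_\lambda(T_{\tau^*,\tau^*+\ell^*}(N))}{\bigl(d_2^2(\lambda,\calS_0[\lambda_0])-t_{\lambda_0,\tau^*,\tau^*+\ell^*}(1-\alpha)\bigr)^2}
\]
as soon as the denominator is positive. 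Requiring the right-hand side to be at most $\beta$, inserting the variance and quantile bounds above, and using $|\delta|=d_2(\lambda,\calS_0[\lambda_0])/\sqrt{\ell^*}$ to eliminate $\delta$ from the right-hand side, one obtains after solving an elementary inequality that $d_2(\lambda,\calS_0[\lambda_0])\geq C(\alpha,\beta,\lambda_0,\ell^*)/\sqrt{L}$ suffices. The main technical difficulty of the whole proof lies in the linear-test analysis, namely securing a uniform lower bound on the UMPU split $(\alpha_1,\alpha_2)$; the quadratic-test analysis, by contrast, is essentially algebraic once the Poisson moment identities are in place.
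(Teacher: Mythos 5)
Your proof takes essentially the same route as the paper's for both tests. For $\phi_{2,\alpha}^{(2)}$ the argument is virtually identical: the same moment computations (as encapsulated in Lemma~\ref{MomentsT}), the same Chebyshev bound for the $(1-\alpha)$-quantile under $H_0$, and the same application of Bienayme--Chebyshev under $H_1$. For $\phi_{2,\alpha}^{(1)}$ you reduce to Proposition~\ref{bNP1} with $\alpha$ replaced by $\alpha_1$ or $\alpha_2$; the paper instead redoes the quantile-bound computation directly from Lemma~\ref{lemmequant1}, but the inequality chain it produces is the same as what the reduction yields, so these are not genuinely different arguments.

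The one point worth dwelling on is the one you single out as ``the delicate step'': lower-bounding $\alpha_1\wedge\alpha_2$ uniformly in $L$. You are right that this is needed, and right that it is the only non-elementary step, but you do not carry it out, so your proof of the linear-test branch is incomplete in the same place the paper's is. The paper simply writes the separation-rate constant as $C(\alpha,\beta,\lambda_0,\ell^*)=\max\bigl(2\sqrt{\lambda_0}(1/\sqrt{\beta}+1/\sqrt{\alpha_1})+1/(\beta\sqrt{\ell^*}),\,\sqrt{\lambda_0}(1/\sqrt{\beta}+1/\sqrt{\alpha_2})\bigr)$ without remarking that $\alpha_1,\alpha_2$ are determined by the Poisson parameter $\lambda_0\ell^*L$ and hence depend on $L$. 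For the stated proposition to hold with a constant free of $L$ one must argue that $\inf_{L\geq 1}(\alpha_1\wedge\alpha_2)>0$; the usual route is continuity of the UMPU split in the Poisson parameter $\xi$ on the half-line $\xi\geq\lambda_0\ell^*$, together with the fact that $(\alpha_1,\alpha_2)\to(\alpha/2,\alpha/2)$ as $\xi\to\infty$ by the local CLT, so the infimum over a half-line is positive. Neither you nor the paper records this argument, but flagging it is a genuine observation rather than an error, and the rest of your reasoning is sound.
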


\emph{Comments.} Both tests $\phi_{2,\alpha}^{(1)}$ and $\phi_{2,\alpha}^{(2)}$ are therefore $\beta$-minimax (up to a possible multiplicative constant) over the set of alternatives $\calS_{\bbul,\tau^*, \ell^*}[\lambda_0]$, where the height of the change is unknown, with an optimal uniform separation rate of the expected parametric order $1/\sqrt{L}$.

\smallskip

Notice that the present study involves the particular non transitory change or jump detection problem, with a known change location, taking $\ell^*=1-\tau^*$. The study of the general non transitory change or jump detection problem (of unknown location) is conducted in Section~\ref{Sec:generalsingle} as a particular case of change detection problem, with unknown location and length.

\subsection{Minimax detection of a transitory change with known length}\label{Sec:knownlength}

The present subsection is dedicated to the  problem of testing $\hzero \ "\lambda\in \calS_0[\lambda_0]=\{\lambda_0\}"$ versus alternatives where the length of the change from the baseline intensity is known, with adaptation with respect to the change location, and with or without adaptation with respect to the height of the change. We therefore introduce, for $\ell^*$ in $(0,1)$ and $\delta^*$ in $(- \lambda_{0}, + \infty)\setminus\{0\}$, the two following sets:
\begin{multline}\label{laledeknown}
\bold{[Alt.3]}\ \calS_{\delta^*,\bbul\bbul,\ell^*}[\lambda_0]= \big\{\lambda:[0,1]\to (0,+\infty),\ \exists \tau \in (0,1-\ell^*),\\
\forall t\in [0,1]\quad   \lambda(t) = \lambda_{0} + \delta^* \mathds{1}_{(\tau,\tau+\ell^*]}(t) \big\}\enspace,
\end{multline}
\vspace{-0.8cm}
\begin{multline}\label{laleknown}
\bold{[Alt.4]}\ \calS_{\bbul,\bbul\bbul,\ell^*}[\lambda_0]=
 \big\{ \lambda\:[0,1]\to (0,+\infty),\ \exists \delta \in (- \lambda_{0}, + \infty)\setminus\lbrace 0\rbrace ,\exists \tau \in (0,1-\ell^*),\\
\forall t\in [0,1]\quad    \lambda(t) = \lambda_{0} + \delta \mathds{1}_{(\tau,\tau+\ell^*]}(t) \big\}\enspace.
\end{multline}

 As seen in the above subsection, the knowledge of the change height $\delta^*$ is not necessary to construct an UMP test of $\hzero$ versus $\hone\  "\lambda\in \calS_{\delta^*,\tau^*,\ell^*}[\lambda_0]"$ as the test statistic, which is the exhaustive statistic in the considered exponential model, does not depend on the value of $\delta^*$. This enables to directly extend it to an UMPU test of $\hzero$ versus $\hone\  "\lambda\in \calS_{\bbul,\tau^*,\ell^*}[\lambda_0]"$ based on the same exhaustive statistic $N(\tau^*,\tau^*+\ell^*]$. 

The only significant question is hence the one of adaptation to the change location $\tau^*$.

A natural approach to handle this question is to take the same linear and quadratic statistics as the ones used for testing  $\hzero$ versus $\hone\  "\lambda\in \calS_{\delta^*,\tau^*,\ell^*}[\lambda_0]"$ or $\hone\  "\lambda\in \calS_{\bbul,\tau^*,\ell^*}[\lambda_0]"$, but making $\tau^*$ varying in the whole set of possible change locations, or an appropriate restricted set of possible change locations. This approach, known as statistics scanning in the signal and image processing literature or statistics aggregation in the minimax testing literature, has close connections with multiple tests that were investigated in \cite{FLRB2016} (see Section~\ref{discussionlevels}), and that will be exploited in a further work dedicated to the change localisation problem.

We therefore first introduce the following linear statistic based aggregated tests:
\begin{equation} \label{testN1alt3}
\left\{\begin{array}{ll}
\phi_{3,\alpha}^{(1)-}(N)=&\1{\min_{\tau\in [0,1-\ell^*]}N(\tau, \tau + \ell^*] < p_{\lambda_0,\ell^*}^-(\alpha)}\enspace,\\
\phi_{3,\alpha}^{(1)+}(N)=&\1{\max_{\tau\in [0,1-\ell^*]}N(\tau, \tau + \ell^*] > p_{\lambda_0,\ell^*}^+(1-\alpha)}\enspace,
\end{array}\right.
\end{equation}
where $p_{\lambda_0,\ell^*}^-(u)$ and $p_{\lambda_0,\ell^*}^+(u)$ respectively denote the $u$-quantiles of the distributions of $\min_{\tau\in [0,1-\ell^*]}N(\tau, \tau + \ell^*]$ and $\max_{\tau\in [0,1-\ell^*]}N(\tau, \tau + \ell^*]$ under $\hzero$.

From these unilateral tests, we construct the bilateral test
\begin{equation} \label{testN1alt4}
\phi_{4,\alpha}^{(1)}(N)=\phi_{3,\alpha/2}^{(1)-}(N)\vee \phi_{3,\alpha/2}^{(1)+}(N)\enspace,
\end{equation}
intended to address the change height adaptation issue.

Finally, considering $M = \lceil 2/\ell^* \rceil$ and $u_\alpha= {\alpha}/{\lceil (1-\ell^*) M \rceil}$, the test statistic $T_{{k}/{M},{k}/{M}+\ell^*}(N)$ defined by \eqref{estimateurSB} and its $u$-quantile  $t_{\lambda_0,{k}/{M}, {k}/{M}+\ell^*}(u)$ under $\hzero$, we introduce the quadratic statistic based aggregated test
\begin{equation} \label{testN2alt3-4}
\phi_{3/4,\alpha}^{(2)}(N) = \1{\max_{k\in \lbrace 0,\ldots, \lceil (1-\ell^*) M \rceil -1 \rbrace} \left( T_{\frac{k}{M}, \frac{k}{M}+\ell^*}(N) - t_{\lambda_0,\frac{k}{M}, \frac{k}{M}+\ell^*} \pa{1 - u_\alpha} \right)>0}\enspace.
\end{equation}
As the set $\calS_{\delta^*,\bbul\bbul,\ell^*}[\lambda_0]$ defined in \eqref{laledeknown} is composed of alternatives with known change height $\delta^*$ and length $\ell^*$, the distance between any of its elements and $\calS_{0}[\lambda_0]=\{\lambda_0\}$ is fixed, equal to $|\delta^*|\sqrt{\ell^*}$. Therefore, it is not discussed from the minimax point of view. We only provide in the following proposition sufficient conditions for the tests $\phi_{3,\alpha}^{(1)+}$, $\phi_{3,\alpha}^{(1)-}$ and $\phi_{3/4,\alpha}^{(2)}$ to have a second kind error rate controlled by a prescribed level $\beta$ under $P_\lambda$ when $\lambda\in \calS_{\delta^*,\bbul\bbul,\ell^*}[\lambda_0]$. 
The proofs of these results, which are postponed to Section \ref{Sec:Proofs}, mainly rely on sharp bounds for the quantiles $p_{\lambda_0,\ell^*}^-(\alpha)$, $p_{\lambda_0,\ell^*}^+(1-\alpha)$ and $ t_{\lambda_0,{k}/{M},{k}/{M}+\ell^*} \pa{1 - u_\alpha} $, that are deduced from two very recent exponential inequalities for the supremum and the oscillation modulus of the square martingale associated with a counting process due to  Le Guével \cite{LeGuevel2021}. Recall that the technical proofs of such quantiles bounds are detailed in Section \ref{Sec:FTresults}.

\begin{proposition}[Second kind error rate control for $\bold{[Alt.3]}$] \label{UBalt3}
Let $L\geq 1$, $\alpha$ and $\beta$ in $(0,1)$, $\lambda_{0}>0$, $\delta^*$ in $(-\lambda_0,+\infty)\setminus\{0\}$  and $\ell^*$ in $(0,1)$. 
Considering the problem of testing $\hzero$ v.s. $\hone \ "\lambda\in \calS_{\delta^*, \bbul\bbul,\ell^*}[\lambda_0]"$, let $\phi_{3,\alpha}^{(1/2)}$ be one of the tests $\phi_{3,\alpha}^{(1)+}$ or $\phi_{3/4,\alpha}^{(2)}$ if $\delta^*>0$, and one of the tests $\phi_{3,\alpha}^{(1)-}$ or $\phi_{3/4,\alpha}^{(2)}$ if $\delta^*<0$ (see \eqref{testN1alt3} and \eqref{testN2alt3-4}  for definitions of the tests). The test $\phi_{3,\alpha}^{(1/2)}$ is of level $\alpha$, that is $P_{\lambda_0}(\phi_{3,\alpha}^{(1/2)}(N)=1)\leq \alpha$. Moreover, there exists $C(\alpha,\beta,\lambda_0,\delta^*,\ell^*)>0$ such that $P_{\lambda}\pa{\phi_{3,\alpha}^{(1/2)}(N)=0} \leq \beta$ as soon as $\lambda$ belongs to $\calS_{\delta^*, \bbul\bbul, \ell^*}[\lambda_0]$ with
$$d_2\pa{\lambda,\calS_{0}[\lambda_0]} \geq {C(\alpha,\beta,\lambda_0,\delta^*,\ell^*)}/{\sqrt{L}}\enspace.$$
\end{proposition}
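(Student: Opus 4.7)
The plan is to handle the level control and the second-kind error rate control separately, and for the latter, to treat the linear tests $\phi_{3,\alpha}^{(1)\pm}$ and the quadratic test $\phi_{3/4,\alpha}^{(2)}$ in parallel, following the same overarching strategy: lower bound the relevant test statistic under $P_\lambda$, upper bound its critical value (the quantile under $P_{\lambda_0}$), and compare. The level control is immediate: for $\phi_{3,\alpha}^{(1)\pm}$ it follows from the very definitions of the quantiles $p_{\lambda_0,\ell^*}^\pm$; for $\phi_{3/4,\alpha}^{(2)}$, since each of the at most $\lceil(1-\ell^*)M\rceil$ events $\{T_{k/M,k/M+\ell^*}(N)>t_{\lambda_0,k/M,k/M+\ell^*}(1-u_\alpha)\}$ has probability at most $u_\alpha=\alpha/\lceil(1-\ell^*)M\rceil$ under $P_{\lambda_0}$, a Bonferroni bound yields $P_{\lambda_0}(\phi_{3/4,\alpha}^{(2)}(N)=1)\leq \alpha$.

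For the second-kind error control of the linear test, assume $\delta^*>0$ and consider $\phi_{3,\alpha}^{(1)+}$ (the case $\delta^*<0$ is symmetric). Fix $\lambda=\lambda_0+\delta^*\mathds{1}_{(\tau,\tau+\ell^*]}\in\calS_{\delta^*,\bbul\bbul,\ell^*}[\lambda_0]$: since $\max_{\tau'\in[0,1-\ell^*]}N(\tau',\tau'+\ell^*]\geq N(\tau,\tau+\ell^*]$, a non-rejection forces $N(\tau,\tau+\ell^*]\leq p_{\lambda_0,\ell^*}^+(1-\alpha)$, where $N(\tau,\tau+\ell^*]$ is Poisson of mean $(\lambda_0+\delta^*)\ell^*L$ under $P_\lambda$. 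A Bennett-type lower tail bound for the Poisson distribution provides a threshold $v=(\lambda_0+\delta^*)\ell^*L-C_1(\beta,\lambda_0,\delta^*,\ell^*)\sqrt{L}$ below which $N(\tau,\tau+\ell^*]$ falls with probability at most $\beta$. It then suffices to upper-bound the scan quantile as $p_{\lambda_0,\ell^*}^+(1-\alpha)\leq \lambda_0\ell^*L+C_2(\alpha,\lambda_0,\ell^*)\sqrt{L}$; this is where I invoke Le Guével's recent exponential inequality for the supremum of a counting process (a result imported from Section~\ref{Sec:FTresults}). Compatibility of the two bounds amounts exactly to $|\delta^*|\sqrt{\ell^*}=d_2(\lambda,\calS_0[\lambda_0])\geq C(\alpha,\beta,\lambda_0,\delta^*,\ell^*)/\sqrt{L}$.

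For the quadratic test $\phi_{3/4,\alpha}^{(2)}$, the extra subtlety is that the scan is discrete, with step $1/M\leq \ell^*/2$, so the true location $\tau$ is generically off-grid. I would pick $k^*\in\{0,\ldots,\lceil(1-\ell^*)M\rceil-1\}$ with $k^*/M\leq \tau<(k^*+1)/M$, so that the overlap between $(\tau,\tau+\ell^*]$ and $(k^*/M,k^*/M+\ell^*]$ has Lebesgue measure at least $\ell^*/2$. A direct computation then gives $\|\Pi_{V_{k^*/M,k^*/M+\ell^*}}(\lambda-\lambda_0)\|_2^2\geq (\delta^*)^2\ell^*/4$, i.e.\ a quarter of the full energy $d_2^2(\lambda,\calS_0[\lambda_0])$ is captured. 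Since $T_{k^*/M,k^*/M+\ell^*}(N)$ is unbiased for this projected energy, a Chebyshev inequality combined with an explicit variance computation (whose dominant terms under $P_\lambda$ are of order $1/L$ and $1/L^2$) yields $T_{k^*/M,k^*/M+\ell^*}(N)\geq (\delta^*)^2\ell^*/8$ with probability at least $1-\beta$, provided $L\geq C_3(\beta,\lambda_0,\delta^*,\ell^*)$. It then remains to upper-bound $t_{\lambda_0,k^*/M,k^*/M+\ell^*}(1-u_\alpha)$ by $C_4(\alpha,\lambda_0,\ell^*)/\sqrt{L}$ using the exponential inequality of Le Guével for the associated square martingale, translating the Bonferroni loss $u_\alpha$ into a benign $\sqrt{\log(1/u_\alpha)}$ factor absorbed in the constant since $M$ depends only on $\ell^*$.

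The main obstacle is precisely the last step for each test, namely the sharp upper bounds on the quantiles $p_{\lambda_0,\ell^*}^\pm(1-\alpha)$ and $t_{\lambda_0,k/M,k/M+\ell^*}(1-u_\alpha)$: the former requires concentration for the supremum of a counting process as $\tau'$ varies continuously, the latter for a quadratic functional of $N$, both of which I handle via the inequalities of Le~Guével~\cite{LeGuevel2021} stated and applied in Section~\ref{Sec:FTresults}. Once these technical bounds are granted, the deterministic comparison between the test statistic and the critical value closes the argument and isolates the explicit constant $C(\alpha,\beta,\lambda_0,\delta^*,\ell^*)$.
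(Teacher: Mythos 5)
Your proposal is correct and follows essentially the same strategy as the paper's proof: level control (trivial for the linear tests, Bonferroni for the quadratic one), a scan-window lower bound on the test statistic under $P_\lambda$ via Chebyshev, the choice $k^*=\lfloor\tau M\rfloor$ so that the projected energy captures at least $d_2^2/4$, and the Le Guével quantile upper bounds from Section~\ref{Sec:FTresults}. The only cosmetic difference is in organizing the final comparison for the quadratic test (thresholding $T$ at half its mean before comparing to the quantile, instead of directly comparing $E_\lambda[T]-\sqrt{\mathrm{Var}_\lambda[T]/\beta}$ to the quantile), and a slightly looser $O(L^{-1/2})$ bound on the quantile where the paper gets $O(L^{-1})$; since $\delta^*$ and $\ell^*$ are fixed here both organizations reduce to a condition of the form ``$L$ large enough'' and yield the stated $d_2\gtrsim L^{-1/2}$ threshold after absorbing constants.
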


\emph{Comments.} Remarking that for $\lambda$ in $\calS_{\delta^*, \bbul\bbul, \ell^*}[\lambda_0]$, $d_2\pa{\lambda,\calS_{0}[\lambda_0]}=|\delta^*|\sqrt{\ell^*}$, Proposition \ref{UBalt3} leads to exhibit a sufficient minimal value $L_0(\alpha,\beta,\lambda_{0},\delta^*,\ell^{*})$ for $L$ so that the second kind error rates of the above tests are controlled by $\beta$. Anecdotally, it furthermore shows that if $L\geq L_0(\alpha,\beta,\lambda_{0},\delta^*,\ell^{*})$,  the $\beta$-uniform separation rate of the above tests over $\calS_{\delta^*,\bbul \bbul, \ell^*}[\lambda_0]$ is  equal to $0$, as well as the $(\alpha,\beta)$-minimax separation rate $\mSRab\pa{\calS_{\delta^*, \bbul \bbul, \ell^*}[\lambda_0]}$.

\smallskip

Turning now to the change height adaptation issue, the lower bound for $\mSRab\pa{\calS_{\bbul,\tau^*, \ell^*}[\lambda_0]} $ given in Proposition \ref{LBalt2} directly leads, using the monotonicity property of the minimax separation rate recalled in Lemma \ref{mSR}, to the following lower bound for $\mSRab\pa{\calS_{\bbul,\bbul\bbul, \ell^*}[\lambda_0]}$.
\begin{corollary}[Minimax lower bound for $\bold{[Alt.4]}$] \label{LBalt4} Let $\alpha, \beta$ in $(0,1)$  such that $\alpha+\beta<1$, $\lambda_0>0$ and $\ell^* $ in $(0,1)$. For $L \geq 1$, 
\[\mSRab\pa{\calS_{\bbul,\bbul\bbul, \ell^*}[\lambda_0]} \geq  \sqrt{\lambda_{0} \log C_{\alpha, \beta}/L},\textrm{ with }C_{\alpha, \beta}=1+4(1- \alpha -\beta)^2\enspace.\]
\end{corollary}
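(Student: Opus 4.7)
The plan is to derive the corollary as a direct consequence of Proposition \ref{LBalt2} through a simple set-inclusion monotonicity argument. The only ingredient one needs to add is the observation that the alternative set appearing in the corollary is strictly larger than the one appearing in Proposition \ref{LBalt2}, and then invoke the monotonicity of $\mSRab$ recalled in Lemma \ref{mSR}.

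More precisely, first I would fix any $\tau^*$ in $(0, 1 - \ell^*)$; it does not matter which one is chosen, since the lower bound of Proposition \ref{LBalt2} does not depend on $\tau^*$. Then, unfolding the definitions \eqref{latauleknown} and \eqref{laleknown}, every element of $\calS_{\bbul, \tau^*, \ell^*}[\lambda_0]$ has the form $\lambda_0 + \delta \mathds{1}_{(\tau^*,\tau^*+\ell^*]}$ for some admissible height $\delta$, which is precisely an element of $\calS_{\bbul, \bbul\bbul, \ell^*}[\lambda_0]$ corresponding to the particular choice of location $\tau = \tau^*$. This gives the inclusion
\[
\calS_{\bbul, \tau^*, \ell^*}[\lambda_0] \ \subset \ \calS_{\bbul, \bbul\bbul, \ell^*}[\lambda_0]\enspace.
\]

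Next, I would apply the monotonicity property of the minimax separation rate stated in Lemma \ref{mSR}: whenever two alternative sets satisfy $\calS_1 \subset \calS_1'$ (with a common null $\calS_0[\lambda_0]$), the worst case second kind error over $\calS_1'$ dominates the one over $\calS_1$ for any test, so that $\mSRab(\calS_1) \leq \mSRab(\calS_1')$. Combining this with Proposition \ref{LBalt2} yields
\[
\mSRab\pa{\calS_{\bbul,\bbul\bbul, \ell^*}[\lambda_0]} \ \geq \ \mSRab\pa{\calS_{\bbul, \tau^*, \ell^*}[\lambda_0]} \ \geq \ \sqrt{\lambda_{0} \log C_{\alpha, \beta}/L}\enspace,
\]
with $C_{\alpha, \beta} = 1 + 4(1 - \alpha - \beta)^2$, as claimed.

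There is essentially no obstacle here: the whole content of the corollary is already contained in Proposition \ref{LBalt2} together with the fact that enlarging an alternative set only makes detection harder. The only care needed is to check that the distance $d_2(\lambda, \calS_0[\lambda_0])$ involved in the definition \eqref{defSR} of $\SRb$ is the same when computed for $\lambda$ in either alternative set (since $\calS_0[\lambda_0]$ does not change), which is immediate. One could in fact state a general monotonicity corollary once and reuse it throughout the paper for similar adaptation-to-a-parameter lower bounds.
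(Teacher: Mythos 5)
Your argument is correct and coincides with the paper's: Corollary~\ref{LBalt4} is presented there as an immediate consequence of Proposition~\ref{LBalt2} combined with the monotonicity property of the minimax separation rate in Lemma~\ref{mSR}, exactly as you write. The inclusion $\calS_{\bbul,\tau^*,\ell^*}[\lambda_0]\subset \calS_{\bbul,\bbul\bbul,\ell^*}[\lambda_0]$ for any fixed $\tau^*\in(0,1-\ell^*)$ is the whole content, and your side remark about the distance $d_2(\lambda,\calS_0[\lambda_0])$ being unaffected is the right sanity check.
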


\begin{proposition}[Minimax upper bounds for $\bold{[Alt.4]}$] \label{UBalt4} 
Let $L\geq 1$, $\alpha$ and $\beta$ in $(0,1)$, $\lambda_{0}>0$, and $\ell^*$ in $(0,1)$. Let $\phi_{4,\alpha}^{(1/2)}$ be one of the tests $\phi_{4,\alpha}^{(1)}$ and $\phi_{3/4,\alpha}^{(2)}$ of $\hzero$ versus $\hone\ "\lambda\in\calS_{\bbul,\bbul\bbul,\ell^*}[\lambda_0]"$ defined respectively by \eqref{testN1alt4} and \eqref{testN2alt3-4}. Then $\phi_{4,\alpha}^{(1/2)}$ is of level $\alpha$, that is $P_{\lambda_0}(\phi_{4,\alpha}^{(1/2)}(N)=1)\leq \alpha$.
Moreover, there exists $C(\alpha, \beta, \lambda_{0},\ell^*)>0$ such that
$$ \SRb\big(\phi_{4,\alpha}^{(1/2)},\calS_{\bbul,\bbul\bbul, \ell^*}[\lambda_0]\big) \leq {C(\alpha, \beta, \lambda_{0},\ell^*)}/{\sqrt{L}}\enspace,$$
which entails in particular $\mSRab\big(\calS_{\bbul, \bbul\bbul, \ell^*}[\lambda_0]\big) \leq C(\alpha, \beta, \lambda_{0}, \ell^*)/\sqrt{L}$.
\end{proposition}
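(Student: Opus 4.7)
My plan is to treat level control and power bounds separately, and, within the power analysis, to handle the quadratic and the linear adaptive tests by different arguments.

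\textbf{Level.} First I would verify that both tests are of level $\alpha$ by Bonferroni corrections. For $\phi_{4,\alpha}^{(1)} = \phi_{3,\alpha/2}^{(1)-} \vee \phi_{3,\alpha/2}^{(1)+}$, each one-sided scan test has level $\alpha/2$ directly by definition of the quantiles $p_{\lambda_0,\ell^*}^\pm$, so a union bound gives $P_{\lambda_0}(\phi_{4,\alpha}^{(1)}(N)=1) \leq \alpha$. For the aggregated quadratic test $\phi_{3/4,\alpha}^{(2)}$, each of the $\lceil (1-\ell^*)M \rceil$ individual rejection events has probability at most $u_\alpha = \alpha/\lceil (1-\ell^*)M\rceil$ under $P_{\lambda_0}$ by the very definition of $t_{\lambda_0, k/M, k/M+\ell^*}(1-u_\alpha)$, and summing gives level $\alpha$ exactly.

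\textbf{Quadratic adaptive test.} The test $\phi_{3/4,\alpha}^{(2)}$ is literally the one used in Proposition \ref{UBalt3}, and its construction never involves $\delta$, so the corresponding proof transfers with essentially no modification. For any $\lambda = \lambda_0 + \delta \mathds{1}_{(\tau^*, \tau^*+\ell^*]} \in \calS_{\bbul, \bbul\bbul, \ell^*}[\lambda_0]$ with $\tau^* \in (0, 1-\ell^*)$, I would pick $k^* \in \{0, \ldots, \lceil(1-\ell^*)M\rceil-1\}$ minimising $|k^*/M - \tau^*|$, so that $|k^*/M - \tau^*| \leq 1/M \leq \ell^*/2$. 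A direct projection computation then yields
\[ E_\lambda\bigl[T_{k^*/M,\, k^*/M + \ell^*}(N)\bigr] = \bigl\| \Pi_{V_{k^*/M,\, k^*/M+\ell^*}}(\lambda - \lambda_0)\bigr\|_2^2 \geq \frac{\delta^2 \ell^*}{4} = \frac{d_2^2(\lambda,\calS_0[\lambda_0])}{4}. \]
Combining this with a Houdré--Reynaud-Bouret type concentration inequality for the $U$-statistic $T_{k^*/M,\, k^*/M+\ell^*}(N)$ and with an $O(\sqrt{L})$ upper bound on the quantile $t_{\lambda_0, k^*/M, k^*/M+\ell^*}(1-u_\alpha)$ (of the type proved in Section \ref{Sec:FTresults}) gives the separation condition $|\delta|\sqrt{\ell^*} \geq C(\alpha,\beta,\lambda_0,\ell^*)/\sqrt{L}$, with constants depending on $\lambda_0, \ell^*, \alpha, \beta$ but not on $\delta$.

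\textbf{Linear adaptive test.} For $\phi_{4,\alpha}^{(1)}$ I would split on the sign of $\delta$. If $\delta > 0$, the pointwise domination $\max_{\tau\in[0,1-\ell^*]} N(\tau, \tau+\ell^*] \geq N(\tau^*, \tau^*+\ell^*]$ yields
\[ P_\lambda\bigl(\phi_{4,\alpha}^{(1)}(N) = 0\bigr) \leq P_\lambda\bigl(N(\tau^*, \tau^*+\ell^*] \leq p_{\lambda_0,\ell^*}^+(1 - \alpha/2)\bigr), \]
where under $P_\lambda$ the count is Poisson with parameter $(\lambda_0 + \delta)\ell^* L$. A Chebyshev lower deviation bound on this Poisson variable, combined with an upper bound of the form $p_{\lambda_0,\ell^*}^+(1-\alpha/2) \leq \lambda_0\ell^* L + C(\alpha,\lambda_0,\ell^*)\sqrt{L}$, leads to the separation condition $|\delta|\sqrt{\ell^*} \geq C(\alpha,\beta,\lambda_0,\ell^*)/\sqrt{L}$, uniform in $\delta$ (a brief case split according to whether $\delta \leq \lambda_0$ or $\delta > \lambda_0$ tames the variance term $\sqrt{(\lambda_0+\delta)\ell^* L}$). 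The case $\delta < 0$ is symmetric using $\phi_{3,\alpha/2}^{(1)-}$ and $p_{\lambda_0,\ell^*}^-$. The main technical obstacle throughout is precisely the scan quantile bound on $p_{\lambda_0,\ell^*}^\pm(1-\alpha/2)$, which is where Le Guével's recent exponential inequality for the supremum of a counting process (see \cite{LeGuevel2021} and the technical lemmas of Section \ref{Sec:FTresults}) plays the essential role; all the remaining steps are routine. The bound on $\mSRab\pa{\calS_{\bbul, \bbul\bbul, \ell^*}[\lambda_0]}$ then follows immediately from the definition of the minimax separation rate as an infimum.
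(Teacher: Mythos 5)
Your overall strategy mirrors the paper's: Bonferroni for the level, pointwise domination and quantile bounds for the linear test, and reuse of the Proposition~\ref{UBalt3} machinery for the quadratic test. There are, however, two substantive errors in the quadratic part.

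First, the quantile bound you invoke has the wrong order of magnitude. You claim an ``$O(\sqrt{L})$ upper bound on the quantile $t_{\lambda_0, k^*/M,\, k^*/M+\ell^*}(1-u_\alpha)$,'' but Lemma~\ref{QuantilesT} gives $t_{\lambda_0,\tau_1, \tau_2}(1-u) \leq 2\lambda_{0}^2 (\tau_2 - \tau_1) \bigl(g^{-1} \bigl(\log(3/u)/(\lambda_{0}L (\tau_2 - \tau_1)) \bigr)\bigr)^2$, which together with $g^{-1}(x) \leq 2x/3 + \sqrt{2x}$ yields $t_{\lambda_0,\tau_1,\tau_2}(1-u_\alpha) = O(1/L)$, not $O(\sqrt{L})$. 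This is not cosmetic: the null quantile of the \emph{quadratic} statistic must shrink like $1/L$, matching $E_\lambda[T] \gtrsim d_2^2/4 \sim 1/L$ at the detection threshold $d_2 \sim 1/\sqrt{L}$; with an $O(\sqrt{L})$ quantile the inequality $E_\lambda[T] - t(1-u_\alpha) \geq \sqrt{\mathrm{Var}_\lambda(T)/\beta}$ fails for all $d_2 \to 0$ and the separation rate would be forced up to order $L^{1/4}$. You are almost certainly conflating this with the deviation of the \emph{linear} scan quantile $p_{\lambda_0,\ell^*}^+(1-\alpha/2) - \lambda_0\ell^*L$, which is indeed $O(\sqrt{L})$ (Proposition~\ref{bquantile_maxminNbis}).

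Second, in the known-baseline setting $T_{\tau_1,\tau_2}(N)$ is not a $U$-statistic, and the Houdr\'e--Reynaud-Bouret inequality is the wrong tool. The $U$-statistic structure and Lemma~\ref{quantile_T'} appear only in Section~\ref{Sec:unknownbaseline}, after conditioning on $N_1$, for the conditional statistic $T'$. Here the correct control of $t_{\lambda_0,\tau_1,\tau_2}(1-u_\alpha)$ is Lemma~\ref{QuantilesT}, which rests on Le Gu\'evel's exponential inequality (Theorem~2 in \cite{LeGuevel2021}) for the square of the compensated counting process. Moreover, no fancy concentration is needed under $P_\lambda$: the alternative-side deviation of $T$ is handled purely by Bienaym\'e--Chebyshev, using the explicit variance from Lemma~\ref{MomentsT}. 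Finally, a small remark on the linear test: your case split $\delta \leq \lambda_0$ versus $\delta > \lambda_0$ to handle $\sqrt{(\lambda_0+\delta)\ell^*L}$ does work, but the paper avoids it altogether by writing $\sqrt{\lambda_0+\delta} \leq \sqrt{\lambda_0}+\sqrt{|\delta|}$ and then absorbing the $\sqrt{|\delta|}$ contribution into half of $|\delta|\sqrt{\ell^*}$ via $\sqrt{ab}\leq (a+b)/2$; this self-bounding step is the one place where the unboundedness of $\delta$ is tamed and is worth doing carefully either way.
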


\emph{Comments.} The proof of Proposition  \ref{UBalt4} mainly relies as the proof of Proposition \ref{UBalt3} on the quantile control of Lemma~\ref{QuantilesT} deduced from Theorem 6 in \cite{LeGuevel2021}. This result with Corollary~\ref{LBalt4} means that the tests $\phi_{4,\alpha}^{(1)}$ and $\phi_{3/4,\alpha}^{(2)}$ of $\hzero$ versus $\hone\ "\lambda\in\calS_{\bbul,\bbul\bbul,\ell^*}[\lambda_0]"$ are minimax. Moreover and importantly, regarding the results obtained for $\bold{[Alt.2]}$, Proposition \ref{UBalt4} with Corollary \ref{LBalt4} also means that minimax adaptation with respect to the change location can be achieved with a minimax separation rate of the parametric order, hence without any additional price to pay (possibly except multiplicative constants), as soon as the only change length is known. This may contrast with the common idea (maybe spread by results in the jump detection problem where adaptation to the change location is equivalent to adaptation to the change length) that adaptation to the change location is the main cause of an unavoidable  logarithmic cost. Here, by considering all the cases separately and step by step, we aim at precisely exhibiting the various regimes of minimax separation rates: this allows us in particular to specify - where relevant - the price to pay for adaptation to the different alternative parameters.

\subsection{Minimax detection of a transitory change with known location} \label{Sec:knownlocation}

In this subsection, we consider the problem of testing the null hypothesis $\hzero \ "\lambda\in \calS_0[\lambda_0]=\{\lambda_0\}"$ versus alternative hypotheses where the location of the change from the baseline intensity is known, with adaptation with respect to the change length, and with or without adaptation with respect to the height of the change. Contrary to the study of Section \ref{Sec:knownlength}, while adaptation to the change length only can be done without any incidence on the minimax separation rate order, adaptation to the change height in addition to the change length has a non-negligible impact. We therefore examine these two questions in two separate subsections.

\subsubsection{Known change height}

Let us first investigate the problem of testing $\hzero$ versus
$\hone\ "\lambda\in\calS_{\delta^*,\tau^*,\bbul\bbul\bbul}[\lambda_0]"$,
where the set $\calS_{\delta^*,\tau^*,\bbul\bbul\bbul}[\lambda_0]$ is defined for $\delta^*$ in $(- \lambda_{0}, + \infty)\setminus\{0\}$ and $\tau^*$ in $(0,1)$ by
\begin{multline}\label{alt5}
\bold{[Alt.5]}\ \calS_{\delta^*,\tau^*,\bbul\bbul\bbul}[\lambda_0]= \big\{\lambda:[0,1]\to (0,+\infty),\ \exists \ell \in (0,1-\tau^*),\\
\forall t\in [0,1]\quad  \lambda(t) = \lambda_{0} + \delta^* \mathds{1}_{(\tau^*,\tau^*+\ell]}(t) \big\}\enspace.
\end{multline}

 As explained above, we will see that the minimax separation rate over this alternative set remains unchanged, of the parametric order $L^{-1/2}$. A lower bound is easily obtained from the key arguments given in Section \ref{Keys}. Therefore the major point here is the construction of a minimax adaptive test, which has to take the knowledge of the change height $\delta^*$ into account. In order to determine the most relevant way to integrate this knowledge, we have used an exact expression for the probability distribution function as well as an exponential inequality for the supremum of Poisson processes with shift, both due to Pyke \cite[Equation~(6) and Theorem 3]{pyke1959}. This has led to a new procedure which is rather atypical regarding the other tests of this paper, and which can be related to Brunel's \cite{Brunel2014} scan test in the Gaussian set-up.

\begin{proposition}[Minimax lower bound for $\bold{[Alt.5]}$] \label{LBalt5}
Let $\alpha,\beta$ in $(0,1)$ such that $\alpha+\beta<1$, $\lambda_0>0$, $\delta^*$ in $(- \lambda_{0}, + \infty)\setminus\{0\}$ and $\tau^*$ in $(0,1)$. For all $L \geq \lambda_{0}  \log C_{\alpha, \beta} /(\delta^{*}{}^2 (1-\tau^{*}{}))$, 
\[\mSRab\pa{\calS_{\delta^*,\tau^*,\bbul\bbul\bbul}[\lambda_0]} \geq  \sqrt{\lambda_{0} \log C_{\alpha, \beta}/L},\textrm{ with }C_{\alpha, \beta}=1+4(1- \alpha -\beta)^2\enspace.\]
\end{proposition}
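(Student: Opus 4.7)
The plan is to reduce to a two-point testing problem and apply the classical Bayesian/chi-square lower bound recalled in Section~\ref{Keys}. Since the only unknown parameter here is $\ell$, the most parsimonious reduction is to consider the simple alternative $\lambda_1=\lambda_0+\delta^*\mathds{1}_{(\tau^*,\tau^*+\ell^*]}$ for a single carefully chosen length $\ell^*$.

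The first step is the choice $\ell^*=\lambda_0\log C_{\alpha,\beta}/(L\delta^{*2})$. The assumption $L\geq \lambda_0\log C_{\alpha,\beta}/(\delta^{*2}(1-\tau^*))$ exactly guarantees $\ell^*\in(0,1-\tau^*]$, so $\lambda_1\in\calS_{\delta^*,\tau^*,\bbul\bbul\bbul}[\lambda_0]$. Moreover $d_2(\lambda_1,\calS_0[\lambda_0])=|\delta^*|\sqrt{\ell^*}=\sqrt{\lambda_0\log C_{\alpha,\beta}/L}$, which is the rate we wish to establish. The remaining task is to show that no level $\alpha$ test can reject at $\lambda_1$ with probability exceeding $1-\beta$.

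The second step is to invoke the standard nonasymptotic two-point lemma from Baraud's framework: if $\chi^2(P_{\lambda_1},P_{\lambda_0})\leq 4(1-\alpha-\beta)^2$, then for every level $\alpha$ test $\phi_\alpha$ one has $P_{\lambda_1}(\phi_\alpha(N)=0)>\beta$. To verify the chi-square bound, I would use Girsanov's lemma (Lemma~\ref{lemmegirsanov}) via the explicit form of the likelihood ratio \eqref{LR}. Under $P_{\lambda_0}$, the count $N(\tau^*,\tau^*+\ell^*]$ has Poisson distribution with parameter $\lambda_0\ell^*L$, so the moment generating function computation yields
\[
E_{\lambda_0}\!\left[\Big(\tfrac{dP_{\lambda_1}}{dP_{\lambda_0}}(N)\Big)^{\!2}\right]=\exp\!\Big(\lambda_0\ell^*L\big((1+\delta^*/\lambda_0)^2-1\big)-2\delta^*\ell^*L\Big)=\exp\!\big(\delta^{*2}\ell^*L/\lambda_0\big).
\]
Plugging in the choice of $\ell^*$ gives $\chi^2(P_{\lambda_1},P_{\lambda_0})=C_{\alpha,\beta}-1=4(1-\alpha-\beta)^2$, which lies exactly at the threshold.

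The third step is to conclude. By the two-point lemma applied to $\lambda_1$, any test of level $\alpha$ fails to have second-kind error at most $\beta$ at $\lambda_1$, so by definition of $\SRb$ and $\mSRab$ we obtain $\mSRab(\calS_{\delta^*,\tau^*,\bbul\bbul\bbul}[\lambda_0])\geq d_2(\lambda_1,\calS_0[\lambda_0])=\sqrt{\lambda_0\log C_{\alpha,\beta}/L}$. The proof is essentially mechanical once the correct length $\ell^*$ is identified; the only subtle point, and the one I would take care over, is checking the admissibility constraint $\ell^*\leq 1-\tau^*$, which is exactly what forces the lower-bound hypothesis on $L$ and explains why this rate is not claimed for small values of $L$. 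No obstacle arises from the Bayesian averaging stage since the reduction is to a Dirac prior on a single alternative.
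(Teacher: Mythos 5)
Your proof is correct and follows essentially the same route as the paper's: a single alternative intensity with length $\ell^* = r^2/\delta^{*2} = \lambda_0\log C_{\alpha,\beta}/(L\delta^{*2})$, the Girsanov likelihood ratio together with the Poisson Laplace transform giving $E_{\lambda_0}\bigl[(dP_{\lambda_1}/dP_{\lambda_0})^2\bigr]=\exp(\delta^{*2}\ell^*L/\lambda_0)=C_{\alpha,\beta}$, and then Lemma~\ref{lemmebayesien} applied with a Dirac prior. The constraint on $L$ is used exactly as you say, to guarantee $\ell^*\leq 1-\tau^*$.
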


Let us now introduce the aggregated test
\begin{equation} \label{testalt5}
\phi_{5,\alpha}(N) = \1{\sup_{\ell \in (0,1- \tau^{*})}S_{\delta^*,\tau^*,\tau^*+\ell}(N) >s_{\lambda_0,\delta^*,\tau^*,L}^+(1-\alpha)}\enspace,
\end{equation}
where $S_{\delta^*,\tau_1,\tau_2}(N)$ is the statistic defined for $0\leq \tau_1<\tau_2\leq 1$ by
\begin{equation}\label{stat_alt5}
S_{\delta^*,\tau_1,\tau_2}(N)=  \mathrm{sgn}(\delta^{*}{}) \Big(N(\tau_1,\tau_2] - \lambda_{0} L(\tau_2-\tau_1)\Big) - \vert \delta^{*}{} \vert L(\tau_2-\tau_1)/2 \enspace,
\end{equation}
and $s_{\lambda_0,\delta^*,\tau^*,L}^+(u)$ is the $u$-quantile of $\sup_{\ell \in (0,1- \tau^{*})}S_{\delta^*,\tau^*,\tau^*+\ell}(N)$ under $(H_0)$.

Lemma \ref{QuantilessupShifted} provides a control of the quantile $s_{\lambda_0,\delta^*,\tau^*,L}^+(1-\alpha)$, which is deduced from Pyke's results  \cite{pyke1959}, and which is the main argument to prove that $\phi_{5,\alpha}$ has an uniform separation rate of parametric order $L^{-1/2}$ and thus show that the lower bound of Proposition \ref{LBalt5} is sharp.

\begin{proposition}[Minimax upper bound for $\bold{[Alt.5]}$] \label{UBalt5}
Let $L\geq 1$, $\alpha$ and  $\beta$ in $(0,1)$, $\lambda_{0}>0$, $\delta^*$ in $(- \lambda_{0}, + \infty)\setminus\{0\}$ and $\tau^*$ in $(0,1)$. Let $\phi_{5,\alpha}$ be the test of $\hzero$ versus $\hone\ "\lambda\in\calS_{\delta^*,\tau^*,\bbul\bbul\bbul}[\lambda_0]"$ defined by \eqref{testalt5}. Then $\phi_{5,\alpha}$ is of level $\alpha$, that is $P_{\lambda_0}\pa{\phi_{5,\alpha}(N)=1}\leq \alpha$.
Moreover, there exists a constant $C(\alpha, \beta, \lambda_{0},\delta^*)>0$ such that
$$ \SRb\pa{\phi_{5,\alpha},\calS_{\delta^*,\tau^*,\bbul\bbul\bbul}[\lambda_0]} \leq {C(\alpha, \beta, \lambda_{0},\delta^*)}/{\sqrt{L}}\enspace,$$
which entails in particular $\mSRab\pa{\calS_{\delta^*,\tau^*,\bbul\bbul\bbul}[\lambda_0]} \leq C(\alpha, \beta, \lambda_{0}, \delta^*)/\sqrt{L}$.
\end{proposition}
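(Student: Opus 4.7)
The plan is to proceed in two steps: first establishing that the test is of level $\alpha$, and then controlling its second kind error rate. The level property is immediate from the definition of $\phi_{5,\alpha}$ and of the quantile $s_{\lambda_0,\delta^*,\tau^*,L}^+(1-\alpha)$: under $P_{\lambda_0}$, $\P(\sup_{\ell\in(0,1-\tau^*)}S_{\delta^*,\tau^*,\tau^*+\ell}(N) > s_{\lambda_0,\delta^*,\tau^*,L}^+(1-\alpha))\leq \alpha$ by construction.

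For the power analysis, I will use the fact that rejection is based on a supremum, so it suffices to bound below the test statistic at a single well chosen length. Given $\lambda\in \calS_{\delta^*,\tau^*,\bbul\bbul\bbul}[\lambda_0]$ with associated (unknown) length $\ell^*\in (0,1-\tau^*)$, I will take $\ell=\ell^*$, so that
\[
P_\lambda(\phi_{5,\alpha}(N)=0)\leq P_\lambda\!\left(S_{\delta^*,\tau^*,\tau^*+\ell^*}(N)\leq s_{\lambda_0,\delta^*,\tau^*,L}^+(1-\alpha)\right).
\]
A direct computation from the Poisson distribution of $N(\tau^*,\tau^*+\ell^*]$ with parameter $(\lambda_0+\delta^*)L\ell^*$ under $P_\lambda$ gives the key positive drift
\[
E_\lambda[S_{\delta^*,\tau^*,\tau^*+\ell^*}(N)]=\mathrm{sgn}(\delta^*)\delta^* L\ell^*-|\delta^*|L\ell^*/2=|\delta^*|L\ell^*/2,
\]
and the variance $\mathrm{Var}_\lambda[S_{\delta^*,\tau^*,\tau^*+\ell^*}(N)]=(\lambda_0+\delta^*)L\ell^*$. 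Applying Bienaymé--Chebyshev's inequality then yields, as soon as $|\delta^*|L\ell^*/2>s_{\lambda_0,\delta^*,\tau^*,L}^+(1-\alpha)$,
\[
P_\lambda(\phi_{5,\alpha}(N)=0)\leq\frac{(\lambda_0+\delta^*)L\ell^*}{\bigl(|\delta^*|L\ell^*/2-s_{\lambda_0,\delta^*,\tau^*,L}^+(1-\alpha)\bigr)^2}.
\]

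The hardest part, which is the very reason behind the choice of this rather atypical test statistic borrowing the $-|\delta^*|L\ell/2$ shift from Girsanov's log-likelihood, will be to bound the quantile $s_{\lambda_0,\delta^*,\tau^*,L}^+(1-\alpha)$ by a constant $C_1(\alpha,\lambda_0,\delta^*)$ \emph{independent of $L$}. This is provided by Lemma~\ref{QuantilessupShifted}, which rests on Pyke's \cite{pyke1959} exact distributional results and exponential bound for the supremum of a shifted Poisson process: heuristically, $N(\tau^*,\tau^*+\ell]-\lambda_0 L\ell$ fluctuates like $\sqrt{L\ell}$ while the drift $|\delta^*|L\ell/2$ grows linearly, so the supremum has an $L$-free limiting law obtained by the time-change $s=L\ell$ reducing to a unit-rate Poisson process against a linear drift of slope $|\delta^*|/2$.

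Combining the quantile control with the Bienaymé--Chebyshev bound, I will need $|\delta^*|L\ell^*/2\geq C_1(\alpha,\lambda_0,\delta^*)+\sqrt{(\lambda_0+\delta^*)L\ell^*/\beta}$, which is a quadratic inequality in $\sqrt{L\ell^*}$. Solving it gives a threshold $\sqrt{L\ell^*}\geq K(\alpha,\beta,\lambda_0,\delta^*)$, i.e.
\[
d_2(\lambda,\calS_0[\lambda_0])=|\delta^*|\sqrt{\ell^*}\geq |\delta^*|K(\alpha,\beta,\lambda_0,\delta^*)/\sqrt{L}:=C(\alpha,\beta,\lambda_0,\delta^*)/\sqrt{L},
\]
from which the claimed bound $\SRb(\phi_{5,\alpha},\calS_{\delta^*,\tau^*,\bbul\bbul\bbul}[\lambda_0])\leq C(\alpha,\beta,\lambda_0,\delta^*)/\sqrt{L}$ follows, and the minimax separation rate upper bound is an immediate consequence. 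Matching with the lower bound from Proposition~\ref{LBalt5} will then confirm that the parametric rate is indeed optimal.
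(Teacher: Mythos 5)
Your proposal is correct and follows essentially the same route as the paper: lower-bound the supremum by its value at the true length $\ell^*$, note the $|\delta^*|L\ell^*/2$ drift, invoke Lemma~\ref{QuantilessupShifted} (Pyke) to get an $L$-free quantile bound, and finish with Bienaym\'e--Chebyshev. The only cosmetic difference is that you treat the two signs of $\delta^*$ uniformly through the centered statistic $S_{\delta^*,\tau^*,\tau^*+\ell^*}(N)$, whereas the paper spells out the $\delta^*>0$ and $\delta^*<0$ cases separately (the quantile bound in Lemma~\ref{QuantilessupShifted} takes a different form in each case), but both routes lead to the same $L$-independent threshold on $\sqrt{L\ell^*}$.
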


\subsubsection{Unknown change height} 

Now addressing the question of adaptation to the change height together with the change length, we consider for $\tau^*$ in $(0,1)$ the alternative set
\begin{multline}\label{alt6-pre}
\calS_{\bbul,\tau^*,\bbul\bbul\bbul}[\lambda_0]=
 \big\{ \lambda:[0,1]\to (0,+\infty),\ \exists \delta \in (- \lambda_{0}, + \infty)\setminus\lbrace 0\rbrace ,\exists \ell \in (0,1-\tau^*),\\
\forall t\in [0,1]\quad  \lambda(t) = \lambda_{0} + \delta \mathds{1}_{(\tau^*,\tau^*+\ell]}(t) \big\}\enspace.
\end{multline}
A first preliminary result in fact shows that this set of alternatives is too large to be relevantly studied from the minimax point of view: the minimax separation rate is infinite over it.
\begin{lemma} \label{LBalt6-pre}
Let $\alpha$ and  $\beta$ in $(0,1)$ such that $\alpha + \beta <1$, $\lambda_0>0$, and $\tau^*$ in $(0,1)$. For the problem of testing 
  $\hzero \ "\lambda\in \calS_0[\lambda_0]=\{\lambda_0\}"$ versus $\hone\ "\lambda\in\calS_{\bbul,\tau^*,\bbul\bbul\bbul}[\lambda_0]"$, with $\calS_{\bbul,\tau^*,\bbul\bbul\bbul}[\lambda_0]$ defined by \eqref{alt6-pre}, one has $\mSRab\pa{\calS_{\bbul,\tau^*,\bbul\bbul\bbul}[\lambda_0]}  = + \infty$.
\end{lemma}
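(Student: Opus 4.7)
The idea is to exhibit, for every $r > 0$, an intensity $\lambda_r \in \calS_{\bbul,\tau^*,\bbul\bbul\bbul}[\lambda_0]$ with $d_2(\lambda_r, \calS_0[\lambda_0]) \geq r$ for which $P_{\lambda_r}$ is arbitrarily close to $P_{\lambda_0}$ in total variation. The standard Neyman--Pearson-type inequality
\[
P_{\lambda_r}(\phi_\alpha(N)=0) \geq P_{\lambda_0}(\phi_\alpha(N)=0) - \|P_{\lambda_r} - P_{\lambda_0}\|_{TV} \geq 1 - \alpha - \|P_{\lambda_r} - P_{\lambda_0}\|_{TV},
\]
valid for any level $\alpha$ test $\phi_\alpha$, together with the assumption $\alpha+\beta<1$, will then force the second kind error at $\lambda_r$ to exceed $\beta$ as soon as this total variation is smaller than $1-\alpha-\beta$. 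Since $r$ can be taken arbitrarily large, this yields $\SRb(\phi_\alpha,\calS_{\bbul,\tau^*,\bbul\bbul\bbul}[\lambda_0]) = +\infty$ for every level $\alpha$ test $\phi_\alpha$, whence $\mSRab\pa{\calS_{\bbul,\tau^*,\bbul\bbul\bbul}[\lambda_0]} = +\infty$.

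The construction that makes this work is to let the change height blow up as its length shrinks, keeping the product $|\delta|\sqrt\ell$ fixed. For $\ell \in (0,1-\tau^*)$, set $\delta_\ell = r/\sqrt\ell$ and $\lambda_\ell = \lambda_0 + \delta_\ell\, \mathds{1}_{(\tau^*,\tau^*+\ell]}$; then $\lambda_\ell \in \calS_{\bbul,\tau^*,\bbul\bbul\bbul}[\lambda_0]$ (as $\delta_\ell>0$) and $d_2(\lambda_\ell,\calS_0[\lambda_0]) = |\delta_\ell|\sqrt\ell = r$. Since $\lambda_0$ and $\lambda_\ell$ coincide on $[0,1]\setminus(\tau^*,\tau^*+\ell]$ and are both constant on $(\tau^*,\tau^*+\ell]$, the independent increments property of the Poisson process together with the fact that, conditionally on the count $N(\tau^*,\tau^*+\ell]$, the points of $N$ in $(\tau^*,\tau^*+\ell]$ are uniformly distributed under both measures, imply that $\|P_{\lambda_\ell} - P_{\lambda_0}\|_{TV}$ reduces to the total variation distance between the Poisson laws with parameters $\lambda_0 \ell L$ and $(\lambda_0+\delta_\ell)\ell L$. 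A standard thinning coupling ($Y \sim \mathrm{Po}((\lambda_0+\delta_\ell)\ell L)$ written as $X+Z$ with $X \sim \mathrm{Po}(\lambda_0 \ell L)$ and $Z \sim \mathrm{Po}(\delta_\ell \ell L)$ independent) then gives
\[
\|P_{\lambda_\ell} - P_{\lambda_0}\|_{TV} \leq 1 - e^{-\delta_\ell \ell L} \leq \delta_\ell \ell L = r\sqrt\ell\, L,
\]
which vanishes as $\ell \to 0^+$.

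Choosing $\ell$ small enough that $r\sqrt\ell\, L < 1-\alpha-\beta$ then completes the proof, since the displayed Neyman--Pearson inequality yields $P_{\lambda_\ell}(\phi_\alpha(N)=0) > \beta$. The only real content of the argument is the identification of the correct family of hard alternatives: adapting to $\delta$ and to $\ell$ simultaneously permits the $\bbL_2$-separation $|\delta|\sqrt\ell$ to remain fixed at $r$ while the expected excess number of points on the bump, $\delta\ell L = r\sqrt\ell\, L$, tends to zero, so that $P_{\lambda_\ell}$ and $P_{\lambda_0}$ become statistically indistinguishable in the limit in spite of the mean functions being $\bbL_2$-far apart. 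This degeneracy disappears as soon as either $\delta$ or $\ell$ is constrained, as Propositions \ref{UBalt2} and \ref{UBalt5} illustrate, and this is precisely why some further restriction on the alternative set is needed to obtain a meaningful minimax separation rate.
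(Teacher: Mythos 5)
Your proof is correct. The family of hard alternatives you construct, $\lambda_\ell = \lambda_0 + (r/\sqrt{\ell})\,\mathds{1}_{(\tau^*,\tau^*+\ell]}$ with $d_2(\lambda_\ell,\calS_0[\lambda_0])=r$ fixed and $\ell\to 0$, is exactly the one used in the paper, and you both start from the same testing-affinity inequality $P_{\lambda}(\phi_\alpha(N)=0)\ge 1-\alpha-V(P_\lambda,P_{\lambda_0})$. Where you diverge is in how the vanishing of the total-variation distance is established: the paper applies Pinsker's inequality $V(P_{\lambda_\ell},P_{\lambda_0})\le\sqrt{K(P_{\lambda_\ell},P_{\lambda_0})/2}$ and computes the Kullback divergence explicitly via Girsanov's lemma, then lets $\ell\to 0$; you instead bound $V$ directly by observing that the two process laws agree off $(\tau^*,\tau^*+\ell]$ and have identical conditional point locations given the count, so the total variation collapses exactly to that between the two Poisson count distributions, which a superposition coupling then bounds by $1-e^{-\delta_\ell\ell L}\le r\sqrt{\ell}\,L$. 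Your route is more elementary (no information-theoretic inequality needed) and yields a transparent explicit bound, while the paper's route has the advantage of slotting into the Girsanov machinery they have already set up for the other lower bounds; both establish the lemma cleanly. One cosmetic remark: your bound $1-e^{-\delta_\ell\ell L}$ follows from a superposition coupling (write $Y=X+Z$ with $X$ and $Z$ independent Poisson); "thinning'' more commonly names the reverse construction, though the resulting bound is the same.
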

This preliminary result leads us to consider, for $R >\lambda_0$, the restricted set of alternatives
\begin{multline}\label{alt6}
\bold{[Alt.6]}\  \calS_{\bbul,\tau^*,\bbul\bbul\bbul}[\lambda_0,R]=
 \big\{ \lambda:[0,1]\to (0,R],\ \exists \delta \in (- \lambda_{0},R- \lambda_{0}]\setminus\lbrace 0\rbrace ,\\
 \exists \ell \in (0,1-\tau^*),\ \forall t\in[0,1]\quad  \lambda(t) = \lambda_{0} + \delta \mathds{1}_{(\tau^*,\tau^*+\ell]}(t) \big\}\enspace.
\end{multline}
For the problem of testing $\hzero \ "\lambda\in \calS_0[\lambda_0]=\{\lambda_0\}"$ versus $\hone\ "\lambda\in\calS_{\bbul,\tau^*,\bbul\bbul\bbul}[\lambda_0,R]"$,
we then obtain the following lower bound.

\begin{proposition}[Minimax lower bound for $\bold{[Alt.6]}$] \label{LBalt6}
Let $\alpha,\beta$ in $(0,1)$ with $\alpha + \beta <1/2$, $\lambda_0\!>\!0$, $R\!>\!\lambda_0$, $\tau^*$ in $(0,1)$. There exists $L_0(\alpha,\beta,\lambda_0,R)\!>\!0$  such that for $L\!\geq\! L_0(\alpha,\beta,\lambda_0,R)$,
 \[\mSRab\pa{\calS_{\bbul,\tau^*,\bbul\bbul\bbul}[\lambda_0,R]} \geq  \sqrt{{\lambda_{0}\log\log L}/{L}}\enspace.\]
\end{proposition}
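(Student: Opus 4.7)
The plan is to invoke the Bayesian lower bound scheme of Ingster--Baraud recalled in Section~\ref{Keys}: produce a prior $\nu$ supported on $\calS_{\bbul,\tau^*,\bbul\bbul\bbul}[\lambda_0,R]$ whose atoms all lie at $\bbL_2$-distance $r$ from $\lambda_0$, and such that $E_{\lambda_0}[(d\nu/dP_{\lambda_0})^2]\leq 1+4(1-\alpha-\beta)^2$; the resulting $r$ then lower bounds $\mSRab\pa{\calS_{\bbul,\tau^*,\bbul\bbul\bbul}[\lambda_0,R]}$. The target order $\sqrt{\lambda_0\log\log L/L}$ strongly suggests a multi-scale construction, analogous in spirit to the log-log rate for Gaussian jump detection.

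First I would set up the prior. Fix $\ell_0\in(0,1-\tau^*)$ and let $\ell_j = 2^{-j}\ell_0$ for $j=0,\ldots,J-1$, with $J$ to be chosen. For each $j$ and $\epsilon\in\{-1,+1\}$, set $\delta_{j,\epsilon} = \epsilon r/\sqrt{\ell_j}$ and $\lambda_{j,\epsilon} = \lambda_0 + \delta_{j,\epsilon}\mathds{1}_{(\tau^*,\tau^*+\ell_j]}$, so that $d_2(\lambda_{j,\epsilon},\lambda_0)=r$. Admissibility in $\calS_{\bbul,\tau^*,\bbul\bbul\bbul}[\lambda_0,R]$ requires $|\delta_{j,\epsilon}|<\min(\lambda_0,R-\lambda_0)$, i.e.\ $\ell_{J-1}\geq r^2/\min(\lambda_0,R-\lambda_0)^2$. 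Take $\nu$ to be the uniform probability on the $2J$ resulting alternatives.

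Next I would compute the chi-square divergence. Combining Girsanov (Lemma~\ref{lemmegirsanov}) applied to the product of two likelihood ratios with the Poisson Laplace functional $E_{\lambda_0}[\exp(\int f\,dN)] = \exp(\lambda_0 L\int(e^f-1)dt)$ yields the key identity $E_{\lambda_0}[(dP_{\lambda_{j,\epsilon}}/dP_{\lambda_0})(dP_{\lambda_{k,\eta}}/dP_{\lambda_0})] = \exp(L\delta_{j,\epsilon}\delta_{k,\eta}\min(\ell_j,\ell_k)/\lambda_0)$, where I used that the intervals $(\tau^*,\tau^*+\ell_j]$ share the same left endpoint $\tau^*$, so $|I_j\cap I_k|=\min(\ell_j,\ell_k)$. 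Since $\min(\ell_j,\ell_k)/\sqrt{\ell_j\ell_k} = 2^{-|j-k|/2}$, averaging over the sign pair $(\epsilon,\eta)\in\{-1,+1\}^2$ converts each exponential into a hyperbolic cosine, producing $E_{\lambda_0}[(d\nu/dP_{\lambda_0})^2] = J^{-2}\sum_{j,k=0}^{J-1}\cosh(A\cdot 2^{-|j-k|/2})$, with $A:=Lr^2/\lambda_0$.

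The final step is to bound this sum. Using $\cosh(x)-1\leq (x^2/2)\cosh(x)$ for $x\geq 0$ and $\sum_{k=0}^{J-1}2^{-|j-k|}\leq 3$ gives $\chi^2\leq (3A^2/(2J))\cosh(A)$. Taking $A=c\log J$ with $c\in(0,1)$ makes this bound vanish as $J\to\infty$; in particular it is $\leq 4(1-\alpha-\beta)^2$ for $L$ large enough. Setting $r^2=c\lambda_0\log\log L/L$ and choosing $J$ maximal subject to the admissibility constraint, one gets $J\asymp \log L$, so $\log J\asymp \log\log L\asymp A/c$, closing the loop and yielding the announced rate with an explicit threshold $L_0(\alpha,\beta,\lambda_0,R)$. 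The main obstacle is that the off-diagonal exponentials $\exp(A\cdot 2^{-|j-k|/2})$ are of order $J^c$ when $|j-k|$ is small; sign randomization is essential, because only the cosh structure allows the quadratic Taylor bound that lets $A$ grow like $\log J\asymp\log\log L$ rather than remain bounded. Coupling the three constraints --- admissibility at the finest scale, size of $J$, and control of the cross-terms --- has to be done jointly and is the most delicate part of the argument.
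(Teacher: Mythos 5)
Your overall strategy---the Ingster--Baraud two-moment scheme applied to a dyadic multi-scale prior of nested intervals anchored at $\tau^*$---coincides with the paper's proof. Your computation of the cross-moment $E_{\lambda_0}[(dP_{\lambda_{j,\epsilon}}/dP_{\lambda_0})(dP_{\lambda_{k,\eta}}/dP_{\lambda_0})]=\exp(L\delta_{j,\epsilon}\delta_{k,\eta}\min(\ell_j,\ell_k)/\lambda_0)$ is correct, as is the identity $\min(\ell_j,\ell_k)/\sqrt{\ell_j\ell_k}=2^{-|j-k|/2}$. Where you genuinely diverge is in adding Rademacher sign randomisation to obtain a $\cosh$ structure; the paper uses only positive $\delta_k$ and controls the resulting exponential sum directly by splitting it into a diagonal part, a near-diagonal part ($1\le|j-k|\le(\log L)^{\eta}$), and a far part.

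The problem with your route as written is the step where you apply $\cosh(x)-1\le(x^2/2)\cosh(x)$ \emph{uniformly}, including at $j=k$. The diagonal term $\cosh(A)-1\approx e^A/2$ is then bounded by $(A^2/2)\cosh(A)\approx(A^2/4)e^A$, i.e.\ you pay an extraneous multiplicative factor $A^2/2$. Since the admissibility constraint caps $J$ at $\asymp\log L$ and the claimed rate forces $A=Lr^2/\lambda_0=\log\log L$, you have $e^A=\log L$ and $J\lesssim\log L$, so the true diagonal contribution $\cosh(A)/J$ is already a \emph{constant}; multiplying it by $A^2/2\asymp(\log\log L)^2$ makes your bound $(3A^2/(2J))\cosh(A)$ diverge at $c=1$. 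That is precisely why you have to back off to $A=c\log J$ with $c<1$, and you therefore only prove $\mSRab\ge\sqrt{c\,\lambda_0\log\log L/L}$, not the stated $\sqrt{\lambda_0\log\log L/L}$. The fix is to peel off the diagonal (and treat it without any Taylor bound, as the paper does implicitly by computing $\log L/K$), reserving the quadratic estimate for the off-diagonal terms where $x_{jk}\le A/\sqrt{2}$ and $\cosh(x_{jk})\le(\log L)^{1/\sqrt{2}}/2$; there the factor $A^2(\log L)^{1/\sqrt{2}-1}\to 0$ and everything closes. You also need to set $J$ with an explicit $C'_{\alpha,\beta}$-dependent prefactor (the paper takes $K=\lceil(\log_2 L)/C'_{\alpha,\beta}\rceil$), otherwise the diagonal constant is not tied to the target $4(1-\alpha-\beta)^2$; this is in fact where the hypothesis $\alpha+\beta<1/2$ enters. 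Finally, your closing claim that ``sign randomisation is essential'' is not correct: the paper's single-sign construction works, and moreover at the values $A\asymp\log\log L$ that you actually need, the first-order estimate $e^x-1\le xe^x$ yields a tighter bound than $\cosh(x)-1\le(x^2/2)\cosh(x)$. Sign randomisation is a legitimate and sometimes cleaner alternative (it removes the asymmetry between positive and negative bumps and halves the diagonal), but it is not what enables $A$ to grow like $\log\log L$; separating the diagonal is.
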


Let us now assume that $L\geq 3$. In order to prove that the above lower bound is of sharp order (with respect to $L$), we construct two aggregated tests: a first one  based on 
a linear statistic and a second one based on quadratic statistic as in Section \ref{Sec:knownlength}.

We thus consider the discrete subset of $(0,1-\tau^*)$ of the dyadic form
\[\left\{ \ell_{\tau^*,k}=\pa{1 - \tau^{*}{}}{2^{-k}}; ~ k \in \lbrace 1,\ldots, \lfloor \log_{2} L \rfloor \rbrace \right\}\enspace,\]
and the corrected level $u_\alpha=\alpha/\lfloor \log_{2}(L) \rfloor$, which allow to define the two following tests:
\begin{equation} \label{testN1alt6}
\phi_{6,\alpha}^{(1)}(N) = \1{\max_{k\in \lbrace 1,\ldots, \lfloor \log_{2} L \rfloor\rbrace} \left( \left|S_{\tau^*,\tau^*+\ell_{\tau^*,k}}(N)\right| - s_{\lambda_0,\tau^*,\tau^*+\ell_{\tau^*,k}} \pa{1 - u_\alpha} \right)>0}\enspace,
\end{equation}
where $S_{\tau_1,\tau_2}(N)$ is the linear statistic defined for $0\leq \tau_1<\tau_2\leq 1$ by
\begin{equation}\label{stat_alt6_N1}
S_{\tau_1,\tau_2}(N)= N(\tau_1, \tau_2] - \lambda_{0} (\tau_2-\tau_1) L \enspace,
\end{equation}
and $s_{\lambda_0,\tau_1,\tau_2}(u)$ stands for the $u$-quantile of $\big| S_{\tau_1,\tau_2}(N)\big|$ under the null hypothesis $\hzero$, and
\begin{equation} \label{testN2alt6}
\phi_{6,\alpha}^{(2)}(N) = \1{\max_{k\in \lbrace 1,\ldots, \lfloor \log_{2} L \rfloor \rbrace} \left(  T_{\tau^*, \tau^*+\ell_{\tau^*,k} }(N)  - t_{\lambda_0,\tau^*,\tau^*+\ell_{\tau^*,k}} \pa{1 - u_\alpha} \right)>0}\enspace,
\end{equation}
where $T_{\tau_1,\tau_2}(N)$ is the quadratic statistic \eqref{estimateurSB}, and $t_{\lambda_0,\tau_1,\tau_2}(u)$ its $u$-quantile under $\hzero$.

\begin{proposition}[Minimax upper bound for $\bold{[Alt.6]}$] \label{UBalt6}
Let $\alpha$ and  $\beta$ in $(0,1)$, $\lambda_{0}>0$, $R>\lambda_0$ and $\tau^*$ in $(0,1)$. Let $\phi_{6,\alpha}^{(1/2)}$ be one of the tests $\phi_{6,\alpha}^{(1)}$ and  $\phi_{6,\alpha}^{(2)}$ of $\hzero$ versus $\hone\ "\lambda\in\calS_{\bbul,\tau^*,\bbul\bbul\bbul}[\lambda_0,R]"$ respectively defined by \eqref{testN1alt6} and \eqref{testN2alt6}. Then $\phi_{6,\alpha}^{(1/2)}$ is of level $\alpha$, that is $P_{\lambda_0}\pa{\phi_{6,\alpha}^{(1/2)}(N)=1}\leq \alpha$.
Moreover, there exists $C(\alpha, \beta, \lambda_{0},R)>0$ such that
$$ \SRb\pa{\phi_{6,\alpha}^{(1/2)},\calS_{\bbul,\tau^*,\bbul\bbul\bbul}[\lambda_0,R]} \leq C(\alpha, \beta, \lambda_{0},R)\sqrt{{\log \log L}/{L}}\enspace,$$
which entails in particular $\mSRab\pa{\calS_{\bbul,\tau^*,\bbul\bbul\bbul}[\lambda_0,R]} \leq C(\alpha, \beta, \lambda_{0},R)\sqrt{{\log \log L}/{L}}$.
\end{proposition}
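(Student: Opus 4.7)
The plan splits into two independent parts: checking the level via a direct Bonferroni bound over the $\lfloor \log_2 L\rfloor$ sub-tests, and controlling the separation rate through an oracle-type argument in which one single dyadic length $\ell_{\tau^*,k^*}$ is identified as a good proxy for the unknown bump length $\ell$, so that the associated sub-test alone rejects with probability at least $1-\beta$ under $P_\lambda$. Level control is immediate: by definition of the quantiles, each of the $\lfloor\log_2 L\rfloor$ events $\{|S_{\tau^*,\tau^*+\ell_{\tau^*,k}}(N)|>s_{\lambda_0,\tau^*,\tau^*+\ell_{\tau^*,k}}(1-u_\alpha)\}$ or $\{T_{\tau^*,\tau^*+\ell_{\tau^*,k}}(N)>t_{\lambda_0,\tau^*,\tau^*+\ell_{\tau^*,k}}(1-u_\alpha)\}$ has $P_{\lambda_0}$-probability at most $u_\alpha=\alpha/\lfloor\log_2 L\rfloor$, and the union bound yields $P_{\lambda_0}(\phi_{6,\alpha}^{(1/2)}(N)=1)\leq \alpha$.

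For the separation rate, fix $\lambda=\lambda_0+\delta \mathds{1}_{(\tau^*,\tau^*+\ell]}\in \calS_{\bbul,\tau^*,\bbul\bbul\bbul}[\lambda_0,R]$ with $d_2(\lambda,\calS_0[\lambda_0])=|\delta|\sqrt{\ell}\geq C(\alpha,\beta,\lambda_0,R)\sqrt{\log\log L/L}$, and let $k^*$ denote the smallest integer $k\in\{1,\ldots,\lfloor\log_2 L\rfloor\}$ such that $\ell_{\tau^*,k}\leq \ell$. Combining the separation hypothesis with $|\delta|\leq R$ gives $\ell\geq (C/R)^2\log\log L/L$, which for $L\geq L_0(\alpha,\beta,\lambda_0,R,\tau^*)$ ensures $\ell\geq 2(1-\tau^*)/L$ and hence the existence of such a $k^*$; one also has $\ell/2<\ell_{\tau^*,k^*}\leq \ell$ by construction, so that the window $(\tau^*,\tau^*+\ell_{\tau^*,k^*}]$ is entirely contained in the true bump.

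For $\phi_{6,\alpha}^{(1)}$, under $P_\lambda$ the count $N(\tau^*,\tau^*+\ell_{\tau^*,k^*}]$ is Poisson with mean $(\lambda_0+\delta)\ell_{\tau^*,k^*}L$, so that $E_\lambda[S_{\tau^*,\tau^*+\ell_{\tau^*,k^*}}(N)]=\delta\ell_{\tau^*,k^*}L$. Bennett's inequality for centred Poisson variables (as stated in \cite{BercuDelyonRio}) applied under $P_{\lambda_0}$ yields a quantile bound of the form $s_{\lambda_0,\tau^*,\tau^*+\ell_{\tau^*,k^*}}(1-u_\alpha)\leq C_1(\lambda_0)\bigl(\sqrt{\ell_{\tau^*,k^*}L\log\log L}+\log\log L\bigr)$, and the same inequality applied under $P_\lambda$ controls $|N(\tau^*,\tau^*+\ell_{\tau^*,k^*}]-(\lambda_0+\delta)\ell_{\tau^*,k^*}L|$, with probability at least $1-\beta$, by $C_2(R,\beta)\bigl(\sqrt{\ell_{\tau^*,k^*}L}+1\bigr)$. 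The triangle inequality $|S|\geq |\delta|\ell_{\tau^*,k^*}L-|N-(\lambda_0+\delta)\ell_{\tau^*,k^*}L|$, combined with $\ell_{\tau^*,k^*}\geq \ell/2$, then translates the separation hypothesis into the detection event, provided $C(\alpha,\beta,\lambda_0,R)$ is taken large enough.

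For $\phi_{6,\alpha}^{(2)}$, the unbiasedness of $T_{\tau_1,\tau_2}$ gives $E_\lambda[T_{\tau^*,\tau^*+\ell_{\tau^*,k^*}}(N)]=\|\Pi_{V_{\tau^*,\tau^*+\ell_{\tau^*,k^*}}}(\lambda-\lambda_0)\|_2^2 = \delta^2\ell_{\tau^*,k^*}\geq d_2^2(\lambda,\calS_0[\lambda_0])/2$. The null quantile $t_{\lambda_0,\tau^*,\tau^*+\ell_{\tau^*,k^*}}(1-u_\alpha)$ is controlled by the forthcoming Lemma~\ref{QuantilesT} (already exploited in the proofs of Propositions~\ref{UBalt3}--\ref{UBalt4}, itself a consequence of Theorem~6 in \cite{LeGuevel2021}), while the dispersion $T-E_\lambda T$ under $P_\lambda$ is handled by a second-order Poisson concentration inequality in the spirit of \cite{HoudreRB}, whose constants depend on $R$ through $\|\lambda\|_\infty\leq R$. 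Combining these two ingredients exactly as in the linear case produces the same detection threshold of order $\sqrt{\log\log L/L}$. The main technical obstacle lies precisely in this quadratic concentration step: whereas the linear statistic is handled by the classical one-dimensional Bennett inequality, controlling $T-E_\lambda T$ requires a second-order inequality with constants that have to be tracked explicitly in terms of $R$, which is exactly the reason the minimax analysis of Proposition~\ref{UBalt6} is restricted to the bounded class $\calS_{\bbul,\tau^*,\bbul\bbul\bbul}[\lambda_0,R]$ in view of Lemma~\ref{LBalt6-pre}.
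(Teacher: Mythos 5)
Your overall structure—level control via a Bonferroni union bound, then detection via the single dyadic proxy $k^*$ such that $\ell/2<\ell_{\tau^*,k^*}\leq\ell$, with a separate null-quantile bound under $P_{\lambda_0}$ and a dispersion bound under $P_\lambda$—is exactly the paper's plan, and your treatment of the linear test $\phi_{6,\alpha}^{(1)}$ is sound (the paper uses Le Gu\'evel's exponential inequality through Lemma~\ref{QuantilesAbsShifted} under $P_{\lambda_0}$ and plain Bienayme--Chebyshev under $P_\lambda$; your Bennett variant under $P_\lambda$ also works, it just gives a stronger concentration than is needed).

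However, your account of the quadratic test contains a genuine misstep. You assert that controlling $T-E_\lambda T$ under $P_\lambda$ is ``the main technical obstacle'' requiring ``a second-order Poisson concentration inequality in the spirit of \cite{HoudreRB}.'' It does not: the paper computes $\mathrm{Var}_\lambda\big[T_{\tau^*,\tau^*+\ell_{\tau^*,k^*}}(N)\big]=4\delta^2(\lambda_0+\delta)\ell_{\tau^*,k^*}/L+2(\lambda_0+\delta)^2/L^2$ explicitly via Lemma~\ref{MomentsT}, bounds it using $\lambda_0+\delta\leq R$, and then applies the one-sided Bienayme--Chebyshev inequality. That is the entire argument for this step. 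The only nontrivial concentration input in the quadratic case is the \emph{null} quantile bound $t_{\lambda_0,\cdot,\cdot}(1-u_\alpha)$ of Lemma~\ref{QuantilesT}. The Houdr\'e--Reynaud-Bouret inequality is used only in Section~\ref{Sec:unknownbaseline}, and only after conditioning on $N_1=n$, which is what turns the quadratic Poisson functional into a genuine degenerate $U$-statistic of a fixed-size i.i.d.\ sample; if you tried to use it here directly you would immediately face the obstacle that $N_1$ is random. Relatedly, the restriction to $\calS_{\bbul,\tau^*,\bbul\bbul\bbul}[\lambda_0,R]$ is forced by the lower-bound result Lemma~\ref{LBalt6-pre}, which shows the unrestricted minimax separation rate is infinite; the fact that $R$ also appears in the upper-bound constant through the variance is a secondary effect, not the reason for imposing the bound. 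So the quadratic case of your sketch needs to be rewritten with Chebyshev replacing the $U$-statistics machinery, and the explanation of the role of $R$ corrected.
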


\emph{Comments.} The proofs of these upper bounds are mainly based on a sharp control of the quantile $s_{\lambda_0,\tau_1,\tau_2}(u)$  derived from an exponential inequality for the supremum of the martingale associated with a counting process due to Le Guével \cite{LeGuevel2021} (see details in Section~\ref{Sec:FTresults}), and of the quantile  $t_{\lambda_0,\tau_2,\tau_2}(u)$ already used in the proof of Proposition \ref{UBalt3}. This result, combined with its corresponding lower bound, brings out a first phase transition in the minimax separation rates orders, from the parametric rate order $1/\sqrt{L}$ to $\sqrt{{\log \log L}/{L}}$. This means that adaptation with respect to both height and length of the bump has a $\sqrt{\log \log L}$ cost, while adaptation to only one of these parameters does not cause any additional price, nor adaptation to both height and location as noticed above. Though a comparable phase transition has already been observed in Gaussian models when dealing with the jump detection problem (where adaptation with respect to the location is equivalent to adaptation with respect to the length), up to our knowledge, such results did not appear yet in the bump detection literature.

\subsection{Minimax detection of a possibly transitory change with unknown location and length} \label{Sec:unknownlocationlength}

In this subsection, we address the final problem of testing the null hypothesis $\hzero \ "\lambda\in \calS_0[\lambda_0]=\{\lambda_0\}"$ versus alternative hypotheses where both location and length of the change from the baseline intensity are unknown, distinguishing the case where the change is transitory from the particular case where it is not transitory.

\smallskip

Still adopting the minimax point of view, we will see that when considering the transitory change detection problem, adaptation to both change location and length has a minimax separation rate cost of order $\sqrt{\log L}$, and this whether the change height is known or not. 

This highly contrasts with the study of the particular non transitory change or jump detection problem, which makes two different regimes of minimax separation rates appear, with a maximal cost of order $\sqrt{\log \log L}$ for change height adaptation. 

Let us underline that the non transitory change or jump detection problem can be viewed as perfectly symmetrical to the above transitory change with known location detection problem  (see Section \ref{Sec:knownlocation}). In the first problem, one can consider that the length of the change is unknown but the endpoint of the change is known,  while in the second problem the length of the change is unknown but the starting point of the change is known. The study of the first non transitory change detection problem will therefore use very similar arguments as the study of the second transitory change with known location detection problem, finally leading to the same minimax separation rates. This is why we conduct it first here. 

\subsubsection{Non transitory change}\label{Sec:generalsingle}

In order to investigate the problem of detecting a non transitory change or jump with unknown location, but known height, we introduce for $\delta^*$ in $(- \lambda_{0}, + \infty)\setminus\{0\}$ the alternative set
\begin{equation}\label{alt7}
\bold{[Alt.7]}\ \calS_{\delta^*,\bbul\bbul,1-\bbul\bbul}[\lambda_0]= \lbrace \lambda:[0,1]\to (0+\infty),\ \exists \tau\in (0,1),~   \lambda(t) = \lambda_{0} + \delta^*\mathds{1}_{(\tau,1]}(t) \rbrace\enspace.
\end{equation}

This allows us to formalise the considered detection problem as a problem of testing the null hypothesis $\hzero \ "\lambda\in \calS_0[\lambda_0]=\{\lambda_0\}"$ versus
the alternative $\hone\ "\lambda\in\calS_{\delta^*,\bbul\bbul,1-\bbul\bbul}[\lambda_0]"$, with the corresponding minimax lower bound stated below. 

\begin{proposition}[Minimax lower bound for $\bold{[Alt.7]}$] \label{LBalt7}
Let $\alpha$ and  $\beta$ in $(0,1)$  such that $\alpha+\beta<1$, $\lambda_0>0$ and $\delta^*$ in $(- \lambda_{0}, + \infty)\setminus\{0\}$. For all $L \geq \lambda_{0}  \log C_{\alpha, \beta} /\delta^{*}{}^2$, 
\[\mSRab\pa{\calS_{\delta^*,\bbul\bbul,1-\bbul\bbul}[\lambda_0]} \geq  \sqrt{\lambda_{0} \log C_{\alpha, \beta}/L},\textrm{ with }C_{\alpha, \beta}=1+4(1- \alpha -\beta)^2\enspace.\]
\end{proposition}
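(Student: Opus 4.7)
The plan is to apply the generic Bayesian reduction recalled in Section~\ref{Keys} with a Dirac prior on a single, well-chosen alternative. Fix $r$ strictly smaller than the target $\sqrt{\lambda_0\log C_{\alpha,\beta}/L}$; the assumption $L\geq\lambda_0\log C_{\alpha,\beta}/\delta^{*2}$ then implies $r^2/\delta^{*2}<1$, so the location $\tau_r:=1-r^2/\delta^{*2}$ lies in $(0,1)$ and the intensity $\lambda_{\tau_r}(t)=\lambda_0+\delta^*\mathds{1}_{(\tau_r,1]}(t)$ belongs to $\calS_{\delta^*,\bbul\bbul,1-\bbul\bbul}[\lambda_0]$ with $d_2(\lambda_{\tau_r},\calS_0[\lambda_0])=|\delta^*|\sqrt{1-\tau_r}=r$. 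The Dirac mass $\mu_r$ at $\lambda_{\tau_r}$ is therefore a valid prior for the Bayesian reduction at separation radius $r$.

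The core computation is the chi-square second moment of the likelihood ratio under $P_{\lambda_0}$. Girsanov's Lemma~\ref{lemmegirsanov}, specialised as in~\eqref{LR}, yields
$$\frac{dP_{\lambda_{\tau_r}}}{dP_{\lambda_0}}(N)=\exp\left[\ln(1+\delta^*/\lambda_0)\,N(\tau_r,1]-\delta^*(1-\tau_r)L\right],$$
and evaluating the Poisson moment generating function of $N(\tau_r,1]$ under $P_{\lambda_0}$ at $a=2\ln(1+\delta^*/\lambda_0)$ gives, after the deterministic exponential factors cancel,
$$E_{\lambda_0}\!\left[\left(\frac{dP_{\lambda_{\tau_r}}}{dP_{\lambda_0}}\right)^2\right]=\exp\!\left(\frac{L(1-\tau_r)\delta^{*2}}{\lambda_0}\right)=\exp(r^2L/\lambda_0).$$
By the choice of $r$, this quantity is strictly smaller than $C_{\alpha,\beta}=1+4(1-\alpha-\beta)^2$.

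Feeding $\mu_r$ into the Bayesian lower bound of Section~\ref{Keys}---in the spirit of Le Cam, Ingster~\cite{Ingster1993} and Baraud~\cite{Baraud2002}, essentially via the total variation versus chi-square inequality $\|P_{\mu_r}-P_{\lambda_0}\|_{TV}\leq\tfrac{1}{2}\sqrt{\chi^2(P_{\mu_r},P_{\lambda_0})}$---then forces $P_{\lambda_{\tau_r}}(\phi(N)=0)>\beta$ for every level-$\alpha$ test $\phi$. Since $\lambda_{\tau_r}\in\calS_{\delta^*,\bbul\bbul,1-\bbul\bbul}[\lambda_0]$ with $d_2(\lambda_{\tau_r},\calS_0[\lambda_0])=r$, this yields $\SRb(\phi,\calS_{\delta^*,\bbul\bbul,1-\bbul\bbul}[\lambda_0])>r$ for every such $\phi$, hence $\mSRab(\calS_{\delta^*,\bbul\bbul,1-\bbul\bbul}[\lambda_0])\geq r$. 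Letting $r\uparrow\sqrt{\lambda_0\log C_{\alpha,\beta}/L}$ concludes.

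I do not expect any serious obstacle: the argument is structurally identical to that of Proposition~\ref{LBalt5} under the reparameterisation $\ell\leftrightarrow 1-\tau$ (jump detection with unknown endpoint and known height is the same problem as bump detection with known starting point $\tau^*=0$, unknown length and known height), and the chi-square functional between the two piecewise constant intensities is fully explicit through Girsanov. The only point to check carefully is that the quantitative assumption $L\geq\lambda_0\log C_{\alpha,\beta}/\delta^{*2}$ is exactly what is needed to guarantee $\tau_r\in(0,1)$, i.e., that a jump of prescribed height $\delta^*$ can actually be placed inside $[0,1]$ while realising the separation radius $r$.
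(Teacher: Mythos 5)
Your argument is correct and follows the paper's own proof essentially line for line: a Dirac prior at $\lambda_0+\delta^*\mathds{1}_{(1-r^2/\delta^{*2},1]}$, the Girsanov/Poisson moment computation giving $\chi^2=\exp(r^2L/\lambda_0)$, and the Bayesian lower bound of Lemmas \ref{mSR}--\ref{lemmebayesien}. The only cosmetic difference is that you take $r$ strictly below $\sqrt{\lambda_0\log C_{\alpha,\beta}/L}$ and pass to the limit, whereas the paper plugs in $r=\sqrt{\lambda_0\log C_{\alpha,\beta}/L}$ directly, which is permissible since Lemma \ref{lemmebayesien} allows equality in the $\chi^2$ bound.
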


Following the study and the notation of Section \ref{Sec:knownlocation}, we define the test
\begin{equation} \label{testalt7}
\phi_{7,\alpha}(N) = \1{\sup_{\tau \in (0,1)} S_{\delta^*,\tau,1}(N) >s_{\lambda_0,\delta^*,L}^+(1-\alpha)}\enspace,
\end{equation}
where $S_{\delta^*,\tau_1,\tau_2}(N)$ is the statistic defined  by \eqref{stat_alt5} and $s_{\lambda_0,\delta^*,L}^+(u)$ stands for the $u$-quantile of $\sup_{\tau \in (0,1)}S_{\delta^*,\tau,1}(N)$ under $\hzero$.

\begin{proposition}[Minimax upper bound for $\bold{[Alt.7]}$] \label{UBalt7}
Let $L\geq 1$, $\alpha$ and  $\beta$ in $(0,1)$, $\lambda_{0}>0$ and $\delta^*$ in $(- \lambda_{0}, + \infty)\setminus\{0\}$. Let $\phi_{7,\alpha}$ be the test of $\hzero$ versus $\hone\ "\lambda\in\calS_{\delta^*,\bbul\bbul,1-\bbul\bbul}[\lambda_0]"$ defined by \eqref{testalt7}. Then $\phi_{7,\alpha}$ is of level $\alpha$, that is $P_{\lambda_0}\pa{\phi_{7,\alpha}(N)=1}\leq \alpha$. Moreover, there exists a constant $C(\alpha, \beta, \lambda_{0},\delta^*)>0$ such that
$$ \SRb\pa{\phi_{7,\alpha},\calS_{\delta^*,\bbul\bbul,1-\bbul\bbul}[\lambda_0]} \leq {C(\alpha, \beta, \lambda_{0},\delta^*)}/{\sqrt{L}}\enspace,$$
which entails in particular $\mSRab\pa{\calS_{\delta^*,\bbul\bbul,1-\bbul\bbul}[\lambda_0]} \leq C(\alpha, \beta, \lambda_{0}, \delta^*)/\sqrt{L}$.
\end{proposition}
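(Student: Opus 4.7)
The proof should follow very closely the lines of Proposition \ref{UBalt5}, by a symmetry argument: here the endpoint of the jump is fixed (equal to $1$) and the startpoint $\tau$ is unknown, whereas in Section \ref{Sec:knownlocation} the startpoint is fixed and the endpoint $\tau^* + \ell$ is unknown. The statistics $S_{\delta^*,\tau,1}(N)$ and the shape of the supremum indexed by a free endpoint are exactly of the same nature, so the same quantile control machinery (Pyke's \cite{pyke1959} exponential inequality for suprema of shifted Poisson processes, as encoded in Lemma \ref{QuantilessupShifted}) should apply.

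The plan is the following. First, the level bound $P_{\lambda_0}(\phi_{7,\alpha}(N)=1)\leq \alpha$ holds by the very definition of the quantile $s_{\lambda_0,\delta^*,L}^+(1-\alpha)$. Second, to control the second kind error, I would fix $\lambda \in \calS_{\delta^*,\bbul\bbul,1-\bbul\bbul}[\lambda_0]$ with $\lambda=\lambda_0+\delta^*\mathds{1}_{(\tau,1]}$ and bound
\[
P_\lambda\pa{\phi_{7,\alpha}(N)=0} \leq P_\lambda\pa{S_{\delta^*,\tau,1}(N)\leq s_{\lambda_0,\delta^*,L}^+(1-\alpha)}
\]
by restricting the supremum to the true location $\tau$. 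Under $P_\lambda$, $N(\tau,1]$ follows a Poisson distribution with mean $(\lambda_0+\delta^*)L(1-\tau)$, so straightforward algebra gives
\[
S_{\delta^*,\tau,1}(N)= \mathrm{sgn}(\delta^*)\bigl(N(\tau,1]-(\lambda_0+\delta^*)L(1-\tau)\bigr) + |\delta^*|L(1-\tau)/2\enspace,
\]
i.e.\ a centered Poisson variable (of variance $(\lambda_0+\delta^*)L(1-\tau)$) plus a positive drift $|\delta^*|L(1-\tau)/2$.

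From there, I would use a Chebyshev (or Bernstein type) inequality on the centered Poisson part: the probability that $S_{\delta^*,\tau,1}(N)$ falls below the critical value $s_{\lambda_0,\delta^*,L}^+(1-\alpha)$ is at most $\beta$ as soon as
\[
|\delta^*|L(1-\tau)/2 \;\geq\; s_{\lambda_0,\delta^*,L}^+(1-\alpha) + \sqrt{(\lambda_0+\delta^*)L(1-\tau)/\beta}\enspace.
\]
Translating this in terms of the distance $d_2(\lambda,\calS_0[\lambda_0])=|\delta^*|\sqrt{1-\tau}$, one obtains a quadratic inequality in $d_2$ that is satisfied as soon as $d_2(\lambda,\calS_0[\lambda_0]) \geq C(\alpha,\beta,\lambda_0,\delta^*)/\sqrt{L}$ for a constant $C$ depending only on $(\alpha,\beta,\lambda_0,\delta^*)$. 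This delivers both the uniform separation rate bound and the minimax separation rate upper bound matching the lower bound of Proposition \ref{LBalt7}.

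The only nontrivial ingredient, and the main obstacle, is the control of the quantile $s_{\lambda_0,\delta^*,L}^+(1-\alpha)$ by a constant independent of $L$. This is exactly the role played by Lemma \ref{QuantilessupShifted} in the proof of Proposition \ref{UBalt5}, and it is ultimately based on Pyke's \cite{pyke1959} exponential bound for the supremum of a shifted Poisson process (Equation~(6) and Theorem~3 in that reference). Once this bound is invoked, the constant $C(\alpha,\beta,\lambda_0,\delta^*)$ is explicit; as elsewhere in the paper we do not try to optimise it.
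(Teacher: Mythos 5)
Your proposal is correct and follows essentially the same route as the paper: restrict the supremum to the true changepoint $\tau$, rewrite $S_{\delta^*,\tau,1}(N)$ as a centered Poisson variable plus the drift $|\delta^*|L(1-\tau)/2$, bound the conditional quantile by a constant independent of $L$ via Pyke's inequality (the paper uses Lemma~\ref{QuantilessupShifted2}, the jump-version twin of Lemma~\ref{QuantilessupShifted}), and finish with Bienayme--Chebyshev. The only detail you leave implicit is the case split on the sign of $\delta^*$, which the paper spells out because the Pyke-based quantile bound has a different explicit form for $\delta^*>0$ versus $\delta^*\in(-\lambda_0,0)$; the overall structure is unchanged.
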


Let us now tackle the question of adaptation with respect to the change height and therefore introduce to this end a preliminary alternative set
\begin{multline}\label{alt8-pre}
\calS_{\bbul,\bbul\bbul,1-\bbul\bbul}[\lambda_0]= \lbrace \lambda\!: \exists \delta \in (- \lambda_{0}, + \infty)\setminus\{0\}, \exists \tau\in (0,1),~   \lambda(t) = \lambda_{0} + \delta \mathds{1}_{(\tau,1]}(t) \rbrace\enspace.
\end{multline}

As in Section \ref{Sec:knownlocation}, we underline that the minimax separation rate over this set is infinite.
\begin{lemma} \label{LBalt8-pre}
Let $\alpha,\beta$ in $(0,1)$ such that $\alpha + \beta <1$. For the problem of testing
  $\hzero \ "\lambda\in \calS_0[\lambda_0]=\{\lambda_0\}"$ versus $\hone\ "\lambda\in\calS_{\bbul,\bbul\bbul,1-\bbul\bbul}[\lambda_0]"$, with $\calS_{\bbul,\bbul\bbul,1-\bbul\bbul}[\lambda_0]$ defined by \eqref{alt8-pre}, one has 
$\mSRab\pa{\calS_{\bbul,\bbul\bbul,1-\bbul\bbul}[\lambda_0]}  = + \infty$. 
\end{lemma}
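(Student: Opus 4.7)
The plan is to exhibit, for any fixed level $\alpha$ test $\phi_\a$, a sequence of alternatives $(\lambda_n)_{n\geq 1}$ in $\calS_{\bbul,\bbul\bbul,1-\bbul\bbul}[\lambda_0]$ with $d_2(\lambda_n,\calS_0[\lambda_0]) \to +\infty$, but along which the second kind error stays bounded below by $\beta$ in the limit. Unlike the lower bounds obtained by the Bayesian route described in Section~\ref{Keys}, no prior construction is needed here: we can reach $+\infty$ by a direct ``small support, huge height'' argument, exploiting the fact that in $\calS_{\bbul,\bbul\bbul,1-\bbul\bbul}[\lambda_0]$ the jump height $\delta$ is unconstrained above, whereas the length $1-\tau$ can be driven to $0$.

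Concretely, I would choose $\delta_n = n^2$ and $\tau_n = 1 - n^{-3}$, and set $\lambda_n = \lambda_0 + \delta_n \mathds{1}_{(\tau_n,1]}$. Then $\lambda_n \in \calS_{\bbul,\bbul\bbul,1-\bbul\bbul}[\lambda_0]$, and
\[
d_2(\lambda_n,\calS_0[\lambda_0]) = |\delta_n|\sqrt{1-\tau_n} = \sqrt{n} \longrightarrow +\infty.
\]
Introduce the event $A_n = \{N(\tau_n,1] = 0\}$. Since $N$ restricted to $[0,\tau_n]$ and to $(\tau_n,1]$ are independent, and since $P_{\lambda_n}$ and $P_{\lambda_0}$ agree on $[0,\tau_n]$, conditional on $A_n$ the law of $N$ is the same under both probabilities; call this common conditional law $Q_n$. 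Moreover
\[
P_{\lambda_n}(A_n) = \exp\bigl(-(\lambda_0+\delta_n)(1-\tau_n)L\bigr) = \exp\bigl(-(\lambda_0/n^3 + 1/n)L\bigr)\to 1,
\]
\[
P_{\lambda_0}(A_n) = \exp(-\lambda_0 (1-\tau_n)L) \to 1,
\]
as $n\to+\infty$.

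I would then decompose
\[
P_{\lambda_n}(\phi_\a(N) = 0) \geq P_{\lambda_n}(A_n)\, Q_n(\phi_\a = 0),
\]
and use the level constraint $P_{\lambda_0}(\phi_\a(N)=0) \geq 1-\alpha$ together with
\[
Q_n(\phi_\a = 0) \geq \frac{P_{\lambda_0}(\phi_\a(N)=0) - P_{\lambda_0}(A_n^c)}{P_{\lambda_0}(A_n)} \geq \frac{1 - \alpha - P_{\lambda_0}(A_n^c)}{P_{\lambda_0}(A_n)}.
\]
Combining these gives a lower bound on $P_{\lambda_n}(\phi_\a(N)=0)$ that tends to $1-\alpha > \beta$ (since $\alpha+\beta<1$). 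Consequently, for $n$ large enough, $P_{\lambda_n}(\phi_\a(N)=0) > \beta$ while $d_2(\lambda_n,\calS_0[\lambda_0])\geq r$ for any prescribed $r>0$, so $\SRb(\phi_\a,\calS_{\bbul,\bbul\bbul,1-\bbul\bbul}[\lambda_0]) = +\infty$. Taking the infimum over $\phi_\a$ yields $\mSRab(\calS_{\bbul,\bbul\bbul,1-\bbul\bbul}[\lambda_0]) = +\infty$.

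No step is genuinely hard; the only subtlety worth highlighting is the conditional-law identity on $A_n$, which makes the argument clean and avoids estimating likelihood ratios by hand. This mirrors the structure of the proof of Lemma~\ref{LBalt6-pre}, with the roles of the free endpoint and free length interchanged — consistent with the symmetry between the jump-detection and bump-with-known-location problems noted in the paragraph preceding Section~\ref{Sec:generalsingle}.
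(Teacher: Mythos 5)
Your proof is correct, but it takes a genuinely different route from the paper's. The paper proves $\mSRab(\calS_{\bbul,\bbul\bbul,1-\bbul\bbul}[\lambda_0])\geq r$ for each fixed $r>0$ by mirroring the proof of Lemma~\ref{LBalt6-pre}: it bounds $P_\lambda(\phi_\alpha=0)\geq 1-\alpha-\sqrt{K(P_\lambda,P_{\lambda_0})/2}$ via Pinsker's inequality, picks a single alternative $\lambda_r=\lambda_0+r(1-\tau)^{-1/2}\mathds 1_{(\tau,1]}$ with $d_2(\lambda_r,\calS_0[\lambda_0])=r$ exactly, computes $K(P_{\lambda_r},P_{\lambda_0})$ in closed form from Girsanov, and drives it below $2(1-\alpha-\beta)^2$ by letting $\tau\to 1$. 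You instead send $d_2$ to $+\infty$ along a sequence $\lambda_n$ with $\delta_n(1-\tau_n)\to 0$ while $\delta_n^2(1-\tau_n)\to\infty$, and control the second kind error by conditioning on the event $A_n=\{N(\tau_n,1]=0\}$; since $P_{\lambda_n}$ and $P_{\lambda_0}$ induce the same conditional law of $N$ given $A_n$ and both give $A_n$ probability tending to one, any test is asymptotically blind to the alternative. In effect you are bounding the total variation distance directly by a coupling, $V(P_{\lambda_n},P_{\lambda_0})\leq P_{\lambda_n}(A_n^c)+P_{\lambda_0}(A_n^c)\to 0$, rather than through Pinsker and a KL computation; this is slightly more elementary and avoids the logarithmic bookkeeping, at the cost of the argument not being "one $\lambda$ at each distance $r$" — though that is immaterial since $\sqrt n\geq r$ eventually. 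Both proofs exploit the same degree of freedom (unbounded jump height combined with vanishing jump length), consistent with the symmetry you note with Lemma~\ref{LBalt6-pre}.
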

We thus consider for $R >\lambda_0$ the more suitable set of alternatives bounded by $R$, defined by
\begin{multline}\label{alt8}
\bold{[Alt.8]}\  \calS_{\bbul,\bbul\bbul,1-\bbul\bbul}[\lambda_0,R]=
 \big\{ \lambda:[0,1]\to(0,R],\ \exists \delta \in (- \lambda_{0},R- \lambda_{0}]\setminus\lbrace 0\rbrace ,\\
 \exists \tau \in (0,1),\ \forall t\in [0,1]\quad  \lambda(t) = \lambda_{0} + \delta \mathds{1}_{(\tau,1]}(t) \big\}\enspace.
\end{multline}
Considering the problem of testing $\hzero \ "\lambda\in \calS_0[\lambda_0]=\{\lambda_0\}"$ versus $\hone\ "\lambda\in\calS_{\bbul,\bbul\bbul,1-\bbul\bbul}[\lambda_0,R]"$, 
we then obtain the following lower bound.

\begin{proposition}[Minimax lower bound for $\bold{[Alt.8]}$] \label{LBalt8}
Let $\alpha, \beta$ in $(0,1)$  with $\alpha + \beta <1/2$, $\lambda_0>0$ and $R>\lambda_0$. There exists $L_0(\alpha,\beta,\lambda_0,R)>0$  such that for $L\geq L_0(\alpha,\beta,\lambda_0,R)$,
 \[\mSRab\pa{\calS_{\bbul,\bbul\bbul,1-\bbul\bbul}[\lambda_0,R]} \geq  \sqrt{{\lambda_{0}\log\log L}/{L}}\enspace.\]
\end{proposition}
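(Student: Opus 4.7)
The plan is to apply the Bayesian machinery recalled in Section~\ref{Keys}, which reduces the lower bound to exhibiting a prior $\mu$ supported on elements of $\calS_{\bbul,\bbul\bbul,1-\bbul\bbul}[\lambda_0,R]$ all lying at $\bbL_2$-distance exactly $r$ from $\lambda_0$, such that $\chi^2(P_\mu,P_{\lambda_0}) \leq 4(1-\alpha-\beta)^2$. In order to reproduce the $\sqrt{\log\log L}$ regime already identified in the Gaussian model by Gao et al. and Verzelen et al., I would use a dyadic family of alternatives indexed by proximity of the jump location to the endpoint $1$: for $k\in\{1,\ldots,K\}$, set $\tau_k = 1 - 2^{-k}$ and $\delta_k = r\cdot 2^{k/2}$, with $r = c\sqrt{\lambda_0 \log\log L/L}$ for some constant $c<1$ to be tuned. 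Each $\lambda_k = \lambda_0 + \delta_k\mathds{1}_{(\tau_k,1]}$ satisfies $d_2(\lambda_k,\lambda_0) = |\delta_k|\sqrt{1-\tau_k} = r$, and the admissibility constraint $\delta_k \leq R-\lambda_0$ allows $K$ of order $\log L$ as soon as $L \geq L_0(\alpha,\beta,\lambda_0,R)$. I then take $\mu$ to be the uniform prior on $\{\lambda_1,\ldots,\lambda_K\}$.

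The next step is an explicit computation of $\chi^2(P_\mu,P_{\lambda_0})$. Girsanov's lemma (Lemma~\ref{lemmegirsanov}) combined with the Laplace formula $E_{\lambda_0}[\exp(\int f\,dN)] = \exp(\int (e^{f}-1)\lambda_0 L\,dt)$ for Poisson integrals gives
\begin{equation*}
E_{\lambda_0}\!\left[\frac{dP_{\lambda_k}}{dP_{\lambda_0}}\cdot\frac{dP_{\lambda_{k'}}}{dP_{\lambda_0}}\right] = \exp\!\pa{\frac{L}{\lambda_0}\int_0^1 (\lambda_k-\lambda_0)(\lambda_{k'}-\lambda_0)\,dt}.
\end{equation*}
The integral equals $\delta_k\delta_{k'}(1-\max(\tau_k,\tau_{k'}))$ by nestedness of the intervals $(\tau_k,1]$, which via the dyadic identity $2^{(k+k')/2-\max(k,k')}=2^{-|k-k'|/2}$ collapses to $r^{2}\cdot 2^{-|k-k'|/2}$. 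Writing $\eta = r^2L/\lambda_0 = c^2\log\log L$ and integrating the square of the mixture density against $P_{\lambda_0}$ yields
\begin{equation*}
1 + \chi^2(P_\mu,P_{\lambda_0}) = \frac{1}{K^2}\sum_{k,k'=1}^K \exp\!\pa{\eta\cdot 2^{-|k-k'|/2}}.
\end{equation*}

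The hard part is bounding this sum by $1+4(1-\alpha-\beta)^2$, and this is precisely where the choice $c<1$ becomes essential. I would split according to the lag $d=|k-k'|$ into three regimes: the diagonal ($d=0$), the near-diagonal ($1\leq d\leq m$ with $m=\lceil 2\log_2\eta\rceil$), and the tail ($d>m$, where $\eta\cdot 2^{-d/2}\leq 1$). The diagonal contributes $Ke^{\eta}/K^2 = (\log L)^{c^2}/K$, which is negligible since $c<1$ and $K\sim\log L$. The tail, combining $e^{x}-1\leq 2x$ on $[0,1]$ with a geometric series in $d$, contributes $O(1/K)$. The near-diagonal has at most $K(2m+1)$ pairs, each crudely bounded by $e^{\eta}$, producing an $O\!\pa{\log\log\log L\cdot (\log L)^{c^2-1}}$ contribution that is again $o(1)$ for $c<1$. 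Tuning $c=c(\alpha,\beta)$ sufficiently close to $1$ so that the total $\chi^{2}$ is at most $4(1-\alpha-\beta)^2$ for $L\geq L_0(\alpha,\beta,\lambda_0,R)$, the Bayesian lower bound of Section~\ref{Keys} delivers the announced rate (the leading constant, which the construction produces as some $c<1$, can be made arbitrarily close to $1$ or absorbed into $L_0$).
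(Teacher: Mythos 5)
Your construction and chi-square computation match the paper's exactly up to the point where you decide how to tune the ``knob'' that keeps $\chi^2(P_\mu,P_{\lambda_0})\leq 4(1-\alpha-\beta)^2$: the paper also takes $\tau_k=1-2^{-k}$, $\delta_k\sqrt{1-\tau_k}=r$, a uniform prior, and gets the same two-index sum $\frac{1}{K^2}\sum_{k,k'}\exp\pa{\tfrac{r^2L}{\lambda_0}\,2^{-|k-k'|/2}}$. The gap is in the final tuning. You set $r=c\sqrt{\lambda_0\log\log L/L}$ with a fixed $c<1$ and keep $K$ of order $\log L$; this indeed kills the diagonal (it scales as $(\log L)^{c^2-1}\to 0$) and the near-diagonal, so the chi-square is controlled. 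But then the Bayesian reduction of Lemma~\ref{lemmebayesien} delivers $\mSRab\geq c\sqrt{\lambda_0\log\log L/L}$, which is \emph{strictly weaker} than the announced $\mSRab\geq\sqrt{\lambda_0\log\log L/L}$, and your closing remark that the constant ``can be absorbed into $L_0$'' is wrong: $c\sqrt{\lambda_0\log\log L/L}<\sqrt{\lambda_0\log\log L/L}$ for every $L$, so no choice of $L_0$ repairs a missing multiplicative constant inside the rate. Taking $c=c_L\uparrow 1$ does not help either, since the lower bound you extract is still $c_L\sqrt{\lambda_0\log\log L/L}<\sqrt{\lambda_0\log\log L/L}$.

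The paper avoids introducing any $c<1$ by instead \emph{shrinking $K$}: it sets $K=K_{\alpha,\beta,L}=\lceil(\log_2 L)/C'_{\alpha,\beta}\rceil$ with $C'_{\alpha,\beta}=4(1-\alpha-\beta)^2$, so that the diagonal term $K\exp(\log\log L)/K^2=\log L/K$ is bounded by $C'_{\alpha,\beta}\log 2$, which is strictly less than $C'_{\alpha,\beta}$ (using $\log 2<1$). This is where the hypothesis $\alpha+\beta<1/2$ is really needed: it guarantees $C'_{\alpha,\beta}>1$, so $K<\log_2 L$ is still of order $\log L$ (and $\delta_k\leq R-\lambda_0$ remains satisfiable for $L$ large). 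The remaining slack $1+(1-\log 2)C'_{\alpha,\beta}$ is then enough to absorb the off-diagonal sum, which the paper bounds by splitting the lag at $\lfloor(\log L)^\eta\rfloor$ for some $0<\eta<1-1/\sqrt{2}$: the small-lag block contributes $O\pa{(\log L)^{\eta+1/\sqrt{2}-1}}=o(1)$ and the large-lag block is bounded by $\exp\pa{\log\log L/2^{(\log L)^\eta/2}}\to 1$. To repair your argument, replace the fixed $c<1$ by $c=1$, take $K=\lceil(\log_2 L)/C'_{\alpha,\beta}\rceil$, and redo the splitting of the off-diagonal sum with a polynomially growing lag threshold $\lfloor(\log L)^\eta\rfloor$ rather than the logarithmic one $m\sim\log_2\eta$; the $m\sim\log\log\log L$ cut you chose is too fine to control the near-diagonal once $c=1$, since $e^\eta=\log L$ no longer decays.
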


Again, following the study and the notation of Section \ref{Sec:knownlocation}, we assume now that $L\geq 3$, we consider the discrete subset of $(0,1)$ of the dyadic form
\[\left\{\tau_k=1 -2^{-k}; ~ k \in \lbrace 1,\ldots,\lfloor \log_{2} L \rfloor\rbrace \right\}\enspace,\]
and  we set $u_\alpha=\alpha/\lfloor \log_{2}(L) \rfloor$, which allows to define the two following tests:
\begin{equation} \label{testN1alt8}
\phi_{8,\alpha}^{(1)}(N) = \1{\max_{k\in \lbrace 1,\ldots, \lfloor \log_{2} L \rfloor \rbrace} \left( |S_{\tau_k,1}(N)| - s_{\lambda_0,\tau_k,1} \pa{1 - u_\alpha} \right)>0}\enspace,
\end{equation}
where $S_{\tau_1,\tau_2}(N)$ is the linear statistic defined by \eqref{stat_alt6_N1}, $s_{\lambda_0,\tau_1,\tau_2}(u)$ the $u$-quantile of $| S_{\tau_1,\tau_2}(N)|$ under $\hzero$, and
\begin{equation} \label{testN2alt8}
\phi_{8,\alpha}^{(2)}(N) = \1{\max_{k\in \lbrace 1,\ldots, \lfloor \log_{2} L \rfloor \rbrace} \left(  T_{\tau_k,1 }(N)  - t_{\lambda_0,\tau_k,1} \pa{1 - u_\alpha} \right)>0}\enspace,
\end{equation}
where $T_{\tau_1,\tau_2}(N)$ is the quadratic statistic \eqref{estimateurSB} and $t_{\lambda_0,\tau_1,\tau_2}(u)$ its $u$-quantile  under $\hzero$.

\begin{proposition}[Minimax upper bound for $\bold{[Alt.8]}$] \label{UBalt8}
Let $\alpha, \beta$ in $(0,1)$ with $\alpha+\beta<1$, $\lambda_{0}>0$ and $R>\lambda_0$. Let $\phi_{8,\alpha}^{(1/2)}$ be one of the tests $\phi_{8,\alpha}^{(1)}$ and  $\phi_{8,\alpha}^{(2)}$ of $\hzero$ versus $\hone\ "\lambda\in\calS_{\bbul,\bbul\bbul, 1-\bbul\bbul}[\lambda_0,R]"$ respectively defined by \eqref{testN1alt8} and \eqref{testN2alt8}. Then $\phi_{8,\alpha}^{(1/2)}$ is of level $\alpha$, that is $P_{\lambda_0}\pa{\phi_{8,\alpha}^{(1/2)}(N)=1}\leq \alpha$.
Moreover, there exists a constant $C(\alpha, \beta, \lambda_{0},R)>0$ such that
$$ \SRb\pa{\phi_{8,\alpha}^{(1/2)},\calS_{\bbul,\bbul\bbul,1-\bbul\bbul}[\lambda_0,R]} \leq C(\alpha, \beta, \lambda_{0},R)\sqrt{{\log \log L}/{L}}\enspace,$$
which entails in particular $\mSRab\pa{\calS_{\bbul,\bbul\bbul,1-\bbul\bbul}[\lambda_0,R]} \leq C(\alpha, \beta, \lambda_{0},R)\sqrt{{\log \log L}/{L}}$.
\end{proposition}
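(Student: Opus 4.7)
The plan is to follow closely the proof of Proposition~\ref{UBalt6}, exploiting the structural symmetry between the unknown-length bump problem $\bold{[Alt.6]}$ and the unknown-location jump problem $\bold{[Alt.8]}$: the dyadic grid of right endpoints $\tau_k = 1-2^{-k}$ here plays the role of the dyadic grid of lengths $\ell_{\tau^*,k} = (1-\tau^*)2^{-k}$ there. The level control is immediate: for both tests, $s_{\lambda_0,\tau_k,1}(1-u_\alpha)$ and $t_{\lambda_0,\tau_k,1}(1-u_\alpha)$ are by definition $(1-u_\alpha)$-quantiles under $P_{\lambda_0}$, so a union bound over $k \in \{1,\ldots,\lfloor\log_2 L\rfloor\}$ with $u_\alpha = \alpha/\lfloor\log_2 L\rfloor$ yields $P_{\lambda_0}(\phi_{8,\alpha}^{(1/2)}(N)=1) \leq \alpha$.

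For the power, fix $\lambda = \lambda_0 + \delta \mathds{1}_{(\tau,1]} \in \calS_{\bbul,\bbul\bbul,1-\bbul\bbul}[\lambda_0,R]$ with $d_2(\lambda,\calS_0[\lambda_0]) = |\delta|\sqrt{1-\tau} \geq C\sqrt{\log\log L/L}$ for a constant $C=C(\alpha,\beta,\lambda_0,R)$ to be chosen large enough. Since $|\delta| \leq R$, one has $1-\tau \geq C^2 \log\log L/(R^2 L) \geq 1/L$ for $L$ large, so the index $k^* = \lceil \log_2(1/(1-\tau)) \rceil$ belongs to $\{1,\ldots,\lfloor\log_2 L\rfloor\}$ and satisfies $1-\tau \leq 1-\tau_{k^*} \leq 2(1-\tau)$, with $(\tau,1] \subset (\tau_{k^*}, 1]$. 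It then suffices to show that the local test at scale $k^*$ rejects with probability at least $1-\beta$ under $P_\lambda$.

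For the linear test, the inclusion above gives $E_\lambda[S_{\tau_{k^*},1}(N)] = \delta(1-\tau) L$, whose absolute value is at least $C\sqrt{(1-\tau) L \log\log L}$. Combining the quantile upper bound $s_{\lambda_0,\tau_{k^*},1}(1-u_\alpha) \leq C_1\bigl(\sqrt{\lambda_0(1-\tau_{k^*})L\log\log L}+\log\log L\bigr)$, derived from Le Gu\'evel's exponential inequality \cite{LeGuevel2021} (same bound as the one underlying Lemma~\ref{QuantilesT}-type quantile estimates of Section~\ref{Sec:FTresults}), with a Bernstein-type deviation for $S_{\tau_{k^*},1}(N)-\delta(1-\tau)L$ under $P_\lambda$, whose variance is bounded by $R(1-\tau_{k^*})L \leq 2R(1-\tau)L$, one absorbs both fluctuation terms into $|\delta|(1-\tau)L/2$ by taking $C$ large enough, which establishes the power bound. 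For the quadratic test, one computes $E_\lambda[T_{\tau_{k^*},1}(N)] = \|\Pi_{V_{\tau_{k^*},1}}(\lambda-\lambda_0)\|_2^2 = \delta^2(1-\tau)^2/(1-\tau_{k^*}) \geq \delta^2(1-\tau)/2$, and combines the control of $t_{\lambda_0,\tau_{k^*},1}(1-u_\alpha)$ from Lemma~\ref{QuantilesT} with the Houdr\'e--Reynaud-Bouret-type concentration for quadratic functionals of Poisson processes already used in the proofs of Propositions~\ref{UBalt3} and~\ref{UBalt6}, to reach the same conclusion.

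The main technical obstacle, as in Proposition~\ref{UBalt6}, is the sharp simultaneous control of the quantile $t_{\lambda_0,\tau_{k^*},1}(1-u_\alpha)$ and of the deviation of $T_{\tau_{k^*},1}(N)$ under $P_\lambda$ rather than $P_{\lambda_0}$, uniformly in the scale $(1-\tau_k)L$ which ranges over almost the full dyadic range $[1/L,1/2]$; these points are however handled by the same inequalities (Le Gu\'evel, Houdr\'e--Reynaud-Bouret, Bernstein) already deployed for $\bold{[Alt.6]}$, so no genuinely new tool is required for $\bold{[Alt.8]}$ beyond adapting the bookkeeping to scan over $\tau_k$ instead of $\ell_{\tau^*,k}$.
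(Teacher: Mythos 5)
Your overall strategy (dyadic scan over right endpoints $\tau_k = 1-2^{-k}$, locating a grid scale $\tau_{k^*}$ comparable to $1-\tau$, then combining the quantile bounds with Chebyshev under $P_\lambda$) is indeed the paper's approach. However, your geometric claims about $k^*$ are internally inconsistent and get the inclusion direction wrong, which corrupts the subsequent moment computations.

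You set $k^* = \lceil \log_2(1/(1-\tau))\rceil$ but then assert $1-\tau \leq 1-\tau_{k^*} \leq 2(1-\tau)$ and $(\tau,1]\subset(\tau_{k^*},1]$, i.e.\ $\tau_{k^*}\leq\tau$. The ceiling gives exactly the opposite: $k^* \geq \log_2(1/(1-\tau))$, hence $1-\tau_{k^*}=2^{-k^*}\leq 1-\tau$, i.e.\ $\tau_{k^*}\geq\tau$ and $(\tau_{k^*},1]\subseteq(\tau,1]$. Moreover, your claimed inclusion $(\tau,1]\subset(\tau_{k^*},1]$ is geometrically impossible for every $\tau<1/2$, since every grid point $\tau_k=1-2^{-k}$ with $k\geq 1$ satisfies $\tau_k\geq 1/2>\tau$; no grid element could ever cover $(\tau,1]$ from the left there. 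So if you had tried to repair the argument by switching to $\lfloor\cdot\rfloor$ to realize your claimed inclusion, the resulting $k^*$ would fall outside $\{1,\ldots,\lfloor\log_2 L\rfloor\}$ for half the parameter range. Consequently, the expectation formulas you derive from this wrong inclusion are incorrect: with the actual $(\tau_{k^*},1]\subseteq(\tau,1]$, one has $E_\lambda[S_{\tau_{k^*},1}(N)]=\delta(1-\tau_{k^*})L$ (not $\delta(1-\tau)L$) and $E_\lambda[T_{\tau_{k^*},1}(N)]=\delta^2(1-\tau_{k^*})$ (not $\delta^2(1-\tau)^2/(1-\tau_{k^*})$). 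These happen to be comparable (since $(1-\tau)/2 < 1-\tau_{k^*}\leq 1-\tau$), so the final rate comes out right, but the argument as you wrote it does not establish it.

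The fix is simply to state the inclusion the right way round, as the paper does: choose $k_\tau=\lfloor-\log_2(1-\tau)\rfloor+1$ so that $(1-\tau)/2\leq 1-\tau_{k_\tau}<1-\tau$, hence $\tau_{k_\tau}>\tau$, $(\tau_{k_\tau},1]\subset(\tau,1]$, and the scanned interval sits entirely in the shifted region where the intensity is $\lambda_0+\delta$. Then $N(\tau_{k_\tau},1]$ is Poisson with parameter $(\lambda_0+\delta)(1-\tau_{k_\tau})L$, which gives clean expressions for the mean and variance of $S_{\tau_{k_\tau},1}(N)$, and $E_\lambda[T_{\tau_{k_\tau},1}(N)]=\delta^2(1-\tau_{k_\tau})\geq d_2^2/2$. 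Also note that to guarantee $k_\tau\leq\lfloor\log_2 L\rfloor$ you should arrange $1-\tau\geq 2/L$ (which follows from the distance condition $d_2^2\geq 2R^2/L$ together with $|\delta|\leq R$); your weaker $1-\tau\geq 1/L$ may let the index escape the grid.
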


\subsubsection{Transitory change} \label{Sec:transitory}

In this section, we address the transitory change detection problem, focusing here on the question of adaptation to unknown location and length.

As explained above, we will see that minimax adaptation to these both parameters has the most important cost in the present study, whose order is as large as $\sqrt{\log L}$, so that adaptation to the height will have no additional cost.

\smallskip

Let us first give lower bounds for the minimax separation rates, focusing on the case where the change height is known since the general case where all three parameters, location, length and height of the change are unknown then follows easily.

Hence, we introduce for $\delta^*$ in $(- \lambda_{0}, + \infty)\setminus\{0\}$ the alternative set
\begin{multline}\label{alt9}
\bold{[Alt.9]}\ \calS_{\delta^*,\bbul\bbul,\bbul\bbul\bbul}[\lambda_0]= \big\{ \lambda:[0,1]\to(0,+\infty),\ \exists \tau\in (0,1),\exists \ell \in (0,1-\tau),\\
\forall t\in [0,1]\quad \lambda(t) = \lambda_{0} + \delta^*\mathds{1}_{(\tau,\tau+\ell]}(t) \big\}\enspace.
\end{multline}

\begin{proposition}[Minimax lower bound for $\bold{[Alt.9]}$] \label{LBalt9}
Let $\alpha, \beta$ in $(0,1)$, $\lambda_0>0$ and $\delta^*$ in $(- \lambda_{0}, + \infty)\setminus\{0\}$. There exists $L_0(\alpha,\beta,\lambda_0,\delta^*)>0$ such that for all $L \geq L_0(\alpha,\beta,\lambda_0,\delta^*)$, 
\[\mSRab\pa{\calS_{\delta^*,\bbul\bbul,\bbul\bbul\bbul}[\lambda_0]} \geq  \sqrt{\lambda_{0} \log L/(2L)}\enspace.\]
\end{proposition}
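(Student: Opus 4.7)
The plan is to apply the nonasymptotic Bayesian reduction recalled in Section \ref{Keys} (Ingster--Le Cam--Baraud): construct a prior $\pi$ supported on alternatives in $\calS_{\delta^*,\bbul\bbul,\bbul\bbul\bbul}[\lambda_0]$ all at the same distance $r$ from $\lambda_0$, upper bound the chi-square $E_{\lambda_0}[(dP_\pi/dP_{\lambda_0})^2]-1$ by $4(1-\alpha-\beta)^2$, and conclude that $\mSRab \geq r$. The free parameters are the common bump length $\ell$ and the number $M$ of locations put into the prior, and they will be chosen to balance the exponential growth of $E_{\lambda_0}[L_k^2]$ against the averaging gain $1/M$.

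First I would fix $\ell \in (0,1)$ (to be chosen later), let $M=\lfloor 1/\ell\rfloor$, set $\tau_k=(k-1)\ell$ for $k=1,\dots,M$, and define $\lambda_k = \lambda_{0}+\delta^* \mathds{1}_{(\tau_k,\tau_k+\ell]}$. All these intensities lie in $\calS_{\delta^*,\bbul\bbul,\bbul\bbul\bbul}[\lambda_0]$ and satisfy $d_2(\lambda_k,\calS_0[\lambda_0]) = |\delta^*|\sqrt{\ell}$. Taking $\pi$ the uniform distribution on $\{\lambda_1,\dots,\lambda_M\}$, the likelihood ratio $L_k=dP_{\lambda_k}/dP_{\lambda_0}$ is given by Girsanov's formula \eqref{LR} as $L_k=\exp[\ln(1+\delta^*/\lambda_0)\,N(\tau_k,\tau_k+\ell] - \delta^*\ell L]$.

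The key computation is the second moment of the mixture likelihood ratio. For $k\neq j$ the intervals $(\tau_k,\tau_k+\ell]$ and $(\tau_j,\tau_j+\ell]$ are disjoint, so the counts are independent under $P_{\lambda_0}$ and $E_{\lambda_0}[L_k L_j]=E_{\lambda_0}[L_k]E_{\lambda_0}[L_j]=1$. For $k=j$, using the Poisson Laplace transform with mean $\lambda_0\ell L$ at $\theta=2\ln(1+\delta^*/\lambda_0)$ gives $E_{\lambda_0}[L_k^2] = \exp((\delta^*)^2\ell L/\lambda_0)$. Therefore
\begin{equation*}
E_{\lambda_0}\!\left[(dP_\pi/dP_{\lambda_0})^2\right]-1 \;=\; \frac{1}{M}\pa{\exp\!\pa{\frac{(\delta^*)^2\ell L}{\lambda_0}}-1}.
\end{equation*}

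Finally I would choose $\ell=\lambda_0\log L/(2L(\delta^*)^2)$ (admissible, i.e. $\ell<1$, for $L$ large enough). This gives $\exp((\delta^*)^2\ell L/\lambda_0)=\sqrt L$ and $M\geq 1/(2\ell) = L(\delta^*)^2/(\lambda_0 \log L)$, so the chi-square is bounded by $\lambda_0 \log L/((\delta^*)^2\sqrt L)$, which is $\leq 4(1-\alpha-\beta)^2$ for every $L\geq L_0(\alpha,\beta,\lambda_0,\delta^*)$. Invoking the Bayesian criterion, $\mSRab(\calS_{\delta^*,\bbul\bbul,\bbul\bbul\bbul}[\lambda_0]) \geq |\delta^*|\sqrt{\ell} = \sqrt{\lambda_0\log L/(2L)}$. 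The only real obstacle is picking the scale $\ell$: the exponential blow-up of $E_{\lambda_0}[L_k^2]$ forces $(\delta^*)^2\ell L/\lambda_0$ to stay close to $(1/2)\log L$, and the averaging gain $1/M\asymp \ell$ exactly absorbs $\sqrt L$, which is what produces the $\sqrt{\log L/L}$ rate and contrasts with the $\sqrt{\log\log L/L}$ regime of the jump case where only nested scales, not translates, are available.
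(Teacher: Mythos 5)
Your proof is correct and arrives at exactly the stated bound via the same Bayesian/chi-square reduction (Lemmas~\ref{mSR} and~\ref{lemmebayesien}, plus Girsanov), but the concrete prior you use is genuinely different from the paper's. The paper places $\lceil L^{3/4}\rceil$ translates at locations $\tau_k=k/L$, spacing $1/L$, so the bumps overlap heavily; the second-moment computation then requires splitting the off-diagonal sum into ``disjoint'' pairs (cross-term $=1$) and ``overlapping'' pairs (cross-term $\sqrt{L}\,e^{-\delta^{*2}(k'-k)/\lambda_0}$), and controlling the latter by a geometric series, which is where the factor $(e^{\delta^{*2}/\lambda_0}+1)/(e^{\delta^{*2}/\lambda_0}-1)$ appears. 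You instead take $M\approx 1/\ell$ pairwise \emph{disjoint} translates tiling $[0,1]$. That kills every off-diagonal term exactly ($E_{\lambda_0}[L_kL_j]=1$ by independence), so the chi-square is simply $(\exp(\delta^{*2}\ell L/\lambda_0)-1)/M$, and the bookkeeping reduces to a one-line balance between $\exp$-growth and $1/M$ averaging. Your variant is shorter and makes the mechanism behind the $\sqrt{\log L/L}$ rate more transparent (exponential blow-up $\sqrt L$ absorbed by $M\asymp L/\log L$); the paper's overlapping scheme buys nothing extra here, and your closing remark correctly identifies the structural reason the translated-family rate is $\log L$ rather than the $\log\log L$ of the nested-scale (jump) family.

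One small technical repair: with $\tau_k=(k-1)\ell$, $k=1,\dots,M$, the first alternative has $\tau_1=0\notin(0,1)$, and if $1/\ell$ is an integer the last has $\ell=1-\tau_M$, both violating the strict inequalities in the definition~\eqref{alt9} of $\calS_{\delta^*,\bbul\bbul,\bbul\bbul\bbul}[\lambda_0]$. Either shift to $\tau_k=(k-\tfrac12)\ell$, or drop the endpoints and run $k$ over $\{2,\dots,M-1\}$; this changes $M$ by $O(1)$ and does not affect the rate or the admissible $L_0$. You should also record that the conclusion of Lemma~\ref{lemmebayesien} needs $\alpha+\beta<1$ for the threshold $4(1-\alpha-\beta)^2$ to be usable, though the paper's statement of Proposition~\ref{LBalt9} has the same omission.
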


Now considering the very general alternative set 
\vspace{-0.3cm}
\begin{multline}\label{alt10-pre}
\calS_{\bbul,\bbul\bbul,\bbul\bbul}[\lambda_0]= \big\{ \lambda:[0,1]\to(0,+\infty),\  \exists \delta \in (- \lambda_{0}, + \infty)\setminus\{0\}, \exists \tau\in (0,1),\\
\exists \ell \in (0,1-\tau),\ \forall t\in[0,1]\quad   \lambda(t) = \lambda_{0} + \delta \mathds{1}_{(\tau,\tau+\ell]}(t) \big\}\enspace,
\end{multline}
since it contains $\calS_{\bbul,\tau^*,\bbul\bbul\bbul}[\lambda_0]$ defined by \eqref{alt6-pre} for any $\tau^*$ in $(0,1)$, Lemma \ref{LBalt6-pre} straightforwardly leads to an infinite minimax separation rate lower bound.
\begin{corollary} \label{LBalt10-pre}
Let $\alpha,\beta$ in $(0,1)$ such that $\alpha + \beta <1$. For the problem of testing
  $\hzero \ "\lambda\in \calS_0[\lambda_0]=\{\lambda_0\}"$ versus $\hone\ "\lambda\in\calS_{\bbul,\bbul\bbul,\bbul\bbul\bbul}[\lambda_0]"$, with $\calS_{\bbul,\bbul\bbul,\bbul\bbul\bbul}[\lambda_0]$ defined by \eqref{alt10-pre}, one has $\mSRab\pa{\calS_{\bbul,\bbul\bbul,\bbul\bbul\bbul}[\lambda_0]}  = + \infty$.
\end{corollary}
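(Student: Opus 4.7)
The plan is to deduce the corollary directly from Lemma~\ref{LBalt6-pre} by a set inclusion and monotonicity argument; no new probabilistic work is needed.

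First, I would observe the inclusion
\[
\calS_{\bbul,\tau^*,\bbul\bbul\bbul}[\lambda_0]\subset \calS_{\bbul,\bbul\bbul,\bbul\bbul\bbul}[\lambda_0]
\]
for every fixed $\tau^*$ in $(0,1)$, which is immediate from comparing the defining formulas \eqref{alt6-pre} and \eqref{alt10-pre}: elements of the left-hand side are those elements of the right-hand side whose (otherwise free) location parameter happens to equal $\tau^*$. Consequently, for the same null hypothesis $\hzero \ "\lambda\in\calS_0[\lambda_0]=\{\lambda_0\}"$, any level $\alpha$ test of $\hzero$ versus $\hone\ "\lambda\in\calS_{\bbul,\bbul\bbul,\bbul\bbul\bbul}[\lambda_0]"$ is in particular a level $\alpha$ test of $\hzero$ versus $\hone\ "\lambda\in\calS_{\bbul,\tau^*,\bbul\bbul\bbul}[\lambda_0]"$, and its $\beta$-uniform separation rate can only be larger over the larger alternative set.

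Second, I would invoke the monotonicity property of the minimax separation rate (as recalled in Lemma~\ref{mSR} and already used to deduce Corollary~\ref{LBalt4}): since the infimum defining $\mSRab$ in \eqref{defmSR} is the same in both cases, while the supremum in \eqref{defSR} is taken over a larger set of alternatives in the right-hand side, one has
\[
\mSRab\pa{\calS_{\bbul,\bbul\bbul,\bbul\bbul\bbul}[\lambda_0]}\ \geq\ \mSRab\pa{\calS_{\bbul,\tau^*,\bbul\bbul\bbul}[\lambda_0]}\enspace.
\]
Since Lemma~\ref{LBalt6-pre} asserts that the right-hand side equals $+\infty$ (for any choice of $\tau^*$), the corollary follows immediately.

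There is no real obstacle here: the content of the result lies entirely in Lemma~\ref{LBalt6-pre}, whose proof requires the genuine minimax argument using the unbounded range of admissible change heights $\delta$, and the present corollary is only the observation that dropping the knowledge of the change location can only make the problem harder. For the write-up I would simply state the two lines above, referring to Lemma~\ref{mSR} for the monotonicity and to Lemma~\ref{LBalt6-pre} for the preliminary infinite lower bound.
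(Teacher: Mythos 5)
Your argument matches the paper's own: the inclusion $\calS_{\bbul,\tau^*,\bbul\bbul\bbul}[\lambda_0]\subset\calS_{\bbul,\bbul\bbul,\bbul\bbul\bbul}[\lambda_0]$ combined with the monotonicity of the minimax separation rate in Lemma~\ref{mSR} yields the result directly from Lemma~\ref{LBalt6-pre}. The proof is correct and takes the same route the authors intended (they state it in one line right before the corollary).
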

We therefore restrict the alternative set to the one defined for $R \geq \lambda_0$ by
\begin{multline}\label{alt10}
\bold{[Alt.10]}\  \calS_{\bbul,\bbul\bbul,\bbul\bbul\bbul}[\lambda_0,R]=
 \big\{ \lambda:[0,1]\to(0,R],\ \exists \delta \in (- \lambda_{0},R- \lambda_{0}]\setminus\lbrace 0\rbrace ,\exists \tau \in (0,1),\\
 \exists \ell\in (0,1-\tau),\ \forall t\in[0,1]\quad \lambda(t) = \lambda_{0} + \delta \mathds{1}_{(\tau,\tau+\ell]}(t) \big\}\enspace,
\end{multline}
 and deal with the problem of testing $\hzero \ "\lambda\in \calS_0[\lambda_0]=\{\lambda_0\}"$ versus $\hone\ "\lambda\in\calS_{\bbul,\bbul\bbul,\bbul\bbul\bbul}[\lambda_0,R]"$. This alternative set $\calS_{\bbul,\bbul\bbul,\bbul\bbul\bbul}[\lambda_0,R]$ includes $\calS_{R-\lambda_0,\bbul\bbul,\bbul\bbul\bbul}[\lambda_0]$   defined by \eqref{alt9} when $R>\lambda_0$, and $\calS_{-\lambda_0/2,\bbul\bbul,\bbul\bbul\bbul}[\lambda_0]$ when $\lambda_0=R$. Therefore, Proposition \ref{LBalt9} has the following direct corollary, whose proof as well as the proof of Corollary \ref{LBalt10-pre} is omitted for simplicity.
  
\begin{corollary}[Minimax lower bound for $\bold{[Alt10]}$] \label{LBalt10}
Let $\alpha,\beta$ in $(0,1)$ with $\alpha+\beta<1$, $\lambda_0>0$ and $R\geq \lambda_0$. There exists $L_0(\alpha,\beta,\lambda_0,R)>0$ such that for all $L \geq L_0(\alpha,\beta,\lambda_0,R)$, 
\[\mSRab\pa{\calS_{\bbul,\bbul\bbul,\bbul\bbul\bbul}[\lambda_0,R]} \geq  \sqrt{\lambda_{0} \log L/(2L)}\enspace.\]
\end{corollary}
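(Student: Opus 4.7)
The plan is to reduce the problem directly to Proposition \ref{LBalt9} via a monotonicity argument on the minimax separation rate. The key observation, already stated in the paragraph preceding the corollary, is that $\calS_{\bbul,\bbul\bbul,\bbul\bbul\bbul}[\lambda_0,R]$ contains a fixed-height sub-alternative of the form $\calS_{\delta^*,\bbul\bbul,\bbul\bbul\bbul}[\lambda_0]$, for a carefully chosen $\delta^*$ that respects the imposed boundedness constraint $\lambda \leq R$. Namely, I would pick $\delta^* = R - \lambda_0$ when $R>\lambda_0$, and $\delta^* = -\lambda_0/2$ when $R = \lambda_0$; in both situations $\delta^*$ lies in $(-\lambda_0, R-\lambda_0]\setminus\{0\}$ (note that this set is nonempty precisely because $R \geq \lambda_0$), and with this choice every intensity in $\calS_{\delta^*,\bbul\bbul,\bbul\bbul\bbul}[\lambda_0]$ takes values in $(0,R]$, so the inclusion $\calS_{\delta^*,\bbul\bbul,\bbul\bbul\bbul}[\lambda_0] \subseteq \calS_{\bbul,\bbul\bbul,\bbul\bbul\bbul}[\lambda_0,R]$ is immediate from the definitions \eqref{alt9} and \eqref{alt10}.

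From that inclusion I would invoke the monotonicity of the minimax separation rate (Lemma~\ref{mSR}): for any fixed null set $\calS_0[\lambda_0]$, the map $\calS_1 \mapsto \mSRab(\calS_1)$ is nondecreasing with respect to inclusion of the alternative set, since any test that is uniformly powerful over the larger set is in particular uniformly powerful over the smaller one. This yields
\[
\mSRab\pa{\calS_{\bbul,\bbul\bbul,\bbul\bbul\bbul}[\lambda_0,R]} \geq \mSRab\pa{\calS_{\delta^*,\bbul\bbul,\bbul\bbul\bbul}[\lambda_0]}\enspace.
\]
Plugging in the lower bound from Proposition~\ref{LBalt9} on the right-hand side with this specific $\delta^*$ delivers the claimed bound $\sqrt{\lambda_0 \log L/(2L)}$, valid as soon as $L \geq L_0(\alpha,\beta,\lambda_0,\delta^*)$. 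Because $\delta^*$ is itself a function of $\lambda_0$ and $R$ via the two cases above, this threshold can be renamed $L_0(\alpha,\beta,\lambda_0,R)$, which matches the statement.

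There is no genuine obstacle here: the whole content of the corollary is bookkeeping, namely that the bounded set is large enough to contain a one-parameter subfamily over which Proposition~\ref{LBalt9} already gives the rate. The only point deserving care is the edge case $R=\lambda_0$, where $R-\lambda_0 = 0$ is excluded from the admissible heights, forcing the use of a negative $\delta^*$ such as $-\lambda_0/2$; this is precisely the reason the restriction to bounded intensities does not degrade the rate.
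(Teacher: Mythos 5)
Your proposal is correct and matches the paper's own (omitted but signposted) argument exactly: the paper explicitly notes in the sentence preceding the corollary that $\calS_{\bbul,\bbul\bbul,\bbul\bbul\bbul}[\lambda_0,R]$ includes $\calS_{R-\lambda_0,\bbul\bbul,\bbul\bbul\bbul}[\lambda_0]$ when $R>\lambda_0$ and $\calS_{-\lambda_0/2,\bbul\bbul,\bbul\bbul\bbul}[\lambda_0]$ when $R=\lambda_0$, then cites monotonicity and Proposition~\ref{LBalt9}, which is precisely your reduction.
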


In order to prove that the above lower bounds are sharp, we secondly construct two novel minimax adaptive tests according to a scanning aggregation principle again.
 
As expected, the test named $\phi_{9/10,\alpha}^{(1)}$ is based on the linear statistic $S_{\tau_1,\tau_2}(N)$ defined by \eqref{stat_alt6_N1} and the $u$-quantiles of $| S_{\tau_1,\tau_2}(N)|$ under $\hzero$ denoted by $s_{\lambda_0,\tau_1,\tau_2}(u)$, while the test named $\phi_{9/10,\alpha}^{(2)}$ is based on the quadratic statistic $T_{\tau_1,\tau_2}(N)$ defined by \eqref{estimateurSB} and its $u$-quantiles  under $\hzero$ denoted by $t_{\lambda_0,\tau_1,\tau_2}(u)$. Since the lower bound shows an additional cost for adaptation to change location and length of order $\sqrt{\log L}$ instead of at most $\sqrt{\log \log L}$ when dealing with adaptation to only one of these parameters, we do not necessarily need to consider a dyadic set of aggregated tests. More precisely, setting  $u_\alpha=2\alpha/( \lceil L \rceil( \lceil L \rceil+1))$, we define
\begin{equation} \label{testN1alt9-10}
\phi_{9/10,\alpha}^{(1)}(N) = \1{\max_{k\in \lbrace 0,\ldots, \lceil L \rceil -1\rbrace, k'\in \lbrace 1,\ldots, \lceil L \rceil-k\rbrace} \left( \big|S_{\frac{k}{\lceil L\rceil},\frac{k+k'}{\lceil L\rceil}}(N)\big| - s_{\lambda_0,\frac{k}{\lceil L\rceil},\frac{k+k'}{\lceil L\rceil}}\pa{1 - u_\alpha} \right)>0}\enspace,
\end{equation}
and
\begin{equation} \label{testN2alt9-10}
\phi_{9/10,\alpha}^{(2)}(N) = \1{\max_{k\in \lbrace 0,\ldots, \lceil L \rceil -1\rbrace, k'\in \lbrace 1,\ldots, \lceil L \rceil-k\rbrace} \left(  T_{\frac{k}{\lceil L\rceil},\frac{k+k'}{\lceil L\rceil}}(N)  - t_{\lambda_0,\frac{k}{\lceil L\rceil},\frac{k+k'}{\lceil L\rceil}} \pa{1 - u_\alpha} \right)>0}\enspace.
\end{equation}
\begin{proposition}[Minimax upper bound for $\bold{[Alt.10]}$] \label{UBalt10}
Let $\alpha,\beta$  in $(0,1)$, $\lambda_{0}>0$ and $R\geq \lambda_0$. Let $\phi_{9/10,\alpha}^{(1/2)}$ be one of the tests $\phi_{9/10,\alpha}^{(1)}$ and  $\phi_{9/10,\alpha}^{(2)}$ of $\hzero$ versus $\hone\ "\lambda\in\calS_{\bbul,\bbul\bbul,\bbul\bbul\bbul}[\lambda_0,R]"$ respectively defined by \eqref{testN1alt9-10} and \eqref{testN2alt9-10}. Then $\phi_{9/10,\alpha}^{(1/2)}$ is of level $\alpha$, that is $P_{\lambda_0}\pa{\phi_{9/10,\alpha}^{(1/2)}(N)=1}\leq \alpha$.
Moreover, there exists a constant $C(\alpha, \beta, \lambda_{0},R)>0$ such that
$$ \SRb\pa{\phi_{9/10,\alpha}^{(1/2)},\calS_{\bbul,\bbul\bbul,\bbul\bbul\bbul}[\lambda_0,R]} \leq C(\alpha, \beta, \lambda_{0},R)\sqrt{{\log L}/{L}}\enspace,$$
which entails in particular $\mSRab\pa{\calS_{\bbul,\bbul\bbul,\bbul\bbul\bbul}[\lambda_0,R]} \leq C(\alpha, \beta, \lambda_{0},R)\sqrt{{\log L}/{L}}$.
\end{proposition}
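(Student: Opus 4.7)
The plan is to follow the general scheme already used for Propositions \ref{UBalt4}, \ref{UBalt6} and \ref{UBalt8}, adapted to the finer grid of $\lceil L\rceil(\lceil L\rceil+1)/2$ intervals and to the Bonferroni correction $u_\alpha=2\alpha/(\lceil L\rceil(\lceil L\rceil+1))$ that reflects the simultaneous ignorance of the change location and length. The level control $P_{\lambda_0}(\phi_{9/10,\alpha}^{(j)}(N)=1)\leq\alpha$ for $j\in\{1,2\}$ will follow at once from a union bound over the $\lceil L\rceil(\lceil L\rceil+1)/2$ candidate intervals $(k/\lceil L\rceil,(k+k')/\lceil L\rceil]$ combined with the definition of $u_\alpha$.

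For the separation-rate bound, I would first fix $\lambda=\lambda_0+\delta\mathds{1}_{(\tau,\tau+\ell]}$ in $\calS_{\bbul,\bbul\bbul,\bbul\bbul\bbul}[\lambda_0,R]$ with $|\delta|\sqrt{\ell}\geq C(\alpha,\beta,\lambda_0,R)\sqrt{\log L/L}$, and use the boundedness constraint $|\delta|\leq R$ to deduce $\ell\geq (C/R)^2\log L/L$, hence $\ell\geq 4/\lceil L\rceil$ as soon as $L\geq L_0(\alpha,\beta,\lambda_0,R)$. Setting $k_0=\lceil \tau\lceil L\rceil\rceil$ and $k_0+k_0'=\lfloor(\tau+\ell)\lceil L\rceil\rfloor$, the grid interval $I_0=(k_0/\lceil L\rceil,(k_0+k_0')/\lceil L\rceil]$ is contained in the bump support with $|I_0|\geq \ell/2$, and $\lambda$ reduces to the constant $\lambda_0+\delta$ on $I_0$. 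It then suffices to prove that the statistic associated with this specific pair exceeds its own $(1-u_\alpha)$-quantile with probability at least $1-\beta$ under $P_\lambda$.

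The core of the proof is then the control of these quantiles combined with the concentration of the statistics under $P_\lambda$. For the linear test, the supremum martingale inequality from \cite{LeGuevel2021}, already exploited in the proof of Proposition \ref{UBalt8}, yields $s_{\lambda_0,k_0/\lceil L\rceil,(k_0+k_0')/\lceil L\rceil}(1-u_\alpha)\leq C_1(\alpha,\lambda_0)\sqrt{L|I_0|\log L}+C_2(\alpha)\log L$, while under $P_\lambda$ a Bernstein-type inequality applied to $S_{\cdot,\cdot}(N)$ (of expectation $L\delta|I_0|$ and variance $(\lambda_0+\delta)L|I_0|$) ensures $|S|\geq L|\delta||I_0|/2$ with probability at least $1-\beta$ as soon as $|\delta|\sqrt{\ell}\gtrsim\sqrt{\log L/L}$ with an explicit constant depending on $\alpha,\beta,\lambda_0,R$. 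For the quadratic test, $T_{\cdot,\cdot}(N)$ has expectation $\delta^2|I_0|$ under $P_\lambda$, and the same strategy applies by combining the quantile bound of Lemma \ref{QuantilesT} with a Bernstein-type concentration for $T$, obtained by splitting it into its linear-in-$N$ and quadratic-in-$N$ Poisson components. In both cases the Bonferroni cost $\log(1/u_\alpha)\asymp\log L$ enters inside the square root and matches the lower bound of Corollary \ref{LBalt10}.

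The main obstacle lies in the quadratic test: its null-quantile mixes a sub-Gaussian regime, coming from the centered square of a Poisson variable, with a heavier Poisson-tail regime, and its concentration under $P_\lambda$ has to be handled while tracking the dependence on $R$, since $|\delta|$ can be as large as $R-\lambda_0$. This is the point where the exponential inequalities for suprema and $U$-statistics of Houdré and Reynaud-Bouret combined with Bernstein and Bennett inequalities from \cite{BercuDelyonRio} and with chaining, as already announced in the introduction, will be essential; it also explains why the constant in the bound must be allowed to depend on $R$, whereas the simpler linear case could in principle be treated with a cleaner constant depending only on $\lambda_0$.
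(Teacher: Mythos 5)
Your outline of the discretization is on the right track: the union bound gives the level, the constraint $|\delta|\leq R-\lambda_0$ converts a lower bound on $d_2$ into a lower bound on $\ell$, and the grid interval $I_0$ sits inside the bump and captures a constant fraction of it. Those are precisely the opening moves of the paper's proof.

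However, your description of the tools for the quadratic test goes astray. In this \emph{known}-baseline setting the paper does not need (and does not use) the Houdré--Reynaud-Bouret $U$-statistics inequality, nor the Bernstein/Bennett inequalities from \cite{BercuDelyonRio}, nor chaining. Those enter only in Section~\ref{Sec:unknownbaseline}, where the conditioning on $N_1$ turns the quadratic statistic into a degenerate $U$-statistic of a uniform sample (see Lemma~\ref{quantile_T'}) and the linear test statistic into something whose conditional quantile needs a chaining argument (Lemma~\ref{QuantilessupShifted_u}). Here, the quantile of $T_{\tau_1,\tau_2}(N)$ under $P_{\lambda_0}$ is handled in one stroke by Lemma~\ref{QuantilesT}, which is an application of Le Gu\'evel's Theorem~2 to the square martingale $\left(\int_0^t H\,(dN-\lambda_0 L\,ds)\right)^2-\int_0^t H^2\,dN$; no splitting into ``linear-in-$N$'' and ``quadratic-in-$N$'' components is required. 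Likewise, concentration under $P_\lambda$ is obtained by the plain Bienaym\'e--Chebyshev inequality applied to $T_{I_0}(N)$, with moments given by Lemma~\ref{MomentsT}; there is no need for a Bernstein-type bound. Your proposal therefore substitutes a substantially heavier toolbox than the paper actually uses, and if you tried to carry it out you would find the $U$-statistic decomposition does not match the unconditional (Poisson) law you are working with here.

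A smaller inaccuracy: you suggest the linear test ``could in principle be treated with a cleaner constant depending only on $\lambda_0$.'' This is not so. The bound $|\delta|\leq R-\lambda_0$ is used in both the linear and the quadratic case, first to derive $\ell>3/\lceil L\rceil$ from $d_2^2\geq 3R^2/L$, and second to control the variance terms $(\lambda_0+\delta)k_0'L/\lceil L\rceil\leq Rk_0'L/\lceil L\rceil$ (linear) and $\mathrm{Var}_\lambda(T)\leq 4Rd_2^2/L+2R^2/L^2$ (quadratic). In both tests the final constant depends on $R$.
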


As in particular $\calS_{\delta^*,\bbul\bbul,\bbul\bbul\bbul}[\lambda_0]$ is included in $\calS_{\bbul,\bbul\bbul,\bbul\bbul\bbul}[\lambda_0,\lambda_0+\delta^*]$ for any $\delta^*>0$ and $\calS_{\bbul,\bbul\bbul,\bbul\bbul\bbul}[\lambda_0,\lambda_0]$ for any $\delta^*$ in $(- \lambda_{0},0)$, Proposition \ref{UBalt10} has the following immediate corollary, which closes the study of possibly transitory change in a known baseline intensity detection.

\begin{corollary}[Minimax upper bound for $\bold{[Alt.9]}$] \label{UBalt9}
Let $\alpha,\beta$ in $(0,1)$, $\lambda_{0}>0$ and $\delta^*$ in $(- \lambda_{0}, + \infty)\setminus\{0\}$. Let $\phi_{9/10,\alpha}^{(1/2)}$ be one of the tests $\phi_{9/10,\alpha}^{(1)}$ and  $\phi_{9/10,\alpha}^{(2)}$ of $\hzero$ versus $\hone\ "\lambda\in\calS_{\delta^*,\bbul\bbul,\bbul\bbul\bbul}[\lambda_0]"$ respectively defined by \eqref{testN1alt9-10} and \eqref{testN2alt9-10}. Then $\phi_{9/10,\alpha}^{(1/2)}$ is of level $\alpha$, that is $P_{\lambda_0}\pa{\phi_{9/10,\alpha}^{(1/2)}(N)=1}\leq \alpha$.
Moreover, there exists $C(\alpha, \beta, \lambda_{0},\delta^*)>0$ such that
$$ \SRb\pa{\phi_{9/10,\alpha}^{(1/2)},\calS_{\delta^*,\bbul\bbul,\bbul\bbul\bbul}[\lambda_0]} \leq C(\alpha, \beta, \lambda_{0},\delta^*)\sqrt{{\log L}/{L}}\enspace,$$
which entails in particular $\mSRab\pa{\calS_{\delta^*,\bbul\bbul,\bbul\bbul\bbul}[\lambda_0]} \leq C(\alpha, \beta, \lambda_{0},\delta^*)\sqrt{{\log L}/{L}}$.
\end{corollary}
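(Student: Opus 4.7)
The statement is a direct corollary of Proposition \ref{UBalt10}, and the proof plan relies entirely on the set inclusions noted immediately before the statement together with the monotonicity of the uniform separation rate with respect to the alternative set.

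First I would observe that the tests $\phi_{9/10,\alpha}^{(1)}$ and $\phi_{9/10,\alpha}^{(2)}$ do not depend on any upper bound $R$ in their construction: they depend only on $\lambda_0$, $\alpha$, $L$ and the data $N$. Consequently, the level control $P_{\lambda_0}(\phi_{9/10,\alpha}^{(1/2)}(N)=1)\leq \alpha$ asserted here is the exact same statement as the level control already established in Proposition \ref{UBalt10}, since the null hypothesis $\hzero\ "\lambda\in\calS_0[\lambda_0]=\{\lambda_0\}"$ is unchanged. Nothing new has to be proved for that part.

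Next I would exploit the elementary inclusions pointed out just before the corollary: for $\delta^*>0$, one has $\calS_{\delta^*,\bbul\bbul,\bbul\bbul\bbul}[\lambda_0]\subset \calS_{\bbul,\bbul\bbul,\bbul\bbul\bbul}[\lambda_0,\lambda_0+\delta^*]$, and for $-\lambda_0<\delta^*<0$, $\calS_{\delta^*,\bbul\bbul,\bbul\bbul\bbul}[\lambda_0]\subset \calS_{\bbul,\bbul\bbul,\bbul\bbul\bbul}[\lambda_0,\lambda_0]$ (using that $\delta\in(-\lambda_0,0)$ is allowed with $R=\lambda_0$). Combining this with the obvious monotonicity of the uniform separation rate — namely, if $\calS_1\subset\calS_1'$ then $\SRb(\phi_\alpha,\calS_1)\leq \SRb(\phi_\alpha,\calS_1')$, which follows at once from the definition \eqref{defSR} since the supremum is taken over a smaller collection of intensities — I would then write
\begin{equation*}
\SRb\bigl(\phi_{9/10,\alpha}^{(1/2)},\calS_{\delta^*,\bbul\bbul,\bbul\bbul\bbul}[\lambda_0]\bigr)\leq \SRb\bigl(\phi_{9/10,\alpha}^{(1/2)},\calS_{\bbul,\bbul\bbul,\bbul\bbul\bbul}[\lambda_0,R(\delta^*)]\bigr),
\end{equation*}
where $R(\delta^*)=\lambda_0+\delta^*$ when $\delta^*>0$ and $R(\delta^*)=\lambda_0$ when $\delta^*\in(-\lambda_0,0)$.

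Finally I would apply Proposition \ref{UBalt10} with $R=R(\delta^*)$ to upper bound the right-hand side by $C(\alpha,\beta,\lambda_0,R(\delta^*))\sqrt{\log L/L}$, and relabel this constant as $C(\alpha,\beta,\lambda_0,\delta^*)$ since $R(\delta^*)$ is an explicit function of $\lambda_0$ and $\delta^*$. The bound on $\mSRab\pa{\calS_{\delta^*,\bbul\bbul,\bbul\bbul\bbul}[\lambda_0]}$ follows immediately from the definition \eqref{defmSR}, since $\phi_{9/10,\alpha}^{(1/2)}$ is an admissible level $\alpha$ test. There is no real obstacle here: the entire content of the corollary is packaged in Proposition \ref{UBalt10}, and no new exponential or concentration inequality needs to be invoked.
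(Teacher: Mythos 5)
Your proof is correct and follows exactly the route the paper intends: the paper itself flags the inclusions $\calS_{\delta^*,\bbul\bbul,\bbul\bbul\bbul}[\lambda_0]\subset\calS_{\bbul,\bbul\bbul,\bbul\bbul\bbul}[\lambda_0,\lambda_0+\delta^*]$ (for $\delta^*>0$) and $\calS_{\delta^*,\bbul\bbul,\bbul\bbul\bbul}[\lambda_0]\subset\calS_{\bbul,\bbul\bbul,\bbul\bbul\bbul}[\lambda_0,\lambda_0]$ (for $\delta^*\in(-\lambda_0,0)$) immediately before the corollary and deduces it from Proposition \ref{UBalt10} without further argument, which is precisely your reasoning via monotonicity of $\SRb$ in the alternative set. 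Nothing to add.
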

\emph{Comment.} The upper bounds in Proposition \ref{UBalt10} and Corollary \ref{UBalt9}, combined with their corresponding lower bounds, bring out a second phase transition in the minimax separation rate orders, from the rate order $\sqrt{{\log \log L}/{L}}$ when considering adaptation with respect to both bump height and length to $\sqrt{{\log L}/{L}}$, obtained when dealing with adaptation to at least bump location and length (with no additional cost when adapting to the bump height). As comparable minimax separation rates were already known in Gaussian models with \cite{Arias-Castro2005} and \cite{Brunel2014}, these results were more expected that some of the above ones.

\subsection{Choice of individual levels for aggregated tests and link with multiple tests}\label{discussionlevels} Except the tests $\phi_{1,\alpha}^{-}$,  $\phi_{1,\alpha}^{+}$, $\phi_{2,\alpha}^{(1)}$ and $\phi_{2,\alpha}^{(2)}$ that are classical single tests derived from the fundamental Neyman-Pearson Lemma, all the tests introduced in the above study are based on aggregation principles. Among them, we can essentially distinguish two kinds of such aggregated tests.

The first aggregated test type is of the form 
\begin{equation}\label{aggregated1}
\phi_{agg1,\alpha}(N)=\1{\sup_{\theta\in\Theta} S_{\theta}(N)>s_{\lambda_0}^{+}(1-\alpha)}\enspace,
\end{equation}
where:
\begin{itemize}
\item $\theta$ is one possible parameter or couple of parameters among  the location $\tau$ or length $\ell$ of the bump/jump in the alternative  intensity, and $\Theta$ is a subset of possible values for $\theta$,
\item  $S_\theta$ is a statistic designed to test $(H_0)$ "$\lambda=\lambda_0$" versus $(H_1)$ "$\lambda$ has a jump or a bump with parameter or parameters $\theta$", such that $\sup_{\theta\in\Theta} S_{\theta}(N)$ has a computable, exactly or by a Monte Carlo method, $(1-\alpha)$-quantile $s_{\lambda_0}^+(1-\alpha)$ under $(H_0)$.
\end{itemize}
The tests $\phi_{3,\alpha}^{(1)-}$, $\phi_{3,\alpha}^{(1)+}$, $\phi_{5,\alpha}$ and $\phi_{7,\alpha}$ can all be written in this way.

\smallskip

The second aggregated test type is of the form 
\begin{equation}\label{aggregated2}
\phi_{agg2,\alpha}(N)=\1{\sup_{\theta\in\Theta} \pa{S_{\theta}(N)-s_{\lambda_0,\theta}(1-u_\alpha)}>0}\enspace,
\end{equation}
where:
\begin{itemize}
\item $\theta$ is one possible parameter or couple of parameters among the location $\tau$ or length $\ell$ of the bump/jump in the alternative  intensity, and $\Theta$ is a finite subset of possible values for $\theta$,
\item  $S_\theta$ is a statistic designed to test $(H_0)$ "$\lambda=\lambda_0$" versus $(H_1)$ "$\lambda$ has a jump or a bump with parameter or parameters $\theta$", with computable $(1-u)$-quantiles $s_{\lambda_0,\theta}(1-u)$ under $(H_0)$,
\item $u_\alpha$ is an individual, adjusted and smaller than $\alpha$, level of test.
\end{itemize}
The tests $\phi_{4,\alpha}^{(1)}$, $\phi_{3/4,\alpha}^{(2)}$, $\phi_{6,\alpha}^{(1)}$, $\phi_{6,\alpha}^{(2)}$, $\phi_{8,\alpha}^{(1)}$, $\phi_{8,\alpha}^{(2)}$, $\phi_{9/10,\alpha}^{(1)}$, $\phi_{9/10,\alpha}^{(2)}$  can all be written in this way.

\smallskip

Notice that both $\phi_{agg1,\alpha}$ and $\phi_{agg2,\alpha}$ aggregated test types can be expressed as 
$$ \1{\sup_{\theta\in\Theta}(S_{\theta}(N)-c_{\lambda_0,\theta,\alpha})>0}=\1{\exists \theta \in \Theta,\ S_{\theta}(N)>c_{\lambda_0,\theta,\alpha}}\enspace,$$ where $c_{\lambda_0,\theta,\alpha}$ is a critical value such that $c_{\lambda_0,\theta,\alpha}=s_{\lambda_0}^{+}(1-\alpha)$ does not vary with $\theta$ in $\phi_{agg1,\alpha}$ and $c_{\lambda_0,\theta,\alpha}=s_{\lambda_0,\theta}(1-u_\alpha)$ varies with $\theta$ in  $\phi_{agg2,\alpha}$. This therefore means that these tests reject $(H_0)$ when, scanning all the parameters or couples of parameters $\theta$ in $\Theta$, at least one single test in the collection $\big\{\un{S_{\theta}(N)>c_{\lambda_0,\theta,\alpha}},\ \theta\in\Theta\big\}$ rejects $(H_0)$, which explains the name of scan aggregation principle.
All the single tests $\un{S_{\theta}(N)>c_{\lambda_0,\theta,\alpha}}$ in the considered collection are of level $\alpha$, but they can be in fact, individually, very conservative, otherwise their aggregation would not preserve the level $\alpha$ property \emph{in fine}. In the particular case of $\phi_{agg2,\alpha}$, the single tests are of individual level $u_\alpha$, taken here equal to $u_\alpha=\alpha/|\Theta|$. 

A better choice for $u_\alpha$, leading to a less conservative aggregated test, was first proposed in another context by Baraud et al. \cite{BHL2005}. In our context, this choice corresponds to
\begin{equation}\label{ualpha_minp}
u_\alpha'=\sup \set{u\in (0,1),\ P_{\lambda_0}\pa{\sup_{\theta\in\Theta} \pa{S_{\theta}(N)-s_{\lambda_0,\theta}(1-u)}>0}\leq \alpha}\enspace.
\end{equation}
Since $u_\alpha\leq u_\alpha'$, by definition, $s_{\lambda_0,\theta}(1-u_\alpha')\leq s_{\lambda_0,\theta}(1-u_\alpha)$. Any upper bound for $s_{\lambda_0,\theta}(1-u_\alpha)$, such as those used in the proofs of the minimax separation rates upper bounds and deduced from the quantiles bounds of Section~\ref{TechnicalRes}, therefore remains valid for $s_{\lambda_0,\theta}(1-u_\alpha')$. As a consequence, all the above tests of type $\phi_{agg2,\alpha}$ but with $u_\alpha'$ instead of $u_\alpha$, that we can denote by $\phi_{agg2,\alpha}'$, satisfy the same minimax properties as $\phi_{agg2,\alpha}$.

The fact that such adjusted aggregated tests $\phi_{agg2,\alpha}'$ are more powerful than $\phi_{agg2,\alpha}$ is not discernable in minimax results whereas it is clearly noticeable in practice, is a known shortcoming of the present nonasymptotic minimax point of view, where exact constants (making lower and upper bound match, up to a possible negligible term) are not expected, which is not solved yet up to our knowledge in any testing framework. Our simulation study presented in Section \ref{SimulationStudy} focuses on the performances of adjusted aggregated tests of the form $\phi_{agg2,\alpha}'$.

\smallskip

Let us now turn to the links that can be highlighted between such minimax adaptive, aggregated tests $\phi_{agg2,\alpha}$ or adjusted aggregated tests $\phi_{agg2,\alpha}'$ and multiple tests. The parallel between such aggregated tests and multiple tests has been established in \cite{FLRB2016}, as the foundation of a minimax theory for multiple tests. Notice that when each single test $\un{S_{\theta}(N)>c_{\lambda_0,\theta,\alpha}}$ in the collection $\big\{\un{S_{\theta}(N)>c_{\lambda_0,\theta,\alpha}},\ \theta\in\Theta\big\}$ can be interpreted as a test of a null hypothesis $(H_{0,\theta})\ \lambda\in \calS_{0,\theta}$ versus $(H_{1,\theta})\ \lambda\not \in \calS_{0,\theta}$,  with $\calS_0\subset \cap_{\theta\in\Theta} \calS_{0,\theta}$, it appears that our first choice of individual level $u_\alpha=\alpha/|\Theta|$ can be related to a Bonferroni type multiple test of the set of hypotheses  $\{(H_{0,\theta}),\  \theta\in \Theta\}$, while our second choice $u_\alpha'$ defined by \eqref{ualpha_minp}
can be related to a min-$p$ type multiple test of the same set of hypotheses.

Notice or recall that $S_{\tau_1,\tau_2}(N)$ and $T_{\tau_1,\tau_2}(N)$ defined by \eqref{stat_alt6_N1} and  \eqref{estimateurSB}  are unbiased estimators of $L(\tau_2-\tau_1) \Pi_{V_{\tau_1,\tau_2}}(\lambda-\lambda_0)$ and $\|\Pi_{V_{\tau_1,\tau_2}}(\lambda-\lambda_0)\|_2^2$ respectively (see Section~\ref{UMPknownSec}). Therefore, for instance, the single tests  $\un{|S_{\tau_k,1}(N)|>s_{\lambda_0,\tau_k,1}(1-u)}$ and  $\un{T_{\tau_k,1}(N) >s_{\lambda_0,\tau_k,1}(1-u)}$ involved in $\phi_{8,\alpha}^{(1)}(N)$ and $\phi_{8,\alpha}^{(2)}(N)$ can be viewed as single tests of  $(H_{0,\tau_k})\ \Pi_{V_{\tau_k,1}}(\lambda-\lambda_0)=0$ v.s. $(H_{1,\tau_k})\ \Pi_{V_{\tau_k,1}}(\lambda-\lambda_0)\neq 0$. In the same way, the single tests involved in $\phi_{9/10,\alpha}^{(1)}(N)$ and $\phi_{9/10,\alpha}^{(2)}(N)$ can be viewed as tests of $\Pi_{V_{k/\lceil L\rceil,(k+k')/{\lceil L\rceil}}}(\lambda-\lambda_0)=0$ versus $\Pi_{V_{k/\lceil L\rceil,(k+k')/{\lceil L\rceil}}}(\lambda-\lambda_0)\neq 0$. Our aggregated tests of the form $\phi_{agg2,\alpha}$ are thus clearly aggregated tests constructed from Bonferroni multiple tests, while the corresponding adjusted aggregated tests of the form $\phi_{agg2,\alpha}'$ are constructed from min-$p$ multiple tests of such collections of hypotheses (see \cite{FLRB2016} for some detailed study).

\subsection{Summary and discussion}\label{Discussion_known}
We present below a summary of the results stated above in a tabular form. Recall (\emph{c.f.} \eqref{estimateurSB}, \eqref{stat_alt5} and \eqref{stat_alt6_N1}) that for $0\leq \tau_1<\tau_2\leq 1$, 
$$ T_{\tau_1, \tau_2}(N) = \frac{1}{L^2 (\tau_2 - \tau_1)} \left(  N \left( \left.  \tau_1, \tau_2  \right] \right.^2 -N \left( \left.  \tau_1 , \tau_2  \right] \right.  \right) - \frac{2 \lambda_{0}}{L} N \left( \left.  \tau_1, \tau_2  \right] \right. + \lambda_{0}^2 (\tau_2 - \tau_1)\enspace,$$
$S_{\delta^*,\tau_1,\tau_2}(N)=  \mathrm{sgn}(\delta^{*}{}) (N(\tau_1,\tau_2] - \lambda_{0} L(\tau_2-\tau_1)) - \vert \delta^{*}{} \vert L(\tau_2-\tau_1)/2$, $S_{\tau_1,\tau_2}(N)= N(\tau_1, \tau_2] - \lambda_{0} (\tau_2-\tau_1) L$, and that
 $t_{\lambda_0,\tau_1,\tau_2}(u)$ and $s_{\lambda_0,\tau_1,\tau_2}(u)$ stand for the $u$-quantiles of $T_{\tau_1, \tau_2}(N)$ and  $| S_{\tau_1,\tau_2}(N)|$ under $\hzero$ respectively.  

\bigskip

\begin{tabular}{l c l}
\hline
\multicolumn{3}{l}{{\bf Transitory change or bump detection}}\\
\hline
Alternative set & $\mSRab$ & Test statistics\\
\hline
$\calS_{\delta^*,\tau^*,\ell^*}[\lambda_0]$ & - & $N(\tau^{*}, \tau^{*}+ \ell^{*}] $\\
\hline
$\calS_{\bbul,\tau^*,\ell^*}[\lambda_0]$ & $L^{-1/2}$ & $N(\tau^{*}, \tau^{*}+ \ell^{*}] $\\
&&$T_{\tau^*, \tau^*+\ell^*}(N)$\\
\hline
$ \calS_{\delta^*,\bbul\bbul,\ell^*}[\lambda_0]$ & - & $\max_{\tau\in [0,1-\ell^*]}N(\tau, \tau + \ell^*]$, $\min_{\tau\in [0,1-\ell^*]}N(\tau, \tau + \ell^*]$\\
&& $\max_{k\in \lbrace 0,\ldots, \lceil (1-\ell^*) M \rceil -1 \rbrace} \left( T_{\frac{k}{M}, \frac{k}{M}+\ell^*}(N) - t_{\lambda_0,\frac{k}{M}, \frac{k}{M}+\ell^*} \pa{1 - u_\alpha} \right)$ $M = \lceil 2/\ell^* \rceil$\\
\hline
$\calS_{\bbul,\bbul\bbul,\ell^*}[\lambda_0]$ & $L^{-1/2}$ & $\max_{\tau\in [0,1-\ell^*]}N(\tau, \tau + \ell^*]$, $\min_{\tau\in [0,1-\ell^*]}N(\tau, \tau + \ell^*]$\\
&& $\max_{k\in \lbrace 0,\ldots, \lceil (1-\ell^*) M \rceil -1 \rbrace} \left( T_{\frac{k}{M}, \frac{k}{M}+\ell^*}(N) - t_{\lambda_0,\frac{k}{M}, \frac{k}{M}+\ell^*} \pa{1 - u_\alpha} \right)$ $M = \lceil 2/\ell^* \rceil$\\
\hline
$\calS_{\delta^*,\tau^*,\bbul\bbul\bbul}[\lambda_0]$ & $L^{-1/2}$ & $\sup_{\ell \in (0,1- \tau^{*})}S_{\delta^*,\tau^*,\tau^*+\ell}(N)$\\
\hline
$\calS_{\bbul,\tau^*,\bbul\bbul\bbul}[\lambda_0,R]$ & $\sqrt{\frac{\log\log L}{L}}$ & $\max_{k\in \lbrace 1,\ldots,\lfloor \log_{2} L \rfloor\rbrace} \Big( \big|S_{\tau^*,\tau^*+\pa{1 - \tau^*}{2^{-k}}}(N)\big| - s_{\lambda_0,\tau^*,\tau^*+\pa{1 - \tau^*}{2^{-k}}} \pa{1 - u_\alpha} \Big)$\\
&& $\max_{k\in \lbrace 1,\ldots, \lfloor \log_{2} L \rfloor \rbrace} \left(  T_{\tau^*, \tau^*+\pa{1 - \tau^*}{2^{-k}} }(N)  - t_{\lambda_0,\tau^*,\tau^*+\pa{1 - \tau^*}{2^{-k}}} \pa{1 - u_\alpha} \right)$\\
\hline
$\calS_{\delta^*,\bbul\bbul,\bbul\bbul\bbul}[\lambda_0]$ & $\sqrt{\frac{\log L}{L}}$ & $\max_{k\in \lbrace 0,\ldots, \lceil L \rceil -1\rbrace, k'\in \lbrace 1,\ldots, \lceil L \rceil-k\rbrace} \Big( \big|S_{\frac{k}{\lceil L\rceil},\frac{k+k'}{\lceil L\rceil}}(N)\big| - s_{\lambda_0,\frac{k}{\lceil L\rceil},\frac{k+k'}{\lceil L\rceil}}\pa{1 - u_\alpha} \Big)$\\
&&$\max_{k\in \lbrace 0,\ldots, \lceil L \rceil -1\rbrace, k'\in \lbrace 1,\ldots, \lceil L \rceil-k\rbrace} \left(  T_{\frac{k}{\lceil L\rceil},\frac{k+k'}{\lceil L\rceil}}(N)  - t_{\lambda_0,\frac{k}{\lceil L\rceil},\frac{k+k'}{\lceil L\rceil}} \pa{1 - u_\alpha} \right)$\\
\hline
$ \calS_{\bbul,\bbul\bbul,\bbul\bbul\bbul}[\lambda_0,R]$ &$\sqrt{\frac{\log L}{L}}$ & $\max_{k\in \lbrace 0,\ldots, \lceil L \rceil -1\rbrace, k'\in \lbrace 1,\ldots, \lceil L \rceil-k\rbrace} \Big( \big|S_{\frac{k}{\lceil L\rceil},\frac{k+k'}{\lceil L\rceil}}(N)\big| - s_{\lambda_0,\frac{k}{\lceil L\rceil},\frac{k+k'}{\lceil L\rceil}}\pa{1 - u_\alpha} \Big)$\\
&& $\max_{k\in \lbrace 0,\ldots, \lceil L \rceil -1\rbrace, k'\in \lbrace 1,\ldots, \lceil L \rceil-k\rbrace} \left(  T_{\frac{k}{\lceil L\rceil},\frac{k+k'}{\lceil L\rceil}}(N)  - t_{\lambda_0,\frac{k}{\lceil L\rceil},\frac{k+k'}{\lceil L\rceil}} \pa{1 - u_\alpha} \right)$\\
\hline
\end{tabular}

\begin{tabular}{l c l}
\hline
\multicolumn{3}{l}{{\bf Non transitory change or jump detection}}\\
\hline
Alternative set & $\mSRab$ & Test statistics\\
\hline
$\calS_{\delta^*,\tau^*,1-\tau^*}[\lambda_0]$ & - & $N(\tau^{*}, 1] $\\
\hline
$\calS_{\bbul,\tau^*,1-\tau^*}[\lambda_0]$ & $L^{-1/2}$ & $N(\tau^{*},1] $\\
&&$T_{\tau^*, 1}(N)$\\
\hline
$\calS_{\delta^*,\bbul\bbul,1-\bbul\bbul}[\lambda_0]$ &$L^{-1/2}$ & $\sup_{\tau \in (0,1)} S_{\delta^*,\tau,1}(N)$\\
\hline
$\calS_{\bbul,\bbul\bbul,1-\bbul\bbul}[\lambda_0,R]$ & $\sqrt{\frac{\log\log L}{L}}$ & $\max_{k\in \lbrace 1,\ldots, \lfloor \log_{2} L \rfloor \rbrace} \Big( \big|S_{1 -2^{-k},1}(N)\big| - s_{\lambda_0,1 -2^{-k},1} \pa{1 - u_\alpha} \Big)$\\
&& $\max_{k\in \lbrace 1,\ldots, \lfloor \log_{2} L \rfloor \rbrace} \left(  T_{1 -2^{-k},1 }(N)  - t_{\lambda_0,1 -2^{-k},1} \pa{1 - u_\alpha} \right)$\\
\hline
\end{tabular}

\medskip

The present overview notably enables to highlight two main phase transitions in minimax separation rates. A phase transition from the smallest parametric rate order $1/\sqrt{L}$ to the intermediate rate order $\sqrt{{\log\log L}/{L}}$, due to adaptation to both height and length of the bump when dealing with the bump detection problem (BDP), or both height and location of the jump (which is in fact equivalent to adaptation to the bump length here) when dealing with the jump detection problem (JDP). A similar phase transition was already known in the independent Gaussian model when dealing with the JDP as explained in the introduction (see \cite{Gao2020} and \cite{Verzelen2021}). But the tools used in this Gaussian model, mainly based on Law of Iterated Logarithm exponential inequalities could not be used here, which led us to circumvent the difficulty via new exponential inequalities of Le Guével \cite{LeGuevel2021} combined with a dyadic type scan aggregation approach. Two points which seem important to us here are: first, in the BDP, adaptation to both bump height and location can be conducted without any additional cost as soon as the bump length is known; second, when adaptation to the length in the BDP or the location in the JDP is considered, the knowledge of the bump or jump height suffices to cancel any price to pay for adaptation. Up to our knowledge, such results were not known, even in classical Gaussian models. Constructing minimax adaptive tests actually required a careful analysis of the shifted Poisson process. Then, a phase transition from the intermediate rate order $\sqrt{{\log\log L}/{L}}$ to the largest rate order $\sqrt{{\log L}/{L}}$, due to adaptation to both position and length of the bump when dealing with the BDP. Notice that this rate is so large that additional adaptation to the height has no supplementary cost. Notice also that similar minimax separation rates were already known in the independent Gaussian model: Arias-Castro et al. \cite{Arias-Castro2005} handled the case where the height is unknown, while Brunel \cite{Brunel2014} handled the case where the height is known, equal to $1$ (therefore positive) within the asymptotic perspective, with linear statistics in the spirit of well-known CUSUM statistics. From this angle, our study provides nonasymptotic and Poisson processes counterparts for the Gaussian tools used in \cite{Arias-Castro2005} and \cite{Brunel2014}. But we furthermore introduce, in the unknown height case, a novel scan aggregated quadratic statistic: if it leads to the same minimax adaptive testing properties as the scan aggregated linear one, our simulation study shows that the corresponding test is often more powerful, especially when the height is negative, that is in depression detection problems. 

\section{Detecting an abrupt, possibly transitory, change in an unknown intensity}\label{Sec:unknownbaseline}

We now turn to the problem of detecting an abrupt change in the intensity of the Poisson process $N$ when its constant baseline is not assumed to be known anymore. This detection problem probably more largely fits applications, especially with epidemiological data for which it is often more realistic not to assume that the baseline intensity of the underlying Poisson process is known. In the present section, we therefore consider the null hypothesis expressed as $\hzero \ "\lambda\in \calS_0^u[R]"$, where $\calS_0^u[R]$ is the set of all possible constant intensities upper bounded by a given $R>0$. As in the above section, we consider various alternative hypotheses, that are defined according to the persistent or transitory nature of the change, and its height, location and length knowledge. In order to further cover the full range of alternatives in a unified notation, we introduce for $\delta^*$ in $(-R,R)\setminus\{0\}$, $\tau^*$ in $ (0,1)$ and $\ell^*$ in $(0,1-\tau^*]$ the set $\calS_{\delta^*,\tau^*,\ell^*}^u[R]$ of intensities with a change of height $\delta^*$, location $\tau^*$ and length $\ell^*$ from an unknown $\lambda_0$ in $\calS_0^u[R]$, and still upper bounded by $R$,
\begin{multline}\label{def_alt1_u}
\bold{[Alt^u.1]}\quad \calS_{\delta^*,\tau^*,\ell^*}^u[R]= \Big\{ \lambda:[0,1]\to (0,R],~\exists \lambda_0 \in (-\delta^{*} \vee 0, (R-\delta^{*}) \wedge R],\\
~\forall t \in [0,1]\quad \lambda(t) = \lambda_{0}{} + \delta^{*}{} \mathds{1}_{(\tau^{*}{}, \tau^{*}{} + \ell^{*}{}]}(t)\Big\}\enspace.\end{multline}
Though it is not as immediate as in Section \ref{Sec:knownbaseline}, testing $\hzero$ versus $\hone\  "\lambda\in \calS_{\delta^*,\tau^*,\ell^*}^u[R]"$ also falls within the scope of Neyman-Pearson tests, and an Uniformly Most Powerful Unbiased (UMPU) test can be constructed by using a conditioning trick (see details below). As above, when the question of adaptivity w.r.t. some unknown parameters is tackled, the unknown parameters are replaced by single, double or triple dots in the notation $\calS_{\delta^*,\tau^*,\ell^*}^u[R]$.

Notice that for any intensity $\lambda$ such that $\lambda(t)= \lambda_{0} + \delta \mathds{1}_{(\tau,\tau+\ell]}(t)$ for  $\delta$ in $(-R,R)\setminus\{0\}$, $\tau$ in $ (0,1)$, $\ell$ in $(0,1-\tau]$ and $\lambda_0$ in $(-\delta\vee 0, (R-\delta) \wedge R]$,
\[d_2(\lambda,\calS_0^u[R])=|\delta| \sqrt{\ell(1-\ell)}\enspace.\]
Hence, as soon as an alternative intensity has known change height $\delta=\delta^*$ and length $\ell=\ell^*$, the distance $d_2(\lambda,\calS_0^u[R])$ is fixed, equal to $|\delta^*| \sqrt{\ell^*(1-\ell^*)}$. Hence, the $\beta$-uniform separation rate of any level $\alpha$ test over $\calS_{\delta^*,\tau^*,\ell^*}^u[R]$ or $\calS_{\delta^*,\bbul,\ell^*}^u[R]$  is either $0$ or $+\infty$, and so is the $(\alpha,\beta)$-minimax separation rate. In these cases, our tests are not studied from the minimax point of view. As in Section  \ref{Sec:knownbaseline}, we nevertheless  establish conditions, expressed as a sufficient minimal distance $d_2(\lambda,\calS_0^u[R])$, guaranteeing that their second kind error rate is controlled by $\beta$. 

\subsection{Uniformly most powerful detection of a  possibly transitory change with known location and length}

Let us first focus on the problem of testing $\hzero \ "\lambda\in \calS_0^u[R]"$ versus $\hone\  "\lambda\in \calS_{\delta^*,\tau^*,\ell^*}^u[R]"$ with $\calS_{\delta^*,\tau^*,\ell^*}^u[R]$ defined by \eqref{def_alt1_u} for $\delta^{*}{}$ in $(-R,R)\setminus\{0\}$, $\tau^{*}{}$ in $(0,1)$ and  $\ell^{*}{}$ in $(0, 1-\tau^{*}{}]$. Assume here that $\lambda$ belongs to
$\lbrace \lambda:[0,1]\to (0,R], ~\exists \delta \in (-R,R),   \exists \lambda_0 \in (-\delta \vee 0, (R-\delta) \wedge R],~ \lambda= \lambda_{0}{} + \delta \mathds{1}_{(\tau^{*}{}, \tau^{*}{} + \ell^{*}{}]} \rbrace\supset \calS_0^u[R]\cup \calS_{\delta^*,\tau^*,\ell^*}^u[R]$. In this model parametrised by $(\delta,\lambda_0)$ in $\set{(\delta,\lambda_0),\ \delta \in (-R,R),  \lambda_0 \in (-\delta \vee 0, (R-\delta) \wedge R]}$, the distribution $P_\lambda$ is dominated by $P_1$  (see Lemma \ref{lemmegirsanov}), with a Likelihood Ratio given by
\[\frac{dP_\lambda}{dP_1}(N)=e^{\pa{\log(1+\delta/\lambda_0) N(\tau^{*}{}, \tau^{*}{} + \ell^{*}{}] +\log(\lambda_0) N(0,1] -L (\lambda_0 +\delta \ell^* -1)}}\enspace.\]
Reparametrising the model by $\theta_1= \log\pa{1+ \delta/\lambda_0}$ and $\theta_2= \log(\lambda_0)$, this LR becomes
\begin{equation}\label{LRreparam}
\frac{dP_\lambda}{dP_1}(N)=e^{-L\left(e^{\theta_2}\left(1+(e^{\theta_1}-1) \ell^*\right) -1\right)} e^{\theta_1 N(\tau^{*}{}, \tau^{*}{} + \ell^{*}{}] +\theta_2 N(0,1]}\enspace.
\end{equation}
Our testing problem can then be viewed as a problem of testing $\hzero "\theta_1= 0"$ versus $\hone "\theta_1<0"$ or $\hone "\theta_1>0"$ (depending on the sign of $\delta^*$) in an exponential model with natural parameters $\theta=(\theta_1,\theta_2)$ and sufficient statistics $(N(\tau^{*}{}, \tau^{*}{} + \ell^{*}{}],N(0,1])$, and where $\theta_2$ can be interpreted as a nuisance parameter. From \eqref{LRreparam} and Lemma 2.7.2 of \cite{Lehmann2006} we can deduce that given $N_{1}=n$, the conditional distribution of $N(\tau^{*}{}, \tau^{*}{} + \ell^{*}{}]$ defines an exponential family with respect to some measure $\nu_n$, with natural parameter $\theta_1$, and is in particular free of $\theta_2$. In this conditional framework, one knows that there exists an UMP test of $\hzero$ versus $\hone$ of the Neyman-Pearson form. Recalling that given $N_{1}=n$, $N(\tau^{*}{}, \tau^{*}{} + \ell^{*}{}]$ has the same distribution as a binomial random variable $Y_{n,\ell^*}$ with parameters $(n,\ell^{*}{})$,  such conditional Neyman-Pearson tests lead us to consider the unilateral tests defined by
\begin{equation} \label{test_alt1_u}
\left\{\begin{array}{llll}
\phi_{1,\alpha}^{u,-}(N) &=& \mathds{1}_{N(\tau^{*}{} ,\tau^{*}{}+\ell^{*}{} ] < b_{N_{1},\ell^*}(\alpha)} &+ \gamma_{(N_{1},\ell^*)}^{-}(\alpha) \mathds{1}_{N(\tau^{*}{} ,\tau^{*}{} +\ell^{*}{}] = b_{N_{1},\ell^*}(\alpha)}\text{ if $\delta^{*}<0$} \\
\phi_{1,\alpha}^{u,+}(N)&=& \mathds{1}_{N(\tau^{*}{} ,\tau^{*}{} +\ell^{*}{} ] > b_{N_{1},\ell^*}(1-\alpha)} &+ \gamma_{(N_{1},\ell^*)}^{+}(1-\alpha) \mathds{1}_{N(\tau^{*}{}, \tau^{*}{}+\ell^{*}{}] = b_{N_{1},\ell^*}(1-\alpha)}\text{ if $\delta^{*}>0$}  \enspace,
\end{array}\right.
\end{equation}
where for all $n$ in $\mathbb{N}$, $b_{n,\ell^*}(u)$ denotes the $u$-quantile of the distribution of $Y_{n,\ell^*}$, and
\begin{equation} \label{bUPP2g}
\gamma_{(n,\ell^*)}^{-}(u)=\frac{u - \mathbb{P}(Y_{n,\ell^*} < b_{n,\ell^*}(u) )}{\mathbb{P}(Y_{n,\ell^*} = b_{n,\ell^*}(u)) },\quad
\gamma_{(n,\ell^*)}^{+}(u)= 1- \gamma_{(n,\ell^*)}^{-}(u) \enspace.
\end{equation}
From Theorem 4.4.1 in \cite{Lehmann2006} and the remark below its proof, we obtain the following result.

\begin{proposition}[Uniformly Most Powerful Unbiased tests] \label{bjumpbumpUMPtestg} ~\\
Let $L \geq 1$, $\alpha$ in $(0,1)$, $\delta^*$ in $(-R,R)\setminus\{0\}$,  $\tau^{*}{}$ in $(0,1)$ and $\ell^{*}{}$ in $(0, 1-\tau^{*}{}]$.
The tests $\phi_{1,\alpha}^{u,-}$ and $\phi_{1,\alpha}^{u,+}$ of $\hzero$ versus $\hone\  "\lambda\in \calS_{\delta^*,\tau^*,\ell^*}^u[R]"$,
defined by \eqref{test_alt1_u}, satisfy
\begin{equation}\label{Neymanstructure}
E_\lambda\cro{\phi_{1,\alpha}^{u,+}(N)\Big|N_1=n}=E_\lambda\cro{\phi_{1,\alpha}^{u,-}(N)\Big|N_1=n}=\alpha\quad \forall \lambda\in \calS_0^u[R]\enspace.
\end{equation}
Moreover, $\phi_{1,\alpha}^{u,-}$ and $\phi_{1,\alpha}^{u,+}$ are UMPU tests.
\end{proposition}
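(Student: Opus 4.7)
The plan is to invoke Theorem 4.4.1 of Lehmann and Romano directly, the hard work being essentially the careful identification of the conditional model. The reparametrisation $(\theta_1,\theta_2)=(\log(1+\delta/\lambda_0),\log \lambda_0)$ already carried out in the excerpt puts the problem into the framework of a two-parameter exponential family with natural parameters $(\theta_1,\theta_2)$ and sufficient statistics $(N(\tau^*,\tau^*+\ell^*],N_1)$, and expresses the testing problem as $H_0:\theta_1=0$ versus the one-sided alternative $\theta_1>0$ (case $\delta^*>0$) or $\theta_1<0$ (case $\delta^*<0$), with $\theta_2$ playing the role of a nuisance parameter.

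The first step is to identify the conditional distribution of $N(\tau^*,\tau^*+\ell^*]$ given $N_1=n$. By Lemma 2.7.2 in \cite{Lehmann2006}, this conditional distribution forms a one-parameter exponential family in $\theta_1$ with respect to some dominating measure $\nu_n$, and in particular no longer depends on the nuisance parameter $\theta_2$. Under $H_0$ (i.e. $\theta_1=0$), the conditional law is that of $Y_{n,\ell^*}\sim \mathrm{Bin}(n,\ell^*)$: this follows from the classical fact that, conditionally on $N_1=n$, the $n$ points of a homogeneous Poisson process on $[0,1]$ are i.i.d.\ uniform, so the number falling in $(\tau^*,\tau^*+\ell^*]$ is binomial. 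The quantiles $b_{n,\ell^*}(u)$ and randomisation coefficients $\gamma^\pm_{(n,\ell^*)}(u)$ are then precisely tailored so that the conditional Neyman--Pearson tests $\phi_{1,\alpha}^{u,\pm}$ have conditional level exactly $\alpha$ under $H_0$, which yields \eqref{Neymanstructure}.

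Next, the Neyman structure property \eqref{Neymanstructure} shows that both tests are similar on the boundary $\set{\lambda\in\calS_0^u[R]}$, since the family of distributions of $N_1$ as $\lambda_0$ varies over $(0,R]$ is boundedly complete (it is a one-parameter exponential family in $\theta_2$). Moreover, conditionally on $N_1=n$, the one-parameter exponential family in $\theta_1$ has monotone likelihood ratio in $N(\tau^*,\tau^*+\ell^*]$, so the conditional tests $\phi_{1,\alpha}^{u,+}(\cdot\mid N_1=n)$ and $\phi_{1,\alpha}^{u,-}(\cdot\mid N_1=n)$ are UMP at conditional level $\alpha$ for their respective one-sided alternatives. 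Theorem 4.4.1 of \cite{Lehmann2006} (together with the remark following its proof concerning the one-sided case) then transfers conditional UMP-ness into unconditional UMPU-ness: any unbiased level-$\alpha$ test must be similar on the boundary, hence of Neyman structure, and the conditional UMP property dominates any other such test in power, uniformly over the alternative.

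The only real obstacle is pedagogical, namely verifying that the hypotheses of the Lehmann--Romano theorem really are met in our setting: one must check that the natural parameter $\theta_2$ ranges over an open interval (which it does, since $\lambda_0$ ranges over the open set $(-\delta\vee 0,(R-\delta)\wedge R]$ up to the harmless endpoint), and that the family of marginals of $N_1$ under $H_0$ is boundedly complete; the latter follows from completeness of the Poisson family $\set{\mathcal{P}(\lambda_0 L),\ \lambda_0\in(0,R]}$. Once these standard verifications are in place, the two conclusions of the proposition are immediate consequences of the quoted results.
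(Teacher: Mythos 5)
Your proposal is correct and follows essentially the same route as the paper: the same reparametrisation to a two-parameter exponential family, the same appeal to Lemma 2.7.2 of Lehmann for the conditional distribution given $N_1=n$ (yielding the binomial $\mathrm{Bin}(n,\ell^*)$ under $H_0$), and the same invocation of Theorem 4.4.1 of Lehmann and Romano and the remark after its proof. Your additional verification of bounded completeness and the natural parameter range is a useful supplement, but the overall argument is the one the paper uses.
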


In order to follow the same line as the minimax results obtained when regarding other alternative hypotheses with unknown height and/or length change, we further study which minimal distance $d_2(\lambda,\calS_0^u[R])$ guarantees  a second kind error rate control. 

\begin{proposition}[Second kind error rates control for $\bold{[Alt^u.1]}$]\label{bNP1U}
Let $L\geq 1$, $\alpha$ in $(0,1)$, $\delta^*$ in $(-R,R)\!\setminus\!\{0\}$,  $\tau^{*}{}$ in $(0,1)$ and $\ell^{*}{}$ in $(0, 1-\tau^{*}{}]$, and let $\phi_{1,\alpha}^{u,-}$ and $\phi_{1,\alpha}^{u,+}$ be the tests of $\hzero$ versus $\hone\  "\lambda\in \calS_{\delta^*,\tau^*,\ell^*}^u[R]"$ defined by \eqref{test_alt1_u}.  There exists $C(\alpha,\beta,R,\ell^*)>0$ such that 

$(i)$ $P_{\lambda}(\phi_{1,\alpha}^{u,+}(N)=0) \leq \beta$ if $0<\delta^*<R$ and $\lambda$ belongs to $\calS_{\delta^*, \tau^*, \ell^*}^u[R]$ with
\begin{equation}\label{cond_alt1U} 
d_2(\lambda, \calS_0^u[R]) \geq C(\alpha,\beta,R,\ell^*)/{\sqrt{L}}\enspace,
\end{equation}
$(ii)$ $P_{\lambda}(\phi_{1,\alpha}^{u,-}(N)=0) \leq \beta$ if $-R<\delta^*<0$ and $\lambda$ belongs to $\calS_{\delta^*, \tau^*, \ell^*}^u[R]$ with \eqref{cond_alt1U}.
\end{proposition}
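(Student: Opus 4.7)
The plan is to exploit the binomial conditional structure that underlies the UMPU test. I focus on the case $\delta^*>0$; the case $-R<\delta^*<0$ is strictly symmetric, dealing with $\phi^{u,-}_{1,\alpha}$ instead of $\phi^{u,+}_{1,\alpha}$. Under $P_\lambda$ with $\lambda=\lambda_0+\delta^*\mathds{1}_{(\tau^*,\tau^*+\ell^*]}$, the variable $N_1$ is Poisson with mean $\mu:=L(\lambda_0+\delta^*\ell^*)$, and the Poisson coloring property gives that, conditionally on $N_1=n$, $Y:=N(\tau^*,\tau^*+\ell^*]\sim\mathcal{B}(n,p)$ with $p=(\lambda_0+\delta^*)\ell^*/(\lambda_0+\delta^*\ell^*)$. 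A direct computation yields the key identity $p-\ell^*=\delta^*\ell^*(1-\ell^*)/(\lambda_0+\delta^*\ell^*)$ and, using $\lambda_0+\delta^*\ell^*\leq R$, the lower bound $p-\ell^*\geq\delta^*\ell^*(1-\ell^*)/R$. Since $d_2(\lambda,\calS_0^u[R])^2=(\delta^*)^2\ell^*(1-\ell^*)$, the separation hypothesis will translate into a lower bound on the "effective signal" $p-\ell^*$ of the required order.

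I would then proceed in two Chebyshev steps. First, applied to the null binomial $Y_{n,\ell^*}\sim\mathcal{B}(n,\ell^*)$, Chebyshev's inequality yields
\begin{equation*}
b_{n,\ell^*}(1-\alpha)\leq n\ell^*+\sqrt{n\ell^*(1-\ell^*)/\alpha}.
\end{equation*}
Second, applied under $P_\lambda$ to $Y\sim\mathcal{B}(n,p)$ with $p>\ell^*$, we get for each $n$ such that the resulting threshold gap is positive,
\begin{equation*}
P_\lambda\bigl(Y<b_{n,\ell^*}(1-\alpha)\mid N_1=n\bigr)\leq\frac{np(1-p)}{\bigl[n(p-\ell^*)-\sqrt{n\ell^*(1-\ell^*)/\alpha}\bigr]^2}.
\end{equation*}
Note that $\{\phi^{u,+}_{1,\alpha}(N)=0\}=\{Y<b_{N_1,\ell^*}(1-\alpha)\}$ is a strict-inequality event, so the randomization atom at $Y=b_{N_1,\ell^*}(1-\alpha)$ causes no additional bookkeeping.

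The final step is to deconditioning on $N_1$. Set $A=\{N_1\geq\mu/2\}$; by Chebyshev on the Poisson variable $N_1$ of variance $\mu$, $P_\lambda(A^c)\leq 4/\mu$, and since $\mu\geq L|\delta^*|(\ell^*\wedge(1-\ell^*))$, the hypothesis $|\delta^*|\sqrt{\ell^*(1-\ell^*)}\geq C/\sqrt{L}$ with $C=C(\alpha,\beta,R,\ell^*)$ large enough makes $P_\lambda(A^c)\leq\beta/2$. On $A$, the same hypothesis (enlarging $C$ if needed) ensures $n(p-\ell^*)\geq 2\sqrt{n\ell^*(1-\ell^*)/\alpha}$, so the conditional bound simplifies to $\leq 4/[n(p-\ell^*)^2]\leq 8/[\mu(p-\ell^*)^2]$, and substituting the expressions for $\mu$ and $p-\ell^*$ and using $\lambda_0+\delta^*\ell^*\leq R$ gives
\begin{equation*}
\frac{8}{\mu(p-\ell^*)^2}=\frac{8(\lambda_0+\delta^*\ell^*)}{L(\delta^*)^2\ell^{*2}(1-\ell^*)^2}\leq\frac{8R}{L\ell^*(1-\ell^*)\,d_2(\lambda,\calS_0^u[R])^2}\leq\frac{8R}{C^2\ell^*(1-\ell^*)},
\end{equation*}
which is also $\leq\beta/2$ for $C$ large enough. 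Combining both contributions of $E_\lambda[P_\lambda(Y<b_{N_1,\ell^*}(1-\alpha)\mid N_1)]$ yields $P_\lambda(\phi^{u,+}_{1,\alpha}(N)=0)\leq\beta$.

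The main obstacle lies in obtaining bounds uniform in the nuisance baseline $\lambda_0$: both the conditional binomial parameter $p$ and the Poisson mean $\mu$ depend on $\lambda_0$, and an adversary may choose $\lambda_0$ to make $p-\ell^*$ as small as possible. The key lever is the uniform upper bound $\lambda_0+\delta^*\ell^*\leq R$ embedded in $\calS_0^u[R]$, which converts this adversarial freedom into a finite $R$-dependent constant. A secondary delicate point is ensuring that the same constant $C=C(\alpha,\beta,R,\ell^*)$ simultaneously dominates all three threshold conditions (on $P_\lambda(A^c)$, on the positivity of the denominator in Step 4, and on the final Chebyshev bound), which amounts to simple but careful bookkeeping.
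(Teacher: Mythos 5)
Your proof is correct, and it takes a genuinely different (though related) route from the paper's. The paper's argument restricts $N_1$ to the window $|N_1-I(\lambda)L|\le 2\sqrt{I(\lambda)L/\beta}$ only in order to bound the random threshold $b_{N_1,\ell^*}(1-\alpha)$ by a deterministic quantity; it then applies the second Bienaym\'e--Chebyshev bound to the \emph{unconditional} Poisson variable $N(\tau^*,\tau^*+\ell^*]$, whose mean and variance under $P_\lambda$ are both $(\lambda_0+\delta^*)\ell^*L$. You instead carry the conditioning on $N_1$ all the way through: you use the conditional binomial law $\mathcal{B}(n,p)$ with $p=(\lambda_0+\delta^*)\ell^*/(\lambda_0+\delta^*\ell^*)$ under the alternative, reduce the second kind error to a shift $p-\ell^*=\delta^*\ell^*(1-\ell^*)/(\lambda_0+\delta^*\ell^*)$ in the binomial parameter, and then average the resulting conditional Chebyshev bound over $N_1$ restricted to $\{N_1\ge\mu/2\}$. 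The paper's approach avoids introducing the parameter $p$ and is slightly lighter in bookkeeping; yours makes the mechanism of the conditional UMPU test transparent and isolates the ``effective signal'' $p-\ell^*$, which directly explains why the single bound $\lambda_0+\delta^*\ell^*\le R$ is the exact lever that neutralizes the unknown nuisance $\lambda_0$. The two proofs otherwise rely on the same two ingredients (Chebyshev for the binomial quantile and Chebyshev for the Poisson $N_1$), produce constants of the same character, and both correctly observe that the Poisson coloring property reduces the problem to a binomial one.
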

\emph{Comments.} Noticing that for any $\lambda$ in $\calS_{\delta^*, \tau^*, \ell^*}^u[R]$, $d_2\pa{\lambda,\calS_{0}^u[R]}=|\delta^*|\sqrt{\ell^*(1-\ell^*)}$, the above proposition in particular gives that if $L \geq C^{2}(\alpha,\beta,R,\ell^*)/{\delta^{*}{}^2 \ell^{*}{}(1- \ell^{*}{})}\enspace,$  then $P_{\lambda}(\phi_{1,\alpha}^{u,+}(N)=0) \leq \beta$ and $P_{\lambda}(\phi_{1,\alpha}^{u,-}(N)=0) \leq \beta$,  respectively when $0<\delta^*<R$ and $-R<\delta^*<0$.
Therefore, in this case, the $\beta$-uniform separation rates of $\phi_{1,\alpha}^{u,+}$  and $\phi_{1,\alpha}^{u,-}$ over $\calS_{\delta^*, \tau^*, \ell^*}^u[R]$, with $0<\delta^*<R$ and $-R<\delta^*<0$ respectively, are equal to $0$, and so are the corresponding $(\alpha,\beta)$-minimax separation rates $\mSRab(\calS_{\delta^*, \tau^*, \ell^*}^u[R])$.

\smallskip

In order to address the question of adaptation to the change height, we consider the problem of testing $\hzero$ v.s. $\hone\  "\lambda\in \calS^u_{\bbul,\tau^*,\ell^*}[R]"$, where for $R>0$, $\tau^*$ in $ (0,1)$ and $\ell^*$ in $(0,1-\tau^*]$,
\begin{multline}\label{def_alt2_u}
\bold{[Alt^u.2]}\quad \calS^u_{\bbul,\tau^*,\ell^*}[R]= \Big\{ \lambda:[0,1]\to (0,R],~\exists \lambda_0\in (0,R],\\
 ~\exists \delta \in (-\lambda_0,R-\lambda_0]\setminus \set{0},
~\forall t \in [0,1]\quad \lambda(t) = \lambda_{0}{} + \delta\mathds{1}_{(\tau^{*}{}, \tau^{*}{} + \ell^{*}{}]}(t)\Big\}\enspace.\end{multline}

Unsurprisingly, with the Bayesian arguments already used to prove Proposition \ref{LBalt2}, one obtains a lower bound for the minimax separation rate over $\calS^u_{\bbul,\tau^*,\ell^*}[R]$ of the parametric order $1/\sqrt{L}$.

 \begin{proposition}[Minimax lower bound for $\bold{[Alt^u.2]}$] \label{LB_alt2_u}
Let $\alpha$ and  $\beta$ in $(0,1)$, $R>0$, $\tau^{*}$ in $(0,1)$ and $\ell^* $ in $(0,1- \tau^{*}]$. For all $L \geq (2 \log C_{\alpha,\beta}/(R \ell^{*}{}))$,
\[\mSRab\pa{\calS^u_{\bbul, \tau^*, \ell^*}[R]} \geq  \sqrt{ {R (1-\ell^{*}{}) \log C_{\alpha,\beta}}/\pa{2L}},\textrm{ with }C_{\alpha, \beta}=1+4(1- \alpha -\beta)^2\enspace.\]
\end{proposition}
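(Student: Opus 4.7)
My plan is to adapt the Bayesian (Ingster--Baraud) reduction recalled in Section~\ref{Keys}. Since any level-$\alpha$ test for the composite null $\calS_0^u[R]$ is \emph{a fortiori} of level $\alpha$ with respect to any single intensity $\lambda^\star\in\calS_0^u[R]$, the minimax separation rate $\mSRab\pa{\calS^u_{\bbul,\tau^*,\ell^*}[R]}$ is bounded from below by the corresponding rate for the simple-null problem $(H_0)\ \lambda=\lambda^\star$ against the same alternative. I would therefore reduce to a two-point testing problem, by exhibiting one $\lambda^\star\in\calS_0^u[R]$ and one alternative $\lambda_1\in\calS^u_{\bbul,\tau^*,\ell^*}[R]$ with $d_2(\lambda_1,\calS_0^u[R])^2 = R(1-\ell^*)\log C_{\alpha,\beta}/(2L)$ such that the second-moment control $E_{\lambda^\star}[(dP_{\lambda_1}/dP_{\lambda^\star})^2]\leq C_{\alpha,\beta}$ required by the fundamental lemma of Section~\ref{Keys} is met.

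The natural candidate is the symmetric choice $\lambda^\star=R/2$, paired with $\lambda_1 = R/2 + \delta\mathds{1}_{(\tau^*,\tau^*+\ell^*]}$ for a height $\delta\in(-R/2,R/2]\setminus\{0\}$ to be tuned. Indeed, $\lambda_1$ lies in $\calS^u_{\bbul,\tau^*,\ell^*}[R]$ as soon as $|\delta|\leq R/2$, and since its closest constant in $\bbL_2([0,1])$ is its mean $R/2$, one gets $d_2(\lambda_1,\calS_0^u[R]) = |\delta|\sqrt{\ell^*(1-\ell^*)}$. Combining Girsanov's Lemma~\ref{lemmegirsanov} with the Laplace transform of the Poisson distribution then yields, after a short computation,
\[E_{R/2}\!\left[\!\left(\frac{dP_{\lambda_1}}{dP_{R/2}}\right)^{\!\!2}\right]=\exp\!\pa{2\delta^2\ell^*L/R}\enspace,\]
so the second-moment constraint becomes $\delta^2\leq R\log C_{\alpha,\beta}/(2\ell^*L)$. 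Taking $\delta^2$ equal to this bound gives exactly the announced separation $d_2(\lambda_1,\calS_0^u[R])^2 = R(1-\ell^*)\log C_{\alpha,\beta}/(2L)$.

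The main point to verify, and the (mild) obstacle of the proof, is the compatibility of this value of $\delta^2$ with the range constraint $|\delta|\leq R/2$ needed for $\lambda_1$ to take values in $(0,R]$. This amounts to $R\log C_{\alpha,\beta}/(2\ell^*L)\leq R^2/4$, that is, exactly to the sample-size assumption $L\geq 2\log C_{\alpha,\beta}/(R\ell^*)$ made in the statement. Once this check is done, the Ingster--Baraud lemma of Section~\ref{Keys} delivers the claimed lower bound. Note that one could potentially improve the constant by replacing the direct comparison with a \emph{centered} one (i.e.\ taking $\lambda^\star$ equal to the mean of $\lambda_1$, which equates the marginal of $N_1$ under null and alternative), but the simpler direct choice above already suffices to reach the parametric order $\sqrt{1/L}$ announced.
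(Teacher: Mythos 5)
Your proof is correct and follows essentially the same route as the paper's: both fix $\lambda_0=R/2$, take an alternative $R/2+\delta\,\mathds{1}_{(\tau^*,\tau^*+\ell^*]}$, compute $E_{\lambda_0}[(dP_{\lambda_1}/dP_{\lambda_0})^2]=\exp(2\delta^2\ell^*L/R)$ via Girsanov and the Poisson Laplace transform, tune $\delta$ so this equals $C_{\alpha,\beta}$, and check that the range constraint $|\delta|\leq R/2$ is exactly the hypothesis $L\geq 2\log C_{\alpha,\beta}/(R\ell^*)$. The paper parameterizes by the distance $r=|\delta|\sqrt{\ell^*(1-\ell^*)}$ rather than by the height $\delta$, but the argument is identical.
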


In order to prove that this lower bound is sharp, we construct two minimax adaptive tests. 

\smallskip

The first one is based on the linear statistic $N(\tau^{*}, \tau^{*} + \ell^*]$ and is very similar in spirit to the test $\phi_{2,\alpha}^{(1)}$ defined by \eqref{test1}, except that the associated critical values are based on the conditional distribution of $N(\tau^{*}, \tau^{*} + \ell^*]$ given $N_1=n$ under $(H_0)$ instead of the unconditional distribution (which is not free from the unknown constant baseline intensity under $(H_0)$). Let
\begin{multline} \label{testalt2_N1_u}
\phi_{2,\alpha}^{u(1)}(N)=\mathds{1}_{N(\tau^{*}, \tau^{*} + \ell^*] > b_{N_1,\ell^*}(1-\alpha_1)}+\gamma^+_{(N_1,\ell^*)}(1-\alpha_1)\mathds{1}_{N(\tau^{*}, \tau^{*} + \ell^*] = b_{N_1,\ell^*}(1-\alpha_1)}\\
+\mathds{1}_{N(\tau^{*}, \tau^{*} + \ell^{*}] < b_{N_1,\ell^*}(\alpha_2)}+
\gamma^-_{(N_1,\ell^*)}(\alpha_2)\mathds{1}_{N(\tau^{*}, \tau^{*} + \ell^{*}] = b_{N_1,\ell^*}(\alpha_2)}\enspace,
\end{multline}
where $\gamma^+_{(n,\ell^*)}$ and $\gamma^-_{(n,\ell^*)}$ are defined by \eqref{bUPP2g}, $b_{n,\ell^*}(u)$ denotes the $u$-quantile of the binomial distribution with parameters $(n,\ell^*)$ for all $n$ in $\N$, and $\alpha_1$ and $\alpha_2 $ in $ (0,1)$ are determined by
\begin{equation} \label{alphaUMPU_u}
\begin{cases}
\alpha_1 + \alpha_2 = \alpha\enspace, \\
E_\lambda\big[N(\tau^{*}, \tau^{*}+ \ell^{*}] \phi_{2,\alpha}^{u(1)}(N)\big|N_1=n\big]=\alpha E_\lambda\big[N(\tau^{*}, \tau^{*}+ \ell^{*}]\big|N_1=n\big]~ \forall \lambda\in \calS_0^u[R]\enspace.
\end{cases}
\end{equation}
Theorem 4.4.1 of \cite{Lehmann2006} again (but considering the bilateral test) shows that $ \phi_{2,\alpha}^{u(1)}$ is UMPU.

\smallskip

The second one is based on a quadratic statistic deduced from an estimation of the $\bbL_2$-distance between $\lambda$  in $\calS^u_{\bbul, \tau^*, \ell^*}[R]$ and $\calS_0^u[R]$. For $0  \!<  \!\tau_1  \!<  \!\tau_2 \!\leq \! 1$, we define $\psi_{0}= \mathds{1}_{[0,1]}$ and
$\psi_{\tau_1,\tau_2}= - \sqrt{\pa{\tau_2- \tau_1}\!/\!\pa{1-\tau_2+ \tau_1}} \pa{ \mathds{1}_{(0,\tau_1]}\!+\!\mathds{1}_{(\tau_2,1]} }  + \sqrt{\pa{1-\tau_2 + \tau_1}\!/\!\pa{\tau_2 - \tau_1}} \mathds{1}_{( \tau_1, \tau_2 ]}$.
Notice that $(\psi_{0}, \psi_{\tau_1, \tau_2})$ is an orthonormal family and set $W_{0}= \mathrm{Vect} \left(\psi_{0}\right)$ and $W_{\tau_1, \tau_2}= \mathrm{Vect} \left(\psi_{0}, \psi_{\tau_1, \tau_2}  \right)$. 
Denoting by $\Pi_{W_{0}}$ and $\Pi_{W_{\tau_1,\tau_2}}$ the orthogonal projections onto  $W_{0}$ and $W_{\tau_1,\tau_2}$ in $\bbL_2([0,1])$ respectively, the quadratic statistic
\begin{multline}\label{def_T'}
T'_{\tau_1, \tau_2}(N)= \frac{1}{L^2} \Big[ \frac{\tau_2 - \tau_1}{1- \tau_2 + \tau_1}  \left( \left( N(0,\tau_1] + N(\tau_2,1] \right)^2 - \left( N(0,\tau_1]  +N(\tau_2,1] \right)  \right) \\
+ \frac{1- \tau_2 + \tau_1}{\tau_2 - \tau_1}  \left( N(\tau_1,\tau_2]^2  - N(\tau_1,\tau_2]  \right) -2 N(\tau_1,\tau_2] \left( N(0, \tau_1]+ N(\tau_2,1] \right) \Big]\enspace,
\end{multline}
is an unbiased estimator of $\Vert \Pi_{W_{\tau_1, \tau_2}}(\lambda - \Pi_{W_{0}}(\lambda)) \Vert_2 ^2$. We therefore consider the particular statistic $T'_{\tau^*, \tau^*+\ell^*}(N)$ which is an unbiased estimator of the squared $\bbL_2$-distance between $\lambda$   in $\calS^u_{\bbul, \tau^*, \ell^*}[R]$ and the set of constant intensities, leading to the test defined by
\begin{equation} \label{testalt2_N2_u}
\phi_{2,\alpha}^{u(2)}(N) = \mathds{1}_{T'_{\tau^*, \tau^*+\ell^*}(N) > t'_{N_1,\tau^*, \tau^* + \ell^*}(1-\alpha)}\enspace,
\end{equation}
where $t'_{n,\tau_1,\tau_2}(u)$ is the $u$-quantile of the distribution of $T'_{\tau_1,\tau_2}(N)$ given $N_1=n$ under $\hzero$.

Since this conditional distribution under $(H_0)$ is the distribution of the renormalised $U$-statistic $L^{-2} \sum_{i \neq j=1}^{n} \psi_{\tau_1, \tau_2}(U_i) \psi_{\tau_1, \tau_2}(U_{j})$ based on a $n$-sample $(U_1,\ldots,U_n)$ of  i.i.d. uniform random variables, we use an exponential inequality for $U$-statistics of order $2$ due to Reynaud-Bouret and Houdré \cite{HoudreRB} to control the quantiles $t'_{n,\tau_1,\tau_2}(u)$ in theory (see Lemma~\ref{quantile_T'}), and Monte-Carlo methods to evaluate them in practice.

\begin{proposition}[Minimax upper bound for $\bold{[Alt^u.2]}$] \label{UBalt2_u} Let $L\geq 1$, $\alpha,\beta$ in $(0,1)$, $R>0$, $\tau^*$ in $(0,1)$ and $\ell^*$ in $(0,1- \tau^{*}]$. Let $\phi_{2,\alpha}^{u(1/2)}$ be one of the tests $\phi_{2,\alpha}^{u(1)}$ and $\phi_{2,\alpha}^{u(2)}$  of $\hzero$ versus $\hone\ "\lambda\in\calS^u_{\bbul,\tau^*,\ell^*}[R]"$  respectively defined by \eqref{testalt2_N1_u}-\eqref{alphaUMPU_u} and \eqref{testalt2_N2_u}. Then $\phi_{2,\alpha}^{u(1/2)}$ is of level $\alpha$, that is $ \sup_{\lambda\in\calS^u_0[R]}P_{\lambda}(\phi_{2,\alpha}^{u(1/2)}(N)=1) \leq \alpha$ ($\phi_{2,\alpha}^{u(1)}$ is even of size $\alpha$). Moreover, there exists $C(\alpha, \beta,R, \ell^*)>0$ such that
$$ \SRb(\phi_{2,\alpha}^{u(1/2)},\calS^u_{\bbul,\tau^*, \ell^*}[R]) \leq {C(\alpha, \beta,R,\ell^*)}/{\sqrt{L}}\enspace,$$
which entails in particular $\mSRab(\calS^u_{\bbul, \tau^*, \ell^*}[R]) \leq C(\alpha, \beta, R,\ell^*)/\sqrt{L}$.
\end{proposition}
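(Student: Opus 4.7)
The plan is to treat the level statement and the uniform separation rate statement separately for each of the two tests, in both cases relying on the conditioning device: given $N_1=n$, the $n$ jump times of $N$ form an i.i.d. sample from the density $\lambda/\int_0^1\lambda$, which reduces to $\mathds{1}_{[0,1]}$ under $\hzero$ and is thus free of the unknown $\lambda_0$. For the level, the test $\phi_{2,\alpha}^{u(1)}$ is UMPU by Theorem~4.4.1 of \cite{Lehmann2006} applied to the bilateral binomial problem conditional on $N_1$, so the Neyman structure property \eqref{alphaUMPU_u} forces $E_\lambda[\phi_{2,\alpha}^{u(1)}(N)\mid N_1=n]=\alpha$ for every $n$ whenever $\lambda\in\calS_0^u[R]$, and averaging yields size $\alpha$. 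For $\phi_{2,\alpha}^{u(2)}$, $t'_{N_1,\tau^*,\tau^*+\ell^*}(1-\alpha)$ is by construction a conditional $(1-\alpha)$-quantile, hence $P_\lambda(\phi_{2,\alpha}^{u(2)}(N)=1\mid N_1)\leq \alpha$ a.s. for every $\lambda\in\calS_0^u[R]$, and integration gives the required level.

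For the separation rate of $\phi_{2,\alpha}^{u(1)}$, fix $\lambda=\lambda_0+\delta\mathds{1}_{(\tau^*,\tau^*+\ell^*]}$ in $\calS^u_{\bbul,\tau^*,\ell^*}[R]$. Conditionally on $N_1=n$, $N(\tau^*,\tau^*+\ell^*]$ is binomial with parameters $(n,p)$, where $p=(\lambda_0+\delta)\ell^*/(\lambda_0+\delta\ell^*)$, so $p-\ell^*=\delta\ell^*(1-\ell^*)/(\lambda_0+\delta\ell^*)$ is, up to a factor depending only on $R$, of the same order as $d_2(\lambda,\calS_0^u[R])=|\delta|\sqrt{\ell^*(1-\ell^*)}$. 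A Bernstein concentration for $N(\tau^*,\tau^*+\ell^*]$ around $np$ under $P_\lambda$ together with a Bernstein-based bound on $|b_{n,\ell^*}(1-\alpha_1)-n\ell^*|$ under $\hzero$ gives a deviation of order $\sqrt{n\ell^*(1-\ell^*)}$; combining with the Poisson concentration $N_1\geq cL$ on an event of probability at least $1-\beta/2$, rejection occurs once $L(p-\ell^*)^2$ is large enough, which translates into $d_2(\lambda,\calS_0^u[R])\geq C(\alpha,\beta,R,\ell^*)/\sqrt{L}$.

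For $\phi_{2,\alpha}^{u(2)}$, a direct computation using the orthonormal system $(\psi_0,\psi_{\tau^*,\tau^*+\ell^*})$ gives $E_\lambda[T'_{\tau^*,\tau^*+\ell^*}(N)]=\|\Pi_{W_{\tau^*,\tau^*+\ell^*}}(\lambda-\Pi_{W_0}\lambda)\|_2^2=d_2^2(\lambda,\calS_0^u[R])=\delta^2\ell^*(1-\ell^*)$ on $\calS^u_{\bbul,\tau^*,\ell^*}[R]$. Under $\hzero$ conditionally on $N_1=n$, the identity $L^2 T'_{\tau^*,\tau^*+\ell^*}(N)=\sum_{i\neq j}\psi_{\tau^*,\tau^*+\ell^*}(X_i)\psi_{\tau^*,\tau^*+\ell^*}(X_j)$ exhibits $T'$ as a degenerate order-two $U$-statistic, so Lemma~\ref{quantile_T'}, derived from the Houdré--Reynaud-Bouret inequality, yields a bound of the form $t'_{n,\tau^*,\tau^*+\ell^*}(1-\alpha)\leq C(\alpha,\ell^*)n/L^2$, which on the event $\{N_1\leq 2LR\}$ of probability at least $1-\beta/3$ becomes $t'_{N_1,\tau^*,\tau^*+\ell^*}(1-\alpha)\leq C(\alpha,\beta,R,\ell^*)/L$. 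Under $\hone$, the analogous $U$-statistic expansion, applied to the centred variable $\psi_{\tau^*,\tau^*+\ell^*}(\cdot)-\langle\psi_{\tau^*,\tau^*+\ell^*},\lambda\rangle/\int\lambda$, decomposes $T'-E_\lambda[T'\mid N_1]$ into a linear term (handled by Bernstein) and a degenerate quadratic term (handled again by Houdré--Reynaud-Bouret conditionally on $N_1$); combined with the Poisson concentration of $N_1$ around $L(\lambda_0+\delta\ell^*)\asymp L$, these controls imply that rejection has probability at least $1-\beta$ whenever $d_2^2(\lambda,\calS_0^u[R])\geq C(\alpha,\beta,R,\ell^*)/L$.

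The main technical obstacle is this last step: obtaining the Hoeffding-type decomposition of $T'-E_\lambda[T'\mid N_1]$ under the alternative and applying the Houdré--Reynaud-Bouret inequality piece by piece requires explicit control of the operator, Hilbert--Schmidt, and sup norms of the kernel $\psi_{\tau^*,\tau^*+\ell^*}\otimes\psi_{\tau^*,\tau^*+\ell^*}$, whose dependence on $\ell^*$ must be tracked carefully so that the final constant $C(\alpha,\beta,R,\ell^*)$ stays finite; the linear test $\phi_{2,\alpha}^{u(1)}$ is by comparison straightforward once the conditioning is set up.
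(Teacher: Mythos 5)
Your proposal reaches the right conclusion and the level argument is exactly the paper's (conditioning on $N_1$ and averaging), but for the second kind error rate you take a significantly heavier route than the one the paper uses, especially for $\phi_{2,\alpha}^{u(2)}$.

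For $\phi_{2,\alpha}^{u(1)}$, you propose working conditionally on $N_1=n$ under the alternative, where $N(\tau^*,\tau^*+\ell^*]$ is binomial with success probability $p=(\lambda_0+\delta)\ell^*/(\lambda_0+\delta\ell^*)$, and applying Bernstein. This is correct, and your identity $p-\ell^*=\delta\ell^*(1-\ell^*)/(\lambda_0+\delta\ell^*)$ is the right quantity to compare to $d_2(\lambda,\calS_0^u[R])$. The paper instead reuses Proposition~\ref{bNP1U} directly: it bounds the conditional binomial quantile $b_{n,\ell^*}(1-\alpha_1)$ by Chebyshev (not Bernstein), restricts to the high-probability event that $N_1$ is within Chebyshev range of $I(\lambda)L$, and then applies \emph{unconditional} Chebyshev to $N(\tau^*,\tau^*+\ell^*]$ under $P_\lambda$. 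No concentration under the conditional binomial law is needed. Your route works but needlessly introduces the conditional alternative distribution.

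For $\phi_{2,\alpha}^{u(2)}$, the gap is more notable. You propose to Hoeffding-decompose $T'-E_\lambda[T'\mid N_1]$ under $\hone$ into a linear and a degenerate quadratic part, apply Bernstein to the first and Houdré--Reynaud-Bouret to the second, and track the kernel norms; you yourself flag this as the main technical obstacle. The paper does none of this under the alternative. It uses the Houdré--Reynaud-Bouret inequality (Lemma~\ref{quantile_T'}) only under $\hzero$, where $T'$ is genuinely degenerate, to bound the conditional quantile $t'_{N_1,\tau^*,\tau^*+\ell^*}(1-\alpha)$ by a deterministic quantity on the event $\{N_1\leq RL+\sqrt{2RL/\beta}\}$ (controlled by Chebyshev). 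Under $\hone$ it then only needs the \emph{exact first two moments} of $T'$, computed in Lemma~\ref{MomentsT'}: $E_\lambda[T']=\delta^2\ell^*(1-\ell^*)$ and a closed-form variance of order $1/L$; an unconditional Bienaymé--Chebyshev step finishes the argument. Since the target rate is parametric and no logarithmic refinement is needed here, Chebyshev is enough and the full Hoeffding decomposition under the alternative — with the attendant kernel-norm bookkeeping you worry about — is avoidable. Your plan is not wrong, but you should be aware that the second-moment-plus-Chebyshev shortcut is available and removes the obstacle you identify.
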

\emph{Comments.} This result proves that both tests $\phi_{2,\alpha}^{u(1)}$ and $\phi_{2,\alpha}^{u(2)}$ are therefore minimax (up to a possible multiplicative constant) over the set of alternatives $\calS^u_{\bbul,\tau^*, \ell^*}[R]$, where the height of the change is unknown, with an optimal uniform separation rate of the parametric order $1/\sqrt{L}$, as expected regarding results for $\phi_{2,\alpha}^{(1)}$ and $\phi_{2,\alpha}^{(2)}$ in Section \ref{Sec:knownbaseline}.

\smallskip

Notice that this study involves the particular non transitory change or jump detection problem, with a known change location, taking $\ell^*=1-\tau^*$. Following the same layout as in Section \ref{Sec:knownbaseline}, we investigate the jump detection problem with unknown location in Section \ref{Sec:generalsingle_u}.

\subsection{Minimax detection of a transitory change with known length}\label{Sec:knownlength_u}

In this subsection, we deal with the  problem of testing the null hypothesis $\hzero \ "\lambda\in \calS_0^u[R]"$  versus alternatives where the length of the change from the unknown baseline intensity is known, with adaptation to the change location, and with or without adaptation to the change height.
We therefore introduce for $\ell^*$ in $(0,1)$ and $\delta^*$ in $(-R,R)\setminus\{0\}$ the two following sets:
  \begin{multline}\label{def_alt3_u}
\bold{[Alt^u.3]}\ \calS^u_{\delta^*,\bbul\bbul,\ell^*}[R]= \big\{\lambda:[0,1]\to (0,R],~ \exists \lambda_0 \in (-\delta^{*} \vee 0, (R-\delta^{*}) \wedge R],\\
\exists \tau \in (0,1-\ell^*),~\forall t\in[0,1]\quad \lambda(t) = \lambda_{0} + \delta^* \mathds{1}_{(\tau,\tau+\ell^*]}(t) \big\}\enspace,
\end{multline}
\vspace{-0.8cm}
\begin{multline}\label{def_alt4_u}
\bold{[Alt^u.4]}\ \calS^u_{\bbul,\bbul\bbul,\ell^*}[R]=
 \big\{ \lambda:[0,1]\to (0,R],~\exists \lambda_0\in (0,R],~\exists \delta \in (-\lambda_0,R-\lambda_0]\setminus \set{0},\\
 ~\exists \tau \in (0,1-\ell^*),~\forall t\in[0,1]\quad  \lambda(t) = \lambda_{0} + \delta \mathds{1}_{(\tau,\tau+\ell^*]}(t) \big\}\enspace.
\end{multline}
Adapting the ideas of Section \ref{Sec:knownlength}, we handle the question of adaptation to the change location $\tau^*$ by introducing aggregated tests based on the same linear and quadratic statistics as those used for testing  $\hzero$ versus $\hone\  "\lambda\in \calS^u_{\delta^*,\tau^*,\ell^*}[R]"$ above. We thus set on the one hand
\begin{eqnarray} 
\phi_{3,\alpha}^{u(1)-}(N)&=&\mathds{1}_{\min_{\tau\in [0,1-\ell^*\wedge(1/2)]}N(\tau, \tau + \ell^*\wedge(1/2)] < b_{N_1,\ell^*\wedge(1/2)}^-(\alpha)}\enspace,\label{testN1alt3_u_1}\\
\phi_{3,\alpha}^{u(1)+}(N)&=&\mathds{1}_{\max_{\tau\in [0,1-\ell^*\wedge(1/2)]}N(\tau, \tau + \ell^*\wedge(1/2)] > b_{N_1,\ell^*\wedge(1/2)}^+(1-\alpha)}\enspace,
\label{testN1alt3_u_2}\\
\phi_{4,\alpha}^{u(1)}(N)&=&\phi_{3,\alpha/2}^{u(1)-}(N)\vee \phi_{3,\alpha/2}^{u(1)+}(N)\enspace,\label{testN1alt4_u}
\end{eqnarray}
where $b_{n,\ell}^-(u)$ and $b_{n,\ell}^+(u)$ respectively denote the $u$-quantiles of the conditional distributions of $\min_{\tau\in [0,1-\ell]}N(\tau, \tau + \ell]$ and $\max_{\tau\in [0,1-\ell]}N(\tau, \tau + \ell]$ given $N_1=n$ under $\hzero$, for all $n$ in $\N$ and $\ell$ in $(0,1/2]$. Then, we introduce on the other hand the aggregated test
\begin{equation} \label{testN2alt3-4_u}
\phi_{3/4,\alpha}^{u(2)}(N) = \1{\max_{k\in \lbrace 0,\ldots, \lceil (1-\ell^*) M \rceil -1 \rbrace} \left( T'_{\frac{k}{M}, \frac{k}{M}+\ell^*}(N) - t'_{N_1,\frac{k}{M}, \frac{k}{M}+\ell^*} \pa{1 - u_\alpha} \right)>0}\enspace,
\end{equation}
where $M= \lceil 2/(\ell^*(1-\ell^*)) \rceil$, $u_\alpha= {\alpha}/{\lceil (1-\ell^*) M \rceil}$, $T'_{{k}/{M},{k}/{M}+\ell^*}$ is defined by \eqref{def_T'} and $t'_{n,{k}/{M},{k}/{M}+\ell^*} \pa{u}$ is the $u$-quantile of  $T'_{{k}/{M},{k}/{M}+\ell^*}(N)$ given $N_1=n$ under $\hzero$.

Since the set $\calS^u_{\delta^*,\bbul\bbul,\ell^*}[R]$ of \eqref{def_alt3_u} is composed of alternatives with known change height $\delta^*$ and length $\ell^*$, the distance between any of its elements and $\calS^u_{0}[R]$  is fixed, equal to $|\delta^*|\sqrt{\ell^*(1-\ell^*)}$. Hence, for this set, we only provide sufficient conditions for the tests $\phi_{3,\alpha}^{u(1)+}$, $\phi_{3,\alpha}^{u(1)-}$ and $\phi_{3/4,\alpha}^{u(2)}$ to have a second kind error rate controlled by a prescribed level $\beta$ when $\lambda\in \calS^u_{\delta^*,\bbul\bbul,\ell^*}[R]$. As in Proposition \ref{UBalt3}, the key points of the proofs of the following results for $\phi_{3,\alpha}^{u(1)-}$ and $\phi_{3,\alpha}^{u(1)+}$ are sharp lower or upper bounds for the involved quantiles $b_{n,\ell^*\wedge(1/2)}^-(\alpha)$ and $b_{n,\ell^*\wedge(1/2)}^+(\alpha)$, which are deduced from inequalities for oscillations of empirical processes found in \cite{ShorackWellner} (see Lemma \ref{bquantile_maxminNbis_u} for details). The result for $\phi_{3/4,\alpha}^{u(2)}$ relies on the control of the quantile $t'_{n,\tau_1,\tau_2}(u_\alpha)$  obtained in Lemma~\ref{quantile_T'} via the exponential inequality for $U$-statistics of order $2$ due to Reynaud-Bouret and Houdré \cite{HoudreRB}, as in Proposition \ref{UBalt2_u}.

\begin{proposition}[Second kind error rate control for $\bold{[Alt^u.3]}$] \label{UBalt3_u}
Let $L\geq 1$, $\alpha$ and $\beta$ in $(0,1)$, $\ell^*$ in $(0,1)$ and $\delta^*$ in $(-R,R)\setminus\{0\}$,
 and consider the problem of testing $\hzero$ v.s. $\hone \ "\lambda\in \calS^u_{\delta^*, \bbul\bbul,\ell^*}[R]"$. Let $\phi_{3,\alpha}^{u(1/2)}$ be one of the tests $\phi_{3,\alpha}^{u(1)+}$ or $\phi_{3/4,\alpha}^{u(2)}$ if $\delta^*>0$, and one of the tests $\phi_{3,\alpha}^{u(1)-}$ or $\phi_{3/4,\alpha}^{u(2)}$ if $\delta^*<0$ (see \eqref{testN1alt3_u_1},  \eqref{testN1alt3_u_2} and \eqref{testN2alt3-4_u}). The test $\phi_{3,\alpha}^{u(1/2)}$ is of level $\alpha$, that is $\sup_{\lambda_0\in \calS^u_0[R]}P_{\lambda_0}\big(\phi_{3,\alpha}^{u(1/2)}(N)=1\big)\leq \alpha$. Moreover, there exists $C(\alpha,\beta,R,\delta^*,\ell^*)>0$ such that $P_{\lambda}\big(\phi_{3,\alpha}^{u(1/2)}(N)=0\big) \leq \beta$ as soon as $\lambda$ belongs to $\calS^u_{\delta^*, \bbul\bbul, \ell^*}[R]$ with
$$d_2\pa{\lambda,\calS^u_0[R]}  \geq {C(\alpha,\beta,R,\delta^*,\ell^*)}/{\sqrt{L}}\enspace.$$
\end{proposition}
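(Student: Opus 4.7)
The proposition splits into a level control and a second-kind error control. The level is obtained by the conditioning trick already used in Proposition~\ref{bjumpbumpUMPtestg}: under any $\lambda_0\in\calS_0^u[R]$, the conditional law of $N$ given $N_1=n$ is that of $n$ i.i.d. uniform points on $[0,1]$, and is in particular free of $\lambda_0$. Consequently, the conditional distributions defining the quantiles $b^{\pm}_{n,\ell}$ (for the scanned window counts) and $t'_{n,\tau_1,\tau_2}$ (for the $U$-statistic) match those of the test statistics under $P_{\lambda_0}$, which gives $P_{\lambda_0}(\phi_{3,\alpha}^{u(1)\pm}(N)=1\mid N_1=n)\leq \alpha$. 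For the quadratic test, a Bonferroni bound over the $\lceil(1-\ell^*)M\rceil$ scanned intervals combined with $u_\alpha=\alpha/\lceil(1-\ell^*)M\rceil$ yields $P_{\lambda_0}(\phi_{3/4,\alpha}^{u(2)}(N)=1\mid N_1=n)\leq \alpha$. Integration in $N_1$ preserves the level uniformly over $\calS_0^u[R]$.

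For the second-kind error of $\phi_{3,\alpha}^{u(1)+}$ (the case $\delta^*<0$ is symmetric via $\phi_{3,\alpha}^{u(1)-}$), fix $\lambda\in\calS^u_{\delta^*,\bbul\bbul,\ell^*}[R]$ with bump on $(\tau,\tau+\ell^*]$, set $\ell=\ell^*\wedge (1/2)$ and pick $\tau'\in [0,1-\ell]$ with $[\tau',\tau'+\ell]\subseteq(\tau,\tau+\ell^*]$, which is always possible by construction of $\ell$. It suffices to control the single event $\{N(\tau',\tau'+\ell]>b^{+}_{N_1,\ell}(1-\alpha)\}$. Under $P_\lambda$, conditionally on $N_1=n$, $N(\tau',\tau'+\ell]$ is $\mathrm{Bin}(n,p)$ with $p=(\lambda_0+\delta^*)\ell/(\lambda_0+\delta^*\ell^*)$, so the drift is $n(p-\ell)=n\ell\delta^*(1-\ell^*)/(\lambda_0+\delta^*\ell^*)$, of order $L\delta^*\ell(1-\ell^*)$. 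Combining the Shorack-Wellner oscillation-type bound of Lemma~\ref{bquantile_maxminNbis_u} (which gives $b^{+}_{n,\ell}(1-\alpha)\leq n\ell + C(\alpha)\sqrt{n}$ uniformly in $n$) with a Bernstein tail for $N(\tau',\tau'+\ell]\mid N_1$ and a Bennett tail for $N_1$ itself (each at level $\beta/3$), the event has probability at least $1-\beta$ once $|\delta^*|\sqrt{\ell(1-\ell^*)}\sqrt{L}$ exceeds a constant depending on $(\alpha,\beta,R,\delta^*,\ell^*)$. Since $\ell\geq \ell^*/2$ and $d_2(\lambda,\calS_0^u[R])=|\delta^*|\sqrt{\ell^*(1-\ell^*)}$, this rewrites as the claimed distance condition.

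For the quadratic test $\phi_{3/4,\alpha}^{u(2)}$, the choice $M=\lceil 2/(\ell^*(1-\ell^*))\rceil$ ensures the existence of $k_0\in\{0,\ldots,\lceil(1-\ell^*)M\rceil-1\}$ with $|k_0/M-\tau|\leq 1/M\leq \ell^*(1-\ell^*)/2$. A direct computation in the orthonormal system $(\psi_0,\psi_{k_0/M,k_0/M+\ell^*})$, using $\lambda-\Pi_{W_0}\lambda=\delta^*(\mathds{1}_{(\tau,\tau+\ell^*]}-\ell^*)$, yields
\[
\|\Pi_{W_{k_0/M,k_0/M+\ell^*}}(\lambda-\Pi_{W_0}\lambda)\|_2^2 \;\geq\; \tfrac{1}{4}\, d_2^2(\lambda,\calS_0^u[R]).
\]
Since $T'_{k_0/M,k_0/M+\ell^*}(N)$ is an unbiased estimator of this squared projection, a conditional Chebyshev bound based on the classical variance formula for a $U$-statistic of order~2 localises it around its mean at rate $O(L^{-1}) + O(d_2(\lambda,\calS_0^u[R])\,L^{-1/2})$. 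The threshold $t'_{N_1,k_0/M,k_0/M+\ell^*}(1-u_\alpha)$ is upper bounded by Lemma~\ref{quantile_T'}, obtained by applying the Houdr\'e--Reynaud-Bouret exponential inequality for degenerate $U$-statistics of order~2 conditionally on $N_1=n$ and then deconditioning via a Poisson tail on $N_1$. Combining the two bounds gives the claim with the stated constant. The main obstacle throughout is precisely this two-layer conditioning: every exponential inequality must first be established conditionally on $N_1=n$ with constants uniform in a suitable range of $n$, then lifted via control of the fluctuations of $N_1$, which is the whole role played by Lemmas~\ref{bquantile_maxminNbis_u} and~\ref{quantile_T'}; once these are in hand, the assembly is routine.
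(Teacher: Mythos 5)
Your proposal is correct and follows essentially the same route as the paper: conditioning trick for the level, selection of a scanning index matching the unknown bump location, upper bounds on the conditional quantiles via Lemma~\ref{bquantile_maxminNbis_u} and Lemma~\ref{quantile_T'}, and a two-layer deconditioning via concentration of $N_1$. Two minor bookkeeping differences from the paper are worth flagging. For the linear test, you propose working conditionally (a binomial Bernstein bound for $N(\tau',\tau'+\ell]\mid N_1$, then a tail bound on $N_1$); the paper instead uses the \emph{unconditional} Bienayme--Chebyshev inequality for the window count $N(\tau,\tau+\ell^*\wedge(1/2)]$, reserving the control of $N_1$ solely for upper bounding the data-dependent threshold $b^+_{N_1,\ell}(1-\alpha)$ — slightly lighter, since the window count is then just Poisson. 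Also note that Lemma~\ref{bquantile_maxminNbis_u} gives $b^+_{n,\ell}(1-\alpha)\leq n\ell + 4\sqrt{n\log(320/\alpha)}+\tfrac{32}{3}\log(320/\alpha)$, so there is an additive $O(\log(1/\alpha))$ term you dropped; it is harmless here but should appear. For the quadratic test, your "conditional Chebyshev based on the $U$-statistic variance formula" is valid but requires computing the conditional variance of $T'$ for a non-uniform sample and then lifting via total variance; the paper sidesteps this by computing the \emph{unconditional} mean and variance of $T'$ directly from the Poisson structure (Lemma~\ref{MomentsT'}) and applying unconditional Chebyshev, deconditioning only the threshold. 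Both assemblies close the argument; the paper's is the less laborious of the two.
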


\emph{Comments.} Remarking that for $\lambda$ in $\calS^u_{\delta^*, \bbul\bbul, \ell^*}[\lambda_0]$, $d_2\pa{\lambda,\calS^u_{0}[R]}=|\delta^*|\sqrt{\ell^*(1-\ell^*)}$, Proposition \ref{UBalt3_u} provides a sufficient value $L_0(\alpha,\beta,R,\delta^*,\ell^{*})$ for $L$ so that the second kind error rates of the three tests is controlled by $\beta$. If $L\geq L_0(\alpha,\beta,R,\delta^*,\ell^{*})$,  their $\beta$-uniform separation rates over $\calS^u_{\delta^*,\bbul \bbul, \ell^*}[R]$ is equal to $0$, as well as the $(\alpha,\beta)$-minimax separation rate.

\smallskip

Now considering the alternative set $\calS^u_{\bbul,\bbul\bbul, \ell^*}[R] $, that is the change height adaptation issue, the following lower bound is directly deduced from the lower bound for $\mSRab\big(\calS^u_{\bbul,\tau^*, \ell^*}[R]\big)$ and the monotonicity property of the minimax separation rate recalled in Lemma \ref{mSR}.

\begin{corollary}[Minimax lower bound for $\bold{[Alt^u.4]}$]\label{LBalt4_u} 
Let $\alpha$ and  $\beta$ in $(0,1)$, $R>0$ and $\ell^* $ in $(0,1)$. For all $L \geq (2 \log C_{\alpha,\beta}/(R \ell^{*}{}))$,
\[\mSRab\pa{\calS^u_{\bbul,\bbul \bbul, \ell^*}[R]} \geq  \sqrt{{R (1-\ell^{*}{}) \log C_{\alpha,\beta}}/\pa{2L}},\textrm{ with }C_{\alpha, \beta}=1+4(1- \alpha -\beta)^2\enspace.\]
\end{corollary}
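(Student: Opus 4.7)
The plan is to observe that Corollary \ref{LBalt4_u} is essentially immediate from Proposition \ref{LB_alt2_u} combined with the monotonicity of the minimax separation rate with respect to the alternative set (the content of Lemma \ref{mSR} cited in the paper). The key structural remark is that fixing the location $\tau^*$ makes the alternative set smaller, so any lower bound over the smaller set automatically yields the same lower bound over the larger set.

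More precisely, first I would fix an arbitrary $\tau^*$ in $(0,1-\ell^*)$ and compare the two sets. From the definitions \eqref{def_alt2_u} and \eqref{def_alt4_u}, every intensity $\lambda$ lying in $\calS^u_{\bbul,\tau^*,\ell^*}[R]$ corresponds to a triple $(\lambda_0,\delta,\tau^*)$ with the location component equal to the prescribed $\tau^*$; such a triple is clearly a particular case of those allowed in $\calS^u_{\bbul,\bbul\bbul,\ell^*}[R]$, where $\tau$ ranges freely in $(0,1-\ell^*)$. Hence
\[
\calS^u_{\bbul,\tau^*,\ell^*}[R]\ \subset\ \calS^u_{\bbul,\bbul\bbul,\ell^*}[R]\enspace.
\]

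Second, I would invoke the monotonicity property (Lemma \ref{mSR}) that for any two alternative sets $\calS_1\subset\calS_1'$ satisfying the inclusion above together with the common null $\calS_0^u[R]$, one has $\mSRab(\calS_1)\le \mSRab(\calS_1')$. This is the standard observation that any level $\alpha$ test must control its uniform type II error rate over a bigger set of alternatives, so inflating the alternative set can only worsen (i.e.\ increase) the minimax separation rate. Applying this with $\calS_1=\calS^u_{\bbul,\tau^*,\ell^*}[R]$ and $\calS_1'=\calS^u_{\bbul,\bbul\bbul,\ell^*}[R]$ yields
\[
\mSRab\pa{\calS^u_{\bbul,\bbul\bbul,\ell^*}[R]}\ \geq\ \mSRab\pa{\calS^u_{\bbul,\tau^*,\ell^*}[R]}\enspace.
\]

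Third, I would apply Proposition \ref{LB_alt2_u}, which, under the assumption $L\geq 2\log C_{\alpha,\beta}/(R\ell^*)$, bounds the right-hand side below by $\sqrt{R(1-\ell^*)\log C_{\alpha,\beta}/(2L)}$, giving the announced inequality. Since no genuinely new argument is required, I do not anticipate any real obstacle: the only point deserving mention is that the lower bound of Proposition \ref{LB_alt2_u} does not depend on the choice of $\tau^*$, so choosing any admissible $\tau^*$ in $(0,1-\ell^*)$ is enough to conclude without any optimisation step.
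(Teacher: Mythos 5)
Your proof is correct and follows exactly the paper's own reasoning: the paper states that Corollary \ref{LBalt4_u} "is directly deduced from the lower bound for $\mSRab(\calS^u_{\bbul,\tau^*,\ell^*}[R])$ and the monotonicity property of the minimax separation rate recalled in Lemma \ref{mSR}." You correctly identified the inclusion $\calS^u_{\bbul,\tau^*,\ell^*}[R]\subset\calS^u_{\bbul,\bbul\bbul,\ell^*}[R]$ for $\tau^*\in(0,1-\ell^*)$, applied the monotonicity of $\mSRab$, and noted that the bound from Proposition \ref{LB_alt2_u} is independent of the chosen $\tau^*$.
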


\begin{proposition}[Minimax upper bounds for $\bold{[Alt^u.4]}$] \label{UBalt4_u} Let $L\geq 1$, $\alpha,\beta$ in $(0,1)$, $R>0$ and $\ell^*$ in $(0,1)$. Let $\phi_{4,\alpha}^{u(1/2)}$ be one of the tests $\phi_{4,\alpha}^{u(1)}$ and $\phi_{3/4,\alpha}^{u(2)}$ of $\hzero$ versus $\hone\ "\lambda\in\calS^u_{\bbul,\bbul\bbul,\ell^*}[R]"$, defined by \eqref{testN1alt4_u} and \eqref{testN2alt3-4_u}. $\phi_{4,\alpha}^{u(1/2)}$ is of level $\alpha$, that is $\sup_{\lambda_0\in \calS^u_0[R]} P_{\lambda_0}(\phi_{4,\alpha}^{u(1/2)}(N)=1)\leq \alpha$, and there exists $C(\alpha, \beta, R,\ell^*)>0$ such that
$$ \SRb\big(\phi_{4,\alpha}^{u(1/2)},\calS^u_{\bbul,\bbul\bbul, \ell^*}[R]\big) \leq {C(\alpha, \beta,R,\ell^*)}/{\sqrt{L}}\enspace,$$
which entails in particular $\mSRab\big(\calS^u_{\bbul, \bbul\bbul, \ell^*}[R]\big) \leq C(\alpha, \beta,R, \ell^*)/\sqrt{L}$.
\end{proposition}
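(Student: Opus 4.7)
The proof splits naturally into a level verification and a separation rate bound, each handled via the conditioning trick that underpins the whole section.

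\textbf{Level control.} Both tests are defined through critical values computed under the conditional law of $N$ given $N_1$. Under any $\lambda_0\in\calS^u_0[R]$, given $N_1=n$ the points of $N$ are i.i.d. uniform on $[0,1]$, a distribution free of $\lambda_0$. Consequently, the quantiles $b^\pm_{N_1,\ell^*\wedge(1/2)}$ and $t'_{N_1,k/M,k/M+\ell^*}$ are intrinsic functions of $N_1$ and each single test they define has exact conditional size $\alpha/2$ (for the two unilateral tests composing $\phi_{4,\alpha}^{u(1)}$) or $u_\alpha=\alpha/\lceil(1-\ell^*)M\rceil$ (for the $\lceil(1-\ell^*)M\rceil$ scan components of $\phi_{3/4,\alpha}^{u(2)}$). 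A union bound inside each conditional law followed by integration over $N_1$ yields $\sup_{\lambda\in\calS^u_0[R]}P_\lambda(\phi(N)=1)\leq\alpha$ for either test.

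\textbf{Separation bound for $\phi_{3/4,\alpha}^{u(2)}$.} Fix $\lambda=\lambda_0+\delta\mathds{1}_{(\tau,\tau+\ell^*]}\in\calS^u_{\bbul,\bbul\bbul,\ell^*}[R]$ with $d_2(\lambda,\calS^u_0[R])=|\delta|\sqrt{\ell^*(1-\ell^*)}$ assumed larger than $C(\alpha,\beta,R,\ell^*)/\sqrt L$, with $C$ to be adjusted. Choose $k^*\in\{0,\ldots,\lceil(1-\ell^*)M\rceil-1\}$ so that $k^*/M$ is the closest grid point to $\tau$; the choice $M=\lceil 2/(\ell^*(1-\ell^*))\rceil$ guarantees $|k^*/M-\tau|\leq\ell^*(1-\ell^*)/4$. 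A direct computation of the projection $\Pi_{W_{k^*/M,k^*/M+\ell^*}}(\lambda-\Pi_{W_0}\lambda)$, exploiting the orthogonality of $\psi_0$ to $\lambda-\Pi_{W_0}\lambda$ and reducing to $\langle\lambda-\Pi_{W_0}\lambda,\psi_{k^*/M,k^*/M+\ell^*}\rangle^2$, shows that
\[
\bE_\lambda[T'_{k^*/M,k^*/M+\ell^*}(N)]=\|\Pi_{W_{k^*/M,k^*/M+\ell^*}}(\lambda-\Pi_{W_0}\lambda)\|_2^2\geq \kappa(\ell^*)\,d_2^2(\lambda,\calS^u_0[R]),
\]
for an explicit $\kappa(\ell^*)>0$ that is bounded away from zero since the overlap between $(k^*/M,k^*/M+\ell^*]$ and $(\tau,\tau+\ell^*]$ stays close to $\ell^*$. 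Next, Lemma~\ref{quantile_T'} provides an upper bound for $t'_{N_1,k^*/M,k^*/M+\ell^*}(1-u_\alpha)$ of order $1/L$ on an event $A_L$ of $P_\lambda$-probability at least $1-\beta/2$, where $A_L$ controls $N_1$ through Chebyshev applied to the Poisson distribution with parameter at most $RL$. Finally, since $T'_{k^*/M,k^*/M+\ell^*}(N)$ is an unbiased estimator of a quantity of order $\kappa(\ell^*)|\delta|^2\ell^*(1-\ell^*)$ with $P_\lambda$-variance of order $(1\vee L\delta^2)/L^2$ (bound obtained by expanding the quadratic form in Poisson counts and using $\bE_\lambda[N(\cdot)^p]$ bounds with $\lambda$ bounded by $R$), Chebyshev's inequality shows the event $\{T'_{k^*/M,k^*/M+\ell^*}(N)\geq t'_{N_1,k^*/M,k^*/M+\ell^*}(1-u_\alpha)\}$ has probability at least $1-\beta/2$ as soon as $C(\alpha,\beta,R,\ell^*)$ is chosen large enough. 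Combining yields the announced bound.

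\textbf{Separation bound for $\phi_{4,\alpha}^{u(1)}$.} Splitting on the sign of $\delta$, consider say $\delta>0$ so that $\phi_{3,\alpha/2}^{u(1)+}$ is the relevant component. Set $\tilde\ell=\ell^*\wedge(1/2)$. Since $(\tau,\tau+\ell^*]$ has length at least $\tilde\ell$, there exists $\tau'\in[0,1-\tilde\ell]$ with $(\tau',\tau'+\tilde\ell]\subset(\tau,\tau+\ell^*]$, so
\[
\max_{\tau''\in[0,1-\tilde\ell]}N(\tau'',\tau''+\tilde\ell]\geq N(\tau',\tau'+\tilde\ell],
\]
a Poisson variable with mean $(\lambda_0+\delta)\tilde\ell L$. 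Lemma~\ref{bquantile_maxminNbis_u} furnishes an upper bound on $b^+_{N_1,\tilde\ell}(1-\alpha/2)$ of the order of the null expectation $\lambda_0\tilde\ell L$ plus a fluctuation term of order $\sqrt{L}$, and integrating over $N_1$ (again $N_1$ is concentrated around $\lambda_0 L$) we conclude by Chebyshev applied to $N(\tau',\tau'+\tilde\ell]$ that the test rejects with probability at least $1-\beta$ as soon as $|\delta|\sqrt L$ exceeds a constant depending on $\alpha,\beta,R,\ell^*$. The case $\delta<0$ is symmetric using $\phi_{3,\alpha/2}^{u(1)-}$. The main technical hurdle throughout is the uniform-in-$N_1$ control of the conditional quantiles, delegated to Lemmas~\ref{quantile_T'} and~\ref{bquantile_maxminNbis_u}; once this is granted, everything else reduces to mean/variance estimates of Poisson counts truncated by the realistically bounded event $\{N_1\leq 2RL\}$.
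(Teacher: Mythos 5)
Your proof is correct and follows essentially the same path as the paper: conditioning trick plus union bound for the level, a floor-type grid choice $k_\tau$ near $\tau$ for the scan, the quantile bounds from Lemmas~\ref{quantile_T'} and \ref{bquantile_maxminNbis_u}, and Chebyshev-based power estimates. The paper states this as a one-line reduction to the proof of Proposition~\ref{UBalt3_u} (observing that $\phi_{4,\alpha}^{u(1)}=\phi_{3,\alpha/2}^{u(1)+}\vee\phi_{3,\alpha/2}^{u(1)-}$ and checking that the separation conditions there hold with $\alpha/2$ in place of $\alpha$, the constants in Proposition~\ref{UBalt3_u}'s bound depending only on $(\alpha,\beta,R,\ell^*)$ and not on $\delta$); your argument is that reduction unrolled, a difference in presentation, not in substance.

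One imprecision is worth correcting in the linear-test branch. You state that $N_1$ concentrates around $\lambda_0 L$, but under $P_\lambda$ with $\lambda=\lambda_0+\delta\mathds{1}_{(\tau,\tau+\ell^*]}$, the total count $N_1$ is Poisson with mean $I(\lambda)L=(\lambda_0+\delta\ell^*)L$. Since $b^+_{N_1,\tilde\ell}(1-\alpha/2)$ from Lemma~\ref{bquantile_maxminNbis_u} scales with $N_1$ (roughly $N_1\tilde\ell+O(\sqrt{N_1})$), the conditional critical value is itself inflated by the bump, so the usable signal is the alternative mean $(\lambda_0+\delta)\tilde\ell L$ minus the inflated baseline $(\lambda_0+\delta\ell^*)\tilde\ell L$, namely $\delta(1-\ell^*)\tilde\ell L$, not $\delta\tilde\ell L$. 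The paper tracks this via $I(\lambda)$ and the crude bound $I(\lambda)\leq R$; with that substitution your argument closes with the same $1/\sqrt{L}$ rate, only with a different constant.
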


\emph{Comments.} Proposition  \ref{UBalt4_u} and Corollary \ref{LBalt4_u} mean that the tests $\phi_{4,\alpha}^{u(1)}$ and $\phi_{3/4,\alpha}^{u(2)}$ are minimax. Together with the ones obtained for $\bold{[Alt^u.2]}$, the two above results finally mean that, as when the baseline intensity is known, adaptation with respect to the change location can be achieved with a minimax separation rate of the parametric order, that is without any additional price to pay (possibly except multiplicative constants) as soon as the only change length is known.

\subsection{Minimax detection of a transitory change with known location} \label{Sec:knownlocation_u}

We consider the problem of testing the null hypothesis $\hzero \ "\lambda\in \calS^u_0[R]"$ versus alternative hypotheses where the location of the change from the baseline intensity is known, with adaptation to the change length, and with or without adaptation to the height. As in Section \ref{Sec:knownlocation}, we see that adaptation to the length can be done without any incidence on the minimax separation rate order, while adaptation to both height and length leads to a cost factor of order $\sqrt{\log\log L}$. 

\subsubsection{Known change height}

Let us first investigate the problem of testing $\hzero$ versus
$\hone\ "\lambda\in\calS^u_{\delta^*,\tau^*,\bbul\bbul\bbul}[R]"$,
where  for $R>0$, $\delta^*$ in $(-R,R)\setminus\{0\}$ and $\tau^*$ in $(0,1)$,
\begin{multline}\label{def_alt5_u}
\bold{[Alt^u.5]}\ \calS^u_{\delta^*,\tau^*,\bbul\bbul\bbul}[R]= \lbrace \lambda:[0,1]\to (0,R],~ \exists \lambda_0 \in (-\delta^{*} \vee 0, (R-\delta^{*}) \wedge R],\\
\exists \ell \in (0,1-\tau^*),~\forall t\in[0,1]\quad \lambda(t) = \lambda_{0} + \delta^* \mathds{1}_{(\tau^*,\tau^*+\ell]}(t) \rbrace\enspace.
\end{multline}

As in Section \ref{Sec:knownlocation}, the most intricate point here is the construction of a test achieving the minimax separation rate over $\calS^u_{\delta^*,\tau^*,\bbul\bbul\bbul}[R]$, which will be proved to be  of the parametric order $1/\sqrt{L}$, and therefore necessarily taking the knowledge of the change height $\delta^*$ into account. The test we propose is largely inspired from the aggregated test $\phi_{5,\alpha}$ defined by \eqref{testalt5}, where the test statistic is slightly adapted to compensate for the lack of the baseline intensity knowledge. Since the critical value can not be taken as a quantile of the test statistic, whose distribution under the null hypothesis is not free from the unknown baseline intensity anymore, we use the same conditioning trick as in the above subsections.

\begin{proposition}[Minimax lower bound for $\bold{[Alt^u.5]}$] \label{LBalt5_u}
Let $\alpha,\beta$ in $(0,1)$ with $\alpha + \beta <1$,  $R>0$, $\delta^*$ in $(-R,R)\setminus\{0\}$, $\tau^*$ in $(0,1)$. For $L > {((R- \delta^{*}{}) \wedge R) \log C_{\alpha,\beta}}/\pa{2 \delta^{*}{}^2 \tau^{*}{}(1-\tau^{*}{})} $, 
\[\mSRab\pa{\calS^u_{\delta^*,\tau^*,\bbul\bbul\bbul}[R]} \geq  \sqrt{{((R- \delta^{*}{}) \wedge R) \log C_{\alpha,\beta}}/\pa{2 L}  },\textrm{ with }C_{\alpha, \beta}=1+4(1- \alpha -\beta)^2\enspace.\]
\end{proposition}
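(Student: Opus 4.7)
The plan is to invoke the classical two-point Bayesian lower bound argument recalled in Section \ref{Keys}, matching the approach used for Propositions \ref{LBalt2}, \ref{LBalt5} and \ref{LBalt7}. I will produce a single baseline $\lambda_0^* \in \calS^u_0[R]$ and a single alternative $\lambda^\dagger \in \calS^u_{\delta^*,\tau^*,\bbul\bbul\bbul}[R]$ at distance $r := \sqrt{((R-\delta^*)\wedge R)\log C_{\alpha,\beta}/(2L)}$ from $\calS^u_0[R]$, whose induced Poisson process distributions are close enough in total variation that no level-$\alpha$ test can control its second kind error rate at $\lambda^\dagger$ by $\beta$.

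I take $\lambda_0^* = (R-\delta^*)\wedge R$, the largest constant baseline compatible with both $\lambda_0^* \in (0,R]$ and $\lambda_0^*+\delta^* \in (0,R]$ (regardless of the sign of $\delta^*$), and set $\lambda^\dagger = \lambda_0^* + \delta^* \mathds{1}_{(\tau^*, \tau^*+\ell^\dagger]}$, where $\ell^\dagger \in (0, 1-\tau^*)$ is chosen as the smaller root of $\ell(1-\ell) = r^2/\delta^{*2}$. This guarantees $d_2(\lambda^\dagger, \calS^u_0[R]) = |\delta^*|\sqrt{\ell^\dagger(1-\ell^\dagger)} = r$ by the computation of $d_2$ discussed at the beginning of Section \ref{Sec:unknownbaseline}. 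Moreover, the elementary inequality $1-\sqrt{1-x} \leq x$ yields $\ell^\dagger \leq 2r^2/\delta^{*2} = \lambda_0^*\log C_{\alpha,\beta}/(\delta^{*2}L)$, and the hypothesis $L > ((R-\delta^*)\wedge R)\log C_{\alpha,\beta}/(2\delta^{*2}\tau^*(1-\tau^*))$ ensures that this $\ell^\dagger$ lies in the admissible range $(0, 1-\tau^*)$. By Girsanov's Lemma \ref{lemmegirsanov} combined with the Poisson moment generating function, a direct computation gives
$$E_{\lambda_0^*}\bigl[(dP_{\lambda^\dagger}/dP_{\lambda_0^*})^{2}\bigr] = \exp\!\left(\delta^{*2}\ell^\dagger L/\lambda_0^*\right) \leq \exp(2r^2 L/\lambda_0^*) = C_{\alpha,\beta}.$$

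Finally, Cauchy-Schwarz provides $\|P_{\lambda^\dagger}-P_{\lambda_0^*}\|_{TV} \leq \tfrac{1}{2}\sqrt{C_{\alpha,\beta}-1} = 1-\alpha-\beta$. For any level-$\alpha$ test $\phi$, combining $P_{\lambda_0^*}(\phi=1)\leq \alpha$ with this total variation estimate forces $P_{\lambda^\dagger}(\phi=0) \geq \beta$, so the $\beta$-uniform separation rate of $\phi$ over $\calS^u_{\delta^*,\tau^*,\bbul\bbul\bbul}[R]$ cannot be strictly less than $r$, establishing the claimed lower bound. The main technical subtlety lies in verifying that $\ell^\dagger$ fits inside $(0, 1-\tau^*)$ while simultaneously meeting the distance identity and the chi-square cap $C_{\alpha,\beta}$; this is precisely where the quantitative threshold on $L$ in the hypothesis intervenes. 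All other ingredients---Girsanov, the Poisson m.g.f., and Cauchy-Schwarz---are routine and directly parallel the calculations used for the earlier lower bounds.
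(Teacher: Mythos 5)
Your proposal is correct and follows essentially the same route as the paper: same baseline $\lambda_0^* = (R-\delta^*)\wedge R$, same alternative with $\ell^\dagger$ the smaller root of $\ell(1-\ell)=r^2/\delta^{*2}$, same Girsanov-plus-Poisson-m.g.f.\ chi-square computation, and the Cauchy--Schwarz/total-variation step is precisely the single-point specialization of Lemma~\ref{lemmebayesien}. The one place you elide a detail is the admissibility $\ell^\dagger < 1-\tau^*$: the loose bound $\ell^\dagger \le 2r^2/\delta^{*2} < 2\tau^*(1-\tau^*)$ that you derive only settles it when $\tau^*\le 1/2$ (since $2\tau^*(1-\tau^*) \le 1-\tau^*$ fails for $\tau^*>1/2$); for $\tau^*>1/2$ one must instead use the exact identity $\ell^\dagger(1-\ell^\dagger)=r^2/\delta^{*2}<\tau^*(1-\tau^*)$ together with the fact that $\ell^\dagger$ and $1-\tau^*$ both lie in $(0,1/2]$ where $x\mapsto x(1-x)$ is strictly increasing. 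The paper handles this by a two-case remark; your statement that the hypothesis on $L$ ``ensures'' admissibility is accurate, but the reason is not the displayed elementary bound alone.
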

Let us now introduce the test
\begin{equation} \label{testalt5_u}
\phi^u_{5,\alpha}(N) = \1{\sup_{\ell \in (0,1- \tau^{*})}S'_{\delta^*,\tau^*,\tau^*+\ell}(N) >s_{N_1,\delta^*,\tau^*,L}^{'+}(1-\alpha)}\enspace,
\end{equation}
where $S'_{\delta^*,\tau_1,\tau_2}(N)$ is the statistic defined for $0\leq \tau_1<\tau_2\leq 1$ by
\begin{equation}\label{stat_alt5_u}
S_{\delta^*,\tau_1,\tau_2}'(N)=  \mathrm{sgn}(\delta^{*}{}) \Big(N(\tau_1,\tau_2] - (\tau_2-\tau_1) N_1\Big) - \vert \delta^{*}{} \vert L(\tau_2-\tau_1)(1-\tau_2+\tau_1)/2 \enspace,
\end{equation}
and $s_{n,\delta^*,\tau^*,L}^{'+}(u)$ is the $u$-quantile of $\sup_{\ell \in (0,1- \tau^{*})}S'_{\delta^*,\tau^*,\tau^*+\ell}(N)$ given $N_1=n$ under $(H_0)$.

The main argument of the following upper bound is a control of the conditional quantile $s_{n,\delta^*,\tau^*,L}^{'+}(1-\alpha)$, provided in Lemma \ref{QuantilessupShifted_u}, and which is deduced from a refined Bernstein inequality based on some chaining techniques.

\begin{proposition}[Minimax upper bound for $\bold{[Alt^u.5]}$] \label{UBalt5_u}
Let $L\!\geq \!1$, $\alpha, \beta$  in $(0,1)$, $R\!>\!0$, $\delta^*$ in $(-R,R)\setminus\{0\}$ and $\tau^*$ in $(0,1)$. Let $\phi^u_{5,\alpha}$ be the test of $\hzero$ versus $\hone\ "\lambda\in\calS^u_{\delta^*,\tau^*,\bbul\bbul\bbul}[R]"$ defined by \eqref{testalt5_u}. $\phi^u_{5,\alpha}$ is of level $\alpha$, that is $\sup_{\lambda_0\in \calS_0^u[R]} P_{\lambda_0}\pa{\phi^u_{5,\alpha}(N)=1}\leq \alpha$.
Moreover, there exists a constant $C(\alpha, \beta,R,\delta^*)>0$ such that
$$ \SRb\pa{\phi^u_{5,\alpha},\calS^u_{\delta^*,\tau^*,\bbul\bbul\bbul}[R]} \leq {C(\alpha, \beta,R,\delta^*)}/{\sqrt{L}}\enspace,$$
which entails in particular $\mSRab\pa{\calS^u_{\delta^*,\tau^*,\bbul\bbul\bbul}[R]} \leq C(\alpha, \beta,R, \delta^*)/\sqrt{L}$.
\end{proposition}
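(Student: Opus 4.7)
Under $P_{\lambda_0}$ with $\lambda_0\in\calS_0^u[R]$, the conditional distribution of the jump locations of $N$ given $N_1=n$ is that of the order statistics of $n$ i.i.d. uniform variables on $[0,1]$, hence free of $\lambda_0$. Thus the conditional distribution of $\sup_{\ell\in(0,1-\tau^*)}S'_{\delta^*,\tau^*,\tau^*+\ell}(N)$ given $N_1$ under $\hzero$ is free of $\lambda_0$, the critical value $s_{N_1,\delta^*,\tau^*,L}^{'+}(1-\alpha)$ is well defined and does not depend on the baseline intensity, and $P_{\lambda_0}(\phi^u_{5,\alpha}(N)=1\mid N_1)\leq \alpha$ almost surely; integrating over $N_1$ yields the claimed level control uniformly on $\calS_0^u[R]$.

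\textbf{Mean at the truth and reduction.} Fix $\lambda=\lambda_0+\delta^*\mathds{1}_{(\tau^*,\tau^*+\ell^*]}$ in $\calS^u_{\delta^*,\tau^*,\bbul\bbul\bbul}[R]$, so that $d_2(\lambda,\calS_0^u[R])=|\delta^*|\sqrt{\ell^*(1-\ell^*)}$. A direct computation of expectations under $P_\lambda$ yields $E_\lambda[N(\tau^*,\tau^*+\ell^*]-\ell^*N_1]=L\delta^*\ell^*(1-\ell^*)$, whence
\[E_\lambda[S'_{\delta^*,\tau^*,\tau^*+\ell^*}(N)]=|\delta^*|L\ell^*(1-\ell^*)/2=Ld_2^2(\lambda,\calS_0^u[R])/(2|\delta^*|).\]
Since $\{\phi^u_{5,\alpha}(N)=0\}\subset\{S'_{\delta^*,\tau^*,\tau^*+\ell^*}(N)\leq s_{N_1,\delta^*,\tau^*,L}^{'+}(1-\alpha)\}$, the plan is, on a high-probability event, to upper bound the critical value and to lower bound $S'_{\delta^*,\tau^*,\tau^*+\ell^*}(N)$ by concentration around the above expectation, and then to check that the mean outweighs both.

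\textbf{Quantile bound and concentration.} The quantile control is provided by Lemma \ref{QuantilessupShifted_u}: the shift $-|\delta^*|L(\tau_2-\tau_1)(1-\tau_2+\tau_1)/2$ embedded in the definition of $S'$ exactly compensates the first-order growth in $\ell$ of the centred statistic $N(\tau^*,\tau^*+\ell]-\ell N_1$, and a Bernstein-type inequality combined with a dyadic chaining over $(0,1-\tau^*)$ yields a bound $s_{N_1,\delta^*,\tau^*,L}^{'+}(1-\alpha)\leq C_1(\alpha,R,\delta^*)$, independent of $L$, on an event $\Omega_1=\{|N_1-L(\lambda_0+\delta^*\ell^*)|\leq C_2\sqrt{L}\}$ of probability at least $1-\beta/2$ under $P_\lambda$ (Bennett's inequality applied to the Poisson variable $N_1$). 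A further Bernstein inequality, together with the variance bound $\mathrm{Var}_\lambda(N(\tau^*,\tau^*+\ell^*]-\ell^*N_1)=L[\lambda_0+(1-\ell^*)\delta^*]\ell^*(1-\ell^*)\leq LR\ell^*(1-\ell^*)$, provides the lower deviation $P_\lambda\bigl(S'_{\delta^*,\tau^*,\tau^*+\ell^*}(N)\leq E_\lambda[S'_{\delta^*,\tau^*,\tau^*+\ell^*}(N)]-C_3(\beta)\sqrt{LR\ell^*(1-\ell^*)}\bigr)\leq\beta/2$.

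\textbf{Conclusion and main obstacle.} On the intersection of both good events, one has $\phi^u_{5,\alpha}(N)=1$ as soon as $Ld_2^2(\lambda,\calS_0^u[R])/(2|\delta^*|)\geq C_1(\alpha,R,\delta^*)+C_3(\beta)\sqrt{LR}\,d_2(\lambda,\calS_0^u[R])/|\delta^*|$, and an elementary tuning of constants shows that this is implied by $d_2(\lambda,\calS_0^u[R])\geq C(\alpha,\beta,R,\delta^*)/\sqrt{L}$, which delivers the announced upper bound on the uniform separation rate (and hence on the minimax one). The hard part is the quantile control of Lemma \ref{QuantilessupShifted_u}: in contrast with the known-baseline case of Proposition \ref{UBalt5}, Pyke's explicit distributional identities for shifted homogeneous Poisson processes are no longer applicable here, because the additional centring by $\ell N_1$ couples the values of $S'_{\delta^*,\tau^*,\tau^*+\ell}(N)$ at different $\ell$; the chaining argument then has to be refined enough to ensure that the linear gain coming from the shift still dominates the cumulative Bernstein increments uniformly in $\ell\in(0,1-\tau^*)$.
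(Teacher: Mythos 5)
Your proposal is correct and follows the same overall architecture as the paper's proof: conditional level control by freeness of the conditional law given $N_1$, reduction to bounding the second-kind error of the single test at the true length $\ell^*$, control of the conditional quantile via Lemma \ref{QuantilessupShifted_u} on a high-probability event on $N_1$, and a moment-deviation argument for $S'_{\delta^*,\tau^*,\tau^*+\ell^*}(N)$. Two cosmetic deviations: the paper truncates with the simpler event $\{N_1\le 2RL\}$ rather than your two-sided $\Omega_1$, and it deviates $S'$ and $N_1$ by Bienayme--Chebyshev instead of Bernstein/Bennett (second moments suffice; the sharper inequalities buy nothing here since the target is only the order $1/\sqrt{L}$); neither affects the result. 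You have also correctly identified the real technical weight of the proof, namely that Pyke's explicit identities are lost once one centres by $\ell N_1$, forcing the chaining-plus-Bernstein argument in Lemma \ref{QuantilessupShifted_u}.
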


\subsubsection{Unknown change height} 

Now addressing the question of adaptation to the change height and length together, we consider for $R>0$ and $\tau^*$ in $(0,1)$ the alternative set
\begin{multline}\label{alt6_u}
\bold{[Alt^u.6]}\  \calS^u_{\bbul,\tau^*,\bbul\bbul\bbul}[R]=\big\{ \lambda: [0,1]\to (0,R],~\exists \lambda_0\in (0,R],
 ~\exists \delta \in (-\lambda_0,R-\lambda_0]\setminus \set{0},\\~ \exists \ell \in (0,1-\tau^*),
~\forall t\in[0,1]\quad  \lambda(t) = \lambda_{0} + \delta \mathds{1}_{(\tau^*,\tau^*+\ell]}(t) \big\}\enspace.
\end{multline}
For the problem of testing $\hzero \ "\lambda\in \calS^u_0[R]"$ versus $\hone\ "\lambda\in\calS^u_{\bbul,\tau^*,\bbul\bbul\bbul}[R]"$, 
we obtain the following lower bound.

\begin{proposition}[Minimax lower bound for $\bold{[Alt^u.6]}$] \label{LBalt6_u}
Let $\alpha, \beta$ in $(0,1)$  with $\alpha + \beta <1/2$, $R>0$ and $\tau^*$ in $(0,1)$. There exists $L_0(\alpha,\beta,R,\tau^{*}{})>0$  such that for $L\geq L_0(\alpha,\beta,R,\tau^*),$
 \[\mSRab\pa{\calS^u_{\bbul,\tau^*,\bbul\bbul\bbul}[R]} \geq  \sqrt{{R \tau^{*}{}\log\log L}/\pa{2L}}\enspace.\]
\end{proposition}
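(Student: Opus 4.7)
The plan is to apply the Bayesian reduction framework used throughout the paper for minimax lower bounds (cf.\ Propositions~\ref{LBalt2}, \ref{LBalt6}, and the general arguments in Section~\ref{Keys}), adapting it to the composite null $\hzero\ "\lambda\in\calS_0^u[R]"$. I would fix a single pivot $\lambda_0\in\calS_0^u[R]$, conveniently $\lambda_0=R/2$ to allow alternatives with both positive and negative $\delta$, and construct a prior $\mu_r$ on $\calS^u_{\bbul,\tau^*,\bbul\bbul\bbul}[R]$ whose atoms all lie at distance exactly $r$ from $\calS_0^u[R]$. The Ingster--Baraud argument will then yield $\mSRab\pa{\calS^u_{\bbul,\tau^*,\bbul\bbul\bbul}[R]}\geq r$ provided $E_{\lambda_0}\!\cro{\pa{\int (dP_\lambda/dP_{\lambda_0})\,d\mu_r(\lambda)}^2}-1\leq 4(1-\alpha-\beta)^2$, where the condition $\alpha+\beta<1/2$ ensures the right-hand side exceeds~$1$, leaving room for an $O(1)$ chi-squared bound.

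For the prior, I would take a dyadic family of bump lengths $\ell_k=(1-\tau^*)2^{-k}$ indexed by $k\in\{0,\ldots,K-1\}$ with $K\asymp\log_2 L$ (suitably truncated at the smallest scales so that the associated heights remain admissible, which determines the threshold $L_0(\alpha,\beta,R,\tau^*)$), together with both signs $\varepsilon\in\{-1,+1\}$ per scale. The heights are fixed by $\delta_k^\varepsilon=\varepsilon r/\sqrt{\ell_k(1-\ell_k)}$, so that $\lambda_k^\varepsilon=R/2+\delta_k^\varepsilon\mathds{1}_{(\tau^*,\tau^*+\ell_k]}$ satisfies $d_2(\lambda_k^\varepsilon,\calS_0^u[R])=r$. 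Taking $\mu_r$ uniform over $\{\lambda_k^\varepsilon\}$, Girsanov's Lemma (Lemma~\ref{lemmegirsanov}) combined with the Laplace transform of Poisson integrals gives
\[E_{\lambda_0}\!\cro{\frac{dP_{\lambda_k^\varepsilon}}{dP_{\lambda_0}}\cdot\frac{dP_{\lambda_{k'}^{\varepsilon'}}}{dP_{\lambda_0}}}=\exp\!\cro{\frac{L}{\lambda_0}\int_0^1 (\lambda_k^\varepsilon-\lambda_0)(\lambda_{k'}^{\varepsilon'}-\lambda_0)\,dt}=\exp\!\cro{\frac{2L\,\varepsilon\varepsilon'|\delta_k||\delta_{k'}|(\ell_k\wedge\ell_{k'})}{R}}\enspace.\]

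After averaging over signs, the chi-squared reduces to $K^{-2}\sum_{k,k'}\cosh\!\cro{2L|\delta_k||\delta_{k'}|(\ell_k\wedge\ell_{k'})/R}-1$. On the diagonal, using $\delta_k^2\ell_k=r^2/(1-\ell_k)$, each term equals $\cosh[2Lr^2/(R(1-\ell_k))]$; the binding constraint arises at $k=0$, where $1-\ell_0=\tau^*$ and the term becomes $\cosh[2Lr^2/(R\tau^*)]$. Choosing $r^2=R\tau^*\log\log L/(2L)$ (up to a constant tuned by $\alpha,\beta$) then keeps this diagonal contribution of order a power of~$K$, and summing $K$ diagonal terms divided by $K^2$ leaves a vanishing contribution. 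Off-diagonal terms decay geometrically in $|k-k'|$ through the ratio $\sqrt{(\ell_k\wedge\ell_{k'})/(\ell_k\vee\ell_{k'})}=2^{-|k-k'|/2}$, and sum to an $O(1)$ quantity by a standard geometric double-sum estimate. The main difficulty I foresee lies in this off-diagonal bookkeeping: since all the bumps in the prior share the left endpoint $\tau^*$ and are hence nested rather than disjoint as in the cleaner Gaussian dyadic constructions of \cite{Arias-Castro2005}, the corresponding likelihoods are strongly positively correlated across scales, and the geometric decay must be tracked explicitly to prevent blow-up. It is precisely this nesting that forces the binding diagonal constraint to appear at the largest admissible length $\ell_0=1-\tau^*$, thereby injecting the $\tau^*$ factor into the final bound, in contrast with the known-baseline Proposition~\ref{LBalt6} where the distance formula $|\delta|\sqrt{\ell}$ yields a scale-free constraint.
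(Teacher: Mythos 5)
Your plan coincides with the paper's proof in its essential structure: pivot $\lambda_0=R/2$, dyadic family $\ell_k=(1-\tau^*)2^{-k}$ anchored at $\tau^*$ with $K\asymp\log_2 L$ scales, Bayesian reduction via Lemma~\ref{lemmebayesien}, chi-squared computed through Girsanov's lemma and the Poisson Laplace transform, diagonal/off-diagonal split, and geometric decay in $|k-k'|$ controlled (in the paper's case) by splitting the double sum at $|k-k'|\asymp(\log L)^\eta$, exactly the bookkeeping you anticipate as the delicate step. Two deviations from the paper deserve comment. First, your index set $k\in\{0,\dots,K-1\}$ includes $\ell_0=1-\tau^*$, which is \emph{not} admissible since the alternative set requires $\ell\in(0,1-\tau^*)$ strictly; you should start at $k=1$ as the paper does. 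Consequently, the factor $\tau^*$ in the final rate is not produced by an \emph{attained} value $1-\ell_0=\tau^*$ but by the \emph{uniform} lower bound $1-\ell_k>\tau^*$ valid for every admissible $k\geq 1$; with your choice $r^2=R\tau^*\log\log L/(2L)$, the diagonal exponent $c_{kk}=\tau^*\log\log L/(1-\ell_k)$ stays strictly below $\log\log L$ for all $k$, so the bound still holds — the attribution is merely imprecise.

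Second, a parametrization difference: you fix $d_2(\lambda_k^\varepsilon,\calS_0^u[R])=r$ exactly and let $\delta_k=r/\sqrt{\ell_k(1-\ell_k)}$ vary, while the paper fixes $\delta_k^2\ell_k=\lambda_0\log\log L/L$ (constant in $k$) and then invokes $1-\ell_k>\tau^*$ only to lower-bound the distances by $\sqrt{\lambda_0\tau^*\log\log L/L}$. The paper's choice is cleaner because the diagonal chi-squared terms become exactly $\log L$ for every $k$, making the whole second-moment computation, off-diagonal bookkeeping included, coincide verbatim with what was already established in the proof of Proposition~\ref{LBalt6}; the paper then simply cites that calculation. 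Your doubly-signed prior with $\varepsilon\in\{\pm 1\}$ is also unnecessary — the paper uses a single sign, which is admissible since $\delta\in(-R/2,R/2]\setminus\{0\}$ allows positive heights provided they are truncated at the finest scales (which sets $L_0$, as you note) — though the resulting $\cosh$ only shrinks the chi-squared, so it is harmless. With the $k=0$ index corrected, your sketch is a valid alternative rendering of the same argument rather than a genuinely different route.
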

Let us assume now that $L\geq 3$. In order to prove that the above lower bound is of sharp order (with respect to $L$), we construct two aggregated tests: a first one  based on 
a linear statistic and a second one based on a quadratic statistic as in Section \ref{Sec:knownlength_u}.

We thus consider the discrete subset of $(0,1-\tau^*)$ of the dyadic form
\[\left\{ \ell_{\tau^*,k}=\pa{1 - \tau^{*}{}}{2^{-k}}, ~ k \in \lbrace 1,\ldots,\lfloor \log_{2} L \rfloor \rbrace \right\}\enspace,\]
and $u_\alpha=\alpha/\lfloor \log_{2}(L) \rfloor$, which allows to define the two following tests:
\begin{equation} \label{testN1alt6_u}
\phi_{6,\alpha}^{u(1)}(N) = \1{\max_{k\in \lbrace 1,\ldots, \lfloor \log_{2} L \rfloor \rbrace} \left( \left|S'_{\tau^*,\tau^*+\ell_{\tau^*,k}}(N)\right| - s'_{N_1,\tau^*,\tau^*+\ell_{\tau^*,k}} \pa{1 - u_\alpha} \right)>0}\enspace,
\end{equation}
where $S'_{\tau_1,\tau_2}(N)$ is the linear statistic defined for $0\leq \tau_1<\tau_2\leq 1$ by
\begin{equation}\label{stat_alt6_N1_u}
S'_{\tau_1,\tau_2}(N)= N(\tau_1, \tau_2] - (\tau_2-\tau_1) N_1 \enspace,
\end{equation}
and $s'_{n,\tau_1,\tau_2}(u)$ stands for the $u$-quantile of  $\left| S'_{\tau_1,\tau_2}(N)\right|$ given $N_1=n$ under $\hzero$, whose sharp bound is obtained via Bennett's inequality (see Lemma \ref{QuantilesAbsS_u} for details), and

\begin{equation} \label{testN2alt6_u}
\phi_{6,\alpha}^{u(2)}(N) = \1{\max_{k\in \lbrace 1,\ldots, \lfloor \log_{2} L \rfloor\rbrace} \left(  T'_{\tau^*, \tau^*+\ell_{\tau^*,k} }(N)  - t'_{N_1,\tau^*,\tau^*+\ell_{\tau^*,k}} \pa{1 - u_\alpha} \right)>0}\enspace,
\end{equation}
where $T'_{\tau_1,\tau_2}(N)$ is the quadratic statistic defined in \eqref{def_T'} and $t'_{n,\tau_1,\tau_2}(u)$ still denotes the $u$-quantile of its conditional distribution  given $N_1=n$ under $\hzero$.

\begin{proposition}[Minimax upper bound for $\bold{[Alt^u.6]}$] \label{UBalt6_u}
Let $\alpha, \beta$ in $(0,1)$,  $R>0$, $\tau^*$ in $(0,1)$, and let $\phi_{6,\alpha}^{u(1/2)}$ be one of the tests $\phi_{6,\alpha}^{u(1)}$ and  $\phi_{6,\alpha}^{u(2)}$ of $\hzero$ versus $\hone\ "\lambda\in\calS^u_{\bbul,\tau^*,\bbul\bbul\bbul}[R]"$ respectively defined by \eqref{testN1alt6_u} and \eqref{testN2alt6_u}. Then $\phi_{6,\alpha}^{u(1/2)}$ is of level $\alpha$, that is $\sup_{\lambda_0\in \calS_0^u[R]} P_{\lambda_0}\pa{\phi_{6,\alpha}^{u(1/2)}(N)=1}\leq \alpha$.
Moreover, there exists $C(\alpha, \beta,R,\tau^*)>0$ such that
$$ \SRb\pa{\phi_{6,\alpha}^{u(1/2)},\calS^u_{\bbul,\tau^*,\bbul\bbul\bbul}[R]} \leq C(\alpha, \beta,R,\tau^*)\sqrt{{\log \log L}/{L}}\enspace,$$
which entails in particular $\mSRab\pa{\calS^u_{\bbul,\tau^*,\bbul\bbul\bbul}[R]} \leq C(\alpha, \beta,R,\tau^*)\sqrt{{\log \log L}/{L}}$.
\end{proposition}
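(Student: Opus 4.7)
The proof follows the template used for Proposition \ref{UBalt6} in the known baseline case, but is complicated by the need to work conditionally on $N_1$. First I would establish the level $\alpha$ property. Under $\hzero$ with $\lambda=\lambda_0\in\calS_0^u[R]$, conditioning on $N_1=n$ reduces both $S'_{\tau_1,\tau_2}(N)$ and $T'_{\tau_1,\tau_2}(N)$ to functionals of $n$ i.i.d. uniform points on $[0,1]$, whose distribution does not depend on $\lambda_0$. Hence the conditional quantiles $s'_{n,\tau_1,\tau_2}$ and $t'_{n,\tau_1,\tau_2}$ are well-defined, each individual test in the aggregation is of conditional level at most $u_\alpha=\alpha/\lfloor\log_2 L\rfloor$, and a Bonferroni union bound over the $\lfloor\log_2 L\rfloor$ dyadic scales, followed by integration in $N_1$, gives the unconditional level $\alpha$.

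For the power, fix $\lambda=\lambda_0+\delta\1{(\tau^*,\tau^*+\ell]}$ in $\calS^u_{\bbul,\tau^*,\bbul\bbul\bbul}[R]$. The strategy is to exhibit a single scale $k^\star\in\{1,\ldots,\lfloor\log_2 L\rfloor\}$ that is well-matched to the unknown $\ell$ and that produces, with probability at least $1-\beta$, a signal above its dedicated critical value. For $L$ large enough, I would choose $k^\star$ so that $\ell/2\leq \ell_{\tau^*,k^\star}\leq \ell$ (a minor adjustment is needed for $\ell$ smaller than $(1-\tau^*)/L$, a regime which is anyway excluded by the separation condition). A direct integration gives
\[
E_\lambda\bigl[S'_{\tau^*,\tau^*+\ell_{\tau^*,k^\star}}(N)\bigr]=L\delta\,\ell_{\tau^*,k^\star}(1-\ell),
\]
whose absolute value is of order $L|\delta|\ell(1-\ell)$, and the analogous calculation for $T'$ yields $E_\lambda[T'_{\tau^*,\tau^*+\ell_{\tau^*,k^\star}}(N)]=\|\Pi_{W_{\tau^*,\tau^*+\ell_{\tau^*,k^\star}}}(\lambda-\Pi_{W_0}\lambda)\|_2^2$, which is bounded below by a constant (depending on $\tau^*$) times $d_2^2(\lambda,\calS_0^u[R])$ thanks to the matching of scales.

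For the linear test $\phi_{6,\alpha}^{u(1)}$, the key ingredients are then the Bennett-type quantile bound of Lemma \ref{QuantilesAbsS_u}, yielding $s'_{n,\tau^*,\tau^*+\ell_{\tau^*,k^\star}}(1-u_\alpha)\lesssim \sqrt{n\,\ell_{\tau^*,k^\star}\log(1/u_\alpha)}+\log(1/u_\alpha)$; a standard Bienaym\'e--Chebyshev control of $N_1$ around $L(\lambda_0+\delta\ell)\leq LR$, implying that with probability at least $1-\beta/3$ the random quantile is at most of order $\sqrt{LR\,\ell\log\log L}$; and a Chebyshev control of $S'_{\tau^*,\tau^*+\ell_{\tau^*,k^\star}}(N)-E_\lambda[S']$, whose variance is bounded by a constant times $LR\ell$. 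Combining these three estimates, the rejection event at scale $k^\star$ has probability at least $1-\beta$ as soon as $L|\delta|\ell(1-\ell)\geq C(\alpha,\beta,R,\tau^*)\sqrt{LR\,\ell\log\log L}$, which, using $1-\ell\geq \tau^*$ and rewriting in terms of $d_2(\lambda,\calS_0^u[R])=|\delta|\sqrt{\ell(1-\ell)}$, is implied by the claimed separation condition.

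The quadratic test $\phi_{6,\alpha}^{u(2)}$ is handled analogously, but with the quantile bound for $T'_{\tau_1,\tau_2}(N)$ given $N_1=n$ provided by Lemma \ref{quantile_T'}, which in turn rests on the Houdr\'e--Reynaud-Bouret exponential inequality for order-two $U$-statistics. The main obstacle here, and the step absorbing most of the technical effort, is controlling $T'_{\tau^*,\tau^*+\ell_{\tau^*,k^\star}}(N)-E_\lambda[T']$ uniformly in a random $N_1$ under $\lambda$ non-constant: contrary to the linear case, the conditional variance is quartic in the point-counts and not directly bounded by $LR\ell$, so I would decompose $T'$ into a fully degenerate Hoeffding-type component (to which the $U$-statistic concentration inequality applies) plus a linear residual (controlled by Bennett), and then deconditionalise through a high-probability bound on $N_1$. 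Together with the preceding level argument, this produces the claimed $\sqrt{\log\log L/L}$ uniform separation rate for both tests, and hence the stated minimax upper bound.
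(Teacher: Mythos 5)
Your argument for the linear test $\phi_{6,\alpha}^{u(1)}$ is essentially the one in the paper: choice of a dyadic scale $\ell_{\tau^*,k_\ell}$ with $\ell/2<\ell_{\tau^*,k_\ell}\leq\ell$ (valid because the separation condition forces $\ell(1-\ell)\gtrsim 1/L$), the Bennett-type conditional quantile bound of Lemma \ref{QuantilesAbsS_u}, a Bienaym\'e--Chebyshev control of $N_1$ around $LI(\lambda)$ to deterministically bound the random critical value, and a final Chebyshev on $S'$ around its mean. One technical ingredient you leave implicit is the scale-comparison inequality $\ell_{\tau^*,k_\ell}/(1-\ell_{\tau^*,k_\ell})>\tfrac{\tau^*}{1+\tau^*}\,\ell/(1-\ell)$, which is where the $\tau^*$-dependence of the constant enters; your observation that $1-\ell\geq\tau^*$ alone is not the mechanism used.

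For the quadratic test $\phi_{6,\alpha}^{u(2)}$ you identify the right quantile lemma (Lemma \ref{quantile_T'}, via the Houdr\'e--Reynaud-Bouret $U$-statistic inequality), but you overcomplicate the deviation step. You claim that controlling $T'-E_\lambda[T']$ requires a conditional argument, a Hoeffding decomposition and a deconditionalisation, because the conditional variance is quartic. This is not needed: the conditioning trick is used in the paper only to bound the \emph{random critical value} $t'_{N_1,\cdots}(1-u_\alpha)$ (by controlling $N_1$ with probability $1-\beta/2$ and plugging into the deterministic bound of Lemma \ref{quantile_T'}); the deviation of $T'$ around its mean is then handled entirely unconditionally. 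The key point you miss is that Lemma \ref{MomentsT'} gives closed-form expressions for $E_\lambda[T'_{\tau_1,\tau_2}(N)]$ and, crucially, for $\mathrm{Var}_\lambda\big(T'_{\tau_1,\tau_2}(N)\big)$ under any alternative $\lambda$, obtained by elementary Poisson moment computations; this variance is bounded by $\tfrac{2R^2}{L^2}+\tfrac{4R}{L}\delta^2\ell(1-\ell)$, and a plain Bienaym\'e--Chebyshev inequality finishes the proof. Your alternative route could probably be made to work but adds a non-trivial amount of machinery where the paper gets by with second-moment calculations.
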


\subsection{Minimax detection of a possibly transitory change with unknown location and length} \label{Sec:unknownlocationlength_u} Let us discuss as final stage the problem of testing the null hypothesis $\hzero \ "\lambda\in \calS_0^u[R]"$

 versus alternatives where both location and length of the change from the unknown baseline intensity are not known, distinguishing as in Section \ref{Sec:unknownlocationlength} the transitory change case from the non transitory change particular case.

From the minimax point of view, we will emphasize that regardless if the baseline intensity is known or not, adaptation to both location and length of the change has the same minimax separation rate cost of order $\sqrt{\log L}$ in the transitory change case, and of order $\sqrt{\log \log L}$ at most (possibly cancelled by the change height knowledge) in the non transitory change case. 

Since the non transitory change or jump detection problem, that we here study first, can be viewed as perfectly symmetrical to the transitory change with known location detection problem, our study uses tools and arguments that are very similar to the ones used in Section~\ref{Sec:knownlocation_u}.

\subsubsection{Non transitory change}\label{Sec:generalsingle_u}

In order to investigate the problem of detecting a non transitory change with unknown location, but known height, we introduce for $R>0$ and $\delta^{*}$ in $(-R,R) \setminus \lbrace 0 \rbrace$ the alternative set
\begin{multline}\label{alt7_u}
\bold{[Alt^u.7]}\ \calS^u_{\delta^*,\bbul\bbul,1-\bbul\bbul}[R]=  \big\{ \lambda :[0,1]\to (0,R],~\exists \lambda_{0} \in (- \delta^{*}{} \vee 0, (R- \delta^{*}{}) \wedge R ],\\
\exists \tau\in (0,1),~\forall t\in [0,1]\quad
\lambda(t) = \lambda_{0} + \delta^{*}{} \mathds{1}_{(\tau,1]}(t)\big\} \enspace.
\end{multline}
Considering the problem of testing the null hypothesis 
 $ \hzero \ " \lambda \in \calS^u_0[R]"$ versus the alternative hypothesis $ \hone \ "\lambda \in  \calS^u_{\delta^*,\bbul\bbul,1-\bbul\bbul}[R]",$ we obtain the following lower bound. 

 \begin{proposition}[Minimax lower bound for $\bold{[Alt^u.7]} $] \label{LBalt7_u}
 Let $\alpha,\beta$ in $(0,1)$ with $\alpha + \beta <1,$ $R>0$ and $\delta^{*}{}$ in $(-R,R) \setminus \lbrace 0 \rbrace$.
  For all $  L > 2((R- \delta^{*}{}) \wedge R) \log C_{\alpha,\beta}/\delta^{*}{}^2$,
 \begin{equation*}
 \mathrm{mSR}_{\alpha, \beta}( \calS^u_{\delta^*,\bbul\bbul,1-\bbul\bbul}[R]) \geq \sqrt{{((R- \delta^{*}{}) \wedge R) \log C_{\alpha,\beta}}/\pa{2L}  }, ~ \mathrm{with}~C_{\alpha,\beta} = 1 + 4(1-\alpha-\beta )^2 \enspace.
 \end{equation*}
 \end{proposition}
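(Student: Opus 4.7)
The plan is to apply the two-point Bayesian reduction recalled in Section~\ref{Keys}: exhibit one null distribution $P_{\lambda_0^*}$ with $\lambda_0^*\in\calS_0^u[R]$ and one alternative $P_{\lambda^*}$ with $\lambda^*\in\calS^u_{\delta^*,\bbul\bbul,1-\bbul\bbul}[R]$ such that the chi-squared distance $E_{\lambda_0^*}\bigl[(dP_{\lambda^*}/dP_{\lambda_0^*})^2\bigr]$ is bounded by $C_{\alpha,\beta}=1+4(1-\alpha-\beta)^2$. The classical Bayesian lemma then yields $\mSRab(\calS^u_{\delta^*,\bbul\bbul,1-\bbul\bbul}[R])\geq d_2(\lambda^*,\calS_0^u[R])$, and it remains to tune the free parameter so that the resulting separation is of the announced order.

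Concretely, I would take as worst-case baseline $\lambda_0^*:=(R-\delta^*)\wedge R$, which lies in $\calS_0^u[R]$ and also satisfies the constraint $\lambda_0^*\in(-\delta^*\vee 0,(R-\delta^*)\wedge R]$ required for $\lambda_0^*+\delta^*\mathds{1}_{(\tau,1]}$ to be a valid member of the alternative set (a small check distinguishing the signs of $\delta^*$ and using $|\delta^*|<R$). I would then pick $\lambda^*(t):=\lambda_0^*+\delta^*\mathds{1}_{(\tau^*,1]}(t)$, with $\tau^*\in(0,1)$ defined by $1-\tau^*:=\lambda_0^*\log C_{\alpha,\beta}/(L\delta^{*2})$. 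The hypothesis $L>2\lambda_0^*\log C_{\alpha,\beta}/\delta^{*2}$ is precisely what ensures $1-\tau^*<1/2$, i.e.\ $\tau^*\in(1/2,1)\subset(0,1)$, so that $\lambda^*$ genuinely belongs to $\calS^u_{\delta^*,\bbul\bbul,1-\bbul\bbul}[R]$.

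The main computation is the chi-squared. Girsanov's formula (Lemma~\ref{lemmegirsanov}) gives
\[
\frac{dP_{\lambda^*}}{dP_{\lambda_0^*}}(N)=\exp\!\left[\ln\pa{1+\delta^*/\lambda_0^*}\,N(\tau^*,1]-\delta^*(1-\tau^*)L\right],
\]
so squaring and integrating under $P_{\lambda_0^*}$, where $N(\tau^*,1]$ is Poisson with parameter $\lambda_0^*(1-\tau^*)L$, the moment generating function of the Poisson distribution collapses the expression to $\exp[(1-\tau^*)L\delta^{*2}/\lambda_0^*]$, which is exactly $C_{\alpha,\beta}$ by the choice of $\tau^*$. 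On the geometric side, a direct $\bbL_2([0,1])$ computation (the best constant approximation of $\lambda^*$ is $\lambda_0^*+\delta^*(1-\tau^*)$) yields $d_2(\lambda^*,\calS_0^u[R])=|\delta^*|\sqrt{\tau^*(1-\tau^*)}$, and using $\tau^*\geq 1/2$ this is bounded below by $|\delta^*|\sqrt{(1-\tau^*)/2}=\sqrt{\lambda_0^*\log C_{\alpha,\beta}/(2L)}$, which is exactly the announced rate.

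I do not anticipate a serious obstacle: the strategy mirrors the one used for Proposition~\ref{LBalt5_u}, with the roles of starting point and endpoint exchanged (known height, known right endpoint at $1$, variable left endpoint $\tau^*$), so the same $\chi^2$ machinery carries over verbatim. The only real subtlety is the choice of $\lambda_0^*$ as the largest admissible baseline, which is what makes the Poisson noise as loud as possible under the null and hence gives the tightest chi-squared bound for a given separation; the lower bound assumption on $L$ is then precisely what is needed to keep $\tau^*$ strictly inside $(1/2,1)$.
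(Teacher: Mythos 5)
Your proof is correct and takes essentially the same route as the paper: the same worst-case baseline $\lambda_0^*=(R-\delta^*)\wedge R$, the same single-alternative chi-squared reduction via Girsanov's formula and the Poisson Laplace transform, and the same reliance on the hypothesis on $L$ to keep $\tau^*$ in $(1/2,1)$. The only difference is a cosmetic parametrization: you pick $\tau^*$ so that the chi-squared equals $C_{\alpha,\beta}$ exactly and then lower-bound the separation using $\tau^*\geq 1/2$, whereas the paper picks $\tau_r$ so that $d_2(\lambda_r,\calS_0^u[R])=r$ exactly and then upper-bounds the chi-squared.
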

 Following the study and the notation of Section \ref{Sec:knownlocation_u}, we define the test
\begin{equation} \label{testalt7_u}
\phi^u_{7,\alpha}(N) = \1{\sup_{\tau \in (0,1)}S'_{\delta^*,\tau,1}(N) >s_{N_1,\delta^*,L}^{'+}(1-\alpha)}\enspace,
\end{equation}
where $S'_{\delta^*,\tau_1,\tau_2}(N)$ is the statistic defined for $0\leq \tau_1<\tau_2\leq 1$ by \eqref{stat_alt5_u}
and $s_{n,\delta^*,L}^{'+}(u)$ is the $u$-quantile of the conditional distribution of $\sup_{\tau \in (0,1)} S'_{\delta^*,\tau,1}(N)$ given $N_1=n$ under $\hzero$.

Notice that a control of this conditional quantile $s_{n,\delta^*,L}^{'+}(1-\alpha)$, provided in Lemma \ref{QuantilessupShifted_ubis}, and deduced from the same chaining trick combined with Bernstein's inequality as in the proof of Lemma \ref{QuantilessupShifted_u}, is the main argument of the following result.

\begin{proposition}[Minimax upper bound for $\bold{[Alt^u.7]} $] \label{UBalt7_u}
 Let $L \geq 1$, $\alpha$ and $\beta$ in $(0,1)$, $R>0$ and $\delta^{*}{}$ in $(-R,R) \setminus \lbrace 0 \rbrace$. Let $\phi^u_{7,\alpha}$ be the test of $\hzero$ versus $\hone \ " \lambda \in   \calS^u_{\delta^*,\bbul\bbul,1-\bbul\bbul}[R]"$ defined by \eqref{testalt7_u}. Then $\phi^u_{7,\alpha}$ is of level $\alpha$, that is $\sup_{\lambda_0 \in \calS_0^u[R]} P_{\lambda_0}( \phi^u_{7,\alpha}(N)=1) \leq \alpha$. Moreover, there exists a constant $C(\alpha, \beta,R,\delta^*)>0$ such that
$$ \SRb\pa{\phi^u_{7,\alpha},\calS^u_{\delta^*,\bbul\bbul,1-\bbul\bbul}[R]} \leq {C(\alpha, \beta,R,\delta^*)}/{\sqrt{L}}\enspace,$$
which entails in particular $\mSRab\pa{\calS^u_{\delta^*,\bbul\bbul,1-\bbul\bbul}[R]} \leq C(\alpha, \beta,R, \delta^*)/\sqrt{L}$.
\end{proposition}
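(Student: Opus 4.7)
The argument will mirror the proof of Proposition~\ref{UBalt5_u}, with the roles of ``unknown length'' and ``unknown location'' interchanged, and it will parallel the proof of Proposition~\ref{UBalt7} (the known-baseline case). The backbone is again the conditioning trick on $N_1$, combined with a Bernstein-type concentration inequality for the shifted, recentered statistic evaluated at the true change location.

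\textbf{Step 1 (Level).} Under $(H_{0})$, conditionally on $N_{1}=n$, the points of $N$ are $n$ i.i.d.\ uniform on $[0,1]$ regardless of $\lambda_{0}\in\calS_{0}^{u}[R]$. Consequently the conditional law of $\sup_{\tau\in(0,1)}S'_{\delta^{*},\tau,1}(N)$ under $(H_{0})$ is free of the nuisance $\lambda_{0}$, so $s_{n,\delta^{*},L}^{'+}(1-\alpha)$ is a well-defined function of $n$, and conditioning then integrating yields
$$P_{\lambda_{0}}(\phi^{u}_{7,\alpha}(N)=1)\;=\;E_{\lambda_{0}}\!\left[P_{\lambda_{0}}\!\bigl(\sup_{\tau}S'_{\delta^{*},\tau,1}(N)>s_{N_{1},\delta^{*},L}^{'+}(1-\alpha)\,\bigl|\,N_{1}\bigr)\right]\;\leq\;\alpha,$$
uniformly in $\lambda_{0}\in\calS_{0}^{u}[R]$.

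\textbf{Step 2 (Separation rate).} Fix $\lambda=\lambda_{0}+\delta^{*}\mathds{1}_{(\tau,1]}\in\calS^{u}_{\delta^{*},\bbul\bbul,1-\bbul\bbul}[R]$, so that $d_{2}^{2}(\lambda,\calS_{0}^{u}[R])=(\delta^{*})^{2}\tau(1-\tau)$. A direct moment computation gives $E_{\lambda}[N(\tau,1]-(1-\tau)N_{1}]=L\delta^{*}\tau(1-\tau)$, hence $E_{\lambda}[S'_{\delta^{*},\tau,1}(N)]=|\delta^{*}|L\tau(1-\tau)/2=L\,d_{2}^{2}(\lambda,\calS_{0}^{u}[R])/(2|\delta^{*}|)$, and $\mathrm{Var}_{\lambda}(N(\tau,1]-(1-\tau)N_{1})=L\tau(1-\tau)(\lambda_{0}+\tau\delta^{*})\leq LR\,\tau(1-\tau)$. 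I split
$$P_{\lambda}(\phi^{u}_{7,\alpha}(N)=0)\;\leq\;P_{\lambda}\bigl(S'_{\delta^{*},\tau,1}(N)\leq s_{N_{1},\delta^{*},L}^{'+}(1-\alpha)\bigr),$$
using that the supremum dominates the value at the true $\tau$. On the event $\Omega_{1}=\{N_{1}\leq 2LR\}$, which has $P_{\lambda}$-probability at least $1-\beta/3$ for $L$ large (Poisson concentration, since $E_{\lambda}[N_{1}]\leq LR$), I invoke Lemma~\ref{QuantilessupShifted_ubis} to obtain $s_{N_{1},\delta^{*},L}^{'+}(1-\alpha)\leq C_{1}(\alpha,R,\delta^{*})$: the downward drift $-|\delta^{*}|L\cdot(1-\tau)\tau/2$ precisely compensates the $\sqrt{L\tau(1-\tau)R}$ fluctuations of the centered martingale part, keeping the supremum at $O(1)$ (this is the analog in the present unknown-baseline setting of the bound obtained in Lemma~\ref{QuantilessupShifted} via Pyke's inequality). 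Bernstein's inequality for centered counting processes (e.g.~as stated in \cite{BercuDelyonRio}) applied to $N(\tau,1]-(1-\tau)N_{1}-L\delta^{*}\tau(1-\tau)$, whose jumps are bounded and whose variance is at most $LR\tau(1-\tau)$, then produces an event $\Omega_{2}$ of $P_{\lambda}$-probability at least $1-\beta/3$ on which
$$S'_{\delta^{*},\tau,1}(N)\;\geq\;\tfrac{1}{2}|\delta^{*}|L\tau(1-\tau)-C_{2}\sqrt{LR\tau(1-\tau)\log(3/\beta)}-C_{2}\log(3/\beta).$$

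\textbf{Step 3 (Conclusion).} On $\Omega_{1}\cap\Omega_{2}$, the test rejects as soon as
$$\tfrac{1}{2}|\delta^{*}|L\tau(1-\tau)\;\geq\; C_{2}\sqrt{LR\tau(1-\tau)\log(3/\beta)}+C_{1}(\alpha,R,\delta^{*})+C_{2}\log(3/\beta).$$
Writing this as a condition on $\sqrt{L\tau(1-\tau)}$ and using $d_{2}(\lambda,\calS_{0}^{u}[R])=|\delta^{*}|\sqrt{\tau(1-\tau)}$, a direct resolution of the quadratic inequality shows that it is fulfilled as soon as $d_{2}(\lambda,\calS_{0}^{u}[R])\geq C(\alpha,\beta,R,\delta^{*})/\sqrt{L}$, which yields $P_{\lambda}(\phi^{u}_{7,\alpha}(N)=0)\leq\beta$ and hence the claimed bound on $\mathrm{SR}_{\beta}(\phi^{u}_{7,\alpha},\calS^{u}_{\delta^{*},\bbul\bbul,1-\bbul\bbul}[R])$, from which $\mSRab(\calS^{u}_{\delta^{*},\bbul\bbul,1-\bbul\bbul}[R])\leq C(\alpha,\beta,R,\delta^{*})/\sqrt{L}$ follows. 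The main difficulty lies in Step~2, specifically in Lemma~\ref{QuantilessupShifted_ubis}: controlling the conditional quantile requires handling the supremum of a centered, compensated counting-process statistic over the continuous range $\tau\in(0,1)$, which the authors have presumably tackled by combining Bernstein's inequality with a chaining/peeling argument analogous to the one behind Lemma~\ref{QuantilessupShifted_u}.
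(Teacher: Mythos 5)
Your proposal follows essentially the same route as the paper's proof: conditioning on $N_1$ for the level control, restricting to the event $\{N_1\leq 2RL\}$ (whose probability is controlled via the distance assumption, which forces $L\gtrsim 1/(\beta R)$) so that Lemma~\ref{QuantilessupShifted_ubis} caps the conditional quantile by the $L$-free constant $Q(2R,\delta^*,\alpha)$, and then lower-bounding $S'_{\delta^*,\tau,1}(N)$ at the true $\tau$ by comparing the mean $|\delta^*|L\tau(1-\tau)/2$ to the standard deviation $\sqrt{(\lambda_0+\delta^*\tau)\tau(1-\tau)L}\leq\sqrt{RL\tau(1-\tau)}$. The only cosmetic deviations are that the paper uses a $\beta/2+\beta/2$ split and a plain Bienaym\'e--Chebyshev bound for the last concentration step, whereas you invoke a $\beta/3$-split and Bernstein's inequality; both give the same parametric $1/\sqrt{L}$ rate, and your identification of Lemma~\ref{QuantilessupShifted_ubis} (proved via chaining plus Bernstein) as the genuine technical hurdle is accurate.
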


To address the question of adaptation to the change height, we introduce the alternative set
\begin{multline}\label{alt8_u}
\bold{[Alt^u.8]}\  \calS^u_{\bbul,\bbul\bbul,1-\bbul\bbul}[R]= \big\{ \lambda :[0,1]\to (0,R],~\exists \lambda_{0} \in (0,R], ~\exists \delta \in  (-\lambda_{0}, R- \lambda_0] \setminus \lbrace 0 \rbrace ,\\
~ \exists \tau \in (0,1),~\forall t\in [0,1]\quad   \lambda(t) = \lambda_{0} + \delta \mathds{1}_{(\tau,1]}(t)\big\}\enspace,
\end{multline}
and we consider the problem of testing $ \hzero \ " \lambda \in \calS^u_0[R]"$ versus $\hone \ "\lambda \in \calS^u_{\bbul,\bbul\bbul,1-\bbul\bbul}[R]".$

As usual, we start with a lower bound for the corresponding minimax separation rate.

\begin{proposition}[Minimax lower bound for $\bold{[Alt^u.8]} $] \label{LBalt8_u}
Let $\alpha, \beta$ in $(0,1)$ with $\alpha + \beta <1/2$ and $R>0$. There exists $L_0(\alpha, \beta,R)>0$ such that for all $L \geq L_0(\alpha, \beta,R),$
\[ \mathrm{mSR}_{\alpha, \beta}(\calS^u_{\bbul,\bbul\bbul,1-\bbul\bbul}[R]) \geq \sqrt{{R\log  \log L}/\pa{4L}}\enspace.\]
\end{proposition}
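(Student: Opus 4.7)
The plan is to invoke the classical Bayesian minimax lower bound machinery of Section \ref{Keys}, following the scheme already used to prove Proposition \ref{LBalt8} in the known baseline case, with a ``matching-of-total-intensity'' trick to neutralise the unknown baseline parameter, in the spirit of \cite{FLRB2011}. Specifically, it will suffice to exhibit a probability measure $\nu_r$ supported on intensities of $\calS^u_{\bbul,\bbul\bbul,1-\bbul\bbul}[R]$ at $d_2$-distance at least $r:=\sqrt{R\log\log L/(4L)}$ from $\calS_0^u[R]$, together with a reference element $\lambda^*\in\calS_0^u[R]$, such that
\[\mathbb{E}_{\lambda^*}\cro{\pa{\frac{d\bar P_{\nu_r}}{dP_{\lambda^*}}(N)}^2}-1\leq 4(1-\alpha-\beta)^2,\]
where $\bar P_{\nu_r}=\int P_\lambda\, d\nu_r(\lambda)$.

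I will take $\lambda^*\equiv R/2$ and consider the dyadic family of jump locations $\tau_k=1-2^{-k}$ for $k\in\{1,\ldots,K_L\}$, with $K_L$ the largest integer not exceeding $\lfloor\log_2 L\rfloor$ satisfying a feasibility constraint discussed below. For each $k$, I will set $\delta_k=r/\sqrt{\tau_k(1-\tau_k)}$ and $\lambda_0^{(k)}=R/2-\delta_k(1-\tau_k)$, so that $\lambda_k:=\lambda_0^{(k)}+\delta_k\mathds{1}_{(\tau_k,1]}$ satisfies both the matching identity $\int_0^1\lambda_k=R/2=\int_0^1\lambda^*$ (killing the information carried by $N_1$) and $d_2(\lambda_k,\calS_0^u[R])=|\delta_k|\sqrt{\tau_k(1-\tau_k)}=r$. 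The feasibility constraints $\lambda_0^{(k)},\lambda_0^{(k)}+\delta_k\in(0,R]$ translate, after substitution, into an inequality of the form $r\cdot 2^{K_L/2}\leq c R$, which is compatible with the target $r^2=R\log\log L/(4L)$ for $K_L=\lfloor\log_2 L\rfloor-C$ with some absolute $C$. I then take $\nu_r$ to be the uniform probability on $\{\lambda_1,\ldots,\lambda_{K_L}\}$.

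Using Girsanov's Lemma \ref{lemmegirsanov} together with the Laplace transform of the Poisson process, the pairwise second moments read
\[\mathbb{E}_{\lambda^*}\cro{\frac{dP_{\lambda_k}}{dP_{\lambda^*}}\frac{dP_{\lambda_{k'}}}{dP_{\lambda^*}}}=\exp\pa{\frac{L}{\lambda^*}\int_0^1(\lambda_k-\lambda^*)(\lambda_{k'}-\lambda^*)\,dt},\]
and the matching condition rewrites $\lambda_k-\lambda^*=\delta_k(\mathds{1}_{(\tau_k,1]}-(1-\tau_k))$, so that a direct computation gives, for $k\leq k'$, $\int_0^1(\lambda_k-\lambda^*)(\lambda_{k'}-\lambda^*)\,dt=\delta_k\delta_{k'}\tau_k(1-\tau_{k'})$. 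Substituting the definitions yields $A_{k,k'}:=(2L/R)\delta_k\delta_{k'}\tau_k(1-\tau_{k'})=\rho\cdot 2^{-|k-k'|/2}(1+o(1))$ as $k,k'\to\infty$, with $\rho:=2Lr^2/R=(\log\log L)/2$. Splitting $K_L^{-2}\sum_{k,k'}e^{A_{k,k'}}$ into diagonal and off-diagonal contributions, applying $e^x\leq 1+xe^x$ for $x\geq 0$, and exploiting the dyadic summability in $|k-k'|$, I expect a bound of the form
\[\mathbb{E}_{\lambda^*}\cro{\pa{\frac{d\bar P_{\nu_r}}{dP_{\lambda^*}}(N)}^2}-1\leq \frac{e^\rho(1+C'\rho)}{K_L}=O\!\pa{\frac{\sqrt{\log L}\,\log\log L}{\log L}}\longrightarrow 0,\]
which is smaller than $4(1-\alpha-\beta)^2$ for $L\geq L_0(\alpha,\beta,R)$; the Bayesian reduction will then close the proof.

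The hard part will be balancing three competing requirements. First, the feasibility window $\lambda_0^{(k)},\lambda_0^{(k)}+\delta_k\in(0,R]$ forces $K_L\lesssim\log_2(R/(2r))$ and is precisely what dictates the constant $1/4$ in $r^2=R\log\log L/(4L)$. Second, the off-diagonal chi-square estimate must retain the factor $K_L^{-2}\asymp (\log L)^{-2}$ so that only $\log\log L$, and not a larger logarithmic factor, emerges as the adaptation cost: this is achieved by exploiting the geometric decay of $A_{k,k'}$ in $|k-k'|$ inherited from the dyadic choice of locations. Third, the residual diagonal term $e^\rho/K_L$ must fit within the Bayesian threshold $4(1-\alpha-\beta)^2$, which is the technical reason for the assumption $\alpha+\beta<1/2$ ensuring $4(1-\alpha-\beta)^2>1$.
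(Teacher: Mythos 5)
Your proof is correct in its essentials and takes a genuinely different route from the paper, so let me compare the two. The paper fixes $\lambda_0=R/2$ as the \emph{common} baseline of all $K$ perturbations, i.e.\ $\lambda_k=R/2+\delta_k\mathds{1}_{(\tau_k,1]}$ with $\delta_k=(2^k\lambda_0\log\log L/L)^{1/2}$; the diagonal exponent is then $L\delta_k^2(1-\tau_k)/\lambda_0=\log\log L$, so $e^{\rho_{\mathrm{diag}}}=\log L$, which forces the paper to take $K=\lceil(\log_2 L)/C'_{\alpha,\beta}\rceil$ so that $\log L/K$ is bounded by $C'_{\alpha,\beta}\log 2<C'_{\alpha,\beta}$, and the $d_2$-distance is then $\sqrt{R\log\log L(1-2^{-k})/(2L)}\geq\sqrt{R\log\log L/(4L)}$ which is where their $1/4$ comes from. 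Your ``matching'' trick instead recenters each $\lambda_k$ so that $\int\lambda_k=R/2=\lambda^*$, which makes $\lambda_k-\lambda^*$ orthogonal to constants; the diagonal exponent $\tfrac{L}{\lambda^*}\|\lambda_k-\lambda^*\|_2^2$ then equals $\tfrac{L}{\lambda^*}d_2^2(\lambda_k,\calS_0^u)=\rho=(\log\log L)/2$, which is smaller by a factor that kills the diagonal automatically ($e^\rho/K=\sqrt{\log L}/K\to 0$) and so you do not need to scale $K$ by $1/C'_{\alpha,\beta}$. Both paths are valid; your formula $\mathbb{E}_{\lambda^*}[(dP_{\lambda_k}/dP_{\lambda^*})(dP_{\lambda_{k'}}/dP_{\lambda^*})]=\exp(\tfrac{L}{\lambda^*}\int(\lambda_k-\lambda^*)(\lambda_{k'}-\lambda^*))$ holds in general, the matching merely simplifies the integral to $\delta_k\delta_{k'}\tau_k(1-\tau_{k'})$, and your dyadic summation bound goes through.

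Two small inaccuracies in your write-up are worth noting, neither fatal. First, the feasibility window $\delta_k\tau_k\leq R/2$ gives $K_L\leq\log_2(RL/\log\log L)$, so the admissible correction to $\lfloor\log_2 L\rfloor$ is not an absolute constant $C$ but depends on $\log_2\log\log L$ and $\log_2 R$; this is lower-order and does not affect $K_L\sim\log_2 L$, but the constant $1/4$ is not ``dictated'' by feasibility — any $c<1/2$ in $r^2=cR\log\log L/L$ would keep $e^\rho=(\log L)^{2c}=o(K_L)$, and $1/4$ is simply a conservative choice matching the paper. Second, the role you attribute to $\alpha+\beta<1/2$ is off: Lemma \ref{lemmebayesien} only needs $\alpha+\beta<1$ so that the threshold $4(1-\alpha-\beta)^2$ is positive, and since your chi-square bound tends to $1$, it drops below any fixed positive threshold for $L$ large; the paper's own proof of the analogous Propositions does not appear to use $\alpha+\beta<1/2$ beyond what $\alpha+\beta<1$ already provides, so this is best regarded as a conservative hypothesis carried over from the statement rather than something your argument must account for.
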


Let us assume now that $L \geq 3$. In order to prove that the above lower bound is of sharp order (with respect to $L$), we consider the discrete subset of $(0,1)$ of the dyadic form
$$ \mathcal{D}_L=  \left\lbrace  2^{-k},~ k \in \lbrace 2,..., \lfloor \log_{2}(L)  \rfloor     \rbrace   \right\rbrace \cup  \left\lbrace  1- 2^{-k},~ k \in \lbrace 1,..., \lfloor \log_{2}(L)  \rfloor     \rbrace   \right\rbrace \enspace,$$
we set $u_\alpha = \alpha/(2\lfloor \log_{2}(L)  \rfloor   -1 )$ and we define the two following tests:
\begin{equation} \label{testN1alt8_u}
 \phi_{8,\alpha}^{u(1)}(N) = \1{\max_{\tau \in \mathcal{D}_L} \left( \left|S'_{\tau,1}(N)\right| - s'_{N_1,\tau,1} \pa{1 - u_\alpha} \right)>0}\enspace,
\end{equation}
where $S'_{\tau_1,\tau_2}(N)$ is the linear statistic defined by \eqref{stat_alt6_N1_u} and $s'_{n,\tau_1,\tau_2}(u)$ stands for the $u$-quantile of the conditional distribution of $| S'_{\tau_1,\tau_2}(N)|$ given $N_1=n$ under $\hzero$, and
\begin{equation} \label{testN2alt8_u}
\phi_{8,\alpha}^{u(2)}(N) = \1{\max_{ \tau  \in \mathcal{D}_L} \left(  T'_{\tau ,1 }(N)  - t'_{N_1,\tau ,1} \pa{1 - u_\alpha} \right)>0}\enspace,
\end{equation}
where $T'_{\tau_1,\tau_2}(N)$ is the quadratic statistic defined by \eqref{def_T'}, and $t'_{n,\tau_1,\tau_2}(u)$ denotes the $u$-quantile of its conditional distribution  given $N_1=n$ under $\hzero$.

\begin{proposition}[Minimax upper bound for $\bold{[Alt^{u}.8]}$] \label{UBalt8_u}
Let $\alpha,\beta$ in $(0,1)$, $R>0$, and let $\phi_{8,\alpha}^{u(1/2)}$ be one of the tests $\phi_{8,\alpha}^{u(1)}$ and  $\phi_{8,\alpha}^{u(2)}$ of $\hzero$ versus $\hone\ "\lambda\in\calS^u_{\bbul,\bbul\bbul, 1-\bbul\bbul}[R]"$ defined by \eqref{testN1alt8_u} and \eqref{testN2alt8_u}. Then $\phi_{8,\alpha}^{u(1/2)}$ is of level $\alpha$, that is $\sup_{\lambda_0 \in \mathcal{S}_0^u[R]}P_{\lambda_0}\pa{\phi_{8,\alpha}^{u(1/2)}(N)=1}\leq \alpha$.
Moreover, there exists a constant $C(\alpha, \beta,R)>0$ such that
$$ \SRb\pa{\phi_{8,\alpha}^{u(1/2)},\calS^u_{\bbul,\bbul\bbul,1-\bbul\bbul}[R]} \leq C(\alpha, \beta, R)\sqrt{{\log \log L}/{L}}\enspace,$$
which entails in particular $\mSRab\pa{\calS^u_{\bbul,\bbul\bbul,1-\bbul\bbul}[R]} \leq C(\alpha, \beta, R)\sqrt{{\log \log L}/{L}}$.
\end{proposition}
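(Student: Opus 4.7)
The plan is to follow the two-step pattern of Propositions \ref{UBalt8} and \ref{UBalt6_u}, adapted to the dyadic grid $\mathcal{D}_L$ indexed by the unknown location $\tau$. For the level control, under $\hzero$ with constant intensity $\lambda_0\in\calS_0^u[R]$ the conditional law of $N$ given $N_1=n$ is that of a binomial process with $n$ i.i.d. uniform points on $[0,1]$, hence free of $\lambda_0$. Consequently $s'_{N_1,\tau,1}(1-u_\alpha)$ and $t'_{N_1,\tau,1}(1-u_\alpha)$ are deterministic functions of $N_1$ and each single test in the aggregation has conditional level at most $u_\alpha$. A Bonferroni union bound over the $|\mathcal{D}_L|=2\lfloor\log_2 L\rfloor-1$ indices, followed by integration over $N_1$, yields $\sup_{\lambda_0\in\calS_0^u[R]}P_{\lambda_0}(\phi_{8,\alpha}^{u(1/2)}(N)=1)\leq|\mathcal{D}_L|u_\alpha=\alpha$.

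For the separation rate, I fix $\lambda=\lambda_0+\delta\mathds{1}_{(\tau^*,1]}\in\calS^u_{\bbul,\bbul\bbul,1-\bbul\bbul}[R]$ with $d_2(\lambda,\calS_0^u[R])=|\delta|\sqrt{\tau^*(1-\tau^*)}\geq C(\alpha,\beta,R)\sqrt{\log\log L/L}$. Since $|\delta|\leq R$, this forces $\tau^*(1-\tau^*)\geq C^2\log\log L/(R^2L)$, so for $L$ large enough I can pick $\tau_k\in\mathcal{D}_L$ with either $\tau_k\in[\tau^*/2,\tau^*]$ (if $\tau^*\leq 1/2$) or $1-\tau_k\in[(1-\tau^*)/2,1-\tau^*]$ (if $\tau^*>1/2$); in both cases $\tau_k(1-\tau_k)$ has the same order as $\tau^*(1-\tau^*)$. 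A direct computation gives $E_\lambda[S'_{\tau_k,1}(N)]=L\delta(\tau_k\wedge\tau^*)(1-\tau_k\vee\tau^*)$, hence $|E_\lambda[S'_{\tau_k,1}(N)]|\geq L|\delta|\tau^*(1-\tau^*)/2\geq Ld_2^2/(2R)$, while $\mathrm{Var}_\lambda[S'_{\tau_k,1}(N)]\leq LR\tau_k(1-\tau_k)$ by a Poisson split computation. Bennett's inequality conditionally on $N_1$, as in Lemma \ref{QuantilesAbsS_u}, bounds $s'_{N_1,\tau_k,1}(1-u_\alpha)$ by $C'\sqrt{N_1\tau_k(1-\tau_k)\log(1/u_\alpha)}+C'\log(1/u_\alpha)$; combined with the concentration of $N_1$ around $L(\lambda_0+\delta(1-\tau^*))\leq 2RL$ under $P_\lambda$, this gives a deterministic upper bound of order $\sqrt{LR\tau^*(1-\tau^*)\log\log L}$ on an event of $P_\lambda$-probability at least $1-\beta/3$. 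Choosing $C$ large enough makes $|E_\lambda[S'_{\tau_k,1}(N)]|\geq 2s'_{N_1,\tau_k,1}(1-u_\alpha)$ there, and Chebyshev's inequality for $S'_{\tau_k,1}(N)-E_\lambda[S'_{\tau_k,1}(N)]$ under $P_\lambda$ closes the argument.

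The analysis of $\phi_{8,\alpha}^{u(2)}$ is parallel: $T'_{\tau_k,1}(N)$ is an unbiased estimator of $\|\Pi_{W_{\tau_k,1}}(\lambda-\Pi_{W_0}\lambda)\|_2^2$, which a direct projection computation shows is at least $c\,d_2^2(\lambda,\calS_0^u[R])$ for my chosen $\tau_k$. Lemma \ref{quantile_T'}, deduced from the Houdré--Reynaud-Bouret exponential inequality for $U$-statistics of order $2$, bounds $t'_{N_1,\tau_k,1}(1-u_\alpha)$ by a quantity of order $\sqrt{R\tau^*(1-\tau^*)\log\log L}/L+\log\log L/L$ on the same high-probability event for $N_1$. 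This is much smaller than $d_2^2$ for $C$ large enough, and Chebyshev on the centered $T'_{\tau_k,1}(N)$ under $P_\lambda$ concludes.

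The main obstacle will be the uniform handling of the random conditional quantiles $s'_{N_1,\tau_k,1}(1-u_\alpha)$ and $t'_{N_1,\tau_k,1}(1-u_\alpha)$ across all $\tau_k\in\mathcal{D}_L$: replacing them by deterministic upper bounds of the right order requires combining the cited exponential inequalities with a simultaneous concentration control of $N_1$ under $P_\lambda$, delicately tracking how the bounds scale with $\tau_k(1-\tau_k)$ so that the eventual comparison $d_2^2\gtrsim R\log\log L/L$ is tight.
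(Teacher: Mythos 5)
Your proposal is correct and follows essentially the same structure as the paper's proof: conditioning on $N_1$ for the level, selecting a single dyadic point $\tau_k$ comparable to $\tau^*$ so that $\tau_k(1-\tau_k)$ has the same order as $\tau^*(1-\tau^*)$, bounding the conditional quantile via Bennett (Lemma \ref{QuantilesAbsS_u}) for $S'$ or the Houdr\'e--Reynaud-Bouret inequality (Lemma \ref{quantile_T'}) for $T'$, replacing the random quantile by a deterministic bound on the event where $N_1\leq I(\lambda)L+\sqrt{2LI(\lambda)/\beta}$, and closing with Chebyshev.

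One remark: your closing paragraph worries about a uniform, simultaneous replacement of $s'_{N_1,\tau_k,1}(1-u_\alpha)$ and $t'_{N_1,\tau_k,1}(1-u_\alpha)$ across all $\tau_k\in\mathcal{D}_L$, but this is unnecessary. For the second-kind error bound one only needs
\[P_\lambda\big(\phi_{8,\alpha}^{u(1/2)}(N)=0\big)\leq \inf_{\tau'\in\mathcal{D}_L}P_\lambda\big(\text{single test at }\tau'\text{ does not reject}\big)\leq\beta\enspace,\]
so it suffices to exhibit one favourable $\tau_k$, control the quantile at that single index on the Chebyshev event for $N_1$, and apply Chebyshev once more for the centred statistic --- which is exactly what the paper does. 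Also note that the dominant contribution from Lemma \ref{quantile_T'} is the $5n\log(2.77/u_\alpha)/L^2 \asymp R\log\log L/L$ term (after inserting $n\lesssim RL$), rather than the order you wrote down; this changes nothing in the conclusion since $C$ is allowed to depend on $R$, but it is the term that actually forces that dependence.
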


\subsubsection{Transitory change}

Let us investigate now the transitory change detection problem, focusing on adaptation to unknown location and length.
As in Section \ref{Sec:transitory}, we will prove that minimax adaptation to these two parameters together has a cost of order as large as $\sqrt{\log L}$, so that adaptation to the height will have no additional cost. We therefore treat the two corresponding alternative sets quasi-simultaneously. For $R>0$, $\delta^{*}$ in $(-R,R)\!\setminus\! \lbrace 0 \rbrace$, let
\begin{multline}\label{alt9_u}
\bold{[Alt^u.9]}\ \calS^u_{\delta^*,\bbul\bbul,\bbul\bbul\bbul}[R]=  \big\{ \lambda :[0,1]\to (0,R],~\exists \lambda_{0} \in (- \delta^{*}{} \vee 0, (R- \delta^{*}{}) \wedge R ],\\
\exists \tau\in (0,1),~\exists \ell  \in (0,1-\tau),~\forall t\in[0,1]\quad \lambda(t) = \lambda_{0} + \delta^{*}{} \mathds{1}_{(\tau, \tau + \ell ]}(t)\big\} \enspace,
\end{multline}
\vspace{-0.8cm}
\begin{multline}\label{alt10_u}
\bold{[Alt^u.10]}\  \calS^u_{\bbul,\bbul\bbul,\bbul\bbul\bbul}[R]= \lbrace \lambda :[0,1]\to (0,R],~ \exists \lambda_{0} \in (0, R ],~
 \exists \delta  \in  (-\lambda_{0}, R -  \lambda_0]\backslash \{0\},\\~ \exists \tau\in (0,1),~ \exists \ell  \in  (0,1-\tau),\forall t\in[0,1]\quad  \lambda(t) = \lambda_{0} + \delta \mathds{1}_{(\tau, \tau + \ell ]}(t)\rbrace \enspace.
\end{multline}

As usual, we begin by giving lower bounds for the minimax separation rates over these two alternative sets, noticing that the case where the change height is known can be straightforwardly extended (as a simple corollary then) to the general one, where all three parameters, location, length and height of the change are unknown.

\begin{proposition}[Minimax lower bound for $\bold{[Alt^u.9]}$] \label{LBalt9_u}
Let $\alpha,\beta$ in $(0,1)$ with $\alpha+\beta<1$,
 $R>0$, $\delta^*$ in $(- R, R)\!\setminus\!\{0\}$. There exists $L_0(\alpha,\beta,R,\delta^*)\!>\!0$ such that for $L \geq L_0(\alpha,\beta,R,\delta^*)$, 
\[\mSRab\pa{\calS^u_{\delta^*,\bbul\bbul,\bbul\bbul\bbul}[R]} \geq  \sqrt{{((R-\delta^{*}{} ) \wedge R) \log L}/\pa{4L}}\enspace.\]
\end{proposition}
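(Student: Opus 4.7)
The plan is to invoke the nonasymptotic Bayesian machinery recalled in Section~\ref{Keys}, specialising the null to a single ``hardest'' baseline. Let $\lambda_{0}^{*} = (R-\delta^{*})\wedge R$, the largest admissible baseline compatible with both $\lambda_{0}\in(0,R]$ and $\lambda_{0}+\delta^{*}\in(0,R]$. Since any level-$\alpha$ test over $\calS_{0}^{u}[R]$ satisfies $P_{\lambda_{0}^{*}}(\phi(N)=1)\leq\alpha$ in particular, it suffices to exhibit a prior $\pi$ supported on elements of $\calS^{u}_{\delta^{*},\bbul\bbul,\bbul\bbul\bbul}[R]$ at $d_{2}$-distance at least $r=\sqrt{\lambda_{0}^{*}\log L/(4L)}$ from $\calS_{0}^{u}[R]$ for which $E_{\lambda_{0}^{*}}[(dP_{\pi}/dP_{\lambda_{0}^{*}})^{2}]\leq C_{\alpha,\beta}=1+4(1-\alpha-\beta)^{2}$. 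To build $\pi$, I would fix an integer $M\geq 2$ (tuned below), set $\ell=1/M$, define $I_{k}=((k-1)\ell,k\ell]$ and $\lambda_{k}=\lambda_{0}^{*}+\delta^{*}\un{I_{k}}$ for $k=1,\dots,M$, and take $\pi$ uniform on $\{\lambda_{1},\dots,\lambda_{M}\}$. By the choice of $\lambda_{0}^{*}$ the mean $\lambda_{0}^{*}+\delta^{*}\ell$ lies in $(0,R)$, so it is the best constant $\bbL_{2}$-approximation of each $\lambda_{k}$ and $d_{2}(\lambda_{k},\calS_{0}^{u}[R])=|\delta^{*}|\sqrt{\ell(1-\ell)}$.

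Next I would carry out the chi-squared computation. Girsanov's Lemma~\ref{lemmegirsanov} gives $dP_{\lambda_{k}}/dP_{\lambda_{0}^{*}}(N)=\exp[\log(1+\delta^{*}/\lambda_{0}^{*})N(I_{k})-\delta^{*}\ell L]$, and since the $N(I_{k})$ are independent Poisson variables under $P_{\lambda_{0}^{*}}$, short moment-generating function computations yield
\[
E_{\lambda_{0}^{*}}\!\left[\frac{dP_{\lambda_{k}}}{dP_{\lambda_{0}^{*}}}\frac{dP_{\lambda_{k'}}}{dP_{\lambda_{0}^{*}}}\right]=1\quad(k\neq k'),\qquad E_{\lambda_{0}^{*}}\!\left[\pa{\frac{dP_{\lambda_{k}}}{dP_{\lambda_{0}^{*}}}}^{\!2}\right]=\exp\!\pa{{\delta^{*}}^{2}\ell L/\lambda_{0}^{*}}.
\]
Averaging, $E_{\lambda_{0}^{*}}[(dP_{\pi}/dP_{\lambda_{0}^{*}})^{2}]-1=M^{-1}(\exp({\delta^{*}}^{2}\ell L/\lambda_{0}^{*})-1)$. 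The target distance $r$ forces $\ell\sim\lambda_{0}^{*}\log L/(4L{\delta^{*}}^{2})$, so I would take $M_{L}=\lceil 4L{\delta^{*}}^{2}/(\lambda_{0}^{*}\log L)\rceil$; this gives ${\delta^{*}}^{2}\ell L/\lambda_{0}^{*}\leq(\log L)/4+o(1)$, hence $\exp({\delta^{*}}^{2}\ell L/\lambda_{0}^{*})\leq L^{1/4}(1+o(1))$ and the chi-squared is of order $L^{1/4}/M_{L}\sim(\log L)/L^{3/4}\to 0$, so the bound $\leq 4(1-\alpha-\beta)^{2}$ is met as soon as $L\geq L_{0}(\alpha,\beta,R,\delta^{*})$.

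The main obstacle will be the careful calibration of constants to recover precisely the factor $1/4$ rather than a cruder one: the $(1-\ell)$ factor in $d_{2}^{2}=\delta^{*2}\ell(1-\ell)$ and the rounding slack in $M_{L}=\lceil\cdot\rceil$ must be absorbed either by slightly oversizing $M_{L}$ or by relaxing $r$ by an arbitrarily small multiplicative constant and letting $L_{0}$ grow accordingly. One must also verify the admissibility conditions $M_{L}\geq 2$ and $\ell=1/M_{L}\in(0,1-\tau)$ for some valid $\tau\in(0,1)$, which hold as soon as $L$ exceeds an explicit threshold depending only on $(\alpha,\beta,R,\delta^{*})$. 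Once these calibrations are in place, the Bayesian criterion from Section~\ref{Keys} delivers the claimed lower bound $\mSRab(\calS^{u}_{\delta^{*},\bbul\bbul,\bbul\bbul\bbul}[R])\geq\sqrt{\lambda_{0}^{*}\log L/(4L)}$.
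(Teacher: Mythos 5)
Your proposal departs from the paper's construction in a genuinely interesting way, but the calibration has a real error that your own ``absorb-the-slack'' remark does not resolve.

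\paragraph*{What differs from the paper.} The paper's proof (like its proof of Proposition~\ref{LBalt9}) takes $\lambda_{0}=(R-\delta^{*})\wedge R$ and a prior uniform over $\lceil L^{3/4}\rceil$ bumps of length $\ell=\lambda_{0}\log L/(2\delta^{*2}L)$ located at $\tau_{k}=k/L$; those intervals \emph{overlap} for $L$ large, so the chi-squared expansion must split pairs into overlapping and non-overlapping ones and sum a geometric series. Your construction — a disjoint equipartition of $[0,1]$ into intervals of length $\ell=1/M$ — kills the cross-terms outright ($E_{\lambda_{0}^{*}}[(dP_{\lambda_{k}}/dP_{\lambda_{0}^{*}})(dP_{\lambda_{k'}}/dP_{\lambda_{0}^{*}})]=1$ for $k\neq k'$), leaving only the diagonal term $M^{-1}(e^{\delta^{*2}\ell L/\lambda_{0}^{*}}-1)$. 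That is a cleaner route, and the Bayesian machinery of Section~\ref{Keys} applies the same way.

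\paragraph*{Where the proposal breaks.} You take $M_{L}=\lceil 4L\delta^{*2}/(\lambda_{0}^{*}\log L)\rceil$, so $\ell=1/M_{L}\leq \lambda_{0}^{*}\log L/(4L\delta^{*2})$, hence $\delta^{*2}\ell\leq\lambda_{0}^{*}\log L/(4L)$ and therefore
\[
d_{2}^{2}(\lambda_{k},\calS_{0}^{u}[R])=\delta^{*2}\ell(1-\ell)<\frac{\lambda_{0}^{*}\log L}{4L}=r^{2}
\]
strictly. The prior is then not supported on $(\calS^{u}_{\delta^{*},\bbul\bbul,\bbul\bbul\bbul}[R])_{r}$, so Lemma~\ref{lemmebayesien} does not deliver $\mSRab\geq r$. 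The $(1-\ell)$ factor is not a harmless $o(1)$ perturbation here: it is exactly what drives the paper's constant $1/4$. Moreover, your two suggested repairs go in the wrong directions: ``slightly oversizing $M_{L}$'' makes $\ell$ (hence $d_{2}$) \emph{smaller} still, and ``relaxing $r$ by a small multiplicative constant'' proves a strictly weaker statement than the proposition.

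\paragraph*{The correct calibration.} Take $M_{L}=\lfloor 2L\delta^{*2}/(\lambda_{0}^{*}\log L)\rfloor$, i.e.\ \emph{half} your $M_{L}$, so that $\ell=1/M_{L}\geq\lambda_{0}^{*}\log L/(2L\delta^{*2})$ and $\delta^{*2}\ell\geq\lambda_{0}^{*}\log L/(2L)$. Imposing (as in the paper's condition \eqref{LBalt9_u_eq2}) that $\ell\leq 1/2$, which holds for $L\geq L_{0}(\alpha,\beta,R,\delta^{*})$, gives $1-\ell\geq 1/2$ and hence $d_{2}^{2}\geq\lambda_{0}^{*}\log L/(4L)$ exactly. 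The exponent becomes $\delta^{*2}\ell L/\lambda_{0}^{*}=(\log L)/2\,(1+o(1))$, so $e^{\delta^{*2}\ell L/\lambda_{0}^{*}}=L^{1/2+o(1)}$ and the chi-squared excess $L^{1/2+o(1)}/M_{L}\sim \lambda_{0}^{*}\log L/(2L^{1/2-o(1)}\delta^{*2})\to 0$ still controls, so $L_{0}$ can be chosen so that the bound $\leq 4(1-\alpha-\beta)^{2}$ holds.

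\paragraph*{A small admissibility point.} An exact tiling of $[0,1]$ by the $M_{L}$ intervals produces $\lambda_{1}$ with $\tau=0$ and $\lambda_{M_{L}}$ with $\tau+\ell=1$, neither of which satisfies the open constraints $\tau\in(0,1)$, $\ell\in(0,1-\tau)$ in the definition \eqref{alt9_u} of $\calS^{u}_{\delta^{*},\bbul\bbul,\bbul\bbul\bbul}[R]$. Use only the $M_{L}-2$ interior intervals; this does not affect the chi-squared asymptotics.
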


Since $\calS^u_{\bbul,\bbul\bbul,\bbul\bbul\bbul}[R]$ includes $\calS^u_{R/2,\bbul\bbul,\bbul\bbul\bbul}[R]$, Proposition \ref{LBalt9_u} directly leads to the following corollary, whose proof is omitted for simplicity.
  
\begin{corollary}[Minimax lower bound for $\bold{[Alt^u.10]}$] \label{LBalt10_u}
Let $\alpha,\beta$ in $(0,1)$ with $\alpha+ \beta<1$ and $R>0$. There exists $L_0(\alpha,\beta,R)>0$ such that for all $L \geq L_0(\alpha,\beta,R)$, 
\[\mSRab\pa{\calS^u_{\bbul,\bbul\bbul,\bbul\bbul\bbul}[R]} \geq  \sqrt{R \log L/(8L)}\enspace.\]
\end{corollary}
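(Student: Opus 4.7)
The plan is to deduce Corollary \ref{LBalt10_u} directly from Proposition \ref{LBalt9_u} by combining an inclusion of alternative sets with the standard monotonicity property of the minimax separation rate (Lemma \ref{mSR} mentioned earlier in the paper). No new Bayesian construction or probabilistic computation is needed; the bound is an immediate corollary.

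First I would verify the inclusion $\calS^u_{R/2,\bbul\bbul,\bbul\bbul\bbul}[R]\subseteq \calS^u_{\bbul,\bbul\bbul,\bbul\bbul\bbul}[R]$. Indeed, any intensity $\lambda$ in the former has the form $\lambda_0 + (R/2)\mathds{1}_{(\tau,\tau+\ell]}$, with $\lambda_0\in(0,R/2]$, $\tau\in(0,1)$ and $\ell\in(0,1-\tau)$; taking $\delta=R/2\in(-\lambda_0, R-\lambda_0]\setminus\{0\}$ shows that $\lambda$ also lies in the larger, height-adaptive set of \eqref{alt10_u}. Then, by Lemma \ref{mSR}, the minimax separation rate is monotone with respect to set inclusion of alternatives, whence
\[
\mSRab\pa{\calS^u_{R/2,\bbul\bbul,\bbul\bbul\bbul}[R]}\leq \mSRab\pa{\calS^u_{\bbul,\bbul\bbul,\bbul\bbul\bbul}[R]}\enspace.
\]

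Finally I would apply Proposition \ref{LBalt9_u} with the specific choice $\delta^*=R/2$. The proposition provides $L_0(\alpha,\beta,R,R/2)>0$, which we rename $L_0(\alpha,\beta,R)$, such that for every $L\geq L_0(\alpha,\beta,R)$,
\[
\mSRab\pa{\calS^u_{R/2,\bbul\bbul,\bbul\bbul\bbul}[R]}\geq \sqrt{\frac{\big((R-R/2)\wedge R\big)\log L}{4L}} = \sqrt{\frac{R\log L}{8L}}\enspace,
\]
since $(R-R/2)\wedge R = R/2$ for $R>0$. Chaining this inequality with the monotonicity step above yields the claimed lower bound $\sqrt{R\log L/(8L)}$ for $\mSRab\pa{\calS^u_{\bbul,\bbul\bbul,\bbul\bbul\bbul}[R]}$ on the range $L\geq L_0(\alpha,\beta,R)$.

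There is no real obstacle in this argument: the difficult work (the Bayesian mixture construction, the $\chi^2$ control, the Girsanov-type computation for the shifted Poisson process) has already been carried out once and for all in the proof of Proposition \ref{LBalt9_u}. The only thing to check carefully is that the admissibility condition on $\delta^*=R/2$ in Proposition \ref{LBalt9_u} (i.e.\ $\delta^*\in(-R,R)\setminus\{0\}$) is met, which is trivial. This is precisely why the authors note that the proof is omitted for simplicity.
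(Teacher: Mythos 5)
Your proof is correct and is exactly the argument the paper has in mind: the text preceding the corollary states that $\calS^u_{\bbul,\bbul\bbul,\bbul\bbul\bbul}[R]$ includes $\calS^u_{R/2,\bbul\bbul,\bbul\bbul\bbul}[R]$ and that Proposition~\ref{LBalt9_u} then gives the result directly (with the proof omitted for simplicity). Your verification of the inclusion (that $\delta=R/2\in(-\lambda_0,R-\lambda_0]\setminus\{0\}$ whenever $\lambda_0\in(0,R/2]$), the appeal to the monotonicity of $\mSRab$ from Lemma~\ref{mSR}, and the arithmetic $(R-R/2)\wedge R=R/2$ yielding $\sqrt{R\log L/(8L)}$ are all correct.
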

In order to prove that the above lower bounds are sharp, we then construct two minimax adaptive tests, based on an aggregation principle. In order to customise the tests developed in Section \ref{Sec:transitory} to the lack of knowledge of the baseline intensity, we consider the linear statistic $S'_{\tau_1,\tau_2}(N)$ defined by \eqref{stat_alt6_N1_u} and the quadratic statistic $T'_{\tau_1,\tau_2}(N)$ defined by \eqref{def_T'}, combined with the conditional trick already used in the above studies
through the $u$-quantiles $s'_{n,\tau_1,\tau_2}(u)$ and $t'_{n,\tau_1,\tau_2}(u)$ of the conditional distributions of  $| S'_{\tau_1,\tau_2}(N)|$ and  $T'_{\tau_1,\tau_2}(N)$ given $N_1=n$ under $\hzero$ respectively. Introducing $M_L=\lceil L/\log L \rceil$,
\begin{eqnarray*}
\mathcal{K}_L^{(1)}&=&\set{(k,k'),\ k\in \lbrace 0,\ldots,  \lceil L \rceil   -1\rbrace, k'\in \lbrace 1,\ldots, \lceil L \rceil -k\rbrace}\enspace,\\
\mathcal{K}_L^{(2)}&=&\set{(k,k'),\ k\in \lbrace 0,\ldots,  M_L -1\rbrace, k'\in \lbrace 1,\ldots, M_L -k \rbrace}\setminus\set{(0,M_L)}\enspace,
\end{eqnarray*}
and the corrected levels $u_\alpha^{(1)}=2\alpha/( \lceil L \rceil( \lceil L \rceil+1))$ and $u_\alpha^{(2)}=2\alpha/( M_L (  M_L+1)-2)$, we can thus propose the two following tests:
\begin{equation} \label{testN1alt9-10_u}
\phi_{9/10,\alpha}^{u(1)}(N) = \1{\max_{(k,k')\in \mathcal{K}_L^{(1)}} \Big( \big|S'_{\frac{k}{ \lceil L \rceil},\frac{k+k'}{ \lceil L \rceil}}(N)\big| - s'_{N_1,\frac{k}{\lceil L \rceil},\frac{k+k'}{\lceil L \rceil}}\pa{1 - u_\alpha^{(1)}} \Big)>0}\enspace,
\end{equation}
\begin{equation} \label{testN2alt9-10_u}
\phi_{9/10,\alpha}^{u(2)}(N) = \1{\max_{(k,k')\in\mathcal{K}_L^{(2)}} \left(  T'_{\frac{k}{M_L},\frac{k+k'}{M_L}}(N)  - t'_{N_1,\frac{k}{M_L},\frac{k+k'}{M_L}} \pa{1 - u_\alpha^{(2)}} \right)>0}\enspace.
\end{equation}

\begin{proposition}[Minimax upper bound for $\bold{[Alt^{u}.9]}$ and $\bold{[Alt^{u}.10]}$] \label{UBalt10_u}
Let $\alpha,\beta$ in $(0,1)$, $R>0$ and $\delta^*$ in $(- R, R)\setminus\{0\}$. Let $\phi_{9/10,\alpha}^{u(1/2)}$ be one of the tests $\phi_{9/10,\alpha}^{u(1)}$ and $\phi_{9/10,\alpha}^{u(2)}$ defined by \eqref{testN1alt9-10_u} and \eqref{testN2alt9-10_u}. Then $\phi_{9/10,\alpha}^{u(1/2)}$ is of level $\alpha$ for the problems of testing $\hzero$ versus $\hone\ "\lambda\in\calS^u_{\delta^*,\bbul\bbul,\bbul\bbul\bbul}[R]$  or $\hone\ "\lambda\in\calS^u_{\bbul,\bbul\bbul, \bbul\bbul\bbul}[R]"$, that is $\sup_{\lambda_0 \in \mathcal{S}_0^u[R]}P_{\lambda_0}\pa{\phi_{9/10,\alpha}^{u(1/2)}(N)=1}\leq \alpha$.
Moreover, there exist $C(\alpha, \beta, R,\delta^*)>0$ and $C(\alpha, \beta,R)>0$ such that
\begin{eqnarray*}
\SRb\pa{\phi_{9/10,\alpha}^{u(1/2)},\calS^u_{\delta^*,\bbul\bbul,\bbul\bbul\bbul}[R]} &\leq& C(\alpha, \beta, R,\delta^*)\sqrt{{ \log L}{/L}}\enspace,\\
\SRb\pa{\phi_{9/10,\alpha}^{u(1/2)},\calS^u_{\bbul,\bbul\bbul,\bbul\bbul\bbul}[R]} &\leq &C(\alpha, \beta, R)\sqrt{{ \log L}/{L}}\enspace.
\end{eqnarray*}
These upper bounds entail both $\mSRab\pa{\calS^u_{\delta^*,\bbul\bbul,\bbul\bbul\bbul}[R]} \leq C(\alpha, \beta, R,\delta^*)\sqrt{{ \log L}/{L}}$ and $\mSRab\pa{\calS^u_{\bbul,\bbul\bbul,\bbul\bbul\bbul}[R]} \leq C(\alpha, \beta, R)\sqrt{{ \log L}/{L}}$.
\end{proposition}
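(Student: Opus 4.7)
The plan is to prove both level and power for the two tests, reducing the case of the known-height alternative to the larger set via the inclusion $\calS^u_{\delta^*,\bbul\bbul,\bbul\bbul\bbul}[R]\subset \calS^u_{\bbul,\bbul\bbul,\bbul\bbul\bbul}[R]$.

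First, I would establish the level $\alpha$ property by conditioning on $N_1$. Under $\hzero$, whatever the unknown baseline $\lambda_0\in\calS_0^u[R]$, the conditional distribution of the jump locations of $N$ given $N_1=n$ is that of $n$ i.i.d.\ uniform random variables on $[0,1]$, so the conditional quantiles $s'_{n,\tau_1,\tau_2}(\cdot)$ and $t'_{n,\tau_1,\tau_2}(\cdot)$ are well defined and free of $\lambda_0$. Bonferroni's inequality applied conditionally on $N_1$ then yields, for both tests,
\[
P_{\lambda_0}\pa{\phi_{9/10,\alpha}^{u(i)}(N)=1\,\big|\,N_1}\le |\mathcal{K}_L^{(i)}|\,u_\alpha^{(i)} = \alpha,\qquad i=1,2,
\]
by the very choice of the corrected levels $u_\alpha^{(i)}$; taking expectation preserves the bound uniformly over $\calS_0^u[R]$.

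Next, for the separation rate, fix $\lambda=\lambda_0+\delta\mathds{1}_{(\tau,\tau+\ell]}$ in $\calS^u_{\bbul,\bbul\bbul,\bbul\bbul\bbul}[R]$ with $d_2(\lambda,\calS_0^u[R])=|\delta|\sqrt{\ell(1-\ell)}\ge C\sqrt{\log L/L}$ for a constant $C=C(\alpha,\beta,R)$ to be chosen large enough. The aim is to exhibit a deterministic pair of indices in the scanning grid at which the test statistic, under $P_\lambda$, exceeds its critical value with probability at least $1-\beta$. For $\phi_{9/10,\alpha}^{u(1)}$, I would pick $(k^*,k'^*)$ so that $(\tau_1^*,\tau_2^*]:=(k^*/\lceil L\rceil,(k^*+k'^*)/\lceil L\rceil]$ is the largest dyadic interval of the grid contained in $(\tau,\tau+\ell]$, with $\tau_2^*-\tau_1^* \ge \ell - 2/\lceil L\rceil$. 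A direct computation gives $E_\lambda[S'_{\tau_1^*,\tau_2^*}(N)]=L\delta(\tau_2^*-\tau_1^*)(1-\ell)$, whose absolute value is of order $L|\delta|\ell(1-\ell)$ up to lower-order terms. Combining Bennett's or Bernstein's inequality for the centered Poisson linear functional $S'_{\tau_1^*,\tau_2^*}(N)-E_\lambda[S'_{\tau_1^*,\tau_2^*}(N)]$, with concentration of $N_1$ around $L(\lambda_0+\delta\ell)\le LR$, and with the quantile bound of Lemma~\ref{QuantilesAbsS_u} applied at level $u_\alpha^{(1)}\ge c(\alpha)/L^2$ (so that the quantile is of order $\sqrt{LR(\tau_2^*-\tau_1^*)(1-\tau_2^*+\tau_1^*)\log L}$), yields rejection as soon as $|\delta|\sqrt{\ell(1-\ell)}\ge C'(\alpha,\beta,R)\sqrt{\log L/L}$.

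The same strategy applies to the quadratic test $\phi_{9/10,\alpha}^{u(2)}$, but with the coarser grid of spacing $1/M_L\approx \log L/L$. Because $|\delta|\le R$, the assumption $d_2(\lambda,\calS_0^u[R])\ge C\sqrt{\log L/L}$ forces $\ell(1-\ell)\ge C^2\log L/(R^2L)$, so for $C$ large enough one of $\ell$ or $1-\ell$ exceeds $1/M_L$ and a grid interval $(\tau_1^*,\tau_2^*]$ that $O(1/M_L)$-approximates $(\tau,\tau+\ell]$ exists. One then needs to show that the projection $\|\Pi_{W_{\tau_1^*,\tau_2^*}}(\lambda-\Pi_{W_0}\lambda)\|_2^2$ captures a constant fraction of $d_2(\lambda,\calS_0^u[R])^2=\delta^2\ell(1-\ell)$, which is the quantitative key and my main obstacle: it is an elementary but delicate piecewise computation in the 2-D space $W_{\tau_1^*,\tau_2^*}$, where the error terms coming from the $O(1/M_L)$ grid mismatch must be absorbed by a fraction of the signal, this being exactly the reason why $M_L$ is calibrated to $\lceil L/\log L\rceil$ rather than $\lceil L\rceil$. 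Once this projection capture is established, one concludes using (i) the conditional quantile bound of Lemma~\ref{quantile_T'}, which is of order $\log L/L$ for the level $u_\alpha^{(2)}\ge c(\alpha)/L^2$, applied with $N_1\le 2LR$ on a high-probability event; (ii) a concentration inequality for the degenerate $U$-statistic part of $T'_{\tau_1^*,\tau_2^*}(N)-E_\lambda[T'_{\tau_1^*,\tau_2^*}(N)]$, e.g.\ Houdré--Reynaud-Bouret's; combined, these give rejection of $\hzero$ under $P_\lambda$ with probability at least $1-\beta$ as soon as $|\delta|\sqrt{\ell(1-\ell)}\ge C'(\alpha,\beta,R)\sqrt{\log L/L}$. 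The bound for $\calS^u_{\delta^*,\bbul\bbul,\bbul\bbul\bbul}[R]$ then follows by inclusion, with the constant inheriting an extra dependence on $\delta^*$ only through the upper bound on $\lambda_0+\delta^*$.
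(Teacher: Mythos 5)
Your overall plan — level by conditioning plus a union bound, separation rate by exhibiting a deterministic grid pair at which the statistic beats its conditional critical value — matches the paper's proof closely. There are, however, two points where your account either misses a needed step or misattributes a calibration.

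First, your interval choice ``the largest dyadic interval of the grid contained in $(\tau,\tau+\ell]$'' only serves the case $\ell\le 1/2$. For $\ell>1/2$, the paper instead takes the smallest grid interval \emph{containing} $(\tau,\tau+\ell]$ (i.e.\ $k_0=\max\{k:\tau\ge k/\lceil L\rceil\}$, $k_0'=\min\{k':(k_0+k')/\lceil L\rceil\ge\tau+\ell\}$). The reason is that in both regimes one needs the grid interval length to sit on the same side of $1/2$ as $\ell$ so that $(k_0'/\lceil L\rceil)(1-k_0'/\lceil L\rceil)\le\ell(1-\ell)$, which is essential for the variance bound $\mathrm{Var}_\lambda\le (\lambda_0+\delta)\ell(1-\ell)L$ (resp. the corresponding bound for $T'$). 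Taking an interval contained in $(\tau,\tau+\ell]$ when $\ell>1/2$ does not give this inequality. The same containment/containing dichotomy reappears for the quadratic test.

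Second, your explanation of why $M_L=\lceil L/\log L\rceil$ is backwards. You attribute the coarser grid to the need to ``absorb the $O(1/M_L)$ grid mismatch'' into the signal; but a finer grid (spacing $1/L$) produces \emph{smaller} mismatch, so if anything the projection capture is easier there. The actual constraint comes from the conditional quantile bound for $T'$, Lemma~\ref{quantile_T'}, which contains the term
\[
\frac{C}{L^2}\cdot 3\max\!\left(\frac{\tau_2-\tau_1}{1-\tau_2+\tau_1},\ \frac{1-\tau_2+\tau_1}{\tau_2-\tau_1}\right)\log^2\!\left(\frac{2.77}{u}\right).
\]
On a grid of spacing $1/M$ this ratio can be as large as $M-1$, and with $u_\alpha^{(2)}\asymp\alpha/M^2$ (so $\log(1/u_\alpha^{(2)})\asymp\log M$), the worst-case contribution is of order $M(\log M)^2/L^2$. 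For $M=\lceil L\rceil$ this would be $(\log L)^2/L$, one logarithm too many; for $M=\lceil L/\log L\rceil$ it is $\log L/L$, which is the right order. This is also why $(k,k')=(0,M_L)$ is excluded from $\mathcal{K}_L^{(2)}$: that index would make the ratio infinite. By contrast, the Bennett-type quantile bound for the linear $|S'|$ (Lemma~\ref{QuantilesAbsS_u}) has no such ratio factor, which is why the fine grid $\lceil L\rceil$ is harmless for $\phi_{9/10,\alpha}^{u(1)}$. Finally, one small notational point: the paper concludes the power bound with the Bienaym\'e--Chebyshev inequality applied to the alternative mean and variance, not by a Bennett/Bernstein tail bound for the centered statistic under $P_\lambda$; either works, but your phrasing may mislead.
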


\subsection{Adjustment  of individual levels for aggregated tests} 

As in Section \ref{discussionlevels}, we discuss here the possibility of adjusting the individual levels of the single tests involved in our aggregated tests to make them more powerful. Most of the tests introduced in the present section are based on aggregation principles coupled with conditional tricks. Among them, we can again distinguish aggregated tests of the form
\begin{equation}\label{aggregated1_u}
\phi_{agg1,\alpha}^u(N)=\1{\sup_{\theta\in\Theta} S_{\theta}(N)>s_{N_1}^{+}(1-\alpha)}\enspace,
\end{equation}
where $s_{n}^{+}(1-\alpha)$ is the $(1-\alpha)$-conditional quantile of $\sup_{\theta\in\Theta} S_{\theta}(N)$ given $N_1=n$ under $(H_0)$ (concerning $\phi_{3,\alpha}^{u(1)-}$, $\phi_{3,\alpha}^{u(1)+}$, $\phi_{5,\alpha}^u$ and $\phi_{7,\alpha}^u$),
from aggregated tests of the form
\begin{equation}\label{aggregated2_u}
\phi_{agg2,\alpha}^u(N)=\1{\sup_{\theta\in\Theta} \pa{S_{\theta}(N)-s_{N_1,\theta}(1-u_\alpha)}>0}\enspace,
\end{equation}
$s_{N_1,\theta}(1-u)$ being the $(1-u)$-conditional quantile of $S_{\theta}(N)$ under $(H_0)$ and  $u_\alpha=\alpha/|\Theta|$ (as $\phi_{4,\alpha}^{u(1)}$, $\phi_{3/4,\alpha}^{u(2)}$, $\phi_{6,\alpha}^{u(1)}$, $\phi_{6,\alpha}^{u(2)}$, $\phi_{8,\alpha}^{u(1)}$, $\phi_{8,\alpha}^{u(2)}$, $\phi_{9/10,\alpha}^{u(1)}$, $\phi_{9/10,\alpha}^{u(2)}$). Notice that for any $\lambda_0$  in $\mathcal{S}_{0}^{u}[R]$, when $N\sim P_{\lambda_0}$, the distribution of $S_{\theta}(N)$ given $N_1=n$ is free from $\lambda_0$.

\smallskip

We here want to point out that similarly to \eqref{ualpha_minp}, a better choice than $u_\alpha$ can be made for the levels of the single tests involved in the aggregated tests of the form $\phi_{agg2,\alpha}(N)$, namely
\begin{equation}\label{ualpha_minp_unknown}
u_{n,\alpha}'=\sup \set{u\in (0,1),\ \sup_{\lambda_0 \in \mathcal{S}_{0}^{u}[R]} P_{\lambda_0}\pa{\sup_{\theta\in\Theta} \pa{S_{\theta}(N)-s_{n,\theta}(1-u)}>0~\Big\vert N_1=n }\leq \alpha}\enspace.
\end{equation}

Since for all $n$ in $\N\setminus\{0\}$ $u_\alpha\leq u_{n,\alpha}'$, by definition, $s_{n,\theta}(1-u_{n,\alpha}')\leq s_{n,\theta}(1-u_\alpha)$, therefore all the above tests of type $\phi_{agg2,\alpha}^u$ but with $u_\alpha$ replaced by $u_{N_1,\alpha}'$, that we can denote by $\phi_{agg2,\alpha}'^{u}$, satisfy the same minimax properties as $\phi_{agg2,\alpha}^u$. Our simulation study presented in Section \ref{SimulationStudy} focuses on the practical performances of these adjusted aggregated tests $\phi_{agg2,\alpha}'^u$.

\subsection{Summary and discussion}\label{Discussion_unknown} As in Section \ref{Discussion_known}, we present a summary of the results stated above. Recall (\emph{c.f.} \eqref{def_T'}, \eqref{stat_alt5_u} and \eqref{stat_alt6_N1_u}) that for $0\leq \tau_1<\tau_2\leq 1$,
\vspace{-0.3cm}
 \begin{multline*}
T'_{\tau_1, \tau_2}(N)= \frac{1}{L^2} \Big[ \frac{\tau_2 - \tau_1}{1- \tau_2 + \tau_1}  \left( \left( N(0,\tau_1] + N(\tau_2,1] \right)^2 - \left( N(0,\tau_1]  +N(\tau_2,1] \right)  \right) \\
+ \frac{1- \tau_2 + \tau_1}{\tau_2 - \tau_1}  \left( N(\tau_1,\tau_2]^2  - N(\tau_1,\tau_2]  \right) -2 N(\tau_1,\tau_2] \left( N(0, \tau_1]+ N(\tau_2,1] \right) \Big]\enspace,
\end{multline*}
$S'_{\delta^*,\tau_1,\tau_2}(N)=  \mathrm{sgn}(\delta^{*}{}) \Big(N(\tau_1,\tau_2] - (\tau_2-\tau_1) N_1\Big) - \vert \delta^{*}{} \vert L(\tau_2-\tau_1)(1-\tau_2+\tau_1)/2$,\\
$S'_{\tau_1,\tau_2}(N)= N(\tau_1, \tau_2] - (\tau_2-\tau_1) N_1$, and that $t'_{n,\tau_1,\tau_2}(u)$ and $s'_{n,\tau_1,\tau_2}(u)$ stand for the $u$-quantiles of $T'_{\tau_1,\tau_2}(N)$ and $| S'_{\tau_1,\tau_2}(N)|$  given $N_1=n$ under $\hzero$ respectively.  

\medskip

\begin{tabular}{l c l}
\hline
\multicolumn{3}{l}{{\bf Transitory change or bump detection}}\\
\hline
Alternative set & $\mSRab$ & Test statistics\\
\hline
$\calS^u_{\delta^*,\tau^*,\ell^*}[R]$ & - & $N(\tau^{*}, \tau^{*}+ \ell^{*}] $\\
\hline
$\calS^u_{\bbul,\tau^*,\ell^*}[R]$ & $L^{-1/2}$ & $N(\tau^{*}, \tau^{*}+ \ell^{*}] $\\
&&$T'_{\tau^*, \tau^*+\ell^*}(N)$\\
\hline
$ \calS^u_{\delta^*,\bbul\bbul,\ell^*}[R]$ & - & $\max_{\tau\in [0,1-\ell^*\wedge(1/2)]}N(\tau, \tau + \ell^*\wedge(1/2)]$,\\
&& $\min_{\tau\in [0,1-\ell^*\wedge(1/2)]}N(\tau, \tau + \ell^*\wedge(1/2)]$\\
&& $\max_{k\in \lbrace 0,\ldots, \lceil (1-\ell^*) M \rceil -1 \rbrace} \left( T'_{\frac{k}{M}, \frac{k}{M}+\ell^*}(N) - t'_{N_1,\frac{k}{M}, \frac{k}{M}+\ell^*} \pa{1 - u_\alpha} \right)$\\
&& $M= \lceil 2/(\ell^*(1-\ell^*)) \rceil$\\
\hline
$\calS^u_{\bbul,\bbul\bbul,\ell^*}[R]$ & $L^{-1/2}$ & $\max_{\tau\in [0,1-\ell^*\wedge(1/2)]}N(\tau, \tau + \ell^*\wedge(1/2)]$,\\
&& $\min_{\tau\in [0,1-\ell^*\wedge(1/2)]}N(\tau, \tau + \ell^*\wedge(1/2)]$\\
&& $\max_{k\in \lbrace 0,\ldots, \lceil (1-\ell^*) M \rceil -1 \rbrace} \left( T'_{\frac{k}{M}, \frac{k}{M}+\ell^*}(N) - t'_{N_1,\frac{k}{M}, \frac{k}{M}+\ell^*} \pa{1 - u_\alpha} \right)$\\
\hline
$\calS^u_{\delta^*,\tau^*,\bbul\bbul\bbul}[R]$ & $L^{-1/2}$ & $\sup_{\ell \in (0,1- \tau^{*})}S'_{\delta^*,\tau^*,\tau^*+\ell}(N)$\\
\hline
$\calS^u_{\bbul,\tau^*,\bbul\bbul\bbul}[R]$ & $\sqrt{\frac{\log\log L}{L}}$ & $\max_{k\in \lbrace 1,\ldots,\lfloor \log_{2} L \rfloor\rbrace} \Big( \big|S'_{\tau^*,\tau^*+\frac{1 - \tau^*}{2^k}}(N)\big| - s'_{N_1,\tau^*,\tau^*+\frac{1 - \tau^*}{2^k}} \pa{1 - u_\alpha} \Big)$\\
&& $\max_{k\in \lbrace 1,\ldots, \lfloor \log_{2} L \rfloor \rbrace} \left(  T'_{\tau^*, \tau^*+\frac{1 - \tau^*}{2^k} }(N)  - t'_{N_1,\tau^*,\tau^*+\frac{1 - \tau^*}{2^k}} \pa{1 - u_\alpha} \right)$\\
\hline
$\calS^u_{\delta^*,\bbul\bbul,\bbul\bbul\bbul}[R]$ & $\sqrt{\frac{\log L}{L}}$ & $\max_{k\in \lbrace 0,\ldots, \lceil L \rceil -1\rbrace, k'\in \lbrace 1,\ldots, \lceil L \rceil-k\rbrace} \Big( \big|S_{\frac{k}{\lceil L\rceil},\frac{k+k'}{\lceil L\rceil}}(N)\big| - s_{\lambda_0,\frac{k}{\lceil L\rceil},\frac{k+k'}{\lceil L\rceil}}\pa{1 - u_\alpha} \Big)$\\
&&$\max_{k\in \lbrace 0,\ldots, \lceil L \rceil -1\rbrace, k'\in \lbrace 1,\ldots, \lceil L \rceil-k\rbrace} \left(  T_{\frac{k}{\lceil L\rceil},\frac{k+k'}{\lceil L\rceil}}(N)  - t_{\lambda_0,\frac{k}{\lceil L\rceil},\frac{k+k'}{\lceil L\rceil}} \pa{1 - u_\alpha} \right)$\\
\hline
$ \calS^u_{\bbul,\bbul\bbul,\bbul\bbul\bbul}[R]$ &$\sqrt{\frac{\log L}{L}}$ & $\max_{k\in \lbrace 0,\ldots,  \lceil L \rceil   -1\rbrace, k'\in \lbrace 1,\ldots, \lceil L \rceil -k\rbrace} \Big( \big|S'_{\frac{k}{ \lceil L \rceil},\frac{k+k'}{ \lceil L \rceil}}(N)\big| - s'_{N_1,\frac{k}{\lceil L \rceil},\frac{k+k'}{\lceil L \rceil}}\pa{1 - u_\alpha^{(1)}} \Big)$\\
&& $\max_{\substack{k\in \lbrace 0,\ldots,  M_L -1\rbrace, k'\in \lbrace 1,\ldots, M_L -k \rbrace\\ (k,k') \neq (0,M_L)}} \left(  T'_{\frac{k}{M_L},\frac{k+k'}{M_L}}(N)  - t'_{N_1,\frac{k}{M_L},\frac{k+k'}{M_L}} \pa{1 - u_\alpha^{(2)}} \right)$\\
&& $M_L=\lceil L/\log L \rceil  $\\
\hline
\end{tabular}

\begin{tabular}{l c l}
\hline
\multicolumn{3}{l}{{\bf Non transitory change or jump detection}}\\
\hline
Alternative set & $\mSRab$ & Test statistics\\
\hline
$\calS^u_{\delta^*,\tau^*,1-\tau^*}[R]$ & - & $N(\tau^{*}, 1] $\\
\hline
$\calS^u_{\bbul,\tau^*,1-\tau^*}[R]$ & $L^{-1/2}$ & $N(\tau^{*},1] $\\
&&$T'_{\tau^*, 1}(N)$\\
\hline
$\calS^u_{\delta^*,\bbul\bbul,1-\bbul\bbul}[R]$ &$L^{-1/2}$ & $\sup_{\tau \in (0,1)}S'_{\delta^*,\tau,1}(N)$\\
\hline
$\calS^u_{\bbul,\bbul\bbul,1-\bbul\bbul}[R]$ & $\sqrt{\frac{\log \log L}{L}}$  & $\max_{\tau \in \calD_L} \Big( \big|S'_{\tau,1}(N)\big| - s'_{N_1,\tau,1} \pa{1 - u_\alpha} \Big)$\\
&& $\max_{\tau  \in \calD_L} \left(  T'_{\tau ,1 }(N)  - t'_{N_1,\tau ,1} \pa{1 - u_\alpha} \right)$\\
&& $\calD_L=\left\lbrace  2^{-k},~ k \in \lbrace 2,\ldots, \lfloor \log_{2}(L)  \rfloor     \rbrace   \right\rbrace \cup  \left\lbrace  1- 2^{-k},~ k \in \lbrace 1,\ldots, \lfloor \log_{2}(L)  \rfloor     \rbrace   \right\rbrace$\\
\hline
\end{tabular}

\medskip

As compared with the above overview in Section \ref{Discussion_known}, this one enables to see that the minimax separation rates do not suffer from the lack of knowledge of the baseline distribution: they indeed remain of the same order as in the problem of detecting a change from a given intensity, with the same phase transitions. Of course, these results are obtained at the price of a more important complexity of the test statistics, whether they are of linear or quadratic nature. This, combined with the need to use conditional quantiles instead of direct quantiles as critical values, brought in more technical arguments in the proofs. It can be furthermore noticed that up to our knowledge, except in the work of Verzelen et al.  \cite{Verzelen2021} for the jump detection problem, this specific case of an unknown baseline distribution is in general not treated in the basic Gaussian model, where the only presence of a signal (that is a bump or jump from zero-mean) is tested.

\section{Simulation study}\label{SimulationStudy}

We study in this section the performance of our minimax adaptive tests from an experimental point of view, by giving estimations of their size and their power for various distributions of the observed Poisson process, characterised by a jump or a bump in its intensity. Motivated by some applications in epidemiology and in cybersecurity, we check the feasibility of our new change-points detection procedures in practice and compare them with existing procedures.

We focus here on the most general problems investigated in this article of detecting a change (a jump or a bump) in the intensity $\lambda$ when the change location and height are unknown. The baseline intensity of $\lambda$, denoted by $\lambda_0,$ is taken equal to $1$ on $[0,1]$ in all the sequel. Recall that this baseline intensity can be considered as a known parameter of the testing problem as in Section \ref{Sec:knownbaseline}  or as an unknown parameter as in Section \ref{Sec:unknownbaseline}.

For several piecewise constant intensities $\lambda$ with respect to the measure $d\Lambda(t)=Ldt,$  where we have chosen $L=100$, we take a level of test $\alpha=0,05.$ 

We compare the estimated powers of our procedures with more classical conditional tests previously studied in practice by other authors. For instance Cohen and Sackrowitz \cite{Cohen1993}, and Bain, Engelhardt and Wright \cite{Bain1985} considered six well-known tests in the context of detecting increasing intensities of a Poisson process. They showed that two of these six tests, namely the so-called Laplace and $Z$ tests (respectively studied first by Cox \cite{Cox1955} and Crow \cite{Crow1974}) are more efficient from a practical point of view.
The Laplace test, denoted by (\texttt{La}) is based on the statistic
\[ \calT^{(La)}_\alpha(N) = \sum_{i=1}^{N_1} X_i - q_{N_1}^{(La)}(1-\alpha) \enspace,\]
where $(X_1,\ldots,X_{N_1})$ are the points of the Poisson process $N$, and for all $n$ in $\mathbb{N}$, $q_{n}^{(La)}(1-\alpha)$ is the $(1-\alpha)$-quantile of the sum of $n$ independent random variables uniformly distributed on $[0,1]$. 
The $Z$ test, denoted by (\texttt{Z}), is based on the statistic
\[ \calT^{(Z)}_\alpha(N) = 2\sum_{i=1}^{N_1} \log(X_i) + q_{N_1}^{(Z)}(\alpha)\enspace,\]
where for every $n$ in $\mathbb{N}$, $q_{n}^{(Z)}(\alpha)$ is the $\alpha$-quantile of the chi-square distribution with $2n$ degrees of freedom. Note that these tests were especially designed to test homogeneity versus an increasing trend, with rejection of homogeneity when they take positive values. Therefore, we have had to adapt them to fit our context. More precisely, we decided to reject the null hypotheses $\hzero \ "\lambda\in \calS_0[\lambda_0]=\{\lambda_0\}"$ or $\hzero \ "\lambda\in \calS_0^u[R]"$ when $\calT^{(La)}_{\alpha/2}(N)>0$ or $\calT^{(La)}_{1-\alpha/2}(N)<0$ for the Laplace test (\texttt{La}), and when $\calT^{(Z)}_{\alpha/2}(N)>0$ or $\calT^{(Z)}_{1-\alpha/2}(N)<0$ for the $Z$ test (\texttt{Z}). Notice that a generalised version of the Laplace and the $Z$ tests have been studied by Peña \cite{Pena1998a} and Zenia, Agustin and Peña \cite{Agustin1999}. A simulation study has been performed for these generalised procedures, but we have found that they have very similar estimated powers to the more classical Laplace and $Z$ tests for the considered alternatives. The results are therefore omitted in this section.

\subsection{Detection of an abrupt change from a known baseline intensity}

We first consider the case where $\lambda_0$ is a known parameter, referring to the theoretical results of Section \ref{Sec:knownbaseline}. The minimax adaptive tests we introduced to detect a change with unknown parameters from such a known intensity are based on two kinds of statistics. The first statistic, of linear nature, is defined by $S_{\tau_1,\tau_2}(N)= N(\tau_1, \tau_2] - \lambda_{0} (\tau_2-\tau_1) L,$ while the second statistic, of quadratic nature, is defined by $$ T_{\tau_1, \tau_2}(N) = \frac{1}{L^2 (\tau_2 - \tau_1)} \left(  N \left( \left.  \tau_1, \tau_2  \right] \right.^2 -N \left( \left.  \tau_1 , \tau_2  \right] \right.  \right) - \frac{2 \lambda_{0}}{L} N \left( \left.  \tau_1, \tau_2  \right] \right. + \lambda_{0}^2 (\tau_2 - \tau_1)\enspace.$$

\subsubsection{Detection of a non transitory change or jump} \label{simuCP_Known}

Let us recall that our tests are based on an aggregation principle which involves a scanning of the above linear and quadratic statistics over a discrete subset of possible values for the change location on $(0,1)$. The subset introduced in Section \ref{Sec:generalsingle} is of the dyadic form
\[\Theta_d=\left\{1 -2^{-k}, ~ k \in \lbrace 1,\ldots,6 \rbrace \right\}\enspace.\]

\smallskip

Considering the alternative $\bold{[Alt.8]}$, the test statistic of our first procedure denoted by (\texttt{CP1}($\Theta_d$)) is thus
\[\calT_{\lambda_0,\alpha }^{(1)} \left(N\right)=\max_{\tau\in \Theta_d} \Big( \big|S_{\tau,1}(N)\big| - s_{\lambda_0,\tau,1} \pa{1 - u^{(1)}_\alpha} \Big) \enspace, \]
where $s_{\lambda_0, {\tau_1},{\tau_2}}(1-u)$ is the $(1-u)$-quantile of $|S_{{\tau_1},{\tau_2}}(N)|$ under $\hzero$ and $u^{(1)}_\alpha$ is defined as in \eqref{ualpha_minp} by
\begin{equation} \label{simuKnown_ualpha1}
u^{(1)}_\alpha = \sup \left\lbrace u \in (0,1),~ P_{\lambda_0} \left(\max_{\tau\in \Theta_d} \Big( \big|S_{\tau,1}(N)\big| - s_{\lambda_0,\tau,1} \pa{1 -u} \Big)  >0  \right) \leq \alpha   \right\rbrace \enspace,
\end{equation}
while the test statistic of our second procedure denoted by (\texttt{CP2}($\Theta_d$)) is 
\[ \calT_{\lambda_0,\alpha }^{(2)} \left( N\right) = \max_{\tau\in \Theta_d} \left(  T_{\tau,1 }(N)  - t_{\lambda_0,\tau,1} \pa{1 - u^{(2)}_\alpha} \right) \enspace, \] 
where $t_{\lambda_0, {\tau_1},{\tau_2}}(1-u)$ is the $(1-u)$-quantile of $T_{{\tau_1},{\tau_2}}(N) $ under $\hzero$ and 
\begin{equation} \label{simuKnown_ualpha2}
u^{(2)}_\alpha = \sup \left\lbrace u \in (0,1),~ P_{\lambda_0} \left(\max_{\tau\in \Theta_d} \left(  T_{\tau,1 }(N)  - t_{\lambda_0,\tau,1} \pa{1 - u} \right) >0  \right) \leq \alpha   \right\rbrace \enspace.
\end{equation}
The null hypothesis $\hzero \ "\lambda\in \calS_0[\lambda_0]=\{\lambda_0\}"$ is rejected when $\calT_{\lambda_0,\alpha }^{(1)}(N) >0$ for (\texttt{CP1}($\Theta_d$)), or when $\calT_{\lambda_0,\alpha }^{(2)} (N)>0$ for (\texttt{CP2}($\Theta_d$)).

\smallskip

Noticing that the test we use for the bump detection problem (see the following subsection) and the jump detection procedure studied in \cite{Verzelen2021} are based on regular grids of possible values of change location instead of the dyadic subset $\Theta_d$, we also consider the same tests but replacing $\Theta_d$ by a regular grid, with a cardinality close to the cardinality of $\Theta_d$, namely 
\[\Theta_r=\left\{\frac{k}{10},~ k \in \lbrace 1,\ldots,9 \rbrace \right\}\enspace.\]
The corresponding tests are then respectively denoted by (\texttt{CP1}($\Theta_r$)) and (\texttt{CP2}($\Theta_r$)).

\smallskip

For all $\tau$ in $\Theta_d$ or $\Theta_r$, we have estimated the quantities $u^{(1)}_\alpha$, $s_{\lambda_0, {\tau},1} (1- u^{(1)}_\alpha )$, $u^{(2)}_\alpha$ and $t_{\lambda_0, {\tau},1} (1-u^{(2)}_\alpha )$
by classical Monte Carlo methods based on the simulation of 200 000 independent samples of $\vert S_{{\tau},1}(N) \vert$ or $T_{{\tau},1}(N)$ under $\hzero.$ The approximations of $u^{(1)}_\alpha$  and $u^{(2)}_\alpha$ were obtained by dichotomy, such that the estimated probabilities occurring in \eqref{simuKnown_ualpha1} and \eqref{simuKnown_ualpha2} are less than $\alpha$, but as close to $\alpha$ as possible.

\subsubsection{Detection of a transitory change or bump}\label{simuTC_Known}

Let us consider the discrete set :
$$\Theta=\set{\pa{\frac{k}{100},\frac{k+k'}{100}},~ k \in \lbrace 0,\ldots,99 \rbrace,~k' \in \lbrace 1,\ldots,100-k \rbrace}\enspace.$$
Considering the alternative $\bold{[Alt.10]}$,  the test statistic of our first procedure denoted by (\texttt{TC1}) is
\[ \mathcal{T}_{\lambda_0,\alpha}^{(3)} (N) = \max_{(\tau,\tau')\in \Theta} \left( \abs{ S_{\tau,\tau'}(N) } - s_{\lambda_0,\tau,\tau'} \left( 1-u^{(3)}_\alpha \right) \right) \enspace,\]
where $u^{(3)}_\alpha$ is defined, again as in \eqref{ualpha_minp}, by 
\begin{equation} \label{simuKnown_ualpha3}
u^{(3)}_\alpha = \sup \left\lbrace u \in (0,1),~ P_{\lambda_0} \left(   \max_{(\tau,\tau')\in \Theta} \left( \abs{ S_{\tau,\tau'}(N) } - s_{\lambda_0,\tau,\tau'} \left( 1-u \right) \right) >0  \right) \leq \alpha   \right\rbrace \enspace,
\end{equation}
while the test statistic of our second procedure denoted by (\texttt{TC2}) is 
\[ \mathcal{T}_{\lambda_0,\alpha }^{(4)} \left( N\right) = \max_{(\tau,\tau')\in \Theta} \left(  T_{\tau,\tau'}(N)  - t_{\lambda_0, \tau,\tau'} \left( 1-u^{(4)}_\alpha \right) \right) \enspace, \] where $u^{(4)}_\alpha$ is defined by 
\begin{equation} \label{simuKnown_ualpha4}
u^{(4)}_\alpha = \sup \left\lbrace u \in (0,1),~ P_{\lambda_0} \left( \max_{(\tau,\tau')\in \Theta} \left(  T_{\tau,\tau'}(N)  - t_{\lambda_0, \tau,\tau'} \left( 1-u^{(4)}_\alpha \right) \right)>0  \right) \leq \alpha   \right\rbrace \enspace.
\end{equation}
The null hypothesis $\hzero \ "\lambda\in \calS_0[\lambda_0]=\{\lambda_0\}"$ is rejected when $\mathcal{T}_{\lambda_0,\alpha}^{(3)}(N)>0$ for (\texttt{TC1}), or when $\mathcal{T}_{\lambda_0,\alpha}^{(4)}(N)>0$ for (\texttt{TC2}). 

\smallskip

As above, for all $(\tau,\tau')$ in $ \Theta$, the quantities $u^{(3)}_\alpha,$ $s_{\lambda_0,\tau,\tau'}(1-u^{(3)}_\alpha), $ $u^{(4)}_\alpha$ and $t_{\lambda_0, \tau,\tau'}(1-u^{(4)}_\alpha) $ have been estimated  by Monte Carlo methods based on the simulation of 200 000 independent samples of $\vert S_{\tau,\tau'}(N) \vert$ or $T_{\tau,\tau'}(N) $  under $\hzero.$  The approximations of $u^{(3)}_\alpha$  and $u^{(4)}_\alpha$ have been obtained by dichotomy.

\subsubsection{Simulation results}

We compare the tests (\texttt{La}) and (\texttt{Z}) with (\texttt{CP1}($\Theta_d$)), (\texttt{CP2}($\Theta_d$)), (\texttt{CP1}($\Theta_r$)) and (\texttt{CP2}($\Theta_r$)) when addressing the jump detection problem (described in \ref{simuCP_Known}), and with (\texttt{TC1}) and (\texttt{TC2}) when addressing the bump detection problem (described in \ref{simuTC_Known}).

\paragraph*{Estimated sizes}
We first study the size of each test via simulation of 5 000 independent homogeneous Poisson processes of intensity $\lambda_0=1$ w.r.t. $\Lambda$ on $[0,1].$ The probabilities of first kind error of all the considered tests were simply  estimated by the number of rejections divided by 5 000. The results are given in Table \ref{size*}.

\begin{table}[h]
\caption{Estimated sizes}\label{size*}
\begin{center}
\begin{tabular}{cc}
\hline
\texttt{La}  & \texttt{Z} \\
0.051 &0.049\\
\hline
\texttt{CP1}($\Theta_d$) & \texttt{CP2}($\Theta_d$)\\
0.049 &0.049\\
\hline
\texttt{CP1}($\Theta_r$) & \texttt{CP2}($\Theta_r$)\\
0.048 &0.047\\
\hline
\texttt{TC1} & \texttt{TC2}\\  
 0.050 & 0.049\\
\hline
\end{tabular}
\end{center}
\end{table}

Notice that the estimated sizes of our tests always remain below the target $0.05$, as expected from the definitions of $u^{(1)}_\alpha$, $u^{(2)}_\alpha$, $u^{(3)}_\alpha$ and $u^{(4)}_\alpha$. It is in particular interesting to see that the Monte Carlo estimation, which is calibrated according to a balance precision/running time, does not affect here the first kind error rate control property.

\paragraph*{Estimated powers}
For both testing problems, we study the estimated power of each test under various alternatives. 

\smallskip

Let us start with the jump detection problem. We consider alternative intensities $\lambda_{\tau,\delta}$ defined for all $t$ in $[0,1]$ by 
\begin{equation} \label{simu_alt_cp}
\lambda_{\tau, \delta}(t) = 1 + \delta \mathds{1}_{(\tau,1]}(t) \enspace,
\end{equation}
 where $\delta \in \{-0.8 , -0.6  , -0.4  , -0.2  ,  0.2  , 0.4  ,  0.6  ,  0.8\},$ and $\tau=0.2$ (Table \ref{cp*0.2}), $\tau=0.5$ (Table~\ref{cp*0.5}), $\tau=0.8$ (Table \ref{cp*0.8}), $\tau=0.9$ (Table \ref{cp*0.9}) or $\tau=0.95$ (Table \ref{cp*0.95}).
%
 For each alternative, 1 000 independent inhomogeneous Poisson processes with intensity $\lambda_{\tau,\delta}$ w.r.t. $\Lambda$ on $[0,1]$ have been simulated. The power of the considered tests has then been simply estimated by the number of rejections divided by 1 000,  leading to the results gathered in Tables \ref{cp*0.2}-\ref{cp*0.95}.

 \begin{table}[h]
\caption{Estimated probability of detecting a jump with $\tau=0.2$}
\label{cp*0.2}
\begin{center}
\begin{tabular}{ccccccccc}
\hline
$\delta$ & -0.8  & -0.6 & -0.4 & -0.2 & 0.2 & 0.4 & 0.6 & 0.8 \\
\hline
\hline
\texttt{La} & 0.95&0.61 &0.27 &0.08 &  0.07&0.15 &0.28 &0.42\\
\texttt{Z} & 0.96 &0.69 &0.31 &0.08 &0.09 &0.18 &0.38 & 0.56 \\
\hline
\texttt{CP1}($\Theta_d$) &1 &1 &0.69 &0.14 &0.23 &0.65 &0.94 &0.99 \\
\texttt{CP2}($\Theta_d$) &1 &1 &0.75 &0.18 &0.20 &0.60 &0.92 &0.99 \\
\hline
\texttt{CP1}($\Theta_r$) &1 &1 &0.95 &0.31 &0.37 &0.87 &0.99 &1 \\
\texttt{CP2}($\Theta_r$) &1 &1 &0.96 &0.34 &0.35 &0.86 &0.99 &1\\
\end{tabular}
\end{center}
\end{table}

\begin{table}[h]
\caption{Estimated probability of detecting a jump with $\tau=0.5$}
\label{cp*0.5}
\begin{center}
\begin{tabular}{ccccccccc}
\hline
$\delta=$ & -0.8  & -0.6 & -0.4 & -0.2 & 0.2 & 0.4 & 0.6 & 0.8 \\
\hline
\hline
\texttt{La} &1 &0.87 &0.50 &0.15 &0.13 &0.35 &0.61 &0.84 \\
\texttt{Z} &0.92 &0.62 &0.31 &0.09 &0.11 &0.26 &0.46 &0.72\\
\hline
\texttt{CP1}($\Theta_d$) &1 &1 &0.68 &0.14 &0.22 &0.67 &0.94 &1\\
\texttt{CP2}($\Theta_d$) &1 &1 &0.73 &0.18 &0.18 &0.64 &0.92 &0.99\\
\hline
\texttt{CP1}($\Theta_r$) &1 &1 &0.76 &0.22 &0.28 &0.71 &0.96 &1\\
\texttt{CP2}($\Theta_r$) &1 &1 &0.77 &0.23 &0.25 &0.68 &0.94 &1\\
\hline
\end{tabular}
\end{center}
\end{table}

 \begin{table}[h]
\caption{Estimated probability of detecting a jump with $\tau=0.8$}
\label{cp*0.8}
\begin{center}
\begin{tabular}{ccccccccc}
\hline
$\delta=$ & -0.8  & -0.6 & -0.4 & -0.2 & 0.2 & 0.4 & 0.6 & 0.8 \\
\hline
\hline
\texttt{La} &0.70 &0.43 &0.19 &0.08 &0.08 &0.18 &0.36 &0.53\\
\texttt{Z} &0.28 &0.20 &0.09 &0.06&0.07 &0.11 &0.20 &0.30 \\
\hline
\texttt{CP1}($\Theta_d$) & 0.92 &0.55 &0.20 &0.07 &0.15 &0.33 &0.59 &0.78\\
\texttt{CP2}($\Theta_d$) &0.96 &0.63 &0.25 &0.08 &0.14 &0.30 &0.54 &0.74\\
\hline
\texttt{CP1}($\Theta_r$) & 0.99 &0.67 &0.27 &0.09 &0.16 &0.37 &0.63 &0.82\\
\texttt{CP2}($\Theta_r$) &1 &0.76 &0.33 &0.11 &0.14 &0.36 &0.61 &0.81\\
\hline
\end{tabular}
\end{center}
\end{table}

  \begin{table}[h]
\caption{Estimated probability of detecting a jump with $\tau=0.9$}
\label{cp*0.9}
\begin{center}
\begin{tabular}{ccccccccc}
\hline
$\delta=$ & -0.8  & -0.6 & -0.4 & -0.2 & 0.2 & 0.4 & 0.6 & 0.8 \\
\hline
\hline
\texttt{La} &0.22 &0.16 &0.08 &0.05 &0.08 &0.11 &0.17 &0.24\\
\texttt{Z} &0.10 &0.07 &0.05 &0.04 &0.07 &0.09 &0.10 &0.14 \\
\hline
\texttt{CP1}($\Theta_d$) & 0.49 &0.22 &0.08 &0.04 &0.10 &0.19 &0.36 &0.55\\
\texttt{CP2}($\Theta_d$) &0.63 &0.30 &0.11 &0.05 &0.09 &0.18 &0.35 &0.54\\
\hline
\texttt{CP1}($\Theta_r$) & 0.50 &0.19 &0.09 &0.06 &0.11 &0.17 &0.34 &0.52\\
\texttt{CP2}($\Theta_r$) &0.74 &0.30 &0.13 &0.07 &0.10 &0.16 &0.33 &0.51\\
\hline
\end{tabular}
\end{center}
\end{table}

  \begin{table}[h]
\caption{Estimated probability of detecting a jump with $\tau=0.95$}
\label{cp*0.95}
\begin{center}
\begin{tabular}{ccccccccc}
\hline
$\delta=$ & -0.8  & -0.6 & -0.4 & -0.2 & 0.2 & 0.4 & 0.6 & 0.8 \\
\hline
\hline
\texttt{La} &0.10 &0.07 &0.06 &0.04 &0.05 &0.07 &0.08 &0.10\\
\texttt{Z} &0.06 &0.04 &0.05 &0.05 &0.05 &0.05 &0.06 &0.06 \\
\hline
\texttt{CP1}($\Theta_d$) & 0.17 &0.06 &0.04 &0.04 &0.08 &0.12 &0.20 &0.31\\
\texttt{CP2}($\Theta_d$) &0.19 &0.08 &0.05 &0.04 &0.08 &0.11 &0.19 &0.30\\
\hline
\texttt{CP1}($\Theta_r$) & 0.08 &0.06 &0.04 &0.04 &0.07 &0.08 &0.12 &0.21\\
\texttt{CP2}($\Theta_r$) &0.12 &0.08 &0.05 &0.05 &0.07 &0.07 &0.11 &0.20\\
\hline
\end{tabular}
\end{center}
\end{table}

Let us now turn to the bump detection problem. We have considered alternative intensities $\lambda_{\tau,\ell,\delta}$ defined for all $t$ in $[0,1]$ by 
\begin{equation} \label{simu_alt_tc}
\lambda_{\tau,\ell, \delta}(t) = 1 + \delta \mathds{1}_{(\tau,\tau +\ell]}(t) \enspace,
\end{equation} 
where $\delta \in \{ -0.8 , -0.6 , -0.4 , -0.2 , 0.2 , 0.4 , 0.6 , 0.8 \},$ with $\tau=0.2$ and $\ell\in \{0.1,0.4,0.7\}$, and then with $\tau=0.5$ and $\ell=0.4$.

For each alternative, we have simulated 1 000 independent inhomogeneous Poisson processes with intensity $\lambda_{\tau,\ell,\delta}$ w.r.t. $\Lambda$ on $[0,1].$ The powers have been estimated for each test by the number of rejections divided by 1 000,  and the results are provided in Tables \ref{tc*0.2-0.1}-\ref{tc*0.5-0.4}.
 
 \begin{table}[h]
\caption{Estimated probability of detecting a bump with $\tau=0.2$ and $\ell=0.1$}
\label{tc*0.2-0.1}
\begin{center}
\begin{tabular}{ccccccccc}
\hline
$\delta=$ & -0.8  & -0.6 & -0.4 & -0.2 & 0.2 & 0.4 & 0.6 & 0.8 \\
\hline
\hline
\texttt{La} &0.11 &0.07 &0.06 &0.06 &0.05 &0.06 &0.07 &0.10\\
\texttt{Z} &0.08 &0.07 &0.06 &0.06 &0.06 &0.05 &0.04 &0.05\\
\hline
\texttt{TC1} &0.06 &0.04 &0.04 &0.04 &0.06 &0.11 &0.18 &0.30\\
\texttt{TC2} &0.10 &0.05 &0.04 &0.04 &0.07 &0.11 &0.18 &0.30 \\
\hline
\end{tabular}
\end{center}
\end{table}

 \begin{table}[h]
\caption{Estimated probability of detecting a bump with $\tau=0.2$ and $\ell=0.4$}
\label{tc*0.2-0.4}
\begin{center}
\begin{tabular}{ccccccccc}
\hline
$\delta=$ & -0.8  & -0.6 & -0.4 & -0.2 & 0.2 & 0.4 & 0.6 & 0.8 \\
\hline
\hline
\texttt{La} &0.30 &0.17 &0.11 &0.06 &0.06 &0.07 &0.10 &0.13\\
\texttt{Z} &0.10 &0.09 &0.07 &0.06 &0.05 &0.04 &0.04 &0.03\\
\hline
\texttt{TC1} &1 &0.71 &0.21 &0.04 &0.15 &0.46 &0.73 &0.94\\
\texttt{TC2} &1 &0.83 &0.30 &0.06 &0.15 &0.46 &0.74 &0.94\\
\hline
\end{tabular}
\end{center}
\end{table}

 \begin{table}[h]
\caption{Estimated probability of detecting a bump with $\tau=0.2$ and $\ell=0.7$}
\label{tc*0.2-0.7}
\begin{center}
\begin{tabular}{ccccccccc}
\hline
$\delta=$ & -0.8  & -0.6 & -0.4 & -0.2 & 0.2 & 0.4 & 0.6 & 0.8 \\
\hline
\hline
\texttt{La} &0.33 &0.19 &0.11 &0.08 &0.06 &0.07 &0.07 &0.10\\
\texttt{Z} &0.70 &0.39 &0.19 &0.10 &0.07 &0.13 &0.20 &0.32\\
\hline
\texttt{TC1} &1 &0.99 &0.54 &0.07 &0.24 &0.70 &0.94 &1\\
\texttt{TC2} &1 &1 &0.63 &0.09 &0.24 &0.71 &0.94 &1 \\
\hline
\end{tabular}
\end{center}
\end{table}

 \begin{table}[h]
\caption{Estimated probability of detecting a bump with $\tau=0.5$ and $\ell=0.4$}
\label{tc*0.5-0.4}
\begin{center}
\begin{tabular}{ccccccccc}
\hline
$\delta=$ & -0.8  & -0.6 & -0.4 & -0.2 & 0.2 & 0.4 & 0.6 & 0.8 \\
\hline
\hline
\texttt{La} &0.74 &0.48 &0.24 &0.09 &0.07 &0.15 &0.31 &0.47\\
\texttt{Z} &0.63 &0.37 &0.19 &0.07 &0.07 &0.14 &0.31 &0.44\\
\hline
\texttt{TC1} &1 &0.71 &0.16 &0.04 &0.14 &0.41 &0.75 &0.93\\
\texttt{TC2} &1 &0.85 &0.28 &0.06 &0.14 &0.43 &0.77 &0.94\\
\hline
\end{tabular}
\end{center}
\end{table}

\subparagraph*{Comments}
\begin{enumerate}
\item It first arises that our procedures have estimated powers significantly larger than the Laplace and the $Z$ tests for both testing problems corresponding to alternatives $\bold{[Alt.8]}$ and $\bold{[Alt.10]}$ in most cases. The lower performances of the Laplace and the $Z$ tests may be due to the fact that their construction does not take the knowledge of $\lambda_0$ into account. Moreover, among our testing procedures, it is to note that both procedures based on the quadratic statistics (\texttt{CP2}) and (\texttt{TC2}) are more worthwhile to use than the ones based on the linear statistics  (\texttt{CP1}) and (\texttt{TC1}). Indeed, in the case of negative change heights, the procedures (\texttt{CP2}) and (\texttt{TC2}) are distinctly mostly more powerful than (\texttt{CP1}) and (\texttt{TC1}), whereas the powers are very close when positive change heights occur. 
\item The comparison of the estimated powers of the different testing procedures confirm the intuition that detecting a bump is harder than detecting a jump. Moreover, the performances of each test are very different according to the sign of the change height. In both jump and bump detection problems, it is easier to detect a change with large negative height than a change with large positive height except for the cases where $\ell=0.1$ in the bump detection problem (the estimated powers are close to the size of the tests for small negative change height whatever the values of change location and length). Note that this capability of our tests to better detect a jump or a bump with negative height than with positive  height can be explained by the fact that the significant parameter to evaluate the detectability of a (transitory or not) change is not the change height itself but the ratio between the minimum and the maximum values of the intensity. In other words, this means that it is easier to detect an intensity increasing from $1$ to $2$ than an intensity increasing from $100$ to $101$ whereas in both cases, the jump height is equal to one. In the same way, it is hence easier to detect an intensity decreasing from $1$ to $0.8$ than an intensity increasing from $1$ to $1.2$.

\item By comparing the estimated powers of our jump detection procedures based on the dyadic and regular sets $\Theta_d$ and $\Theta_r$, one can take note that using the dyadic set is as expected more relevant when the jump is close to the observation interval ending point, that is the most difficult to detect.

\item Finally, for the bump detection problem, we have to notice that complementary experiments showed that the estimated powers of (\texttt{TC1}) and (\texttt{TC2}) are equivalent for a same value of change length whatever the values of the change location. For the procedures (\texttt{La}) and (\texttt{Z}), we noticed that this is also true for $\ell =0.1$, but  not for $\ell=0.4$ since we observe a larger power for  $\tau=0.5$ than for  $\tau=0.2$. 
\end{enumerate}

\subsection{Detection of an abrupt change from an unknown baseline intensity}

We now consider the case where $\lambda_0$ is an unknown parameter, referring to theoretical results in Section \ref{Sec:unknownbaseline}. The minimax adaptive tests we introduced to detect a change with unknown parameters from such an unknown intensity are still based on two kinds of statistics. The first statistic, of linear nature, is defined by $
S'_{\tau_1, \tau_2}(N) = N(\tau_1, \tau_2] - N(0,1] (\tau_2 - \tau_1)$, while the second statistic, of quadratic nature, is defined by 
\begin{align*}
T'_{\tau_1, \tau_2}(N)&= \frac{1}{L^2} \left[ \frac{\tau_2 - \tau_1}{1- \tau_2 + \tau_1}  \left( \left( N(0,\tau_1] + N(\tau_2,1] \right)^2 - \left( N(0,\tau_1]  +N(\tau_2,1] \right)  \right) \nonumber \right. \\ & \quad  \left. + \frac{1- \tau_2 + \tau_1}{\tau_2 - \tau_1}  \left( N(\tau_1,\tau_2]^2  - N(\tau_1,\tau_2]  \right) -2 N(\tau_1,\tau_2] \left( N(0, \tau_1]+ N(\tau_2,1] \right) \right].
\end{align*}

\subsubsection{Detection of a non transitory change or jump} \label{simuCP_Unknown}

As above, the aggregation approach we used to construct our new tests to detect a jump from an unknown baseline intensity consists in scanning these linear and quadratic statistics over a discrete subset of possible values for the change location on $(0,1)$. The subset introduced in Section \ref{Sec:generalsingle_u} is of the dyadic form
\[\Theta_d^u=\left\lbrace  2^{-k},~ k \in \lbrace 2,\ldots, 6     \rbrace   \right\rbrace \cup  \left\lbrace  1- 2^{-k},~ k \in \lbrace 1,\ldots, 6     \rbrace   \right\rbrace     \enspace.\]

\smallskip

Considering the alternative $\bold{[Alt^u.8]}$, the test statistic of our first procedure denoted by (\texttt{CP1}$^u(\Theta_d^u)$) is thus
\[ \mathcal{T}_{\bbul,\alpha}^{(1)} (N)  = \max_{{\tau} \in \Theta_d^u} \left( \left|S'_{\tau,1}(N)\right| - s'_{N_1,\tau,1} \pa{1 - u^{(1)}_{N_1,\alpha}} \right) \enspace,\]
where $s'_{n,\tau_1, \tau_2}(u)$ is the $u$-quantile of the conditional distribution of $\vert S'_{n,\tau_1, \tau_2}(N) \vert$ given $N_1=n$ under $\hzero$ and $u^{(1)}_{n,\alpha}$ is defined for all $n$ in $\mathbb{N}$ as in \eqref{ualpha_minp_unknown} by 
\begin{equation} \label{simuU_ualpha1}
u^{(1)}_{n,\alpha} = \sup \left\lbrace u \in (0,1),~ \sup_{\lambda_0 \in \mathcal{S}_{0}^{u}[R]} P_{\lambda_0} \left(
  \max_{{\tau} \in \Theta_d^u} 
  \left( \vert S'_{{\tau},1}(N) \vert - s'_{n,{\tau},1} (1-u)  \right) >0 ~\Big\vert N_1=n  \right) \leq \alpha   \right\rbrace \enspace.
\end{equation}
The test statistic of our second procedure denoted by (\texttt{CP2}$^u(\Theta_d^u)$) is
\[  \mathcal{T}_{\bbul,\alpha}^{(2)} (N)  = \max_{{\tau} \in \Theta_d^u}\left(T'_{{\tau},1}(N) - t'_{N_1,{\tau},1} \left( 1-u^{(2)}_{N_1,\alpha} \right) \right) \enspace,\]  where $t'_{n,\tau_1, \tau_2}(u)$ is the $u$-quantile of the conditional distribution of $T'_{\tau_1, \tau_2}(N) $ given $N_1=n$ under $\hzero$ and 
$u^{(2)}_{n,\alpha}$ is defined for all $n$ in $\mathbb{N}$ by 
\begin{equation} \label{simuU_ualpha2}
u^{(2)}_{n,\alpha} = \sup \left\lbrace u \in (0,1),~ \sup_{\lambda_0 \in \mathcal{S}_{0}^{u}[R]} P_{\lambda_0} \left(
  \max_{{\tau} \in \Theta_d^u}\left(T'_{{\tau},1}(N) - t'_{N_1,{\tau},1} \left( 1-u\right) \right) >0 ~\Big\vert N_1=n  \right) \leq \alpha   \right\rbrace \enspace.
\end{equation}

Then, the null hypothesis $\hzero \ "\lambda\in \calS_0^u[R]"$ is rejected when $ \mathcal{T}_{\bbul,\alpha}^{(1)} (N) >0$ for (\texttt{CP1}$^u(\Theta_d^u)$), and when $\mathcal{T}_{\bbul,\alpha}^{(2)} (N)  >0$ for (\texttt{CP2}$^u(\Theta_d^u)$). 

\smallskip

As in the known baseline case, we have also considered the same tests, but replacing the dyadic set $\Theta_d^u$ by the regular set
\[\Theta_r^u=\left\lbrace \frac{k}{10},~k\in\{1,\ldots,9\}  \right\rbrace     \enspace.\]
The corresponding testing procedures are then denoted  by (\texttt{CP1}$^u(\Theta_r^u)$) and  (\texttt{CP2}$^u(\Theta_r^u)$)

\smallskip

The quantities $u^{(1)}_{n,\alpha},$ $u^{(2)}_{n,\alpha},$ $s'_{n,{\tau},1} (1- u^{(1)}_{n,\alpha}  )$ and $t'_{n,{\tau},1} (1- u^{(2)}_{n,\alpha})$
have been estimated  by Monte Carlo methods based on the simulation of 200 000 samples of $n$ i.i.d. random variables with uniform distribution  on $[0,1]$. These samples were used to estimate the distribution of  $ |S'_{{\tau},1}(N)| $ and $ T'_{{\tau},1}(N) $  given $N_1=n$, and therefore to approximate the conditional probabilities occurring in \eqref{simuU_ualpha1} and \eqref{simuU_ualpha2}. The approximations of $u^{(1)}_{n,\alpha}$  and $u^{(2)}_{n,\alpha}$ were obtained by dichotomy.

\subsubsection{Detection of a transitory change or bump}\label{simuTC_Unknown}

Let us consider the discrete sets
$$\Theta_1=\set{\pa{\frac{k}{100},\frac{k+k'}{100}},~ k \in \lbrace 0,\ldots,99 \rbrace,~k' \in \lbrace 1,\ldots,100-k \rbrace}\enspace,$$
and
$$\Theta_2=\set{\pa{\frac{k}{22}, \frac{k+k'}{22}},~ k \in \lbrace 0,\ldots,21 \rbrace,~k' \in \lbrace 1,\ldots, 22-k \rbrace,~(k,k') \neq (0,22)\rbrace}\enspace.$$

Considering the alternative $\bold{[Alt^u.10]}$,  the test statistic of our first procedure denoted by (\texttt{TC1}$^u$) is
\[ \mathcal{T}_{\bbul,\alpha}^{(3)} (N) = \max_{(\tau,\tau')\in \Theta_1} \left(\abs{ S'_{\tau,\tau'}(N) } - s'_{N_1,\tau,\tau'}\left( 1-u^{(3)}_{N_1,\alpha} \right)    \right) \enspace,\]
where $u^{(3)}_{n,\alpha}$ is defined for all $n$ in $\mathbb{N}$ by
\begin{align} \label{simuU_ualpha3}
u^{(3)}_{n,\alpha} &= \sup \left\lbrace u \in (0,1),~ \sup_{\lambda_0 \in \mathcal{S}_{0}^{u}[R]} P_{\lambda_0} \left(
  \max_{(\tau,\tau')\in \Theta_1} \left(\abs{ S'_{\tau,\tau'}(N) } - s'_{N_1,\tau,\tau'}\left( 1-u \right)    \right) >0 ~\Big\vert N_1=n  \right)  \leq \alpha  \right\rbrace \enspace,
\end{align} 
while the test statistic of our second procedure denoted by (\texttt{TC2}$^u$) is
\[\mathcal{T}_{\bbul,\alpha}^{(4)} (N) =\max_{(\tau,\tau')\in \Theta_2} \left( T'_{\tau,\tau'}(N) - t'_{N_1,\tau,\tau'}\left( 1-u^{(4)}_{N_1,\alpha} \right) \right) \enspace,\]
where $u^{(4)}_{n,\alpha}$ is defined for all $n \in \mathbb{N}$ by 
\begin{align} \label{simuU_ualpha4}
u^{(4)}_{n,\alpha} = \sup \left\lbrace u \in (0,1), \sup_{\lambda_0 \in \mathcal{S}_{0}^{u}[R]} P_{\lambda_0} \left(
\max_{(\tau,\tau')\in \Theta_2} \left( T'_{\tau,\tau'}(N) - t'_{N_1,\tau,\tau'}\left( 1-u \right) \right)  >0 ~\Big\vert N_1=n  \right) \leq \alpha  \right\rbrace \enspace. 
\end{align}

The null hypothesis $\hzero \ "\lambda\in \calS_0^u[R]"$ is rejected when $\mathcal{T}_{\bbul,\alpha}^{(3)} (N) >0$ for (\texttt{TC1$^u$}), and when $\mathcal{T}_{\bbul,\alpha}^{(4)} (N) >0$ for (\texttt{TC2$^u$}).
The quantities $u^{(3)}_{n,\alpha}$, $s'_{n,\tau,\tau'} ( 1-u^{(3)}_{n,\alpha} )$, $u^{(4)}_{n,\alpha}$ and $t'_{n,\tau,\tau'} ( 1-u^{(4)}_{n,\alpha} )$ have been estimated by Monte Carlo methods based on the simulation of 200 000 independent samples of $\abs{ S'_{\tau,\tau'}(N) }$ and 
 of $ T'_{\tau,\tau'}(N) $ given $N_1=n$ under $\hzero$, obtained from the simulation of 200 000 samples of $n$ i.i.d. random variables uniformly distributed on $[0,1]$. These samples have been used to estimate the distribution of $\abs{ S'_{\tau,\tau'}(N) }$ and  $ T'_{\tau,\tau'}(N) $ given $N_1=n$, and to approximate the conditional probabilities occurring in \eqref{simuU_ualpha3} and \eqref{simuU_ualpha4} . The approximations of $u^{(3)}_{n,\alpha}$ and $u^{(4)}_{n,\alpha}$ have been obtained by dichotomy.
 
 \subsubsection{Simulation results}

We compare the tests (\texttt{La}) and (\texttt{Z}) with (\texttt{CP1}$^{u}(\Theta_d^u)$),  (\texttt{CP2}$^{u}(\Theta_d^u)$), (\texttt{CP1}$^{u}(\Theta_r^u)$), and (\texttt{CP2}$^{u}(\Theta_r^u)$) when addressing the jump detection problem (described in \ref{simuCP_Unknown}), and with  (\texttt{TC1$^u$}) and (\texttt{TC2$^u$})  when addressing the bump detection problem (described in \ref{simuTC_Unknown}).

\paragraph*{Estimated sizes}
We first study the size of each test by simulating 5 000 independent homogeneous Poisson processes of intensity $\lambda_0=1$ w.r.t. $\Lambda$ on $[0,1].$ The probabilities of first kind error of all the considered tests have been estimated by the number of rejections divided by 5 000. The results are given in Table \ref{size*}.

\begin{table}[h!]
\caption{Estimated sizes}
\label{size}
\begin{center}
\begin{tabular}{cc}
\hline
  \texttt{La}  & \texttt{Z}\\
 0.052 &0.049 \\
 \hline
 (\texttt{CP1}$^{u}(\Theta_d^u)$) &  (\texttt{CP2}$^{u}(\Theta_d^u)$) \\
 0.045 & 0.045\\
 \hline
 (\texttt{CP1}$^{u}(\Theta_r^u)$) &  (\texttt{CP2}$^{u}(\Theta_r^u)$) \\
0.049  &0.047 \\  
\hline
\end{tabular}
\end{center}
\end{table}

Notice again that the estimated sizes of our tests always remain below the target $0.05$, as expected from the definitions of $u^{(1)}_{n,\alpha}$, $u^{(2)}_{n,\alpha}$, $u^{(3)}_{n,\alpha}$ and $u^{(4)}_{n,\alpha}$: the Monte Carlo estimation procedure does not affect this first kind error rate control property.

\paragraph*{Estimated powers}
For both testing problems, we study the estimated power of each test under various alternatives. 

\smallskip

For the jump detection problem, we consider the same alternative intensities $\lambda_{\tau,\delta}$ as in the known baseline intensity case, defined for all $t$ in $[0,1]$ by 
\eqref{simu_alt_cp}, but with $\tau$ varying in $\{0.05, 0.1,0.2,0.5,0.8,0.9,0.95\}$.

 For each alternative, we have simulated 1 000  independent inhomogeneous Poisson processes with intensity $\lambda_{\tau,\delta}$ w.r.t. $\Lambda$ on $[0,1],$ and the powers have been estimated for each test by the number of rejections divided by 1 000. The results are gathered in Tables \ref{cp0.05}-\ref{cp0.95}.

  \begin{table}[h!]
\caption{Estimated probability of detecting a jump with $\tau=0.05$}
\label{cp0.05}
\begin{center}
\begin{tabular}{ccccccccc}
\hline
$\delta=$ & -0.8  & -0.6 & -0.4 & -0.2 & 0.2 & 0.4 & 0.6 & 0.8 \\
\hline
\hline
\texttt{La} &0.32 &0.11 &0.07 &0.05 &0.06  &0.06 &0.06 &0.08\\
\texttt{Z} & 0.58  &0.26 &0.13 &0.07 &0.06 &0.07 &0.10 &0.14  \\
\hline
\texttt{CP1}$^{u}(\Theta_d^u)$  &0.59 &0.30 &0.13 &0.08 &0.04 &0.04 &0.05 &0.05 \\
\texttt{CP2}$^{u}(\Theta_d^u)$  &0.59 &0.30 &0.13 &0.08 &0.05 &0.04 &0.05 &0.07\\
\hline
\texttt{CP1}$^{u}(\Theta_r^u)$ &0.39 &0.17 &0.08 &0.06 &0.04 &0.04 &0.05 &0.07 \\
\texttt{CP2}$^{u}(\Theta_r^u)$ &0.39 &0.17 &0.08 &0.06 &0.04 &0.04 &0.05 &0.08\\
\hline
\end{tabular}
\end{center}
\end{table}

 \begin{table}[h!]
\caption{Estimated probability of detecting a jump with $\tau=0.1$}
\label{cp0.1}
\begin{center}
\begin{tabular}{ccccccccc}
\hline
$\delta=$ & -0.8  & -0.6 & -0.4 & -0.2 & 0.2 & 0.4 & 0.6 & 0.8 \\
\hline
\hline
\texttt{La} &0.64 &0.28 &0.14 &0.06 &0.06  &0.09 &0.09 &0.15\\
\texttt{Z} & 0.81  &0.47 &0.21 &0.09 &0.06 &0.12 &0.17 &0.31  \\
\hline
\texttt{CP1}$^{u}(\Theta_d^u)$ &0.85 &0.47 &0.19 &0.07 &0.04 &0.06 &0.10 &0.16 \\
\texttt{CP2}$^{u}(\Theta_d^u)$  &0.85 &0.47 &0.18 &0.07 &0.04 &0.07 &0.11 &0.20\\
\hline
\texttt{CP1}$^{u}(\Theta_r^u)$ &0.87 &0.48 &0.19 &0.07 &0.06 &0.08 &0.13 &0.24 \\
\texttt{CP2}$^{u}(\Theta_r^u)$  &0.87 &0.47 &0.18 &0.07 &0.06 &0.08 &0.15 &0.27\\
\hline
\end{tabular}
\end{center}
\end{table}

 \begin{table}[h!]
\caption{Estimated probability of detecting a jump with $\tau=0.2$}
\label{cp0.2}
\begin{center}
\begin{tabular}{ccccccccc}
\hline
$\delta=$ & -0.8  & -0.6 & -0.4 & -0.2 & 0.2 & 0.4 & 0.6 & 0.8 \\
\hline
\hline
\texttt{La} &0.95 &0.64 &0.30 &0.10 &0.08  &0.16 &0.27 &0.42\\
\texttt{Z} & 0.95  &0.67 &0.32 &0.11 &0.09 &0.22 &0.39 &0.58  \\
\hline
\texttt{CP1}$^{u}(\Theta_d^u)$ &0.96 &0.64 &0.31 &0.09 &0.05 &0.11 &0.18 &0.38 \\
\texttt{CP2}$^{u}(\Theta_d^u)$ &0.96 &0.63 &0.29 &0.09 &0.06 &0.12 &0.22 &0.41\\
\hline
\texttt{CP1}$^{u}(\Theta_r^u)$ &0.98 &0.72 &0.35 &0.10 &0.08 &0.18 &0.31 &0.53 \\
\texttt{CP2}$^{u}(\Theta_r^u)$ &0.98 &0.70 &0.33 &0.10 &0.08 &0.19 &0.33 &0.55\\
\hline
\end{tabular}
\end{center}
\end{table}

 \begin{table}[h!]
\caption{Estimated probability of detecting a jump with $\tau=0.5$}
\label{cp0.5}
\begin{center}
\begin{tabular}{ccccccccc}
\hline
$\delta=$ & -0.8  & -0.6 & -0.4 & -0.2 & 0.2 & 0.4 & 0.6 & 0.8 \\
\hline
\hline
\texttt{La} &1 &0.90 &0.48 &0.15 &0.14 &0.37 &0.62 &0.83 \\
\texttt{Z} &0.92 &0.63 &0.31 &0.10 &0.11 &0.29 &0.48 &0.69\\
\hline
\texttt{CP1}$^{u}(\Theta_d^u)$&1 &0.86 &0.36 &0.10 &0.09 &0.29 &0.55 &0.77\\
\texttt{CP2}$^{u}(\Theta_d^u)$ &1 &0.86 &0.36 &0.10 &0.08 &0.30 &0.55 &0.78\\
\hline
\texttt{CP1}$^{u}(\Theta_r^u)$ &1 &0.89 &0.43 &0.13 &0.10 &0.34 &0.62 &0.82\\
\texttt{CP2}$^{u}(\Theta_r^u)$ &1 &0.90 &0.43 &0.13 &0.10 &0.33 &0.62 &0.82\\
\hline
\end{tabular}
\end{center}
\end{table}

 \begin{table}[h!]
\caption{Estimated probability of detecting a jump with $\tau=0.8$}
\label{cp0.8}
\begin{center}
\begin{tabular}{ccccccccc}
\hline
$\delta=$ & -0.8  & -0.6 & -0.4 & -0.2 & 0.2 & 0.4 & 0.6 & 0.8 \\
\hline
\hline
\texttt{La} &0.73 &0.42 &0.21 &0.08 &0.09 &0.20 &0.33 &0.54\\
\texttt{Z} &0.31 &0.17 &0.10 &0.06 &0.06 &0.13 &0.17 &0.31 \\
\hline
\texttt{CP1}$^{u}(\Theta_d^u)$& 0.78 &0.36 &0.14 &0.07 &0.08 &0.21 &0.34 &0.57 \\
\texttt{CP2}$^{u}(\Theta_d^u)$ &0.82 &0.41 &0.15 &0.08 &0.08 &0.20 &0.33 &0.56 \\
\hline
\texttt{CP1}$^{u}(\Theta_r^u)$ & 0.93 &0.53 &0.20 &0.08 &0.09 &0.27 &0.45 &0.68 \\
\texttt{CP2}$^{u}(\Theta_r^u)$ &0.95 &0.57 &0.23 &0.08 &0.09 &0.24 &0.43 &0.65 \\
\hline
\end{tabular}
\end{center}
\end{table}

 \begin{table}[h!]
 \caption{Estimated probability of detecting a jump with $\tau=0.9$}
\label{cp0.9}
\begin{center}
\begin{tabular}{ccccccccc}
\hline
$\delta=$ & -0.8  & -0.6 & -0.4 & -0.2 & 0.2 & 0.4 & 0.6 & 0.8 \\
\hline
\hline
\texttt{La} &0.25 &0.15 &0.10 &0.06 &0.07 &0.11 &0.16 &0.22\\
\texttt{Z} &0.10 &0.09 &0.06 &0.05 &0.06 &0.08 &0.09 &0.11 \\
\hline
\texttt{CP1}$^{u}(\Theta_d^u)$  & 0.31 &0.14 &0.09 &0.05 &0.07 &0.14 &0.24 &0.40 \\
\texttt{CP2}$^{u}(\Theta_d^u)$  &0.41 &0.19 &0.10 &0.05 &0.07 &0.14 &0.23 &0.37 \\
\hline
\texttt{CP1}$^{u}(\Theta_r^u)$  & 0.47 &0.20 &0.10 &0.05 &0.07 &0.15 &0.26 &0.42 \\
\texttt{CP2}$^{u}(\Theta_r^u)$  &0.60 &0.26 &0.11 &0.05 &0.06 &0.14 &0.25 &0.40 \\
\hline
\end{tabular}
\end{center}
\end{table}

 \begin{table}[h!]
\caption{Estimated probability of detecting a jump with $\tau=0.95$}
\label{cp0.95}
\begin{center}
\begin{tabular}{ccccccccc}
\hline
$\delta=$ & -0.8  & -0.6 & -0.4 & -0.2 & 0.2 & 0.4 & 0.6 & 0.8 \\
\hline
\hline
\texttt{La} &0.09 &0.06 &0.06 &0.05 &0.05 &0.07 &0.10 &0.10\\
\texttt{Z} &0.07 &0.05 &0.05 &0.04 &0.05 &0.05 &0.07 &0.07 \\
\hline
\texttt{CP1}$^{u}(\Theta_d^u)$ & 0.07 &0.05 &0.04 &0.05 &0.06 &0.09 &0.15 &0.25 \\
\texttt{CP2}$^{u}(\Theta_d^u)$ &0.12 &0.08 &0.05 &0.05 &0.06 &0.08 &0.14 &0.23 \\
\hline
\texttt{CP1}$^{u}(\Theta_r^u)$& 0.08 &0.05 &0.06 &0.06 &0.05 &0.05 &0.11 &0.14 \\
\texttt{CP2}$^{u}(\Theta_r^u)$ &0.10 &0.07 &0.06 &0.06 &0.05 &0.05 &0.10 &0.12 \\
\hline
\end{tabular}
\end{center}
\end{table}

Concerning the bump detection problem, we have considered the same alternative intensities $\lambda_{\tau,\ell,\delta}$ as in the known baseline intensity case. For each alternative, we have simulated 1 000 independent inhomogeneous Poisson processes with intensity $\lambda_{\tau,\ell,\delta}$ w.r.t. $\Lambda$ on $[0,1]$. The powers have been estimated for each test by the number of rejections divided by 1 000, giving the results presented in Tables  \ref{tc0.2-0.1}-\ref{tc0.5-0.4}.

 \begin{table}[h]
\caption{Estimated probability of detecting a bump with $\tau=0.2$ and $\ell=0.1$}
\label{tc0.2-0.1}
\begin{center}
\begin{tabular}{ccccccccc}
\hline
$\delta=$ & -0.8  & -0.6 & -0.4 & -0.2 & 0.2 & 0.4 & 0.6 & 0.8 \\
\hline
\hline
\texttt{La} &0.10  &0.08  &0.08 &0.06 &0.05 &0.07 &0.07 &0.11 \\
\texttt{Z} &0.08 &0.07 &0.06 &0.06 &0.05 &0.05 &0.04 &0.04 \\
\hline
\texttt{TC1$^u$} &0.17 &0.12 &0.08 &0.07 &0.07 &0.10 &0.16 &0.24 \\
\texttt{TC2$^u$} &0.20 &0.12 &0.09 &0.05 &0.05 &0.08 &0.14 &0.20 \\
\hline
\end{tabular}
\end{center}
\end{table}

\begin{table}[h]
\caption{Estimated probability of detecting a bump with $\tau=0.2$ and $\ell=0.2$}
\label{tc0.2-0.2}
\begin{center}
\begin{tabular}{ccccccccc}
\hline
$\delta=$ & -0.8  & -0.6 & -0.4 & -0.2 & 0.2 & 0.4 & 0.6 & 0.8 \\
\hline
\hline
\texttt{La} &0.25  &0.16  &0.09 &0.05 &0.05 &0.08 &0.12 &0.18 \\
\texttt{Z} &0.12 &0.08 &0.08 &0.05 &0.05 &0.05 &0.05 &0.05 \\
\hline
\texttt{TC1$^u$} &0.68 &0.29 &0.13 &0.09 &0.07 &0.14 &0.29 &0.47 \\
\texttt{TC2$^u$} &0.74 &0.34 &0.15 &0.07 &0.07 &0.15 &0.29 &0.46 \\
\hline
\end{tabular}
\end{center}
\end{table}

 \begin{table}[h]
\caption{Estimated probability of detecting a bump with $\tau=0.2$ and $\ell=0.4$}
\label{tc0.2-0.4}
\begin{center}
\begin{tabular}{ccccccccc}
\hline
$\delta=$ & -0.8  & -0.6 & -0.4 & -0.2 & 0.2 & 0.4 & 0.6 & 0.8 \\
\hline
\hline
\texttt{La} &0.27 &0.19 &0.12 &0.07 &0.04 &0.08 &0.09 &0.15 \\
\texttt{Z} &0.09 &0.08 &0.06 &0.05 &0.04 &0.04 &0.03 &0.03 \\
\hline
\texttt{TC1$^u$} &0.99 &0.69 &0.28 &0.11 &0.10 &0.22 &0.46 &0.67 \\
\texttt{TC2$^u$} &0.99 &0.71 &0.28 &0.10 &0.09 &0.20 &0.45 &0.67 \\
\hline
\end{tabular}
\end{center}
\end{table}

\begin{table}[h]
\caption{Estimated probability of detecting a bump with $\tau=0.2$ and $\ell=0.7$}
 \label{tc0.2-0.7}
 \begin{center}
\begin{tabular}{ccccccccc}
\hline
$\delta=$ & -0.8  & -0.6 & -0.4 & -0.2 & 0.2 & 0.4 & 0.6 & 0.8 \\
\hline
\hline
\texttt{La} &0.34 &0.19 &0.13 &0.08 &0.05 &0.06 &0.06 &0.08 \\
\texttt{Z} &0.66 &0.39 &0.20 &0.08 &0.08 &0.11 &0.19 &0.29 \\
\hline
\texttt{TC1$^u$} &0.98 &0.75 &0.32 &0.11 &0.09 &0.15 &0.26 &0.48 \\
\texttt{TC2$^u$} &0.98 &0.71 &0.30 &0.10 &0.08 &0.17 &0.31 &0.51  \\
\hline
\end{tabular}
\end{center}
\end{table}

\begin{table}[h]
\caption{Estimated probability of detecting a bump with $\tau=0.5$ and $\ell=0.4$} 
\label{tc0.5-0.4}
\begin{center}
\begin{tabular}{ccccccccc}
\hline
$\delta=$ & -0.8  & -0.6 & -0.4 & -0.2 & 0.2 & 0.4 & 0.6 & 0.8 \\
\hline
\hline
\texttt{La} &0.78 &0.47 &0.22 &0.09 &0.08 &0.17 &0.31 &0.51 \\
\texttt{Z} &0.63 &0.36 &0.17 &0.08 &0.07 &0.17 &0.31 &0.49 \\
\hline
\texttt{TC1$^u$} &0.98 &0.71 &0.27 &0.11 &0.09 &0.22 &0.44 &0.68 \\
\texttt{TC2$^u$} &0.99 &0.74 &0.29 &0.10 &0.08 &0.23 &0.45 &0.69 \\
\hline
\end{tabular}
\end{center}
\end{table}

\subparagraph*{Comments}
\begin{enumerate}
\item Considering the single change-point or jump detection problem, it first arises that among the (\texttt{La}) and (\texttt{Z}) procedures, neither is preferable to use: the Laplace  and $Z$ tests can have very low powers depending on when the change occurs. One can notice that their performances are significantly smaller than the ones of our procedures (\texttt{CP1}$^{u}$) and (\texttt{CP2}$^{u}$) when the jump occurs near to one, while the estimated powers remain comparable in the other cases.
Moreover, it is worthwhile to note again that the jump detection problem in a Poisson process is not a symmetric problem. Indeed, it is easier to detect large negative jumps occurring close to zero than close to one, and easier to detect large positive jumps occurring close to one than close to zero. 
\item Considering the transitory change or bump detection problem, our procedures have estimated powers significantly larger in all cases than the Laplace and $Z$ tests. Moreover, we have to mention that complementary experiments (omitted in this study) showed that the estimated powers of (\texttt{TC1$^u$}) and (\texttt{TC2$^u$}) are equivalent for a same value of change length whatever the change location. This assessment is not true for the procedures (\texttt{La}) and (\texttt{Z}) except for $\ell =0.1$, for which one observes better powers for $\ell =0.4$ and $\tau=0.5$ than for $\ell =0.4$ and $\tau=0.2$.
\item Among our testing procedures, the procedures (\texttt{CP2}$^{u}$) and (\texttt{TC2$^u$}), based on the quadratic statistics, are slightly more powerful than (\texttt{CP1}$^{u}$) and (\texttt{TC1$^u$}) based on the linear statistics for some negative jumps, and as expected, the aggregated tests based on dyadic sets are significantly more efficient that the ones based on regular sets when the change occurs near $0$ or $1$.
\item The comparison of the simulated powers of the different testing procedures confirm again the intuition that detecting a bump is harder than detecting a jump. The simulation study also highlights that it is substantially easier to detect a jump or a bump with negative change height than with positive change height. This phenomenon can still be explained by the fact that the significant parameter to evaluate the detectability of a (transitory or not) change is not the change height itself but the ratio between the minimum and the maximum values of the intensity.

\end{enumerate}

\section{Proofs of the main results}\label{Sec:Proofs}

\subsection*{Notation} As explained in the introduction, the main tools to prove our nonasymptotic minimax separation rates upper bounds are exponential inequalities. Many of these exponential inequalities  involve the function $g$ defined by:
\begin{equation}\label{defg}
g(x)=(1+x)\log(1+x)-x\quad  \forall x > 0\enspace,
\end{equation}
and its inverse function $g^{-1}$, which can be upper bounded as follows:
\begin{equation}\label{UBginv}
g^{-1}(x)\leq 2x/3 + \sqrt{2x} \quad \forall x>0\enspace.
\end{equation}

\subsection{Proof of Proposition \ref{bNP1}}
The first statement of Proposition \ref{bNP1} directly results from the Neyman-Pearson fundamental lemma and Girsanov's lemma recalled in Lemma \ref{lemmegirsanov}.

Assume that $\delta^* >0$ and notice that the assumption (\ref{cond_alt1}) leads to
 \begin{equation}\label{bNPpreuve3}
   \delta^{*}\ell^* L\geq \sqrt{\frac{(\lambda_{0}+ \delta^{*}) \ell^{*} L}{\beta}} +\sqrt{\frac{\lambda_{0} \ell^{*} L}{\alpha}}\enspace.
 \end{equation}
From \eqref{bNPpreuve3}, the quantile bound \eqref{controlquantilePb1} and the Bienayme-Chebyshev inequality, we obtain
 \begin{align*}
  P_{\lambda}(\phi_{1,\alpha}^{+}(N)=0) &\leq P_{\lambda}\left(N(\tau^{*},\tau^{*}+\ell^{*}]  \leq \sqrt{\frac{\lambda_{0}\ell^{*} L}{\alpha}} +\lambda_{0} \ell^{*} L\right)\\
  &\leq P_{\lambda} \left(  N(\tau^{*},\tau^{*} +\ell^{*}] \leq  (\lambda_{0}^*+ \delta^* ) \ell^{*}L  - \sqrt{\frac{(\lambda_{0}+ \delta^{*}) \ell^{*} L}{\beta}} \right)\\
  &\leq \beta \enspace.
 \end{align*}
 Assume now that $-\lambda_{0}^*<\delta^*<0$ and notice that the assumption \eqref{cond_alt1} leads to
 \begin{equation}\label{bControltemp2}
  \delta^{*}\ell^* L \leq-\sqrt{\frac{(\lambda_{0}+ \delta^{*})\ell^{*} L}{\beta}} -\sqrt{\frac{\lambda_{0}\ell^{*} L}{\alpha}}\enspace.
 \end{equation}
As above, using \eqref{bControltemp2},  \eqref{controlquantilePb1} and the Bienayme-Chebyshev inequality, we obtain
 \begin{align*}
  P_{\lambda}(\phi_{1,\alpha}^{-}(N)=0)  &\leq P_{\lambda}\left(N(\tau^{*},\tau^{*} +\ell^{*}] \geq -\sqrt{\frac{\lambda_{0}\ell^{*} L}{\alpha}} +\lambda_{0} \ell^{*} L \right)\\
  &\leq P_{\lambda} \left(  N(\tau^*,\tau^* +\ell^{*}] \geq  (\lambda_{0}+ \delta^{*} )\ell^{*} L  + \sqrt{\frac{(\lambda_{0}+ \delta^{*})\ell^{*}L}{\beta}} \right)\\
  &\leq \beta\enspace.
 \end{align*}

\subsection{Proof of Proposition \ref{LBalt2}}

Let $C_{\alpha, \beta}=1+4(1- \alpha -\beta)^2$. Let us introduce for $r>0$ the Poisson intensity $\lambda_r$ defined by 
\[\lambda_r(t) = \lambda_{0} + \frac{r}{\sqrt{\ell^*}}\un{(\tau^{*}, \tau^{*} +\ell^*]}(t) \textrm{ for all $t$ in [0,1]}\enspace.\]
Notice that $\lambda_r$ belongs to $\pa{\calS_{\bbul,\tau^{*},\ell^*}[\lambda_0]}_r= \lbrace \lambda \in \calS_{\bbul,\tau^{*},\ell^*}[\lambda_0],~d_2(\lambda, \calS_{0}[\lambda_0]) \geq r   \rbrace$, as defined by Lemma~\ref{mSR}. We get from 
 Lemma \ref{lemmegirsanov}  and Lemma \ref{momentPoisson} that
 \[E_{\lambda_0} \left[\left( \frac{d P_{\lambda_r}}{dP_{\lambda_0}} \right)^{2}(N)\right] = \exp \pa{\frac{r^2 L}{\lambda_{0}} }\enspace. \]
Choosing $r=(\lambda_{0} \log C_{\alpha, \beta}/L)^{1/2}$ then leads to $E_{\lambda_0} \left[\left(d P_{\lambda_r}/{dP_{\lambda_0}} \right)^{2}(N)\right]= C_{\alpha,\beta}$, and thanks to Lemma \ref{lemmebayesien}, we conclude that $\rho_\alpha\pa{\pa{\calS_{\bbul,\tau^{*},\ell^*}[\lambda_0]}_r}\geq \beta$ and $\mSRab\pa{\calS_{\bbul,\tau^{*},\ell^*}[\lambda_0]} \geq r.$

 \subsection{Proof of Proposition \ref{UBalt2}} 

The first statement of the proposition is straightforward.

For the test $\phi_{2,\alpha}^{(1)}$, let us consider first $\lambda=\lambda_{0} + \delta \mathds{1}_{(\tau^*,\tau^*+\ell^*]}$ in $ \calS_{\bbul, \tau^*, \ell^*}[\lambda_0]$ with $\delta>0$.
 From the quantile bound \eqref{controlquantilePb1}, one deduces that
\begin{align*}
 P_{\lambda}(\phi_{2,\alpha}^{(1)}=0)
 &\leq P_{\lambda} \pa{N(\tau^*,\tau^* + \ell^*] \leq  \sqrt{\frac{ \lambda_0\ell^* L}{\alpha_1}} +\lambda_0\ell^* L}\enspace,\\
 &=P_{\lambda}\pa{N(\tau^*,\tau^* + \ell^*]  - \pa{\lambda_0+ \delta}\ell^* L \leq  - \delta \ell^* L + \sqrt{\frac{ \lambda_0\ell^* L}{\alpha_1}}}\enspace.
\end{align*}
It remains to find a condition on $d_2\pa{\lambda,\calS_{0}[\lambda_0]}$ which will guarantee that
\begin{equation}\label{condition1}
- \delta \ell^* L + \sqrt{\frac{ \lambda_0\ell^* L}{\alpha_1}} \leq -\sqrt{\frac{(\lambda_0+ \delta)\ell^* L}{\beta}}\enspace,
\end{equation}
so that $P_{\lambda}(\phi_{2,\alpha}^{(1)}=0)\leq \beta$ thanks to the Bienayme-Chebyshev inequality.

Let us assume for instance that
\[d_2\pa{\lambda,\calS_{0}[\lambda_0]}\geq 2\sqrt{\frac{\lambda_0}{L}} \pa{\frac{1}{\sqrt{\beta}}+\frac{1}{\sqrt{\alpha_1}}}+\frac{1}{\beta \sqrt{\ell^*} L}\enspace.\]
Since $d_2\pa{\lambda,\calS_{0}[\lambda_0]}=\delta\sqrt{\ell^*}$,  this implies
\[\delta\sqrt{\ell^*}\geq 2\sqrt{\frac{\lambda_0}{L}} \pa{\frac{1}{\sqrt{\beta}}+\frac{1}{\sqrt{\alpha_1}}}+\frac{1}{\beta \sqrt{\ell^*} L}\enspace,\]
whereby
\[\delta\sqrt{\ell^*}-\pa{\frac{\delta\sqrt{\ell^*}}{2}+\frac{1}{2\beta \sqrt{\ell^*} L}}\geq \sqrt{\frac{\lambda_0}{L}} \pa{\frac{1}{\sqrt{\beta}}+\frac{1}{\sqrt{\alpha_1}}}\enspace.\]
Using the basic inequality $\sqrt{ab}\leq (a+b)/2$ then leads to
\[\delta\sqrt{\ell^*}-\sqrt{\frac{\delta}{\beta L}}\geq \sqrt{\frac{\lambda_0}{L}} \pa{\frac{1}{\sqrt{\beta}}+\frac{1}{\sqrt{\alpha_1}}}\enspace,\]
and \eqref{condition1} conveniently follows. 

Let us consider now $\lambda=\lambda_{0} + \delta \mathds{1}_{(\tau^*,\tau^*+\ell^*]}$ in $\calS_{\bbul, \tau^*, \ell^*}[\lambda_0]$ with $\delta$ in $(-\lambda_0,0)$.
 From the quantile bound \eqref{controlquantilePb1} again, one deduces that
\begin{align*}
 P_{\lambda}(\phi_{2,\alpha}^{(1)}=0)
 &\leq P_{\lambda} \pa{N(\tau^*, \tau^* +\ell^*] \geq  -\sqrt{\frac{\lambda_0\ell^* L}{\alpha_2}} +\lambda_0\ell^* L}\\
  &=P_{\lambda}\pa{N(\tau^*,\tau^* +\ell^*]  - \pa{\lambda_0+ \delta}\ell^* L\geq  - \delta \ell^* L - \sqrt{\frac{\lambda_0 \ell^* L}{\alpha_2}}} \enspace.
 \end{align*}
As above, it remains to find a condition on $d_2\pa{\lambda,\calS_{0}[\lambda_0]}$ which will guarantee that
\begin{equation}\label{condition2}
- \delta \ell^* L - \sqrt{\frac{\lambda_0\ell^* L}{\alpha_2}} \geq \sqrt{\frac{(\lambda_0+ \delta)\ell^* L}{\beta}}\enspace,
\end{equation}
so that $P_{\lambda}(\phi_{2,\alpha}^{(1)}=0)\leq \beta$.
Since $d_2\pa{\lambda,\calS_{0}[\lambda_0]}=-\delta\sqrt{\ell^*}$ and $\delta<0$, the following condition suffices
\[d_2\pa{\lambda,\calS_{0}[\lambda_0]}\geq \sqrt{\frac{\lambda_0}{L}} \pa{\frac{1}{\sqrt{\beta}}+\frac{1}{\sqrt{\alpha_2}}}\enspace.\]

Taking $C(\alpha,\beta,\lambda_0,\ell^*)=\textrm{max}\pa{2\sqrt{\lambda_0} \pa{\frac{1}{\sqrt{\beta}}+\frac{1}{\sqrt{\alpha_1}}}+\frac{1}{\beta \sqrt{\ell^*}}~,~\sqrt{\lambda_0} \pa{\frac{1}{\sqrt{\beta}}+\frac{1}{\sqrt{\alpha_2}}}}$ finally allows to conclude for the test $\phi_{2,\alpha}^{(1)}$.

\smallskip

As for the test $\phi_{2,\alpha}^{(2)}$, let us consider $\lambda=\lambda_{0} + \delta \mathds{1}_{(\tau^*,\tau^*+\ell^*]}$ in $ \calS_{\bbul, \tau^*, \ell^*}[\lambda_0]$.
Since  $T_{\tau^*, \tau^* + \ell^{*}}(N)$ is centered under $\hzero$, $t_{\lambda_0,\tau^*, \tau^*+ \ell^*}(1-\alpha) \leq \sqrt{\textrm{Var}_{\lambda_0}(T_{\tau^*, \tau^* + \ell^{*}}(N))/\alpha}$.
From the variance computation of Lemma \ref{MomentsT} under $\hzero$, we derive the upper bound $t_{\lambda_0,\tau^*, \tau^*+ \ell^*}(1-\alpha) \leq (\lambda_{0}/L)\sqrt{2/\alpha}$.

Moreover, still using Lemma \ref{MomentsT} but under $\hone$ now, one can see that $E_{\lambda}[T_{\tau^*, \tau^* + \ell^{*}}(N)] =   \delta^2 \ell^*$ (recall that $T_{\tau^*, \tau^* + \ell^{*}}(N)$ is an unbiased estimator of $d_2^2(\lambda,\calS_0[\lambda_0]= \delta^2 \ell^*$), and 
\[ \mathrm{Var}_{\lambda}(T_{\tau^*, \tau^* + \ell^{*}}(N)) =  \frac{4 (\lambda_{0}+\delta)\delta^2 \ell^*}{L}  + \frac{2(\lambda_{0} + \delta)^2}{L^2}\enspace.\]
Therefore,
 \begin{align*}
 P_{\lambda}(\phi_{2,\alpha}^{(2)}=0) 
 &=P_{\lambda} \pa{T_{\tau^*, \tau^* + \ell^{*}}(N) \leq t_{\lambda_0,\tau^*, \tau^*+ \ell^*}(1-\alpha)}\enspace,\\
 &\leq P_{\lambda} \pa{T_{\tau^*, \tau^* + \ell^{*}}(N) \leq \frac{\lambda_{0}}{L}\sqrt{\frac{2}{\alpha}}}\enspace,\\
   &\leq P_{\lambda}\pa{T_{\tau^*, \tau^* + \ell^{*}}(N)  - \delta^2 \ell^* \leq  \frac{\lambda_{0}}{L}\sqrt{\frac{2}{\alpha}} - \delta^2 \ell^* } \enspace.
\end{align*}
Assume now that $$d_2\pa{\lambda,\calS_{0}[\lambda_0]}  \geq \frac{C(\alpha,\beta,\lambda_0,\ell^*)}{\sqrt{L}}\enspace,$$ with 
\begin{equation*}
\begin{array}{c}
C(\alpha,\beta,\lambda_0,\ell^*)=\max \left(  \sqrt{3 \lambda_{0} \pa{\sqrt{\frac{2}{\alpha}}   +   \sqrt{\frac{2}{\beta}} }}~,~  \frac{6 \sqrt{ \lambda_{0}}}{\sqrt{\beta}}   +  \frac{3 \sqrt{2}}{\sqrt{\beta \ell^* L}}~,~\frac{36}{\beta \sqrt{\ell^* L}}     \right)\enspace. 
\end{array}
\end{equation*}
 This implies
 \[\delta^2 \ell^* \geq 3 \max \pa{\frac{\lambda_{0}}{L} \pa{\sqrt{\frac{2}{\alpha}} + \sqrt{\frac{2}{\beta}}},~ \vert \delta \vert \sqrt{\ell^*} \pa{\frac{2}{\sqrt{L}}  \sqrt{\frac{ \lambda_{0}}{\beta}} +  \frac{1}{L} \sqrt{\frac{2}{\beta \ell^*}}  },~  \frac{2\vert \delta \vert ^{3/2} {\sqrt{\ell^*}}}{\sqrt{\beta L}}  }\enspace,  \]
 and then
 \[ \delta^2 \ell^* \geq \frac{\lambda_{0}}{L}\sqrt{\frac{2}{\alpha}} +2 \sqrt{\frac{ \lambda_{0}}{\beta L}}\vert \delta \vert \sqrt{\ell^*}  + \sqrt{\frac{2}{\beta}}\pa{\frac{\lambda_{0}}{L} + \frac{1}{L \sqrt{\ell^*}} \vert \delta \vert \sqrt{\ell^*}}+\frac{2\vert \delta \vert ^{3/2} {\sqrt{\ell^*}}}{\sqrt{\beta L}} \enspace,\]
 hence, using $\sqrt{\lambda_{0} + \delta} \leq \sqrt{\lambda_{0}} + \sqrt{\vert \delta \vert}$,
  \begin{equation} \label{bcontrol2Pb1N2}
  \delta^2 \ell^* \geq \frac{\lambda_{0}}{L}\sqrt{\frac{2}{\alpha}} + 2\sqrt{\frac{\lambda_{0} + \delta}{L \beta}}\vert \delta \vert \sqrt{\ell^*} + \sqrt{\frac{2}{\beta}}\frac{\lambda_{0} +\delta}{L}\enspace.
 \end{equation}
Therefore,
 \begin{align*}
 P_{\lambda}(\phi_{2,\alpha}^{(2)}=0) 
  &\leq P_{\lambda}\pa{T_{\tau^*, \tau^* + \ell^{*}}(N)  - \delta^2 \ell^* \leq  \frac{\lambda_{0}}{L}\sqrt{\frac{2}{\alpha}} - \delta^2 \ell^* } \\
 &\leq P_{\lambda} \pa{T_{\tau^*, \tau^* + \ell^{*}}(N) - \delta^2 \ell^* \leq  -\frac{1}{\sqrt{\beta}} \sqrt{\frac{4 (\lambda_{0} + \delta) \delta^2 \ell^*}{L} + \frac{2(\lambda_{0}+ \delta)^2}{L^2}}     }\\
 &\leq P_{\lambda} \pa{T_{\tau^*, \tau^* + \ell^{*}}(N) - E_\lambda\cro{T_{\tau^*, \tau^* + \ell^{*}}(N)} \leq  -\sqrt{\frac{\textrm{Var}_\lambda\pa{T_{\tau^*, \tau^* + \ell^{*}}(N)}}{\beta}}}\\
   &\leq \beta\enspace.
 \end{align*}
This concludes the proof for the test $\phi_{2,\alpha}^{(2)}$.

\subsection{Proof of Proposition \ref{UBalt3}} Let us first give a short proof for the tests $\phi_{3,\alpha}^{(1)+}$ and $\phi_{3,\alpha}^{(1)-}$.

Start by remarking that the first kind error rates control of both tests is straightforward.

Since $g^{-1}(x) \leq 2x/3+ \sqrt{2x}$ for all $x> 0$ (see \eqref{UBginv}), Proposition \ref{bquantile_maxminNbis} leads to
\begin{equation} \label{bDeltaLengthProof1}
 p_{\lambda_0,\ell^*}^+(1- \alpha) \leq \lambda_{0} \ell^{*}L+ 2 \sqrt{  2 \lambda_{0}\log \pa{2/\alpha}L} +4\log \pa{2/\alpha}/3\enspace,
 \end{equation}
and
\begin{equation} \label{bDeltaLengthProof2}
 p_{\lambda_0,\ell^*}^-( \alpha) \geq \lambda_{0}\ell^{*}L  - 2 \sqrt{  2 \lambda_{0} \log \pa{2/\alpha}L} -4\log \pa{2/\alpha}/3\enspace.
  \end{equation}
Let us consider $\lambda=\lambda_0 +\delta^* \un{(\tau,\tau+\ell^*]}$ in $\calS_{\delta^*,\bbul\bbul,\ell^*}$ and assume that 
\begin{equation}\label{cond_alt3+-} 
d_2\pa{\lambda,\calS_{0}[\lambda_0]} \geq \frac{1}{\sqrt{L}} \pa{ 2 \sqrt{ \frac{ 2 \lambda_0\log \pa{2/\alpha}}{\ell^{*}}} + \sqrt{\frac{\lambda_0 + \delta^{*}}{\beta}}} + \frac{ 4 \log \pa{2/\alpha}}{3L\sqrt{\ell^{*}}}\enspace.
\end{equation}

If  $\delta^*>0$, the condition \eqref{cond_alt3+-} yields
\begin{equation} \label{bdistance1DeltaLengthN1}
\delta^*\ell^*L \geq 2 \sqrt{  2\lambda_{0} \log \pa{2/ \alpha}L} +\frac{4}{3} \log \pa{2/\alpha}+ \sqrt{\frac{(\lambda_{0} + \delta^{*})\ell^{*}L}{\beta}}\enspace,
\end{equation} 
which entails
\begin{align*}
P_\lambda \Bigg(&\max_{t \in [0, 1-\ell^*]} N(t,t+\ell^*] \leq p_{\lambda_0,\ell^*}^+(1- \alpha) \Bigg)\\
 &\leq P_\lambda \pa{\max_{t \in [0, 1-\ell^*]} N(t,t+\ell^*]  \leq (\lambda_{0}+\delta^*) \ell^*L - \sqrt{\frac{(\lambda_{0} + \delta^{*})\ell^{*}L}{\beta}} }\\
&\leq  P_{\lambda} \left(  N(\tau,\tau +\ell^*] -(\lambda_{0} + \delta^{*}{})L\ell^* \leq  - \sqrt{\frac{(\lambda_{0}+ \delta^{*}{}) \ell^* L}{\beta}} \right)\\
  &\leq \beta~~\text{with the Bienayme-Chebyshev inequality}\enspace.
\end{align*}
This concludes the proof for $\phi_{3,\alpha}^{(1)+}$.

\smallskip

If  $\delta^*$ belongs to $(-\lambda_{0}, 0)$, the condition \eqref{cond_alt3+-} yields
\begin{equation} \label{bdistance2DeltaLengthN1-2}
-\delta^*\ell^*L \geq 2 \sqrt{  2\lambda_{0} \log \pa{2/ \alpha}L} +\frac{4}{3} \log \pa{2/\alpha}+ \sqrt{\frac{(\lambda_{0} + \delta^{*})\ell^{*}L}{\beta}}\enspace.
\end{equation} 
We get then as above, with \eqref{bDeltaLengthProof2}, \eqref{bdistance2DeltaLengthN1-2} and the Bienayme-Chebyshev inequality,
\begin{align*}
P_\lambda \Bigg(&\min_{t \in [0, 1-\ell^*]} N(t,t+\ell^*] \geq  p_{\lambda_0,\ell^*}^-(\alpha) \Bigg)\\
 &\leq P_\lambda \pa{ \min_{t \in [0, 1-\ell^*]} N(t,t+\ell^*] \geq \lambda_{0} \ell^*L- 2 \sqrt{  2 \lambda_{0} \log \pa{2/\alpha}L} - \frac{4}{3} \log \pa{2/\alpha}}\\
  & \leq P_\lambda \pa{\min_{t \in [0, 1-\ell^*]} N(t,t+\ell^*]  \geq (\lambda_{0}+\delta^*) \ell^*L + \sqrt{\frac{(\lambda_{0} + \delta^{*})\ell^{*}L}{\beta}} }\\
  &\leq P_{\lambda} \left(  N(\tau,\tau +\ell^{*}] -(\lambda_{0} + \delta^{*})\ell^*L \geq   \sqrt{\frac{(\lambda_{0}+ \delta^{*}) \ell^{*}L}{\beta}} \right)\\
 &\leq \beta\enspace.
\end{align*}
This concludes the proof for $\phi_{3,\alpha}^{(1)-}$.

\smallskip

Now, let us turn to the test $\phi_{3/4,\alpha}^{(2)}$. As above, start by remarking that the first kind error rate control of this test straightforwardly follows from a basic union bound:
 \begin{align*}
P_{\lambda_0}\pa{\phi_{3/4,\alpha}^{(2)}(N)=1} &\leq \sum_{k=0}^{\lceil (1-\ell^*) M \rceil -1} P_{\lambda_0} \left(T_{\frac k M, \frac k M +\ell^*}(N) > t_{\frac k M, \frac k M +\ell^*}\left( 1-u_\alpha  \right)  \right) \\
 &\leq \sum_{k=0}^{\lceil (1-\ell^*) M \rceil -1} \frac{\alpha}{\lceil (1-\ell^*) M \rceil}\\
 &\leq \alpha\enspace.
 \end{align*}
 Let $\lambda$ in $\mathcal{S}_{\delta^{*},\bbul\bbul,\ell^{*}}$ such that $\lambda= \lambda_{0}+  \delta^{*} \un{(\tau, \tau + \ell^*]}$ with $\tau$ in $[0,1-\ell^*]$, and  assume that the following holds:
\begin{multline}\label{cond_alt3_N2} 
d_2\pa{\lambda,\calS_{0}[\lambda_0]}\geq
\frac{2}{\sqrt{L}} \max \Bigg(8 \sqrt{\frac{\lambda_{0} + \vert \delta^{*}\vert}{\beta}}~,~\\
 \Bigg(\frac{4\sqrt{2} (\lambda_{0}+ \vert \delta^{*} \vert)}{\sqrt{\beta}}+8 \lambda_{0}\pa{\frac{2}{3}\frac{\log \pa{3/u_\alpha}}{\sqrt{\lambda_0\ell^*L}}  + \sqrt{2  \log \pa{3/u_\alpha }}}^2      \Bigg)^{\frac{1}{2}} \Bigg)\enspace.
\end{multline}
This entails
  \begin{align}\label{cond_alt3_N2_bis}
   d_2^2(\lambda, \mathcal{S}_{0}[\lambda_0]) &\geq \frac{8 \lambda_{0}}{L}\pa{\frac{2}{3}\frac{\log \pa{3/u_\alpha}}{\sqrt{\lambda_0\ell^*L}}  + \sqrt{2  \log \pa{3/u_\alpha }}}^2  +\frac{4\sqrt{2} (\lambda_{0} + \vert \delta^{*} \vert) }{\sqrt{\beta}L}+ \\ &\quad \frac{8 d_2(\lambda,  \mathcal{S}_{0}[\lambda_0])}{\sqrt{L}} \sqrt{\frac{\lambda_{0} + \vert \delta^{*} \vert}{\beta}}  \nonumber\enspace.
\end{align} 
Noticing that
 \[ P_{\lambda}\pa{\phi_{3/4,\alpha}^{(2)}(N)=0} \leq \min_{k \in \lbrace  0,...,\lceil (1-\ell^*) M \rceil -1  \rbrace} P_{\lambda} \pa{T_{\frac k M, \frac k M +\ell^*}(N) \leq t_{\frac k M, \frac k M +\ell^*}( 1-u_\alpha)}\enspace,\]
we only need to exhibit some $k_{\tau}$ in $\lbrace  0,...,\lceil (1-\ell^*) M \rceil-1  \rbrace$ satisfying
 $$P_{\lambda}\pa{T_{\frac{k_{\tau}}{M}, \frac{k_{\tau}}{M} +\ell^*}(N) \leq t_{\frac{k_{\tau}}{M}, \frac{k_{\tau}}{M} +\ell^*}( 1-u_\alpha)} \leq \beta\enspace.$$ 
We set $k_\tau = \lfloor \tau M \rfloor$. Since $0< \tau<1-\ell^*$,  $k_\tau$ actually belongs to $\lbrace  0,...,\lceil (1-\ell^*) M \rceil-1  \rbrace$, and since $M=\lceil 2/\ell^*  \rceil$, $k_\tau/M \leq \tau < k_\tau/M +\ell^*/2$.
Therefore, using Lemma \ref{MomentsT} equation  \eqref{bpreliesperance}, we get on the one hand
$$ E_\lambda \cro{T_{\frac{k_\tau}{M},\frac{k_\tau}{M} +\ell^*}(N)} = {\delta^{*}}^2 \frac{(\ell^*+ k_\tau/M - \tau)^2 }{\ell^*}\geq \frac{{\delta^{*}}^2 \ell^*}{4}\enspace,$$
that is 
\begin{equation} \label{bproofDeltaLength1}
 E_\lambda \cro{T_{\frac{k_\tau}{M},\frac{k_\tau}{M} +\ell^*}(N)}\geq  \frac{d_2^2(\lambda, \mathcal{S}_{0}[\lambda_0]) }{4}\enspace,
 \end{equation}
 and on the other hand with Lemma \ref{MomentsT} equation \eqref{bprelivariance},
 \begin{align} \label{bproofDeltaLength2}
 \mathrm{Var}_\lambda \cro{T_{\frac{k_\tau}{M}, \frac{k_\tau}{M}+\ell^*}(N)} &= \frac{4 {\delta^{*}}^2 \pa{\lambda_0 \ell^* + \delta^{*}(k_\tau/M + \ell^* -\tau) }}{L} \frac{(k_\tau/M + \ell^* -\tau)^2}{{\ell^*}^2} \nonumber\\
&  + \frac{2}{L^2} \frac{\pa{\lambda_0 \ell^{*} + \delta^{*} (k_\tau/M + \ell^*-\tau) }^2}{{\ell^*}^2}
 \nonumber \\
 & \leq \frac{4 (\lambda_{0} + \vert \delta^{*}\vert )}{L} d_2^2(\lambda, \calS_0[\lambda_0]) + \frac{2(\lambda_{0} + \vert \delta^{*} \vert)^2}{L^2}\enspace.
 \end{align} 
 Moreover, Lemma \ref{QuantilesT} entails
 \[t_{\frac{k_\tau}{M}, \frac{k_\tau}{M} +\ell^*}( 1-u_\alpha)\leq 2\lambda_{0}^2 \ell^* \pa{g^{-1} \pa{\frac{\log \pa{3/u_\alpha}}{\lambda_{0} \ell^* L}}}^2\enspace,  \] with $g^{-1}(x) \leq 2x/3+ \sqrt{2x}$ (see \eqref{UBginv}), which implies, with \eqref{cond_alt3_N2_bis}, \eqref{bproofDeltaLength1} and \eqref{bproofDeltaLength2} that
\[ t_{\frac{k_\tau}{M}, \frac{k_\tau}{M} +\ell^*}( 1-u_\alpha)\leq  E_\lambda \cro{T_{\frac{k_\tau}{M}, \frac{k_\tau}{M} +\ell^*}(N)} -\sqrt{\mathrm{Var}_\lambda \cro{T_{\frac{k_\tau}{M}, \frac{k_\tau}{M} +\ell^*}(N)}/\beta}\enspace.\]
 We simply conclude the proof for $\phi_{3/4,\alpha}^{(2)}$ with 
 \begin{align*}
 &P_\lambda\pa{ T_{\frac{k_\tau}{M}, \frac{k_\tau}{M} +\ell^*}(N)\leq t_{\frac{k_\tau}{M}, \frac{k_\tau}{M} +\ell^*}( 1-u_\alpha)}\\
 &\leq  P_\lambda \pa{T_{\frac{k_\tau}{M}, \frac{k_\tau}{M} +\ell^*}(N) -  E_\lambda \cro{T_{\frac{k_\tau}{M}, \frac{k_\tau}{M} +\ell^*}(N)} \leq   -\sqrt{\mathrm{Var}_\lambda \cro{T_{\frac{k_\tau}{M}, \frac{k_\tau}{M} +\ell^*}(N)}/\beta}}\\
 &\leq \beta\enspace.
 \end{align*}

\subsection{Proof of Proposition \ref{UBalt4}}

The control of the first kind error rates of the two tests $\phi_{4,\alpha}^{(1)}$ and $\phi_{3/4,\alpha}^{(2)}$ is straightforward using simple union bounds. 

\medskip

\emph{$(i)$ Control of the second kind error rate of $\phi_{4,\alpha}^{(1)}$.}

\smallskip

Recall from the proof of Proposition \ref{UBalt3}, equations \eqref{bDeltaLengthProof1} and \eqref{bDeltaLengthProof2}, that 
\begin{equation}\label{malpha/2}
\left\{\begin{array}{rll}
p_{\lambda_0,\ell^*}^+(1- \alpha/2) &\leq &\lambda_{0} \ell^{*}L+ 2 \sqrt{  2 \lambda_{0}\log \pa{4/\alpha}L} +4 \log \pa{4/\alpha}/3\enspace,\\
p_{\lambda_0,\ell^*}^-( \alpha/2) &\geq &\lambda_{0}\ell^{*}L  - 2 \sqrt{  2 \lambda_{0} \log \pa{4/\alpha}L} -4 \log \pa{4/\alpha}/3\enspace.\end{array}\right.
\end{equation}

Let us first set $\lambda$ in $\mathcal{S}_{\bbul,\bbul\bbul,\ell^{*}}$ such that $\lambda= \lambda_{0}+  \delta\un{(\tau, \tau + \ell^*]}$ with $\delta>0$ or $\delta$ in $(-\lambda_0,0)$, $\tau$ in $(0,1-\ell^*)$, and  
\begin{multline} \label{bUBhdistance}
d_2(\lambda, \calS_0[\lambda_0]) \geq \frac{2\sqrt{\lambda_0}}{\sqrt{L}}\pa{\frac{1}{\sqrt{\beta}}+2\sqrt{\frac{2\log(4/\alpha)}{\ell^*}}}+\frac{1}{\sqrt{\ell^*}L}\pa{\frac{1}{\beta}+\frac{8}{3}\log(4/\alpha)}\enspace.
\end{multline}
The condition \eqref{bUBhdistance} entails
\[|\delta|\sqrt{\ell^*} \geq \frac{|\delta|\sqrt{\ell^*}}{2}+\frac{\sqrt{\lambda_0}}{\sqrt{L}}\pa{\frac{1}{\sqrt{\beta}}+2\sqrt{\frac{2\log(4/\alpha)}{\ell^*}}}+\frac{1}{\sqrt{\ell^*}L}\pa{\frac{1}{2\beta}+\frac{4}{3}\log(4/\alpha)}\enspace,\]
and therefore, using the inequalities $\sqrt{ab}\leq (a+b)/2$ and $\sqrt{a+b}\leq \sqrt{a}+\sqrt{b}$ for every $a,b\geq 0$,
\begin{eqnarray}
|\delta|\sqrt{\ell^*} &\geq &\sqrt{\frac{|\delta|}{\beta L}} +\frac{\sqrt{\lambda_0}}{\sqrt{L}}\pa{\frac{1}{\sqrt{\beta}}+2\sqrt{\frac{2\log(4/\alpha)}{\ell^*}}}+\frac{4}{3}\frac{\log(4/\alpha)}{\sqrt{\ell^*}L}\nonumber\\
 &\geq &\sqrt{\frac{|\delta|+\lambda_0}{\beta L}} +2\sqrt{\frac{2\log(4/\alpha)\lambda_0}{\ell^* L}}+\frac{4}{3}\frac{\log(4/\alpha)}{\sqrt{\ell^*}L}\enspace.\label{UBalt4ineq}
 \end{eqnarray}
Then, assuming that $\delta>0$, we conclude with the following inequalities: 
\begin{eqnarray*}
P_{\lambda}\pa{\phi_{4,\alpha}^{(1)}(N)=0} &\leq &P_{\lambda}\pa{\phi_{3,\alpha/2}^{(1)+}(N)=0}\\
&\leq &P_\lambda \pa{\max_{t \in [0, 1-\ell^{*}]} N(t,t+\ell^*] \leq p_{\lambda_0,\ell^*}^+(1- \alpha/2)} \\
&\leq &P_\lambda \pa{ N(\tau, \tau +\ell^*]  \leq  \lambda_{0} \ell^{*}L+ 2 \sqrt{  2 \lambda_{0}\log \pa{4/\alpha}L} +4 \log \pa{4/\alpha}/3} ~~\text{with \eqref{malpha/2}} \\
&\leq  &P_{\lambda} \left(  N(\tau,\tau +\ell^*] -(\lambda_{0}^{*}{} + \delta)\ell^* L \leq    -\sqrt{\frac{(\lambda_{0}+ \delta) \ell^* L}{\beta}} \right)~~\text{with \eqref{UBalt4ineq}}\\
  &\leq &\beta ~~\text{with the Bienayme-Chebyshev inequality}\enspace.
\end{eqnarray*}
Assuming now that $\delta$ is in $(-\lambda_0,0)$, 
\begin{eqnarray*}
P_{\lambda}\pa{\phi_{4,\alpha}^{(1)}(N)=0} &\leq &P_{\lambda}\pa{\phi_{3,\alpha/2}^{(1)-}(N)=0}\\
&\leq &P_\lambda \pa{\min_{t \in [0, 1-\ell^{*}]} N(t,t+\ell^*] \geq p_{\lambda_0,\ell^*}^-(\alpha/2)} \\
&\leq &P_\lambda \pa{ N(\tau, \tau +\ell^*]  \geq  \lambda_{0}\ell^{*}L  - 2 \sqrt{  2 \lambda_{0} \log \pa{4/\alpha}L} - 4\log \pa{4/\alpha}/3} ~~\text{with \eqref{malpha/2}} \\
&\leq  &P_{\lambda} \left(  N(\tau,\tau +\ell^*] -(\lambda_{0}+ \delta)\ell^* L \geq    \sqrt{\frac{(\lambda_{0}+ |\delta|) \ell^* L}{\beta}} \right)~~\text{with \eqref{UBalt4ineq}}\\
  &\leq &\beta\enspace.
\end{eqnarray*}

\medskip

\emph{$(ii)$ Control of the second kind error rate of $\phi_{3/4,\alpha}^{(2)}$.}

\smallskip

Let us now set $\lambda$ in $\mathcal{S}_{\bbul,\bbul\bbul,\ell^{*}}$ such that $\lambda= \lambda_{0}+  \delta\un{(\tau, \tau + \ell^*]}$ with $\delta>0$ or $\delta$ in $(-\lambda_0,0)$, $\tau$ in $(0,1-\ell^*)$, and  
\begin{multline} \label{UBdistance_alt4_2}
d_2(\lambda, \calS_0[\lambda_0]) \geq 2 \max \Bigg( 12  \frac{\sqrt{\lambda_{0}}}{\sqrt{\beta L}} +\frac{6 \sqrt{2}}{\sqrt{\beta \ell^*} L}~,~\frac{288}{\beta \sqrt{\ell^*}L}~,\\
\frac{\sqrt{3 \lambda_{0}(4 \log \pa{3/u_\alpha} +\sqrt{2/\beta})}} {\sqrt{L}}+ \frac{2 \sqrt{2} \log \pa{3/u_\alpha} }{\sqrt{3\ell^*}L} + \frac{2\sqrt{2} (2 \lambda_{0})^{1/4} \log^{3/4} \pa{3 /u_\alpha}}{ {\ell^{*}}^{1/4}L^{3/4} }\Bigg)\enspace.
\end{multline}
 Notice that \eqref{UBdistance_alt4_2} entails that
 \begin{multline*} 
  d_2^2(\lambda, \calS_0[\lambda_0]) \geq 3 \max \Bigg( d_2(\lambda, \calS_0[\lambda_0]) \left(  \frac{8 \sqrt{\lambda_{0}}}{\sqrt{ \beta L}} + \frac{4 \sqrt{2}}{\sqrt{\beta \ell^*}L} \right)~,~d_2^{3/2}(\lambda,  \calS_0[\lambda_0]) \frac{8}{ {\ell^*}^{1/4}\sqrt{\beta L}}~,\\
 \frac{16 \lambda_{0} \log \pa{3/u_\alpha}}{ L} +  \frac{32 \log^2 \pa{3/u_\alpha}}{9 \ell^*L^2 } + \frac{32 \sqrt{2\lambda_{0}}  \log^{3/2} \pa{3/u_\alpha}}{3\sqrt{\ell^*}L^{3/2}}  + \frac{4 \lambda_0\sqrt{2}}{\sqrt{\beta} L}\Bigg)\enspace,
 \end{multline*}
and therefore
\begin{multline*} 
  d_2^2(\lambda, \calS_0[\lambda_0]) \geq d_2(\lambda, \calS_0[\lambda_0]) \left(  \frac{8 \sqrt{\lambda_{0}}}{\sqrt{ \beta L}} + \frac{4 \sqrt{2}}{\sqrt{\beta \ell^*}L} \right)+d_2^{3/2}(\lambda,  \calS_0[\lambda_0]) \frac{8}{ {\ell^*}^{1/4}\sqrt{\beta L}}+\\
   \frac{16 \lambda_{0} \log \pa{3/u_\alpha}}{ L} +  \frac{32 \log^2 \pa{3/u_\alpha}}{9 \ell^* L^2} + \frac{32 \sqrt{2\lambda_{0}}  \log^{3/2} \pa{3/u_\alpha}}{3\sqrt{\ell^*}L^{3/2}}  + \frac{4 \lambda_0\sqrt{2}}{\sqrt{\beta} L}\enspace.
 \end{multline*}
Since $d_2(\lambda, \calS_0[\lambda_0])=|\delta|\sqrt{\ell^*}$ and $ \sqrt{\lambda_{0} + \vert \delta \vert } \leq \sqrt{\lambda_{0}} + \sqrt{\vert \delta \vert}$, this implies that
\begin{multline} \label{UBdistance_alt4_2bis}
  \frac{d_2^2(\lambda, \calS_0[\lambda_0])}{4} \geq \frac{2 \sqrt{\lambda_{0}+|\delta|}}{\sqrt{ \beta L}} d_2(\lambda, \calS_0[\lambda_0])  + \frac{\sqrt{2}(|\delta|+\lambda_0)}{\sqrt{\beta}L}+\\
   \frac{4 \lambda_{0} \log \pa{3/u_\alpha}}{ L} +  \frac{8 \log^2 \pa{3/u_\alpha}}{9 \ell^*L^2 } + \frac{8 \sqrt{2\lambda_{0}}  \log^{3/2} \pa{3/u_\alpha}}{3\sqrt{\ell^*}L^{3/2}} \enspace.
 \end{multline}
Let us now prove that $P_{\lambda}\pa{\phi_{3/4,\alpha}^{(2)}(N) =0} \leq \beta$. From the definition \eqref{testN2alt3-4}, we notice that
\[ P_{\lambda}\pa{\phi_{3/4,\alpha}^{(2)}(N) =0} \leq \min_{k\in \lbrace 0,\ldots, \lceil (1-\ell^*) M \rceil -1 \rbrace} P_{\lambda} \left( T_{\frac{k}{M}, \frac{k}{M}+\ell^*}(N)\leq t_{\frac{k}{M}, \frac{k}{M}+\ell^*} \pa{1 - u_\alpha} \right)\enspace,\]
so that we only need to exhibit some $k_{\tau}$ in $\lbrace  0,...,\lceil (1-\ell^*) M \rceil-1  \rbrace$ such that 
 $$P_{\lambda}\pa{T_{\frac{k_{\tau}}{M}, \frac{k_{\tau}}{M} +\ell^*}(N) \leq t_{\frac{k_{\tau}}{M}, \frac{k_{\tau}}{M} +\ell^*}( 1-u_\alpha)} \leq \beta\enspace.$$ 
As in the  proof of Proposition \ref{UBalt3}, we choose $k_\tau = \lfloor \tau M \rfloor$, which leads (see \eqref{bproofDeltaLength1}  and \eqref {bproofDeltaLength2}) to
\begin{equation} \label{bproofDeltaLength1_alt4}
 E_\lambda \cro{T_{\frac{k_\tau}{M}, \frac{k_\tau}{M}+\ell^*}(N)}\geq  \frac{d_2^2(\lambda, \mathcal{S}_{0}[\lambda_0]) }{4}\enspace,
 \end{equation}
and
\begin{equation}\label{bproofDeltaLength2_alt4}
 \mathrm{Var}_\lambda \cro{T_{\frac{k_\tau}{M}, \frac{k_\tau}{M}+\ell^*}(N)} \leq \frac{4 (\lambda_{0} + \vert \delta \vert )}{L} d_2^2(\lambda, \calS_0[\lambda_0]) + \frac{2(\lambda_{0} + \vert \delta \vert)^2}{L^2}\enspace.
 \end{equation}
From \eqref{UBdistance_alt4_2bis}, \eqref{bproofDeltaLength1_alt4} and \eqref{bproofDeltaLength2_alt4}, we derive that
\begin{multline*} 
 E_\lambda \cro{T_{\frac{k_\tau}{M}, \frac{k_\tau}{M}+\ell^*}(N)} \geq  \sqrt{\mathrm{Var}_\lambda \cro{T_{\frac{k_\tau}{M}, \frac{k_\tau}{M}+\ell^*}(N)}/{\beta}}+\\
     \frac{4 \lambda_{0} \log \pa{3/u_\alpha}}{ L} +  \frac{8 \log^2 \pa{3/u_\alpha}}{9 \ell^*L^2} + \frac{8 \sqrt{2\lambda_{0}}  \log^{3/2} \pa{3/u_\alpha}}{3\sqrt{\ell^*} L^{3/2}}\enspace.
 \end{multline*}
The conclusion then basically follows from Lemma \ref{QuantilesT}, supplemented by the upper bound \eqref{UBginv}, which allows to see that
\[  E_\lambda \cro{T_{\frac{k_\tau}{M}, \frac{k_\tau}{M}+\ell^*}(N)} \geq  \sqrt{\mathrm{Var}_\lambda \cro{T_{\frac{k_\tau}{M}, \frac{k_\tau}{M}+\ell^*}(N)}/{\beta}}+t_{\frac{k_\tau}{M}, \frac{k_\tau}{M} +\ell^*}( 1-u_\alpha)\enspace,\]
and the Bienayme-Chebyshev inequality, which entails
\[P_{\lambda}\pa{T_{\frac{k_{\tau}}{M}, \frac{k_{\tau}}{M} +\ell^*}(N) \leq t_{\frac{k_{\tau}}{M}, \frac{k_{\tau}}{M} +\ell^*}( 1-u_\alpha)} \leq \beta\enspace.\]

 \subsection{Proof of Proposition \ref{LBalt5}}

Let $C_{\alpha, \beta}=1+4(1- \alpha -\beta)^2$, $r=(\lambda_{0}  \log C_{\alpha,\beta}/L )^{1/2}$ and $\lambda_r$ defined  for all $t$ in $(0,1)$ by
\[ \lambda_r(t) = \lambda_{0} +  \delta^*  \mathds{1}_{(\tau^{*}, \tau^{*} + r^2/{\delta^{*}}^2]}(t)\enspace.\]
Notice that for all $L \geq \lambda_{0} \log C_{\alpha,\beta}/({\delta^{*}}^2 (1-\tau^{*})),$ $r^2/{\delta^{*}}^2 \leq 1-\tau^{*}$ and $\lambda_r$ belongs to $\pa{\calS_{\delta^*,\tau^*,\bbul\bbul\bbul}[\lambda_0]}_{r}$ in the notation of Lemma \ref{mSR}. We get now from Lemma \ref{lemmegirsanov} and Lemma \ref{momentPoisson}
\[ E_{\lambda_0} \left[\left( \frac{d P_{\lambda_r}}{dP_{0}} \right)^{2}(N)\right] = \exp \pa{  \frac{r^2 L}{\lambda_{0}} } = C_{\alpha,\beta}. \]
Lemmas \ref{lemmebayesien} and \ref{mSR} then entail  $\rho_\alpha\pa{\pa{\calS_{\delta^*,\tau^*,\bbul\bbul\bbul}[\lambda_0]}_{r}} \geq \beta$ and $\mSRab\pa{\calS_{\delta^*,\tau^*,\bbul\bbul\bbul}[\lambda_0]}\geq r.$

 \subsection{Proof of Proposition \ref{UBalt5}}

The first kind error rate control is straightforward. As for the second kind error rate control, let $\lambda= \lambda_{0}+  \delta^*\un{(\tau^*, \tau^* + \ell]}$ belonging to $\calS_{\delta^*,\tau^*,\bbul\bbul\bbul}[\lambda_0]$ with $\ell$ in $(0,1-\tau^*)$ and satisfying
\begin{multline}\label{distalt5}
d_2(\lambda, \calS_0[\lambda_0]) \geq \frac{2}{\sqrt{L}} \max \Bigg( 2 \sqrt{\frac{\lambda_{0} + \delta^{*}}{\beta}}~,~\sqrt{ \delta^{*} s_{\lambda_{0},\frac{\delta^{*}}{2}}^+\pa{ 1-\alpha}} \1{\delta^{*} >0} +\\
 \sqrt{\frac{\vert \delta^{*} \vert \log \pa{1/\alpha} }{\log \pa{\lambda_{0}/\pa{\lambda_{0} - \vert\delta^{*} \vert/ 2}}}} \1{-\lambda_{0} <\delta^{*} <0}\Bigg)\enspace.
 \end{multline}

Assume that $\delta^{*}{}>0$ and recall that $s_{\lambda_{0},\delta^*/2}^+\pa{ 1-\alpha}$ defined in Lemma \ref{QuantilessupShifted} is a constant which does not depend on $L$.
The assumption \eqref{distalt5} implies
$$ d_2(\lambda, \calS_0[\lambda_0]) \geq \frac{2}{\sqrt{L}}   \max \pa{ 2 \sqrt{\frac{\lambda_{0} + \delta^{*}{}}{\beta}} ~,~  \sqrt{\delta^{*}{} s_{\lambda_{0},\frac{\delta^{*}}{2}}^+\pa{ 1-\alpha} }}\enspace,$$
which yields
$$ \delta^{*} \ell  \geq 4 \max \pa{\sqrt{\frac{(\lambda_{0}+ \delta^{*})\ell }{\beta L}}  ~,~  \frac{s_{\lambda_{0},\frac{\delta^*}{2}}^+\pa{ 1-\alpha} }{L}  }\enspace,$$
hence
\begin{equation} \label{bineq33bislog}
\frac{\delta^{*}}{2}\ell L \geq  \sqrt{\frac{(\lambda_{0} + \delta^{*})\ell   L}{\beta}} + s_{\lambda_{0},\frac{\delta^{*}}{2}}^+\pa{ 1-\alpha }\enspace.
\end{equation}

We get then
\[P_{\lambda}\pa{\phi_{5,\alpha}(N)=0} \leq P_{\lambda} \pa{ \sup_{\ell' \in (0,1- \tau^{*})}S_{\delta^*,\tau^*,\tau^*+\ell'}(N) \leq s_{\lambda_0,\delta^*,\tau^*,L}^+(1-\alpha)}\enspace,\]
where 
\[S_{\delta^*,\tau^*,\tau^*+\ell'}(N)=  \mathrm{sgn}(\delta^{*}{}) \Big(N(\tau^{*}{},\tau^{*}{} +\ell'] - \lambda_{0} L\ell'\Big) - \vert \delta^{*}{} \vert L\ell'/2 \enspace,\]
as defined by \eqref{stat_alt5} and $s_{\lambda_0,\delta^*,\tau^*,L}^+(u)$ is the $u$-quantile of $\sup_{\ell' \in (0,1- \tau^{*})}S_{\delta^*,\tau^*,\tau^*+\ell'}(N)$  under $(H_0)$.
From the quantile upper bound \eqref{UBQuantilemaxShifted}, we deduce
\begin{align*}
P_{\lambda}\pa{\phi_{5,\alpha}(N)=0} &\leq P_{\lambda} \pa{\sup_{\ell' \in (0,1- \tau^{*})}S_{\delta^*,\tau^*,\tau^*+\ell'}(N)\leq   s_{\lambda_{0},\frac{\delta^*}{2}}^+\pa{ 1-\alpha}}\\
&\leq P_{\lambda} \pa{ N(\tau^{*}, \tau^{*} +\ell] - \pa{\lambda_{0} + \frac{\delta^{*}}{2}}\ell L     \leq  s_{\lambda_{0},\frac{\delta^*}{2}}^+\pa{ 1-\alpha} } \\
&\leq  P_{\lambda} \pa{  N(\tau^{*},\tau^{*}+\ell ] - (\lambda_{0} + \delta^{*}{})\ell  L \leq  s_{\lambda_{0},\frac{\delta^*}{2}}^+\pa{ 1-\alpha} - \frac{\delta^{*}{}}{2}  \ell  L  } \\
&\leq P_{\lambda} \pa{  N(\tau^{*},\tau^{*}+\ell ] - (\lambda_{0}+ \delta^{*}{}) \ell L \leq    - \sqrt{\frac{(\lambda_{0}+ \delta^{*}{}) \ell  L}{\beta}}  } ~~ \text{with \eqref{bineq33bislog}} \\
&\leq \beta\enspace,
\end{align*}
 with a last line simply following from the Bienayme-Chebyshev inequality.

\smallskip

Assume now that $\delta^{*}{}$ is in $(- \lambda_{0}, 0)$.
The assumption \eqref{distalt5} implies 
$$ d_2(\lambda, \calS_0[\lambda_0]) \geq \frac{2}{\sqrt{L}} \max \Bigg( 2 \sqrt{\frac{\lambda_{0} + \delta^{*}}{\beta}} ~,~
 \sqrt{\frac{\vert \delta^{*} \vert \log \pa{1/\alpha} }{\log \pa{\lambda_{0}/\pa{\lambda_{0} - \vert\delta^{*} \vert/ 2}}}} \Bigg)\enspace,$$
which yields
$$ \vert \delta^{*}{} \vert \ell  \geq  4 \max \pa{\sqrt{\frac{(\lambda_{0} + \delta^{*}{})\ell }{\beta L}}  ~,~  \frac{\log \pa{1/\alpha} }{L\log \pa{\lambda_{0}/\pa{\lambda_{0} - \vert\delta^{*} \vert/ 2} }}}\enspace.$$
Hence
$$ \frac{\vert \delta^{*}{} \vert}{2}\ell  L \geq   \sqrt{\frac{(\lambda_{0}+ \delta^{*}{})\ell  L}{\beta}} + \frac{\log \pa{1/\alpha} }{\log \pa{\lambda_{0}/\pa{\lambda_{0} - \vert\delta^{*} \vert/ 2} }}\enspace,$$
and then
\begin{equation}\label{bineq33-bislog}
\pa{\lambda_{0} - \frac{\vert \delta^{*}{} \vert}{2}}\ell  L -   \frac{\log \pa{1/\alpha} }{\log \pa{\lambda_{0}/\pa{\lambda_{0} - \vert\delta^{*} \vert/ 2} }}- (\lambda_{0} + \delta^{*}{})\ell  L \geq \sqrt{\frac{(\lambda_{0} + \delta^{*}{})\ell  L}{\beta}}\enspace.
\end{equation}

We conclude with the following inequalities:
\begin{align*}
P_{\lambda}\Big(&\phi_{5,\alpha}(N)=0\Big)\\
& \leq P_{\lambda} \pa{\sup_{\ell' \in (0,1- \tau^{*})}S_{\delta^*,\tau^*,\tau^*+\ell'}(N)\leq   s_{\lambda_0,\delta^*,\tau^*,L}^+(1-\alpha)}\\
&\leq P_{\lambda} \pa{ \pa{\lambda_{0} - \frac{\vert \delta^{*}{} \vert}{2}}\ell  L - N(\tau^{*}{},\tau^{*}{} +\ell]  \leq  \frac{\log \pa{1/\alpha} }{\log \pa{\lambda_{0}/\pa{\lambda_{0} - \vert\delta^{*} \vert/ 2} }} } ~~ \text{with \eqref{UBQuantilemaxShifted}} \\
&\leq P_\lambda \pa{ N(\tau^{*}{},\tau^{*}{} + \ell ]  \geq \pa{\lambda_{0} - \frac{\vert \delta^{*}{} \vert}{2}}\ell   L -  \frac{\log \pa{1/\alpha} }{\log \pa{\lambda_{0}/\pa{\lambda_{0} - \vert\delta^{*} \vert/ 2} }} }  \\
&\leq    P_\lambda \pa{ N(\tau^{*}{}, \tau^{*}{} + \ell] - (\lambda_{0}+ \delta^{*}{})\ell  L \geq \sqrt{\frac{(\lambda_{0} + \delta^{*}{})\ell  L}{\beta}}    }~~\text{with (\ref{bineq33-bislog})} \\
&\leq \beta~~\text{with the Bienayme-Chebyshev inequality} \enspace.
\end{align*}

\subsection{Proof of Lemma \ref{LBalt6-pre}}

Let $\lambda_0>0$, $\tau^*$ in $(0,1)$, and $\phi_{\alpha}$ a level-$\alpha$ test of 
  $\hzero \ "\lambda\in \calS_0[\lambda_0]=\{\lambda_0\}"$ versus $\hone\ "\lambda\in\calS_{\bbul,\tau^*,\bbul\bbul\bbul}[\lambda_0]"$, with 
$$\calS_{\bbul,\tau^*,\bbul\bbul\bbul}[\lambda_0]=
 \big\{ \lambda\!:\exists \delta \in (- \lambda_{0}, + \infty)\setminus\lbrace 0\rbrace ,\exists \ell \in (0,1-\tau^*),\\
 \lambda(t) = \lambda_{0} + \delta \mathds{1}_{(\tau^*,\tau^*+\ell]}(t) \big\}\enspace,$$
as defined by \eqref{alt6-pre}. 

Let $r>0$ and $\lambda$ in $\calS_{\bbul,\tau^*,\bbul\bbul\bbul}[\lambda_0]$  satisfying $d_2\pa{\lambda,\calS_0[\lambda_0]}\geq r$. We compute
 \begin{align*}
P_{\lambda}\pa{\phi_{\alpha}(N) =0} &= 1- P_{\lambda}\pa{\phi_{\alpha}(N)=1} + P_{\lambda_0}\pa{\phi_{\alpha}(N)=1}-P_{\lambda_0}\pa{\phi_{\alpha}(N)=1}\\
 &\geq 1- \alpha - \vert P_{\lambda_0}\pa{\phi_{\alpha}(N)=1} - P_{\lambda}\pa{\phi_{\alpha}(N)=1}   \vert \\
 &\geq 1- \alpha - V\pa{P_{\lambda}, P_{\lambda_0}}\enspace,
 \end{align*}
 where $V\pa{P_{\lambda}, P_{\lambda_0}}$ is the total variation distance between the probability measures $P_{\lambda}$ and $P_{\lambda_0}$.
 Then, using the Pinsker inequality (see for example Lemma 2.5 in \cite{Tsybakov2008}),
 \[ P_{\lambda}\pa{\phi_{\alpha} (N)=0} \geq 1- \alpha - \sqrt{\frac{K\pa{P_{\lambda}, P_{\lambda_0}}}{2}}\enspace,\]
 where $K\pa{P_{\lambda}, P_{\lambda_0}}$ is the Kullback divergence between the probability measures $P_{\lambda}$ and $P_{\lambda_0}$. We deduce from  Lemma \ref{mSR} that if there exists $\lambda$ in $\calS_{\bbul,\tau^*,\bbul\bbul\bbul}[\lambda_0]$ such that $d_2\pa{\lambda,\calS_0[\lambda_0]}\geq r$ satisfying $1- \alpha - \sqrt{K\pa{P_{\lambda}, P_{\lambda_0}}/2} \geq \beta$, then $\mSRab\pa{\calS_{\bbul,\tau^*,\bbul\bbul\bbul}[\lambda_0]}  \geq r$.

Let us introduce for all $\ell$ in $(0,1-\tau^{*}{})$, $\lambda_{r}= \lambda_{0} + r \ell^{-1/2} \mathds{1}_{(\tau^{*}{}, \tau^{*}{} +\ell]}$ in $\calS_{\bbul,\tau^*,\bbul\bbul\bbul}[\lambda_0]$ which satisfies $d_2(\lambda_{r},\calS_0[\lambda_0])=r$. Then, Lemma \ref{lemmegirsanov} entails
 \[ K\pa{P_{\lambda_{r}}, P_{\lambda_0}}= \int \log \left( \frac{dP_{\lambda_{r}}}{dP_{\lambda_0}}  \right) dP_{\lambda_{r}} = \log \left( 1+ \frac{r}{\lambda_{0} \sqrt{\ell}}  \right)\left( \lambda_{0}+ \frac{r}{\sqrt{\ell}} \right)\ell L-Lr \sqrt{\ell}\enspace.\]
Hence choosing $\ell$ close enough to $0$ -- which is allowed as long as $\lambda_r$ is not constrained to be upper bounded by some given constant, $K\pa{P_{\lambda_{r}}, P_{\lambda_0}} \leq 2(1- \alpha -\beta)^2$. This entails 
$$\mSRab\pa{\calS_{\bbul,\tau^*,\bbul\bbul\bbul}[\lambda_0]}  \geq r$$
for every $r>0$ and allows to conclude that $\mSRab\pa{\calS_{\bbul,\tau^*,\bbul\bbul\bbul}[\lambda_0]} = + \infty.$

\subsection{Proof of Proposition \ref{LBalt6}}

Assume that $L\geq 3$ and $\alpha+\beta<1/2$. 

Let $C'_{\alpha, \beta}=4(1- \alpha - \beta)^{2}$, $K_{\alpha,\beta,L}=\lceil (\log_2 L)/C'_{\alpha,\beta} \rceil$, and for $k$ in $\lbrace 1,\ldots, K_{\alpha,\beta,L}\rbrace$,
$\lambda_{k}= \lambda_{0}+ \delta_{k} \mathds{1}_{(\tau^{*}{}, \tau^{*}{} + \ell_{k}]}$  with $ \ell_{k}  = (1- \tau^{*}{})/2^{k}$
 and
 $ \delta_{k} = (\lambda_{0} \log \log L/ (\ell_k L))^{1/2}.$ Then  $d_2\pa{\lambda_{k},\calS_0[\lambda_0]} = \sqrt{\lambda_{0}\log \log L/L}$ for all $k$ in $\lbrace 1,\ldots,K_{\alpha,\beta,L} \rbrace$ and assuming that
\begin{equation}\label{assum_L_alt6_1}
\frac{\log \log L}{L^{1-1/C'_{\alpha,\beta}}} \leq (R- \lambda_{0})^2\frac{1-\tau^*}{2\lambda_0}\enspace,
\end{equation}
 $\lambda_k$ belongs to $\calS_{\bbul,\tau^*,\bbul\bbul\bbul}[\lambda_0,R]$.
Recall that for any $k$ in $\lbrace 1,\ldots,K_{\alpha,\beta,L} \rbrace$ $P_{\lambda_{k}}$ denotes the distribution of a Poisson process with intensity $\lambda_{k}$ with respect to the measure $\Lambda$, and consider $\kappa$, a random variable with uniform distribution on $\lbrace 1,\ldots, K_{\alpha,\beta,L} \rbrace$, which allows to define the probability distribution $\mu$ of $\lambda_{\kappa}$. From Lemma \ref{lemmebayesien}, we know that it is enough to prove $E_{\lambda_0} [\left( dP_{\mu}/dP_{\lambda_0}\right)^{2}  ] \leq 1+C'_{\alpha,\beta}$ to conclude that  $\mSRab\pa{\calS_{\bbul,\tau^*,\bbul\bbul\bbul}[\lambda_0,R]}\geq \sqrt{\lambda_{0}\log \log L/L}$.
 
 By definition, $ (dP_{\mu}/dP_{\lambda_0})(N)= E_{\kappa} \left[ (dP_{\lambda_{\kappa}}/dP_{\lambda_0})(N)  \right] $ (where $E_\kappa$ denotes the expectation w.r.t. the uniform variable $\kappa$) and therefore
\[\frac{dP_{\mu}}{dP_{\lambda_0}}(N)= \frac{1}{K_{\alpha,\beta,L}} \sum_{k=1}^{K_{\alpha,\beta,L}} \exp \left( \log \left( 1+ \frac{\delta_{k}}{\lambda_{0}} \right) N ( \tau^{*}{}, \tau^{*}{} + \ell_k ]  -L \ell_k \delta_{k}  \right)\enspace.\]
 Since $\ell_{k'}< \ell_{k}$ for all $k' >k$,
 \begin{multline*}
 \left( \frac{dP_{\mu}}{dP_{\lambda_0}}(N) \right)^{2} 
 = \frac{1}{K_{\alpha,\beta,L}^{2}} \sum_{k=1}^{K_{\alpha,\beta,L}} \exp \left( 2 \log \left( 1+ \frac{\delta_{k}}{\lambda_{0}} \right) N (\tau^{*}{}, \tau^{*}{} + \ell_k]  -2L\ell_k \delta_{k}  \right) \\
+ \frac{2}{K_{\alpha,\beta,L}^{2}} \sum_{k=1}^{K_{\alpha,\beta,L}-1} \sum_{k'=k+1}^{K_{\alpha,\beta,L}} \exp \left(\left( \log \left( 1+ \frac{\delta_{k}}{\lambda_{0}} \right) +\log\left( 1+ \frac{\delta_{k'}}{\lambda_{0}} \right) \right)N(\tau^{*}{}, \tau^{*}{}+\ell_{k'}]+\right.\\
 +\left.\log \left( 1+ \frac{\delta_{k}}{\lambda_{0}} \right) N(\tau^{*}{} + \ell_{k'}, \tau^{*}{}+\ell_{k}]-L\ell_k \delta_{k}  -L\ell_{k'} \delta_{k'} \right)\enspace.
\end{multline*}
Recall that under $\hzero$, $N$ is a homogeneous Poisson process with intensity $\lambda_{0}$ with respect to the measure $\Lambda$, so
\[E_{\lambda_0} \left[\left( \frac{dP_{\mu}}{dP_{\lambda_0}} \right)^{2}   \right]= \frac{1}{K_{\alpha,\beta,L}^{2}} \sum_{k=1}^{K_{\alpha,\beta,L}} \exp\pa{\frac{L \ell_k \delta_{k}^{2}}{\lambda_{0}}}+ \frac{2}{K_{\alpha,\beta,L}^{2}} \sum_{k=1}^{K_{\alpha,\beta,L}-1} \sum_{k'=k+1}^{K_{\alpha,\beta,L}} \exp\pa{ \frac{L \ell_{k'}\delta_{k} \delta_{k'}}{\lambda_{0}}}\enspace.\]
Then
\begin{equation}\label{eq_LBalt6}
E_{\lambda_0} \left[\left( \frac{dP_{\mu}}{dP_{\lambda_0}} \right)^{2}   \right] = \frac{\log L}{K_{\alpha,\beta,L}}  +\frac{2}{K_{\alpha,\beta,L}^{2}} \sum_{k=1}^{K_{\alpha,\beta,L}-1} \sum_{k'=k+1}^{K_{\alpha,\beta,L}} \exp \left( 2^{\frac{k-k'}{2} }(\log \log L) \right)\enspace,\\
\end{equation}
which entails
\begin{align*}
E_{\lambda_0} \left[\left( \frac{dP_{\mu}}{dP_{\lambda_0}} \right)^{2}   \right] &= \frac{\log L}{K_{\alpha,\beta,L}}  + \frac{2}{K_{\alpha,\beta,L}^{2}} \sum_{l=1}^{K_{\alpha,\beta,L}-1}  \left( K_{\alpha,\beta,L} - l \right)  \exp \left( 2^{-\frac{l}{2} }(\log \log L) \right)\\
&\leq C'_{\alpha,\beta}\log 2 +\frac{2}{K_{\alpha,\beta,L}^{2}} \sum_{l=1}^{K_{\alpha,\beta,L}-1}  \left( K_{\alpha,\beta,L} - l \right)  \exp \left( 2^{-\frac{l}{2} }(\log \log L) \right)\enspace.
\end{align*}
Now taking $\eta$ such that $0<\eta<1-1/\sqrt{2}$,
\begin{align}
E_{\lambda_0} \left[\left( \frac{dP_{\mu}}{dP_{\lambda_0}} \right)^{2}   \right] &\leq C'_{\alpha,\beta}\log 2 +\frac{2}{K_{\alpha,\beta,L}^{2}} \sum_{l=1}^{K_{\alpha,\beta,L}-1}  \left( K_{\alpha,\beta,L} - l \right)  \exp \left( 2^{-\frac{l}{2} }(\log \log L) \right) \nonumber\\
&= C'_{\alpha,\beta}\log 2 + \frac{2}{K_{\alpha, \beta,L}^{2}} \sum_{l=1}^{\lfloor (\log L)^{\eta} \rfloor}  \left( K_{\alpha,\beta,L}  - l \right)  \exp \left( 2^{-\frac{l}{2} }(\log \log L) \right) \nonumber\\
& \quad + \frac{2}{K_{\alpha, \beta,L}^{2}} \sum_{l=\lfloor (\log L)^{\eta} \rfloor +1}^{K_{\alpha, \beta,L}-1}  \left(K_{\alpha, \beta,L}- l \right)  \exp \left( 2^{-\frac{l}{2} }(\log \log L) \right) \nonumber\\
&\leq  C'_{\alpha,\beta}\log 2 + \frac{2C'_{\alpha,\beta}\log 2}{\log L} \left( \log L  \right)^{\eta+\frac{1}{\sqrt{2}}}+\exp \left( \frac{\log \log L}{2^{(\log L)^{\eta}/2}}\right) \label{upperboundExpectsquare}\enspace.
\end{align}
If we assume now that 
\begin{equation}\label{assum_L_alt6_2}
\exp \left( \frac{\log \log L}{2^{(\log L)^{\eta}/2}}\right) + \frac{2C'_{\alpha,\beta}\log 2}{\left( \log L  \right)^{1-\eta-\frac{1}{\sqrt{2}}}}\leq 1+ (1-\log 2) C'_{\alpha,\beta}\enspace,
\end{equation}
we finally obtain the expected result, i.e.
\[E_{\lambda_0} \left[\left( \frac{dP_{\mu}}{dP_{\lambda_0}} \right)^{2}   \right] \leq  1+C'_{\alpha,\beta}\enspace.\]
To end the proof, it remains to notice that there exists $L_0(\alpha,\beta,\lambda_0,R)\geq 3$ such that for all $L\geq L_0(\alpha,\beta,\lambda_0,R)$, both assumptions \eqref{assum_L_alt6_1} and \eqref{assum_L_alt6_2} hold.

\subsection{Proof of Proposition \ref{UBalt6}}

The control of the first kind error rates of the two tests $\phi_{6,\alpha}^{(1)}$ and $\phi_{6,\alpha}^{(2)}$ is straightforward using simple union bounds. 

\medskip

\emph{$(i)$ Control of the second kind error rate of $\phi_{6,\alpha}^{(1)}$.}

\smallskip

Let $\lambda$ in $\calS_{\bbul,\tau^*,\bbul\bbul\bbul}[\lambda_0,R]$ be such that $\lambda= \lambda_{0}+  \delta \mathds{1}_{(\tau^{*}{}, \tau^{*}{} + \ell]}$, with $\delta$ in $(-\lambda_{0}, R- \lambda_{0}]\setminus\{0\}$, $\ell$ in $(0,1-\tau^{*})$, and such that 
 \begin{align} \label{distance_alt6_N1}
  d_2\pa{\lambda,\calS_0[\lambda_0]}&\geq \sqrt{2} \max \left(2 \sqrt{\frac{R\log \left(2/u_\alpha \right)}{3 L}}~,~ 2  \sqrt{\frac{2 \lambda_{0}\log \left(2/u_\alpha\right)}{L}} + 2\sqrt{\frac{R}{\beta L}}  ~,~ \frac{R}{\sqrt{L}}\right)\enspace.
 \end{align}
Let us prove that $P_{\lambda}\pa{\phi_{6,\alpha}^{(1)}(N)=0} \leq \beta$. 

\smallskip

Assume first that $\delta$ belongs to $(0,R- \lambda_{0}]$. 
Noticing that
$$ P_{\lambda}\pa{\phi_{6,\alpha}^{(1)}(N)=0}  \leq \inf_{k\in \lbrace 1,\ldots, \lfloor \log_{2} L \rfloor \rbrace} P_{\lambda} \left( S_{\tau^{*}{}, \tau^{*}{} + \ell_{\tau^*,k}}(N)\leq  s_{\lambda_0,\tau^*,\tau^{*}{} + \ell_{\tau^*,k}} \pa{1 -u_\alpha} \right)\enspace,$$
one can see that it is enough to exhibit some $k$ in $\set{1,\ldots,\lfloor \log_{2} L \rfloor}$ satisfying 
\[P_{\lambda} \left( S_{\tau^{*}{}, \tau^{*}{} + \ell_{\tau^*,k}}(N)\leq  s_{\lambda_0,\tau^*,\tau^{*}{} + \ell_{\tau^*,k}} \pa{1 -u_\alpha} \right) \leq \beta\enspace.\]

We get from \eqref{distance_alt6_N1} that 
 $d_2^2\pa{\lambda,\calS_0[\lambda_0]} \geq 2 R^2/L$ which entails $\ell\geq 2/L$, so\\
    $ (1-\tau^{*}{}) 2^{- \lfloor \log_2 L \rfloor}\leq 2(1-\tau^*)/L< 2/{L}\leq \ell$ and  $\ell<(1-\tau^*) 2^{-1+1}$.  Therefore, one can find $k_{\tau^{*}{}}$ in $\lbrace 1,\ldots, \lfloor \log_{2} L \rfloor \rbrace$ satisfying $(1-\tau^{*}{}) 2^{-k_{\tau^{*}{}}} \leq \ell <  (1-\tau^{*}{}) 2^{-k_{\tau^{*}{}}+1}$.
Consider now
 $ \ell_{\tau^{*}{}}= (1-\tau^{*}{}) 2^{-k_{\tau^{*}{}}}=\ell_{\tau^*,k_{\tau^*}}$ such that $\ell/2<\ell_{\tau^{*}{}}\leq \ell $. We get
  \begin{equation} \label{eq1_alt6_N1}
   \delta^2 \ell_{\tau^{*}{}} > d_2^2\pa{\lambda,\calS_0[\lambda_0]}/2\enspace .
   \end{equation}

Moreover, we deduce from \eqref{distance_alt6_N1} that
\[d_2\pa{\lambda,\calS_0[\lambda_0]}\geq \sqrt{2} \max \left(2\sqrt{\frac{\delta\log \left(2/u_\alpha \right)}{3L}}~,~ 2  \sqrt{\frac{2 \lambda_{0}\log \left(2/u_\alpha\right)}{L}} + 2\sqrt{\frac{\lambda_0+\delta}{\beta L}} \right)\enspace,\]
and then with \eqref{eq1_alt6_N1},
\[\delta \sqrt{\ell_{\tau^{*}}} \geq    \max \left(2\sqrt{\frac{\delta\log \left(2/u_\alpha \right)}{3L}}~,~ 2  \sqrt{\frac{2 \lambda_{0}\log \left(2/u_\alpha\right)}{L}} + 2\sqrt{\frac{\lambda_0+\delta}{\beta L}} \right)\enspace.\]
This entails in particular
$\delta \ell_{\tau^{*}{}} \geq 4 \log \left( 2 /u_\alpha  \right)/(3L)$ as well as
\[\delta \ell_{\tau^{*}{}}  \geq 2 \sqrt{\ell_{\tau^{*}{}}}\pa{\sqrt{\frac{2 \lambda_{0}\log \left(2/u_\alpha\right)}{L}} + \sqrt{\frac{\lambda_0+\delta}{\beta L}} }\enspace.\]

Therefore,
\[ \delta \ell_{\tau^{*}{}} \geq 2  \max \left( \frac{2 \log \left(2 /u_\alpha  \right)}{3L} ~,~ \sqrt{\ell_{\tau^{*}{}}}\pa{\sqrt{\frac{2 \lambda_{0}\log \left(2/u_\alpha\right)}{L}} + \sqrt{\frac{\lambda_0+\delta}{\beta L}}} \right)\enspace,\]
hence
 \begin{equation} \label{eq2_alt6_N1}
    \delta \ell_{\tau^{*}{}} L \geq \frac{2}{3} \log \left( 2/u_\alpha \right) +  \sqrt{2 \lambda_{0}\ell_{\tau^{*}{}} L \log \left(2/u_\alpha\right) } +  \sqrt{\frac{(\lambda_{0}+ \delta)  \ell_{\tau^{*}{}} L}{\beta}}\enspace.
\end{equation}

On the one hand, since $\ell_{\tau^{*}{}} \leq \ell$, Lemma \ref{momentPoisson} gives $E_{\lambda}\cro{S_{\tau^{*}{}, \tau^{*}{} + \ell_{\tau^*}}(N)}= \delta \ell_{\tau^{*}{}} L$,
 and
$\mathrm{Var}_{\lambda}\cro{S_{\tau^{*}{}, \tau^{*}{} + \ell_{\tau^*}}(N) } =(\lambda_{0}+ \delta) \ell_{\tau^{*}{}} L$. On the other hand, Lemma \ref{QuantilesAbsShifted} gives
 \[s_{\lambda_0,\tau^*,\tau^{*}{} + \ell_{\tau^*}} \pa{1 -u_\alpha} \leq \lambda_{0} \ell_{\tau^{*}{}} L g^{-1} \pa{\frac{\log \pa{2/u_\alpha}}{\lambda_{0} L \ell_{\tau^{*}{}} }}\enspace,  \] with $g^{-1}(x) \leq 2x/3+ \sqrt{2x}$ for all $x>0$  (see \eqref{UBginv}), which leads to
  \[s_{\lambda_0,\tau^*,\tau^{*}{} + \ell_{\tau^*}} \pa{1 -u_\alpha} \leq \frac{2}{3} \log \left( 2/u_\alpha \right) +  \sqrt{2 \lambda_{0}\ell_{\tau^{*}{}} L \log \left(2/u_\alpha\right) } \enspace. \] 
 
Combined with \eqref{eq2_alt6_N1}, these computations yield
 \begin{equation} \label{eq3_alt6_N1}
E_{\lambda}\cro{S_{\tau^{*}{}, \tau^{*}{} + \ell_{\tau^*}}(N)} \geq s_{\lambda_0,\tau^*,\tau^{*}{} + \ell_{\tau^*}} \pa{1 -u_\alpha} +  \sqrt{\mathrm{Var}_{\lambda}\cro{S_{\tau^{*}{}, \tau^{*}{} + \ell_{\tau^*}}(N)}/\beta}\enspace.
\end{equation}

 We conclude with the Bienayme-Chebyshev inequality:
 \begin{align*}
P_{\lambda} \Big(& S_{\tau^{*}{}, \tau^{*}{} + \ell_{\tau^*}}(N)\leq  s_{\lambda_0,\tau^*,\tau^{*}{} + \ell_{\tau^*}} \pa{1 -u_\alpha} \Big)\\
 &\leq P_{\lambda} \left( S_{\tau^{*}{}, \tau^{*}{} + \ell_{\tau^*}}(N) - E_{\lambda}\cro{S_{\tau^{*}{}, \tau^{*}{} + \ell_{\tau^*}}(N)} \leq  - \sqrt{\mathrm{Var}_{\lambda}\cro{S_{\tau^{*}{}, \tau^{*}{} + \ell_{\tau^*}}(N) }/\beta} \right) ~~\text{with \eqref{eq3_alt6_N1}}\\
 &\leq \beta\enspace.
 \end{align*}

Assume now that $\delta$ belongs to $(-\lambda_0,0)$ and notice that
$$ P_{\lambda}\pa{\phi_{6,\alpha}^{(1)}(N)=0}  \leq \inf_{k\in \lbrace 1,\ldots, \lfloor \log_{2} L \rfloor \rbrace} P_{\lambda} \left( -S_{\tau^{*}{}, \tau^{*}{} + \ell_{\tau^*,k}}(N)\leq  s_{\lambda_0,\tau^*,\tau^{*}{} + \ell_{\tau^*,k}} \pa{1 -u_\alpha} \right)\enspace.$$
The same choice of $k_{\tau^{*}{}}$ and $\ell_{\tau^*}=\ell_{\tau^*,k_{\tau^*}}$ as in the above case where $\delta\in (0,R- \lambda_{0}]$ entails 
 \begin{equation} \label{eq4_alt6_N1}
    |\delta| \ell_{\tau^{*}{}} L \geq  s_{\lambda_0,\tau^*,\tau^{*}{} + \ell_{\tau^*}} \pa{1 -u_\alpha}    +  \sqrt{\frac{(\lambda_{0}+ \delta)  \ell_{\tau^{*}{}} L}{\beta}}\enspace,
    \end{equation}
and since $E_{\lambda}\cro{S_{\tau^{*}{}, \tau^{*}{} + \ell_{\tau^*}}(N)}  = - \vert \delta \vert \ell_{\tau^{*}{}} L$ and
$\mathrm{Var}_{\lambda}\cro{S_{\tau^{*}{}, \tau^{*}{} + \ell_{\tau^*}}(N)} =(\lambda_{0} + \delta) \ell_{\tau^{*}{}} L  $, we obtain in the same way
\[P_{\lambda} \Big(-S_{\tau^{*}{}, \tau^{*}{} + \ell_{\tau^*}}(N)\leq  s_{\lambda_0,\tau^*,\tau^{*}{} + \ell_{\tau^*}} \pa{1 -u_\alpha} \Big)\leq \beta\enspace.\]

 \smallskip
 
Finally \eqref{distance_alt6_N1} leads in both cases to $P_{\lambda}\pa{\phi_{6,\alpha}^{(1)}(N)=0} \leq \beta$, which allows to conclude that
\begin{multline*}
\SRb\Big(\phi_{6,\alpha}^{(1)},\calS_{\bbul,\tau^*,\bbul\bbul\bbul}[\lambda_0,R]\Big) \leq \sqrt{2} \max \Bigg(2\sqrt{\frac{R\log \left(2\lfloor \log_2 L\rfloor/\alpha  \right)}{3L}}~,\\
 2  \sqrt{\frac{2 \lambda_{0}\log \left(2\lfloor \log_2 L\rfloor/\alpha  \right)}{L}} + 2\sqrt{\frac{R}{\beta L}}  ~,~ \frac{R}{\sqrt{L}} \Bigg)\enspace.
\end{multline*}

\emph{$(ii)$ Control of the second kind error rate of $\phi_{6,\alpha}^{(2)}$.}

\smallskip

Let $\lambda$ in $\calS_{\bbul,\tau^*,\bbul\bbul\bbul}[\lambda_0,R]$ be such that $\lambda= \lambda_{0}+  \delta \mathds{1}_{(\tau^{*}{}, \tau^{*}{} + \ell]}$, with $\delta$ in $(-\lambda_{0}, R- \lambda_{0}]\setminus\{0\}$, $\ell$ in $(0,1-\tau^{*})$, and such that 
 \begin{multline} \label{distance_alt6_N2}
   d_2\pa{\lambda,\calS_0[\lambda_0]}\geq \max \Bigg(  4 \sqrt{\frac{2 \lambda_{0}\log \pa{3/u_\alpha}}{L}} +2 \sqrt{2\sqrt{\frac{2}{\beta}}\frac{R}{L}}~,~2\sqrt{\frac{2\sqrt{2}R\log \pa{3/u_\alpha}}{3L}}~,\\
4\pa{\frac{2}{3}}^{1/3} \lambda_{0}^{1/6}R^{1/3}\sqrt{\frac{\log \pa{3/u_\alpha}}{L}}~,~16 \sqrt{\frac{R}{\beta L}}~,~\frac{\sqrt{2}R}{\sqrt{L}}\Bigg)\enspace.
 \end{multline}
 
 Let us prove that $P_{\lambda}\pa{\phi_{6,\alpha}^{(2)}(N)=0} \leq \beta$. 

\smallskip

Noticing that
$$ P_{\lambda}\pa{\phi_{6,\alpha}^{(2)}(N)=0}  \leq \inf_{k\in \lbrace 1,\ldots,\lfloor \log_{2} L \rfloor \rbrace} P_{\lambda} \left(  T_{\tau^*, \tau^*+\ell_{\tau^*,k} }(N) \leq  t_{\lambda_0,\tau^*,\tau^{*}{} + \ell_{\tau^*,k}} \pa{1 -u_\alpha} \right)\enspace,$$
one can see that it is enough to exhibit some $k$ in $\set{1,\ldots,\lfloor \log_{2} L \rfloor}$ satisfying
\[P_{\lambda} \left( T_{\tau^{*}{}, \tau^{*}{} + \ell_{\tau^*,k}}(N)\leq  t_{\lambda_0,\tau^*,\tau^{*}{} + \ell_{\tau^*,k}} \pa{1 -u_\alpha} \right) \leq \beta\enspace.\]

From \eqref{distance_alt6_N2}, we deduce that 
 $d_2^2\pa{\lambda,\calS_0[\lambda_0]} \geq 2 R^2/L$ which entails $\ell \geq 2/L.$ Therefore, as in the above part $(i)$ of the proof, let $k_{\tau^{*}{}}$ in $\lbrace 1,\ldots, \lfloor \log_{2} L \rfloor\rbrace$  be such that $(1-\tau^{*}{}) 2^{-k_{\tau^{*}{}}} \leq \ell <  (1-\tau^{*}{}) 2^{-k_{\tau^{*}{}}+1}$,  and consider
 $ \ell_{\tau^{*}{}}= (1-\tau^{*}{}) 2^{-k_{\tau^{*}{}}}=\ell_{\tau^*,k_{\tau^*}}$. Then $\ell/2<\ell_{\tau^{*}{}}\leq \ell $ and
  \begin{equation} \label{eq1_alt6_N2}
   \delta^2 \ell_{\tau^{*}{}} > d_2^2\pa{\lambda,\calS_0[\lambda_0]}/2\enspace .
   \end{equation}

Moreover, we get from \eqref{distance_alt6_N2}
 \begin{multline*}
  d_2\pa{\lambda,\calS_0[\lambda_0]}\geq \max \Bigg(  4 \sqrt{\frac{2 \lambda_{0}\log \pa{3/u_\alpha}}{L}} + 2 \sqrt{2\sqrt{\frac{2}{\beta}}\frac{R}{L}}~,~2\sqrt{\frac{2\sqrt{2}R\log \pa{3/u_\alpha}}{3L}}~,\\
4\pa{\frac{2}{3}}^{1/3} \lambda_{0}^{1/6}R^{1/3}\sqrt{\frac{\log \pa{3/u_\alpha}}{L}}~,~16 \sqrt{\frac{R}{\beta L}}\Bigg)\enspace.
 \end{multline*}
On the one hand, this entails in particular
$d_2^4\pa{\lambda,\calS_0[\lambda_0]} \geq 128 R^2  \log^2 \pa{3/u_\alpha}/(9L^2)$, and then, with \eqref{eq1_alt6_N2}, $ d_2^4\pa{\lambda,\calS_0[\lambda_0]} \geq 64 d_2^2\pa{\lambda,\calS_0[\lambda_0]}  \log^{2} \pa{3/u_\alpha}/(9L^2  \ell
_{\tau^{*}{}})$.

On the other hand, this yields
\[d_2^3\pa{\lambda,\calS_0[\lambda_0]} \geq \frac{64}{3} d_2\pa{\lambda,\calS_0[\lambda_0]}  \sqrt{\frac{2\lambda_{0} \log^{3}  \pa{3/u_\alpha}}{L^3\ell_{\tau^{*}}}}\enspace,\]
using the same arguments.
Therefore
 \begin{multline*}
  d_2^2\pa{\lambda,\calS_0[\lambda_0]}\geq \max \Bigg( 32 \frac{ \lambda_{0}\log \pa{3/u_\alpha}}{L} + 8\sqrt{\frac{2}{\beta}} \frac{R}{L}~,~\frac{64 \log^2 \pa{3/u_\alpha}}{9L^2  \ell_{\tau^{*}{}}}~,\\
\frac{64}{3} \sqrt{\frac{2\lambda_{0} \log^{3}  \pa{3/u_\alpha}}{L^3\ell_{\tau^{*}}}}~,~16d_2(\lambda,\calS_0[\lambda_0])\sqrt{\frac{R}{\beta L}}\Bigg)\enspace.
 \end{multline*}
Hence
 \begin{multline} \label{eq2_alt6_N2}
  \frac{ d_2^2\pa{\lambda,\calS_0[\lambda_0]}}{2} \geq  \frac{4 \lambda_{0} \log \pa{3/u_\alpha}}{ L}   + \sqrt{\frac{2}{\beta}}\frac{ R}{L} + \frac{8 \log^2 \pa{3/u_\alpha}}{9L^2 \ell_{\tau^{*}{}}}\\
  +\frac{8}{3} \sqrt{\frac{2\lambda_{0} \log^{3}  \pa{3/u_\alpha}}{L^3\ell_{\tau^{*}}}}   + 2d_2(\lambda,\calS_0[\lambda_0])\sqrt{\frac{R}{\beta L}}\enspace.
 \end{multline}
Since $\ell_{\tau^{*}{}} \leq \ell$, Lemma \ref{MomentsT} gives  $E_{\lambda}\cro{T_{\tau^{*}{}, \tau^{*}{} + \ell_{\tau^*}}(N)}= \delta^2 \ell_{\tau^{*}{}}$
and
\[\mathrm{Var}_{\lambda}\cro{T_{\tau^{*}{}, \tau^{*}{} + \ell_{\tau^*}}(N) } = \frac{4 \delta^2(\lambda_{0}+\delta)\ell_{\tau^{*}{}}}{L}  + \frac{2(\lambda_{0}+ \delta)^2}{L^2} \enspace.\] 
This leads with \eqref{eq1_alt6_N2} to 
 \begin{equation}\label{eq3_alt6_N2}
E_{\lambda}\cro{T_{\tau^{*}{}, \tau^{*}{} + \ell_{\tau^*}}(N)} \geq \frac{d_2^2\pa{\lambda,\calS_0[\lambda_0]}}{2}\enspace,
 \end{equation}
 and 
 \begin{equation}\label{eq4_alt6_N2}
 \mathrm{Var}_{\lambda}\cro{T_{\tau^{*}{}, \tau^{*}{} + \ell_{\tau^*}}(N) } \leq \frac{4 d_2^2\pa{\lambda,\calS_0[\lambda_0]} R}{L} +  \frac{2 R^2}{L^2}\enspace.
 \end{equation}
 With \eqref{eq3_alt6_N2} and \eqref{eq4_alt6_N2}, the inequality \eqref{eq2_alt6_N2} yields
 \begin{multline} \label{bcontrol3Pb3}
E_{\lambda}\cro{T_{\tau^{*}{}, \tau^{*}{} + \ell_{\tau^*}}(N)} \geq  \frac{4 \lambda_{0} \log \pa{3/u_\alpha}}{ L} + \frac{8 \log^2 \pa{3/u_\alpha}}{9L^2 \ell_{\tau^{*}{}}}+\frac{8}{3} \sqrt{\frac{2\lambda_{0} \log^{3} \pa{3/u_\alpha}}{L^{3}\ell_{\tau^{*}}}}\\
 +  \sqrt{\frac{\mathrm{Var}_{\lambda}\cro{T_{\tau^{*}{}, \tau^{*}{} + \ell_{\tau^*}}(N) } }{\beta}}\enspace.
 \end{multline}
 Moreover, Lemma \ref{QuantilesT} gives
 \[t_{\lambda_0,\tau^*,\tau^{*}{} + \ell_{\tau^*}} \pa{1 -u_\alpha} \leq 2\lambda_{0}^2  \ell_{\tau^{*}{}} \pa{g^{-1} \pa{\frac{\log \pa{3/u_\alpha}}{\lambda_{0} \ell_{\tau^{*}{}} L}}}^2\enspace,  \] where  $g^{-1}(x) \leq 2x/3+ \sqrt{2x}$ for all $x>0$  (see \eqref{UBginv}), and then
 \begin{align*} 
t_{\lambda_0,\tau^*,\tau^{*}{} + \ell_{\tau^*}} \pa{1 -u_\alpha}  &\leq   \frac{4\lambda_{0}\log \pa{3/u_\alpha}}{ L} +  \frac{8 \log^2 \pa{3/u_\alpha}}{9L^2  \ell_{\tau^{*}{}}} +\frac{8}{3}\sqrt{\frac{2\lambda_{0} \log^{3}  \pa{3/u_\alpha}}{L^3\ell_{\tau^{*}}}}\\
&\leq E_{\lambda}\cro{T_{\tau^{*}{}, \tau^{*}{} + \ell_{\tau^*}}(N)} -\sqrt{\frac{\mathrm{Var}_{\lambda}\cro{T_{\tau^{*}{}, \tau^{*}{} + \ell_{\tau^*}}(N) } }{\beta}}\enspace.
 \end{align*}
We obtain with the Bienayme-Chebyshev inequality
 \begin{align*}
P_{\lambda} \Big(& T_{\tau^{*}{}, \tau^{*}{} + \ell_{\tau^*}}(N)\leq  t_{\lambda_0,\tau^*,\tau^{*}{}+ \ell_{\tau^*}} \pa{1 -u_\alpha} \Big)\\
 &\leq  P_{\lambda} \pa{T_{\tau^{*}{}, \tau^{*}{} + \ell_{\tau^*}}(N)\leq  E_{\lambda}\cro{T_{\tau^{*}{}, \tau^{*}{} + \ell_{\tau^*}}(N)} -\sqrt{\frac{\mathrm{Var}_{\lambda}\cro{T_{\tau^{*}{}, \tau^{*}{} + \ell_{\tau^*}}(N) } }{\beta}} }\\
 &\leq \beta \enspace,
 \end{align*}
which entails $P_{\lambda}\pa{\phi_{6,\alpha}^{(2)}(N)=0} \leq \beta$.

This finally allows to conclude that
\begin{multline*}
\SRb\Big(\phi_{6,\alpha}^{(2)},\calS_{\bbul,\tau^*,\bbul\bbul\bbul}[\lambda_0,R]\Big) \leq \max \Bigg(  4 \sqrt{\frac{2 \lambda_{0}\log \pa{3\lfloor \log_2 L\rfloor/\alpha }}{L}} +2 \sqrt{2\sqrt{\frac{2}{\beta}}\frac{R}{L}}~,\\
2\sqrt{\frac{2\sqrt{2}R \log \pa{3\lfloor \log_2 L\rfloor/\alpha }}{3L}}~,~4\pa{\frac{2}{3}}^{1/3} \lambda_{0}^{1/6}R^{1/3}\sqrt{\frac{\log \pa{3\lfloor \log_2 L\rfloor/\alpha }}{L}}~,~16 \sqrt{\frac{R}{\beta L}}~,~\frac{\sqrt{2}R}{\sqrt{L}}\Bigg)\enspace.
\end{multline*}

\subsection{Proof of Proposition \ref{LBalt7}}
Let $C_{\alpha, \beta}=1+4(1- \alpha -\beta)^2$, $r=(\lambda_{0}  \log C_{\alpha,\beta}/L )^{1/2}$ and $\lambda_r$ defined  for all $t$ in $(0,1)$ by
\[ \lambda_r(t) = \lambda_{0} +  \delta^*  \mathds{1}_{(1-r^2/{\delta^{*}}^2,1]}(t)\enspace.\]
Notice that for all $L \geq \lambda_{0} \log C_{\alpha,\beta}/{\delta^{*}}^2,$ we have $r\leq \abs{\delta^{*}}$ and $\lambda_r$ belongs to $\pa{\calS_{\delta^*,\bbul\bbul,1-\bbul\bbul}[\lambda_0]}_{r}$ in the notation of Lemma \ref{mSR}. We get now from Lemma \ref{lemmegirsanov} and Lemma \ref{momentPoisson}
\[ E_{\lambda_0} \left[\left( \frac{d P_{\lambda_r}}{dP_{0}} \right)^{2}(N)\right] = \exp \pa{  \frac{r^2 L}{\lambda_{0}} } = C_{\alpha,\beta}. \]
Lemmas \ref{lemmebayesien} and \ref{mSR} then entail  $\rho_\alpha\pa{\pa{\calS_{\delta^*,\bbul\bbul,1-\bbul\bbul}[\lambda_0]}_{r}} \geq \beta$ and $\mSRab\pa{\calS_{\delta^*,\bbul\bbul,1-\bbul\bbul}[\lambda_0]}\geq r.$

 \subsection{Proof of Proposition \ref{UBalt7}}

 The first kind error rate control is straightforward. As for the second kind error rate control, let $\lambda= \lambda_{0}+  \delta^*\un{(\tau,1]}$ belonging to $\calS_{\delta^*,\bbul\bbul,1-\bbul\bbul}[\lambda_0]$ with $\tau$ in $(0,1)$ and satisfying
\begin{multline}\label{distalt7}
d_2(\lambda, \calS_0[\lambda_0]) \geq \frac{2}{\sqrt{L}} \max \Bigg( 2 \sqrt{\frac{\lambda_{0} + \delta^{*}}{\beta}} ~,~ \sqrt{ \delta^{*} s_{\lambda_{0},\frac{\delta^{*}}{2}}^+\pa{ 1-\alpha}} \1{\delta^{*} >0} +\\
 \sqrt{\frac{\vert \delta^{*} \vert \log \pa{1/\alpha} }{\log \pa{\lambda_{0}/\pa{\lambda_{0} - \vert\delta^{*} \vert/ 2}}}} \1{-\lambda_{0} <\delta^{*} <0}\Bigg)\enspace.
 \end{multline}

Then the proof essentially follows the same line as the one of Proposition \ref{UBalt5} just replacing $\ell$ by $(1-\tau)$.

Assume that $\delta^{*}{}>0$ and recall that $s_{\lambda_{0},\delta^*/2}^+\pa{ 1-\alpha}$ defined in Lemma \ref{QuantilessupShifted} is a constant which does not depend on $L$.
The assumption \eqref{distalt7} implies that
$$ d_2(\lambda, \calS_0[\lambda_0]) \geq \frac{2}{\sqrt{L}}   \max \pa{ 2 \sqrt{\frac{\lambda_{0} + \delta^{*}{}}{\beta}} ~,~  \sqrt{\delta^{*}{} s_{\lambda_{0},\frac{\delta^{*}}{2}}^+\pa{ 1-\alpha} }}\enspace,$$
which entails
\begin{equation} \label{UBalt7eq1}
\frac{\delta^{*}}{2}(1-\tau) L \geq  \sqrt{\frac{(\lambda_{0} + \delta^{*})(1-\tau)   L}{\beta}} + s_{\lambda_{0},\frac{\delta^{*}}{2}}^+\pa{ 1-\alpha }\enspace.
\end{equation}

Then we get from the quantile upper bound \eqref{UBQuantilesupShifted2}
\begin{align*}
P_{\lambda}\pa{\phi_{7,\alpha}(N)=0} &\leq P_{\lambda} \pa{\sup_{\tau' \in (0,1)}S_{\delta^*,\tau',1}(N)\leq   s_{\lambda_{0},\frac{\delta^*}{2}}^+\pa{ 1-\alpha}}\\
&\leq P_{\lambda} \pa{ N(\tau, 1] - \pa{\lambda_{0} + \frac{\delta^{*}}{2}}(1-\tau) L     \leq  s_{\lambda_{0},\frac{\delta^*}{2}}^+\pa{ 1-\alpha} } \\
&\leq  P_{\lambda} \pa{  N(\tau, 1] - (\lambda_{0} + \delta^{*}{})(1-\tau) L \leq  s_{\lambda_{0},\frac{\delta^*}{2}}^+\pa{ 1-\alpha} - \frac{\delta^{*}{}}{2} (1-\tau)L  } \\
&\leq P_{\lambda} \pa{  N(\tau, 1]  - (\lambda_{0}+ \delta^{*}{})(1-\tau)L \leq    - \sqrt{\frac{(\lambda_{0}+ \delta^{*}{}) (1-\tau)  L}{\beta}}  } ~~ \text{with \eqref{UBalt7eq1}} \\
&\leq \beta\enspace.
\end{align*}

\smallskip

Assume now that $\delta^{*}{}$ is in $(- \lambda_{0}, 0)$.
The assumption \eqref{distalt7} implies that
$$ d_2(\lambda, \calS_0[\lambda_0]) \geq \frac{2}{\sqrt{L}} \max \Bigg( 2 \sqrt{\frac{\lambda_{0} + \delta^{*}}{\beta}} ~,~
 \sqrt{\frac{\vert \delta^{*} \vert \log \pa{1/\alpha} }{\log \pa{\lambda_{0}/\pa{\lambda_{0} - \vert\delta^{*} \vert/ 2}}}} \Bigg)\enspace,$$
which entails
$$ \frac{\vert \delta^{*}{} \vert}{2}(1-\tau) L \geq   \sqrt{\frac{(\lambda_{0}+ \delta^{*}{})(1-\tau)  L}{\beta}} + \frac{\log \pa{1/\alpha} }{\log \pa{\lambda_{0}/\pa{\lambda_{0} - \vert\delta^{*} \vert/ 2} }}\enspace,$$
and then
\begin{multline}\label{UBalt7eq2}
\pa{\lambda_{0} - \frac{\vert \delta^{*}{} \vert}{2}}(1-\tau) L -   \frac{\log \pa{1/\alpha} }{\log \pa{\lambda_{0}/\pa{\lambda_{0} - \vert\delta^{*} \vert/ 2} }}- (\lambda_{0} + \delta^{*}{})(1-\tau)  L \\
\geq \sqrt{\frac{(\lambda_{0} + \delta^{*}{})(1-\tau) L}{\beta}}\enspace.
\end{multline}

We conclude with the following inequalities, deduced from \eqref{UBQuantilesupShifted2}, \eqref{UBalt7eq2} and the Bienayme-Chebyshev inequality successively:
\begin{align*}
P_{\lambda}\Big(\phi_{7,\alpha}(N)=0\Big)& \leq P_{\lambda} \pa{\sup_{\tau' \in (0,1)}S_{\delta^*,\tau',1}(N)\leq   s_{\lambda_0,\delta^*,\tau^*,L}^+(1-\alpha)}\\
&\leq P_{\lambda} \pa{ \pa{\lambda_{0} - \frac{\vert \delta^{*}{} \vert}{2}}(1-\tau)  L - N(\tau,1]  \leq  \frac{\log \pa{1/\alpha} }{\log \pa{\lambda_{0}/\pa{\lambda_{0} - \vert\delta^{*} \vert/ 2} }} } \\
&\leq    P_\lambda \pa{ N(\tau, 1] - (\lambda_{0}+ \delta^{*}{})(1-\tau)  L \geq \sqrt{\frac{(\lambda_{0} + \delta^{*}{})(1-\tau)  L}{\beta}}    } \\
&\leq \beta\enspace.
\end{align*}
Coming back to the formulation of assumption \eqref{distalt7}, we therefore can take in the statement of Proposition \ref{UBalt7}
\begin{multline*}
C(\alpha, \beta, \lambda_{0},\delta^*)=2\max \Bigg(2 \sqrt{\frac{\lambda_{0} + \delta^{*}}{\beta}} ~,~ \sqrt{ \delta^{*} s_{\lambda_{0},\frac{\delta^{*}}{2}}^+\pa{ 1-\alpha}} \1{\delta^{*} >0} +\\
\sqrt{\frac{\vert \delta^{*} \vert \log \pa{1/\alpha} }{\log \pa{\lambda_{0}/\pa{\lambda_{0} - \vert\delta^{*} \vert/ 2}}}} \1{-\lambda_{0} <\delta^{*} <0}\Bigg)\enspace.
 \end{multline*}

 \subsection{Proof of Lemma \ref{LBalt8-pre}}

Let $\lambda_0>0$ and $\phi_{\alpha}$ a level-$\alpha$ test of the null hypothesis
  $\hzero \ "\lambda\in \calS_0[\lambda_0]=\{\lambda_0\}"$ versus the alternative $\hone\ "\lambda\in\calS_{\bbul,\bbul\bbul,1-\bbul\bbul}[\lambda_0]"$, with 
$\calS_{\bbul,\bbul\bbul,1-\bbul\bbul}[\lambda_0]$ defined by \eqref{alt8-pre}.  Let us fix some $r>0$. As in the proof of Lemma \ref{LBalt6-pre}, we can argue that
 if there exists $\lambda$ in $\calS_{\bbul,\bbul\bbul,1-\bbul\bbul}[\lambda_0]$ such that $d_2\pa{\lambda,\calS_0[\lambda_0]}\geq r$ satisfying $1- \alpha - \sqrt{K\pa{P_{\lambda}, P_{\lambda_0}}/2} \geq \beta$, then $\mSRab\pa{\calS_{\bbul,\bbul\bbul,1-\bbul\bbul}[\lambda_0]}  \geq r$.

Let us introduce for all $\tau$ in $(0,1)$, $\lambda_{r}= \lambda_{0} + r (1-\tau)^{-1/2} \mathds{1}_{(\tau,1]}$ in $\calS_{\bbul,\bbul\bbul,1-\bbul\bbul}[\lambda_0]$ which satisfies $d_2(\lambda_{r},\calS_0[\lambda_0])=r$. The end of the proof follows the same line as the one of Lemma~\ref{LBalt6-pre}, noticing that 
 \[ K\pa{P_{\lambda_{r}}, P_{\lambda_0}}= \log \left( 1+ \frac{r}{\lambda_{0} \sqrt{1-\tau}}  \right)\left( \lambda_{0}+ \frac{r}{\sqrt{1-\tau}} \right)(1-\tau) L-Lr \sqrt{1-\tau}\enspace, \] and choosing  $\tau$ close enough to $1$ in order to get $K\pa{P_{\lambda_{r}}, P_{\lambda_0}} \leq 2(1- \alpha -\beta)^2$, which yields $\mSRab\pa{\calS_{\bbul,\bbul\bbul,1-\bbul\bbul}[\lambda_0]}  \geq r$ for all $r>0$ and allows to conclude.

 \subsection{Proof of Proposition \ref{LBalt8}}

Assume that $L\geq 3$ and $\alpha+\beta<1/2$. As in the proof of Proposition \ref{LBalt6}, we consider $C'_{\alpha, \beta}=4(1- \alpha - \beta)^{2}$, $K_{\alpha,\beta,L}=\lceil (\log_2 L)/C'_{\alpha,\beta} \rceil$ and for $k$ in $\lbrace 1,\ldots, K_{\alpha,\beta,L}\rbrace$,
$\lambda_{k}= \lambda_{0}+ \delta_{k} \mathds{1}_{(\tau_k, 1]}$ with $ \tau_{k}  = 1- 2^{-k}$
 and
 $ \delta_{k} = (2^k\lambda_{0} \log \log L/ L)^{1/2}.$ Then, for every $k$ in $\lbrace 1,\ldots,K_{\alpha,\beta,L} \rbrace$, $d_2\pa{\lambda_{k},\calS_0[\lambda_0]} = \sqrt{\lambda_{0}\log \log L/L}$, and assuming that
\begin{equation}\label{assum_L_alt8}
\frac{\log \log L}{L^{1-1/C'_{\alpha,\beta}}} \leq \frac{(R- \lambda_{0})^2}{2\lambda_0}\enspace,
\end{equation}
 $\lambda_k$ belongs to $\calS_{\bbul,\bbul,1-\bbul\bbul}[\lambda_0,R]$. The proof then essentially follows the same arguments as the proof of Proposition \ref{LBalt6}. Thus, considering  a random variable $\kappa$ with uniform distribution on $\lbrace 1,\ldots, K_{\alpha,\beta,L} \rbrace$ and the probability distribution $\mu$ of $\lambda_{\kappa}$, we aim at proving that $E_{\lambda_0} [\left( dP_{\mu}/dP_{\lambda_0}\right)^{2}  ] \leq 1+C'_{\alpha,\beta}$, with $P_\mu$ defined as in Lemma \ref{lemmebayesien}, in order to conclude that  $\mSRab\pa{\calS_{\bbul,\bbul\bbul,1-\bbul\bbul}[\lambda_0,R]}\geq \sqrt{\lambda_{0}\log \log L/L}$.
 
 By definition,
\[\frac{dP_{\mu}}{dP_{\lambda_0}}(N)= \frac{1}{K_{\alpha,\beta,L}} \sum_{k=1}^{K_{\alpha,\beta,L}} \exp \left( \log \left( 1+ \frac{\delta_{k}}{\lambda_{0}} \right) N ( \tau_k, 1]  -L (1-\tau_k) \delta_{k}  \right)\enspace.\]
 Since $\tau_{k'}>\tau_{k}$ for all $k' >k$,
 \begin{multline*}
 \left( \frac{dP_{\mu}}{dP_{\lambda_0}}(N) \right)^{2} 
 = \frac{1}{K_{\alpha,\beta,L}^{2}} \sum_{k=1}^{K_{\alpha,\beta,L}} \exp \left( 2 \log \left( 1+ \frac{\delta_{k}}{\lambda_{0}} \right) N (\tau_k, 1]  -2L(1-\tau_k) \delta_{k}  \right) \\
+ \frac{2}{K_{\alpha,\beta,L}^{2}} \sum_{k=1}^{K_{\alpha,\beta,L}-1} \sum_{k'=k+1}^{K_{\alpha,\beta,L}} \exp \left(\left( \log \left( 1+ \frac{\delta_{k}}{\lambda_{0}} \right) +\log\left( 1+ \frac{\delta_{k'}}{\lambda_{0}} \right) \right)N(\tau_{k'}, 1]+\right.\\
 +\left.\log \left( 1+ \frac{\delta_{k}}{\lambda_{0}} \right) N(\tau_k, \tau_{k'}]-L(1-\tau_k) \delta_{k}  -L(1-\tau_{k'}) \delta_{k'} \right)\enspace,
\end{multline*}
hence
 \begin{multline*}
E_{\lambda_0} \left[\left( \frac{dP_{\mu}}{dP_{\lambda_0}} \right)^{2}   \right]= \frac{1}{K_{\alpha,\beta,L}^{2}} \sum_{k=1}^{K_{\alpha,\beta,L}} \exp\pa{\frac{L (1-\tau_k) \delta_{k}^{2}}{\lambda_{0}}}\\+ \frac{2}{K_{\alpha,\beta,L}^{2}} \sum_{k=1}^{K_{\alpha,\beta,L}-1} \sum_{k'=k+1}^{K_{\alpha,\beta,L}} \exp\pa{ \frac{L (1-\tau_{k'})\delta_{k} \delta_{k'}}{\lambda_{0}}}\enspace.
\end{multline*}

We then obtain the same expression of $E_{\lambda_0} \left[\left(dP_{\mu}/dP_{\lambda_0}\right)^{2}   \right]$ as in Equation \eqref{eq_LBalt6}, and the proof ends exactly as the one of Proposition \ref{LBalt6}, just replacing \eqref{assum_L_alt6_1} by  \eqref{assum_L_alt8} in the final argument.

\subsection{Proof of Proposition \ref{UBalt8}}

This proof is very similar to the one of Proposition \ref{UBalt6}. For the sake of completeness, we nevertheless detail it below.

 The control of the first kind error rates of the two tests $\phi_{8,\alpha}^{(1)}$ and $\phi_{8,\alpha}^{(2)}$ is straightforward using simple union bounds. 

\medskip

\emph{$(i)$ Control of the second kind error rate of $\phi_{8,\alpha}^{(1)}$.}

\smallskip

Let $\lambda$ in $\calS_{\bbul,\bbul\bbul,1-\bbul\bbul}[\lambda_0,R]$ be such that $\lambda= \lambda_{0}+  \delta \mathds{1}_{(\tau, 1]}$, with $\tau$ in $(0,1)$, $\delta$ in $(-\lambda_{0}, R- \lambda_{0}]\setminus\{0\}$, and such that 
 \begin{align} \label{distance_alt8_N1}
  d_2\pa{\lambda,\calS_0[\lambda_0]}&\geq \sqrt{2} \max \left(2\sqrt{\frac{R\log \left(2/u_\alpha \right)}{3L}}~,~ 2  \sqrt{\frac{2 \lambda_{0}\log \left(2/u_\alpha\right)}{L}} + 2\sqrt{\frac{R}{\beta L}}  ~,~ \frac{R}{\sqrt{L}}  \right)\enspace,
 \end{align}
as in \eqref{distance_alt6_N1}.

We prove here that $P_{\lambda}\pa{\phi_{8,\alpha}^{(1)}(N)=0} \leq \beta$, assuming first that $\delta$ belongs to $(0,R- \lambda_{0}]$. 

Noticing that
$$ P_{\lambda}\pa{\phi_{8,\alpha}^{(1)}(N)=0}  \leq \inf_{k\in \lbrace 1,\ldots, \lfloor \log_{2} L \rfloor \rbrace} P_{\lambda} \left( S_{\tau_k, 1}(N)\leq  s_{\lambda_0,\tau_k,1} \pa{1 -u_\alpha} \right)\enspace,$$
one can see that it is enough to exhibit some $k$ in $\set{1,\ldots, \lfloor \log_{2} L \rfloor}$ satisfying
\[P_{\lambda} \left( S_{\tau_k,1}(N)\leq  s_{\lambda_0,\tau_k,1} \pa{1 -u_\alpha} \right) \leq \beta\enspace.\]

Let $k_\tau=\lfloor -\log_2(1-\tau)\rfloor+1$. Since $0<1-\tau<1$, $k_\tau\geq 1$. Moreover, from \eqref{distance_alt8_N1}, we obtain 
 $d_2^2\pa{\lambda,\calS_0[\lambda_0]} \geq 2 R^2/L$ which entails $(1-\tau)\geq 2/L$ and $k_\tau\leq  \lfloor \log_2(L/2) \rfloor+1\leq \lfloor \log_2 L\rfloor$. Consider now $\tau_{k_\tau}=1-2^{-k_{\tau}},$ which satisfies  $(1-\tau)/2\leq 1-\tau_{k_\tau}<1-\tau $ as well as
  \begin{equation} \label{eq1_alt8_N1}
   \delta^2(1-\tau_{k_\tau}) \geq d_2^2\pa{\lambda,\calS_0[\lambda_0]}/2\enspace .
   \end{equation}

We get from \eqref{distance_alt8_N1}
\[d_2\pa{\lambda,\calS_0[\lambda_0]}\geq \sqrt{2} \max \left(2\sqrt{\frac{\delta\log \left(2/u_\alpha \right)}{3L}}~,~ 2  \sqrt{\frac{2 \lambda_{0}\log \left(2/u_\alpha\right)}{L}} + 2\sqrt{\frac{\lambda_0+\delta}{\beta L}} \right)\enspace,\]
which gives with \eqref{eq1_alt8_N1}
\[\delta \sqrt{1-\tau_{k_\tau}} \geq    \max \left(2\sqrt{\frac{\delta\log \left(2/u_\alpha \right)}{3L}}~,~ 2  \sqrt{\frac{2 \lambda_{0}\log \left(2/u_\alpha\right)}{L}} + 2\sqrt{\frac{\lambda_0+\delta}{\beta L}} \right)\enspace.\]
This entails in particular
$\delta \pa{1-\tau_{k_\tau}} \geq (4/3) \log \left( 2 /u_\alpha  \right)/L$ and also 
\[\delta\pa{1-\tau_{k_\tau}} \geq 2 \sqrt{1-\tau_{k_\tau}}\pa{\sqrt{\frac{2 \lambda_{0}\log \left(2/u_\alpha\right)}{L}} + \sqrt{\frac{\lambda_0+\delta}{\beta L}} }\enspace.\]

Hence,
 \begin{multline} \label{eq2_alt8_N1}
    \delta \pa{1-\tau_{k_\tau}} L \geq
     \frac{2}{3} \log \left( 2/u_\alpha \right) +  \sqrt{2 \lambda_{0}\pa{1-\tau_{k_\tau}} L \log \left(2/u_\alpha\right) } +\\  \sqrt{\frac{(\lambda_{0}+ \delta) \pa{1-\tau_{k_\tau}}L}{\beta}}\enspace.
\end{multline}

On the one hand, since $\tau_{k_\tau}>\tau$, Lemma \ref{momentPoisson} gives $E_{\lambda}\cro{S_{\tau_{k_\tau}, 1}(N)}= \delta \pa{1-\tau_{k_\tau}} L$
 and
$\mathrm{Var}_{\lambda}\cro{S_{\tau_{k_\tau},1}(N) } =(\lambda_{0}+ \delta)\pa{1-\tau_{k_\tau}} L$. On the other hand, Lemma \ref{QuantilesAbsShifted} with the inequality   \eqref{UBginv} give
   \[s_{\lambda_0,\tau_{k_\tau},1} \pa{1 -u_\alpha}  \leq \frac{2}{3} \log \left( 2/u_\alpha \right) +  \sqrt{2 \lambda_{0}\pa{1-\tau_{k_\tau}} L \log \left(2/u_\alpha\right) } \enspace. \] 
 
Combined with \eqref{eq2_alt8_N1}, these computations yield
 \begin{equation} \label{eq3_alt8_N1}
E_{\lambda}\cro{S_{\tau_{k_\tau},1}(N)} \geq s_{\lambda_0,\tau_{k_\tau},1} \pa{1 -u_\alpha} +  \sqrt{\mathrm{Var}_{\lambda}\cro{S_{\tau_{k_\tau},1}(N)}/\beta}\enspace.
\end{equation}

The Bienayme-Chebyshev then leads to
\[P_{\lambda} \Big(S_{\tau_{k_\tau},1}(N)\leq  s_{\lambda_0,\tau_{k_\tau},1} \pa{1 -u_\alpha} \Big)\leq \beta\enspace.\]

Assume now that $\delta$ belongs to $(-\lambda_0,0)$ and notice that
$$ P_{\lambda}\pa{\phi_{8,\alpha}^{(1)}(N)=0}  \leq \inf_{k\in \lbrace 1,\ldots, \lfloor \log_{2} L \rfloor \rbrace} P_{\lambda} \left( -S_{\tau_k,1}(N)\leq  s_{\lambda_0,\tau_k,1} \pa{1 -u_\alpha} \right)\enspace.$$
The same choice of $k_{\tau}$ as in the above case where $\delta\in (0,R- \lambda_{0}]$ entails 
 \begin{equation} \label{eq4_alt8_N1}
    |\delta| \pa{1-\tau_{k_\tau}}  L \geq  s_{\lambda_0,\tau_{k_\tau},1} \pa{1 -u_\alpha}    +  \sqrt{\frac{(\lambda_{0}+ \delta)  \pa{1-\tau_{k_\tau}} L}{\beta}}\enspace,
    \end{equation}
and since $E_{\lambda}\cro{S_{\tau_{k_\tau},1}(N)}  = - \vert \delta \vert \pa{1-\tau_{k_\tau}} L$ and
$\mathrm{Var}_{\lambda}\cro{S_{\tau_{k_\tau},1}(N)} =(\lambda_{0} + \delta) \pa{1-\tau_{k_\tau}}L  $, we obtain in the same way
\[P_{\lambda} \Big(-S_{\tau_{k_\tau},1}(N)\leq  s_{\lambda_0,\tau_{k_\tau},1} \pa{1 -u_\alpha} \Big)\leq \beta\enspace.\]

 \smallskip
 
Finally, \eqref{distance_alt8_N1} leads in both cases to $P_{\lambda}\pa{\phi_{8,\alpha}^{(1)}(N)=0} \leq \beta$, which allows to conclude that
\begin{multline*}
\SRb\Big(\phi_{8,\alpha}^{(1)},\calS_{\bbul,\bbul\bbul,1-\bbul\bbul}[\lambda_0,R]\Big) \leq \sqrt{2} \max \Bigg(2\sqrt{\frac{R\log \left(2\lfloor \log_2 L\rfloor/\alpha  \right)}{3L}}~,\\
 2  \sqrt{\frac{2 \lambda_{0}\log \left(2\lfloor \log_2 L\rfloor/\alpha  \right)}{L}} + 2\sqrt{\frac{R}{\beta L}}  ~,~ \frac{R}{\sqrt{L}} \Bigg)\enspace.
\end{multline*}

\emph{$(ii)$ Control of the second kind error rate of $\phi_{8,\alpha}^{(2)}$.}

\smallskip

Let $\lambda$ in $\calS_{\bbul,\bbul\bbul,1-\bbul\bbul}[\lambda_0,R]$ be such that $\lambda= \lambda_{0}+  \delta \mathds{1}_{(\tau, 1]}$, with $\tau$ in $(0,1)$, $\delta$ in $(-\lambda_{0}, R- \lambda_{0}]\setminus\{0\}$, and such that 
 \begin{multline} \label{distance_alt8_N2}
  d_2\pa{\lambda,\calS_0[\lambda_0]}\geq \max \Bigg(  4 \sqrt{\frac{2 \lambda_{0}\log \pa{3/u_\alpha}}{L}} +2 \sqrt{2\sqrt{\frac{2}{\beta}}\frac{R}{L}}~,~2\sqrt{\frac{2\sqrt{2}R\log \pa{3/u_\alpha}}{3L}}~,\\
4\pa{\frac{2}{3}}^{1/3} \lambda_{0}^{1/6}R^{1/3}\sqrt{\frac{\log \pa{3/u_\alpha}}{L}}~,~16 \sqrt{\frac{R}{\beta L}}~,~\frac{\sqrt{2}R}{\sqrt{L}}\Bigg)\enspace,
 \end{multline}
as in \eqref{distance_alt6_N2}.

Let us prove that this implies that  $P_{\lambda}\pa{\phi_{8,\alpha}^{(2)}(N)=0} \leq \beta$. 

\smallskip

Notice that
$$ P_{\lambda}\pa{\phi_{8,\alpha}^{(2)}(N)=0}  \leq \inf_{k\in \lbrace 1,\ldots,\lfloor \log_{2} L \rfloor \rbrace} P_{\lambda} \left(  T_{\tau_k, 1 }(N) \leq  t_{\lambda_0,\tau_k,1} \pa{1 -u_\alpha} \right)\enspace,$$
to see that one only needs to exhibit some $k$ in $\set{1,\ldots, \lfloor \log_{2} L \rfloor}$ satisfying
\[P_{\lambda} \left( T_{\tau_k,1}(N)\leq  t_{\lambda_0,\tau_k,1} \pa{1 -u_\alpha} \right) \leq \beta\enspace,\]
to obtain the expected result.

As in the above part $(i)$ of the proof, let $k_\tau=\lfloor -\log_2(1-\tau)\rfloor+1$ and $\tau_{k_\tau}=1-2^{-k_{\tau}}$. From \eqref{distance_alt8_N2} which in particular entails $d_2^2\pa{\lambda,\calS_0[\lambda_0]} \geq 2 R^2/L$, we get that $k_\tau$ actually belongs to $\set{1,\ldots, \lfloor \log_{2} L \rfloor}$. Furthermore, by definition, \begin{equation} \label{eq1_alt8_N2}
   \delta^2\pa{1-\tau_{k_\tau}}\geq d_2^2\pa{\lambda,\calS_0[\lambda_0]}/2\enspace .
   \end{equation}

Now, we also deduce from \eqref{distance_alt8_N2} that
 \begin{multline*}
  d_2\pa{\lambda,\calS_0[\lambda_0]}\geq \max \Bigg(  4 \sqrt{\frac{2 \lambda_{0}\log \pa{3/u_\alpha}}{L}} +2 \sqrt{2\sqrt{\frac{2}{\beta}}\frac{R}{L}}~,~\sqrt{\frac{\sqrt{2}R\log \pa{3/u_\alpha}}{3L}}~,\\
4\pa{\frac{2}{3}}^{1/3} \lambda_{0}^{1/6}R^{1/3}\sqrt{\frac{\log \pa{3/u_\alpha}}{L}}~,~16\sqrt{\frac{R}{\beta L}}\Bigg)\enspace.
 \end{multline*}
This entails on the one hand that
$d_2^4\pa{\lambda,\calS_0[\lambda_0]} \geq 128 R^2  \log^2 \pa{3/u_\alpha}/(9L^2)$, and with \eqref{eq1_alt8_N2}, $ d_2^4\pa{\lambda,\calS_0[\lambda_0]} \geq 64 d_2^2\pa{\lambda,\calS_0[\lambda_0]}  \log^{2} \pa{3/u_\alpha}/(9L^2 \pa{1-\tau_{k_\tau}})$.
On the other hand, we deduce that
\[d_2^3\pa{\lambda,\calS_0[\lambda_0]} \geq \frac{64}{3} d_2\pa{\lambda,\calS_0[\lambda_0]}  \sqrt{\frac{2\lambda_{0} \log^{3}  \pa{3/u_\alpha}}{L^3\pa{1-\tau_{k_\tau}}}}\enspace.\]
Therefore
 \begin{multline*}
  d_2^2\pa{\lambda,\calS_0[\lambda_0]}\geq \max \Bigg( 32 \frac{ \lambda_{0}\log \pa{3/u_\alpha}}{L} + 8\sqrt{\frac{2}{\beta}}\frac{ R}{L}~,~\frac{64 \log^2 \pa{3/u_\alpha}}{9L^2  \pa{1-\tau_{k_\tau}}}~,\\
\frac{64}{3}\sqrt{\frac{2\lambda_{0} \log^{3}  \pa{3/u_\alpha}}{L^3\pa{1-\tau_{k_\tau}}}}~,~16d_2(\lambda,\calS_0[\lambda_0])\sqrt{\frac{R}{\beta L}}\Bigg)\enspace.
 \end{multline*}
Hence,
 \begin{multline} \label{eq2_alt8_N2}
  \frac{ d_2^2\pa{\lambda,\calS_0[\lambda_0]}}{2} \geq  \frac{4 \lambda_{0} \log \pa{3/u_\alpha}}{ L}   + \sqrt{\frac{2}{\beta}}\frac{ R}{L} + \frac{8 \log^2 \pa{3/u_\alpha}}{9L^2 \pa{1-\tau_{k_\tau}}}\\
  +\frac{8}{3} \sqrt{\frac{2\lambda_{0} \log^{3}  \pa{3/u_\alpha}}{L^3\pa{1-\tau_{k_\tau}}}}   + 2d_2(\lambda,\calS_0[\lambda_0])\sqrt{\frac{R}{\beta L}}\enspace.
 \end{multline}
Since $\tau_{k_\tau}>\tau$, Lemma \ref{MomentsT} gives that $E_{\lambda}\cro{T_{\tau_{k_\tau}, 1}(N)}= \delta^2 \pa{1-\tau_{k_\tau}}$
and
\[\mathrm{Var}_{\lambda}\cro{T_{\tau_{k_\tau}, 1}(N) } = \frac{4 \delta^2(\lambda_{0}+\delta)\pa{1-\tau_{k_\tau}}}{L}  + \frac{2(\lambda_{0}+ \delta)^2}{L^2} \enspace.\] 
From \eqref{eq1_alt8_N2}, we get
 \begin{equation}\label{eq3_alt8_N2}
E_{\lambda}\cro{T_{\tau_{k_\tau}, 1}(N)} \geq \frac{d_2^2\pa{\lambda,\calS_0[\lambda_0]}}{2}\enspace.
 \end{equation}
Moreover,
 \begin{equation}\label{eq4_alt8_N2}
 \mathrm{Var}_{\lambda}\cro{T_{\tau_{k_\tau}, 1}(N) } \leq \frac{4 d_2^2\pa{\lambda,\calS_0[\lambda_0]} R}{L} +  \frac{2 R^2}{L^2}\enspace.
 \end{equation}
 With \eqref{eq3_alt8_N2} and \eqref{eq4_alt8_N2}, the inequality \eqref{eq2_alt8_N2} yields
 \begin{multline*}
E_{\lambda}\cro{T_{\tau_{k_\tau}, 1}(N)} \geq  \frac{4 \lambda_{0} \log \pa{3/u_\alpha}}{ L} + \frac{8 \log^2 \pa{3/u_\alpha}}{9L^2\pa{1-\tau_{k_\tau}}}+\frac{8}{3}\sqrt{\frac{2\lambda_{0} \log^{3} \pa{3/u_\alpha}}{L^{3}\pa{1-\tau_{k_\tau}}}}\\
 +  \sqrt{\frac{\mathrm{Var}_{\lambda}\cro{T_{\tau_{k_\tau}, 1}(N) } }{\beta}}\enspace.
 \end{multline*}

Furthermore, Lemma \ref{QuantilesT} gives
 \begin{align*} 
t_{\lambda_0,\tau_{k_\tau},1} \pa{1 -u_\alpha}  &\leq   \frac{4\lambda_{0}\log \pa{3/u_\alpha}}{ L} +  \frac{ 8 \log^2 \pa{3/u_\alpha}}{9L^2  \pa{1-\tau_{k_\tau}}} +\frac{8}{3}\sqrt{\frac{2\lambda_{0} \log^{3}  \pa{3/u_\alpha}}{L^3\pa{1-\tau_{k_\tau}}}}\\
&\leq E_{\lambda}\cro{T_{\tau_{k_\tau}, 1}(N)} -\sqrt{\frac{\mathrm{Var}_{\lambda}\cro{T_{\tau_{k_\tau}, 1}(N) } }{\beta}}\enspace, 
 \end{align*}
and the Bienayme-Chebyshev leads to
\[
P_{\lambda} \Big(T_{\tau_{k_\tau}, 1}(N)\leq  t_{\lambda_0,\tau_{k_\tau},1} \pa{1 -u_\alpha} \Big)\leq \beta \enspace.
\]
This entails the expected result $P_{\lambda}\pa{\phi_{8,\alpha}^{(2)}(N)=0} \leq \beta$, 
 and finally allows to conclude that
\begin{multline*}
\SRb\Big(\phi_{8,\alpha}^{(2)},\calS_{\bbul,\bbul\bbul,1-\bbul\bbul}[\lambda_0,R]\Big) \leq \max \Bigg(  4 \sqrt{\frac{2 \lambda_{0}\log \pa{3\lfloor \log_2 L\rfloor/\alpha }}{L}} +2 \sqrt{2\sqrt{\frac{2}{\beta}}\frac{R}{L}}~,\\
2\sqrt{\frac{2\sqrt{2}R\log \pa{3\lfloor \log_2 L\rfloor/\alpha }}{3L}}~,~4\pa{\frac{2}{3}}^{1/3}\lambda_{0}^{1/6}R^{1/3}\sqrt{\frac{\log \pa{3\lfloor \log_2 L\rfloor/\alpha }}{L}}~,~16 \sqrt{\frac{R}{\beta L}}~,~\frac{\sqrt{2}R}{\sqrt{L}}\Bigg)\enspace.
\end{multline*}

\subsection{Proof of Proposition \ref{LBalt9}}

Let $L \geq 2$. For all $k$ in $\set{1,\ldots,\lceil L^{3/4} \rceil}$, let us define $\lambda_k(t) = \lambda_{0} + \delta^{*}{} \mathds{1}_{(\tau_k, \tau_k + \ell]}(t)$ with
$\tau_k =k/L$, and $\ell =\lambda_{0} \log L /(2{\delta^*}^2 L)$. Then $\lambda_k$ belongs to $\calS_{\delta^*,\bbul\bbul,\bbul\bbul\bbul} [\lambda_0]$ for all $k$ in $\set{1,\ldots,\lceil L^{3/4} \rceil}$ as soon as
\begin{equation} \label{LBalt9eq1}
\frac{\lceil L^{3/4} \rceil}{L} + \frac{ \lambda_{0} \log L}{2 {\delta^*}^2 L}< 1\enspace,
\end{equation}
and it satisfies
\[d_2^2\pa{\lambda_k,\calS_0[\lambda_0]} = \lambda_{0} \log L/(2L)\enspace.\]
Considering a random variable $J$ uniformly distributed on $\set{1,\ldots, \lceil L^{3/4} \rceil}$ and
the distribution $\mu$ of $\lambda_{J}$ and using Lemma \ref{lemmebayesien}, one can see that it is enough to prove that $E_{\lambda_0} [\left( dP_{\mu}/dP_{\lambda_0}\right)^{2}  ] \leq 1+4(1- \alpha -\beta)^{2}$ to obtain the expected lower bound.
By definition, $ (dP_{\mu}/dP_{\lambda_0})(N)= E_{J} \left[ (dP_{\lambda_{J}}/dP_{\lambda_0})(N)  \right] $, therefore
 \[ \frac{dP_{\mu}}{dP_{\lambda_0}}(N)= \frac{1}{\lceil L^{3/4} \rceil} \sum_{k=1}^{\lceil L^{3/4} \rceil} \exp \left( \log \left( 1+ \frac{\delta^{*}{}}{\lambda_{0}} \right) N (\tau_k, \tau_k +\ell]  -L \delta^* \ell  \right)\enspace.\]
 We then expand the square as
  \begin{align*}
\left( \frac{dP_{\mu}}{dP_{\lambda_{0}}} \right)^{2} (N)
 = &\frac{1}{ \lceil L^{3/4} \rceil^{2}} \sum_{k=1}^{ \lceil L^{3/4} \rceil} \exp \left( 2 \log \left( 1+ \frac{\delta^{*}{}}{\lambda_{0}} \right) N(\tau_k, \tau_k + \ell]  -2 L \delta^{*}{} \ell  \right) \\
  &+ \frac{2}{\lceil L^{3/4} \rceil^{2}} \sum_{k=1}^{\lceil L^{3/4} \rceil -1} \sum_{k'=k+1}^{\lceil L^{3/4} \rceil} \exp \Bigg( 
   \log \left( 1+ \frac{\delta^{*}{}}{\lambda_{0}} \right) N (\tau_k, \tau_k+\ell]\\
  &+ \log \left(1+ \frac{\delta^{*}{}}{\lambda_{0}} \right) N (\tau_{k'}, \tau_{k'} +\ell]
 -2 L \delta^{*}{} \ell 
  \Bigg)\enspace.
  \end{align*}
  For $k$ in $\set{1,\ldots,\lceil L^{3/4} \rceil-1}$, setting $K_0(k)= \max \left( k' \in \set{k+1,\ldots,\lceil L^{3/4} \rceil}:\tau_{k'} < \tau_k +\ell \right)$, we may write
    \begin{align*}
\left( \frac{dP_{\mu}}{dP_{\lambda_{0}}} \right)^{2} (N)
 = &\frac{1}{ \lceil L^{3/4} \rceil^{2}} \sum_{k=1}^{ \lceil L^{3/4} \rceil} \exp \left( 2 \log \left( 1+ \frac{\delta^{*}{}}{\lambda_{0}} \right) N(\tau_k, \tau_k + \ell]  -2 L \delta^{*}{} \ell \right) \\
  &+ \frac{2}{\lceil L^{3/4} \rceil^{2}} \sum_{k=1}^{\lceil L^{3/4} \rceil -1} \sum_{k'=K_0(k)+1}^{\lceil L^{3/4} \rceil} \exp \Bigg( 
   \log \left( 1+ \frac{\delta^{*}{}}{\lambda_{0}} \right) N (\tau_k, \tau_k+\ell]\\
  &\qquad+ \log \left(1+ \frac{\delta^{*}{}}{\lambda_{0}} \right) N (\tau_{k'}, \tau_{k'} +\ell]
 -2 L \delta^{*}{} \ell
  \Bigg) \\
   &+ \frac{2}{\lceil L^{3/4} \rceil^{2}} \sum_{k=1}^{\lceil L^{3/4} \rceil -1} \sum_{k'=k+1}^{K_0(k)} \exp \Bigg( 
   \log \left( 1+ \frac{\delta^{*}{}}{\lambda_{0}} \right) \left( N (\tau_k, \tau_{k'}] + N (\tau_{k'}, \tau_k+\ell] \right)\\
  &\qquad+ \log \left(1+ \frac{\delta^{*}{}}{\lambda_{0}} \right) ( N (\tau_{k'}, \tau_{k} +\ell]+N (\tau_k +\ell, \tau_{k'}+\ell] )-2 L \delta^{*}{} \ell \Bigg)\enspace.
  \end{align*}
  Under $\hzero$, $N$ is a homogeneous Poisson process with intensity $\lambda_{0}$ with respect to the measure $\Lambda$. Thus
  \begin{align*}
   E_{\lambda_0} \left[  \left( \frac{dP_{\mu}}{dP_{\lambda_0}} \right)^{2} (N)  \right] = &\frac{\sqrt{L}}{\lceil L^{3/4} \rceil} + \frac{2}{\lceil L^{3/4} \rceil^2} \sum_{k=1}^{\lceil L^{3/4} \rceil-1} (\lceil L^{3/4} \rceil-K_0(k)) \\
   &+  \frac{2}{\lceil L^{3/4} \rceil^{2}} \sum_{k=1}^{\lceil L^{3/4} \rceil -1} \sum_{k'=k+1}^{K_0(k)} \exp \left(  \frac{\delta^{*}{}^2 L}{\lambda_{0}}(\tau_k - \tau_{k'} +\ell)  \right) \\
  = & \frac{\sqrt{L}}{\lceil L^{3/4} \rceil} + \frac{2}{\lceil L^{3/4} \rceil^2} \sum_{k=1}^{\lceil L^{3/4} \rceil-1} (\lceil L^{3/4} \rceil-K_0(k))\\
  & +  \frac{2 \sqrt{L}}{\lceil L^{3/4} \rceil^{2}} \sum_{k=1}^{\lceil L^{3/4} \rceil -1} \sum_{k'=k+1}^{K_0(k)} \exp \left(  -\frac{\delta^{*}{}^2}{\lambda_{0} } (k' - k)  \right) \\
   \leq  &  \frac{\sqrt{L}}{\lceil L^{3/4} \rceil} + \frac{2}{\lceil L^{3/4} \rceil^2} \sum_{k=1}^{\lceil L^{3/4} \rceil-1} (\lceil L^{3/4} \rceil-K_0(k))\\
   & +  \frac{2 \sqrt{L}}{\lceil L^{3/4} \rceil} \left(  e^{\delta^{*}{}^2 /\lambda_{0} }-1 \right)^{-1}\enspace.
   \end{align*}
   Since $K_0(k) \geq k+1$ for all $k$ in $\set{1,\ldots,\lceil L^{3/4} \rceil-1}$, notice that  
  \[\sum_{k=1}^{\lceil L^{3/4} \rceil-1} (\lceil L^{3/4} \rceil-K_0(k)) \leq \sum_{k=1}^{\lceil L^{3/4} \rceil-2} k \leq \frac{\lceil L^{3/4} \rceil^2}{2}\enspace,\]
  hence
  \[E_{\lambda_0} \left[  \left( \frac{dP_{\mu}}{dP_{\lambda_0}} \right)^{2} (N)  \right]  \leq   1+ \frac{\sqrt{L}}{\lceil L^{3/4} \rceil}\frac{e^{\delta^{*}{}^2 /\lambda_{0} }+1}{e^{\delta^{*}{}^2 /\lambda_{0} }-1}\enspace.\]
   Finally, assuming that 
 \begin{equation} \label{LBalt9eq2}
\frac{\sqrt{L}}{\lceil L^{3/4} \rceil}\frac{e^{\delta^{*}{}^2 /\lambda_{0} }+1}{e^{\delta^{*}{}^2 /\lambda_{0} }-1} \leq 4(1- \alpha - \beta)^2\enspace,
\end{equation}
we get
\[E_{\lambda_0} \left[  \left( \frac{dP_{\mu}}{dP_{\lambda_0}} \right)^{2} (N)  \right]  \leq   1+ 4(1- \alpha - \beta)^2\enspace.\]
Noticing that there exists $L_0(\alpha,\beta,\lambda_0,\delta^*)\geq 2$ such that for all $L\geq L_0(\alpha,\beta,\lambda_0,\delta^*)$, both assumptions \eqref{LBalt9eq1} and \eqref{LBalt9eq2} hold then allows to end the proof.

\subsection{Proof of Proposition \ref{UBalt10}}

 The control of the first kind error rates of the two tests $\phi_{9/10,\alpha}^{(1)}$ and $\phi_{9/10,\alpha}^{(2)}$ is straightforward using simple union bounds. 

\medskip

\emph{$(i)$ Control of the second kind error rate of $\phi_{9/10,\alpha}^{(1)}$.}

\smallskip

 Let $\lambda$ in $\calS_{\bbul,\bbul\bbul,\bbul\bbul\bbul} [\lambda_0,R]$. We may fix $\delta$ in $(-\lambda_0,R-\lambda_0]\setminus\{0\}$, $\tau$ in $(0,1)$, $\ell$ in $(0,1-\tau)$ such that $\lambda= \lambda_{0}+  \delta \mathds{1}_{(\tau, \tau + \ell]}.$ We assume by now that
 \begin{multline} \label{UBalt10eq1}
  d_2\pa{\lambda, \calS_{0}[\lambda_0]} \geq \sqrt{3} \max \Bigg(   \sqrt{\frac{4 R\log \left(2/u_\alpha)  \right)}{3 L}}~,\\
 2 \sqrt{\frac{2\lambda_{0}\log \left(2/u_\alpha \right)}{L}} + 2 \sqrt{\frac{R}{\beta L}}~,~\frac{R}{\sqrt{ L}}\Bigg)\enspace,
 \end{multline}
and we prove the inequality $P_{\lambda}\pa{\phi_{9/10,\alpha}^{(1)}(N)=0} \leq \beta.$ 

\smallskip

Assume first that $\delta$ belongs to $(0,R- \lambda_{0}].$  Noticing that
\begin{multline*}
 P_{\lambda}\pa{\phi_{9/10,\alpha}^{(1)}(N)=0}\\
  \leq\inf_{k \in \lbrace 0,\ldots,\lceil L \rceil-1 \rbrace} \inf_{k' \in \lbrace 1,\ldots,\lceil L \rceil-k \rbrace}  P_{\lambda} \left( S_{\frac{k}{\lceil L\rceil },\frac{k+k'}{\lceil L\rceil }}(N)\leq  s_{\lambda_0,\frac{k}{\lceil L\rceil },\frac{k+k'}{\lceil L\rceil }} \pa{1 -u_\alpha} \right)\enspace,
  \end{multline*}
one can see that it is enough to exhibit some $k_0$ in $\lbrace 0,\ldots,\lceil L \rceil-1 \rbrace$ and $k_0'$ in $\lbrace 1,\ldots,\lceil L \rceil-k_{0} \rbrace$ satisfying
\[P_{\lambda} \left( S_{\frac{k_0}{\lceil L\rceil },\frac{k_0+k_0'}{\lceil L\rceil }}(N)\leq  s_{\lambda_0,\frac{k_0}{\lceil L\rceil },\frac{k_0+k_0'}{\lceil L\rceil }} \pa{1 -u_\alpha} \right)\leq \beta\enspace.\]

  We get from \eqref{UBalt10eq1} that 
 \begin{equation}\label{UBalt10eq2}
 d_2^2\pa{\lambda, \calS_{0}[\lambda_0]}  \geq 3 R^2/ L >3 \delta^2/\lceil L \rceil\enspace,
 \end{equation}
  which entails
 \begin{equation}\label{UBalt10eq2bis}
\ell > 3/ \lceil L \rceil \textrm{ and }\tau<1-{3}/{\lceil L\rceil}\enspace.
\end{equation}  
 We therefore can define
 $ k_{0}= \min (  k \in \lbrace 0,\ldots, \lceil L \rceil-1 \rbrace,~ \tau \leq k/\lceil L \rceil  )$
 and $ k'_{0} = \max (k' \in \lbrace 1,\ldots,\lceil L \rceil-k_{0} \rbrace,~ (k_{0} + k')/\lceil L \rceil \leq \tau + \ell )$,
so that $\tau \leq k_{0}/\lceil L \rceil < (k_{0} + k'_{0})/\lceil L \rceil \leq \tau + \ell$.

  Since by definition $k_{0}/\lceil L \rceil - \tau < 1/\lceil L \rceil$ and $ \tau + \ell- (k_{0}+k_{0}')/\lceil L \rceil < 1/\lceil L \rceil$, notice that
\[\frac{k'_{0} }{\lceil L \rceil} = \ell - \left(  \left( \frac{k_{0}}{\lceil L \rceil}- \tau    \right) + \left( \tau + \ell - \frac{k_{0}+k_{0}'}{\lceil L \rceil}    \right)  \right) > \ell - \frac{2}{\lceil L \rceil}\enspace.\]
This, combined with \eqref{UBalt10eq2bis} and the expression of $d_2^2\pa{\lambda, \calS_{0}[\lambda_0]}=\delta^2\ell$, implies that
 \begin{equation} \label{UBalt10eq3}
 \delta^2  \frac{k'_{0} }{\lceil L \rceil}
> d_2^2\pa{\lambda, \calS_{0}[\lambda_0]}-  \frac{2\delta^2 }{\lceil L \rceil} > \frac{d_2^2\pa{\lambda, \calS_{0}[\lambda_0]}}{3}\enspace,
  \end{equation}
which yields with \eqref{UBalt10eq1}
 \begin{equation}\label{UBalt10eq4}
 \delta \sqrt{ \frac{k'_{0} }{\lceil L \rceil}} > \max \Bigg(   \sqrt{\frac{4 \delta \log \left(2/u_\alpha)  \right)}{3 L}}~,~
 2 \sqrt{\frac{2\lambda_{0}\log \left(2/u_\alpha \right)}{L}} + 2 \sqrt{\frac{\lambda_0+\delta}{\beta L}}\Bigg)\enspace.
 \end{equation}
We then deduce on the one hand
\[\frac{\delta  k'_{0}}{\lceil L \rceil} > \frac{4  \log \left( 2/u_\alpha \right)}{3 L}\enspace,\]
and on the other hand
\[\frac{\delta k'_{0}}{\lceil L \rceil} \geq \sqrt{ \frac{k'_{0}}{\lceil L \rceil}} \pa{2  \sqrt{ \frac{2 \lambda_{0}  \log \left(2/u_\alpha \right)}{L}}   + 2 \sqrt{\frac{\lambda_{0} + \delta}{\beta L}}}\enspace, \]
which together can be synthesized in
\[
\frac{\delta k'_{0} }{\lceil L \rceil}> 2 \max \left(    \frac{2 \log \left(2/u_\alpha\right)}{3L}~,~ \sqrt{ \frac{ k'_{0}} {\lceil L \rceil}} \left(\sqrt{\frac{2\lambda_{0}\log \left( 2/u_\alpha  \right)}{L}} + \sqrt{\frac{\lambda_{0} + \delta}{\beta L}} \right) \right)\enspace.
\]
Hence
\begin{equation} \label{UBalt10eq5}
\frac{\delta k'_{0} L }{\lceil L \rceil} >\frac{2\log \left(2/u_\alpha\right) }{3}  + \sqrt{\frac{2\log \left( 2/u_\alpha  \right)\lambda_{0} k'_{0} L}{\lceil L \rceil}} + \sqrt{\frac{\pa{\lambda_{0} + \delta}k'_{0} L}{\beta \lceil L \rceil}}\enspace.
\end{equation}

From Lemma \ref{momentPoisson}, we easily deduce that $E_\lambda \cro{S_{{k_0}/{\lceil L\rceil },{(k_0+k_0')}/{\lceil L\rceil }}(N)} = \delta k'_{0} L /\lceil L \rceil$ and $\mathrm{Var}_\lambda \pa{S_{{k_0}/{\lceil L\rceil },{(k_0+k_0')}/{\lceil L\rceil }}(N)}= (\lambda_{0} + \delta) k'_{0} L/\lceil L \rceil$, and from Lemma \ref{QuantilesAbsShifted}
\[ s_{\lambda_0, \frac{k_0}{\lceil L\rceil },\frac{k_0+k_0'}{\lceil L\rceil }}\pa{1-u_{\alpha}}  \leq \frac{\lambda_{0} k_{0}' L }{\lceil L \rceil} g^{-1} \left( \frac{ \log \left(2/u_\alpha\right)\lceil L \rceil}{\lambda_{0} k_{0}'  L}   \right)\enspace.\]
Using the upper bound  \eqref{UBginv}, this leads to 
  \begin{equation} \label{UBalt10eq6}
s_{\lambda_0, \frac{k_0}{\lceil L\rceil },\frac{k_0+k_0'}{\lceil L\rceil }}\pa{1-u_{\alpha}} \leq \frac{2\log \left(2/u_\alpha\right)}{3}  +  \sqrt{\frac{2\log \left(2/u_\alpha\right) \lambda_{0} k'_{0} L} {\lceil L \rceil}}\enspace.
 \end{equation}
The inequality \eqref{UBalt10eq5} therefore entails
\begin{equation} \label{UBalt10eq7}
E_\lambda \cro{S_{\frac{k_0}{\lceil L\rceil },\frac{k_0+k_0'}{\lceil L\rceil }}(N)}> s_{\lambda_0, \frac{k_0}{\lceil L\rceil },\frac{k_0+k_0'}{\lceil L\rceil }}\pa{1-u_{\alpha}}
 + \sqrt{\mathrm{Var}_\lambda\pa{S_{\frac{k_0}{\lceil L\rceil },\frac{k_0+k_0'}{\lceil L\rceil }}(N)}/{\beta}}\enspace.
\end{equation}
We conclude with \eqref{UBalt10eq7} and the Bienayme-Chebyshev inequality:
\begin{align*}
&P_\lambda \pa{S_{\frac{k_0}{\lceil L\rceil },\frac{k_0+k_0'}{\lceil L\rceil }}(N)\leq s_{\lambda_0, \frac{k_0}{\lceil L\rceil },\frac{k_0+k_0'}{\lceil L\rceil }}\pa{1-u_{\alpha}}}\\
&\leq P_\lambda \pa{S_{\frac{k_0}{\lceil L\rceil },\frac{k_0+k_0'}{\lceil L\rceil }}(N)-E_\lambda \cro{S_{\frac{k_0}{\lceil L\rceil },\frac{k_0+k_0'}{\lceil L\rceil }}(N)}\leq  - \sqrt{\mathrm{Var}_\lambda\pa{S_{\frac{k_0}{\lceil L\rceil },\frac{k_0+k_0'}{\lceil L\rceil }}(N)}/{\beta}}}\\
&\leq \beta\enspace.
\end{align*}
Assume now that $\delta$ belongs to $(-\lambda_{0}, 0)$ and notice that we have here
 \begin{multline*}
 P_{\lambda}\pa{\phi_{9/10,\alpha}^{(1)}(N)=0}\\
  \leq\inf_{k \in \lbrace 0,\ldots,\lceil L \rceil-1 \rbrace} \inf_{k' \in \lbrace 1,\ldots,\lceil L \rceil-k \rbrace}  P_{\lambda} \left( -S_{\frac{k}{\lceil L\rceil },\frac{k+k'}{\lceil L\rceil }}(N)\leq  s_{\lambda_0,\frac{k}{\lceil L\rceil },\frac{k+k'}{\lceil L\rceil }} \pa{1 -u_\alpha} \right)\enspace,
  \end{multline*}
 The same choice of $k_{0}$ and $k'_{0}$ as in the previous case yields $E_\lambda \cro{-S_{{k_0}/{\lceil L\rceil },{(k_0+k_0')}/{\lceil L\rceil }}(N)} = |\delta| k'_{0} L /\lceil L \rceil$ and $\mathrm{Var}_\lambda \pa{S_{{k_0}/{\lceil L\rceil },{(k_0+k_0')}/{\lceil L\rceil }}(N)}= (\lambda_{0} + \delta) k'_{0} L/\lceil L \rceil$, and we obtain
  \[P_{\lambda} \left( -S_{\frac{k_0}{\lceil L\rceil },\frac{k_0+k_0'}{\lceil L\rceil }}(N)\leq  s_{\lambda_0,\frac{k_0}{\lceil L\rceil },\frac{k_0+k_0'}{\lceil L\rceil }} \pa{1 -u_\alpha} \right)
   \leq \beta\enspace,\] 
in the same way,  notably replacing  $\delta$ by $\vert\delta\vert$ (except when it is involved in $\lambda_0+\delta$) in the rest of the proof.
 \smallskip
 
Coming back to the assumption \eqref{UBalt10eq1} and the definition of $u_\alpha$, one can finally claim that
\begin{multline*}
 \SRb\Big(\phi_{9/10,\alpha}^{(1)},\calS_{\bbul,\bbul\bbul,\bbul\bbul\bbul}[\lambda_0,R]\Big) \leq \sqrt{3} \max \Bigg(   \sqrt{\frac{4  R\log \left(\lceil L\rceil (\lceil L\rceil +1)/\alpha  \right)}{3 L}}~,\\
 2 \sqrt{\frac{2\lambda_{0}\log \left(\lceil L\rceil (\lceil L\rceil +1)/\alpha \right)}{L}} + 2 \sqrt{\frac{R}{\beta L}}~  ,~ \frac{R}{\sqrt{ L}}\Bigg)\enspace,
 \end{multline*}
which ends the proof of $(i)$.

\medskip

\emph{$(ii)$ Control of the second kind error rate of $\phi_{9/10,\alpha}^{(2)}$.}

\smallskip

  Let $\lambda$ in $\calS_{\bbul,\bbul\bbul,\bbul\bbul\bbul} [\lambda_0,R]$. There exist $\delta$ in $(-\lambda_0,R-\lambda_0]\setminus\{0\}$, $\tau$ in $(0,1)$, $\ell$ in $(0,1-\tau)$ such that $\lambda= \lambda_{0}+  \delta \mathds{1}_{(\tau, \tau + \ell]}.$ Let us assume now that
 \begin{multline} \label{UBalt10-2eq1}
  d_2\pa{\lambda, \calS_{0}[\lambda_0]} \geq \max \Bigg(R\sqrt{\frac{3}{ L}}~,~  4  \sqrt{\frac{3\lambda_{0}\log \pa{3/u_\alpha}}{L}} +2 \sqrt{\frac{3\sqrt{2}R}{\sqrt{\beta}L}}~,\\
 2 \sqrt{\frac{\sqrt{2} R \log \pa{3/u_\alpha}}{L}}~,~
32^{1/3} (6\lambda_{0})^{1/6} R^{1/3}\sqrt{\frac{\log \pa{3/u_ \alpha}}{L}}~,~ 24\sqrt{\frac{R}{\beta L}} \Bigg)\enspace,
 \end{multline}
and  prove that it entails $P_{\lambda}\pa{\phi_{9/10,\alpha}^{(2)}(N)=0} \leq \beta$. 

As in $(i)$, we begin by noticing that
 \begin{multline*}
 P_{\lambda}\pa{\phi_{9/10,\alpha}^{(2)}(N)=0}\\
  \leq\inf_{k \in \lbrace 0,...,\lceil L \rceil-1 \rbrace} \inf_{k' \in \lbrace 1,...,\lceil L \rceil-k \rbrace}  P_{\lambda} \left( T_{\frac{k}{\lceil L\rceil },\frac{k+k'}{\lceil L\rceil }}(N)\leq  t_{\lambda_0,\frac{k}{\lceil L\rceil },\frac{k+k'}{\lceil L\rceil }} \pa{1 -u_\alpha} \right)\enspace,
  \end{multline*}
so it suffices to find $k_{0}$ in $\lbrace 0,\ldots,\lceil L \rceil-1 \rbrace$ and $k_{0}'$ in $\lbrace 1,\ldots,\lceil L \rceil-k_{0} \rbrace$ satisfying
\[
P_{\lambda} \left( T_{\frac{k_0}{\lceil L\rceil },\frac{k_0+k_0'}{\lceil L\rceil }}(N)\leq  t_{\lambda_0,\frac{k_0}{\lceil L\rceil },\frac{k_0+k_0'}{\lceil L\rceil }} \pa{1 -u_\alpha} \right)\leq \beta\enspace,\] to obtain $P_{\lambda}\pa{\phi_{9/10,\alpha}^{(2)}(N)=0} \leq \beta$.

We get from \eqref{UBalt10-2eq1} that 
 $d_2^2\pa{\lambda, \calS_{0}[\lambda_0]} \geq 3 R^2/ L  >3 \delta^2/\lceil L \rceil 
 $ which entails
 \begin{equation}\label{UBalt10-2eq2}
\ell > 3/ \lceil L \rceil \textrm{ and }\tau<1-{3}/{\lceil L\rceil}\enspace.
\end{equation}  
 We therefore can define
 $ k_{0}= \min (  k \in \lbrace 0,\ldots, \lceil L \rceil-1 \rbrace,~ \tau \leq k/\lceil L \rceil  )$
 and $ k'_{0} = \max (k' \in \lbrace 1,\ldots,\lceil L \rceil-k_{0} \rbrace,~ (k_{0} + k')/\lceil L \rceil \leq \tau + \ell )$,
so that $\tau \leq k_{0}/\lceil L \rceil < (k_{0} + k'_{0})/\lceil L \rceil \leq \tau + \ell$.

As in $(i)$ above,   starting from the remark that $k_{0}/\lceil L \rceil - \tau < 1/\lceil L \rceil$ and $ \tau + \ell- (k_{0}+k_{0}')/\lceil L \rceil < 1/\lceil L \rceil$, which implies $k'_{0}/{\lceil L \rceil} > \ell -{2}/{\lceil L \rceil},$
 we obtain, combined with \eqref{UBalt10-2eq2} and the expression of $d_2^2\pa{\lambda, \calS_{0}[\lambda_0]}=\delta^2\ell$:
 \begin{equation} \label{UBalt10-2eq3}
 \delta^2  \frac{k'_{0} }{\lceil L \rceil}
> d_2^2\pa{\lambda, \calS_{0}[\lambda_0]}-  \frac{2\delta^2 }{\lceil L \rceil} > \frac{d_2^2\pa{\lambda, \calS_{0}[\lambda_0]}}{3}\enspace.
  \end{equation}
Moreover, we get from \eqref{UBalt10-2eq1}
 \begin{multline*}
d_2\pa{\lambda, \calS_{0}[\lambda_0]} > \max \Bigg(4  \sqrt{\frac{3\lambda_{0}\log \pa{3/u_\alpha}}{L}} +2 \sqrt{\frac{3\sqrt{2} (\lambda_0+\delta)}{\sqrt{\beta}L}}~,~
 2\sqrt{\frac{\sqrt{2}\vert\delta\vert \log \pa{3/u_\alpha}}{L}}~,\\
32^{1/3} (6\lambda_{0})^{1/6} \vert \delta\vert ^{1/3}\sqrt{\frac{\log \pa{3/u_ \alpha}}{L}}~,~24\sqrt{\frac{\lambda_0+\delta}{\beta L}}  \Bigg)\enspace,
 \end{multline*}
which entails
  \begin{multline*}
 d_2^2\pa{\lambda, \calS_{0}[\lambda_0]}> \max \Bigg( \frac{48 \lambda_{0} \log \pa{3/u_\alpha}}{ L}+ \frac{12 \sqrt{2} (\lambda_{0}+ \delta)}{\sqrt{\beta} L} ~,~   \frac{ 32 \delta^2 \log^2 \pa{3/u_\alpha}}{L^2  d_2^{2}\pa{\lambda, \calS_{0}[\lambda_0]}}  ~,\\
  \frac{32\sqrt{6 \lambda_{0}} \vert \delta \vert  \log^{3/2} \pa{3/u_\alpha}}{L^{3/2} d_2\pa{\lambda, \calS_{0}[\lambda_0]}}    ~,~ \frac{24 \sqrt{\lambda_{0} + \delta} d_2\pa{\lambda, \calS_{0}[\lambda_0]}}{\sqrt{\beta L}} \Bigg)\enspace.
 \end{multline*}
 Then with \eqref{UBalt10-2eq3},
  \begin{multline*}
 d_2^2\pa{\lambda, \calS_{0}[\lambda_0]}> \max \Bigg( \frac{48 \lambda_{0} \log \pa{3/u_\alpha}}{ L}+ \frac{12 \sqrt{2} (\lambda_{0}+ \delta)}{\sqrt{\beta} L} ~,~   \frac{32\log^2 \pa{3/u_\alpha}\lceil L \rceil}{ 3 k_0'  L^2}  ~,\\
   \frac{32 \log^{3/2} \pa{3/u_\alpha}}{L^{3/2}} \sqrt{\frac{2\lambda_{0} \lceil L \rceil}{k_{0}'}}  ~,~ \frac{24 \sqrt{\lambda_{0} + \delta} d_2\pa{\lambda, \calS_{0}[\lambda_0]}}{\sqrt{\beta L}} \Bigg)\enspace,
 \end{multline*}
hence
  \begin{multline}\label{UBalt10-2eq4}
 \frac{d_2^2\pa{\lambda, \calS_{0}[\lambda_0]} }{3} > \frac{4 \lambda_{0} \log \pa{3/u_\alpha}}{ L}+ \frac{ \sqrt{2} (\lambda_{0}+ \delta)}{\sqrt{\beta} L}+  \frac{8 \log^2 \pa{3/u_\alpha}\lceil L \rceil}{9k_0'  L^2} +\\
 \frac{8   \log^{3/2} \pa{3/u_\alpha}}{3 L^{3/2}} \sqrt{\frac{2\lambda_{0} \lceil L \rceil}{k_{0}'}}+ \frac{2\sqrt{\lambda_{0} + \delta} d_2\pa{\lambda, \calS_{0}[\lambda_0]}}{\sqrt{\beta L}} \Bigg)\enspace.
 \end{multline}  
  Using Lemma \ref{MomentsT}, we compute
  \begin{equation*} 
  E_{\lambda}\cro{T_{\frac{k_0}{\lceil L\rceil },\frac{k_0+k_0'}{\lceil L\rceil }}(N)} =  \delta^2 \frac{k'_{0}}{\lceil L \rceil} > \frac{d_2^2\pa{\lambda, \calS_{0}[\lambda_0]} }{3}~~ \text{with }\eqref{UBalt10-2eq3}\enspace, 
  \end{equation*}
  and 
  \begin{align*} 
   \mathrm{Var}_{\lambda}\pa{T_{\frac{k_0}{\lceil L\rceil },\frac{k_0+k_0'}{\lceil L\rceil }}(N)} &=  \frac{4(\lambda_{0}+\delta) \delta^2}{L} \frac{k_{0}'}{\lceil L \rceil}  + \frac{2(\lambda_{0} + \delta)^2}{L^2}\\
   & \leq \frac{4 (\lambda_{0} + \delta)d_2^2\pa{\lambda, \calS_{0}[\lambda_0]}}{L} + \frac{2(\lambda_{0} + \delta)^2}{L^2}\enspace.
  \end{align*}
 These computations combined with \eqref{UBalt10-2eq4} leads to 
  \begin{multline}\label{UBalt10-2eq5}
E_{\lambda}\cro{T_{\frac{k_0}{\lceil L\rceil },\frac{k_0+k_0'}{\lceil L\rceil }}(N)} > \frac{4 \lambda_{0} \log \pa{3/u_\alpha}}{ L}+  \frac{8 \log^2 \pa{3/u_\alpha}\lceil L \rceil}{9 k_0'  L^2} +\\
 \frac{8   \log^{3/2} \pa{3/u_\alpha}}{3L^{3/2}} \sqrt{\frac{2\lambda_{0} \lceil L \rceil}{k_{0}'}}
   + \sqrt{{ \mathrm{Var}_{\lambda}\pa{T_{\frac{k_0}{\lceil L\rceil },\frac{k_0+k_0'}{\lceil L\rceil }}(N)}}/{\beta}}\enspace.
 \end{multline}  
Furthermore, Lemma \ref{QuantilesT} gives
\[t_{\lambda_0,\frac{k_0}{\lceil L\rceil },\frac{k_0+k_0'}{\lceil L\rceil }}(1-u_\alpha) \leq \frac{2 \lambda_{0}^2 k_{0}'}{\lceil L \rceil} \pa{g^{-1} \pa{ \frac{ \log \pa{3/u_\alpha}\lceil L \rceil}{\lambda_{0}k_{0}' L } }}^2\enspace,\]
  where $g^{-1}(x) \leq 2x/3+ \sqrt{2x}$ for all $x>0$  (see \eqref{UBginv}), and then
 \begin{equation} \label{UBalt10-2eq6}
t_{\lambda_0,\frac{k_0}{\lceil L\rceil },\frac{k_0+k_0'}{\lceil L\rceil }}(1-u_\alpha) \leq   \frac{8 \log^2 \pa{3/u_\alpha}\lceil L \rceil}{9k_{0}'L^2} + \frac{8   \log^{3/2} \pa{3/u_\alpha}}{3L^{3/2}} \sqrt{\frac{2\lambda_{0} \lceil L \rceil}{k_{0}'}}+\frac{4\lambda_{0} \log \pa{3/u_\alpha}}{ L}\enspace.
 \end{equation}
It follows from \eqref{UBalt10-2eq5} and \eqref{UBalt10-2eq6} that
 \begin{multline*}
E_{\lambda}\cro{T_{\frac{k_0}{\lceil L\rceil },\frac{k_0+k_0'}{\lceil L\rceil }}(N)} >  t_{\lambda_0,\frac{k_0}{\lceil L\rceil },\frac{k_0+k_0'}{\lceil L\rceil }}(1-u_\alpha)  + \sqrt{{ \mathrm{Var}_{\lambda}\pa{T_{\frac{k_0}{\lceil L\rceil },\frac{k_0+k_0'}{\lceil L\rceil }}(N)}}/{\beta}}\enspace,
 \end{multline*}  
whereby
 \begin{multline*}
P_{\lambda} \left( T_{\frac{k_0}{\lceil L\rceil },\frac{k_0+k_0'}{\lceil L\rceil }}(N)\leq  t_{\lambda_0,\frac{k_0}{\lceil L\rceil },\frac{k_0+k_0'}{\lceil L\rceil }} \pa{1 -u_\alpha} \right)
\\
\leq P_{\lambda} \left( T_{\frac{k_0}{\lceil L\rceil },\frac{k_0+k_0'}{\lceil L\rceil }}(N)\leq E_{\lambda}\cro{T_{\frac{k_0}{\lceil L\rceil },\frac{k_0+k_0'}{\lceil L\rceil }}(N)} -\sqrt{\mathrm{Var}_{\lambda}\pa{T_{\frac{k_0}{\lceil L\rceil },\frac{k_0+k_0'}{\lceil L\rceil }}(N)}/\beta} \right)\enspace.
\end{multline*}

The Bienayme-Chebyshev inequality allows to conclude that 
\[P_{\lambda} \left( T_{\frac{k_0}{\lceil L\rceil },\frac{k_0+k_0'}{\lceil L\rceil }}(N)\leq  t_{\lambda_0,\frac{k_0}{\lceil L\rceil },\frac{k_0+k_0'}{\lceil L\rceil }} \pa{1 -u_\alpha} \right)
\leq \beta\enspace.\]

Coming back to the assumption \eqref{UBalt10-2eq1} and the definition of $u_\alpha$, one can finally claim that
\begin{multline*}
 \SRb\Big(\phi_{9/10,\alpha}^{(2)},\calS_{\bbul,\bbul\bbul,\bbul\bbul\bbul}[\lambda_0,R]\Big) \leq \max \Bigg(4  \sqrt{\frac{3\lambda_{0}\log \left(3\lceil L\rceil (\lceil L\rceil +1)/(2\alpha) \right)}{L}} +2 \sqrt{\frac{3\sqrt{2} R}{\sqrt{\beta}L}}~,\\
  2\sqrt{\frac{\sqrt{2}R \log \left(3\lceil L\rceil (\lceil L\rceil +1)/2\alpha \right)}{L}}~,~24\sqrt{\frac{R}{\beta L}} ~,~ R \sqrt{\frac{3}{ L}}~,\\
32^{1/3} (6\lambda_{0})^{1/6} R^{1/3}\sqrt{\frac{\log \left(3\lceil L\rceil (\lceil L\rceil +1)/2\alpha \right)}{L}}\Bigg)
\enspace,
 \end{multline*}
which ends the proof of $(ii)$.

\subsection{Proof of Proposition \ref{bNP1U}}

By definition of $b_{n,\ell^*}(u)$ as the $u$-quantile of a binomial distribution with  parameters $(n,\ell^{*}{})$, the Bienayme-Chebyshev inequality easily gives \[b_{n,\ell^*}(1-\alpha)\leq n\ell^{*}{} + \sqrt{n \ell^{*}{} (1-\ell^{*}{})/\alpha}\enspace,\]
for all $n$ in $\mathbb{N}.$ It also gives for every $n$ in $\mathbb{N}$ and every $\varepsilon>0$, $b_{n,\ell^*}(\alpha) > n\ell^{*}{} - \sqrt{n \ell^{*}{} (1-\ell^{*}{})/(\alpha - \varepsilon)}$. Therefore, letting $\varepsilon$ tending to $0$,  
\[b_{n,\ell^*}(\alpha)\geq n \ell^{*}{} - \sqrt{{n \ell^{*}{} (1-\ell^{*}{})}/{\alpha}}\enspace.\]  

\smallskip

(i) Assume first that $0<\delta^{*}{}<R$ and let $\lambda$ in $\mathcal{S}^u_{ \delta^{*}{}, \tau^{*}{}, \ell^{*}{}}[R].$

Setting $I(\lambda)= \int_0^1 \lambda(t) dt\leq R$,
the assumption \eqref{cond_alt1U} leads to
$$\delta^{*}{} \sqrt{\ell^{*}{} (1-\ell^{*}{})} \geq \frac{1}{\sqrt{L}} \pa{\sqrt{\frac{2(\lambda_0 + \delta^{*}{})}{\beta (1-\ell^{*}{})}} + 2\sqrt{\frac{I(\lambda)\ell^{*}{}}{\beta (1-\ell^{*}{})}} + \sqrt{ \frac{1}{\alpha} \pa{I(\lambda) +2\sqrt{\frac{I(\lambda)}{\beta L}} }    }        }\enspace.$$
Hence
$$   \delta^{*}{}  \ell^{*}{} (1-\ell^{*}{})L \geq \sqrt{\frac{2(\lambda_0 + \delta^{*}{}) \ell^{*}{} L}{\beta}} + 2\ell^{*}{} \sqrt{\frac{I(\lambda)L}{\beta }} + \sqrt{ \frac{\ell^{*}{} (1-\ell^{*}{})}{\alpha} \pa{I(\lambda)L +2\sqrt{\frac{I(\lambda)L}{\beta}} }    }        \enspace ,$$
and since $I(\lambda)= \lambda_0 + \delta^{*}{} \ell^{*}{}$,
\begin{multline}\label{bNP1U_eq1}
\pa{I(\lambda)L + 2\sqrt{\frac{I(\lambda) L}{\beta}}    }\ell^{*}{}  + \sqrt{\frac{\ell^{*}{}  (1-\ell^{*}{} )}{\alpha} \pa{I(\lambda)L +2\sqrt{\frac{I(\lambda)L}{\beta}}}   }\\ - (\lambda_0 + \delta^{*}{})\ell^{*}{}  L \leq -\sqrt{\frac{2(\lambda_0 + \delta^{*}{}) \ell^{*}{} L}{\beta}}\enspace.
\end{multline}
We get then the following inequalities
\begin{align*}
P_\lambda &\pa{\phi_{1,\alpha}^{u,+}(N) =0} \\
&=P_\lambda \pa{N(\tau^{*}{}, \tau^{*}{} + \ell^{*}{}]  < b_{N_1,\ell^*}(1-\alpha)} \\ 
&= P_\lambda \pa{N(\tau^{*}{}, \tau^{*}{} + \ell^{*}{}] < b_{N_1,\ell^*}(1-\alpha)~ ,~  |N_1-I(\lambda) L|\leq 2 \sqrt{{ I(\lambda)L}/{\beta}}}  \\
&\quad +   P_\lambda \pa{N_1<  I(\lambda) L - 2 \sqrt{{ I(\lambda)L}/{\beta}}         }+ P_\lambda \pa{N_1>    I(\lambda) L + 2 \sqrt{{ I(\lambda)L}/{\beta}}} \\
&\leq P_\lambda \pa{N(\tau^{*}{}, \tau^{*}{} + \ell^{*}{}] < b_{N_1,\ell^*}(1-\alpha)~ ,~  |N_1-I(\lambda) L|\leq 2 \sqrt{{ I(\lambda)L}/{\beta}}} + \frac{\beta}{2}\\
&\leq P_\lambda \pa{N(\tau^{*}{}, \tau^{*}{} + \ell^{*}{}] < N_1 \ell^{*}{}  + \sqrt{\frac{N_1 \ell^{*}{}  (1-\ell^{*}{} )}{\alpha}}~ ,~  |N_1-I(\lambda) L|\leq 2 \sqrt{{ I(\lambda)L}/{\beta}}} + \frac{\beta}{2}\\
&\leq P_\lambda \pa{N(\tau^{*}{}, \tau^{*}{} + \ell^{*}{}] <      \pa{ I(\lambda) L + 2 \sqrt{\frac{ I(\lambda)L}{\beta}}  }  \ell^{*}{}  + \sqrt{\frac{ \ell^{*}{}  (1-\ell^{*}{} )}{\alpha}   \pa{  I(\lambda) L + 2 \sqrt{\frac{ I(\lambda)L}{\beta}}}    }          } + \frac{\beta}{2}\\
&\leq P_\lambda \pa{N(\tau^{*}{}, \tau^{*}{} + \ell^{*}{}] - (\lambda_0 + \delta^{*}{})\ell^{*}{}  L  <  -\sqrt{\frac{2(\lambda_0 + \delta^{*}{}) \ell^{*}{}  L}{\beta}}    } + \frac{\beta}{2} ~~ \text{(thanks to \eqref{bNP1U_eq1})}\\
&\leq \beta ~~ \text{(Bienayme-Chebyshev)} \enspace.
\end{align*}
This concludes the proof of $(i)$.

$(ii)$ Assume now that $-R<\delta^{*}{}<0$ and let $\lambda$ in $\mathcal{S}^u_{ \delta^{*}{}, \tau^{*}{}, \ell^{*}{}}[R]$. The assumption \eqref{cond_alt1U} entails
$$ \vert \delta^{*}{} \vert \sqrt{\ell^{*}{} (1-\ell^{*}{})} \geq \frac{1}{\sqrt{L}} \pa{\sqrt{\frac{2(\lambda_0 + \delta^{*}{})}{\beta (1-\ell^{*}{})}} + 2\sqrt{\frac{I(\lambda)\ell^{*}{}}{\beta (1-\ell^{*}{})}} + \sqrt{ \frac{1}{\alpha} \pa{I(\lambda) +2\sqrt{\frac{I(\lambda)}{\beta L}} }    }        } \enspace.$$
Hence
$$  - \delta^{*}{}  \ell^{*}{} (1-\ell^{*}{})L \geq \sqrt{\frac{2(\lambda_0 + \delta^{*}{}) \ell^{*}{} L}{\beta}} + 2 \ell^{*}{} \sqrt{\frac{I(\lambda)L}{\beta }} + \sqrt{ \frac{ \ell^{*}{} (1-\ell^{*}{})}{\alpha} \pa{I(\lambda)L +2\sqrt{\frac{I(\lambda)L}{\beta}} }    }         \enspace,$$
and then
\begin{multline}\label{bNP1U_eq2}
\pa{I(\lambda)L - 2\sqrt{\frac{I(\lambda) L}{\beta}}    } \ell^{*}{} - \sqrt{\frac{ \ell^{*}{} (1-\ell^{*}{} )}{\alpha} \pa{I(\lambda)L +2\sqrt{\frac{I(\lambda)L}{\beta}}}   }\\- (\lambda_0 + \delta^{*}{}) \ell^{*}{} L \geq \sqrt{\frac{2(\lambda_0 + \delta^{*}{}) \ell^{*}{} L}{\beta}} \enspace.
\end{multline}

We get then
\begin{align*}
P_\lambda &\pa{\phi_{1,\alpha}^{u,-}(N)=0}\\
&= P_\lambda \pa{N(\tau^{*}{}, \tau^{*}{} + \ell^{*}{}] > b_{N_1,\ell^*}(\alpha)} \\ 
&= P_\lambda \pa{N(\tau^{*}{}, \tau^{*}{} + \ell^{*}{}]> b_{N_1,\ell^*}(\alpha)~ ,~   |N_1-I(\lambda) L|\leq 2 \sqrt{{ I(\lambda)L}/{\beta}}}  \\
&\quad +   P_\lambda \pa{N_1<  I(\lambda) L - 2 \sqrt{{ I(\lambda)L}/{\beta}}         }+ P_\lambda \pa{ N_1>    I(\lambda) L + 2 \sqrt{{ I(\lambda)L}/{\beta}}             } \\
&\leq P_\lambda \pa{N(\tau^{*}{}, \tau^{*}{} + \ell^{*}{}]> b_{N_1,\ell^*}(\alpha)~ ,~   |N_1-I(\lambda) L|\leq 2 \sqrt{{ I(\lambda)L}/{\beta}}} + \frac{\beta}{2} \\
&\leq P_\lambda     \left(     N(\tau^{*}{}, \tau^{*}{} + \ell^{*}{}] >      N_1 \ell^{*}{} - \sqrt{\frac{N_1 \ell^{*}{} (1-\ell^{*}{})}{\alpha}} ,~   |N_1-I(\lambda) L|\leq 2 \sqrt{{ I(\lambda)L}/{\beta}}\right)+ \frac{\beta}{2} \\
&\leq P_\lambda \pa{ N(\tau^{*}{}, \tau^{*}{} + \ell^{*}{}]>      \pa{ I(\lambda) L - 2 \sqrt{\frac{ I(\lambda)L}{\beta}}  } \ell^{*}{} - \sqrt{\frac{ \ell^{*}{} (1-\ell^{*}{})}{\alpha}   \pa{  I(\lambda) L + 2 \sqrt{\frac{ I(\lambda)L}{\beta}}}    }          }  + \frac{\beta}{2}\\
&\leq P_\lambda \pa{N(\tau^{*}{}, \tau^{*}{} + \ell^{*}{}]- (\lambda_0 + \delta^{*}{}) \ell^{*}{} L  >  \sqrt{\frac{2(\lambda_0 + \delta^{*}{}) \ell^{*}{} L}{\beta}}    } + \frac{\beta}{2} ~~ \text{(thanks to \eqref{bNP1U_eq2})}\\
&\leq \beta ~~ \text{(Bienayme-Chebyshev)}\enspace.
\end{align*}
This concludes the proof of $(ii)$.

\subsection{Proof of Proposition \ref{LB_alt2_u}}

Let us set $\lambda_0=R/2$ and introduce for $r>0$ the Poisson intensity $\lambda_r$ defined for all $t$ in $[0,1]$ by
\[ \lambda_r(t) = \lambda_{0} + \frac{r}{\sqrt{\ell^{*}{}(1- \ell^{*}{})}}  \mathds{1}_{(\tau^{*}{}, \tau^{*}{}+ \ell^{*}{}]}(t)\enspace.\]
Notice that when $0<{r}/{\sqrt{\ell^{*}{}(1- \ell^{*}{})}}\leq R/2$,   $\lambda_r$ belongs to 
$$( \mathcal{S}^u_{ \bbul, \tau^{*}{}, \ell^{*}{} }[R])_r = \lbrace \lambda \in \mathcal{S}^u_{\bbul, \tau^{*}{}, \ell^{*}{}}[R],~d_2(\lambda, \mathcal{S}_0[R]) \geq r \rbrace\enspace,$$ as defined in Lemma \ref{mSR}. We get from  Lemma \ref{lemmegirsanov} and Lemma \ref{momentPoisson} that 
\[ E_{\lambda_0} \left[\left( \frac{d P_{\lambda}}{dP_{\lambda_0}} \right)^{2}(N)\right] = \exp \pa{\frac{r^2 L}{\lambda_0 (1-\ell^{*}{})} }  \enspace. \]
Choosing $r=(\lambda_0 (1-\ell^{*}{}) \log C_{\alpha,\beta}/L )^{1/2}$ then leads to $ E_{\lambda_0} \left[ (d P_{\lambda}/dP_{\lambda_0})^2(N)\right] =  C_{\alpha,\beta}.$ 

For $L\geq 1$ such that $2\log C_{\alpha,\beta}/(\ell^* L )\leq R$, we obtain $0<{r}/{\sqrt{\ell^{*}{}(1- \ell^{*}{})}}\leq R/2$ whereby $\lambda_r$ belongs to 
$( \mathcal{S}^u_{ \bbul, \tau^{*}{}, \ell^{*}{} }[R])_r$ and Lemma \ref{lemmebayesien} allows us to conclude that $\rho \pa{ \pa{\mathcal{S}^u_{\bbul, \tau^{*}{}, \ell^{*}{}}[R]}_r } \geq \beta$ and $\mathrm{mSR}_{\alpha, \beta}\pa{ \mathcal{S}^u_{\bbul, \tau^{*}{}, \ell^{*}{}}[R]} \geq r.$

\subsection{Proof of Proposition \ref{UBalt2_u}}

The first statement of Proposition \ref{UBalt2_u} is straightforward, just noticing that for every $\lambda_0$ in $\calS_0^u[R]$
$$E_{\lambda_0}\cro{\phi_{2,\alpha}^{u(1/2)}(N)} =  E_{\lambda_0}\cro{E_{\lambda_0}\cro{\phi_{2,\alpha}^{u(1/2)}(N)\Big| N_1}}\leq \alpha\enspace.$$

Let us assume that $\lambda \in \mathcal{S}^u_{\bbul, \tau^{*}{}, \ell^{*}{}}[R]$, that is there exists $\lambda_0$ in $(0,R)$ and $\delta$ in $(-\lambda_0,R-\lambda_0] \setminus \lbrace 0 \rbrace$ satisfying $\lambda(t) = \lambda_{0} + \delta \mathds{1}_{(\tau^{*}{},\tau^{*}{} +\ell^{*}{}]}(t)$ for all $t$ in $[0,1]$.  

\smallskip

Let us first consider the test  $\phi_{2,\alpha}^{u(1)}(N)$ and assume 
\begin{equation} \label{UBalt2_u_eq1}
d_2(\lambda,\calS^u_0[R]) \geq \frac{1}{\sqrt{L}} \pa{ \sqrt{\frac{2R}{\beta (1-\ell^{*}{} )}}   +    2\sqrt{\frac{R \ell^{*}{} }{\beta (1-\ell^{*}{} )}}   + \sqrt{\frac{1}{\alpha_1 \wedge \alpha_2} \pa{R+2\sqrt{\frac{R}{\beta L}}}    }       } \enspace.
\end{equation}

We may write  $\phi_{2,\alpha}^{u(1)}(N) = \phi_{1,\alpha_2}^{u,-}(N) \vee \phi_{1,\alpha_1}^{u,+}(N)$ by  definition of the tests  $\phi_{1,\alpha}^{u,-}$ and $\phi_{1,\alpha}^{u,+}$ in \eqref{test_alt1_u}.
We therefore obtain  $P_\lambda \left(\phi_{2,\alpha}^{u(1)}(N) = 0\right) = P_\lambda \left( \phi_{1,\alpha_2}^{u,-}(N)=0,~ \phi_{1,\alpha_1}^{u,+}(N)=0\right)$. 

From the assumption \eqref{UBalt2_u_eq1} and the same computations as in the proof of Proposition \ref{bNP1U}, we get
$$P_\lambda \pa{\phi_{1,\alpha_2}^{u,-}(N) =0}  \leq P_\lambda \pa{ N(\tau^{*}{} ,\tau^{*}{}+\ell^{*}{} ]> b_{N_1,\ell^*}(\alpha_2) } \leq \beta\enspace,$$ when $- \lambda_0 < \delta <0$ and $$P_\lambda \pa{ \phi_{1,\alpha_1}^{u,+}(N) =0}  \leq P_\lambda \pa{ N(\tau^{*}{} ,\tau^{*}{}+\ell^{*}{} ]< b_{N_1,\ell^*}(1-\alpha_1)} \leq \beta\enspace,$$ when $0< \delta \leq R-\lambda_0 $.

The result of Proposition \ref{UBalt2_u} for the test  $\phi_{2,\alpha}^{u(1)}(N)$ follows with $$C(\alpha, \beta,R, \ell^{*}{})= \sqrt{\frac{2R}{\beta (1-\ell^{*}{})}}   +    2\sqrt{\frac{R \ell^{*}{}}{\beta (1-\ell^{*}{})}}   + \sqrt{\frac{1}{\alpha_1 \wedge \alpha_2} \pa{R+2\sqrt{\frac{R}{\beta L}}}    }\enspace.$$

\smallskip

Let us consider then the test  $\phi_{2,\alpha}^{u(2)}(N)$. We get from  Lemma \ref{quantile_T'}
$$t'_{n,\tau^*, \tau^*+\ell^*}(1-\alpha) \leq \frac{C}{L^2}  \left( 5n  \log \! \left(\!  \frac{2.77 }{\alpha} \! \right)\!\! + 3\max \left( \frac{1-\ell^*}{\ell^*},\frac{\ell^*}{1- \ell^*}   \right)    \log^2\!\! \left(\!\frac{2.77 }{\alpha} \! \right)  \right)  \enspace.$$
Now, the Bienayme-Chebyshev inequality and the bound $\int_{0}^{1} \lambda(x) Ldx\leq R L$ give
$$ P_{\lambda} \left( N_{1} \geq RL + \sqrt{\frac{2RL}{\beta}} \right) \leq \frac{\beta}{2} \enspace.$$
This yields $P_{\lambda} \left( t'_{N_1,\tau^*, \tau^*+\ell^*}(1-\alpha) \geq   C'(\alpha, \beta, R, \ell^{*}{},L)  \right) \leq {\beta}/{2}$ with
\begin{multline*}
C'(\alpha, \beta, R, \ell^{*}{},L)=C \Bigg( 5R \frac{ \log \left(  \frac{2.77 }{\alpha} \right)}{L} + 5 \sqrt{\frac{2R}{\beta}} \frac{ \log \left(  \frac{2.77 }{\alpha} \right)}{L^{3/2}}\\
 + 3 \max \left( \frac{1-\ell^*}{\ell^*}~,~ \frac{\ell^*}{1- \ell^*}   \right) \frac{ \log^2 \left(  \frac{2.77 }{\alpha} \right)}{L^2}    \Bigg)\enspace.
\end{multline*}
Noticing that
\begin{multline*}
P_\lambda\pa{\phi_{2,\alpha}^{u(2)}(N)  = 0}\leq P_\lambda \pa{T'_{\tau^{*}{}, \tau^{*}{} + \ell^{*}{}}(N) \leq C'(\alpha, \beta, R, \ell^{*}{},L) } \\
+ P_\lambda \pa{t'_{N_1,\tau^*, \tau^* + \ell^*}(1-\alpha) > C'(\alpha, \beta, R, \ell^{*}{},L) }\enspace,
\end{multline*}
this enables to write
\begin{equation} \label{UBalt2_eq2}
P_\lambda\pa{\phi_{2,\alpha}^{u(2)}(N)  = 0} \leq P_\lambda \pa{T'_{\tau^{*}{}, \tau^{*}{} + \ell^{*}{}}(N) \leq C'(\alpha, \beta, R, \ell^{*}{},L) } +\frac{\beta}{2}
\enspace.
\end{equation}

Assume now
\begin{equation} \label{UBalt2_eq3}
d_2(\lambda, \calS^u_0[R]) \geq \frac{1}{\sqrt{L}} \max \Bigg(4 \sqrt{\frac{2R}{\beta}}~,~  \Bigg( \frac{4R}{\sqrt{\beta}} + 2LC'(\alpha, \beta, R, \ell^{*}{},L)  \Bigg)^{1/2}  \Bigg) \enspace,
\end{equation}
which ensures
\begin{multline*}
 \vert \delta \vert \sqrt{\ell^{*}{} (1-\ell^{*}{})} \geq \frac{1}{\sqrt{L}} \max \Bigg(4 \sqrt{\frac{2( \lambda_0 + \delta (1-\ell^{*}{}))}{\beta}}~,\\
 ~  \left( \frac{4( \lambda_0 + \delta (1-\ell^{*}{}))}{\sqrt{\beta}} + 2LC'(\alpha, \beta, R, \ell^{*}{},L)  \right)^{1/2} \Bigg)\enspace.
 \end{multline*}
We get then
\begin{multline*}
 \delta^2 \ell^{*}{} (1-\ell^{*}{}) \geq 2 \max \Bigg(2 \sqrt{\frac{2( \lambda_0 + \delta (1-\ell^{*}{}))}{\beta L}}  \vert \delta \vert \sqrt{\ell^{*}{} (1-\ell^{*}{})}  ~,\\
 ~   \frac{2( \lambda_0 + \delta (1-\ell^{*}{}))}{\sqrt{\beta} L} + C'(\alpha, \beta, R, \ell^{*}{},L)  \Bigg)\enspace,
 \end{multline*}
and using the simple facts that $a+b \leq 2 \max(a,b)$ and $\sqrt{a+b} \leq \sqrt{a}+\sqrt{b}$ for all $a,b \geq 0$, 
  \begin{multline} \label{UBalt2_eq4}
 \delta^2 \ell^{*}{} (1-\ell^{*}{}) \geq  \sqrt{\frac{2}{\beta} \pa{  \frac{4( \lambda_0 + \delta (1-\ell^{*}{}))}{ L}   \delta^2 \ell^{*}{} (1-\ell^{*}{})  +   \frac{2( \lambda_0 + \delta (1-\ell^{*}{}))^2}{ L^2}}}\\
  + C'(\alpha, \beta, R, \ell^{*}{},L)\enspace.
 \end{multline}
Furthermore, Lemma \ref{MomentsT'} gives $E_\lambda [T'_{\tau^{*}{}, \tau^{*}{} + \ell^{*}{}}(N)] = \delta^2 \ell^{*}{} (1-\ell^{*}{})$ and
$$ \mathrm{Var}_\lambda \pa{T'_{\tau^{*}{}, \tau^{*}{} + \ell^{*}{}}(N)} =\frac{2(\lambda_0 + \delta (1-\ell^{*}{}))^2}{L^2} + \frac{4(\lambda_0 + \delta (1- \ell^{*}{})  )}{L} \delta^2 \ell^{*}{} (1-\ell^{*}{})\enspace,$$
so \eqref{UBalt2_eq4} leads to
\begin{equation*}
E_\lambda [T'_{\tau^{*}{}, \tau^{*}{} + \ell^{*}{}}(N)] \geq  \sqrt{{2 \mathrm{Var}_\lambda \pa{T'_{\tau^{*}{}, \tau^{*}{} + \ell^{*}{}}(N) }}/{\beta}} + C'(\alpha, \beta, R, \ell^*,L)\enspace.
 \end{equation*}
Combined with \eqref{UBalt2_eq2}, this inequality entails
\begin{multline*}
P_\lambda\pa{\phi_{2,\alpha}^{u(2)}(N)  = 0} \leq P_\lambda \Bigg(T'_{\tau^{*}{}, \tau^{*}{} + \ell^{*}{}}(N)-E_\lambda [T'_{\tau^{*}{}, \tau^{*}{} + \ell^{*}{}}(N)]\\
  \leq  \sqrt{{2 \mathrm{Var}_\lambda \pa{T'_{\tau^{*}{}, \tau^{*}{} + \ell^{*}{}}(N) }}/{\beta}}\Bigg)
 +\frac{\beta}{2}\enspace,
\end{multline*}
and the proof ends with the Bienayme-Chebyshev inequality, thus giving
\[P_\lambda\pa{\phi_{2,\alpha}^{u(2)}(N)  = 0} \leq \beta\enspace.\]
The result of Proposition \ref{UBalt2_u} for the test  $\phi_{2,\alpha}^{u(2)}(N)$ then follows with 
\begin{multline*}
C(\alpha, \beta,R, \ell^{*}{})= \max \Bigg(4 \sqrt{\frac{2R}{\beta}}~,~  \Bigg( \frac{4R}{\sqrt{\beta}}
+ 2C \Bigg( 5R\log \left(  \frac{2.77 }{\alpha} \right)+ 5 \sqrt{\frac{2R}{\beta} }\log \left(  \frac{2.77 }{\alpha} \right) +\\
     3 \max \left( \frac{\ell^*}{1-\ell^*}, \frac{1-\ell^*}{\ell^*}    \right)\log^2 \left(  \frac{2.77 }{\alpha} \right)   \Bigg)\Bigg)^{1/2}  \Bigg)  \enspace,
\end{multline*}
where the constant $C$ is defined in Lemma \ref{quantile_T'}.

\subsection{Proof of Proposition \ref{UBalt3_u}}

Start by remarking that the control of the first kind error rates of the three tests $\phi_{3,\alpha}^{u(1)+}$, $\phi_{3,\alpha}^{u(1)-}$ and $\phi_{3/4,\alpha}^{u(2)}$ is straightforward, considering the same conditioning trick as in the beginning of the proof of Proposition \ref{UBalt2_u} above.

\medskip

Let us first address the statement of Proposition~\ref{UBalt3_u} for $\phi_{3,\alpha}^{u(1)+}$. 

Let $L \geq 1$ and let us consider $\lambda= \lambda_{0}+  \delta^{*}{} \mathds{1}_{(\tau, \tau + \ell^{*}{}]}$ in $\mathcal{S}^u_{\delta^{*}{}, \bbul \bbul, \ell^{*}{}}[R]$ with $\delta^*> 0$.
Notice that
\begin{align*}
P_\lambda \pa{\phi_{3,\alpha}^{u(1)+}(N)  =0}&=P_\lambda \pa{\max_{t \in [0,1-\ell^*\wedge(1/2)]} N(t,t+\ell^*\wedge(1/2)] \leq b_{N_1,\ell^*\wedge(1/2)}^+(1- \alpha)}\\
&\leq P_\lambda \pa{N(\tau,\tau+\ell^*\wedge(1/2)] \leq b_{N_1,\ell^*\wedge(1/2)}^+(1- \alpha)}\enspace.
\end{align*}
One deduces from Lemma \ref{bquantile_maxminNbis_u} that for every $n$ in $\N\setminus\{0\}$,
$$ b_{n,\ell^*\wedge(1/2)}^+(1- \alpha) \leq (\ell^*\wedge(1/2)) n + \frac{n}{2} g^{-1} \pa{\frac{32}{n} \log \pa{\frac{320}{\alpha}}}\enspace,$$
with $g$ defined by \eqref{defg}, and then from the inequality $g^{-1}(x) \leq 2x/3 + \sqrt{2x}$ for all $x>0$ (see \eqref{UBginv}),
$$
 b_{n,\ell^*\wedge(1/2)}^+(1- \alpha) \leq (\ell^*\wedge(1/2))n + 4 \sqrt{n \log \pa{\frac{320}{\alpha}} } + \frac{32}{3}\log \pa{\frac{320}{\alpha}}\enspace.
$$
Since $b_{0,\ell^*\wedge(1/2)}^+(1- \alpha) =0$, the above control holds in fact for every $n$ in $\N$, whereby
$$
b_{N_1,\ell^*\wedge(1/2)}^+(1- \alpha) \leq (\ell^*\wedge(1/2))N_1 + 4 \sqrt{N_1 \log \pa{\frac{320}{\alpha}} } + \frac{32}{3}\log \pa{\frac{320}{\alpha}}\enspace.
$$

Setting $I_{\lambda,L}(\beta)=I(\lambda )L+\sqrt{\frac{2I(\lambda)L}{\beta}}$ with $I(\lambda)=\int_0^1 \lambda(t) dt$, and
\begin{equation} \label{UBalt3_u_eq1}
Q(\alpha, \beta,\ell,L )= \ell I_{\lambda,L}(\beta)  + 4 \sqrt{I_{\lambda,L}(\beta) \log \pa{\frac{320}{\alpha}}}+ \frac{32}{3}\log \pa{\frac{320}{\alpha}}\enspace,
\end{equation}
we obtain for $\ell$ in $(0,1/2]$
\begin{multline*}
P_\lambda \pa{\phi_{3,\alpha}^{u(1)+}(N)  =0} \leq  P_\lambda \pa{N(\tau,\tau+\ell^*\wedge(1/2)] \leq Q(\alpha, \beta,\ell^*\wedge(1/2),L )}\\
+ P_\lambda \pa{ N_1 > I_{\lambda,L}(\beta)}
\enspace.
\end{multline*}
Therefore
\begin{equation}\label{UBalt3_u_eq2}
P_\lambda \pa{\phi_{3,\alpha}^{u(1)+}(N)  =0} \leq P_\lambda  \pa{N(\tau,\tau+\ell^*\wedge(1/2)] \leq Q(\alpha, \beta,\ell^*\wedge(1/2),L )}  + \frac{\beta}{2}\enspace,
\end{equation}
with
\begin{multline}\label{UBalt3_u_eq3}
Q(\alpha, \beta,\ell^*\wedge(1/2),L ) \leq \pa{\ell^*\wedge\frac{1}{2}} (\lambda_0 +\delta^*\ell^*)L+ \pa{\ell^*\wedge\frac{1}{2}}\sqrt{\frac{2RL}{\beta}} \\
  + 4 \sqrt{\pa{RL+\sqrt{\frac{2RL}{\beta}}}  \log \pa{\frac{320}{\alpha}}}+ \frac{32}{3}\log \pa{\frac{320}{\alpha}}\enspace,
\end{multline}
since $I(\lambda )=\lambda_0 +\delta^*\ell^* \leq R$.
Let us now assume that 
\begin{multline}\label{UBalt3_u_cond1}
d_2\pa{\lambda,\calS^u_0[R]}=\delta^{*}{} \sqrt{\ell^{*}{} (1-\ell^{*}{})} \geq \sqrt{ \frac{\ell^*}{(1-\ell^*)L}} \Bigg(\sqrt{\frac{2R}{\beta}} + \sqrt{\frac{2R}{\beta\pa{\ell^*\wedge\frac{1}{2}}} }\\
+  \frac{4}{\ell^*\wedge\frac{1}{2}} \sqrt{\pa{R+ \sqrt{\frac{2R}{\beta L}}} \log \pa{\frac{320}{\alpha}}}  + \frac{32 \log \pa{320/\alpha}}{3\pa{\ell^*\wedge\frac{1}{2}}\sqrt{L}} \Bigg) \enspace.\end{multline}
Then
\begin{multline*}
\delta^{*}{} (1-\ell^{*}{})\pa{\ell^*\wedge\frac{1}{2}} L \geq \pa{\ell^*\wedge \frac{1}{2}}\sqrt{\frac{2RL}{\beta}} + \sqrt{\frac{2RL}{\beta} \pa{\ell^*\wedge\frac{1}{2}}}\\
+  4 \sqrt{\pa{RL+ \sqrt{\frac{2RL}{\beta}}} \log \pa{\frac{320}{\alpha}}}  + \frac{32}{3} \log \pa{\frac{320}{\alpha}} \enspace.
\end{multline*}
With \eqref{UBalt3_u_eq3} and using  $R\geq \lambda_0 +\delta^*$, this implies
\begin{multline*}
\delta^{*}{} (1-\ell^{*}{})\pa{\ell^*\wedge\frac{1}{2}} L \geq Q(\alpha, \beta,\ell^*\wedge(1/2),L )+ \sqrt{\frac{2\pa{\lambda_0 +\delta^*}L}{\beta} \pa{\ell^*\wedge\frac{1}{2}}}\\
-\pa{\ell^*\wedge\frac{1}{2}} (\lambda_0 +\delta^*\ell^*)L \enspace,
\end{multline*}
and
\[Q(\alpha, \beta,\ell^*\wedge(1/2),L )\leq \pa{\ell^*\wedge\frac{1}{2}} (\lambda_0 +\delta^*)L  
- \sqrt{\frac{2\pa{\lambda_0 +\delta^*}L}{\beta} \pa{\ell^*\wedge\frac{1}{2}}} \enspace.\]
By simply using the exact computation of $E_\lambda [N(\tau,\tau+\ell^*\wedge(1/2)]]$ and $\textrm{Var}_\lambda [N(\tau,\tau+\ell^*\wedge(1/2)]]$ which both equal $(\lambda_0 + \delta^{*}{})\pa{\ell^*\wedge (1/2)} L$, one can notice that
this is equivalent to
\[Q(\alpha, \beta,\ell^*\wedge(1/2),L )\leq E_\lambda [N(\tau,\tau+\ell^*\wedge(1/2)]]
- \sqrt{\frac{2\textrm{Var}_\lambda [N(\tau,\tau+\ell^*\wedge(1/2)]] }{\beta}} \enspace.\]
Coming back to \eqref{UBalt3_u_eq2}, one finally deduces from the Bienayme-Chebyshev inequality that
\[P_\lambda \pa{\phi_{3,\alpha}^{u(1)+}(N)  =0} \leq P_\lambda  \pa{N(\tau,\tau+\ell^*\wedge(1/2)] \leq Q(\alpha, \beta,\ell^*\wedge(1/2),L )}  + \frac{\beta}{2}\leq\beta\enspace.\]

\smallskip

Let us now address the statement of Proposition~\ref{UBalt3_u} for $\phi_{3,\alpha}^{u(1)-}$. 

Let $L \geq 1$ and let us consider again $\lambda= \lambda_{0}+  \delta^{*}{} \mathds{1}_{(\tau, \tau + \ell^{*}{}]}$ in $\mathcal{S}^u_{\delta^{*}{}, \bbul \bbul, \ell^{*}{}}[R]$, but with $-\lambda_0<\delta^*<0$ here.
Notice first that
\begin{align*}
P_\lambda \pa{\phi_{3,\alpha}^{u(1)-}(N)  =0}&=P_\lambda \pa{\min_{t \in [0,1-\ell^*\wedge(1/2)]} N(t,t+\ell^*\wedge(1/2)] \geq b_{N_1,\ell^*\wedge(1/2)}^-(\alpha)}\\
&\leq P_\lambda \pa{N(\tau,\tau+\ell^*\wedge(1/2)] \geq b_{N_1,\ell^*\wedge(1/2)}^-(\alpha)}\enspace.
\end{align*}

From Lemma \ref{bquantile_maxminNbis_u}, one deduces that 
$$ b_{N_1,\ell^*\wedge(1/2)}^-(\alpha) \geq N_1\pa{\ell^*\wedge(1/2)} - 4\sqrt{2N_1 \log \pa{\frac{320}{\alpha}} } \enspace.$$
Setting for $\ell$ in $(0,1/2]$,
\begin{equation} \label{UBalt3_u_eq4}
Q'(\alpha, \beta,\ell,L )= \ell I(\lambda )L-2 \ell \sqrt{\frac{I(\lambda)L}{\beta}} -  4\sqrt{I(\lambda)L + 2\sqrt{\frac{I(\lambda)L}{\beta}}} \sqrt{2\log \pa{\frac{320}{\alpha}}}\enspace,
\end{equation}
with $I(\lambda)=\int_0^1 \lambda(t) dt$ as above, 
this entails
\begin{multline*}
P_\lambda \pa{\phi_{3,\alpha}^{u(1)-}(N)  =0} \leq  P_\lambda \pa{N(\tau,\tau+\ell^*\wedge(1/2)] \geq Q'(\alpha, \beta,\ell^*\wedge(1/2),L )}\\
+P_\lambda \pa{ N_1\not\in \cro{I(\lambda)L -2 \sqrt{{I(\lambda)L}/{\beta}} ; I(\lambda)L +2 \sqrt{{I(\lambda)L}/{\beta}}}}
\enspace.
\end{multline*}

The Bienayme-Chebyshev inequality therefore leads to
\begin{equation}\label{UBalt3_u_eq5}
P_\lambda \pa{\phi_{3,\alpha}^{u(1)-}(N)  =0} \leq P_\lambda \pa{N(\tau,\tau+\ell^*\wedge(1/2)] \geq Q'(\alpha, \beta,\ell^*\wedge(1/2),L )} + \frac{\beta}{2}\enspace,
\end{equation}
with
\begin{multline}\label{UBalt3_u_eq6}
Q'(\alpha, \beta,\ell^*\wedge(1/2),L )\geq  \pa{\ell^*\wedge\frac{1}{2}} (\lambda_0 +\delta^*\ell^*)L-2  \pa{\ell^*\wedge\frac{1}{2}} \sqrt{\frac{RL}{\beta}} \\-  4\sqrt{RL + 2\sqrt{\frac{RL}{\beta}}} \sqrt{2\log \pa{\frac{320}{\alpha}}}\enspace.
\end{multline}
since $I(\lambda )=\lambda_0 +\delta^*\ell^*$ belongs to $(0, R]$.

Let us furthermore assume that 
\begin{multline}\label{UBalt3_u_cond2}
d_2\pa{\lambda,\calS^u_0[R]} \geq \sqrt{ \frac{\ell^*}{(1-\ell^*)L}} \Bigg(2\sqrt{\frac{R}{\beta}} + \sqrt{\frac{2R}{\beta\pa{\ell^*\wedge\frac{1}{2}}} }\\
+  \frac{4}{\ell^*\wedge\frac{1}{2}} \sqrt{R+ 2\sqrt{\frac{R}{\beta L}}}\sqrt{2 \log \pa{\frac{320}{\alpha}}}\Bigg) \enspace.
\end{multline}

Then
\begin{multline*}
|\delta^{*}{}| (1-\ell^{*}{})\pa{\ell^*\wedge\frac{1}{2}} L \geq \sqrt{\frac{2R\pa{\ell^*\wedge(1/2)} L}{\beta}}  +2\pa{\ell^*\wedge \frac{1}{2}}\sqrt{\frac{RL}{\beta}} \\
+  4 \sqrt{RL+2 \sqrt{\frac{RL}{\beta}}}\sqrt{2 \log \pa{\frac{320}{\alpha}}}\enspace.
\end{multline*}
With \eqref{UBalt3_u_eq6} and the bound  $R\geq \lambda_0 +\delta^*$, this yields
\begin{multline*}
|\delta^{*}{}| (1-\ell^{*}{})\pa{\ell^*\wedge\frac{1}{2}} L \geq  \sqrt{\frac{2(\lambda_0 +\delta^*)\pa{\ell^*\wedge(1/2)} L}{\beta}} +\pa{\ell^*\wedge\frac{1}{2}} (\lambda_0 +\delta^*\ell^*)L\\
-Q'(\alpha, \beta,\ell^*\wedge(1/2),L ) \enspace.
\end{multline*}
From $E_\lambda [N(\tau,\tau+\ell^*\wedge(1/2)]]=\textrm{Var}_\lambda [N(\tau,\tau+\ell^*\wedge(1/2)]]=(\lambda_0 + \delta^{*}{})\pa{\ell^*\wedge (1/2)} L$, we then deduce that  \begin{multline*}
Q'(\alpha, \beta,\ell^*\wedge(1/2),L )  \geq  E_\lambda [N(\tau,\tau+\ell^*\wedge(1/2)]] +\sqrt{\frac{2\textrm{Var}_\lambda [N(\tau,\tau+\ell^*\wedge(1/2)]]}{\beta}}\enspace.
\end{multline*}
Inserting this inequality in \eqref{UBalt3_u_eq5} and using the Bienayme-Chebyshev inequality again, we finally obtain
\[P_\lambda \pa{\phi_{3,\alpha}^{u(1)-}(N)  =0} \leq P_\lambda  \pa{N(\tau,\tau+\ell^*\wedge(1/2)] \geq Q'(\alpha, \beta,\ell^*\wedge(1/2),L )}  + \frac{\beta}{2}\leq\beta\enspace.\]
This concludes the proof for the test $\phi_{3,\alpha}^{u(1)-}$.

\smallskip

Let us finally turn to the test $\phi_{3/4,\alpha}^{u(2)}$.

Let $L \geq 1$ and let us consider $\lambda= \lambda_{0}+  \delta^{*}{} \mathds{1}_{(\tau, \tau + \ell^{*}{}]}$ in $\mathcal{S}^u_{\delta^{*}{}, \bbul \bbul, \ell^{*}{}}[R]$ satisfying
\begin{multline}\label{UBalt3_u_cond3}
 d_2(\lambda, \calS_0^u[R]) \geq 2\max \Bigg(   \frac{1}{\sqrt{L}} \left(   \sqrt{10C  R \log \left(  \frac{2.77  }{u_\alpha}   \right)} +  2\sqrt{\frac{R}{\sqrt{\beta}}} \right) \\+ \frac{1}{L^{3/4}}  \sqrt{10C \sqrt{\frac{2R}{\beta}} \log \left(  \frac{2.77  }{u_\alpha}   \right)}  +\frac{1}{L} \sqrt{6C \max \left( \frac{\ell^{*}{}}{1-\ell^{*}{}}, \frac{1-\ell^{*}{}}{\ell^{*}{}}   \right)} \log \left(  \frac{2.77 }{u_\alpha}   \right)~,~ 8\sqrt{\frac{2R}{\beta L}}
  \Bigg) \enspace,
 \end{multline}
$C$ being the constant defined in Lemma \ref{quantile_T'}.

 In order to prove $P_{\lambda}(\phi_{3/4,\alpha}^{u(2)}(N)=0)  \leq \beta$, noticing first that
\begin{multline*} 
  P_{\lambda}(\phi_{3/4,\alpha}^{u(2)}(N)=0)  \leq \inf_{k \in \lbrace  0,...,\lceil (1-\ell^{*}{}) M \rceil-1  \rbrace} P_{\lambda} \left( T'_{\frac{k}{M}, \frac{k}{M}+\ell^{*}{}}(N) \leq t'_{N_1,\frac{k}{M}, \frac{k}{M}+\ell^{*}{}}(1-u_{\alpha}) \right) \enspace,
  \end{multline*}
 we only need to exhibit some $k_{\tau}$ in $\lbrace  0,...,\lceil (1-\ell^{*}{}) M \rceil-1  \rbrace$ such that $$P_{\lambda} \left( T'_{\frac{k_\tau}{M}, \frac{k_\tau}{M}+\ell^{*}{}}(N) \leq t'_{N_1\frac{k_\tau}{M}, \frac{k_\tau}{M}+\ell^{*}{}}(1-u_{\alpha}) \right) \leq \beta\enspace.$$
Let 
\begin{multline*}
C'(u_\alpha, \beta, R, \ell^{*}{},L)=C \Bigg( 5R \frac{ \log \left(  \frac{2.77 }{u_\alpha} \right)}{L} + 5 \sqrt{\frac{2R}{\beta}} \frac{ \log \left(  \frac{2.77 }{u_\alpha} \right)}{L^{3/2}}\\
 + 3 \max \left( \frac{1-\ell^*}{\ell^*}~,~ \frac{\ell^*}{1- \ell^*}   \right) \frac{ \log^2 \left(  \frac{2.77 }{u_\alpha} \right)}{L^2}    \Bigg)\enspace.
\end{multline*}  
Since Lemma \ref{quantile_T'} with the Bienayme-Chebyshev inequality together entail
 $$P_{\lambda} \pa{ t'_{N_1\frac{k_\tau}{M}, \frac{k_\tau}{M}+\ell^{*}{}}(1-u_{\alpha}) \geq C'(u_\alpha,\beta,R,\ell^*,L)}   \leq \frac{\beta}{2} \enspace,$$
$k_{\tau}$ only needs  to satisfy
 \begin{equation} \label{UBalt3_u_eq7}
P_{\lambda} \left( T'_{\frac{k_\tau}{M}, \frac{k_\tau}{M}+\ell^{*}{}}(N) \leq C'(u_\alpha,\beta,R,\ell^*,L) \right) \leq \frac{\beta}{2}\enspace.
\end{equation}
Using now the simple facts that $(a+b)^2 \geq a^{2}+b^{2}$ and $a+b \leq 2 \max(a,b)$ for all $a,b \geq 0$, \eqref{UBalt3_u_cond3} entails
 \begin{align*} 
  d_2^2(\lambda, \mathcal{S}^u_0[R]) &\geq 
 4C \left( 5R \frac{ \log \left(  \frac{2.77}{u_\alpha}   \right) }{L} + 5 \sqrt{\frac{2R}{\beta} } \frac{ \log \left(  \frac{2.77 }{u_\alpha}   \right) }{L^{3/2}} +    3 \max \left( \frac{\ell^{*}{} }{1- \ell^{*}{}}, \frac{1- \ell^{*}{}}{\ell^{*}{}}    \right)   \frac{ \log^2 \left(  \frac{2.77  }{u_\alpha}   \right)}{L^2}    \right) \nonumber \\ &
   + \frac{8R}{L\sqrt{\beta}} + \frac{8 \sqrt{2R}}{\sqrt{L \beta}} \vert \delta^{*}{} \vert \sqrt{\ell^{*}{}(1-\ell^{*}{})}  \enspace.
 \end{align*}
Further using $\sqrt{a+b} \leq \sqrt{a} + \sqrt{b}$ for all $a,b \geq 0$, by definition of $C'(u_\alpha, \beta, R, \ell^{*}{},L)$, this yields
 \begin{equation} \label{UBalt3_u_eq8}
  \frac{\delta^{*}{}^{2} \ell^{*}{}(1-\ell^{*}{})}{4} \geq C'(u_\alpha, \beta, R, \ell^{*}{},L)+\sqrt{\frac{4R^2}{L^2 \beta}  + \frac{8R \delta^{*}{}^2 \ell^{*}{}(1-\ell^{*}{})  }{L \beta} }\enspace.
 \end{equation}
Let us set $k_\tau = \lfloor \tau M \rfloor$. Since $0< \tau < 1-\ell^{*}{}$, $k_\tau$ actually belongs to $\lbrace  0,...,\lceil (1-\ell^{*}{}) M \rceil-1 \rbrace$, and since $M=\lceil 2/(\ell^{*}{} (1-\ell^{*}{} ))  \rceil$, $ k_\tau/M \leq \tau < k_\tau/M + (\ell^{*}{}(1-\ell^{*}{})/2$. Therefore, since we get in particular $k_\tau/M \leq \tau < k_\tau/M +\ell^{*}{}$, using Lemma \ref{MomentsT'} (equations \eqref{MomentsT'_E} and \eqref{MomentsT'_V}) with $x=\lambda_0 k_\tau /M,$ $y=\lambda_0 \ell^* + \delta (k_{\tau}/M+\ell^*-\tau) $ and $z= \lambda_0(1-\ell^*-k_{\tau}/M)+\delta( \tau-k_{\tau}/M)$, we get on the one hand
 $$ E_\lambda \left[T'_{\frac{k_\tau}{M}, \frac{k_\tau}{M}+\ell^{*}{}}(N) \right] = \delta^{*}{}^2 \frac{\pa{\ell^{*}{}(1-\ell^{*}{}) + k_\tau/M -\tau}^2}{\ell^{*}{}(1-\ell^{*}{})} \geq \frac{\delta^{*}{}^2 \ell^{*}{}(1-\ell^{*}{})}{4}\enspace,$$
 and on the other hand
\begin{multline*}
 \mathrm{Var}_\lambda \left[ T'_{\frac{k_\tau}{M}, \frac{k_\tau}{M}+\ell^{*}{}}(N) \right] = \frac{2}{L^2} \left( \lambda_0 + \delta^{*}{} \left( 1-\ell^* + \left( \tau - \frac{k_\tau}{M} \right) \frac{2\ell^{*}{} -1}{\ell^{*}{} (1-\ell^{*}{})}    \right)   \right)^2\\+ \frac{4}{L}  \delta^{*}{}^{2}   \frac{1-\ell^{*}{}}{\ell^{*}{}} \left( \ell^{*}{} - \frac{\tau -k_\tau/M}{1-\ell^{*}{}} \right)^2\left( \lambda_0  + \delta^{*}{} \left( 1- \ell^{*}{} + \left( \tau - \frac{k_\tau}{M}   \right) \frac{2\ell^{*}{} -1}{\ell^{*}{} (1-\ell^{*}{})}   \right)   \right)\enspace.
\end{multline*}
Using again the fact that $\tau - k_\tau/M < (\ell^{*}{}(1-\ell^{*}{})/2,$ we obtain
\begin{equation*}
 0 \leq  1-\ell^* + \left( \tau - \frac{k_\tau}{M} \right) \frac{2\ell^{*}{} -1}{\ell^{*}{} (1-\ell^{*}{})}    \leq 1
\end{equation*}
for all $\ell^*$ in $(0,1),$ leading to 
\begin{multline*}
 \mathrm{Var}_\lambda \left[ T'_{\frac{k_\tau}{M}, \frac{k_\tau}{M}+\ell^{*}{}}(N) \right] \leq \pa{ \frac{2}{L^2} \left( \lambda_0 +  \delta^{*}{}  \right)^2 + \frac{4}{L}\left( \lambda_0  +  \delta^{*}{}   \right)  \delta^{*}{}^{2}   \ell^{*}{} (1- \ell^{*}{} )} \mathds{1}_{\delta^{*}{}>0}\\
+ \pa{ \frac{2}{L^2} \lambda_0^2 + \frac{4}{L}\lambda_0 \delta^{*}{}^{2}\ell^{*}{} (1-\ell^{*}{}) }\mathds{1}_{\delta^{*}{}<0}
\enspace,
\end{multline*}
whereby
\[ \mathrm{Var}_\lambda \left[ T'_{\frac{k_\tau}{M}, \frac{k_\tau}{M}+\ell^{*}{}}(N) \right] \leq   \frac{2R^2}{L^2}  + \frac{4R}{L}   \delta^{*}{}^{2}   \ell^{*}{} (1- \ell^{*}{} )\enspace.
\]
Combined with these computations,  \eqref{UBalt3_u_eq8} leads to
 \begin{equation*}
E_\lambda \left[T'_{\frac{k_\tau}{M}, \frac{k_\tau}{M}+\ell^{*}{}}(N) \right] \geq C'(u_\alpha, \beta, R, \ell^{*}{},L)+\sqrt{\frac{2 \mathrm{Var}_\lambda \left[ T'_{\frac{k_\tau}{M}, \frac{k_\tau}{M}+\ell^{*}{}}(N) \right]}{\beta}} \enspace.
 \end{equation*}
The Bienayme-Chebyshev inequality finally allows to obtain \eqref{UBalt3_u_eq7}, which ends the proof.

\subsection{Proof of Proposition \ref{UBalt4_u}}

As for the other Bonferroni type aggregated tests, the control of the first kind error rates of the two tests $\phi_{4,\alpha}^{u(1)}$ and $\phi_{3/4,\alpha}^{u(2)}$ is straightforward using simple union bounds. 

\medskip

\emph{$(i)$ Control of the second kind error rate of $\phi_{4,\alpha}^{u(1)}$.}

\smallskip

Let us first set $\lambda$ in $\mathcal{S}^u_{\bbul,\bbul\bbul,\ell^{*}}[R]$ such that $\lambda= \lambda_{0}+  \delta\un{(\tau, \tau + \ell^*]}$ with $\tau$ in $(0,1-\ell^*)$, $\lambda_0$ in $(0,R]$, $\delta$ in $(0,R-\lambda_0]$ or $\delta$ in $(-\lambda_0,0)$, and 
\begin{multline*}
d_2(\lambda, \calS_0^u[R])\geq \sqrt{ \frac{\ell^*}{(1-\ell^*)L}} \Bigg(2\sqrt{\frac{R}{\beta}} + \sqrt{\frac{2R}{\beta\pa{\ell^*\wedge\frac{1}{2}}} }\\
+  \frac{4}{\ell^*\wedge\frac{1}{2}} \sqrt{2\pa{R+ 2\sqrt{\frac{R}{\beta L}}} \log \pa{\frac{640}{\alpha}}}  + \frac{32 \log \pa{640/\alpha}}{3\pa{\ell^*\wedge\frac{1}{2}}\sqrt{L}} \Bigg) \enspace.
\end{multline*}
This condition ensures that \eqref{UBalt3_u_cond1} and \eqref{UBalt3_u_cond2} both hold, but with $\alpha$ replaced by $\alpha/2$.
Then,  it suffices to notice that if $\delta\in (0,R-\lambda_0]$,
$$P_{\lambda}\pa{\phi_{4,\alpha}^{u(1)}(N)=0} \leq P_{\lambda}\pa{\phi_{3,\alpha/2}^{(1)+}(N)=0}\enspace,$$
and if $\delta$ in $(-\lambda_0,0)$,
$$P_{\lambda}\pa{\phi_{4,\alpha}^{u(1)}(N)=0} \leq P_{\lambda}\pa{\phi_{3,\alpha/2}^{(1)-}(N)=0}\enspace.$$
Since \eqref{UBalt3_u_cond1} and \eqref{UBalt3_u_cond2} hold with $\alpha/2$ instead of $\alpha$, by using exactly the same arguments as in the proof of Proposition \ref{UBalt3_u}, we obtain  $P_{\lambda}\pa{\phi_{3,\alpha/2}^{(1)+}(N)=0}\leq \beta$
when $\delta\in (0,R-\lambda_0]$ on the one hand, and $P_{\lambda}\pa{\phi_{3,\alpha/2}^{(1)-}(N)=0}\leq \beta$ when
$\delta$ in $(-\lambda_0,0)$ on the other hand.

In any case, whatever the value of $\delta$ in $(-\lambda_0,R-\lambda_0]\setminus\{0\}$, one has
\[P_{\lambda}\pa{\phi_{4,\alpha}^{u(1)}(N)=0}\leq \beta\enspace.\]

\medskip

\emph{$(ii)$ Control of the second kind error rate of $\phi_{3/4,\alpha}^{u(2)}$.}

\smallskip

Let us set now $\lambda$ in $\mathcal{S}^u_{\bbul,\bbul\bbul,\ell^{*}}[R]$ such that $\lambda= \lambda_{0}+  \delta\un{(\tau, \tau + \ell^*]}$ with $\tau$ in $(0,1-\ell^*)$, $\lambda_0$ in $(0,R]$, $\delta$ in $(-\lambda_0,R-\lambda_0]\setminus\{0\}$ as above, but with
\begin{multline*}
d_2(\lambda, \calS_0^u[R])\geq 2\max \Bigg(   \frac{1}{\sqrt{L}} \left(   \sqrt{10C  R \log \left(  \frac{2.77  }{u_\alpha}   \right)} +2\sqrt{\frac{R}{\sqrt{\beta}}} \right) + \frac{1}{L^{3/4}}  \sqrt{10C \sqrt{\frac{2R}{\beta}} \log \left(  \frac{2.77  }{u_\alpha}   \right)} \\ +\frac{1}{L} \sqrt{6C \max \left( \frac{\ell^{*}{}}{1-\ell^{*}{}}, \frac{1-\ell^{*}{}}{\ell^{*}{}}   \right)} \log \left(  \frac{2.77 }{u_\alpha}   \right)~,~ 8\sqrt{\frac{2R}{\beta L}}
  \Bigg) \enspace,
\end{multline*}
so that \eqref{UBalt3_u_cond3} holds.

Following the same arguments as in the proof of Proposition \ref{UBalt3_u} (with the only change of $\delta$ instead of $\delta^*$), we prove that
$$P_{\lambda}\pa{\phi_{3/4,\alpha}^{u(2)}(N)=0} \leq \beta\enspace.$$
This ends the proof, just taking for instance $C(\alpha,\beta,R,\ell^*)$ as the maximum between 
\begin{multline*}
\sqrt{ \frac{\ell^*}{(1-\ell^*)}} \Bigg(2\sqrt{\frac{R}{\beta}} + \sqrt{\frac{2R}{\beta\pa{\ell^*\wedge\frac{1}{2}}} }\\
+  \frac{4}{\ell^*\wedge\frac{1}{2}} \sqrt{2\pa{R+ 2\sqrt{\frac{R}{\beta}}} \log \pa{\frac{640}{\alpha}}}  + \frac{32 \log \pa{640/\alpha}}{3\pa{\ell^*\wedge\frac{1}{2}}} \Bigg)\enspace,
\end{multline*}
and
\begin{multline*}
2\max \Bigg(  \left(   \sqrt{10C  R \log \left(  \frac{2.77  }{u_\alpha}   \right)} +2\sqrt{\frac{R}{\sqrt{\beta}}} \right) +  \sqrt{10C \sqrt{\frac{2R}{\beta}} \log \left(  \frac{2.77  }{u_\alpha}   \right)} \\ + \sqrt{6C \max \left( \frac{\ell^{*}{}}{1-\ell^{*}{}}, \frac{1-\ell^{*}{}}{\ell^{*}{}}   \right)} \log \left(  \frac{2.77 }{u_\alpha}   \right)~,~ 8\sqrt{\frac{2R}{\beta }}
  \Bigg) \enspace.
\end{multline*}

\subsection{Proof of Proposition \ref{LBalt5_u}}

Assume that  
 \begin{equation} \label{LBalt5_u_eq1}
L > \frac{((R- \delta^{*}{}) \wedge R) \log C_{\alpha,\beta}}{2 \delta^{*}{}^2 \tau^{*}{}(1-\tau^{*}{})} \enspace,
\end{equation}
and set
 \begin{equation} \label{LBalt5_u_eq2}
 r=\sqrt{  \frac{((R- \delta^{*}{}) \wedge R)  \log C_{\alpha,\beta}}{2L}} \enspace.
 \end{equation}
The assumption \eqref{LBalt5_u_eq1} ensures 
\begin{equation} \label{LBalt5_u_eq3}
 r^2 < \delta^{*}{}^2 \tau^{*}{}(1- \tau^{*}{})\leq \delta^{*}{}^2/4\enspace,
  \end{equation}
which enables us to define $\lambda_r$ for $t$ in $(0,1)$ by
 $$ \lambda_r(t) = \lambda_0 + \delta^{*}{}  \mathds{1}_{( \tau^{*}{}, \tau^{*}{} + \ell_r ]}(t)\textrm{ with } \lambda_0=((R- \delta^{*}{}) \wedge R) \textrm{ and } \ell_r = \frac{1}{2} \pa{1- \frac{\sqrt{\delta^{*}{}^{2} - 4r^2}}{\vert \delta^{*}{} \vert}}\enspace.$$
First, $\ell_r$ belongs to $(0,1-\tau^{*}{})$ for all $\tau^{*}$ in $(0,1).$ Indeed, if $\tau^{*}{} \leq 1/2$ the result is straightforward by definition of $\ell_r$ and if $\tau^{*}{} >1/2$ the result follows from \eqref{LBalt5_u_eq3}. 

Moreover, the definition of $\ell_r$ implies $\delta^{*}{}^2 \ell_r(1-\ell_r)=r^2$ and ensures that $\lambda_r$ belongs to $(\mathcal{S}^u_{\delta^{*}{}, \tau^{*}{}, \bbul \bbul \bbul}[R])_r$.
Furthermore, we get from \eqref{LBalt5_u_eq1} that $L > (2 \lambda_0 \log C_{\alpha,\beta} )/ \delta^{*}{}^{2}$ and then $ 1- \lambda_0 \log C_{\alpha,\beta}/(L \delta^{*}{}^2) > 1/2$, which leads to
\begin{equation*} 
  r^2 < \frac{\lambda_0 \log C_{\alpha,\beta}}{L} \pa{1-  \frac{\lambda_0 \log C_{\alpha,\beta}}{L \delta^{*}{}^2}}= \frac{\delta^{*}{}^2}{4} \pa{ 1- \pa{1- \frac{2 \lambda_0 \log C_{\alpha,\beta}}{L\delta^{*}{}^2}}^2}\enspace,
\end{equation*}
 hence $\delta^{*}{}^2 \ell_r < \lambda_0  \log C_{\alpha,\beta}/L$. We then obtain from Lemma \ref{lemmegirsanov} and Lemma \ref{momentPoisson}
 \[ E_{\lambda_0} \left[\left( \frac{d P_{\lambda_r}}{dP_{\lambda_0}} \right)^{2}(N)\right] = \exp \left( \frac{L \delta^{*}{}^2 \ell_r }{\lambda_0}   \right) < C_{\alpha,\beta}\enspace. \]
Lemmas \ref{mSR} and \ref{lemmebayesien} then entail $\rho \pa{(\mathcal{S}^u_{\delta^{*}{}, \tau^{*}{}, \bbul \bbul \bbul}[R])_r} \geq \beta$ and $\mathrm{mSR}_{\alpha, \beta}(\mathcal{S}^u_{\delta^{*}{}, \tau^{*}{}, \bbul \bbul \bbul}[R])\geq r$.

\subsection{Proof of Proposition \ref{UBalt5_u}}

The control of the first kind error rate is straightforward, and even more strong by using the same conditioning trick as in the proof of Proposition \ref{UBalt2_u}:
in fact, for every $\lambda_0$ in $\calS_0^u[R]$ and $n$ in $\N$, $E_{\lambda_0}\cro{\phi_{5,\alpha}^{u}(N)\Big| N_1=n}=P_{\lambda_0}\pa{\phi_{5,\alpha}^{u}(N)=1\Big| N_1=n}\leq \alpha$, so
$$\forall \lambda_0\in \calS_0^u[R],~ P_{\lambda_0}\pa{\phi_{5,\alpha}^{u}(N)=1} =  E_{\lambda_0}\cro{P_{\lambda_0}\pa{\phi_{5,\alpha}^{u}(N)\Big| N_1}}\leq \alpha\enspace.$$ 
Let us turn to the control of the second kind error rate of $\phi^u_{5,\alpha}$.

Set $\lambda$ in $\calS^u_{\delta^*,\tau^*,\bbul\bbul\bbul}[R]$ such that $\lambda =\lambda_0 + \delta^* \mathds{1}_{(\tau^*, \tau^* + \ell]}$ with $\lambda_0$ in $(-\delta^{*}{} \vee 0,(R- \delta^{*}{}) \wedge R]$ and $\ell$ in $(0,1-\tau^{*}{})$, and satisfying
\begin{equation} \label{UBalt5_u_cond}
d_2(\lambda, \calS_0[R]) \geq \frac{2}{\sqrt{L}} \max\pa{\sqrt{\vert \delta^* \vert Q(2R, \delta^{*}{}, \alpha) },~ 2\sqrt{\frac{2R}{\beta}},~ \frac{\vert \delta^* \vert}{2\sqrt{2 \beta R}}} \enspace,
\end{equation} 
where $Q(2R, \delta^{*}{}, \alpha)$ is the quantile upper bound defined by Lemma \ref{QuantilessupShifted_u}, and which does not depend on $L$. The condition \eqref{UBalt5_u_cond} ensures that $  \vert \delta^{*}{} \vert  \sqrt{\ell (1-\ell)} \geq \vert\delta^{*}{} \vert /\sqrt{2 \beta R L}$, that is, using the fact that $\ell (1-\ell) \leq 1/4$,
 \begin{equation} \label{UBalt5_u_eq1}
 L \geq \frac{2}{\beta R}\enspace.
\end{equation}  
Setting $I(\lambda)= \int_0^{1} \lambda(t) dt$ as in the proof of Proposition~\ref{UBalt3_u}, with $I(\lambda )\leq R$ (and therefore obviously $2R - I(\lambda) \geq R$), since \eqref{UBalt5_u_eq1} also entails  $L \geq 2R/(\beta R^2)$ we obtain
\begin{equation} \label{UBalt5_u_eq2}
L \geq \frac{2I(\lambda)}{(2R - I(\lambda))^2 \beta}\enspace.
\end{equation}
Notice now that \eqref{UBalt5_u_eq2} and the Bienayme-Chebyshev inequality yield
\begin{align*} 
P_\lambda (N_1 > 2RL) &= P_\lambda \big(N_1 - I(\lambda)L > L(2R - I(\lambda)) \big)\\
&\leq P_\lambda \pa{N_1 - I(\lambda)L > \sqrt{\frac{2 I(\lambda)L}{\beta}} }\leq  \frac{\beta}{2} \enspace.
\end{align*}
This leads to 
\[P_\lambda \pa{ \phi^u_{5,\alpha}(N) =0} \leq P_\lambda \pa{\sup_{\ell' \in (0,1- \tau^{*})}S'_{\delta^*,\tau^*,\tau^*+\ell'}(N) \leq s_{N_1,\delta^*,\tau^*,L}^{'+}(1-\alpha),~ N_1 \leq 2RL}+ \frac{\beta}{2}\enspace,\]
and Lemma \ref{QuantilessupShifted_u} allows to write that
\begin{align*}
P_\lambda \pa{ \phi^u_{5,\alpha}(N) =0} &\leq P_\lambda \pa{\sup_{\ell' \in (0,1- \tau^{*})}S'_{\delta^*,\tau^*,\tau^*+\ell'}(N) \leq Q(2R, \delta^{*}{}, \alpha)} + \frac{\beta}{2}\\
&\leq P_\lambda \pa{S'_{\delta^*,\tau^*,\tau^*+\ell}(N) \leq Q(2R, \delta^{*}{}, \alpha)} + \frac{\beta}{2} \enspace.
\end{align*}
Moreover, the assumption \eqref{UBalt5_u_cond} implies 
$$ \vert \delta^* \vert \sqrt{\ell (1-\ell)} \geq \frac{2}{\sqrt{L}} \max\pa{\sqrt{\vert \delta^* \vert Q(2R, \delta^{*}{}, \alpha) },~ 2\sqrt{\frac{2R}{\beta}}} \enspace,$$
which entails
$$ \frac{\vert \delta^* \vert}{2} \ell (1-\ell) L \geq 2 \max\pa{ Q(2R, \delta^{*}{}, \alpha) ,~ \sqrt{\frac{2R \ell (1-\ell) L}{\beta}}} \enspace,$$
hence
$$ \frac{\vert \delta^* \vert}{2} \ell (1-\ell) L \geq 2 \max\pa{ Q(2R, \delta^{*}{}, \alpha) ,~ \sqrt{\frac{2(\lambda_0 + \delta^* (1- \ell)) \ell (1-\ell) L}{\beta}}} \enspace.$$
Noticing that $E_\lambda [S'_{\delta^*,\tau^*,\tau^*+\ell}(N)] = \vert\delta^*\vert \ell (1-\ell)L/2$ and $\mathrm{Var}_\lambda (S'_{\delta^*,\tau^*,\tau^*+\ell}(N)) = (\lambda_0 + \delta^{*}{} (1-\ell)) \ell (1-\ell)L$, we get
\begin{equation} \label{bdistanceDelta+bis}
E_\lambda [S'_{\delta^*,\tau^*,\tau^*+\ell}(N)] \geq  Q(2R, \delta^{*}{}, \alpha)+ \sqrt{\frac{2\mathrm{Var}(S'_{\delta^*,\tau^*,\tau^*+\ell}(N))}{\beta}} \enspace.
\end{equation}
Therefore,
\begin{align*}
P_\lambda \pa{ \phi^u_{5,\alpha}(N) =0} &\leq P_\lambda \pa{S'_{\delta^*,\tau^*,\tau^*+\ell}(N) \leq Q(2R, \delta^{*}{}, \alpha)} + \frac{\beta}{2} \\
&\leq P_\lambda \pa{S'_{\delta^*,\tau^*,\tau^*+\ell}(N) - E_\lambda [S'_{\delta^*,\tau^*,\tau^*+\ell}(N)] \leq -\sqrt{\frac{2\mathrm{Var}(S'_{\delta^*,\tau^*,\tau^*+\ell}(N))}{\beta}}} + \frac{\beta}{2}\\
&\leq \beta \enspace,
\end{align*}
with a last line simply following from the Bienayme-Chebyshev inequality.

\subsection{Proof of Proposition \ref{LBalt6_u}}

Assume that $L\geq 3$ and $\alpha+\beta<1/2$, and set  $\lambda_0=R/2$. As in the proof of Proposition \ref{LBalt6}, we consider $C'_{\alpha, \beta}=4(1- \alpha - \beta)^{2}$, $K_{\alpha,\beta,L}=\lceil (\log_2 L)/C'_{\alpha,\beta} \rceil$, and for $k$ in $\lbrace 1,\ldots, K_{\alpha,\beta,L}\rbrace$,
$\lambda_{k}= \lambda_{0}+ \delta_{k} \mathds{1}_{(\tau^{*}{}, \tau^{*}{} + \ell_{k}]}$ with $ \ell_{k}  = (1- \tau^{*}{})/2^{k}$
 and
 $ \delta_{k} = (\lambda_{0} \log \log L/ (\ell_k L))^{1/2}.$ Then, for all $k$ in $\lbrace 1,\ldots,K_{\alpha,\beta,L} \rbrace$, noticing that $\ell_{k} < 1- \tau^{*}{}$, we get $d_2\pa{\lambda_{k},\mathcal{S}_0[R]} > \sqrt{\lambda_{0} \tau^{*}{} \log \log L/L}$. Furthermore, assuming that 
\begin{equation}\label{LBalt6_u_eq1}
\frac{\log \log L}{L^{1+1/C'_{\alpha,\beta}}} \leq  \frac{R(1-\tau^*)}{4}\enspace,
\end{equation}
 one obtains that $\lambda_k$ belongs to $\mathcal{S}^u_{\bbul, \tau^{*}{}, \bbul \bbul \bbul}[R]$.

Recall also that for all $k$ in $\lbrace 1,\ldots, K_{\alpha,\beta,L} \rbrace$, $P_{\lambda_{k}}$ denotes the distribution of a Poisson process with intensity $\lambda_{k}$ with respect to the measure $\Lambda$, and consider $\kappa$, a random variable with uniform distribution on $\lbrace 1,\ldots, K_{\alpha,\beta,L} \rbrace$, which allows to define the probability distribution $\mu$ of $\lambda_{\kappa}$. From Lemma \ref{lemmebayesien}, we know that it is enough to prove $E_{\lambda_0} [\left( dP_{\mu}/dP_{\lambda_0}\right)^{2}  ] \leq 1+C'_{\alpha,\beta}$ to conclude that  $\mathrm{mSR}_{\alpha, \beta}(\mathcal{S}^u_{\bbul, \tau^{*}{}, \bbul \bbul \bbul}[R]) \geq \sqrt{R\tau^{*}{} \log \log L/(2L)}.$

\smallskip

 The same calculation as in the proof of Proposition \ref{LBalt6}  (see \eqref{upperboundExpectsquare}) gives for $\eta$ such that $0<\eta<1-1/\sqrt{2},$
\[ E_{\lambda_0} \left[\left( \frac{dP_{\mu}}{dP_{\lambda_0}} \right)^{2}   \right] \leq C'_{\alpha,\beta}\log 2 + \frac{2C'_{\alpha,\beta}\log 2}{\log L} \left( \log L  \right)^{\eta+\frac{1}{\sqrt{2}}}+\exp \left( \frac{\log \log L}{2^{(\log L)^{\eta}/2}}\right)\enspace.\]
If we assume now that 
\begin{equation}\label{LBalt6_u_eq2}
\exp \left( \frac{\log \log L}{2^{(\log L)^{\eta}/2}}\right) + \frac{2C'_{\alpha,\beta}\log 2}{\left( \log L  \right)^{1-\eta-\frac{1}{\sqrt{2}}}}\leq 1+ (1-\log 2) C'_{\alpha,\beta}\enspace,
\end{equation}
we finally obtain the expected result
\[E_{\lambda_0} \left[\left( \frac{dP_{\mu}}{dP_{\lambda_0}} \right)^{2}   \right] \leq  1+C'_{\alpha,\beta}\enspace.\]
To end the proof, it remains to notice that there exists $L_0(\alpha,\beta,R,\tau^{*}{})\geq 3$ such that for all $L\geq L_0(\alpha,\beta,R,\tau^{*}{}),$ both assumptions \eqref{LBalt6_u_eq1} and \eqref{LBalt6_u_eq2} hold.
\bigskip

\subsection{Proof of Proposition \ref{UBalt6_u}}

As for all our Bonferroni type aggregated tests, the control of the first kind error rates of the two tests $\phi_{6,\alpha}^{u(1)}$ and $\phi_{6,\alpha}^{u(2)}$ is straightforward using simple union bounds and the conditioning trick of the above proofs for upper bounds. 

\medskip

\emph{$(i)$ Control of the second kind error rate of $\phi_{6,\alpha}^{u(1)}$.}

\smallskip

Let $\lambda$ in $\mathcal{S}^u_{\bbul, \tau^{*}{}, \bbul \bbul \bbul}[R]$ be such that $\lambda = \lambda_{0} + \delta \mathds{1}_{(\tau^{*}{}, \tau^{*}{} +\ell]}$ with $\lambda_{0}$ in $(0,R]$, $\delta$ in $(-\lambda_{0}, R- \lambda_0] \setminus \lbrace 0 \rbrace$ and $\ell$ in $(0,1-\tau^{*}{})$ and assume that

\begin{multline} \label{UBalt6_u_cond1}
  d_2(\lambda, \calS^u_0[R]) \geq 2 \max \Bigg(  \sqrt{\frac{R}{3}} \sqrt{ \frac{1+ \tau^{*}{}}{\tau^{*}{}}} \sqrt{\frac{\log \left(  2 /u_\alpha   \right)}{L}}~,\\
  ~  \frac{1+ \tau^{*}{}}{\tau^{*}{}}\left(  \sqrt{\frac{2 \log \left( 2/u_\alpha   \right)}{L}} \sqrt{R + \sqrt{\frac{2R}{ \beta L}}}+\sqrt{ \frac{2R}{\beta L}}  \right)~,
~ \frac{\sqrt{1-\tau^{*}{}} R}{\sqrt{2L}}
\Bigg)\enspace.
\end{multline}

Let us prove the inequality $P_{\lambda}(\phi_{6,\alpha}^{u(1)}(N)=0) \leq \beta.$
Applying Lemma \ref{momentPoisson}, we get for all $k$ in $\lbrace 1,\ldots,\lfloor \log_{2} L \rfloor \rbrace$ and $\ell_{\tau^*,k}=\pa{1 - \tau^{*}{}}{2^{-k}}$
\begin{equation} \label{alt6_esp}
E_\lambda [ S'_{\tau^*,\tau^*+\ell_{\tau^*,k}}(N)] = \delta  (\ell_{\tau^*,k} \wedge \ell) (1- \ell_{\tau^*,k} \vee \ell) L \enspace ,
\end{equation}
and
\begin{equation}\label{alt6_var}
\mathrm{Var}_\lambda \cro{ S'_{\tau^*,\tau^*+\ell_{\tau^*,k}}(N)} = \begin{cases}
(\lambda_0 (1- \ell_{\tau^*,k}) + \delta (1- 2 \ell_{\tau^*,k} +\ell_{\tau^*,k} \ell))  \ell_{\tau^*,k} L~~\text{if } \ell_{\tau^*,k} \leq \ell\enspace, \\
(\lambda_0 \ell_{\tau^*,k} + \delta \ell (1- \ell_{\tau^*,k})) (1- \ell_{\tau^*,k} )L~~\text{if }\ell_{\tau^*,k} \geq \ell\enspace.
\end{cases}
\end{equation}

Assume first that $\delta$ belongs to $(0, R - \lambda_0].$
Noticing that  
\[ P_{\lambda}(\phi_{6,\alpha}^{u(1)}(N)=0) \leq    \inf_{k\in \lbrace 1,\ldots, \lfloor \log_{2} L \rfloor\rbrace}  P_{\lambda} \left(  S'_{\tau^*,\tau^*+\ell_{\tau^*,k}}(N) \leq s'_{N_1,\tau^*,\tau^*+\ell_{\tau^*,k}} \pa{1 - u_\alpha} \right)  \enspace,\]
 one can see it is enough to exhibit some $k$ in $\lbrace 1,\ldots, \lfloor \log_{2} L \rfloor\rbrace$ satisfying
\[ P_{\lambda} \left( S'_{\tau^*,\tau^*+\ell_{\tau^*,k}}(N)\leq s'_{N_1,\tau^*,\tau^*+\ell_{\tau^*,k}} \pa{1 - u_\alpha}\right) \leq \beta\enspace.\]

Under the condition \eqref{UBalt6_u_cond1}, we have $d_2^{2}(\lambda, \mathcal{S}^u_0[R]) \geq 2R^2 (1-\tau^{*}{})/L$ which ensures the inequality $\ell(1- \ell) > 2(1-\tau^{*}{})/L > (1-\tau^{*}{}) 2^{- \lfloor \log_2 L  \rfloor}$ and then
\begin{equation} \label{UBalt6_u_1_eq1}
 1- \tau^{*}{} >\ell > \frac{1- \tau^{*}{}}{2^{\lfloor \log_2 L  \rfloor}}\enspace .
  \end{equation}
From \eqref{UBalt6_u_1_eq1}, we deduce the existence of  $k_{\ell}$ in $ \lbrace 1,\ldots, \lfloor \log_{2} L \rfloor\rbrace$ satisfying $(1-\tau^{*}{}) 2^{-k_{\ell}} \leq \ell < (1-\tau^{*}{}) 2^{-k_{\ell}+1}$, that is
\begin{equation} \label{UBalt6_u_1_eq2}
\ell_{\tau^*,k_\ell}\leq \ell< \ell_{\tau^*,k_\ell-1}\enspace.
\end{equation} 
  Then
 \[ \frac{\ell_{\tau^*,k_\ell}}{1-\ell_{\tau^*,k_\ell}}\leq \frac{\ell}{1-\ell} < \frac{\ell_{\tau^*,k_\ell-1}}{1-\ell_{\tau^*,k_\ell-1}}\enspace,\]
 and
 \[  \frac{\ell_{\tau^*,k_\ell}}{1-\ell_{\tau^*,k_\ell}} =  \underbrace{\frac{\ell_{\tau^*,k_\ell-1}}{1-\ell_{\tau^*,k_\ell-1}}}_{>\frac{\ell}{1- \ell}} \frac{1-\ell_{\tau^*,k_\ell-1}}{1-\ell_{\tau^*,k_\ell}} \underbrace{\frac{\ell_{\tau^*,k_\ell}}{\ell_{\tau^*,k_\ell-1}}}_{=\frac{1}{2}} >\frac{\ell}{2(1-\ell)}  \frac{1-\ell_{\tau^*,k_\ell-1}}{1-\ell_{\tau^*,k_\ell}}\enspace.\]
But
\[ \frac{1-\ell_{\tau^*,k_\ell-1}}{1-\ell_{\tau^*,k_\ell}} = 1-\frac{1-\tau^*}{2^{k_{\ell}}-(1-\tau^{*}{})} \geq 2 \frac{\tau^{*}}{1+\tau^{*}}\enspace,\] so we finally obtain
 \begin{equation} \label{UBalt6_u_1_eq3}
\frac{\ell_{\tau^*,k_\ell}}{1-\ell_{\tau^*,k_\ell}} >  \frac{\tau^{*}}{1+\tau^*}\frac{\ell}{1-\ell}\enspace.
\end{equation}

The condition \eqref{UBalt6_u_cond1} then gives on the one hand
\[ \delta \sqrt{\ell(1-\ell)} \geq \frac{2}{\sqrt{3}} \sqrt{R} \sqrt{ \frac{1+ \tau^{*}{}}{\tau^{*}{}}} \sqrt{\frac{\log \left(  2 /u_\alpha   \right)}{L}}\enspace ,\]
and using the fact that $\delta <R$
$$ \delta \ell(1-\ell)  \frac{\tau^{*}{}}{1+ \tau^{*}{}} \geq  \frac{4 \log \left( 2 /u_\alpha   \right)}{3L}\enspace,$$
which entails with  \eqref{UBalt6_u_1_eq2} and  \eqref{UBalt6_u_1_eq3}
\begin{equation} \label{UBalt6_u_1_eq4}
\delta \ell_{\tau^*,k_\ell} (1-\ell) \geq \frac{4 \log \left( {2 }/{u_\alpha}   \right)}{3L}\enspace.
\end{equation}
On the other hand, \eqref{UBalt6_u_cond1} yields
\[ \delta \sqrt{\ell (1-\ell)} \geq 2 \frac{1+ \tau^{*}{}}{\tau^{*}{}}\left(  \sqrt{\frac{2\log \left(  2 /u_\alpha   \right)}{L}} \sqrt{R + \sqrt{\frac{2R}{ \beta L}}} + \sqrt{\frac{2R}{\beta L}}  \right)\enspace,\]
which entails with \eqref{UBalt6_u_1_eq2} and \eqref{UBalt6_u_1_eq3}
\[ \delta (1-\ell) \sqrt{\frac{\ell_{\tau^*,k_\ell}}{1-\ell_{\tau^*,k_\ell}}} \geq 2 \left(  \sqrt{\frac{2\log \left(  2 /u_\alpha   \right)}{L}} \sqrt{R + \sqrt{\frac{2R}{ \beta L}}} + \sqrt{\frac{2R}{\beta L}}  \right)\enspace,\]
whereby
\begin{equation} \label{UBalt6_u_1_eq5}
\delta \ell_{\tau^*,k_\ell} (1-\ell) \geq  2 \sqrt{\ell_{\tau^*,k_\ell} (1-\ell_{\tau^*,k_\ell})} \left(  \sqrt{\frac{2\log \left(  2 /u_\alpha   \right)}{L}} \sqrt{R + \sqrt{\frac{2R}{ \beta L}}} + \sqrt{\frac{2R}{\beta L}}  \right)\enspace.
\end{equation}
Thus, with \eqref{UBalt6_u_1_eq4} and \eqref{UBalt6_u_1_eq5} the condition \eqref{UBalt6_u_cond1} leads to
$$ \delta \ell_{\tau^*,k_\ell} (1-\ell) \geq \max \left( \frac{4 \log \left(  2 /u_\alpha   \right)}{3 L}~,~    2 \sqrt{\ell_{\tau^*,k_\ell} (1-\ell_{\tau^*,k_\ell})} \left( \sqrt{\frac{2\log \left(  2 /u_\alpha   \right)}{L}} \sqrt{R + \sqrt{\frac{2R}{ \beta L}}} + \sqrt{\frac{2R}{\beta L}}  \right) \right)\enspace.$$ 
Using the fact that $2\max(a,b) \geq a+b$ for all $a,b \geq 0$, we get
\begin{multline} \label{UBalt6_u_1_eq6}
\delta \ell_{\tau^*,k_\ell} (1-\ell) L \geq \frac{2}{3} \log \left(  \frac{2 }{u_\alpha}   \right)\\
 +  \sqrt{\ell_{\tau^*,k_\ell} (1-\ell_{\tau^*,k_\ell})} \left(  \sqrt{2 \log \left(  \frac{2 }{u_\alpha}   \right)} \sqrt{L I(\lambda) + \sqrt{\frac{2 L I(\lambda)}{ \beta }}} + \sqrt{\frac{2RL}{\beta }}  \right)\enspace,
\end{multline}
where we recall that $I(\lambda)$ stands for $ \int_0^1 \lambda(t) dt.$

Moreover, with \eqref{UBalt6_u_1_eq2}, the equations \eqref{alt6_esp} and \eqref{alt6_var}  lead to $E_\lambda [ S'_{\tau^*,\tau^*+\ell_{\tau^*,k_\ell}}(N)]  = \delta \ell_{\tau^*,k_\ell} (1- \ell) L$ and $\mathrm{Var}_\lambda \cro{ S'_{\tau^*,\tau^*+\ell_{\tau^*,k_\ell}}(N)}=(\lambda_0 (1- \ell_{\tau^*,k}) + \delta (1- 2 \ell_{\tau^*,k} +\ell_{\tau^*,k} \ell))  \ell_{\tau^*,k} L  \leq \ell_{\tau^*,k_\ell}(1-\ell_{\tau^*,k_\ell}) R L$. So \eqref{UBalt6_u_1_eq6} entails
  \begin{multline} \label{UBalt6_u_1_eq7}
E_\lambda [ S'_{\tau^*,\tau^*+\ell_{\tau^*,k_\ell}}(N)] \geq \frac{2}{3} \log \left( \frac{2 }{u_\alpha}  \right) +  \sqrt{\ell_{\tau^*,k_\ell}(1- \ell_{\tau^*,k_\ell})} \sqrt{2 \log \left(  \frac{2}{u_\alpha}  \right)} \sqrt{L I(\lambda) + \sqrt{\frac{2 L I(\lambda)}{ \beta }}}\\
   + \sqrt{{2 \mathrm{Var}_\lambda \cro{ S'_{\tau^*,\tau^*+\ell_{\tau^*,k_\ell}}(N)}}/{\beta}}\enspace.
  \end{multline}
The total probability formula then ensures that
\begin{align*}
P_{\lambda} &\left(  S'_{\tau^*,\tau^*+\ell_{\tau^*,k_\ell}}(N) \leq s'_{N_1,\tau^*,\tau^*+\ell_{\tau^*,k_\ell}} \pa{1 - u_\alpha}\right) \\
\leq  & P_\lambda \left( S'_{\tau^*,\tau^*+\ell_{\tau^*,k_\ell}}(N) \leq E_\lambda [ S'_{\tau^*,\tau^*+\ell_{\tau^*,k_\ell}}(N)]  - \sqrt{{2 \mathrm{Var}_\lambda \cro{ S'_{\tau^*,\tau^*+\ell_{\tau^*,k_\ell}}(N)}}/{\beta}} \right)
  \\ & +  P_\lambda \left( s'_{N_1,\tau^*,\tau^*+\ell_{\tau^*,k_\ell}} \pa{1 - u_\alpha}> \frac{2}{3}\log \left( \frac{2 }{u_\alpha}  \right) +  \sqrt{\ell_{\tau^*,k_\ell}(1- \ell_{\tau^*,k_\ell})} \sqrt{2 \log \left(  \frac{2}{u_\alpha}  \right)} \sqrt{L I(\lambda) + \sqrt{\frac{2 L I(\lambda)}{ \beta }}}    \right) \enspace.
 \end{align*} 
From the Bienayme-Chebyshev inequality, we deduce on the one hand that the first right hand side term is upper bounded by $\beta/2,$ i.e. 
\[P_\lambda \left( S'_{\tau^*,\tau^*+\ell_{\tau^*,k_\ell}}(N) \leq E_\lambda [ S'_{\tau^*,\tau^*+\ell_{\tau^*,k_\ell}}(N)]  - \sqrt{{2 \mathrm{Var}_\lambda \cro{ S'_{\tau^*,\tau^*+\ell_{\tau^*,k_\ell}}(N)}}/{\beta}} \right) \leq \frac{\beta}{2}\enspace,\]
and on the other hand that
\[P_\lambda \left(  N_1 \geq L I(\lambda) + \sqrt{\frac{2L I(\lambda)}{\beta}}   \right) \leq \frac{\beta}{2}\enspace.\]
This last inequality, combined with Lemma \ref{QuantilesAbsS_u}, which follows from a simple application of Bennett's inequality, leads to
\begin{multline*}
P_\lambda \Bigg( s'_{N_1,\tau^*,\tau^*+\ell_{\tau^*,k_\ell}} \pa{1 - u_\alpha}> \frac{2}{3}\log \left( \frac{2 }{u_\alpha}  \right)\\
+  \sqrt{\ell_{\tau^*,k_\ell}(1- \ell_{\tau^*,k_\ell})} \sqrt{2 \log \left(  \frac{2}{u_\alpha}  \right)} \sqrt{L I(\lambda) + \sqrt{\frac{2 L I(\lambda)}{ \beta }}}    \Bigg)
 \leq \frac{\beta}{2}\enspace.
\end{multline*}

We therefore conclude that
\[P_{\lambda}\left(  S'_{\tau^*,\tau^*+\ell_{\tau^*,k_\ell}}(N) \leq s'_{N_1,\tau^*,\tau^*+\ell_{\tau^*,k_\ell}} \pa{1 - u_\alpha}\right) \leq \beta\enspace,\]
so, as expected, $P_{\lambda}(\phi_{6,\alpha}^{u(1)}(N)=0) \leq   \beta$.

\smallskip

Assume now that $\delta$ belongs to $(- \lambda_0, 0)$ and notice that
\[ P_{\lambda}(\phi_{6,\alpha}^{u(1)}(N)=0) \leq    \inf_{k\in \lbrace 1,\ldots, \lfloor \log_{2} L \rfloor \rbrace}  P_{\lambda} \left( -S'_{\tau^*,\tau^*+\ell_{\tau^*,k}}(N) \leq s'_{N_1,\tau^*,\tau^*+\ell_{\tau^*,k}} \pa{1 - u_\alpha} \right)  \enspace.\]
 One can see it is enough to exhibit some $k$ in $\lbrace 1,\ldots, \lfloor \log_{2} L \rfloor \rbrace$ satisfying
\[ P_{\lambda} \left(  -S'_{\tau^*,\tau^*+\ell_{\tau^*,k}}(N)\leq s'_{N_1,\tau^*,\tau^*+\ell_{\tau^*,k}} \pa{1 - u_\alpha}\right) \leq \beta\enspace.\]
The same choice of $k_\ell$ as above leads, with  \eqref{alt6_esp} and \eqref{alt6_var}, to $E_\lambda [ -S'_{\tau^*,\tau^*+\ell_{\tau^*,k_\ell}}(N)] = \vert \delta \vert \ell_{\tau^*,k_\ell} (1- \ell) L$ and $\mathrm{Var}_\lambda [-S'_{\tau^*,\tau^*+\ell_{\tau^*,k_\ell}}(N) ] \leq  \ell_{\tau^*,k_\ell} (1-\ell_{\tau^*,k_\ell}) \lambda_0 L\leq  \ell_{\tau^*,k_\ell} (1-\ell_{\tau^*,k_\ell}) R L$. Using very similar arguments and calculations (mainly replacing $\delta$ by $|\delta|$), we also conclude that condition  \eqref{UBalt6_u_cond1}
implies 
\[ P_{\lambda} \left(  -S'_{\tau^*,\tau^*+\ell_{\tau^*,k_\ell}}(N)\leq s'_{N_1,\tau^*,\tau^*+\ell_{\tau^*,k_\ell}} \pa{1 - u_\alpha}\right) \leq \beta\enspace,\]
and $P_{\lambda}(\phi_{6,\alpha}^{u(1)}(N)=0) \leq \beta.$

Since $\log \left({2 }/{u_\alpha}  \right)=\log (2\lfloor\log_2 L\rfloor/\alpha)$ and $L\geq 3$, there exists
$C(\alpha, \beta, R,\tau^*)>0$ such that 
\begin{multline*}
2 \max \Bigg(  \sqrt{\frac{R}{3}} \sqrt{ \frac{1+ \tau^{*}{}}{\tau^{*}{}}} \sqrt{\frac{\log \left(  2 /u_\alpha   \right)}{L}}~,
  ~  \frac{1+ \tau^{*}{}}{\tau^{*}{}}\Bigg(  \sqrt{\frac{2 \log \left( 2/u_\alpha   \right)}{L}} \sqrt{R + \sqrt{\frac{2R}{ \beta L}}}+\sqrt{ \frac{2R}{\beta L}}  \Bigg)~,
~ \frac{\sqrt{1-\tau^{*}{}} R}{\sqrt{2L}}
\Bigg)\\
\leq C(\alpha, \beta, R,\tau^*) \sqrt{\frac{\log \log L}{L}}\enspace,
\end{multline*}
which allows to conclude the proof.

\medskip

\emph{$(ii)$ Control of the second kind error rate of $\phi_{6,\alpha}^{u(2)}$.}

\medskip

Let $\lambda$ in $\mathcal{S}^u_{\bbul, \tau^{*}{}, \bbul \bbul \bbul}[R]$ such that $\lambda = \lambda_{0} + \delta \mathds{1}_{(\tau^{*}{}, \tau^{*}{} +\ell]}$ with $\lambda_{0}$ in $(0,R]$, $\delta$ in $(-\lambda_{0}, R- \lambda_0] \setminus \lbrace 0 \rbrace$ and $\ell$ in $(0,1-\tau^{*}{})$ and assume that

\begin{multline} \label{UBalt6_u_cond2}
  d_2(\lambda, \calS^u_0[R]) \geq 
 \sqrt{\frac{1+ \tau^{*}{}}{\tau^{*}{}}} \max \Bigg( \sqrt{2C} \Bigg( \sqrt{5R} \sqrt{\frac{ \log \left(  2.77 /u_\alpha \right)}{L}} + \sqrt{5} \pa{\frac{2R}{\beta} }^{1/4} \frac{ \sqrt{\log \left(  2.77  /u_\alpha \right)}}{L^{3/4}} \\ +     \frac{\sqrt{3}\log \left(  2.77 /u_\alpha \right)}{2\sqrt{2 L \log \log L}} \Bigg)+  \frac{2 \sqrt{R}}{\sqrt{L\sqrt{\beta}}}  ~, ~ 4 \sqrt{\frac{1+\tau^{*}{}}{\tau^{*}{}}} \sqrt{\frac{2R}{ L\beta }}~,~ 4R \sqrt{\frac{\tau^*}{1+\tau^*}}  \sqrt{\frac{\log \log L}{L}} \Bigg) 
 \enspace,
\end{multline}
where $C$ is the constant defined in Lemma \ref{quantile_T'}, and let us prove the inequality $P_{\lambda}(\phi_{6,\alpha}^{u(2)}(N)=0) \leq \beta.$

Noticing that  
\[ P_{\lambda}(\phi_{6,\alpha}^{u(2)}(N)=0) \leq    \inf_{k\in \lbrace 1,\ldots, \lfloor \log_{2} L \rfloor \rbrace}  P_{\lambda} \left(  T'_{\tau^*,\tau^*+\ell_{\tau^*,k}}(N) \leq t'_{N_1,\tau^*,\tau^*+\ell_{\tau^*,k}} \pa{1 - u_\alpha} \right)  \enspace,\]
one only needs to exhibit some $k$ in $\lbrace 1,\ldots, \lfloor \log_{2} L \rfloor \rbrace$ satisfying 
\[ P_{\lambda} \left( T'_{\tau^*,\tau^*+\ell_{\tau^*,k}}(N)\leq t'_{N_1,\tau^*,\tau^*+\ell_{\tau^*,k}} \pa{1 - u_\alpha}\right) \leq \beta\enspace,\]
in order to prove the result.

Under the condition \eqref{UBalt6_u_cond2}, we first obtain $d_2^{2}(\lambda, \mathcal{S}^u_0[R]) \geq 16 R^2 (\log \log L)/L$ which ensures $\ell(1- \ell) > 16 (\log \log L)/L$ as well as
\begin{equation} \label{UBalt6_u_2_eq0}
\ell > \frac{16 \log \log L}{L}~\text{and}~ 1-\ell > \frac{16 \log \log L}{L} \enspace .
  \end{equation}
  Moreover, since
$16 (\log \log L)/L >2^{- \lfloor \log_2 L  \rfloor} > (1-\tau^{*}{}) 2^{- \lfloor \log_2 L  \rfloor}$, we obtain
\begin{equation} \label{UBalt6_u_2_eq1}
 1- \tau^{*}{} >\ell > \frac{1- \tau^{*}{}}{2^{\lfloor \log_2 L  \rfloor}}\enspace .
  \end{equation}
From \eqref{UBalt6_u_2_eq1}, as in the above part $(i)$, we deduce that there exists $k_{\ell}$ in $ \lbrace 1,\ldots, \lfloor \log_{2} L \rfloor \rbrace$ satisfying $(1-\tau^{*}{}) 2^{-k_{\ell}} \leq \ell < (1-\tau^{*}{}) 2^{-k_{\ell}+1}$, that is
\begin{equation} \label{UBalt6_u_2_eq2}
\ell_{\tau^*,k_\ell}\leq \ell< \ell_{\tau^*,k_\ell-1}\enspace.
\end{equation} 
As above again, this leads to the same inequality as \eqref{UBalt6_u_1_eq3}, i.e.
 \begin{equation} \label{UBalt6_u_2_eq3}
\frac{\ell_{\tau^*,k_\ell}}{1-\ell_{\tau^*,k_\ell}} >  \frac{\tau^{*}}{1+\tau^*}\frac{\ell}{1-\ell}\enspace.
\end{equation}

Applying Lemma \ref{MomentsT'}, we get with \eqref{UBalt6_u_2_eq2}
\begin{equation} \label{alt6_esp2}
E_\lambda [ T'_{\tau^*,\tau^*+\ell_{\tau^*,k_\ell}}(N)] = \delta^2 (1-\ell)^2 \frac{\ell_{\tau^*,k_\ell}}{1-\ell_{\tau^*,k_\ell}} \enspace ,
\end{equation}
and
\begin{multline}\label{alt6_var2}
\mathrm{Var}_\lambda \cro{ T'_{\tau^*,\tau^*+\ell_{\tau^*,k_\ell}}(N)} = \frac{2}{L^2} \left( \lambda_0 + \delta \left( 1- (1-\ell) \frac{\ell_{\tau^*,k_\ell}}{1-\ell_{\tau^*,k_\ell}} \right)  \right)^2 \\ + \frac{4}{L} \delta^2 (1-\ell)^2  \frac{\ell_{\tau^*,k_\ell}}{1-\ell_{\tau^*,k_\ell}}  \left(  \lambda_0 + \delta \left( 1- (1-\ell) \frac{\ell_{\tau^*,k_\ell}}{1-\ell_{\tau^*,k_\ell}} \right) \right) \enspace.
\end{multline}

Using \eqref{UBalt6_u_2_eq3},
\begin{equation} \label{alt6_esp2_eq}
E_\lambda [ T'_{\tau^*,\tau^*+\ell_{\tau^*,k_\ell}}(N)] > \delta^2 \ell (1-\ell) \frac{\tau^*}{1+ \tau^*} \enspace ,
\end{equation}
and with \eqref{UBalt6_u_2_eq2} we obtain
\begin{equation}\label{alt6_var2_eq}
\mathrm{Var}_\lambda \cro{ T'_{\tau^*,\tau^*+\ell_{\tau^*,k_\ell}}(N)} \leq \frac{2R^2}{L^2} + \frac{4R}{L} \delta^2 \ell (1-\ell)   \enspace.
\end{equation}

On the one hand, using  $a^2+b^2 \leq (a+b)^2$ for $a,b\geq 0$, the condition \eqref{UBalt6_u_cond2} ensures that 
\begin{multline*}
 \delta^2  \ell (1-\ell) \geq
  2 \frac{1+ \tau^{*}{}}{\tau^{*}{}}  \Bigg( C \left( 5R \frac{ \log \left(  2.77  /u_\alpha \right)}{L} + 5 \sqrt{\frac{2R}{\beta} } \frac{ \log \left(  2.77  /u_\alpha \right)}{L^{3/2}} +     \frac{ 3 \log^2 \left(  2.77 /u_\alpha \right)}{8L \log \log L} \right)  +  \frac{2R}{L\sqrt{\beta}}
\Bigg) \enspace,
\end{multline*}
and on the other hand
\begin{align*}
 \delta^2  \ell (1-\ell) &\geq 2 \frac{1+ \tau^{*}{}}{\tau^{*}{}} \left(   2 \sqrt{\frac{2R}{L \beta}} \vert \delta \vert \sqrt{\ell (1-\ell)}  \right)\enspace,
 \end{align*}
 hence 
\begin{multline*}
 \delta^2  \ell (1-\ell) \geq
 2 \frac{1+ \tau^{*}{}}{\tau^{*}{}} \max  \Bigg( C \left( 5R \frac{ \log \left(  2.77 /u_\alpha \right)}{L} + 5 \sqrt{\frac{2R}{\beta} } \frac{ \log \left(  2.77 /u_\alpha \right)}{L^{3/2}} +     \frac{3 \log^2 \left(  2.77 /u_\alpha \right)}{ 8 L \log \log L} \right)\\+  \frac{2R}{L\sqrt{\beta}} ~,~  2  \sqrt{\frac{2R}{L\beta}} \vert \delta \vert \sqrt{\ell (1-\ell)} 
\Bigg)\enspace.
\end{multline*}
Finally, with the inequality $a+b \leq 2 \max(a,b)$, we get
\begin{multline*}
 \delta^2  \ell (1-\ell) \frac{\tau^{*}{}}{1+\tau^{*}{}}  \geq
 C \left( 5R \frac{ \log \left(  2.77 /u_\alpha \right)}{L} + 5 \sqrt{\frac{2R}{\beta} } \frac{ \log \left(  2.77  /u_\alpha \right)}{L^{3/2}} +  \frac{3 \log^2 \left(  2.77 /u_\alpha \right)}{ 8 L \log \log L} \right)  +  \frac{2 R}{L\sqrt{\beta}} \\+   2  \sqrt{\frac{2R}{L\beta}} \vert \delta \vert \sqrt{\ell (1-\ell)}\enspace,
 \end{multline*}
that is, with \eqref{alt6_esp2_eq}, \eqref{alt6_var2_eq} and using  $\sqrt{a+b} \leq \sqrt{a}+ \sqrt{b}$,
\begin{multline} \label{UBalt6_u_cond2bis}
E_\lambda [ T'_{\tau^*,\tau^*+\ell_{\tau^*,k_\ell}}(N)]  \geq
 C \left( 5R \frac{ \log \left(  2.77 /u_\alpha \right)}{L} + 5 \sqrt{\frac{2R}{\beta} } \frac{ \log \left(  2.77 /u_\alpha \right)}{L^{3/2}}   +\frac{ 3 \log^2 \left(  2.77 /u_\alpha \right)}{8 L \log \log L} \right) \\ + \sqrt{2 \mathrm{Var}_\lambda \cro{ T'_{\tau^*,\tau^*+\ell_{\tau^*,k_\ell}}(N)} /\beta}\enspace.
 \end{multline}

Furthermore, \eqref{UBalt6_u_2_eq0} and \eqref{UBalt6_u_2_eq2} lead to
$$\frac{1}{\ell_{\tau^*,k_\ell}} < \frac{L}{8 \log \log L} \textrm{ and } \frac{1}{1-\ell_{\tau^*,k_\ell}} < \frac{L}{16 \log \log L} \enspace,$$
and therefore
 \begin{equation} \label{UBalt6_u_2_eq5}
 \max \left( \frac{1-\ell_{\tau^*,k_\ell}}{\ell_{\tau^*,k_\ell}}~,~\frac{\ell_{\tau^*,k_\ell}}{1-\ell_{\tau^*,k_\ell}}    \right) < \frac{L}{8 \log \log L} \enspace.
 \end{equation}
We set now 
\begin{multline*} 
Q(\alpha, \beta, L,R, \ell_{\tau^*,k_\ell})=C \Bigg( 5R \frac{ \log \left(  2.77  /u_\alpha \right)}{L} + 5 \sqrt{\frac{2R}{\beta} } \frac{ \log \left(  2.77  /u_\alpha \right)}{L^{3/2}} \\+    3 \max \left( \frac{\ell_{\tau^*,k_\ell}}{1- \ell_{\tau^*,k_\ell}}, \frac{1- \ell_{\tau^*,k_\ell}}{\ell_{\tau^*,k_\ell}}    \right) \frac{ \log^2 \left(  2.77/u_\alpha \right)}{L^2}    \Bigg) \enspace,
\end{multline*}  
and with \eqref{UBalt6_u_2_eq5}, the condition \eqref{UBalt6_u_cond2bis} ensures that 
\begin{equation} \label{UBalt6_u_cond2ter}
E_\lambda [ T'_{\tau^*,\tau^*+\ell_{\tau^*,k_\ell}}(N)]  \geq Q(\alpha, \beta, L,R,k_\ell)  + \sqrt{2 \mathrm{Var}_\lambda \cro{ T'_{\tau^*,\tau^*+\ell_{\tau^*,k_\ell}}(N)} /\beta}\enspace.
 \end{equation}
 
 The Bienayme-Chebyshev inequality leads to
$$P_\lambda \left(  N_1 \geq L I(\lambda) + \sqrt{\frac{2L I(\lambda)}{\beta}}   \right) \leq \frac{\beta}{2}\enspace,$$
and combined with Lemma \ref{quantile_T'} and the fact that $I(\lambda) \leq R$, we obtain 
\begin{equation}\label{UBalt6_u_2_eq6}
  P_\lambda \left( t'_{N_1,\tau^{*}{}, \tau^{*}{} + \ell_{\tau^*,k_\ell} }(1-u_{\alpha})   \geq Q(\alpha, \beta, L,R,\ell_{\tau^*,k_\ell})   \right) \leq \frac{\beta}{2} \enspace.
 \end{equation}

As a consequence,
\begin{multline*}
P_\lambda ( T'_{\tau^{*}{}, \tau^{*}{} + \ell_{\tau^*,k_\ell} }(N)  \leq t'_{N_1,\tau^{*}{}, \tau^{*}{} + \ell_{\tau^*,k_\ell} }(1-u_{\alpha}) ) \\
\leq P_\lambda \left( T'_{\tau^{*}{}, \tau^{*}{} + \ell_{\tau^*,k_\ell} }(N) < Q(\alpha, \beta, L,R,\ell_{\tau^*,k_\ell})   \right) + \frac{\beta}{2}\enspace,
\end{multline*}
whereby we finally obtain with \eqref{UBalt6_u_cond2ter} and the Bienayme-Chebyshev inequality
\[P_\lambda ( T'_{\tau^{*}{}, \tau^{*}{} + \ell_{\tau^*,k_\ell} }(N)  \leq t'_{N_1,\tau^{*}{}, \tau^{*}{} + \ell_{\tau^*,k_\ell} }(1-u_{\alpha}) ) \leq \beta\enspace.\]

The proof is ended by noticing that there exists
$C(\alpha, \beta, R,\tau^*)>0$ such that 
\begin{multline*}
  \sqrt{\frac{1+ \tau^{*}{}}{\tau^{*}{}}} \max \Bigg( \sqrt{2C} \Bigg( \sqrt{5R} \sqrt{\frac{ \log \left(  2.77 /u_\alpha \right)}{L}} + \sqrt{5} \pa{\frac{2R}{\beta} }^{1/4} \frac{ \sqrt{\log \left(  2.77  /u_\alpha \right)}}{L^{3/4}} +   \frac{\sqrt{3}\log \left(  2.77 /u_\alpha \right)}{ 2\sqrt{2 L \log \log L}} \Bigg) \\+  \frac{2 \sqrt{R}}{\beta^{1/4}}\frac{1}{\sqrt{L}}  ~, ~ 4 \sqrt{\frac{1+\tau^{*}{}}{\tau^{*}{}}} \sqrt{\frac{2R}{\beta }} \frac{1}{\sqrt{L}}~,~ 4R \sqrt{\frac{\tau^*}{1+\tau^*}}  \sqrt{\frac{\log \log L}{L}} \Bigg) \leq  C(\alpha, \beta, R,\tau^*) \sqrt{\frac{\log \log L}{L}}\enspace,
\end{multline*}
as $\log \left({2.77 }/{u_\alpha}  \right)=\log (2.77 \lfloor\log_2 L\rfloor/\alpha)$ and $L\geq 3$.

 \subsection{Proof of Proposition \ref{LBalt7_u}}
 
Assume that
 \begin{equation} \label{LBalt7_eq1_u}
  L > \frac{2((R- \delta^{*}{}) \wedge R) \log C_{\alpha,\beta}}{\delta^{*}{}^2}\enspace,
  \end{equation} 
  and set
 \begin{equation} \label{LBalt7_eq2_u}
 r=\sqrt{  \frac{((R - \delta^{*}{}) \wedge R) \log C_{\alpha,\beta}}{2L}} \enspace.
 \end{equation}
The assumption \eqref{LBalt7_eq1_u} entails
\begin{equation}  \label{LBalt7_eq3_u}
   r^{2} < \frac{\delta^{*}{}^2}{4}\enspace,
   \end{equation}
 which enables us to define $\lambda_r$ by $\lambda_r(t) = \lambda_0 + \delta^{*}{}  \mathds{1}_{( \tau_r,1 ]}(t)$ for $t$ in $(0,1),$ with
$\lambda_{0} = (R - \delta^{*}{}) \wedge R$ and
  \[ \tau_r = \frac{1}{2} \pa{1+ \frac{\sqrt{\delta^{*}{}^2 - 4r^2}}{\vert \delta^{*}{} \vert}}\enspace.\]
Thanks to  \eqref{LBalt7_eq3_u}, $\tau_r$ belongs to $(0,1)$ and satisfies  $\delta^{*}{}^2 \tau_r (1- \tau_r) = r^2,$ that is $ \lambda_r$ belongs to $\pa{ \calS^u_{\delta^*,\bbul\bbul,1-\bbul\bbul}[R]}_r.$ Moreover, the inequality $1- \lambda_0  \log C_{\alpha,\beta}/(L \delta^{*}{}^2)  > 1/2$ comes from \eqref{LBalt7_eq1_u} and yields
  \begin{align*}
  r^2 &< \frac{\lambda_0 \log C_{\alpha,\beta}}{L} \pa{1-  \frac{\lambda_0 \log C_{\alpha,\beta}}{L \delta^{*}{}^2}} \\
&= \frac{\delta^{*}{}^2}{4} \pa{ 1- \pa{1- \frac{2 \lambda_0 \log C_{\alpha,\beta}}{L \delta^{*}{}^2}}^2}\enspace,
\end{align*}
  hence $\delta^{*}{}^2 (1- \tau_r) < \lambda_0 \log C_{\alpha,\beta}/L$. We get now from Lemma \ref{lemmegirsanov} and Lemma \ref{momentPoisson}
 \[ E_{\lambda_0} \left[\left( \frac{d P_{\lambda}}{dP_{\lambda_0}} \right)^{2}(N)\right] = \exp \left( \frac{L (1- \tau_r) \delta^{*}{}^2}{\lambda_0}   \right) < C_{\alpha,\beta}\enspace. \]
Lemmas \ref{mSR} and \ref{lemmebayesien} then entail $\rho\left( (\calS^u_{\delta^*,\bbul\bbul,1-\bbul\bbul}[R])_r \right) \geq \beta$ and $\mathrm{mSR}_{\alpha, \beta}( \calS^u_{\delta^*,\bbul\bbul,1-\bbul\bbul}[R])\geq r.$

\subsection{Proof of Proposition \ref{UBalt7_u}}
The control of the first kind error rate is straightforward, and even more strong by using the same conditioning trick as in the proof of Proposition \ref{UBalt5_u}:
in fact, for every $\lambda_0$ in $\calS_0^u[R]$ and $n$ in $\N$, $E_{\lambda_0}\cro{\phi_{7,\alpha}^{u}(N)\Big| N_1=n}=P_{\lambda_0}\pa{\phi_{7,\alpha}^{u}(N)=1\Big| N_1=n}\leq \alpha$. 

Let us turn to the control of the second kind error rate of $\phi^u_{7,\alpha}$.

Set $\lambda$ in $\calS^u_{\delta^*,\bbul\bbul,1-\bbul\bbul}[R]$ such that $\lambda =\lambda_0 + \delta^* \mathds{1}_{(\tau, 1]}$ with $\lambda_0$ in $(-\delta^{*}{} \vee 0,(R- \delta^{*}{}) \wedge R]$ and $\tau$ in $(0,1)$, and satisfying
\begin{equation} \label{UBalt7_u_cond1}
d_2(\lambda, \calS^u_0[R]) \geq \frac{2}{\sqrt{L}} \max\pa{\sqrt{\vert \delta^* \vert Q(2R, \delta^{*}{}, \alpha) },~ 2\sqrt{\frac{2R}{\beta}},~ \frac{\vert \delta^* \vert}{2\sqrt{2 \beta R}}} \enspace,
\end{equation} 
where $Q(2R, \delta^{*}{}, \alpha)$ is the constant, not depending on $L$, defined in Lemma \ref{QuantilessupShifted_u} and used in Lemma \ref{QuantilessupShifted_ubis}.
The condition \eqref{UBalt7_u_cond1} ensures that $  \vert \delta^{*}{} \vert  \sqrt{\tau (1-\tau)} \geq \vert\delta^{*}{} \vert /\sqrt{2 \beta R L}$, and therefore, using the fact that $\tau (1-\tau) \leq 1/4$,
 \begin{equation} \label{UBalt7_u_eq1}
 L \geq \frac{2}{\beta R}\enspace.
\end{equation}  
Setting $I(\lambda)= \int_0^{1} \lambda(t) dt$,  we obtain with $I(\lambda )\leq R$ (and therefore obviously $2R - I(\lambda) \geq R$) and the fact that (\ref{UBalt7_u_eq1}) entails  $L \geq 2R/(\beta R^2)$
\begin{equation} \label{UBalt7_u_eq2}
L \geq \frac{2I(\lambda)}{(2R - I(\lambda))^2 \beta}\enspace.
\end{equation}
Notice now that \eqref{UBalt7_u_eq2} and the Bienayme-Chebyshev inequality yield
\begin{align*} 
P_\lambda (N_1 > 2RL) &= P_\lambda \big(N_1 - I(\lambda)L > L(2R - I(\lambda)) \big) \nonumber \\
&\leq P_\lambda \pa{N_1 - I(\lambda)L > \sqrt{\frac{2 I(\lambda)L}{\beta}} } \leq \frac{\beta}{2} \enspace.
\end{align*}
This leads to
\begin{align*}
P_\lambda \pa{ \phi^u_{7,\alpha}(N) =0} &\leq P_\lambda \pa{\sup_{\tau' \in (0,1)}S'_{\delta^*,\tau',1}(N) \leq s_{N_1,\delta^*,L}^{'+}(1-\alpha),~ N_1 \leq 2RL} + \frac{\beta}{2} \\
&\leq P_\lambda \pa{\sup_{\tau' \in (0,1)}S'_{\delta^*,\tau',1}(N) \leq Q(2R, \delta^{*}{}, \alpha)} + \frac{\beta}{2}~~\text{with Lemma \ref{QuantilessupShifted_ubis}} \\
&\leq P_\lambda \pa{S'_{\delta^*,\tau,1}(N) \leq Q(2R, \delta^{*}{}, \alpha)} + \frac{\beta}{2} \enspace.
\end{align*}
Moreover, the assumption \eqref{UBalt7_u_cond1} implies 
$$ \vert \delta^* \vert \sqrt{\tau (1-\tau)} \geq \frac{2}{\sqrt{L}} \max\pa{\sqrt{\vert \delta^* \vert Q(2R, \delta^{*}{}, \alpha) },~ 2\sqrt{\frac{2R}{\beta}}} \enspace,$$
which entails
$$ \frac{\vert \delta^* \vert}{2} \tau (1-\tau) L \geq 2 \max\pa{ Q(2R, \delta^{*}{}, \alpha) ,~ \sqrt{\frac{2R \tau (1-\tau) L}{\beta}}} \enspace,$$
whereby
$$ \frac{\vert \delta^* \vert}{2} \tau (1-\tau) L \geq 2 \max\pa{ Q(2R, \delta^{*}{}, \alpha) ,~ \sqrt{\frac{2(\lambda_0 + \delta^* \tau) \tau (1-\tau) L}{\beta}}} \enspace.$$
Noticing that $E_\lambda [S'_{\delta^*,\tau,1}(N)] =\vert \delta^{*}{} \vert \tau (1-\tau)L/2$ and $\mathrm{Var}(S'_{\delta^*,\tau,1}(N)) = (\lambda_0 + \delta^* \tau) \tau (1-\tau) L$, this leads to
\begin{equation} \label{UBalt7_u_cond2}
E_\lambda [S'_{\delta^*,\tau,1}(N)] \geq  Q(2R, \delta^{*}{}, \alpha)+ \sqrt{\frac{2\mathrm{Var}(S'_{\delta^*,\tau,1}(N))}{\beta}} \enspace.
\end{equation}
Therefore,
\begin{align*}
P_\lambda \pa{ \phi^u_{7,\alpha}(N) =0}& \leq P_\lambda \pa{S'_{\delta^*,\tau,1}(N) - E_\lambda [S'_{\delta^*,\tau,1}(N)] \leq -\sqrt{\frac{2\mathrm{Var}(S'_{\delta^*,\tau,1}(N))}{\beta}}} + \frac{\beta}{2}~~\text{with \eqref{UBalt7_u_cond2}} \\
&\leq \beta \enspace,
\end{align*}
with a last line simply deduced from the Bienayme-Chebyshev inequality.

Setting 
$$C(\alpha,\beta,R,\delta^*)=2\max\pa{\sqrt{\vert \delta^* \vert Q(2R, \delta^{*}{}, \alpha) },~ 2\sqrt{\frac{2R}{\beta}},~ \frac{\vert \delta^* \vert}{2\sqrt{2 \beta R}}} \enspace,$$
allows to conclude the proof.

\subsection{Proof of Proposition \ref{LBalt8_u}} Assume that $L \geq 3$ and $\alpha + \beta < 1/2$. We consider $C'_{\alpha, \beta} = 4(1- \alpha - \beta)^2$, $K_{\alpha,\beta,L}=\lceil (\log_2 L)/C'_{\alpha,\beta} \rceil$, $\lambda_0 = R/2$ and for $k$ in $\lbrace 1,\ldots, K_{\alpha,\beta,L}\rbrace$, 
$\lambda_{k}= \lambda_{0}+ \delta_{k} \mathds{1}_{(\tau_k, 1]}$, with $ \tau_k = 1- 2^{-k}$
 and
 $ \delta_{k} = (2^k \lambda_{0} \log \log L/ L)^{1/2}.$ Then, for every $k$ in $\lbrace 1,\ldots,K_{\alpha,\beta,L} \rbrace$, $d_2\pa{\lambda_{k},\mathcal{S}_0^u[R]} \geq \sqrt{R  \log \log L/(4L)}$, and $\lambda_k$ belongs to $\calS^u_{\bbul,\bbul\bbul,1-\bbul\bbul}[R]$ assuming
\begin{equation}\label{LBalt8_u_eq1}
\frac{\log \log L}{L^{1-1/C'_{\alpha,\beta}}} \leq  \frac{R}{4}\enspace.
\end{equation}
 The proof then essentially follows the same arguments as the proof of Proposition \ref{LBalt8}. 

Considering a random variable $\kappa$ with uniform distribution on $\lbrace 1,\ldots, K_{\alpha,\beta,L} \rbrace$ and the probability distribution $\mu$ of $\lambda_{\kappa}$, we aim at proving that $E_{\lambda_0} [\left( dP_{\mu}/dP_{\lambda_0}\right)^{2}  ] \leq 1+C'_{\alpha,\beta}$, with $P_\mu$ defined as in Lemma \ref{lemmebayesien}, in order to conclude that $\mathrm{mSR}_{\alpha, \beta}(\calS^u_{\bbul,\bbul\bbul,1-\bbul\bbul}[R]) \geq \sqrt{R \log \log L/(4L)}$. 
 
 The same calculation as in the proof of Proposition \ref{LBalt6} and Proposition \ref{LBalt8} gives for $\eta$ such that $0<\eta<1-1/\sqrt{2}$,
$$ E_{\lambda_0} \left[\left( \frac{dP_{\mu}}{dP_{\lambda_0}} \right)^{2}   \right] \leq C'_{\alpha,\beta}\log 2 + \frac{2C'_{\alpha,\beta}\log 2}{\log L} \left( \log L  \right)^{\eta+\frac{1}{\sqrt{2}}}+\exp \left( \frac{\log \log L}{2^{(\log L)^{\eta}/2}}\right)\enspace.$$
If we assume now that 
\begin{equation}\label{LBalt8_u_eq2}
\exp \left( \frac{\log \log L}{2^{(\log L)^{\eta}/2}}\right) + \frac{2C'_{\alpha,\beta}\log 2}{\left( \log L  \right)^{1-\eta-\frac{1}{\sqrt{2}}}}\leq 1+ (1-\log 2) C'_{\alpha,\beta}\enspace,
\end{equation}
we finally obtain the expected result, that is
\[E_{\lambda_0} \left[\left( \frac{dP_{\mu}}{dP_{\lambda_0}} \right)^{2}   \right] \leq  1+C'_{\alpha,\beta}\enspace.\]
To end the proof, it remains to notice that there exists $L_0(\alpha,\beta,R)\geq 3$ such that for all $L\geq L_0(\alpha,\beta,R)$, both assumptions \eqref{LBalt8_u_eq1} and \eqref{LBalt8_u_eq2} hold.

\subsection{Proof of Proposition \ref{UBalt8_u}}~\\
As for all our Bonferroni type aggregated tests, the control of the first kind error rates of the two tests $\phi_{8,\alpha}^{u(1)}$ and $\phi_{8,\alpha}^{u(2)}$ is straightforward using union bounds and the conditioning trick of the above proofs for upper bounds. Let us now turn to the second kind error rates.

\medskip

\emph{$(i)$ Control of the second kind error rate of $\phi_{8,\alpha}^{u(1)}$.}

\smallskip

Let $\lambda$ in $\calS^u_{\bbul,\bbul\bbul,1-\bbul\bbul}[R]$ such that $\lambda = \lambda_{0} + \delta \mathds{1}_{(\tau,1]}$, with $\lambda_0$ in $(0,R]$, $\tau$ in $(0,1)$, $\delta$ in $(-\lambda_{0}, R- \lambda_0] \setminus \lbrace 0 \rbrace$, and assume that
\begin{multline} \label{UBalt8_u_cond1}
d_2(\lambda, \mathcal{S}^u_0[R]) \geq \max \Bigg( \sqrt{\frac{\log \left(    2 /u_\alpha   \right)}{L}}  2 \sqrt{R}  ~  ,\\
~\sqrt{\frac{\log \left(   2 /u_\alpha  \right)}{L}} 2 \sqrt{6} \sqrt{R + \sqrt{\frac{2R}{\beta L}}} + \frac{2}{\sqrt{L}} \sqrt{\frac{6R}{\beta}} ~,~   \sqrt{\frac{2}{L}}R  \Bigg) \enspace .
\end{multline}
Let us prove that under this assumption, $P_{\lambda}(\phi_{8,\alpha}^{u(1)}(N)=0) \leq \beta$.

 Applying Lemma \ref{momentPoisson}, we get for all $\tau'$ in $\mathcal{D}_L$
\begin{equation} \label{alt8_u_esp}
E_\lambda [ S'_{\tau',1}(N)] = \delta  (\tau' \wedge \tau) (1- \tau' \vee \tau) L \enspace,
\end{equation}
and
\begin{equation}\label{alt8_u_var}
\mathrm{Var}_\lambda \cro{S'_{\tau',1}(N)}= \begin{cases}
(\lambda_0 (1- \tau') + \delta \tau' (1- \tau)) \tau' L~~\text{if } \tau' \leq \tau \\
(\lambda_0 \tau'+ \delta (\tau' - \tau + \tau' \tau)) (1- \tau' )L~~\text{if } \tau' \geq \tau \enspace.
\end{cases}
\end{equation}

Assume first that $\delta$ belongs to  $(0, R - \lambda_0]$.

Noticing that 
 \[ P_{\lambda}(\phi_{8,\alpha}^{u(1)}(N)=0) \leq   \inf_{\tau' \in \mathcal{D}_L} P_{\lambda} \left(   S'_{\tau',1}(N) \leq s'_{N_1,\tau',1} (1-u_{\alpha}) \right) \enspace,\]
one can see that it is enough to exhibit some $\tau'$ in $\mathcal{D}_L$ such that 
  $$P_{\lambda} \left(  S'_{\tau',1}(N) \leq s'_{N_1,\tau',1} (1-u_{\alpha})\right) \leq \beta \enspace.$$

Under the condition (\ref{UBalt8_u_cond1}), we have $d_2^{2}(\lambda, \mathcal{S}^u_0[R]) \geq 2R^2 /L$ which entails
\begin{equation} \label{UBalt8_u_1_eq1}
 \tau(1- \tau) > \frac{2}{L} \enspace.
  \end{equation}

On the one hand, if $\tau$ belongs to $(0, 1/2)$, the condition (\ref{UBalt8_u_1_eq1}) implies the inequalities $2^{-1}>\tau > 2/L>2^{- \lfloor \log_2 L \rfloor}$ and the existence of $k_{\tau}$ in $\lbrace 2,..., \lfloor \log_{2}L  \rfloor \rbrace$ satisfying $2^{-k_{\tau}} \leq \tau < 2^{-k_{\tau}+1}.$ We set $\tau_{k_{\tau}}= 2^{-k_{\tau}}$ and then 
\begin{equation} \label{UBalt8_u_1_eq2}
\tau_{k_{\tau}} \leq \tau < \tau_{k_{\tau}-1} \enspace,
\end{equation}
whereby
 $$ \frac{\tau_{k_{\tau}}}{1- \tau_{k_{\tau}}} \leq \frac{\tau}{1- \tau} < \frac{\tau_{k_{\tau}-1}}{1- \tau_{k_{\tau}-1}} \enspace,$$
and therefore
 $$  \frac{\tau_{k_{\tau}}}{1- \tau_{k_{\tau}}}=  \underbrace{\frac{\tau_{k_{\tau}-1}}{1-\tau_{k_{\tau}-1}}}_{>\frac{\tau}{1- \tau}} \frac{1-\tau_{k_{\tau}-1}}{1- \tau_{k_{\tau}}} \underbrace{\frac{\tau_{k_{\tau}}}{\tau_{k_{\tau}-1}}}_{=\frac{1}{2}} > \frac{1}{2} \frac{\tau}{1-\tau}  \frac{1-\tau_{k_{\tau}-1}}{1- \tau_{k_{\tau}}} \enspace.$$
 But
$$  
  \frac{1-\tau_{k_{\tau}-1}}{1- \tau_{k_{\tau}}} = 1 - \frac{1}{2^{k_\tau} -1} \geq \frac{2}{3} \enspace,$$
 so we finally obtain
\begin{equation} \label{UBalt8_u_1_eq3}
 \frac{\tau_{k_{\tau}}}{1-\tau_{k_{\tau}}}>  \frac{\tau}{3(1-\tau)} \enspace.
  \end{equation}

On the other hand, if $\tau$ belongs to $[1/2, 1)$, the condition (\ref{UBalt8_u_1_eq1}) implies the inequalities $1-2^{-1}\leq \tau < 1- 2/L< 1-2^{-\lfloor \log_2 L \rfloor}$ and the existence of $k_{\tau}$ in $\lbrace 1,..., \lfloor \log_{2}L  \rfloor -1 \rbrace$ satisfying $1-2^{-k_{\tau}} \leq \tau < 1- 2^{-k_{\tau}-1}.$ We set $\tau_{k_\tau}=1- 2^{-k_{\tau}}$ and we obtain
\begin{equation} \label{UBalt8_u_1_eq4}
\tau_{k_{\tau}} \leq \tau < \tau_{k_{\tau}+1} \enspace,
\end{equation}
whereby
 $$ \frac{\tau_{k_\tau}}{1- \tau_{k_\tau}} \leq \frac{\tau}{1- \tau} < \frac{\tau_{{k_\tau}+1}}{1-\tau_{k_\tau+1}} \enspace,$$
and therefore
 $$  \frac{\tau_{k_\tau}}{1- \tau_{k_\tau}}=  \underbrace{\frac{\tau_{k_\tau+1}}{1-\tau_{k_\tau+1}}}_{>\frac{\tau}{1- \tau}} \underbrace{\frac{1-\tau_{k_\tau+1}}{1-\tau_{k_\tau}}}_{=\frac{1}{2}} \frac{\tau_{k_\tau}}{\tau_{k_\tau+1}} >\frac{1}{2} \frac{ \tau}{ 1- \tau}  \frac{\tau_{k_\tau}}{\tau_{k_\tau+1}} \enspace.$$
But
$$ \frac{\tau_{k_\tau}}{\tau_{k_\tau+1}} = 1- \frac{1}{2^{k_\tau+1}-1} \geq \frac{2}{3} \enspace,  $$
so we finally get
\begin{equation} \label{UBalt8_u_1_eq5}
\frac{\tau_{k_\tau}}{1- \tau_{k_\tau}} > \frac{\tau}{3(1-\tau)} \enspace.
\end{equation}

Recall that $I(\lambda)=\int_0^1 \lambda(t) dt \leq R$ and notice that the assumption \eqref{UBalt8_u_cond1} leads to
\begin{multline} \label{UBalt8_u_cond2}
\delta \sqrt{\tau (1-\tau)} \geq  \max \Bigg( \sqrt{\frac{\log \left(    2 /u_\alpha   \right)}{L}}  2 \sqrt{R} ~    ,\\~\sqrt{\frac{\log \left(    2 /u_\alpha   \right)}{L}} 2 \sqrt{6} \sqrt{I(\lambda) + \sqrt{\frac{2 I(\lambda)}{\beta L}}}   + \frac{2}{\sqrt{L}} \sqrt{\frac{6(\lambda_0 + \delta \tau_{k_\tau})}{\beta}} \Bigg) \enspace.
\end{multline}
In particular, \eqref{UBalt8_u_cond2} gives on the one hand
$$ \delta^2 \tau (1-\tau) \geq 4R\frac{\log \left(  2 /u_\alpha  \right)}{L} \enspace ,$$
which entails, using \eqref{UBalt8_u_1_eq2}, \eqref{UBalt8_u_1_eq4} and the inequality $\delta < R$
$$ \delta \tau (1-\tau_{k_\tau}) \geq 4 \frac{\log \left(    2 /u_\alpha   \right)}{L}\enspace .$$
Then, \eqref{UBalt8_u_1_eq3} and \eqref{UBalt8_u_1_eq5} ensure that $ \tau (1- \tau_{k_\tau})/(3 (1-\tau))< \tau_{k_\tau}$ and thus
\begin{align} \label{UBalt8_u_cond3}
\delta \tau_{k_\tau} (1- \tau) \geq \frac{4}{3} \frac{\log \left(    2 /u_\alpha   \right)}{L} \enspace.
\end{align}

On the other hand, it follows from \eqref{UBalt8_u_cond2} that
\begin{align*}
\delta \sqrt{\tau (1-\tau)} &\geq  \sqrt{\frac{\log \left(    2 /u_\alpha  \right)}{L}} 2 \sqrt{6} \sqrt{I(\lambda) + \sqrt{\frac{2 I(\lambda)}{\beta L}}} + \frac{2}{\sqrt{L}} \sqrt{\frac{6(\lambda_0 + \delta \tau_{k_\tau})}{\beta}} \enspace,
\end{align*}
which entails with \eqref{UBalt8_u_1_eq3} and \eqref{UBalt8_u_1_eq5}
\begin{align*}
\delta (1-\tau) \sqrt{\frac{\tau_{k_\tau}}{1- \tau_{k_\tau}}} &\geq  \sqrt{\frac{\log \left(    2 /u_\alpha   \right)}{L}} 2\sqrt{2} \sqrt{I(\lambda) + \sqrt{\frac{2 I(\lambda)}{\beta L}}} + \frac{2}{\sqrt{L}} \sqrt{\frac{2(\lambda_0 + \delta \tau_{k_\tau})}{\beta}} \enspace,
\end{align*}
that is
\begin{multline}  \label{UBalt8_u_cond4}
\delta \tau_{k_\tau} (1- \tau) L \geq  2 \sqrt{2 \tau_{k_\tau} (1- \tau_{k_\tau})} \sqrt{L \log \left(   2 /u_\alpha  \right)} \sqrt{I(\lambda) + \sqrt{\frac{2 I(\lambda)}{\beta L}}} \\
+  2 \sqrt{\frac{2\tau_{k_\tau}(1- \tau_{k_\tau})(\lambda_0 + \delta \tau_{k_\tau})L}{\beta}} \enspace.
\end{multline}
Thereby, with \eqref{UBalt8_u_cond3} and \eqref{UBalt8_u_cond4}, 
\begin{multline*}
\delta\tau_{k_\tau} (1- \tau) L \geq 2 \max \Bigg( \frac{2}{3}\log \left(  \frac{2}{u_\alpha}  \right) ~,~  \sqrt{\tau_{k_\tau} (1- \tau_{k_\tau})} \sqrt{2 \log \left(  \frac{2 }{u_\alpha}  \right)} \sqrt{L I(\lambda) + \sqrt{\frac{2L I(\lambda)}{\beta }}} \\   +  \sqrt{\frac{2\tau_{k_\tau}(1- \tau_{k_\tau})(\lambda_0 + \delta \tau_{k_\tau})L}{\beta}} \Bigg) \enspace,
\end{multline*}
hence
\begin{equation}  \label{UBalt8_u_cond5}
\delta \tau_{k_\tau} (1- \tau) L \geq  Q(\alpha, \beta,L, \tau_{k_\tau}) +  \sqrt{\frac{2\tau_{k_\tau}(1- \tau_{k_\tau})(\lambda_0 + \delta \tau_{k_\tau})L}{\beta}} \enspace,
\end{equation}
with
$$ Q(\alpha, \beta,L, \tau_{k_\tau}) = \frac{2}{3} \log \left(  \frac{2 }{u_\alpha}  \right)   + \sqrt{\tau_{k_\tau} (1- \tau_{k_\tau})} \sqrt{2 \log \left(  \frac{2 }{u_\alpha}  \right)} \sqrt{L I(\lambda) + \sqrt{\frac{2L I(\lambda)}{\beta }}}\enspace .$$
Furthermore, with \eqref{UBalt8_u_1_eq2} and \eqref{UBalt8_u_1_eq4}, the expressions of $E_\lambda \cro{ S'_{\tau',1}(N)}$ and $\mathrm{Var}_\lambda \cro{ S'_{\tau',1}(N)} $ given in \eqref{alt8_u_esp} and \eqref{alt8_u_var} entail
\begin{equation} \label{alt8_u_moment}
E_\lambda [ S'_{\tau_{k_\tau},1}(N)] = \delta  \tau_{k_\tau} (1-  \tau) L \enspace,
~
\mathrm{Var}_\lambda \cro{S'_{\tau_{k_\tau},1}(N) } \leq
(\lambda_0 + \delta \tau_{k_\tau} ) \tau_{k_\tau} (1- \tau_{k_\tau}) L\enspace.
\end{equation}
The Bienayme-Chebyshev inequality leads to
$$P_\lambda \left(  N_1 \geq L I(\lambda) + \sqrt{\frac{2L I(\lambda)}{\beta}}   \right) \leq \frac{\beta}{2}\enspace,$$
and combined with Lemma \ref{QuantilesAbsS_u}, which follows from a simple application of Bennett's inequality, we get that 
\begin{equation}\label{UBalt8_u_1_eq6}
 P_\lambda \left(   s'_{N_1,\tau_{k_\tau},1} (1-u_{\alpha})     \geq Q(\alpha, \beta,L,\tau_{k_\tau})   \right) \leq \frac{\beta}{2} \enspace.
 \end{equation}
We conclude with the following inequalities:
\begin{align*}
P_\lambda &( S'_{\tau_{k_\tau},1}(N) \leq s'_{N_1,\tau_{k_\tau},1} (1-u_{\alpha}) ) \\
&\leq P_\lambda \left( S'_{\tau_{k_\tau},1}(N) <  Q(\alpha, \beta,L,\tau_{k_\tau})  \right) + \frac{\beta}{2}~ \text{with \eqref{UBalt8_u_1_eq6}} \\
&\leq P_\lambda \left( S'_{\tau_{k_\tau},1}(N) -  E_\lambda [S'_{\tau_{k_\tau},1}(N)]<  - \sqrt{\frac{2 \mathrm{Var}_\lambda (S'_{\tau_{k_\tau},1}(N) )}{\beta}} \right) + \frac{\beta}{2} ~~\text{with \eqref{UBalt8_u_cond5} and \eqref{alt8_u_moment}} \\
&\leq \beta ~~\text{with the Bienayme-Chebyshev inequality again}\enspace.
\end{align*}

Assume now that  $\delta$ belongs to $(-\lambda_0, 0)$ and notice that we have also 
\[ P_{\lambda}(\phi_{8,\alpha}^{u(1)}(N)=0) \leq \inf_{\tau' \in \mathcal{D}_L} P_{\lambda} \left( -S'_{\tau',1}(N) \leq s'_{N_1,\tau',1} (1-u_{\alpha}) \right) \enspace.\]
The same choices of $\tau_{k_\tau}$ as above lead, with \eqref{alt8_u_esp} and \eqref{alt8_u_var}, to $E_\lambda [-S'_{\tau_{k_\tau},1}(N) ] = \vert \delta \vert  \tau_{k_\tau} (1-  \tau) L$ and
$
\mathrm{Var}_\lambda \cro{- S'_{\tau_{k_\tau},1}(N) } \leq
\lambda_0 \tau_{k_\tau} (1- \tau_{k_\tau}) L$
and we obtain, using very similar arguments and calculations (mainly replacing $\delta$ by $\vert \delta \vert$), that the assumption \eqref{UBalt8_u_cond1} implies
$$P_\lambda ( -S'_{\tau_{k_\tau},1}(N)  \leq s'_{N_1,\tau_{k_\tau},1} (1-u_{\alpha}) ) \leq \beta \enspace,$$
so $P_{\lambda}(\phi_{8,\alpha}^{u(1)}(N)=0) \leq \beta$.

Since $\log(2/u_\alpha)= \log(2(2 \lfloor \log_2 L \rfloor -1)/\alpha)$ and $L \geq 3$, there exists $C(\alpha, \beta,R)>0$ such that 
\begin{multline}
\max \left( \sqrt{\frac{\log \left(    2 /u_\alpha   \right)}{L}}  2 \sqrt{R}    ,~\sqrt{\frac{\log \left(   2 /u_\alpha  \right)}{L}} 2 \sqrt{6} \sqrt{R + \sqrt{\frac{2R}{\beta L}}} + \frac{2}{\sqrt{L}} \sqrt{\frac{6R}{\beta}}~,~   \sqrt{\frac{2}{L}}R  \right) \\ \leq C(\alpha, \beta,R) \sqrt{\frac{\log \log L}{L}} \enspace,
\end{multline}
which allows to conclude the proof.

\medskip

\emph{$(ii)$ Control of the second kind error rate of $\phi_{8,\alpha}^{u(2)}$.}

\smallskip

Let $\lambda$ in $\calS^u_{\bbul,\bbul\bbul,1-\bbul\bbul}[R]$ such that $\lambda= \lambda_{0} + \delta \mathds{1}_{(\tau,1]}$ with $\lambda_0$ in $ (0,R]$, $\tau$ in $(0,1)$ and $\delta$ in  $(-\lambda_{0}, R- \lambda_0] \setminus \lbrace 0 \rbrace$ and assume that 
\begin{multline} \label{UBalt8_u2_cond1}
d_2(\lambda, \mathcal{S}^u_0[R]) \geq \max \Bigg( \sqrt{\frac{30CR\log \left(   2.77/u_\alpha \right)}{L}} +  \frac{ \sqrt{30C \log \left(   2.77/u_\alpha \right)}}{L^{3/4}} \pa{\frac{2R}{\beta}}^{1/4}\\+ \frac{3\sqrt{ C}\log \left(   2.77/u_\alpha \right)}{2\sqrt{ L\log \log L}} + 2\sqrt{\frac{3R}{L\sqrt{\beta}}}~,~12 \sqrt{\frac{2R}{L \beta}}~,~4R \sqrt{\frac{\log \log L}{L}}  \Bigg) \enspace ,
\end{multline}
where $C$ is the constant defined in Lemma \ref{quantile_T'}. 

Let us prove the inequality $P_{\lambda}(\phi_{8,\alpha}^{u(2)}(N) =0) \leq \beta$. 
Notice first that
\[ P_{\lambda}(\phi_{8,\alpha}^{u(2)}(N)  =0)  \leq \inf_{\tau' \in \mathcal{D}_L} P_{\lambda} \pa{ T'_{\tau',1}(N) \leq t'_{N_1,\tau',1}(1-u_{\alpha}) } \enspace ,\]
so one only needs to exhibit some $\tau'$ in $\mathcal{D}_L$ satisfying \[P_{\lambda} \pa{ T'_{\tau',1}(N)\leq t'_{N_1,\tau',1}(1-u_{\alpha}) } \leq \beta \enspace, \]
to obtain the expected result.

Under the assumption \eqref{UBalt8_u2_cond1}, we get $d_2^{2}(\lambda, \mathcal{S}^u_0[R]) \geq 16R^2 (\log \log L)/L$ which entails
\begin{equation} \label{UBalt8_u_2_eq1} 
  \tau(1- \tau) > \frac{16\log \log L}{L} \enspace.
  \end{equation}

Assume first that $\tau$ belongs to $(0, 1/2)$. 
 The condition (\ref{UBalt8_u_2_eq1}) leads to $\tau > 16 (\log \log L)/L$ and since $L \geq 3$, we get the inequality $16 (\log \log L)/L > 2^{- \lfloor \log_2 L \rfloor}$ and the existence of  
 $k_{\tau}$ in $\lbrace 2,..., \lfloor \log_{2}L  \rfloor \rbrace$ satisfying $2^{-k_{\tau}} \leq \tau < 2^{-k_{\tau}+1}.$ Setting $\tau_{k_\tau}= 2^{-k_{\tau}}$ we can prove as in the above case $(i)$ that
 \begin{equation} \label{UBalt8_u_2_eq2}
\tau_{k_{\tau}} \leq \tau < \tau_{k_{\tau}-1} \enspace,
\end{equation}
and
\begin{equation} \label{UBalt8_u_2_eq3}
 \frac{\tau_{k_{\tau}}}{1-\tau_{k_{\tau}}}>  \frac{\tau}{3(1-\tau)} \enspace.
  \end{equation}
Moreover, since $\tau_{k_\tau} < 1/2$ and by definition  of $k_\tau$ ,
\[ \max \left( \frac{\tau_{k_\tau}}{1- \tau_{k_\tau}}, \frac{1- \tau_{k_\tau}}{\tau_{k_\tau}}    \right) = \frac{1- \tau_{k_\tau}}{\tau_{k_\tau}}< \frac{2- \tau}{\tau} \enspace.\]
Since (\ref{UBalt8_u_2_eq1}) implies $\tau > 16 (\log \log L)/L$, we get 
  \begin{equation}  \label{UBalt8_u_2_eq4}
  \max \left( \frac{\tau_{k_\tau}}{1- \tau_{k_\tau}}, \frac{1- \tau_{k_\tau}}{\tau_{k_\tau}}    \right)< \frac{L}{8 \log \log L}\enspace.
    \end{equation}

Assume now that $\tau$ belongs to $[1/2, 1)$.
 The condition (\ref{UBalt8_u_2_eq1}) entails $\tau < 1- 16 (\log \log L)/L$ and since $L \geq 3$, $1-2^{-1}\leq \tau< 1-2^{- \lfloor \log_2 L  \rfloor}$. Hence, there exists $k_{\tau}$ in $\lbrace 1,..., \lfloor \log_{2}L  \rfloor -1 \rbrace$ satisfying $1-2^{-k_{\tau}} \leq \tau < 1- 2^{-k_{\tau}-1}.$ We set $\tau_{k_\tau}=1- 2^{-k_{\tau}}$ and we obtain as in the above case $(i)$
\begin{equation} \label{UBalt8_u_2_eq5}
\tau_{k_{\tau}} \leq \tau < \tau_{k_{\tau}+1} \enspace,
\end{equation}
and
\begin{equation} \label{UBalt8_u_2_eq6}
\frac{\tau_{k_\tau}}{1- \tau_{k_\tau}} > \frac{\tau}{3(1-\tau)} \enspace.
\end{equation}

Moreover, since $\tau_{k_\tau}$ belongs to $[1/2,1),$ we get using \eqref{UBalt8_u_2_eq5}
 \begin{align*} 
 \max \left( \frac{\tau_{k_\tau}}{1- \tau_{k_\tau}}, \frac{1- \tau_{k_\tau}}{\tau_{k_\tau}}    \right) &= \frac{\tau_{k_\tau}}{1- \tau_{k_\tau}} 
 \leq \frac{ \tau}{1-\tau} \enspace.
  \end{align*}
 Since (\ref{UBalt8_u_2_eq1}) yields $1-\tau > 16 (\log \log L)/L$, we get also
 \begin{equation} \label{UBalt8_u_2_eq7}
 \max \left( \frac{\tau_{k_\tau}}{1- \tau_{k_\tau}}, \frac{1- \tau_{k_\tau}}{\tau_{k_\tau}}    \right) < \frac{L}{16 \log \log L} \enspace.
 \end{equation}
 
 Applying Lemma \ref{MomentsT'}, we obtain with \eqref{UBalt8_u_2_eq2} and \eqref{UBalt8_u_2_eq5}
 \begin{equation}\label{UBalt8_u_2_esp}
 E_{\lambda}[T'_{\tau_{k_\tau},1}(N)] = \delta^2 (1- \tau)^2 \frac{\tau_{k_\tau}}{1- \tau_{k_\tau}} \enspace,
 \end{equation}
 and 
  \begin{equation}\label{UBalt8_u_2_var}
  \mathrm{Var}_{\lambda}\cro{T'_{\tau_{k_\tau},1}(N) } = \frac{4}{L} \delta^2 (1-\tau)^2 \frac{\tau_{k_\tau}}{1- \tau_{k_\tau}} \Bigg( \lambda_0 + \delta(1-\tau) \frac{\tau_{k_\tau}}{1- \tau_{k_\tau}}    \Bigg) + \frac{2}{L^2} \Bigg(  \lambda_0 + \delta (1-\tau) \frac{\tau_{k_\tau}}{1- \tau_{k_\tau}} \Bigg)^2 \enspace.
  \end{equation} 
  On the one hand, we finally get with \eqref{UBalt8_u_2_eq3} and \eqref{UBalt8_u_2_eq6},
   \begin{equation}\label{UBalt8_u_2_esp1}
 E_{\lambda}[T'_{\tau_{k_\tau},1}(N)] > \frac{\delta^2 \tau(1- \tau)}{3} \enspace,
 \end{equation}
 and on the other hand, using \eqref{UBalt8_u_2_eq2} and \eqref{UBalt8_u_2_eq5},
   \begin{equation}\label{UBalt8_u_2_var1}
  \mathrm{Var}_{\lambda}\cro{T'_{\tau_{k_\tau},1}(N) }
\leq \frac{4 \delta^2 \tau (1-\tau)R}{L} + \frac{2R^2}{L^2}  \enspace.
  \end{equation}

 Notice that the assumption \eqref{UBalt8_u2_cond1} entails
\begin{multline*}
\delta^2 \tau (1-\tau) \geq \max \Bigg( \frac{30 CR \log \left(   2.77/u_\alpha  \right)}{L}  +  \frac{   30 C \log \left(   2.77/u_\alpha  \right)}{L^{3/2}} \sqrt{\frac{2R}{\beta}}  + \frac{9C \log^2 \left(   2.77/u_\alpha \right)}{ 4 L\log \log L}   \\ +  \frac{12R}{L \sqrt{\beta}} ~,~12 \vert \delta \vert \sqrt{\tau (1-\tau)} \sqrt{\frac{2R}{L \beta}}\Bigg) \enspace,
\end{multline*}
hence  \begin{multline} \label{UBalt8_u2_cond2}
 \delta^2 \tau (1-\tau) \geq  \frac{15CR \log \left(   2.77/u_\alpha  \right)}{L}  +  \frac{   15 C \log \left(   2.77/u_\alpha  \right)}{L^{3/2}} \sqrt{\frac{2R}{\beta}} \\ + \frac{9C \log^2 \left(   2.77/u_\alpha \right)}{ 8 L\log \log L}  +  \frac{6R}{L \sqrt{\beta}} +6 \vert \delta \vert \sqrt{\tau (1-\tau)} \sqrt{\frac{2R}{L \beta}} \enspace.
 \end{multline}

Using \eqref{UBalt8_u_2_esp1} and \eqref{UBalt8_u_2_var1}, the condition \eqref{UBalt8_u2_cond2} then leads to 
\begin{multline} \label{UBalt8_u2_cond3}
E_{\lambda}[T'_{\tau_{k_\tau},1}(N) ]  - \sqrt{2 \mathrm{Var}_{\lambda} \cro{T'_{\tau_{k_\tau},1}(N) }/\beta}  \\ \geq  C \left( 5R \frac{ \log \left(  2.77/u_\alpha \right)}{L} + 5 \sqrt{\frac{2R}{\beta} } \frac{ \log \left(  2.77/u_\alpha \right)}{L^{3/2}} +    \frac{3}{8} \frac{ \log^2 \left( 2.77/u_\alpha \right)}{L \log \log L}    \right) \enspace.
\end{multline}

 Now, if we set
 \begin{multline}
 Q(\alpha, \beta,L,R, \tau_{k_\tau})= C \Bigg( 5R \frac{ \log \left( 2.77/u_\alpha \right)}{L} + 5 \sqrt{\frac{2R}{\beta} } \frac{ \log \left(  2.77/u_\alpha \right)}{L^{3/2}} \\+    3 \max \left( \frac{\tau_{k_\tau}}{1- \tau_{k_\tau}}, \frac{1- \tau_{k_\tau}}{\tau_{k_\tau}}    \right) \frac{ \log^2 \left(  2.77/u_\alpha \right)}{L^2}    \Bigg) \enspace,
 \end{multline}
combined with \eqref{UBalt8_u_2_eq4} and \eqref{UBalt8_u_2_eq7}, the condition \eqref{UBalt8_u2_cond3} yields
 \begin{equation} \label{UBalt8_u2_cond4}
E_{\lambda}[T'_{\tau_{k_\tau},1}(N) ]  - \sqrt{2 \mathrm{Var}_{\lambda}\cro{T'_{\tau_{k_\tau},1}(N) )}/\beta}   \geq  Q(\alpha, \beta,L,R, \tau_{k_\tau}) \enspace.
\end{equation}

Furthermore, from the inequality
$$P_\lambda \left(  N_1 \geq L I(\lambda) + \sqrt{\frac{2L I(\lambda)}{\beta}}   \right) \leq \frac{\beta}{2}\enspace,$$
Lemma \ref{quantile_T'}, and the fact that $I(\lambda) \leq R$, we deduce that
\begin{equation}\label{UBalt8_u_2_eq8}
 P_\lambda \left(   t'_{N_1,\tau_{k_\tau},1} (1-u_{\alpha})     \geq Q(\alpha, \beta,L,R, \tau_{k_\tau})   \right) \leq \frac{\beta}{2} \enspace.
 \end{equation}

We finally obtain using successively \eqref{UBalt8_u_2_eq8}, \eqref{UBalt8_u2_cond4} and the Bienayme-Chebyshev inequality
\begin{align*}
P_{\lambda}(T'_{\tau_{k_\tau},1}(N) \leq t'_{N_1,\tau_{k_\tau},1}(1-u_{\alpha})   ) 
&\leq P_{\lambda} \left( T'_{\tau_{k_\tau},1}(N) < Q(\alpha, \beta,L,R, \tau_{k_\tau}) \right) + \frac{\beta}{2}\\
&\leq P_{\lambda} \left( T'_{\tau_{k_\tau},1}(N) -E_{\lambda}[T'_{\tau_{k_\tau},1}(N) ]  < - \sqrt{2 \mathrm{Var}_{\lambda}\cro{T'_{\tau_{k_\tau},1}(N) )}/\beta}\right) + \frac{\beta}{2}\\
&\leq \beta\enspace.
\end{align*}
Since $\log(2/u_\alpha)= \log((4 \lfloor \log_2 L \rfloor -2)/\alpha)$ and $L \geq 3$, there exists $C(\alpha, \beta,R)>0$ such that 
\eqref{UBalt8_u2_cond1} is implied by
$$d_2(\lambda, \mathcal{S}^u_0[R]) \geq C(\alpha, \beta,R) \sqrt{\frac{\log \log L}{L}} \enspace,$$
which concludes the proof.

\subsection{Proof of Proposition \ref{LBalt9_u}}

Let $L \geq 2$ and set $\lambda_0= (R-\delta^{*}{}) \wedge R$.

 For all $k$ in $\set{1,\ldots,\lceil L^{3/4} \rceil}$, let us define $\lambda_k(t) = \lambda_{0} + \delta^{*}{} \mathds{1}_{(\tau_k, \tau_k + \ell]}(t)$ with
$\tau_k =k/L$ and $\ell =\lambda_{0} \log L /(2{\delta^*}^2 L)$. Then, as soon as
\begin{equation} \label{LBalt9_u_eq1}
\frac{\lceil L^{3/4} \rceil}{L} + \frac{  ((R-\delta^{*}{}) \wedge R) \log L}{2 {\delta^*}^2 L}< 1\enspace,
\end{equation}
$\lambda_k$ belongs to $\calS^u_{\delta^*,\bbul\bbul,\bbul\bbul\bbul} [R]$ for any $k$ in $\set{1,\ldots,\lceil L^{3/4} \rceil}.$ If in addition
\begin{equation} \label{LBalt9_u_eq2}
\frac{\log L}{L} \leq \frac{\delta^{*}{}^2}{ (R-\delta^{*}{}) \wedge R} \enspace,
\end{equation}
$\lambda_k$ satisfies for all $k$ in $\set{1,\ldots,\lceil L^{3/4} \rceil}$,
\[d_2^2\pa{\lambda_k,\calS^u_{0} [R]} \geq \frac{((R-\delta^{*}{}) \wedge R) \log L}{4L}\enspace.\]
Using Lemma \ref{lemmebayesien} and considering a random variable $J$ uniformly distributed on $\set{1,\ldots, \lceil L^{3/4} \rceil}$ and
the distribution $\mu$ of $\lambda_{J}$, one can see that it is enough to prove that $E_{\lambda_0} [\left( dP_{\mu}/dP_{\lambda_0}\right)^{2}  ] \leq 1+4(1- \alpha -\beta)^{2}$ to obtain the expected lower bound.
The same calculation as in the proof of Proposition \ref{LBalt9} leads to
 \[E_{\lambda_0} \left[  \left( \frac{dP_{\mu}}{dP_{\lambda_0}} \right)^{2} (N)  \right]  \leq   1+ \frac{\sqrt{L}}{\lceil L^{3/4} \rceil}\frac{e^{\delta^{*}{}^2 /\lambda_{0} }+1}{e^{\delta^{*}{}^2 /\lambda_{0} }-1}\enspace.\]
Therefore,  assuming that
 \begin{equation} \label{LBalt9_u_eq3}
\frac{\sqrt{L}}{\lceil L^{3/4} \rceil}\frac{e^{\delta^{*}{}^2 /(R-\delta^{*}{}) \wedge R }+1}{e^{\delta^{*}{}^2 /(R-\delta^{*}{}) \wedge R }-1} \leq 4(1- \alpha - \beta)^2\enspace,
\end{equation}
we get the expected result, that is
\[E_{\lambda_0} \left[  \left( \frac{dP_{\mu}}{dP_{\lambda_0}} \right)^{2} (N)  \right]  \leq   1+ 4(1- \alpha - \beta)^2\enspace.\]
We then conclude the proof noticing that there exists $L_0(\alpha,\beta,\delta^*,R)\geq 2$ such that  the assumptions \eqref{LBalt9_u_eq1}, \eqref{LBalt9_u_eq2} and \eqref{LBalt9_u_eq3} hold for all $L\geq L_0(\alpha,\beta,\delta^*,R).$

\subsection{Proof of Proposition \ref{UBalt10_u}}

As for all our Bonferroni type aggregated tests of Section~~\ref{Sec:unknownbaseline} dedicated to change detection from an unknown baseline intensity, the control of the first kind error rates of the two tests $\phi_{9/10,\alpha}^{u(1)}$ and $\phi_{9/10,\alpha}^{u(2)}$ is easily deduced from union bounds and the conditioning trick of the above proofs for upper bounds. We therefore focus here on the second kind error rates.

\medskip

\emph{$(i)$ Control of the second kind error rate of $\phi_{9/10,\alpha}^{u(1)}$.}

\smallskip

Let $\lambda$ in $\calS^u_{\bbul,\bbul\bbul,\bbul\bbul\bbul}[R]$ such that $\lambda = \lambda_{0} + \delta \mathds{1}_{(\tau,\tau+ \ell]}$, with $\lambda_0$ in $(0,R]$, $\tau$ in $(0,1)$, $\delta$ in $(-\lambda_{0}, R- \lambda_0] \setminus \lbrace 0 \rbrace$ and $\ell$ in $(0,1-\tau)$. By now, we aim at proving that  $P_{\lambda}\pa{ \phi_{9/10,\alpha}^{u(1)}(N)=0} \leq \beta$ as soon as we assume that
 \begin{multline} \label{UBalt10_u1_cond1}
  d_2(\lambda, \mathcal{S}^u_0[R]) \geq  \max \Bigg(  2\sqrt{\frac{R\log \left(  2/u_\alpha^{(1)}  \right)}{L}}~,~ 6  \sqrt{\frac{2 \log \left(  2/u_\alpha^{(1)} \right)}{L}} \sqrt{R + \sqrt{\frac{2R}{ \beta L}}}\\ + 6 \sqrt{\frac{2R}{\beta L}}~  ,~ \frac{\sqrt{3}R}{\sqrt{ L}}           \Bigg) \enspace.
 \end{multline}

Let us first consider the case where $\delta$ belongs to $(0,R-\lambda_0]$.

Noticing that
  \begin{multline*} 
  P_{\lambda}\pa{ \phi_{9/10,\alpha}^{(1)}(N)=0}\\\leq \inf_{k \in \lbrace 0,\ldots,\lceil L\rceil-1 \rbrace} \inf_{k' \in \lbrace 1,\ldots,\lceil L\rceil-k \rbrace} P_{\lambda} \left(   S'_{\frac{k}{\lceil L\rceil },\frac{k+k'}{\lceil L\rceil }}(N)  \leq s'_{N_1,\frac{k}{\lceil L\rceil },\frac{k+k'}{\lceil L\rceil }}\left( 1-u_\alpha^{(1)} \right)  \right)\enspace ,
  \end{multline*}
one can see that it is enough to exhibit some $k_0$ in $ \lbrace 0,\ldots,\lceil L\rceil-1 \rbrace$ and $k'_0$ in $\lbrace 1,\ldots,\lceil L\rceil-k_0 \rbrace$ such that 
$$P_{\lambda} \left( S'_{\frac{k_0}{\lceil L\rceil },\frac{k_0+k'_0}{\lceil L\rceil }}(N)   \leq s'_{N_1,\frac{k_0}{\lceil L\rceil },\frac{k_0+k'_0}{\lceil L\rceil }}\left( 1-u_\alpha^{(1)} \right)   \right) \leq \beta \enspace.$$
  We get from (\ref{UBalt10_u1_cond1}) that 
 $
  d_2^2(\lambda, \mathcal{S}^u_0[R]) \geq 3 R^2/ L \geq 3R^2/\lceil L\rceil $ which entails
 \begin{equation}\label{UBalt10_u1_eq1}
  \ell(1-\ell) > 3/ \lceil L\rceil \enspace. 
\end{equation}  

 Assume first that $\ell \leq 1/2.$ The condition (\ref{UBalt10_u1_eq1}) leads to
 \begin{equation} \label{UBalt10_u1_eq2}
  \ell > 3/ \lceil L\rceil~ \textrm{and}~ \tau < 1-3/ \lceil L\rceil  \enspace.
 \end{equation}
 Therefore, we can define
 $ k_{0} = \min (  k \in \lbrace 0,\ldots,\lceil L\rceil-1 \rbrace,~ \tau \leq k/\lceil L\rceil  )   $ and $ k'_{0} = \max (k' \in \lbrace 1,\ldots,\lceil L\rceil-k_0 \rbrace,~ (k_0 + k')/\lceil L\rceil \leq \tau + \ell )$, so that $\tau \leq k_0/\lceil L\rceil < (k_0 + k'_0)/\lceil L\rceil \leq \tau + \ell$.
  Since by definition $k_{0}/\lceil L\rceil - \tau < 1/\lceil L\rceil$ and $ \tau + \ell - (k_{0}+k'_{0})/\lceil L\rceil < 1/\lceil L\rceil$, we get that
 $$ \frac{k'_{0} }{\lceil L\rceil} = \ell - \left(  \left( \frac{k_{0}}{\lceil L\rceil}- \tau    \right) + \left( \tau + \ell - \frac{k_{0}+k'_{0}}{\lceil L\rceil}    \right)  \right) > \ell - \frac{2}{\lceil L\rceil},$$
 and then, combining with \eqref{UBalt10_u1_eq2},
 \begin{equation} \label{UBalt10_u1_eq3}
   \frac{k'_{0} }{\lceil L\rceil}
  > \frac{\ell}{3} \enspace.
  \end{equation}
Lemma \ref{momentPoisson} leads to $E_\lambda \left[S'_{k_0/\lceil L\rceil ,(k_0+k'_0)/\lceil L\rceil }(N)  \right] = \delta (1-\ell)L  k'_{0}/\lceil L\rceil $ and $$ \mathrm{Var}_\lambda \cro{ S'_{\frac{k_0}{\lceil L\rceil },\frac{k_0+k'_0}{\lceil L\rceil }}(N)  } = \pa{\lambda_0 \pa{ 1- \frac{k'_{0}}{\lceil L\rceil}}  + \delta \pa{1- (2-\ell) \frac{k'_{0}}{\lceil L\rceil} }} \frac{k'_{0}}{\lceil L\rceil}L \enspace.$$
With \eqref{UBalt10_u1_eq3}, we obtain on the one hand
\begin{equation}  \label{UBalt10_u1_esp}
E_\lambda \left[ S'_{\frac{k_0}{\lceil L\rceil },\frac{k_0+k'_0}{\lceil L\rceil }}(N)  \right]  > \frac{\delta}{3} \ell(1-\ell)L \enspace.
\end{equation}
On the other hand, $k'_{0}/\lceil L\rceil \leq \ell<1$ yields $0< 1-(2-\ell)k'_{0}/ \lceil L\rceil < 1- k'_{0}/\lceil L\rceil.$
Moreover, using $k'_{0}/\lceil L\rceil \leq \ell$ again and the fact that $\ell \leq 1/2$ one obtains
\begin{equation}  \label{UBalt10_u1_eq4}
\frac{k'_{0}}{\lceil L\rceil} \pa{1- \frac{k'_{0}}{\lceil L\rceil}} \leq \ell(1-\ell) \enspace,
\end{equation}  
 and we finally get
\begin{equation}  \label{UBalt10_u1_var}
\mathrm{Var}_\lambda \cro{ S'_{\frac{k_0}{\lceil L\rceil },\frac{k_0+k'_0}{\lceil L\rceil }}(N)  } \leq (\lambda_0 + \delta) \ell(1-\ell)L \enspace.
\end{equation}

Recall that $I(\lambda)=\int_0^1 \lambda(t) dt$ and notice that (\ref{UBalt10_u1_cond1}) entails
$$    \delta \sqrt{\ell(1-\ell)} \geq  \max \left(  2 \sqrt{\frac{\delta \log \left(   2/u_\alpha^{(1)} \right)}{L}}~,~ 6  \sqrt{\frac{2\log \left(   2/u_\alpha^{(1)} \right)}{L}} \sqrt{I(\lambda) + \sqrt{\frac{2 I(\lambda)}{ \beta L}}} + 6\sqrt{\frac{2(\lambda_0 + \delta)}{\beta L}}          \right)\enspace ,$$
thereby
$$   \delta \ell(1-\ell) \geq  2 \max \left(    \frac{2\log \left(   2/u_\alpha^{(1)} \right)}{L}~,~ 3 \sqrt{\ell(1-\ell)} \left(   \sqrt{\frac{2\log \left(   2/u_\alpha^{(1)}  \right)}{L}} \sqrt{I(\lambda) + \sqrt{\frac{2 I(\lambda)}{ \beta L}}} +  \sqrt{\frac{2(\lambda_0 + \delta)}{\beta L}}  \right)        \right) \enspace,$$
and then \begin{equation} \label{UBalt10_u1_cond2}
 \frac{  \delta }{3}  \ell(1-\ell) L \geq    \frac{2}{3}  \log \left(  \frac{2}{ u_\alpha^{(1)}}  \right) +   \sqrt{\ell(1-\ell)}    \sqrt{2 L \log \left(  \frac{2}{ u_\alpha^{(1)}}  \right)} \sqrt{I(\lambda) + \sqrt{\frac{2 I(\lambda)}{ \beta L}}} +  \sqrt{\frac{2 \ell(1-\ell) L(\lambda_0 + \delta)}{\beta}} \enspace .
\end{equation}
  
Thus, with (\ref{UBalt10_u1_esp}) and (\ref{UBalt10_u1_var}), the inequality (\ref{UBalt10_u1_cond2}) ensures that
\begin{multline} \label{UBalt10_u1_cond3}
E_\lambda \left[ S'_{\frac{k_0}{\lceil L\rceil },\frac{k_0+k'_0}{\lceil L\rceil }}(N)   \right] \geq \frac{2}{3}  \log \left(  \frac{2}{ u_\alpha^{(1)}}  \right) +   \sqrt{\ell(1-\ell)}    \sqrt{ 2L \log \left(  \frac{2}{ u_\alpha^{(1)}}  \right)} \sqrt{I(\lambda) + \sqrt{\frac{2 I(\lambda)}{ \beta L}}}\\ +\sqrt{\frac{2 \mathrm{Var}_\lambda \cro{ S'_{k_0/\lceil L\rceil ,\pa{k_0+k'_0}/\lceil L\rceil }(N)   }}{\beta}} \enspace.
\end{multline}

Furthermore, we deduce from the inequality 
$$P_\lambda \left(  N_1 \geq L I(\lambda) + \sqrt{\frac{2L I(\lambda)}{\beta}}   \right) \leq \frac{\beta}{2}\enspace,$$
combined with Lemma \ref{QuantilesAbsS_u} that
\begin{equation} \label{UBalt10_u1_eq5}
 P_\lambda \left( s'_{N_1,\frac{k_0}{\lceil L\rceil },\frac{k_0+k'_0}{\lceil L\rceil }}\left( 1-u_\alpha^{(1)} \right)    \geq  Q(\alpha, \beta,L, k'_{0})  \right) \leq \frac{\beta}{2} \enspace, 
 \end{equation}
with
\begin{equation} \label{UBalt10_u1_eq6}
 Q(\alpha, \beta,L, k'_{0})= \frac{2}{3}  \log \left(  \frac{2}{ u_\alpha^{(1)}}  \right) +  \sqrt{\frac{k'_{0}}{\lceil L\rceil} \pa{1- \frac{k'_{0}}{\lceil L\rceil}}} \sqrt{2 \log \left(  \frac{2}{ u_\alpha^{(1)}}  \right)  } \sqrt{L I(\lambda) +  \sqrt{\frac{2L I(\lambda)}{ \beta}} } \enspace.
 \end{equation}
Using (\ref{UBalt10_u1_eq4}), the inequality (\ref{UBalt10_u1_cond3}) leads to
\begin{equation} \label{UBalt10_u1_cond4}
E_\lambda \left[ S'_{\frac{k_0}{\lceil L\rceil },\frac{k_0+k'_0}{\lceil L\rceil }}(N)  \right] \geq Q(\alpha, \beta,L,k'_{0})+ \sqrt{2 \mathrm{Var}_\lambda \cro{ S'_{\frac{k_0}{\lceil L\rceil },\frac{k_0+k'_0}{\lceil L\rceil }}(N) }/\beta} \enspace,
\end{equation}
and we conclude with the following inequalities
\begin{align*}
P_\lambda &\left( S'_{\frac{k_0}{\lceil L\rceil },\frac{k_0+k'_0}{\lceil L\rceil }}(N)   \leq s'_{N_1,\frac{k_0}{\lceil L\rceil },\frac{k_0+k'_0}{\lceil L\rceil }}\left( 1-u_\alpha^{(1)} \right)   \right) \\ &\leq P_\lambda \left( S'_{\frac{k_0}{\lceil L\rceil },\frac{k_0+k'_0}{\lceil L\rceil }}(N)   < Q(\alpha, \beta,L, k'_{0}) \right) + P_\lambda \left(s'_{N_1,\frac{k_0}{\lceil L\rceil },\frac{k_0+k'_0}{\lceil L\rceil }}\left( 1-u_\alpha^{(1)} \right)   \geq Q(\alpha, \beta,L, k'_{0}) \right) \\
&\leq P_\lambda \left( S'_{\frac{k_0}{\lceil L\rceil },\frac{k_0+k'_0}{\lceil L\rceil }}(N)   < Q(\alpha, \beta,L, k'_{0}) \right)  +\frac{\beta}{2}~~\text{with (\ref{UBalt10_u1_eq5}) } \\
&\leq P_\lambda \left( S'_{\frac{k_0}{\lceil L\rceil },\frac{k_0+k'_0}{\lceil L\rceil}}(N)   - E_\lambda \left[ S'_{\frac{k_0}{\lceil L\rceil},\frac{k_0+k'_0}{\lceil L\rceil }}(N)   \right] <  -\sqrt{2 \mathrm{Var}_\lambda \left(S'_{\frac{k_0}{\lceil L\rceil },\frac{k_0+k'_0}{\lceil L\rceil }}(N)  \right)/\beta}   \right) +\frac{\beta}{2}~~\text{with (\ref{UBalt10_u1_cond4}) } \\
&\leq \beta~~\text{with the Bienayme-Chebyshev inequality}\enspace.
\end{align*}
~\\
Assume now that $\ell >1/2$. 
We define
 $ k_{0} = \max (  k \in \lbrace 0,\ldots,\lceil L\rceil-1 \rbrace,~ \tau \geq k/\lceil L\rceil  )   $ and $ k'_{0} = \min (k' \in\lbrace 1,\ldots,\lceil L\rceil-k_0 \rbrace,~ (k_{0} + k')/\lceil L\rceil \geq \tau + \ell )$ so that $ k_{0}/\lceil L\rceil \leq \tau< \tau+\ell \leq (k_{0} + k'_{0})/\lceil L\rceil$. Notice that the condition (\ref{UBalt10_u1_eq1}) entails
\begin{equation} \label{UBalt10_u1_eq7}
1-\ell > {3}/{\lceil L\rceil} \enspace.
\end{equation}
Since by definition of $k_0$ and $k_0'$, $\tau-k_0/\lceil L\rceil<1/\lceil L\rceil$ and $(k_0+k_0')/\lceil L\rceil-(\tau+\ell)<1/\lceil L\rceil$, then, with \eqref{UBalt10_u1_eq7},
$$\frac{k_0'}{\lceil L\rceil}=\frac{k_{0}+k'_{0}}{\lceil L\rceil}   - (\tau +\ell) +\ell + \tau-\frac{k_0}{\lceil L\rceil} <1/\lceil L\rceil+(1-3/\lceil L\rceil)+1/\lceil L\rceil \leq 1-1/\lceil L\rceil<1\enspace.$$
In the same way,  we can also obtain with \eqref{UBalt10_u1_eq7} that
 \begin{equation} \label{UBalt10_u1_eq8}
 1- \frac{k'_{0} }{\lceil L\rceil} = 1- \ell - \left(  \left( \tau - \frac{k_{0}}{\lceil L\rceil}    \right) + \left( \frac{k_{0}+k'_{0}}{\lceil L\rceil}   - (\tau +\ell) \right)  \right) > 1- \ell - \frac{2}{\lceil L\rceil}> \frac{1-\ell}{3} \enspace.
  \end{equation}
Furthermore, by definition of $k_{0}$ and $k'_{0}$, one has $k'_{0}/\lceil L\rceil \geq \ell$ and since $\ell > 1/2$,
\begin{equation}  \label{UBalt10_u1_eq9}
\frac{k'_{0}}{\lceil L\rceil} \pa{1- \frac{k'_{0}}{\lceil L\rceil}} \leq \ell(1-\ell)\enspace.
\end{equation}  
From Lemma \ref{momentPoisson}, we therefore deduce
\begin{align*}
 E_\lambda \left[ S'_{\frac{k_0}{\lceil L\rceil },\frac{k_0+k'_0}{\lceil L\rceil }}(N)  \right] &= \delta \ell \pa{1- \frac{k'_{0}}{\lceil L\rceil}} L \\
 & > \frac{\delta}{3 }\ell(1-\ell)L ~~\text{with (\ref{UBalt10_u1_eq8})} \enspace,
 \end{align*}
 and since $k'_0/\lceil L\rceil \geq \ell$,
\begin{align*}
 \mathrm{Var}_\lambda \cro{S'_{\frac{k_0}{\lceil L\rceil },\frac{k_0+k'_0}{\lceil L\rceil }}(N)  } &=\pa{\lambda_0 \frac{k'_{0}}{\lceil L\rceil} + \delta \ell \pa{1- \frac{k'_{0}}{\lceil L\rceil}}  }   \pa{1- \frac{k'_{0}}{\lceil L\rceil}} L \\
 &\leq \pa{\lambda_0  + \delta  \pa{1- \frac{k'_{0}}{L}}  }   \pa{1- \frac{k'_{0}}{L}} \frac{k'_{0}}{\lceil L\rceil}L\\
 &\leq (\lambda_0 + \delta) \ell(1-\ell)L ~~\text{with (\ref{UBalt10_u1_eq9})}\enspace.
 \end{align*}

Thus, the condition (\ref{UBalt10_u1_cond2}) also yields
 $$E_\lambda \left[S'_{\frac{k_0}{\lceil L\rceil },\frac{k_0+k'_0}{\lceil L\rceil}}(N)  \right] \geq Q(\alpha, \beta,L,k'_{0})+ \sqrt{2 \mathrm{Var}_\lambda \cro{ S'_{\frac{k_0}{\lceil L\rceil },\frac{k_0+k'_0}{\lceil L\rceil }}(N)  }/\beta}\enspace,$$ where $Q(\alpha, \beta,L,k'_{0})$ is defined by (\ref{UBalt10_u1_eq6}) and we conclude that $$P_\lambda \left(S'_{\frac{k_0}{\lceil L\rceil },\frac{k_0+k'_0}{\lceil L\rceil }}(N)  \leq s'_{N_1,\frac{k_0}{\lceil L\rceil},\frac{k_0+k'_0}{\lceil L\rceil }}\left( 1-u_\alpha^{(1)} \right)   \right) \leq \beta \enspace,$$ with the same arguments as above.

\medskip
  
Let us then treat the case where $\delta$ belongs to $(-\lambda_0,0).$ We start by noticing that 
   \begin{multline*} 
  P_{\lambda}\pa{ \phi_{9/10,\alpha}^{(1)}(N)=0}\\
  \leq \inf_{k \in \lbrace 0,\ldots,\lceil L\rceil-1 \rbrace} \inf_{k' \in \lbrace 1,\ldots,\lceil L\rceil-k \rbrace}  P_{\lambda} \left(  - S'_{\frac{k}{\lceil L\rceil },\frac{k+k'}{\lceil L\rceil}}(N) \leq s'_{N_1,\frac{k}{\lceil L\rceil },\frac{k+k'}{\lceil L\rceil}}\left( 1-u_\alpha^{(1)} \right)  \right) \enspace.
  \end{multline*}
The same choice of $k_{0}$ and $k'_{0}$ as in the case where $\delta$ belongs to $(0,R-\lambda_0]$  leads to $$ E_\lambda \left[-S'_{\frac{k_0}{\lceil L\rceil },\frac{k_0+k'_0}{\lceil L\rceil }}(N)  \right] > \frac{\vert \delta \vert \ell(1-\ell)L}{3}$$ and 
 $$\mathrm{Var}_\lambda \cro{ S'_{\frac{k_0}{\lceil L\rceil },\frac{k_0+k'_0}{\lceil L\rceil }}(N)  } \leq \lambda_{0} \frac{k'_{0}}{\lceil L\rceil} \pa{1-\frac{k'_{0}}{\lceil L\rceil}}L \leq R \frac{k'_{0}}{\lceil L\rceil} \pa{1-\frac{k'_{0}}{\lceil L\rceil}}L \enspace,$$
 and we obtain
  $$P_{\lambda} \left( -S'_{\frac{k_0}{\lceil L\rceil },\frac{k_0+k'_0}{\lceil L\rceil }}(N)   \leq s'_{N_1,\frac{k_0}{\lceil L\rceil },\frac{k_0+k'_0}{\lceil L\rceil }}\left( 1-u_\alpha^{(1)} \right)   \right) \leq \beta \enspace,$$
following the same lines of proof as above, but with $\delta$ replaced by $\vert \delta \vert$ except when it is involved in $\lambda_0 + \delta$. 

Coming back to the assumption \eqref{UBalt10_u1_cond1} and the definition of $u_\alpha^{(1)}$, one can finally claim that
\begin{multline}
\SRb\Big(\phi_{9/10,\alpha}^{u(1)},\calS^u_{\bbul,\bbul\bbul,\bbul\bbul\bbul}[R]\Big) \leq \max \Bigg(  2\sqrt{\frac{R\log \left(  \lceil L\rceil ( \lceil L\rceil +1)/\alpha  \right)}{L}}~,\\
6  \sqrt{\frac{2 \log \left( \lceil L\rceil(\lceil L\rceil +1)/\alpha \right)}{L}} \sqrt{R + \sqrt{\frac{2R}{ \beta L}}} + 6 \sqrt{\frac{2R}{\beta L}}~  ,~ \frac{\sqrt{3}R}{\sqrt{ L}}           \Bigg) \enspace,
\end{multline}
 which leads to the result stated in Proposition  \ref{UBalt10_u} for $\phi_{9/10,\alpha}^{u(1)}$ and the set $\calS^u_{\bbul,\bbul\bbul,\bbul\bbul\bbul}[R]$ of the alternative $\bold{[Alt^{u}.10]}$.
Since $\calS^u_{\delta^*,\bbul\bbul,\bbul\bbul\bbul}[R]\subset \calS^u_{\bbul,\bbul\bbul,\bbul\bbul\bbul}[R]$ for any $\delta^*$ in $(-R,R) \setminus \lbrace 0 \rbrace$,  the same result holds for $\phi_{9/10,\alpha}^{u(1)}$ and the set $\calS^u_{\delta^*,\bbul\bbul,\bbul\bbul\bbul}[R]$ of the alternative $\bold{[Alt^{u}.9]}$. Notice that in this case, the constant $C(\alpha,\beta,R)$ involved in the upper bound can be refined, benefiting from  the knowledge of $\delta^*$, which explains the formulation with $C(\alpha,\beta,R,\delta^*)$ instead of $C(\alpha,\beta,R)$ in Proposition~\ref{UBalt10_u}.

\medskip

\emph{$(ii)$ Control of the second kind error rate of $\phi_{9/10,\alpha}^{u(2)}$.}

\smallskip

Let $\lambda$ in $\calS^u_{\bbul,\bbul\bbul,\bbul\bbul\bbul}[R]$ such that $\lambda = \lambda_{0} + \delta \mathds{1}_{(\tau,\tau+ \ell]}$, with $\lambda_0$ in $(0,R]$, $\tau$ in $(0,1)$, $\delta$ in $(-\lambda_{0}, R- \lambda_0] \setminus \lbrace 0 \rbrace$ and $\ell$ in $(0,1-\tau)$. We assume by now that
 \begin{multline} \label{UBalt10_u2_cond1}
 d_2(\lambda, \mathcal{S}^u_0[R]) \geq \max \Bigg( \sqrt{\frac{60CR\log \left(   2.77/u_\alpha^{(2)} \right)}{L}} +   \pa{\frac{2R}{\beta}}^{1/4} \frac{ \sqrt{60C \log \left(   2.77/u_\alpha^{(2)} \right)}}{L^{3/4}}\\+ 6\sqrt{ C}\frac{\log \left(   2.77/u_\alpha^{(2)} \right)}{\sqrt{ L \log L}} + 2\sqrt{\frac{6R}{L\sqrt{\beta}}}~,~24 \sqrt{\frac{2R}{ \beta L}}~,~R \sqrt{\frac{3\log L}{L}}  \Bigg) \enspace ,
\end{multline}
where $C$ is the constant defined in Lemma \ref{quantile_T'}. 

Let us prove that this assumption implies  $P_{\lambda} \pa{\phi_{9/10,\alpha}^{u(2)}(N)=0} \leq \beta$.

As above, we invoke that
\begin{multline*} 
  P_{\lambda} \pa{\phi_{9/10,\alpha}^{u(2)}(N)=0} \\
   \leq \inf_{k \in \lbrace 0,\ldots,M_L-1 \rbrace} 
\inf_{\substack{k' \in \lbrace 1,\ldots,M_L-k \rbrace \\ (k,k') \neq (0,M_L)}} P_{\lambda} \left( T'_{\frac{k}{M_L}, \frac{k+k'}{M_L}}(N) \leq t'_{N_1,\frac{k}{M_L}, \frac{k+k'}{M_L}}\left( 1- u_{\alpha}^{(2)} \right) \right) \enspace,
  \end{multline*}
 to argue that we only need  to exhibit some $k_{0}$ in $\lbrace 0,\ldots,M_L-1 \rbrace$ and $k'_{0}$ in $\lbrace 1,\ldots,M_L-k_{0} \rbrace$ such that $(k_0,k'_0) \neq (0,M_L)$ satisfying $$P_{\lambda} \left( T'_{\frac{k_0}{M_L}, \frac{k_0+k'_0}{M_L}}(N) \leq t'_{N_1,\frac{k_0}{M_L}, \frac{k_0+k'_0}{M_L}}\left( 1- u_{\alpha}^{(2)} \right)  \right) \leq \beta \enspace.$$
Recalling that $M_L=\lceil L/\log L \rceil$, we get from (\ref{UBalt10_u2_cond1}) that 
 $
  d_2^2(\lambda, \mathcal{S}^u_0[R]) \geq 3 R^2 \log L/ L  \geq 3 R^2/M_L 
 $ which entails
 \begin{equation}\label{UBalt10_u2_eq1}
  \ell(1-\ell) > 3/ M_L \enspace. 
\end{equation}  

\medskip

Let us first consider the case where $\ell \leq 1/2$. 

The condition (\ref{UBalt10_u2_eq1}) leads to
 \begin{equation} \label{UBalt10_u2_eq2}
  \ell > 3/ M_L \enspace,
 \end{equation}
 and therefore, we can define
 $ k_{0} = \min (  k \in \lbrace 1,\ldots, M_L-1 \rbrace,~ \tau \leq k/M_L  )   $ and $ k'_{0}= \max (k' \in \lbrace 1,\ldots,M_L-k_{0} \rbrace,~ (k_{0} + k')/M_L \leq \tau + \ell )$, so that $\tau \leq k_{0}/M_L < (k_{0} + k'_{0})/M_L \leq \tau + \ell$.
  Since by definition $k_{0}/M_L - \tau < 1/M_L$ and $ \tau + \ell - (k_{0}+k'_{0})/M_L < 1/M_L$, notice that
 $$ \frac{k'_{0} }{M_L} = \ell - \left(  \left( \frac{k_{0}}{M_L}- \tau    \right) + \left( \tau + \ell - \frac{k_{0}+k'_{0}}{M_L}    \right)  \right) > \ell - \frac{2}{M_L}\enspace,$$
 and then, combined with (\ref{UBalt10_u2_eq2}),
 \begin{equation} \label{UBalt10_u2_eq3}
   \frac{k'_{0} }{M_L}
  > \frac{\ell}{3} \enspace.
  \end{equation}
Lemma \ref{MomentsT'} gives
$E_\lambda \left[ T'_{k_0/M_L, \pa{k_0+k'_0}/M_L}(N) \right] = \delta^2 (1-\ell)^2   (k'_{0} /M_L)/(1-k'_{0} /M_L)$, and with (\ref{UBalt10_u2_eq3}) and the facts that $1-\ell \geq 1/2$ and $1/(1-k'_{0} /M_L) >1$, we get
\begin{equation}  \label{UBalt10_u2_esp}
E_\lambda \left[ T'_{\frac{k_0}{M_L}, \frac{k_0+k'_0}{M_L}}(N) \right]  > \frac{\delta^2}{6} \ell(1-\ell)\enspace.
\end{equation}

Lemma \ref{MomentsT'} also gives
\begin{multline}
 \mathrm{Var}_\lambda \cro{  T'_{\frac{k_0}{M_L}, \frac{k_0+k'_0}{M_L}}(N)  } =\frac{2}{L^2} \pa{\lambda_0   + \delta \frac{1-(2-\ell)k'_{0}/M_L}{1-k'_{0}/M_L} }^2 \\ + \frac{4}{L} \delta^{2} (1-\ell)^2 \frac{k'_{0}/M_L}{1-k'_{0}/M_L} \pa{\lambda_0   + \delta \frac{1-(2-\ell)k'_{0}/M_L}{1-k'_{0}/M_L} } \enspace,
 \end{multline}
and since $k'_0/M_L \leq \ell<1$, notice that $0< 1-(2-\ell)k'_{0}/ M_L < 1- k'_{0}/M_L$.
Moreover using $k'_{0}/M_L \leq \ell$ again, one obtains
\begin{equation}  \label{UBalt10_u2_eq4}
\frac{k'_{0}/M_L} {1- k'_{0}/M_L} \leq \frac{\ell}{1-\ell} \enspace.
\end{equation}  
 Therefore, using (\ref{UBalt10_u2_eq4}),
\begin{multline*}
\mathrm{Var}_\lambda \cro{ T'_{\frac{k_0}{M_L}, \frac{k_0+k'_0}{M_L}}(N)  } \leq
\pa{ \frac{2}{L^2}(\lambda_0 + \delta)^2 + \frac{4}{L}(\lambda_0 + \delta) \delta^2 \ell(1-\ell)} \mathds{1}_{\delta>0}\\
 +
\pa{ \frac{2}{L^2} \lambda_0^2 + \frac{4}{L} \lambda_0 \delta^2 \ell(1-\ell)} \mathds{1}_{\delta<0} \enspace,
\end{multline*}
hence
\begin{align}   \label{UBalt10_u2_var1}
\mathrm{Var}_\lambda \cro{ T'_{\frac{k_0}{M_L}, \frac{k_0+k'_0}{M_L}}(N)  } & \leq
 \frac{2R^2}{L^2} + \frac{4R}{L} \delta^2 \ell(1-\ell) \enspace.
\end{align}
    
 Now, on the one hand, notice that the assumption \eqref{UBalt10_u2_cond1} ensures that
 \begin{multline*}
 \vert \delta \vert \sqrt{\ell (1-\ell)}  \geq \sqrt{\frac{60CR\log \left(   2.77/u_\alpha^{(2)} \right)}{L}} +   \pa{\frac{2R}{\beta}}^{1/4} \frac{ \sqrt{60C \log \left(   2.77/u_\alpha^{(2)} \right)}}{L^{3/4}}\\+ 6\sqrt{ C(M_L-1)}\frac{\log \left(   2.77/u_\alpha^{(2)} \right)}{L} + 2\sqrt{\frac{6R}{L\sqrt{\beta}}} \enspace ,
\end{multline*}
 which entails
 \begin{multline} \label{UBalt10_u2_cond2}
  \delta^{2} \ell (1-\ell) \geq  60 C R\frac{ \log \left(  2.77/u_{\alpha}^{(2)}  \right)}{L} + 60C\sqrt{\frac{2R}{\beta}} \frac{\log \left(  2.77/u_{\alpha}^{(2)}  \right)}{L^{3/2}} \\
   + 36 C (M_L -1)\frac{\log^2 \left(  2.77/u_{\alpha}^{(2)} \right)}{ L^2  } + \frac{24 R}{\sqrt{\beta} L} \enspace.
 \end{multline}
 On the other hand, the same assumption \eqref{UBalt10_u2_cond1} ensures that
$
  \vert \delta \vert \sqrt{\ell (1-\ell)} \geq  24 \sqrt{2R/(\beta L)},
 $
 which leads to 
 \begin{equation} \label{UBalt10_u2_cond2bis}
 \delta^{2} \ell (1-\ell) \geq 24 \sqrt{\frac{2R}{\beta L}}  \vert \delta \vert \sqrt{\ell (1-\ell)} \enspace.
  \end{equation}
Therefore, \eqref{UBalt10_u2_cond2} and \eqref{UBalt10_u2_cond2bis} imply
 \begin{multline*} 
  \delta^{2} \ell (1-\ell) \geq  2 \max \Bigg( C \Bigg( 30 R \frac{\log \left(  2.77/u_{\alpha}^{(2)} \right)}{L} + 30 \sqrt{\frac{2R}{\beta}} \frac{\log \left(  2.77/u_{\alpha}^{(2)} \right)}{L^{3/2}} \\ + 18 (M_L -1)\frac{\log^2 \left(  2.77/u_{\alpha}^{(2)}  \right)}{ L^2  } \Bigg) + \frac{12 R}{\sqrt{\beta} L}~,  ~
   12 \sqrt{\frac{2R}{\beta L}}  \vert \delta \vert \sqrt{\ell (1-\ell)}          \Bigg) \enspace,
 \end{multline*}
so
\begin{multline} \label{UBalt10_u2_cond3}
  \frac{\delta^{2}}{6} \ell (1-\ell) \geq   C \Bigg( 5 R \frac{\log \left(  2.77/u_{\alpha}^{(2)}  \right)}{L} + 5 \sqrt{\frac{2R}{\beta}} \frac{\log \left(  2.77/u_{\alpha}^{(2)}  \right)}{L^{3/2}}   + 3 (M_L -1)\frac{\log^2 \left(  2.77/u_{\alpha}^{(2)}  \right)}{ L^2  } \Bigg) \\ + \frac{2 R}{\sqrt{\beta}L}+ 2 \sqrt{\frac{2R}{\beta L}}  \vert \delta \vert \sqrt{\ell (1-\ell)} \enspace .
 \end{multline}

Thus, with (\ref{UBalt10_u2_esp}) and (\ref{UBalt10_u2_var1}), the inequality (\ref{UBalt10_u2_cond3}) ensures that
\begin{multline} \label{UBalt10_u2_cond4}
E_\lambda \left[ T'_{\frac{k_0}{M_L}, \frac{k_0+k'_0}{M_L}}(N) \right] \geq  C \Bigg( 5 R \frac{\log \left(  2.77/u_{\alpha}^{(2)}  \right)}{L} + 5 \sqrt{\frac{2R}{\beta}} \frac{\log \left(  2.77/u_{\alpha}^{(2)}  \right)}{L^{3/2}}  \\ + 3 (M_L -1)\frac{\log^2 \left(  2.77/u_{\alpha}^{(2)}  \right)}{ L^2  } \Bigg) + \sqrt{2 \mathrm{Var}_\lambda \cro{ T'_{\frac{k_0}{M_L}, \frac{k_0+k'_0}{M_L}}(N)   }/\beta} \enspace.
\end{multline}

Furthermore, the inequality
$$P_\lambda \left(  N_1 \geq L I(\lambda) + \sqrt{\frac{2L I(\lambda)}{\beta}}   \right) \leq \frac{\beta}{2}\enspace,$$
combined with Lemma \ref{quantile_T'} and the upper bound $I(\lambda) \leq R$, leads to
\begin{align} \label{UBalt10_u2_eq5}
 P_{\lambda} &\left( t'_{N_1,\frac{k_0}{M_L}, \frac{k_0+k'_0}{M_L}} \left( 1- u_{\alpha}^{(2)} \right)  \geq Q(\alpha, \beta,L, k'_{0}) \right)   \leq \frac{\beta}{2} \enspace,
 \end{align}
with
\begin{multline} \label{UBalt10_u2_eq6}
 Q(\alpha, \beta,L, k'_{0})= C \Bigg(  5R \frac{ \log \left(  2.77/u_{\alpha}^{(2)} \right)}{L} + 5 \sqrt{\frac{2R}{\beta} } \frac{\log \left(  2.77/u_{\alpha}^{(2)}  \right)}{L^{3/2}}  \\ +    3 \max \left( \frac{k'_{0}/M_L}{1- k'_{0}/M_L}, \frac{1- k'_{0}/M_L}{k'_{0}/M_L}    \right)   \frac{ \log^2 \left(  2.77/u_{\alpha}^{(2)}  \right)}{L^2}    \Bigg) \enspace.
 \end{multline}
Since $k'_{0}/M_L \leq \ell \leq 1/2$, one has
$$\max \left( \frac{k'_{0}/M_L}{1- k'_{0}/M_L}, \frac{1- k'_{0}/M_L}{k'_{0}/M_L}    \right) = \frac{1- k'_{0}/M_L}{k'_{0}/M_L} = \frac{M_L}{k'_{0}} -1 \leq M_L -1 \enspace,$$
and  the inequality (\ref{UBalt10_u2_cond4}) leads to
\begin{equation} \label{UBalt10_u2_cond5}
E_\lambda \left[ T'_{\frac{k_0}{M_L}, \frac{k_0+k'_0}{M_L}}(N) \right] \geq Q(\alpha, \beta,L,k'_{0})+ \sqrt{2 \mathrm{Var}_\lambda \cro{ T'_{\frac{k_0}{M_L}, \frac{k_0+k'_0}{M_L}}(N)   } /\beta} \enspace.
\end{equation}
We conclude with the following inequalities
\begin{align*}
P_\lambda &\pa{ T'_{\frac{k_0}{M_L}, \frac{k_0+k'_0}{M_L}}(N)  \leq t'_{N_1,\frac{k_0}{M_L}, \frac{k_0+k'_0}{M_L}} \left( 1- u_{\alpha}^{(2)} \right)  } \\
 &\leq P_\lambda \left( T'_{\frac{k_0}{M_L}, \frac{k_0+k'_0}{M_L}}(N)  < Q(\alpha, \beta,L, k'_{0}) \right) + P_\lambda \left( t'_{N_1,\frac{k_0}{M_L}, \frac{k_0+k'_0}{M_L}}\left( 1- u_{\alpha}^{(2)} \right)  \geq Q(\alpha, \beta,L, k'_{0}) \right) \\
&\leq P_\lambda \left( T'_{\frac{k_0}{M_L}, \frac{k_0+k'_0}{M_L}}(N)  < Q(\alpha, \beta,L, k'_{0}) \right)  +\frac{\beta}{2}~~\text{with (\ref{UBalt10_u2_eq5}) } \\
&\leq P_\lambda \left( T'_{\frac{k_0}{M_L}, \frac{k_0+k'_0}{M_L}}(N)  - E_\lambda \left[ T'_{\frac{k_0}{M_L}, \frac{k_0+k'_0}{M_L}}(N) \right] <  -\sqrt{2 \mathrm{Var}_\lambda \cro{ T'_{\frac{k_0}{M_L}, \frac{k_0+k'_0}{M_L}}(N)  }/\beta}   \right) +\frac{\beta}{2}~~\text{with (\ref{UBalt10_u2_cond5}) } \\
&\leq \beta~~\text{with the Bienayme-Chebyshev inequality}\enspace.
\end{align*}

\medskip

Let us then consider the case where $\ell> 1/2$.

We define
 $ k_{0} = \max (  k \in \lbrace 0,\ldots, M_L-1 \rbrace,~ \tau \geq k/M_L  )   $ and $ k'_{0} = \min (k' \in \lbrace 1,\ldots,M_L-k_{0} \rbrace,~ (k_{0} + k')/M_L \geq \tau + \ell )$ so that $ k_{0}/M_L \leq \tau< \tau+\ell \leq (k_{0} + k'_{0})/M_L$.
 
  Notice that the condition (\ref{UBalt10_u2_eq1}) entails
\begin{equation} \label{UBalt10_u2_eq7}
1-\ell > \frac{3}{M_L}\enspace.
\end{equation}

Since by definition of $k_0$ and $k_0'$, $\tau-k_0/M_L<1/M_L$ and $(k_0+k_0')/M_L-(\tau+\ell)<1/M_L$, then, with \eqref{UBalt10_u2_eq7},
 \begin{equation} \label{UBalt10_u2_eq8}
 \frac{k_0'}{M_L}=\frac{k_{0}+k'_{0}}{M_L}   - (\tau +\ell) +\ell + \tau-\frac{k_0}{M_L} <1/M_L+(1-3/M_L)+1/M_L<1\enspace.
 \end{equation}
In the same way,  we can also get with \eqref{UBalt10_u2_eq7}
 \begin{equation} \label{UBalt10_u2_eq9}
 1- \frac{k'_{0} }{M_L} = 1- \ell - \left(  \left( \tau - \frac{k_{0}}{M_L}    \right) + \left( \frac{k_{0}+k'_{0}}{M_L}   - (\tau +\ell) \right)  \right) > 1- \ell - \frac{2}{M_L}> \frac{1-\ell}{3} \enspace.
  \end{equation}
Furthermore, by definition of $k_{0}$ and $k'_{0}$ again, one has $k'_{0}/M_L \geq \ell$, so
\begin{equation}  \label{UBalt10_u2_eq10}
\frac{1-k'_{0}/M_L}{k'_{0}/M_L} \leq \frac{1-\ell}{\ell} \enspace.
\end{equation}  

From Lemma \ref{MomentsT'}, we deduce the expectation
$$E_\lambda \left[  T'_{k_0/M_L, \pa{k_0+k'_0}/M_L}(N) \right] = \delta^2 \ell^2 \pa{1-k'_{0}/M_L}/\pa{k'_{0}/M_L} \enspace,$$
 which satisfies with \eqref{UBalt10_u2_eq9} and $\ell>1/2$
$$E_\lambda \left[  T'_{k_0/M_L, \pa{k_0+k'_0}/M_L}(N) \right] > \frac{\delta^2}{6 }\ell(1-\ell)\enspace.$$
Lemma \ref{MomentsT'} also gives the variance
 \begin{align*}
  \mathrm{Var}_\lambda \cro{ T'_{\frac{k_0}{M_L}, \frac{k_0+k'_0}{M_L}}(N)  } &= \frac{2}{L^2} \pa{\lambda_0 + \delta \ell \frac{1-k'_{0}/M_L}{k'_{0}/M_L}}^2 + \frac{4}{L}\delta^2 \ell^2 \frac{1-k'_{0}/M_L}{k'_{0}/M_L} \pa{\lambda_0 + \delta \ell \frac{1-k'_{0}/M_L}{k'_{0}/M_L}} \enspace,
  \end{align*}
 which satisfies with \eqref{UBalt10_u2_eq10} 
  \begin{multline*}
 \mathrm{Var}_\lambda \cro{  T'_{\frac{k_0}{M_L}, \frac{k_0+k'_0}{M_L}}(N) } \leq
  \pa{\frac{2}{L^2} (\lambda_0 +  \delta )^2 + \frac{4}{L} \delta^2 \ell(1-\ell)(\lambda_0 +  \delta ) } \mathds{1}_{\delta>0}\\
   + \pa{
  \frac{2}{L^2} \lambda_0^2 + \frac{4}{L} \lambda_0 \delta^2 \ell(1-\ell)} \mathds{1}_{\delta<0} \enspace,
  \end{multline*}
  thereby
  $$  \mathrm{Var}_\lambda \cro{  T'_{\frac{k_0}{M_L}, \frac{k_0+k'_0}{M_L}}(N) }
   \leq
  \frac{2R^2}{L^2} + \frac{4R}{L} \delta^2 \ell(1-\ell) \enspace.
 $$
Finally, since $k'_{0}/M_L \geq \ell >1/2$, \eqref{UBalt10_u2_eq8} leads to
 $$ \max \pa{\frac{k'_{0}/M_L}{1-k'_{0}/M_L},\frac{1-k'_{0}/M_L}{k'_{0}/M_L}} = \frac{k'_{0}/M_L}{1-k'_{0}/M_L} = \frac{k'_{0}}{M_L - k'_{0}} \leq M_L -1 \enspace.$$

As in the above case where $\ell \leq 1/2$, we can therefore prove that the assumption \eqref{UBalt10_u2_cond1}  leads to \eqref{UBalt10_u2_cond5}, that is
 \begin{equation*}
E_\lambda \left[ T'_{\frac{k_0}{M_L}, \frac{k_0+k'_0}{M_L}}(N) \right] \geq Q(\alpha, \beta,L,k'_{0})+ \sqrt{2 \mathrm{Var}_\lambda \cro{ T'_{\frac{k_0}{M_L}, \frac{k_0+k'_0}{M_L}}(N)   } /\beta} \enspace,
\end{equation*}
with $Q(\alpha, \beta,L,k'_{0})$ is defined by \eqref{UBalt10_u2_eq6}.
We then use the same final arguments as in this case, to conclude that $$P_\lambda \left( T'_{\frac{k_0}{M_L}, \frac{k_0+k'_0}{M_L}}(N)  \leq t'_{N_1,\frac{k_0}{M_L}, \frac{k_0+k'_0}{M_L}} \left( 1- u_{\alpha}^{(2)} \right)  \right) \leq \beta \enspace.$$
Coming back to the assumption \eqref{UBalt10_u2_cond1}, we can thus affirm that
\begin{multline}
\SRb\Big(\phi_{9/10,\alpha}^{u(2)},\calS^u_{\bbul,\bbul\bbul,\bbul\bbul\bbul}[R]\Big) \leq  \max \Bigg( \sqrt{\frac{60CR\log \left(   2.77/u_\alpha^{(2)} \right)}{L}}\\
 +   \pa{\frac{2R}{\beta}}^{1/4} \frac{ \sqrt{60C \log \left(   2.77/u_\alpha^{(2)} \right)}}{L^{3/4}}+ 6\sqrt{ C}\frac{\log \left(   2.77/u_\alpha^{(2)} \right)}{\sqrt{ L \log L}} + 2\sqrt{\frac{6R}{L\sqrt{\beta}}}~,\\~24 \sqrt{\frac{2R}{ \beta L}}~,~R \sqrt{\frac{3\log L}{L}}  \Bigg) \enspace ,
\end{multline}
 which, since $M_L=\lceil L/\log L \rceil$ and $u_\alpha^{(2)}=2\alpha/( M_L (  M_L+1)-2)$, leads
 to the result stated in Proposition  \ref{UBalt10_u} for $\phi_{9/10,\alpha}^{u(2)}$ and the set $\calS^u_{\bbul,\bbul\bbul,\bbul\bbul\bbul}[R]$ of the alternative $\bold{[Alt^{u}.10]}$.

Since $\calS^u_{\delta^*,\bbul\bbul,\bbul\bbul\bbul}[R]\subset \calS^u_{\bbul,\bbul\bbul,\bbul\bbul\bbul}[R]$ for any $\delta^*$ in $(-R,R) \setminus \lbrace 0 \rbrace$,  the same result holds for $\phi_{9/10,\alpha}^{u(2)}$ and the set $\calS^u_{\delta^*,\bbul\bbul,\bbul\bbul\bbul}[R]$ of the alternative $\bold{[Alt^{u}.9]}$. Again, notice that in this case, the constant $C(\alpha,\beta,R)$ could be refined thanks to the knowledge of $\delta^*$, which justifies the formulation with $C(\alpha,\beta,R,\delta^*)$ instead of $C(\alpha,\beta,R)$ in Proposition~\ref{UBalt10_u}.

\section{Key arguments for minimax separation rates lower bounds and technical results for minimax separation rates upper bounds}\label{Sec:FTresults}

\subsection{Key arguments for nonasymptotic minimax separation rates lower bounds}\label{Keys}

The arguments given below are repeated from \cite{Baraud2002}, who adapted the asymptotic Bayesian approach of Ingster \cite{Ingster1993} to obtain lower bounds for minimax rates of testing in a nonasymptotic perspective, and \cite{FLRB2011} who derived these lower bounds in the Poisson framework. We only recall these arguments without the proofs for the sake of clarity and completeness.

Let us recall the notation of the introduction, where $N=(N_t)_{t\in[0,1]}$ is a Poisson process observed on $[0,1]$, with intensity $\lambda$ w.r.t.  some measure $\Lambda$ on $[0,1]$, and whose distribution is denoted by $P_\lambda$, and where  $\calS_0$ is either the set  $\calS_0[\lambda_0]=\{\lambda_0\}$ of a single known constant intensity $\lambda_0$ on $[0,1]$, or the set $\calS_0^u[R]$ of all constant intensities on $[0,1]$ bounded by $R$.

We consider the problem of testing $\hzero$ $"\lambda \in \calS_0"$ versus $\hone$ $"\lambda\in \calS"$, where  $\calS$ is a set of possible alternative intensities, from the nonasymptotic minimax point of view based on the definition of $\mSRab(\calS)$ given in \eqref{defmSR}.

\begin{lemma} \label{mSR} 
For $r>0$ and any subspace $\mathcal{S}$ of $\bbL_{2}([0,1])$, set $\mathcal{S}_{r}= \lbrace \lambda \in \mathcal{S},~d_2(\lambda, \calS_{0}) \geq r   \rbrace$. For  $\alpha$ in $(0,1)$, we define
\[ \rho_\alpha(\calS_r) = \inf_{\phi_{\alpha}, \sup_{\lambda\in \calS_0} P_{\lambda}(\phi_{\alpha}(N)=1) \leq \alpha} \quad \sup_{\lambda \in \mathcal{S}_r} P_{\lambda}(\phi_{\alpha}(N)=0)\enspace.\]

$(i)$ If $\rho_\alpha( \mathcal{S}_{r}) \geq \beta$ then $\mSRab(\calS) \geq r$.

$(ii)$ For all  subsets $\mathcal{S}'$ and $\mathcal{S}$ of $\bbL_{2}([0,1])$ such that $\mathcal{S}' \subset \mathcal{S}$, then $\rho_\alpha( \mathcal{S}'_{r})\leq \rho_\alpha( \mathcal{S}_{r})$ and $\mSRab(\mathcal{S}') \leq \mSRab(\mathcal{S})$.
\end{lemma}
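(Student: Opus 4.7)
The plan is to prove both parts by a direct definition-chase, exploiting the monotonicity of the family of suprema $r' \mapsto \sup_{\{\lambda\in\calS,\ d_2(\lambda,\calS_0)\geq r'\}} P_\lambda(\phi_\alpha(N)=0)$ under the nested inclusion of the relevant parameter sets. Part (i) is the standard contrapositive argument linking the minimal second-kind error $\rho_\alpha$ to the minimax separation rate, while part (ii) follows from plain monotonicity of suprema and infima under the inclusion $\calS'\subset\calS$.

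For part (i), I would argue by contraposition. Suppose $\mSRab(\calS)<r$; then by definition of $\mSRab$ as an infimum over level-$\alpha$ tests, there exists a level-$\alpha$ test $\phi_\alpha$ with $\SRb(\phi_\alpha,\calS)<r$. First I would observe that $r'\mapsto \sup_{\{\lambda\in\calS,\ d_2(\lambda,\calS_0)\geq r'\}} P_\lambda(\phi_\alpha(N)=0)$ is non-increasing since the set over which one takes the supremum shrinks as $r'$ grows. Combined with the definition of $\SRb$ as the infimum of the set on which this function is $\leq \beta$, this allows me to pick $r''\in(\SRb(\phi_\alpha,\calS),r)$ with $\sup_{\{\lambda\in\calS,\ d_2(\lambda,\calS_0)\geq r''\}}P_\lambda(\phi_\alpha(N)=0)\leq \beta$. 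The inclusion $\calS_r\subset\calS_{r''}$ then yields $\sup_{\lambda\in\calS_r}P_\lambda(\phi_\alpha(N)=0)\leq\beta$, and since $\phi_\alpha$ has level $\alpha$, $\rho_\alpha(\calS_r)\leq\beta$, contradicting the hypothesis $\rho_\alpha(\calS_r)\geq\beta$ (modulo the usual weak/strict convention at the boundary).

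Part (ii) is even shorter: the inclusion $\calS'\subset\calS$ directly gives $\calS'_r\subset\calS_r$. For any level-$\alpha$ test $\phi_\alpha$, the monotonicity of the supremum under inclusion yields $\sup_{\lambda\in\calS'_r}P_\lambda(\phi_\alpha(N)=0)\leq\sup_{\lambda\in\calS_r}P_\lambda(\phi_\alpha(N)=0)$; taking the infimum over such tests delivers the first inequality $\rho_\alpha(\calS'_r)\leq\rho_\alpha(\calS_r)$. For the second, the same monotonicity shows that the set $\{r'>0:\sup_{\lambda\in\calS,\ d_2(\lambda,\calS_0)\geq r'}P_\lambda(\phi_\alpha(N)=0)\leq \beta\}$ is contained in its $\calS'$-analogue, so $\SRb(\phi_\alpha,\calS')\leq\SRb(\phi_\alpha,\calS)$ for every level-$\alpha$ test; another infimum over tests gives $\mSRab(\calS')\leq\mSRab(\calS)$.

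The only delicate point is the strict-versus-non-strict threshold in part (i): the contrapositive gives $\rho_\alpha(\calS_r)\leq\beta$ rather than $<\beta$, and strictly speaking one would need either the hypothesis $\rho_\alpha(\calS_r)>\beta$ or the convention that $\mSRab(\calS)\geq r$ is the weak inequality permitting equality at the boundary (which is what is in fact used throughout the applications of this lemma in the paper). Apart from this cosmetic issue, everything reduces to unwinding the definitions of $\SRb$, $\mSRab$ and $\rho_\alpha$ from \eqref{defSR}--\eqref{defmSR}.
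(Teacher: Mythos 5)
The paper does not actually prove Lemma \ref{mSR}: it explicitly defers to Baraud (2002) and Fromont et al. (2011), stating ``We only recall these arguments without the proofs for the sake of clarity and completeness.'' Your reconstruction is the natural definition-chase argument that those references use, and both parts are correct: part (ii) is a pure monotonicity-under-inclusion argument (inclusion of $\calS'_r\subset\calS_r$ gives monotonicity of the supremum, hence of $\rho_\alpha$; inclusion of the level sets $\{r':f_\phi^{\calS}(r')\leq\beta\}\subset\{r':f_\phi^{\calS'}(r')\leq\beta\}$ gives monotonicity of $\SRb$, hence of $\mSRab$), and part (i) is the standard contrapositive using the non-increasing nature of $r'\mapsto\sup_{\{\lambda\in\calS,\,d_2(\lambda,\calS_0)\geq r'\}}P_\lambda(\phi_\alpha(N)=0)$.

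You are also right to flag the strict-versus-weak boundary mismatch in part (i): your contrapositive yields $\rho_\alpha(\calS_r)\leq\beta$, which does not literally contradict $\rho_\alpha(\calS_r)\geq\beta$ at equality, and the direct argument (any level-$\alpha$ test $\phi$ satisfies $\sup_{\calS_r}P_\lambda(\phi(N)=0)\geq\rho_\alpha(\calS_r)\geq\beta$, hence $r'\notin\{f_\phi\leq\beta\}$ for $r'<r$ only if the inequality is strict) has the same gap when $f_\phi$ plateaus at $\beta$ on an interval. This is indeed a standard technicality in this literature; in every application of the lemma in the paper one can perturb $r$ downward by an arbitrarily small $\varepsilon$ and the same lower-bound constant survives, so your characterization of it as cosmetic is fair. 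No substantive gap.
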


\begin{lemma}[Minimax lower bounds and Bayesian approach] \label{lemmebayesien}

Let $\mu$ be a probability measure on $\mathcal{S}_{r}$ and define $P_{\mu}$ the mixture probability by
\[ P_{\mu}= \int P_{\lambda} d\mu(\lambda).\]
Then \begin{align*}
 \rho_\alpha(\mathcal{S}_{r})  \geq  1- \alpha- \frac{1}{2}\pa{\inf_{\lambda_0\in \calS_0} E_{\lambda_0} \left[\left( \frac{dP_{\mu}}{dP_{\lambda_{0}}} \right)^{2}(N)\right]-1  }^{1/2}\enspace.
 \end{align*}
  
 As a consequence, if $\alpha + \beta <1$ and $\inf_{\lambda_0\in \calS_0} E_{\lambda_0} \left[\left( dP_{\mu}/dP_{\lambda_{0}} \right)^{2}(N)\right] \leq 1+4(1- \alpha - \beta)^{2}$ then  \[\rho_\alpha(\mathcal{S}_{r}) \geq \beta \quad \textrm{and}\quad \mSRab(\mathcal{S}) \geq r\enspace.\]
\end{lemma}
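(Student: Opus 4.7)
The plan is to follow the classical Bayesian-to-minimax reduction of Le Cam, made nonasymptotic in the spirit of Baraud. Fix any nonrandomised level-$\alpha$ test $\phi_\alpha$. Since $\mu$ is supported on $\mathcal{S}_r$, bounding a supremum below by a $\mu$-average and applying Fubini yields
\begin{equation*}
\sup_{\lambda\in\mathcal{S}_r} P_\lambda(\phi_\alpha(N)=0)\ \geq\ \int P_\lambda(\phi_\alpha(N)=0)\,d\mu(\lambda)\ =\ P_\mu(\phi_\alpha(N)=0)\ =\ 1-E_{P_\mu}[\phi_\alpha(N)].
\end{equation*}
The question is therefore to upper bound $E_{P_\mu}[\phi_\alpha(N)]=P_\mu(\phi_\alpha(N)=1)$ using the level constraint $\sup_{\lambda_0\in\calS_0}P_{\lambda_0}(\phi_\alpha(N)=1)\leq\alpha$.

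To connect these two probabilities, I will fix an arbitrary $\lambda_0\in\calS_0$ and dominate the gap by the total variation distance: $P_\mu(\phi_\alpha(N)=1)\leq P_{\lambda_0}(\phi_\alpha(N)=1)+\|P_\mu-P_{\lambda_0}\|_{TV}\leq \alpha+\|P_\mu-P_{\lambda_0}\|_{TV}$. Writing the TV distance as $\frac12 E_{P_{\lambda_0}}\bigl|\,dP_\mu/dP_{\lambda_0}-1\,\bigr|$ and applying Cauchy--Schwarz, I get
\begin{equation*}
\|P_\mu-P_{\lambda_0}\|_{TV}\ \leq\ \tfrac{1}{2}\sqrt{\,E_{\lambda_0}\!\left[\pa{\tfrac{dP_\mu}{dP_{\lambda_0}}(N)}^{2}\right]-1\,},
\end{equation*}
since $E_{\lambda_0}[dP_\mu/dP_{\lambda_0}]=1$. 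Combining the three displays gives $P_\mu(\phi_\alpha(N)=0)\geq 1-\alpha-\tfrac12\sqrt{E_{\lambda_0}[(dP_\mu/dP_{\lambda_0})^2(N)]-1}$. This holds for every $\lambda_0\in\calS_0$, hence one can replace the $\chi^2$-type term by its infimum over $\calS_0$. Taking the infimum over all level-$\alpha$ tests $\phi_\alpha$ then yields the announced lower bound on $\rho_\alpha(\mathcal{S}_r)$.

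The consequence is immediate: if $\inf_{\lambda_0\in\calS_0}E_{\lambda_0}[(dP_\mu/dP_{\lambda_0})^2(N)]\leq 1+4(1-\alpha-\beta)^{2}$, the square root is at most $2(1-\alpha-\beta)$, so $\rho_\alpha(\mathcal{S}_r)\geq 1-\alpha-(1-\alpha-\beta)=\beta$, and Lemma~\ref{mSR}(i) then gives $\mSRab(\mathcal{S})\geq r$. There is no real obstacle here: the only delicate point is choosing the chi-squared linearisation of TV rather than, say, Pinsker, because this is what makes the inequality explicit in the second moment of the likelihood ratio — exactly the quantity one can compute with Girsanov's lemma in the Poisson model when constructing the prior $\mu$ in all the concrete lower-bound propositions of the paper.
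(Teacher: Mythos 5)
Your proof is correct and is precisely the standard argument from Baraud (2002) / Ingster (1993) that the paper cites without reproducing (the authors explicitly state in Section~\ref{Keys} that they recall these lemmas ``without the proofs''). The chain sup~$\geq$~$\mu$-average, followed by the total-variation bound $P_\mu(\phi_\alpha=1)\leq P_{\lambda_0}(\phi_\alpha=1)+\|P_\mu-P_{\lambda_0}\|_{TV}$ and the Cauchy--Schwarz reduction of TV to the centred second moment of the likelihood ratio, is exactly the mechanism the cited works use, and the final numerics ($\sqrt{4(1-\alpha-\beta)^2}=2(1-\alpha-\beta)$ provided $\alpha+\beta<1$) go through. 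Two small points worth making explicit if you were writing this up: (i) the TV convention you use, $\|P-Q\|_{TV}=\sup_A|P(A)-Q(A)|=\frac12\int|dP/dQ-1|\,dQ$, must be stated, since the factor $\frac12$ is essential to match the announced constant; (ii) the argument silently assumes $P_\mu\ll P_{\lambda_0}$, which is automatic here because every $P_\lambda$ for $\lambda$ a bounded intensity is mutually absolutely continuous with $P_{\lambda_0}$ by Girsanov's lemma (Lemma~\ref{lemmegirsanov}), and this also guarantees $E_{\lambda_0}[dP_\mu/dP_{\lambda_0}]=1$, which you use when simplifying the Cauchy--Schwarz bound. Your closing remark about preferring the $\chi^2$-linearisation of TV over Pinsker is apt: the authors do in fact invoke Pinsker in the proofs of Lemmas~\ref{LBalt6-pre} and~\ref{LBalt8-pre} where only a single alternative is needed and letting a parameter degenerate suffices, whereas the $\chi^2$ route is what makes the mixture priors tractable in Propositions~\ref{LBalt6}, \ref{LBalt8}, \ref{LBalt9} and their unknown-baseline counterparts.
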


\subsection{Technical results for minimax separation rates upper bounds }\label{TechnicalRes}

\begin{lemma}[Moments of a Poisson distribution] \label{momentPoisson}
Let $X$ be a Poisson distributed random variable with constant intensity $\xi>0$.
\begin{enumerate}
\item The Laplace transform for $X$ is given by
\[ \forall c\in \R,~~\mathbb{E} \left[ \exp(cX)  \right] = \exp [\xi(\exp(c)-1)].\]
\item The first moments of $X$ are given by the following formulas:
\begin{align*}
\mathbb{E}[X]= \xi,~
\mathbb{E}[X^2]= \xi^2 +\xi, ~
\mathbb{E}[X^3]= \xi^3 + 3 \xi^2 + \xi, ~
\mathbb{E}[X^4]= \xi^4 +6 \xi^3 +7 \xi^2 + \xi, 
\end{align*}
and its central moments are given by $
\mathbb{E}[(X-\mathbb{E}[X])^3]= \xi$ and $
\mathbb{E}[(X-\mathbb{E}[X])^4]= 3\xi^2+ \xi.$ 
\end{enumerate}
\end{lemma}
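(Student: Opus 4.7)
The plan is to derive all three statements from a single series manipulation yielding the Laplace transform, and then to recover the ordinary and central moments by elementary algebra.

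First I would prove part~1 directly: for every $c\in\R$,
$$\mathbb{E}[\exp(cX)] = \sum_{k=0}^{\infty} e^{ck}\frac{\xi^k e^{-\xi}}{k!} = e^{-\xi}\sum_{k=0}^{\infty}\frac{(\xi e^c)^k}{k!} = e^{-\xi}\exp(\xi e^c) = \exp[\xi(e^c-1)],$$
the interchange being justified by absolute convergence of the series for every real $c$.

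For part~2, the cleanest route is through factorial moments. Either by differentiating $s\mapsto \mathbb{E}[s^X] = \exp[\xi(s-1)]$ repeatedly at $s=1$, or by the analogous series computation
$$\mathbb{E}[X(X-1)\cdots(X-k+1)] = e^{-\xi}\sum_{j\geq k}\frac{j!}{(j-k)!}\frac{\xi^j}{j!} = \xi^k\enspace,$$
valid for every $k$ in $\N\setminus\{0\}$. I would then express each monomial $X^k$ as a linear combination of falling factorials (Stirling numbers of the second kind), namely
$$X^2 = X(X-1)+X,\quad X^3 = X(X-1)(X-2)+3X(X-1)+X,$$
$$X^4 = X(X-1)(X-2)(X-3)+6X(X-1)(X-2)+7X(X-1)+X\enspace,$$
where the coefficients $1,3,1$ and $1,6,7,1$ are obtained by equating coefficients in a two-line polynomial expansion. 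Substituting the factorial moments gives the four stated expressions for $\mathbb{E}[X^k]$, $k\in\{1,2,3,4\}$.

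For the central moments, I would simply expand $(X-\xi)^3$ and $(X-\xi)^4$ by the binomial theorem and substitute the ordinary moments computed in part~2:
$$\mathbb{E}[(X-\xi)^3] = \mathbb{E}[X^3]-3\xi\mathbb{E}[X^2]+3\xi^2\mathbb{E}[X]-\xi^3,$$
$$\mathbb{E}[(X-\xi)^4] = \mathbb{E}[X^4]-4\xi\mathbb{E}[X^3]+6\xi^2\mathbb{E}[X^2]-4\xi^3\mathbb{E}[X]+\xi^4,$$
and a one-line simplification yields $\xi$ and $3\xi^2+\xi$ respectively. The \emph{hard part} of this lemma is genuinely nonexistent: unlike the other technical results in Section~\ref{Sec:FTresults}, no concentration inequality, chaining, or martingale argument is required. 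The only mild bookkeeping is the passage from factorial to ordinary moments, which is a two-line polynomial identity.
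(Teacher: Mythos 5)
Your proof is correct, and the paper does not in fact supply one for this lemma (it is stated as a standard fact and used throughout Section~\ref{Sec:FTresults} without derivation), so there is nothing to compare against. The route you take — Laplace transform by direct series summation, factorial moments $\mathbb{E}[X(X-1)\cdots(X-k+1)]=\xi^k$, conversion of $X^k$ into falling factorials via Stirling numbers of the second kind, then binomial expansion for the centered moments — is the textbook argument and is exactly right; I checked the identities $X^3 = X(X-1)(X-2)+3X(X-1)+X$ and $X^4 = X(X-1)(X-2)(X-3)+6X(X-1)(X-2)+7X(X-1)+X$ and the resulting simplifications $\mathbb{E}[(X-\xi)^3]=\xi$ and $\mathbb{E}[(X-\xi)^4]=3\xi^2+\xi$, and all are correct. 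Your closing remark is also accurate: this is the one technical lemma in the section that genuinely requires no concentration, chaining, or martingale machinery.
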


 \begin{lemma}[Quantile bounds for the Poisson distribution] \label{lemmequant1}
The $u$-quantile  $p_\xi(u)$ of the Poisson distribution with parameter $\xi$ satisfies
\begin{equation}\label{controlquantilePb1}
 -\sqrt{{\xi}/{u}} +\xi\leq p_\xi(u) \leq \sqrt{\xi/(1-u)} +\xi\enspace.
 \end{equation}
 \end{lemma}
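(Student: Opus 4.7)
The plan is to derive both bounds directly from the Bienayme-Chebyshev inequality, exploiting the key feature of the Poisson distribution that $E[X] = \mathrm{Var}(X) = \xi$. Concretely, for any $t>0$,
\begin{equation*}
P\pa{|X - \xi|\geq t} \leq \xi/t^{2}\enspace.
\end{equation*}

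For the upper bound, I would set $t = \sqrt{\xi/(1-u)}$, which gives
\begin{equation*}
P\pa{X \geq \xi + t} \leq P\pa{|X-\xi|\geq t} \leq 1-u\enspace,
\end{equation*}
hence $P(X \leq \xi + t) \geq u$. By definition of $p_\xi(u)$ as the smallest real $x$ with $P(X\leq x) \geq u$, this immediately yields $p_\xi(u) \leq \xi + \sqrt{\xi/(1-u)}$.

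For the lower bound, I would circumvent the fact that Chebyshev only provides a weak inequality (and that the Poisson CDF has atoms, so the set $\{F \geq u\}$ may meet its infimum at a quantity smaller than $\xi - \sqrt{\xi/u}$) by introducing a small perturbation $\varepsilon\in(0,u)$ and setting $t_\varepsilon = \sqrt{\xi/(u-\varepsilon)}$. Then Chebyshev gives
\begin{equation*}
P\pa{X \leq \xi - t_\varepsilon} \leq P\pa{|X-\xi|\geq t_\varepsilon} \leq u-\varepsilon < u\enspace,
\end{equation*}
so by definition of $p_\xi(u)$, we must have $p_\xi(u) > \xi - t_\varepsilon$. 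Letting $\varepsilon \to 0^{+}$ closes the gap and gives $p_\xi(u) \geq \xi - \sqrt{\xi/u}$.

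No genuine obstacle is expected here: the only subtlety is the weak-versus-strict inequality in the lower bound, handled by the elementary perturbation trick (the same device that appears explicitly a few lines earlier in the proof of Proposition \ref{bNP1U} to control the binomial quantile $b_{n,\ell^*}(\alpha)$).
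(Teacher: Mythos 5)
Your proof is correct and matches the paper's own argument: both bounds follow from the Bienayme--Chebyshev inequality applied to a Poisson variable with mean and variance both equal to $\xi$, and the lower bound is obtained through the identical $\varepsilon$-perturbation device followed by letting $\varepsilon \to 0^{+}$. The only cosmetic difference is that the paper writes its one-sided upper-tail inequality with $u$ in place of $1-u$, which amounts to the same statement after the substitution $u \mapsto 1-u$.
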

\begin{proof}
Let $N$ be a Poisson random variable with parameter $\xi.$ Using the Bienayme-Chebyshev inequality, we obtain for all $\varepsilon>0$
\[ 
\P\left( N  \leq  -\sqrt{\frac{\xi}{u-\varepsilon}} +\xi \right) \leq u-\varepsilon<u \quad \textrm{and} \quad  \P\left( N >  \sqrt{\frac{\xi}{u}} +\xi \right) \leq u,\]
which leads to the expected result by letting $\varepsilon$ tend to zero.
 \end{proof}

\begin{lemma}[Expectation and variance of $T_{\tau_1,\tau_2}(N)$] \label{MomentsT}
Let $\tau_1,\tau_2$ be in $(0,1)$ such that $0 \leq \tau_1 < \tau_2 \leq 1,$  $\lambda_0>0$ and $T_{\tau_1, \tau_2}(N)$ be defined by \eqref{estimateurSB}. Assume that $N(\tau_1 ,\tau_2]$ follows a Poisson distribution with parameter $Lx$ with $x>0.$ Then
\begin{equation} \label{bpreliesperance}
\mathbb{E}[T_{\tau_1, \tau_2}(N)]= \pa{ \frac{x}{\sqrt{\tau_2 - \tau_1}} - \lambda_{0} \sqrt{\tau_2 - \tau_1}}^2\enspace,
\end{equation}
and
 \begin{equation} \label{bprelivariance}
\mathrm{Var}(T_{\tau_1, \tau_2}(N))= \frac{4x}{L}  \left(\frac{x}{\tau_2 - \tau_1}- \lambda_{0}\right)^2 + \frac{2}{L^2} \frac{x^2}{(\tau_2 - \tau_1)^2}\enspace.
\end{equation}
\end{lemma}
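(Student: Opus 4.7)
\textbf{Proof plan for Lemma \ref{MomentsT}.} The statement is purely a moment computation for a quadratic function of a single Poisson variable, so the plan is a direct calculation using Lemma~\ref{momentPoisson}. Write $\ell=\tau_2-\tau_1$, $Y=N(\tau_1,\tau_2]$, and $\xi=Lx$, so that $Y\sim\mathrm{Poisson}(\xi)$, and rewrite
\[
T_{\tau_1,\tau_2}(N)=\frac{1}{L^2\ell}\bigl(Y^2-Y\bigr)-\frac{2\lambda_0}{L}Y+\lambda_0^2\ell.
\]

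For the expectation, I would first observe the factorial-moment identity $\mathbb{E}[Y^2-Y]=\xi^2$, which follows immediately from $\mathbb{E}[Y]=\xi$ and $\mathbb{E}[Y^2]=\xi^2+\xi$ in Lemma~\ref{momentPoisson}. Substituting yields
\[
\mathbb{E}[T_{\tau_1,\tau_2}(N)]=\frac{\xi^2}{L^2\ell}-\frac{2\lambda_0\xi}{L}+\lambda_0^2\ell=\frac{x^2}{\ell}-2\lambda_0x+\lambda_0^2\ell=\Bigl(\tfrac{x}{\sqrt\ell}-\lambda_0\sqrt\ell\Bigr)^2,
\]
which is \eqref{bpreliesperance}. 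This step is one line.

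For the variance, the plan is to view $T=aZ+bY+c$ with $Z=Y^2-Y$, $a=1/(L^2\ell)$, $b=-2\lambda_0/L$, $c=\lambda_0^2\ell$, so that $\mathrm{Var}(T)=a^2\mathrm{Var}(Z)+b^2\mathrm{Var}(Y)+2ab\,\mathrm{Cov}(Z,Y)$. Using the four moments of $Y$ listed in Lemma~\ref{momentPoisson}, I would compute
\[
\mathrm{Var}(Y^2)=\mathbb{E}[Y^4]-\bigl(\mathbb{E}[Y^2]\bigr)^2=4\xi^3+6\xi^2+\xi,\quad \mathrm{Cov}(Y^2,Y)=\mathbb{E}[Y^3]-\mathbb{E}[Y^2]\mathbb{E}[Y]=2\xi^2+\xi,
\]
hence $\mathrm{Var}(Z)=\mathrm{Var}(Y^2)-2\,\mathrm{Cov}(Y^2,Y)+\mathrm{Var}(Y)=4\xi^3+2\xi^2$ and $\mathrm{Cov}(Z,Y)=\mathrm{Cov}(Y^2,Y)-\mathrm{Var}(Y)=2\xi^2$.

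Plugging in and simplifying with $\xi=Lx$ then gives
\[
\mathrm{Var}(T)=\frac{4\xi^3+2\xi^2}{L^4\ell^2}+\frac{4\lambda_0^2\xi}{L^2}-\frac{8\lambda_0\xi^2}{L^3\ell}=\frac{4x}{L}\Bigl(\frac{x}{\ell}-\lambda_0\Bigr)^2+\frac{2x^2}{L^2\ell^2},
\]
which is \eqref{bprelivariance}, after factoring the three terms in $x^3/\ell^2$, $\lambda_0 x^2/\ell$ and $\lambda_0^2 x$ as a common multiple of $\tfrac{4x}{L}(x/\ell-\lambda_0)^2$. There is no real obstacle here — the only care needed is bookkeeping of the four factorial moments and the cross-term $2ab\,\mathrm{Cov}(Z,Y)$, whose minus sign is what produces the clean square $(x/\ell-\lambda_0)^2$ in the final answer.
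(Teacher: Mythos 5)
Your proof is correct and the computations all check out against Lemma~\ref{momentPoisson}. The approach is essentially the same as the paper's direct moment calculation — the paper centres $N(\tau_1,\tau_2]$ around its mean and tracks central moments $c_2,c_3,c_4$, while you decompose $T=aZ+bY+c$ with $Z=Y^2-Y$ and use the bilinearity of covariance, but this is only a bookkeeping variant of the same elementary algebra.
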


\begin{proof}
Set $m_{i}$ the moment of order $i$ of $N(\tau_1,\tau_2]$. Since $N(\tau_1,\tau_2]$ follows a Poisson distribution, $m_2=m_1+m_1^2$ and
\[ \mathbb{E}[T_{\tau_1, \tau_2}(N)]=\frac{1}{L^2(\tau_2-\tau_1)}m_{1}^2 - \frac{2 \lambda_{0}}{L} m_{1}    + \lambda_{0}^2(\tau_2-\tau_1),\]
 with $m_1=Lx$. This straightforwardly  leads  to the first statement of Lemma \ref{MomentsT}. 
Notice now that
\begin{align*}
T_{\tau_1, \tau_2}(N)=& \frac{1}{L^2(\tau_2-\tau_1)} \left( (N(\tau_1,\tau_2]-m_{1})^2 + m_{1}^2 +(2m_{1}-1) (N(\tau_1,\tau_2]-m_{1}) -m_{1} \right)\\
& - \frac{2 \lambda_{0}}{L} (N(\tau_1,\tau_2]-m_{1})- \frac{2 \lambda_{0}}{L} m_{1}    + \lambda_{0}^2(\tau_2-\tau_1)\enspace,
\end{align*}
which entails
\begin{align*}
 T_{\tau_1, \tau_2}(N) - \mathbb{E}[T_{\tau_1, \tau_2}(N)] &= \frac{1}{L^2(\tau_2-\tau_1)} \left( (N(\tau_1,\tau_2]-m_{1})^2  +(2m_{1}-1) (N(\tau_1,\tau_2] -m_{1}) -m_{1}   \right)\\
 & - \frac{2 \lambda_{0}}{L} (N(\tau_1,\tau_2]-m_{1}).
\end{align*}
Considering the moment  $c_{i}$ of order $i$ of the centered variable $N(\tau_1,\tau_2]-m_{1}$, one obtains
\begin{align*}
\mathrm{Var}(T_{\tau_1, \tau_2}(N))&= \frac{1}{L^4(\tau_2-\tau_1)^2} \left( c_{4} +(2m_1-1)^2 c_2+ m_1^2 + 2(2m_1-1)c_3 - 2m_1c_2\right) + \frac{4 \lambda_{0}^2}{L^2}c_{2}\\ &\quad - \frac{4 \lambda_{0}}{L^3(\tau_2-\tau_1)} \left( c_{3} +(2m_{1}-1)c_{2}   \right).
\end{align*}
Applying Lemma \ref{momentPoisson} finally leads to the second statement of Lemma \ref{MomentsT}.
\end{proof}

\begin{lemma}[Quantile bound for $T_{\tau_1,\tau_2}(N)$] \label{QuantilesT}
Let $\lambda_{0}>0$, $u$ in $(0,1)$ and assume that $N$ is a homogeneous Poisson process of intensity $\lambda_{0}$ with respect to the measure $\Lambda$. Let $t_{\lambda_0,\tau_1, \tau_2}(1-u)$ be the $(1-u)$-quantile of $T_{\tau_1,\tau_2}(N)$  defined by \eqref{estimateurSB}. Then for all $0 \leq \tau_1 < \tau_2 \leq 1$, 
\[ t_{\lambda_0,\tau_1, \tau_2}\left( 1- u\right) \leq 2\lambda_{0}^2 (\tau_2 - \tau_1) \pa{g^{-1} \pa{\frac{\log \pa{3/u}}{\lambda_{0}L (\tau_2 - \tau_1) }}}^2\enspace,   \]
where $g$ is defined by \eqref{defg}.
\end{lemma}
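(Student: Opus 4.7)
The proof is a concentration argument for $T_{\tau_1,\tau_2}(N)$, and I would organise it in two essentially independent steps. The first step is an algebraic reduction: writing $X = N(\tau_1,\tau_2]$ and $\xi = \lambda_0 L(\tau_2-\tau_1) = E_{\lambda_0}[X]$, a direct expansion of the definition \eqref{estimateurSB} gives the identity
\[
T_{\tau_1,\tau_2}(N) \;=\; \frac{(X-\xi)^2 - X}{L^2(\tau_2-\tau_1)}.
\]
Since $X \geq 0$ almost surely, this immediately yields $T_{\tau_1,\tau_2}(N) \leq (X-\xi)^2/(L^2(\tau_2-\tau_1))$, so that for any $t>0$,
\[
P_{\lambda_0}\pa{T_{\tau_1,\tau_2}(N) > t} \;\leq\; P_{\lambda_0}\pa{|X-\xi| \geq L\sqrt{(\tau_2-\tau_1)\,t}}.
\]
Thus the problem reduces to a two-sided deviation estimate for a single Poisson variable.

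The second step is the classical Chernoff bound for the Poisson distribution. Under $\hzero$, the upper tail satisfies $P(X - \xi \geq x) \leq \exp(-\xi g(x/\xi))$ for $x \geq 0$, and an analogous calculation for the lower tail gives $P(\xi - X \geq x) \leq \exp(-\xi h_-(x/\xi))$ on $[0,\xi]$ with $h_-(u) = u + (1-u)\log(1-u)$. A short one-variable computation shows $h_-(u) \geq g(u)$ on $[0,1]$, so both tails can be bounded uniformly by $\exp(-\xi g(x/\xi))$, and combining them with some room to spare yields $P(|X-\xi| \geq x) \leq 3 \exp(-\xi g(x/\xi))$. Solving $3\exp(-\xi g(x/\xi)) \leq u$ for $x$ gives $x = \xi\, g^{-1}(\log(3/u)/\xi)$.

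Plugging $x = L\sqrt{(\tau_2-\tau_1)\,t}$ and $\xi = \lambda_0 L(\tau_2-\tau_1)$ into the condition just derived and solving for $t$ yields
\[
t \;\leq\; \lambda_0^2 (\tau_2-\tau_1) \cro{g^{-1}\pa{\log(3/u)/(\lambda_0 L(\tau_2-\tau_1))}}^2,
\]
which is the claimed bound without the leading factor $2$; the $2$ in the statement is harmless slack left for any cruder intermediate estimate one might prefer (for instance, keeping the $-X$ term in play rather than discarding it via $X \geq 0$). The proof is essentially a calibration, with no substantive obstacle: the only non-trivial ingredient is the comparison $h_- \geq g$ on $[0,1]$, which is classical. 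The rest is bookkeeping on constants to match the specific form of the stated inequality.
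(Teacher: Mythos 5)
Your proof is correct and in fact gives a slightly sharper constant: your argument yields $t_{\lambda_0,\tau_1,\tau_2}(1-u) \leq \lambda_0^2(\tau_2-\tau_1)\big[g^{-1}\big(\log(3/u)/(\lambda_0 L(\tau_2-\tau_1))\big)\big]^2$, i.e.\ without the leading factor of $2$, which of course implies the stated bound.

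The route is genuinely different from the paper's. The paper writes
\[
T_{\tau_1,\tau_2}(N)= \left( \int_{0}^{1} \frac{ \varphi_{(\tau_1,\tau_2]}(t)}{L} (dN_{t}-\lambda_{0}Ldt)  \right)^2 - \int_{0}^{1} \pa{\frac{ \varphi_{(\tau_1,\tau_2]}(t)}{L}}^{2} dN_{t}
\]
(which is exactly your identity $T=\big((X-\xi)^2-X\big)/(L^2(\tau_2-\tau_1))$ in integral form) and then invokes Theorem~2 of Le Gu\'evel (2021), an exponential inequality for the square of a compensated counting-process martingale minus its angle bracket. That theorem gives $\P(T>x)\leq 3\exp\!\big(-\lambda_0 L(\tau_2-\tau_1)\,g\big(\lambda_0^{-1}(\tau_2-\tau_1)^{-1/2}\sqrt{x/2}\big)\big)$, whence the factor $2$. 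You instead notice that since $\varphi_{(\tau_1,\tau_2]}$ is supported on a single interval the whole statistic is a polynomial in a single Poisson count $X$, discard the negative $-X$ contribution, and apply the classical two-sided Poisson Chernoff bound, using that the lower-tail rate function $h_-(v)=(1-v)\log(1-v)+v$ dominates $g(v)=(1+v)\log(1+v)-v$ on $[0,1]$. Your approach is more elementary and self-contained, avoiding the external martingale inequality; the paper's choice is presumably for uniformity of tools, since it reuses the same Le~Gu\'evel inequalities in Proposition~\ref{bquantile_maxminNbis} and elsewhere, where the reduction to a single Poisson variable is not available.
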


\begin{proof}
Notice first that under the assumption of Lemma \ref{QuantilesT}, $T_{\tau_1,\tau_2}(N)$ can be written as
\[ T_{\tau_1,\tau_2}(N)= \left( \int_{0}^{1} \frac{ \varphi_{(\tau_1,\tau_2]}(t)}{L} (dN_{t}-\lambda_{0}Ldt)  \right)^2 - \int_{0}^{1} \pa{\frac{ \varphi_{(\tau_1,\tau_2]}(t)}{L}}^{2} dN_{t}\]
with $ \varphi_{(\tau_1,\tau_2]}=  \mathds{1}_{(\tau_1, \tau_2 ]}/\sqrt{\tau_2 - \tau_1}.$ Applying the exponential inequality of Theorem 2 in \cite{LeGuevel2021}, we obtain for all $x>0$
\begin{equation*}
\P(T_{\tau_1,\tau_2}(N)> x) \leq 3 \exp \pa{-\frac{\|H_{\tau_1,\tau_2}\|_{2,L}^2}{\|H_{\tau_1,\tau_2}\|_{\infty}^2}g\pa{  \frac{\|H_{\tau_1,\tau_2}\|_{\infty}}{\|H_{\tau_1,\tau_2}\|_{2,L}^2} \sqrt{\frac{x}{2}} } } \enspace,
\end{equation*}
where $H_{\tau_1,\tau_2}(t) = \varphi_{(\tau_1,\tau_2]}(t)/L,$ $g$ is defined by \eqref{defg} and $\|H_{\tau_1,\tau_2}\|_{2,L}$ is the $\bbL_2$-norm of $H_{\tau_1,\tau_2}$ in $\bbL^2([0,1],\lambda_0 Ldt),$ that is $\|H_{\tau_1,\tau_2}\|_{2,L}^2 = \int_0^1 |H_{\tau_1,\tau_2}(t)|^2 \lambda_{0} L dt = \lambda_0/L.$
This leads to
\[ \P \pa{T_{\tau_1,\tau_2}(N)> x} \leq 3 \exp \pa{- \lambda_{0}L(\tau_2-\tau_1) g \pa{\frac{1}{\lambda_{0}\sqrt{\tau_2-\tau_1}} \sqrt{\frac{x}{2}}}}\enspace.\]
We therefore obtain $ \P \pa{T_{\tau_1,\tau_2}(N)> x} \leq u$ if $ x \geq 2 \lambda_{0}^2 (\tau_2-\tau_1) \pa{g^{-1} \pa{\log \pa{3/u}/(\lambda_{0}L(\tau_2-\tau_1)) }}^2$ and the result follows.
\end{proof}

\begin{proposition}[Quantile bound for $\max/\min_{\tau\in [0,1-\ell^*]}{N(\tau, \tau + \ell^*]}$] \label{bquantile_maxminNbis} $\ $\\
Let $L \geq 1$. The $(1-\alpha)$-quantile $p_{\lambda_0,\ell^*}^+(1-\alpha)$ of $\max_{\tau\in [0,1-\ell^*]}{N(\tau, \tau + \ell^*]}$ under $\hzero$ satisfies
$$ p_{\lambda_0,\ell^*}^+(1- \alpha) \leq  \lambda_{0} L\ell^*+ 2  \lambda_{0}L g^{-1} \left( \frac{\log (2/ \alpha)}{\lambda_{0} L}   \right)\enspace.$$
The $\alpha$-quantile $p_{\lambda_0,\ell^*}^-(\alpha)$ of $\min_{\tau\in [0,1-\ell^*]}{N(\tau, \tau+ \ell^*]}$ under $\hzero$ satisfies
$$ p_{\lambda_0,\ell^*}^-(\alpha) \geq  \lambda_{0} L\ell^* - 2  \lambda_{0} L g^{-1} \left( \frac{\log (2/ \alpha)}{\lambda_{0} L}   \right)\enspace,$$
where $g$ is defined by \eqref{defg}.
\end{proposition}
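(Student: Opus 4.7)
The plan is to reduce the two quantile bounds to a single exponential deviation inequality for the oscillation modulus of the compensated Poisson process. Under $(H_0)$, $N$ is a homogeneous Poisson process of intensity $\lambda_0$ with respect to $L\,dt$, so for every $\tau\in[0,1-\ell^*]$ the centred random variable
\[
M_\tau := N(\tau,\tau+\ell^*]-\lambda_0 L\ell^* \;=\; \int_0^1 \mathbf{1}_{(\tau,\tau+\ell^*]}(t)\,\bigl(dN_t-\lambda_0 L\,dt\bigr)
\]
is a stochastic integral of a non-negative predictable function $H_\tau$ satisfying $\|H_\tau\|_\infty=1$ and $\|H_\tau\|_{2,L}^2=\lambda_0 L\ell^*$. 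The family $\{M_\tau,\ \tau\in[0,1-\ell^*]\}$ is exactly an oscillation-modulus over intervals of common length $\ell^*$, and since $N$ has finitely many jumps a.s.\ on $[0,1]$, the supremum over $\tau$ is in fact attained on a finite random grid, so measurability is not an issue.

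The key technical ingredient will be the exponential inequality of Le Guével~\cite{LeGuevel2021} (Theorem~6, already used in the proofs of Propositions \ref{UBalt3} and \ref{UBalt4}) for the supremum of the martingale associated with a counting process, applied to this family. In the form relevant here, it provides a control of the type
\[
P_{\lambda_0}\!\left(\sup_{\tau\in[0,1-\ell^*]} M_\tau > x\right) \leq \exp\!\left(-\lambda_0 L\,g\!\left(\frac{x}{2\lambda_0 L}\right)\right),
\]
with the analogous control for $\sup_\tau(-M_\tau)$; the factor $2$ in the denominator reflects the fact that increments of $N$ can only go upwards, so that the natural range of the integrand involves $\|H_\tau\|_\infty=1$ in the usual Bennett way. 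Summing the two tails adds a factor $2$ inside the exponent, which is what produces the $\log(2/\alpha)$ in the final statement.

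To conclude, I would simply invert the inequality: choosing $x = 2\lambda_0 L\,g^{-1}\bigl(\log(2/\alpha)/(\lambda_0 L)\bigr)$ makes the upper bound equal to $\alpha/2$ for each of the two tails, hence
\[
P_{\lambda_0}\!\left(\max_\tau N(\tau,\tau+\ell^*]>\lambda_0 L\ell^*+x\right)\leq \alpha,\quad P_{\lambda_0}\!\left(\min_\tau N(\tau,\tau+\ell^*]<\lambda_0 L\ell^*-x\right)\leq \alpha,
\]
and the two quantile bounds follow directly from the definition of $p^+_{\lambda_0,\ell^*}(1-\alpha)$ and $p^-_{\lambda_0,\ell^*}(\alpha)$. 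The main difficulty here is not in the inversion (routine, given the monotonicity of $g$), but in correctly invoking Le Guével's inequality with the right constants so that the pre-factor in front of $g^{-1}$ is exactly $2$ and the argument is $\log(2/\alpha)/(\lambda_0 L)$; any looser form of the inequality (e.g.\ with an extra numeric constant coming from chaining) would degrade the final minimax upper bounds that rely on this lemma, which is why specifically the Le Guével bound is used rather than a classical Bennett or Talagrand inequality.
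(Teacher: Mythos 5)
Your strategy is the same as the paper's (reduce to a tail bound for the oscillation of the compensated process, then invert), but there are two imprecisions that would cause problems if you tried to fill in the details.

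First, the correct reference is Le Gu\'evel's Theorem~4, not Theorem~6: the paper invokes Theorem~4 for the oscillation modulus of the martingale $M_s^t=\int_s^t(dN_u-\lambda_0L\,du)$, whereas Theorem~6/2 are used elsewhere for the \emph{square} martingale behind the quadratic statistics $T_{\tau_1,\tau_2}$.

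Second, and more importantly, you present the bound
\[
P_{\lambda_0}\!\left(\sup_{\tau\in[0,1-\ell^*]} M_\tau > x\right) \leq \exp\!\left(-\lambda_0 L\,g\!\left(\tfrac{x}{2\lambda_0 L}\right)\right)
\]
as applying directly to the fixed-length supremum over $\tau$, attributing the $2\lambda_0 L$ in the denominator to a Bennett-type argument driven by $\|H_\tau\|_\infty=1$. If that were the mechanism, the variance term inside $g$ would naturally be $\lambda_0L\ell^*$ (the mass of a single interval, as in Lemma~\ref{QuantilesAbsShifted}), not $\lambda_0 L$. The reason the denominator carries the full mass $\lambda_0 L$ and a factor $2$ — with \emph{no} dependence on $\ell^*$ — is that one first enlarges $\sup_\tau M_\tau^{\tau+\ell^*}$ to the oscillation over \emph{all} pairs $\max_{s,t\in[0,1]}|M_s^t|$ and then applies the two-sided oscillation inequality with its prefactor $2$. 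This enlargement is the step your write-up skips; without it there is no reason the claimed constants should come out. A closely related consequence: the prefactor $2$ in the final answer is already present in the two-sided oscillation bound, and it is used separately for each of the $\max$- and $\min$-quantiles — it is not obtained, as you suggest, by summing the two tails of a single one-sided inequality.
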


\begin{proof}
We define $M_s^t = \int_s^t (dN_u - \lambda_{0} L du)$ for all $s,t >0.$ Let $x>0$ be such that
\begin{equation} \label{bproofDeltaLength5N1}
x \geq \lambda_{0} L\ell^*+ 2  \lambda_{0}L g^{-1} \left( \frac{\log (2/ \alpha)}{\lambda_{0} L}   \right)\enspace.\end{equation}
Since
 \begin{align*}
 P_{\lambda_0}\left(\max_{\tau \in [0, 1-\ell^*]} N(\tau,\tau+\ell^*] >x\right) &= P_{\lambda_0} \left(  \max_{\tau \in [0, 1-\ell^*]} M_\tau^{\tau+\ell^*}  > x- \lambda_{0} L\ell^*  \right) \\
 &\leq P_{\lambda_0} \left( \max_{s,t \in [0,1]} \vert M_s^t \vert > x- \lambda_{0} L\ell^*    \right) \enspace,
 \end{align*}
Theorem 4 in \cite{LeGuevel2021} ensures that 
 \[P_{\lambda_0} \left( \max_{s,t \in [0,1]} \vert M_s^t \vert > x- \lambda_{0} L\ell^*    \right) 
 \leq 2 \exp \left(  -\lambda_{0} L g \left(  \frac{x - \lambda_{0} L\ell^*}{2 \lambda_{0} L}  \right)   \right)\enspace.\]
Then (\ref{bproofDeltaLength5N1}) entails \[2 \exp \left(  -\lambda_{0} L g \left(  \frac{x - \lambda_{0} L\ell^*}{2 \lambda_{0} L}  \right)   \right) \leq \alpha\enspace,
 \]
leading to $P_{\lambda_0}(\max_{\tau \in [0, 1-\ell^*]} N(\tau,\tau+\ell^*] >x) \leq \alpha.$ 
The $(1-\alpha)$-quantile $p_{\lambda_0,\ell^*}^+(\alpha)$ of $\max_{\tau\in [0,1-\ell^*]}{N(\tau, \tau+ \ell^*]}$ under $\hzero$ therefore satisfies $ p_{\lambda_0,\ell^*}^+(1- \alpha) \leq  x$ for every $x$ such that  \eqref{bproofDeltaLength5N1} holds. In particular, 
 $$ p_{\lambda_0,\ell^*}^+(1- \alpha) \leq  \lambda_{0} L\ell^*+ 2 \lambda_{0} L g^{-1} \left( \frac{\log (2/ \alpha)}{\lambda_{0} L}   \right)\enspace.$$
 Let us consider now $x$ in $\R$ and $\varepsilon$ in $(0,1)$ satisfying
\begin{equation} \label{bproofDeltaLength7N1}
x \leq  \lambda_{0} L\ell^*- 2 \lambda_{0} Lg^{-1} \left( \frac{\log (2/ (\alpha(1-\varepsilon)))}{\lambda_{0} L}   \right)\enspace  .
\end{equation}
 Using \eqref{bproofDeltaLength7N1} and Theorem 4  in \cite{LeGuevel2021} again, we obtain
 \begin{align*}
 P_{\lambda_0}\left(\min_{\tau \in [0, 1-\ell^*]} N(\tau,\tau+\ell^*] \leq x\right) &= P_{\lambda_0} \left(  \max_{\tau \in [0, 1-\ell^*]} -M_\tau^{\tau+\ell^*}  \geq  \lambda_{0} L\ell^* -x  \right) \\
 &\leq P_{\lambda_0} \left( \max_{s,t \in [0,1]} \vert M_s^t \vert \geq  \lambda_{0} L\ell^* -x    \right) \\
 &\leq 2 \exp \left(  -\lambda_{0} L g\left(  \frac{ \lambda_{0} L\ell^* -x}{2\lambda_{0} L}  \right)   \right) \\
 &\leq \alpha(1-\varepsilon)  \\
 &< \alpha \enspace.
 \end{align*}
 Thus the $\alpha$-quantile $p_{\lambda_0,\ell^*}^-(\alpha)$ of $\min_{\tau\in [0,1-\ell^*]}{N(\tau,\tau + \ell^*]}$ under $\hzero$ satisfies $p_{\lambda_0,\ell^*}^-(\alpha) >x$ for every $x$ such that
 \eqref{bproofDeltaLength7N1} holds. In particular
 \[ p_{\lambda_0,\ell^*}^-(\alpha) > \lambda_{0} L\ell^* - 2  \lambda_{0} L g^{-1} \left( \frac{\log (2/ (\alpha(1-\varepsilon)))}{\lambda_{0} L}   \right)\]
 for every $ \varepsilon$ in $(0,1).$ The result then follows by continuity of $g^{-1}.$
\end{proof}

\begin{lemma}[Quantile bound for $\sup_{\ell \in (0,1- \tau^{*})} S_{\delta^*,\tau^*,\tau^*+\ell}(N)$]\label{QuantilessupShifted}
Let $\gamma >0$ and $L \geq 1$. Let $(N_t^{\lambda_0})_{t \geq 0}$ be an homogeneous Poisson process with a known constant intensity $\lambda_0 L >0$ w.r.t. the Lebesgue measure. Then, the $u$-quantile of the supremum $\sup_{t \geq 0} (N_t ^{\lambda_0}- (\lambda_0 + \gamma)Lt)$ does not depend on $L$, and will therefore be denoted by $s_{\lambda_0,\gamma}^+(u)$. Now considering $s_{\lambda_0,\delta^*,\tau^*,L}^+(u)$,  the $u$-quantile under $(H_0)$
of the statistic $\sup_{\ell \in (0,1- \tau^{*})} S_{\delta^*,\tau^*,\tau^*+\ell}(N)$ with $S_{\delta^*,\tau^*,\tau^*+\ell}(N)$ defined by \eqref{stat_alt5}, we have for all $L\geq 1$
\begin{equation} \label{UBQuantilemaxShifted}
\left\{\begin{array}{ccl}
s_{\lambda_0,\delta^*,\tau^*,L}^+(u)&\leq& s_{\lambda_0,\delta^{*}/2}^+(u) \quad \textrm { when }\delta^*>0\enspace,\\
s_{\lambda_0,\delta^*,\tau^*,L}^+(u)&\leq& \frac{\log \pa{1/(1-u)}}{\log \pa{\lambda_{0}/\pa{\lambda_{0} - \vert \delta^{*} \vert/2}}} \quad \textrm { when }\delta^{*} \in (-\lambda_{0}^{*}, 0) \enspace.
\end{array}\right.
\end{equation} 
\end{lemma}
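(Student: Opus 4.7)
The plan is to handle the $L$-independence claim, the $\delta^* > 0$ case, and the $\delta^* \in (-\lambda_0, 0)$ case in sequence.

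First, the $L$-independence of $s_{\lambda_0,\gamma}^+(u)$ follows from a time change. Writing the Poisson process $N^{\lambda_0}$ of intensity $\lambda_0 L$ as $N^{\lambda_0}_t = \bar N_{Lt}$ with $\bar N$ a rate-$\lambda_0$ Poisson process, the substitution $s = Lt$ gives
$\sup_{t\geq 0}(N^{\lambda_0}_t - (\lambda_0+\gamma)Lt) = \sup_{s\geq 0}(\bar N_s - (\lambda_0+\gamma)s)$,
whose distribution depends only on $\lambda_0$ and $\gamma$.

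Second, for $\delta^* > 0$ we have $S_{\delta^*,\tau^*,\tau^*+\ell}(N) = N(\tau^*, \tau^*+\ell] - (\lambda_0+\delta^*/2)L\ell$. Setting $\tilde N_\ell = N(\tau^*, \tau^*+\ell]$, under $(H_0)$ the process $(\tilde N_\ell)_{\ell\geq 0}$ is homogeneous Poisson with intensity $\lambda_0 L$, so bounding the restricted supremum by the unrestricted one yields
$\sup_{\ell\in(0,1-\tau^*)} S_{\delta^*,\tau^*,\tau^*+\ell}(N) \leq \sup_{\ell\geq 0}(\tilde N_\ell - (\lambda_0+\delta^*/2)L\ell)$,
whose $u$-quantile is by definition $s_{\lambda_0,\delta^*/2}^+(u)$.

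Third, for $\delta^* \in (-\lambda_0,0)$, letting $c = \lambda_0 - |\delta^*|/2 > 0$, I need to bound
$\P\pa{\sup_{\ell\in(0,1-\tau^*)}(cL\ell - \tilde N_\ell) > x}$
by $1-u$ for $x$ above the claimed threshold. The tool is the nonnegative exponential martingale
$M_t = \exp\pa{-\theta \tilde N_t + Lt\lambda_0(1-e^{-\theta})}$,
normalised so that $\E[M_t]=1$. A direct manipulation shows that if $\psi(\theta) := \lambda_0(1-e^{-\theta}) - \theta c \geq 0$, then $\{cLt - \tilde N_t > x\} \subseteq \{M_t > e^{\theta x}\}$ for every $t\geq 0$, and Doob's maximal inequality extended to $[0,\infty)$ via monotone convergence gives $\P(\sup_t M_t > e^{\theta x}) \leq e^{-\theta x}$. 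The choice $\theta = \log(\lambda_0/c)$ then produces the bound $x \geq \log(1/(1-u))/\log(\lambda_0/(\lambda_0-|\delta^*|/2))$ as required.

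The only nontrivial verification, and the real crux of the proof, is checking that this specific $\theta$ lies in the admissible range, i.e.\ $\psi(\log(\lambda_0/c)) \geq 0$. Writing $r = c/\lambda_0 \in (0,1)$ reduces the inequality to $1 - r + r\log r \geq 0$ on $(0,1]$, which follows because the function has derivative $\log r \leq 0$ and vanishes at $r=1$. Everything else — the martingale property of $M_t$, the non-negativity, the passage from $\sup_{t\leq T}$ to $\sup_{t\geq 0}$ — is standard, so I expect no serious obstacle beyond this scalar convexity check.
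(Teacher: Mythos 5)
Your proof is correct, and it follows a genuinely different route from the paper. The paper treats the lemma as a corollary of Pyke's results: the $L$-independence and the $\delta^*>0$ case come directly from Pyke's Equation~(7), and the $\delta^*<0$ case invokes Pyke's Theorem~3, which gives the tail bound $e^{-\omega x}$ with $\omega$ the largest root of $\lambda_0(1-e^{-\omega})=\omega(\lambda_0-|\delta^*|/2)$, and then observes that $\omega>\log(\lambda_0/(\lambda_0-|\delta^*|/2))$. You instead give a self-contained derivation: a time change for the $L$-independence, stochastic domination by the unrestricted supremum for $\delta^*>0$, and an exponential-martingale (Chernoff) bound for $\delta^*<0$ with the explicit, suboptimal choice $\theta=\log(\lambda_0/c)$. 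That suboptimal $\theta$ is exactly what makes your argument slightly cleaner than the paper's: by not insisting on the optimal root you bypass the need to define $\omega$ and bound it from below, landing directly on the stated threshold with equality. The one thing you elided is the passage from $\tilde N_\ell = N(\tau^*,\tau^*+\ell]$, which lives only on $[0,1-\tau^*]$, to a Poisson process on all of $[0,\infty)$ for the $\delta^*>0$ step; the paper handles this by an equality of finite-dimensional distributions with $(N(0,\ell])_\ell$. Your coupling/extension argument is fine but should be stated (note also that for the $\delta^*<0$ case you do not need any extension at all, since Doob's inequality on $[0,1-\tau^*]$ already suffices). Overall the trade is: the paper's proof is shorter by outsourcing to Pyke, yours is longer but fully elementary and transparent about where the bound comes from.
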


\begin{proof}
Equation (7) in \cite{pyke1959} directly enables to state the first part of the result.

Under $(H_0)$, since the processes $(N(\tau^{*}, \tau^{*} +\ell])_{\ell \in (0,1- \tau^{*}]}$ and $(N(0,\ell])_{\ell \in (0,1- \tau^{*}]}$ are left-continuous and have the same finite dimensional laws, we get
$$ \sup_{\ell \in (0,1- \tau^{*})} S_{\delta^*,\tau^*,\tau^*+\ell}(N) \overset{d}{=} \sup_{\ell \in (0,1- \tau^{*})} \pa{\mathrm{sgn}(\delta^{*})(N(0,\ell] - \lambda_{0} \ell L) - \frac{\vert \delta^{*}{} \vert}{2}\ell L}.$$
Assume first that $\delta^{*}{}>0$. We compute
\begin{align*}
P_{\lambda_0} \Big(\sup_{\ell \in (0,1- \tau^{*})} S_{\delta^*,\tau^*,\tau^*+\ell}(N) &> s_{\lambda_0,\delta^{*}/2}^+(u)\Big)\\
&\leq \mathbb{P} \pa{ \sup_{t \in [0, + \infty)} \pa{N^{\lambda_0}(0,t]  - \pa{\lambda_0 + \frac{\delta^{*}{}}{2}}Lt  }  > s_{\lambda_0,\delta^{*}/2}^+(u)}\\
& \leq 1-u\enspace,
\end{align*}
by definition of $s_{\lambda_0,\delta^{*}/2}^+(u)$. This allows to conclude that  the first part of \eqref{UBQuantilemaxShifted} holds. 

Assume then that $\delta^{*}$ belongs to $(-\lambda_{0},0)$. For all $x>0$,
\[P_{\lambda_0} \pa{ \sup_{\ell \in (0,1- \tau^{*})} S_{\delta^*,\tau^*,\tau^*+\ell}(N)  >x} 
 = P_{\lambda_0} \pa{\sup_{\ell \in (0,1- \tau^{*})} \pa{\pa{\lambda_{0}- \frac{\vert \delta^{*}\vert}{2}}\ell L - N(0,\ell] }  > x}\enspace.\]
Theorem  3 in \cite{pyke1959} then entails 
$$P_{\lambda_0} \pa{\sup_{\ell \in (0,1- \tau^{*})} S_{\delta^*,\tau^*,\tau^*+\ell}(N) >x} \leq \exp \pa{-\omega x }\enspace,$$ where $\omega$ is the largest real root of the equation $\lambda_{0}(1- e^{- \omega})= \omega \pa{\lambda_{0} - \vert \delta^{*}{} \vert /2}$. Notice that $\omega  > \log\pa{\lambda_{0}/(\lambda_{0} - \vert \delta^{*}{} \vert/2)}$. Then, correctly choosing $x$ in the above exponential inequality leads to
$$P_{\lambda_0} \pa{\sup_{\ell \in (0,1- \tau^{*})} S_{\delta^*,\tau^*,\tau^*+\ell}(N)  >\frac{\log \pa{1/(1-u)}}{\log \pa{\lambda_{0}/\pa{\lambda_{0} -{\vert \delta^{*} \vert}/{2}}}}} \leq 1-u\enspace,$$ which implies the second part of \eqref{UBQuantilemaxShifted}.
\end{proof}

\begin{lemma}[Quantile bound for $\sup_{\tau \in (0,1)} S_{\delta^*,\tau,1}(N)$]\label{QuantilessupShifted2}
Let $L\geq 1$. With the same notation as in  Lemma \ref{QuantilessupShifted} and $S_{\delta^*,\tau,1}(N)$ defined by \eqref{stat_alt5},  the $u$-quantile $s_{\lambda_0,\delta^*,L}^+(u)$
of the statistic $\sup_{\tau \in (0,1)} S_{\delta^*,\tau,1}(N)$ under $(H_0)$ satisfies
\begin{equation} \label{UBQuantilesupShifted2}
\left\{\begin{array}{ccl}
s_{\lambda_0,\delta^*,L}^+(u)&\leq& s_{\lambda_0,\delta^{*}/2}^+(u) \quad \textrm { when }\delta^*>0\enspace,\\
s_{\lambda_0,\delta^*,L}^+(u)&\leq& \frac{\log \pa{1/(1-u)}}{\log \pa{\lambda_{0}/\pa{\lambda_{0} - \vert \delta^{*} \vert /2}}} \quad \textrm { when }\delta^{*} \in (-\lambda_{0}^{*}, 0) \enspace.
\end{array}\right.
\end{equation} 
\end{lemma}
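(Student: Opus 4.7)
\textbf{Proof plan for Lemma \ref{QuantilessupShifted2}.}

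The plan is to reduce this lemma to Lemma \ref{QuantilessupShifted} by a time-reversal argument exploiting the homogeneity of $N$ under $(H_0)$. Specifically, under $P_{\lambda_0}$ with $\lambda_0$ constant, the process $\bigl(N(\tau,1]\bigr)_{\tau \in (0,1)}$ viewed as a left-continuous pure-jump process indexed by $\tau$ has the same finite-dimensional distributions as $\bigl(N(0,1-\tau]\bigr)_{\tau \in (0,1)}$. Equivalently, setting $\ell = 1-\tau$, the process $\bigl(N(1-\ell,1]\bigr)_{\ell \in (0,1)}$ is equidistributed with $\bigl(N(0,\ell]\bigr)_{\ell \in (0,1)}$. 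This is a standard consequence of the independence and stationarity of the increments of a homogeneous Poisson process on a fixed interval.

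Consequently,
\[
\sup_{\tau \in (0,1)} S_{\delta^*,\tau,1}(N) \stackrel{d}{=} \sup_{\ell \in (0,1)} \Bigl( \mathrm{sgn}(\delta^{*})\bigl(N(0,\ell]-\lambda_{0}\ell L\bigr) - \tfrac{|\delta^{*}|}{2}\ell L \Bigr).
\]
Since this right-hand side is exactly the same object as the one that appears in the proof of Lemma \ref{QuantilessupShifted} (with $\tau^* = 0$ and the range $\ell \in (0,1)$ in place of $\ell \in (0,1-\tau^*)$), the same tail bounds apply.

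For the case $\delta^{*} > 0$, I would dominate this supremum by the supremum over $t \in [0,+\infty)$ of $N^{\lambda_0}(0,t] - (\lambda_0 + \delta^*/2)Lt$ (extending the range from $(0,1)$ to $[0,+\infty)$ can only increase the supremum), and then invoke Pyke's identity \cite{pyke1959} exactly as in Lemma \ref{QuantilessupShifted} to obtain $s^+_{\lambda_0, \delta^*, L}(u) \leq s^+_{\lambda_0, \delta^*/2}(u)$. For the case $\delta^* \in (-\lambda_0, 0)$, I would rewrite the sup as $\sup_{\ell \in (0,1)} \bigl( (\lambda_0 - |\delta^*|/2)\ell L - N(0,\ell] \bigr)$ and apply Theorem~3 of \cite{pyke1959}, which provides the exponential tail $\exp(-\omega x)$ with $\omega$ solving $\lambda_0(1-e^{-\omega}) = \omega(\lambda_0 - |\delta^*|/2)$; the bound $\omega > \log\bigl(\lambda_0/(\lambda_0 - |\delta^*|/2)\bigr)$ then yields the stated quantile control.

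The only genuinely new ingredient compared with Lemma \ref{QuantilessupShifted} is the time-reversal identity in law; everything else follows verbatim. The main subtlety to check carefully is that this identity holds not just in law for fixed $\tau$ but at the level of the supremum — that is, as an equality of distributions of processes, so that the quantile of the supremum is preserved. This is however immediate because the map that sends a locally finite point configuration on $[0,1]$ to its image under $t \mapsto 1-t$ is a measure-preserving involution on the law of a homogeneous Poisson process on $[0,1]$, and under this involution $N(\tau,1]$ is sent to $N(0,1-\tau]$.
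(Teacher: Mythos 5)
Your proposal is correct and matches the paper's own proof in all essentials: both rely on the time-reversal identity in law between $(N(\tau,1])_{\tau\in(0,1)}$ and $(N(0,1-\tau])_{\tau\in(0,1)}$ under a homogeneous Poisson process, reduce the supremum to the one already controlled in Lemma~\ref{QuantilessupShifted}, and then invoke Pyke's results exactly as there. Your extra paragraph making explicit that the reflection $t\mapsto 1-t$ is a measure-preserving involution on Poisson configurations is a sound way to justify the equality in distribution at the level of suprema (the paper's own phrasing, appealing to left-continuity, is if anything slightly less careful than yours), so no gap remains.
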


\begin{proof}
Under $(H_0)$, since the processes $(N(\tau,1])_{\tau \in (0,1)}$ and $(N(0,1-\tau])_{\tau \in (0,1)}$ are left-continuous and have the same finite dimensional laws, we get
$$ \sup_{\tau\in (0,1)} S_{\delta^*,\tau,1}(N) \overset{d}{=} \sup_{\tau \in (0,1)} \pa{\mathrm{sgn}(\delta^{*})(N(0,\tau] - \lambda_{0} \tau L) - \frac{\vert \delta^{*}{} \vert}{2}\tau L}.$$
The result then follows from the same arguments as in the proof of Lemma \ref{QuantilessupShifted}, by noticing that when $\delta^{*}{}>0$,
\begin{align*}
P_{\lambda_0} \Big( \sup_{\tau\in (0,1)} S_{\delta^*,\tau,1}(N)  &> s_{\lambda_0,\delta^{*}/2}^+(u)\Big)\\
&\leq \mathbb{P} \pa{ \sup_{t \in [0, + \infty)} \pa{N^{\lambda_0}(0,t]  - \pa{\lambda_0 + \frac{\delta^{*}{}}{2}}Lt  }  > s_{\lambda_0,\delta^{*}/2}^+(u)}\enspace,
\end{align*}
and when $\delta^{*}$ belongs to $(-\lambda_{0},0)$, 
\[P_{\lambda_0} \pa{ \sup_{\tau\in (0,1)} S_{\delta^*,\tau,1}(N)   >x} 
 = P_{\lambda_0} \pa{\sup_{t \in (0,1)} \pa{\pa{\lambda_{0}- \frac{\vert \delta^{*}\vert}{2}}t L - N(0,t] }  > x}\]
 for all $x>0.$
\end{proof}

\begin{lemma}[Quantile bound for $|S_{\tau_1, \tau_2}(N)|$] \label{QuantilesAbsShifted}
Let $\lambda_{0}>0$ and assume that $N$ is a homogeneous Poisson process of intensity $\lambda_{0}$ with respect to the measure $\Lambda$. For $0 \leq \tau_1 < \tau_2 \leq 1$ and  $u$ in $(0,1)$,  let $s_{\lambda_0,\tau_1, \tau_2}(1-u)$ be the $(1-u)$-quantile of $|S_{\tau_1,\tau_2}(N)|$, with $S_{\tau_1,\tau_2}(N)= N(\tau_1, \tau_2] - \lambda_{0} (\tau_2-\tau_1) L $ as defined by \eqref{stat_alt6_N1}. 
Then 
\[ s_{\lambda_0,\tau_1, \tau_2}\left( 1- u\right) \leq \lambda_{0} L (\tau_2 - \tau_1) g^{-1} \pa{\frac{\log \pa{{2}/{u}}}{\lambda_{0} (\tau_2 - \tau_1) L}}  \enspace, \]
where $g$ is defined by \eqref{defg}.
 \end{lemma}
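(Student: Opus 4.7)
The plan is to reduce the statement to a standard Bennett/Chernoff tail bound for a Poisson random variable, then invert the resulting deviation inequality.

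First I would observe that under $(H_0)$, the increment $N(\tau_1,\tau_2]$ follows a Poisson distribution with parameter $\mu := \lambda_0 L(\tau_2-\tau_1)$, so $S_{\tau_1,\tau_2}(N)$ is simply its centered version. Using the Laplace transform $E_{\lambda_0}[e^{\lambda N(\tau_1,\tau_2]}] = \exp(\mu(e^\lambda-1))$ recalled in Lemma~\ref{momentPoisson}, the Chernoff inequality $P(S_{\tau_1,\tau_2}(N) \geq x) \leq \exp(-\lambda(\mu+x) + \mu(e^\lambda-1))$ optimized at $\lambda^\star = \log(1+x/\mu)$ yields the upper-tail bound
\[ P_{\lambda_0}\big(S_{\tau_1,\tau_2}(N) \geq x\big) \leq \exp\!\pa{-\mu\,g\!\pa{x/\mu}}, \]
with $g$ defined in~\eqref{defg}. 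A symmetric optimization over $\lambda<0$, taking $\lambda^\star = \log(1-x/\mu)$ for $0<x<\mu$, gives the lower-tail bound $P_{\lambda_0}(S_{\tau_1,\tau_2}(N) \leq -x) \leq \exp(-\mu\tilde g(x/\mu))$ with $\tilde g(y) = (1-y)\log(1-y)+y$.

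A quick convexity comparison shows $\tilde g(y) \geq g(y)$ on $(0,1)$ (both vanish at $0$ with the same second-order behavior, and the higher-order term is more favorable for $\tilde g$), so a simple union bound absorbs both tails into
\[ P_{\lambda_0}\big(|S_{\tau_1,\tau_2}(N)| \geq x\big) \leq 2\exp\!\pa{-\mu\,g\!\pa{x/\mu}}. \]

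Inverting this requires $2\exp(-\mu g(x/\mu)) \leq u$, i.e., $g(x/\mu) \geq \log(2/u)/\mu$, which by monotonicity of $g$ on $[0,\infty)$ is equivalent to $x/\mu \geq g^{-1}(\log(2/u)/\mu)$. Plugging in $\mu = \lambda_0 L(\tau_2-\tau_1)$ gives exactly the claimed upper bound on $s_{\lambda_0,\tau_1,\tau_2}(1-u)$. There is no real obstacle: the only minor point is the comparison $\tilde g \geq g$, which can alternatively be bypassed by citing the classical Bennett inequality for Poisson variables, or by invoking Theorem~4 of \cite{LeGuevel2021} (as was done in Proposition~\ref{bquantile_maxminNbis}) applied to the degenerate single-interval case, at the cost of an additional factor $2$ inside $g^{-1}$.
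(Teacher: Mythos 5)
Your proof is correct and reaches the same bound, but by a different and more elementary route than the paper. The paper writes $S_{\tau_1,\tau_2}(N) = \int_0^1 \mathds{1}_{(\tau_1,\tau_2]}(t)\,(dN_t - \lambda_0 L\,dt)$ and invokes Theorem~1 of \cite{LeGuevel2021}, a two-sided exponential inequality for martingale integrals against a compensated counting process, which directly delivers $P_{\lambda_0}(|S_{\tau_1,\tau_2}(N)|>x)\leq 2\exp(-\lambda_0 L(\tau_2-\tau_1)\,g(x/(\lambda_0 L(\tau_2-\tau_1))))$ and hence the quantile bound by inversion. You instead observe that $N(\tau_1,\tau_2]$ is simply Poisson with mean $\mu=\lambda_0 L(\tau_2-\tau_1)$, derive the upper-tail Chernoff bound $\exp(-\mu g(x/\mu))$ and the lower-tail bound $\exp(-\mu\tilde g(x/\mu))$ with $\tilde g(y)=(1-y)\log(1-y)+y$, then bridge the two via the comparison $\tilde g\geq g$ on $(0,1)$ (correct: $\tilde g'(y)=-\log(1-y)\geq\log(1+y)=g'(y)$ with equality at $0$) to absorb both tails into $2\exp(-\mu g(x/\mu))$. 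This is entirely self-contained, needing only the Poisson Laplace transform from Lemma~\ref{momentPoisson}, whereas the paper's version leans on an external martingale inequality that it already uses in a unified way for the harder supremum and oscillation bounds elsewhere; your argument has the advantage of being elementary for this single-increment case, at the cost of the extra $\tilde g\geq g$ comparison (a detail the martingale inequality hides). Your closing remark about Theorem~4 of \cite{LeGuevel2021} introducing an extra factor of~$2$ inside $g^{-1}$ is also accurate, since that result concerns the supremum over all sub-intervals and is therefore looser for a fixed interval.
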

 
 \begin{proof}
Since $S_{\tau_1,\tau_2}(N)$ can be written as
\[S_{\tau_1,\tau_2}(N)= \int_0^{1} \mathds{1}_{(\tau_1, \tau_2]}(t) (dN_t - \lambda_{0} L dt)\enspace,\]
we can apply  the exponential inequality of Theorem 1 in \cite{LeGuevel2021} and obtain for all $x>0$
\begin{equation} \label{lambda0quantileN1proof}
P_{\lambda_0} (\vert S_N(\tau_1, \tau_2)\vert >x) \leq 2 \exp \left(-\lambda_{0}L (\tau_2 - \tau_1) g \left(  \frac{x}{\lambda_{0} L (\tau_2 - \tau_1)  } \right)    \right)\enspace.
\end{equation}
We get then $P_{\lambda_0} (\vert S_N(\tau_1, \tau_2) \vert >x) \leq u $ if
\[ x \geq \lambda_{0} L (\tau_2 - \tau_1) g^{-1} \pa{\frac{\log \pa{{2}/{u}}}{\lambda_{0} L (\tau_2 - \tau_1) }} \enspace,\]
which leads to the expected result.

\end{proof}

 \begin{lemma}[Expectation and variance of $T'_{\tau_1, \tau_2}(N)$] \label{MomentsT'}
 
Let $\tau_1$ and $\tau_2$ in $(0,1]$ such that $0 < \tau_1 < \tau_2 \leq 1$ and let $N$ be a Poisson process such that $N( 0,\tau_1]$, $N( \tau_1,\tau_2]$, and $N( \tau_2,1]$ follow a Poisson distribution with respective parameters $Lx>0$, $Ly>0$ and $Lz>0.$ Considering the statistic $T'_{\tau_1, \tau_2}$ defined by \eqref{def_T'}, one has

\begin{equation} \label{MomentsT'_E}
 \mathbb{E}[T'_{\tau_1, \tau_2}(N)] = \left(\sqrt{\frac{\tau_2 - \tau_1}{1- \tau_2 + \tau_1}}(x+z) - \sqrt{\frac{1-\tau_2 + \tau_1}{\tau_2 - \tau_1}}y   \right)^2 \enspace,
 \end{equation}
 and
\begin{multline} \label{MomentsT'_V}
\mathrm{Var}(T'_{\tau_1, \tau_2}(N))= \frac{2}{L^2} \left( \frac{ \tau_2- \tau_1}{1- \tau_2 + \tau_1}( x+z) + \frac{1- \tau_2 + \tau_1}{\tau_2 - \tau_1}y   \right)^2\\+ \frac{4}{L} \left(  \frac{1- \tau_2 + \tau_1}{\tau_2 - \tau_1}  \right)^2
 \left( y - \frac{\tau_2 - \tau_1}{1- \tau_2 + \tau_1} (x+z) \right)^2 \left( \left( \frac{ \tau_2 - \tau_1}{1- \tau_2 + \tau_1} \right)^2 (x+z) +y       \right) \enspace.
\end{multline}
 \end{lemma}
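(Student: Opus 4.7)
\noindent\textbf{Proof plan for Lemma~\ref{MomentsT'}.}

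The plan is to exploit the independence and additive structure of $N$ on the three disjoint intervals $(0,\tau_1]$, $(\tau_1,\tau_2]$, $(\tau_2,1]$. Set $A = N(0,\tau_1]+N(\tau_2,1]$ and $B = N(\tau_1,\tau_2]$; then $A$ and $B$ are independent Poisson random variables with parameters $\mu_A = L(x+z)$ and $\mu_B = Ly$ respectively. Writing $a = (\tau_2-\tau_1)/(1-\tau_2+\tau_1)$ and $b = 1/a$, the statistic defined in \eqref{def_T'} rewrites compactly as
\[
L^2 T'_{\tau_1,\tau_2}(N) = a(A^2 - A) + b(B^2 - B) - 2AB.
\]
(Equivalently, $T'_{\tau_1,\tau_2}(N)= L^{-2}\bigl[(\int\psi_{\tau_1,\tau_2}dN)^2 - \int \psi_{\tau_1,\tau_2}^2 dN\bigr]$, which is the natural $U$-statistic whose mean is the squared projection $\|\Pi_{W_{\tau_1,\tau_2}}(\lambda-\Pi_{W_0}\lambda)\|_2^2$.)

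For \eqref{MomentsT'_E}, I will apply Lemma~\ref{momentPoisson} to get $\mathbb{E}[A^2-A]=\mu_A^2$ and $\mathbb{E}[B^2-B]=\mu_B^2$. Combined with $\mathbb{E}[AB]=\mu_A\mu_B$ by independence, this yields $L^2\mathbb{E}[T']=a\mu_A^2+b\mu_B^2-2\mu_A\mu_B$; substituting back and using $ab=1$ so that $2\sqrt{a}\sqrt{b}=2$, this collapses to $\bigl(\sqrt{a}(x+z)-\sqrt{b}y\bigr)^2$, which is exactly \eqref{MomentsT'_E}.

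For \eqref{MomentsT'_V}, I would expand the variance of the three-term sum using $A\perp B$ to kill the covariance between functions of $A$ alone and functions of $B$ alone. The remaining covariances $\mathrm{Cov}(A^2-A, AB)$ and $\mathrm{Cov}(B^2-B, AB)$ factor through $\mathbb{E}[B]$ and $\mathbb{E}[A]$ respectively; using the moments $\mathbb{E}[A^k]$ for $k\leq 4$ from Lemma~\ref{momentPoisson} one checks $\mathrm{Cov}(A^2-A,A)=2\mu_A^2+\mu_A$ and, after arithmetic, $\mathrm{Cov}(A^2-A,AB)=2\mu_A^2\mu_B$, with the symmetric identity for $B$. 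Likewise $\mathrm{Var}(A^2-A)=4\mu_A^3+2\mu_A^2$ and $\mathrm{Var}(AB)=\mu_A\mu_B(\mu_A+\mu_B+1)$. Substituting and dividing by $L^4$ produces a polynomial in $x+z$, $y$, $a$, $b$, which I would then group by powers of $L$.

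The main (and only real) obstacle is the bookkeeping in the last step: showing that the $L^{-2}$ piece equals $2(a(x+z)+by)^2$ and that the $L^{-1}$ piece factors as $4b^2\bigl(y-a(x+z)\bigr)^2\bigl(a^2(x+z)+y\bigr)$. For the first identity this is the square $(a\mu_A+b\mu_B)^2$, immediate from $ab=1$. For the second, I would expand $b^2(y-a(x+z))^2 = b^2y^2-2b(x+z)y+(x+z)^2$ using $ab=1$, multiply by $a^2(x+z)+y$, and match against the collected coefficients $4a^2(x+z)^3$, $4b^2 y^3$, $4(x+z)y^2+4(x+z)^2y$, $-8a(x+z)^2y$, $-8b(x+z)y^2$ arising from the expansion. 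The repeated use of $ab=1$ (so that $a^2b=a$ and $ab^2=b$) is the only tool needed to close the identification, which yields the stated form of $\mathrm{Var}(T'_{\tau_1,\tau_2}(N))$.
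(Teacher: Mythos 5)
Your proof is correct and takes essentially the same approach as the paper: both decompose $L^2 T'_{\tau_1,\tau_2}(N)$ into $a(A^2-A)+b(B^2-B)-2AB$ with $A\perp B$ (the paper carries this out in terms of the centered moments $c_i,\bar c_i$), then apply the Poisson moment identities of Lemma~\ref{momentPoisson} and the relation $ab=1$ to match the stated form. One small slip in an intermediate step: $\mathrm{Cov}(A^2-A,A)=2\mu_A^2$, not $2\mu_A^2+\mu_A$; your downstream conclusion $\mathrm{Cov}(A^2-A,AB)=2\mu_A^2\mu_B$ is nonetheless correct, so the final identification goes through.
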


 \begin{proof}
Recall that
\begin{multline*}
T'_{\tau_1, \tau_2}(N)= \frac{1}{L^2} \Bigg[ \frac{\tau_2 - \tau_1}{1- \tau_2 + \tau_1}  \left( \left( N(0,\tau_1] + N(\tau_2,1] \right)^2 - \left( N(0,\tau_1]  +N(\tau_2,1] \right)  \right) \\
+ \frac{1- \tau_2 + \tau_1}{\tau_2 - \tau_1}  \left( N(\tau_1,\tau_2]^2  - N(\tau_1,\tau_2]  \right) -2 N(\tau_1,\tau_2] \left( N(0, \tau_1]+ N(\tau_2,1] \right) \Bigg]\enspace.
\end{multline*}
Set, as in the proof of Lemma \ref{MomentsT}, $m_{i}$ and $\bar{m}_i$  the moments of order $i$ of $N( \tau_1, \tau_2]$ and $(N(0, \tau_1]+N(\tau_2, 1])$ respectively, and $c_i$ and $\bar{c}_i$ the corresponding centered moments of order $i$. Then, by independence of  $N( 0,\tau_1]$, $N( \tau_1,\tau_2]$ and $N( \tau_2,1]$, and since $m_2=m_1^2+m_1$, $\bar{m}_2=\bar{m}_1^2+\bar{m}_1$ with $m_1=Ly$ and $\bar{m}_1=L(x+z)$,
\begin{align*}
 \mathbb{E}[T'_{\tau_1, \tau_2}(N)]&= \frac{1}{L^2} \left[ \frac{\tau_2 - \tau_1}{1- \tau_2 + \tau_1} (\bar{m}_2-\bar{m}_1)  + \frac{1-\tau_2 + \tau_1}{\tau_2 - \tau_1} (m_{2} -m_1)- 2m_{1}\bar{m}_{1} \right] \\
 &= \frac{\tau_2 - \tau_1}{1- \tau_2 + \tau_1} (x+z)^2  + \frac{1-\tau_2 + \tau_1}{\tau_2 - \tau_1} y^2- 2y(x+z)\enspace,
 \end{align*}
 which gives \eqref{MomentsT'_E}.
Moreover,
\begin{multline*}
T'_{\tau_1, \tau_2}(N)= \frac{1}{L^2} \Bigg[ \frac{\tau_2 - \tau_1}{1- \tau_2 + \tau_1}  \left( \left( N(0,\tau_1] + N(\tau_2,1] \right)^2 - \left( N(0,\tau_1]  +N(\tau_2,1] \right)  \right) \\
+ \frac{1- \tau_2 + \tau_1}{\tau_2 - \tau_1}  \left( N(\tau_1,\tau_2]^2  - N(\tau_1,\tau_2]  \right) -2 N(\tau_1,\tau_2] \left( N(0, \tau_1]+ N(\tau_2,1] \right) \Bigg]\enspace,
\end{multline*}
and then
\begin{align*}
T'_{\tau_1, \tau_2}(N)-  \mathbb{E}[T'_{\tau_1, \tau_2}(N)]=& \frac{1}{L^2} \Bigg[ \frac{\tau_2 - \tau_1}{1- \tau_2 + \tau_1}  \Big( (N(0,\tau_1]+N(\tau_2, 1]-\bar{m}_{1})^2 \\
 &+(2 \bar{m}_{1} -1) \left(    N(0,\tau_1]+N(\tau_2,1] -\bar{m}_{1} \right)  -\bar{m}_{1}\Big)\\ 
 &+ \frac{1- \tau_2 + \tau_1}{\tau_2 - \tau_1}  \Big( (N(\tau_1,\tau_2]-m_{1})^2  +(2m_{1}-1) (N(\tau_1,\tau_2]-m_{1})  -m_{1}\Big)\\
  &-2 (N(\tau_1, \tau_2]- m_{1}) \Big( (N(0,\tau_1]+N(\tau_2,1]-\bar{m}_{1}) + \bar{m}_{1}\Big)\\
  & -2m_{1} \left( N(0, \tau_1] +N(\tau_2,1]  - \bar{m}_{1} \right) \Bigg] \enspace.
\end{align*}
This entails
\begin{align*}
\mathrm{Var}(T'_{\tau_1, \tau_2}(N))= \frac{1}{L^4} \Bigg[ &\left( \frac{\tau_2 - \tau_1}{1- \tau_2 + \tau_1} \right)^2  \left(\bar{c}_4+ (2\bar{m}_{1}-1)^2 \bar{c}_2+ \bar{m}_{1}^2 + 2(2\bar{m}_1 -1)\bar{c}_3 -2\bar{m}_{1} \bar{c}_{2}\right) \\
 & +\left(  \frac{1- \tau_2 + \tau_1}{\tau_2 - \tau_1} \right)^2   \left( c_{4}+ (2m_{1}-1)^{2}c_{2}  + m_{1}^2 + 2(2m_{1}-1)c_{3} -2m_{1} c_{2} \right)\\
 &+ 4 \left( c_{2}\bar{m}_2  + m_{1}^2\bar{c}_2\right)+ 2 ( c_{2}- m_{1}) (\bar{c}_2-\bar{m}_1)\\
 & -4m_1 \frac{ \tau_2 - \tau_1}{1- \tau_2 + \tau_1}  \left(\bar{c}_3+(2\bar{m}_1-1)\bar{c}_2 \right)\\
 &- 4\bar{m}_1 \frac{1- \tau_2 + \tau_1}{\tau_2 - \tau_1} \left(c_{3}+(2m_{1}-1)c_{2}  \right) \Bigg] \enspace.
\end{align*}
This leads using  Lemma \ref{momentPoisson} to
\begin{align*}
\mathrm{Var}(T'_{\tau_1, \tau_2}(N))= \frac{1}{L^4} \Bigg[ &\left( \frac{\tau_2 - \tau_1}{1- \tau_2 + \tau_1} \right)^2 \left( 4L^3(x+z)^3+2L^2(x+z)^2\right)\\
& +\left(  \frac{1- \tau_2 + \tau_1}{\tau_2 - \tau_1} \right)^2   \left( 4L^3y^3+2L^2y^2\right)\\
 &+ 4 \left(Ly (L^2(x+z)^2 +L(x+z))  + L^2y^2L(x+z)\right)\\
 & -4Ly  \frac{ \tau_2 - \tau_1}{1- \tau_2 + \tau_1}  \left(L(x+z)+(2L(x+z)-1)L(x+z) \right)\\
 & - 4 L(x+z) \frac{1- \tau_2 + \tau_1}{\tau_2 - \tau_1} \left(Ly+(2Ly-1)Ly  \right) \Bigg] \enspace.
\end{align*}
The second statement of Lemma \ref{MomentsT'} given in \eqref{MomentsT'_V} finally follows from direct computations.
\end{proof}

 \begin{lemma}[Conditional quantile bound for $T'_{\tau_1, \tau_2}(N)$] \label{quantile_T'}

Assume that $N$ is a homogeneous Poisson process with a constant intensity $\lambda_0$ with respect to the measure $\Lambda=Ldt$. For $\tau_1$ and $\tau_2$ in $(0,1)$ such that $0 < \tau_1 < \tau_2 \leq 1$, $u$ in $(0,1)$ and $n$ in $\mathbb{N}$, let $t'_{n,\tau_1, \tau_2}(1-u)$ the $(1-u)$-quantile of the conditional distribution of $T'_{\tau_1, \tau_2}(N)$ defined by \eqref{def_T'} given the event $\{N_1=n\}$. Then
$$t'_{n,\tau_1, \tau_2}(1-u) \leq \frac{C}{L^2} \!\! \left( 5n  \log \! \left(\!  \frac{2.77 }{u} \! \right)\!\! + 3\max \left( \frac{1- \tau_2 + \tau_1}{\tau_2 - \tau_1},\frac{\tau_2 - \tau_1}{1- \tau_2 + \tau_1}   \right)    \log^2\!\! \left(\!\frac{2.77 }{u} \! \right)\!\!  \right)     \!\!   \enspace,$$
with $C= \min_{\varepsilon>0} \pa{e(1+\varepsilon^{-1})^2 (2.5+32 \varepsilon^{-1}} + \pa{(2\sqrt{2}(2+\varepsilon+\varepsilon^{-1})) \vee ((1+\varepsilon)^2)/\sqrt{2}}\enspace.$\end{lemma}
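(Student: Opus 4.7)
}

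The plan is to pass to the conditional world where $\{N_1=n\}$, recognise $T'_{\tau_1,\tau_2}(N)$ as a degenerate $U$-statistic of order 2 based on i.i.d. uniform variables, and then apply the exponential inequality of Houdré and Reynaud-Bouret \cite{HoudreRB}. First, observe that under $\hzero$, conditionally on $\{N_1=n\}$, the points of $N$ are distributed as an i.i.d.\ sample $U_1,\ldots,U_n$ with uniform law on $[0,1]$, and this conditional law is free of $\lambda_0$. Denote $a=\sqrt{(\tau_2-\tau_1)/(1-\tau_2+\tau_1)}$ and $b=\sqrt{(1-\tau_2+\tau_1)/(\tau_2-\tau_1)}$, so that $ab=1$. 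Writing $A(N)=N(0,\tau_1]+N(\tau_2,1]$ and $B(N)=N(\tau_1,\tau_2]$, an elementary expansion gives $L^2 T'_{\tau_1,\tau_2}(N)=(aA(N)-bB(N))^{2}-(a^{2}A(N)+b^{2}B(N))$, which in terms of $\psi_{\tau_1,\tau_2}=-a(\mathds{1}_{(0,\tau_1]}+\mathds{1}_{(\tau_2,1]})+b\,\mathds{1}_{(\tau_1,\tau_2]}$ equals $\bigl(\sum_i\psi(U_i)\bigr)^2-\sum_i\psi(U_i)^2 = \sum_{i\neq j}\psi(U_i)\psi(U_j)$. A direct computation yields $\int_0^1\psi=0$ and $\|\psi\|_2^2=1$, so this is a canonical (completely degenerate) $U$-statistic with rank-one symmetric kernel $h(x,y)=\psi(x)\psi(y)$.

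Next, I would apply the Houdré–Reynaud-Bouret inequality \cite{HoudreRB} to $U_n=\sum_{i\neq j}h(U_i,U_j)$: there exist absolute constants $\kappa_1,\kappa_2$ such that for every $\eta>0$,
\[
\P\bigl(U_n\geq \kappa_1(\mathbf{C}\eta^{1/2}+\mathbf{D}\eta+\mathbf{B}\eta^{3/2}+\mathbf{A}\eta^{2})\bigr)\leq \kappa_2 e^{-\eta},
\]
where $\mathbf{A}=\|h\|_\infty$, $\mathbf{B}^2\sim n\sup_x E[h(x,U)^2]$, $\mathbf{C}$ is the operator-type norm and $\mathbf{D}^2\sim n(n-1)E[h^2]$. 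The four quantities can then be computed explicitly from the rank-one structure: $\mathbf{A}=\|\psi\|_\infty^{2}=\max(a^{2},b^{2})=\max\bigl(\tfrac{\tau_2-\tau_1}{1-\tau_2+\tau_1},\tfrac{1-\tau_2+\tau_1}{\tau_2-\tau_1}\bigr)$; since $\|\psi\|_2=1$, one gets $\mathbf{B}^2\leq n\|\psi\|_\infty^{2}\|\psi\|_2^{2}=n\mathbf{A}$; the rank-one form of $h$ together with Cauchy–Schwarz and $\|\psi\|_2=1$ yield $\mathbf{C}\lesssim n$; and $\mathbf{D}^2\leq n(n-1)\|\psi\|_2^{4}\leq n^{2}$.

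The third step is the clean-up that produces the Bernstein-type form $c(n\eta+\mathbf{A}\eta^2)$ appearing in the statement. I would use AM–GM applied to a free parameter $\varepsilon>0$ (hence the $\varepsilon$-minimisation in the constant $C$) to absorb the intermediate powers of $\eta$: the cross term $\mathbf{B}\eta^{3/2}\leq \sqrt{n\eta\cdot \mathbf{A}\eta^{2}}\leq \tfrac{\varepsilon}{2}\mathbf{A}\eta^{2}+\tfrac{1}{2\varepsilon}n\eta$, while $\mathbf{D}\eta^{1/2}\leq n\eta^{1/2}$ is handled through $\sqrt{ab}\leq \varepsilon a+b/(4\varepsilon)$ to redistribute to $n\eta$ plus a purely numerical quantity that gets absorbed in the leading constant, and $\mathbf{C}\eta\lesssim n\eta$ directly. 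Gathering everything yields a bound of the form $\kappa_1'(\,5n\eta+3\mathbf{A}\eta^{2})/L^{2}$ (inside the conditioning).

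Finally, inverting in the standard way, choosing $\eta=\log(2.77/u)$ so that $\kappa_2 e^{-\eta}\leq u$ with $\kappa_2=2.77$, gives
\[
t'_{n,\tau_1,\tau_2}(1-u)\leq \frac{C}{L^{2}}\Bigl(5n\log(2.77/u)+3\max\bigl(\tfrac{1-\tau_2+\tau_1}{\tau_2-\tau_1},\tfrac{\tau_2-\tau_1}{1-\tau_2+\tau_1}\bigr)\log^{2}(2.77/u)\Bigr),
\]
with the displayed expression of $C$ obtained by tracking the $\varepsilon$-splittings. The main obstacle will be Step~3: bounding $\mathbf{C}$ rigorously (it is a supremum of bilinear expectations over families of $L^2$-normalised functions, and one must exploit the rank-one structure carefully), and then managing the four-parameter AM–GM optimisation cleanly enough to recover the exact constant $C=\min_{\varepsilon>0}\bigl(e(1+\varepsilon^{-1})^{2}(2.5+32\varepsilon^{-1})+((2\sqrt{2}(2+\varepsilon+\varepsilon^{-1}))\vee((1+\varepsilon)^2/\sqrt{2}))\bigr)$ announced in the lemma.
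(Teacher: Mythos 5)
Your proposal follows essentially the same route as the paper: condition on $\{N_1=n\}$, rewrite $L^2 T'_{\tau_1,\tau_2}(N)$ as the degenerate rank-one $U$-statistic $\sum_{i\neq j}\psi_{\tau_1,\tau_2}(U_i)\psi_{\tau_1,\tau_2}(U_j)$ with $\|\psi_{\tau_1,\tau_2}\|_2=1$, apply Theorem 3.4 of Houdr\'e--Reynaud-Bouret, compute the four constants explicitly from the rank-one structure, and absorb the mixed powers of $x$ to get the Bernstein-type form. Your anticipated difficulty about bounding the operator-type constant is handled cleanly in the paper by the rank-one structure and two applications of Cauchy--Schwarz, exactly as you guessed.

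One genuine conceptual slip in your plan: you attribute the $\varepsilon$-minimisation in $C$ to the final clean-up (``hence the $\varepsilon$-minimisation in the constant $C$''), as if you introduce a free AM--GM parameter and then optimise it. That is not where $C$ comes from. The explicit constant $C=\min_{\varepsilon>0}\pa{e(1+\varepsilon^{-1})^2(2.5+32\varepsilon^{-1})}+\pa{(2\sqrt{2}(2+\varepsilon+\varepsilon^{-1}))\vee((1+\varepsilon)^2/\sqrt{2})}$ is already the absolute constant supplied by Theorem 3.4 of \cite{HoudreRB}; the $\varepsilon$ lives inside that theorem. The clean-up in the paper is parameter-free and elementary: the $A_3x^{3/2}$ term is absorbed by the plain AM--GM $2\sqrt{ab}\leq a+b$ (giving $2\sqrt{n\max(\cdot)}\,x^{3/2}\leq nx+\max(\cdot)\,x^2$), and the $A_1\sqrt{x}$ term is absorbed using $\sqrt{x}\leq x$ for $x=\log(2.77/u)\geq 1$, yielding the final $5nx+3\max(\cdot)x^2$. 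Trying to re-derive $C$ by tracking an $\varepsilon$-split in the clean-up, as you propose, would be both unnecessary and impossible.

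A small inconsistency worth flagging: in your statement of the Houdr\'e--Reynaud-Bouret inequality you pair the operator-type constant $\mathbf{C}$ with $\eta^{1/2}$ and the variance-type constant $\mathbf{D}$ with $\eta$, but in your subsequent clean-up you silently switch to $\mathbf{D}\eta^{1/2}$ and $\mathbf{C}\eta$, which is the correct pairing (the $L^2$/variance term goes with $\sqrt{x}$ and the operator-norm term with $x$). In this particular application both contributions are $\lesssim n$ so the swap is harmless, but in the writeup you should use the correct assignment consistently.
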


\begin{proof}
For $n\geq 2$ and conditionally on the event $\{N_1 =n\},$ the points of the process $N$ obey the same law as a $n$-sample $(U_1,\ldots,U_n)$ of i.i.d. random variables uniformly distributed on $(0,1).$ $t'_{n,\tau_1, \tau_2}(1-u)$ is thus equal to the $(1-u)$-quantile of the following $U$-statistic of order $2$
\[T'_{n,L,\tau^1,\tau^2}=\frac{1}{L^2} \sum_{i \neq j=1}^{n} \psi_{\tau_1, \tau_2}(U_i) \psi_{\tau_1, \tau_2}(U_j)= \sum_{i=2}^{n} \sum_{j=1}^{i-1} H_{L,\tau_1, \tau_2}(U_i,U_j) \enspace,\]
where $H_{L,\tau_1, \tau_2}(x,y)=  2 \psi_{\tau_1, \tau_2}(x) \psi_{\tau_1, \tau_2}(y)/L^2$ for any $x$ and $y$ in $[0,1]$.

Since for all $0 \leq \tau_1 < \tau_2 \leq 1$, $\psi_{\tau_1, \tau_2}$ is orthogonal to $\psi_{0}$ (in $\bbL^2([0,1])$), the variables $\psi_{\tau_1, \tau_2}(U_i)$ are centered and we can apply Theorem 3.4 in \cite{HoudreRB}. We obtain that there exists some absolute constant $C>0$ such that for all $x>0$ and for all $n \geq 2$
$$ \mathbb{P} \left( T'_{n,L,\tau^1,\tau^2} \geq C( A_{1} \sqrt{x} + A_{2} x +A_{3} x^{3/2} + A_{4} x^{2}  )    \right) \leq 2.77 e^{-x} \enspace,$$
where
\begin{align*}
A_{1}^{2}&= n^2 \mathbb{E} [H_{\tau_1, \tau_2}(U_1,U_2)^2] \enspace, \\
A_{2} &= \sup\Bigg( \Bigg|    \mathbb{E} \Bigg[  \sum_{i=2}^{n} \sum_{j=1}^{i-1} H_{L,\tau_1, \tau_2}(U_i, U_j) f_{i}(U_i) g_{j}(U_j)   \Bigg] \Bigg|~,\\
&\quad\mathbb{E} \Bigg[    \sum_{i=2}^{n} f_{i}^{2}(U_i) \Bigg] \leq 1,~  \mathbb{E} \Bigg[    \sum_{j=1}^{n-1} g_{j}^{2}(U_j) \Bigg] \leq 1,~\textrm{$f_{i}  $ and $g_{i}$ Borel measurable functions}    \Bigg)\enspace, \\
A_{3}^{2}&=n \sup_{y \in [0,1]} \int_{0}^{1} H_{L,\tau_1, \tau_2}^2(x,y)dx \enspace, \\
A_{4}&= \sup_{x,y \in [0,1]} \vert H_{L,\tau_1, \tau_2}(x,y) \vert \enspace.
\end{align*}
From Theorem 3.4 in \cite{HoudreRB}, notice that the constant $C$ can be taken equal to $C= \min_{\varepsilon>0} \pa{e(1+\varepsilon^{-1})^2 (2.5+32\varepsilon^{-1}} + \pa{(2\sqrt{2}(2+\varepsilon+\varepsilon^{-1})) \vee ((1+\varepsilon)^2 /\sqrt{2})}.$ 

Let us now evaluate $A_{1}, A_{2}, A_{3}$ and $A_{4}$. ~\\
Since the function $ \psi_{\tau_1, \tau_2}$ has a $\bbL_2$-norm equal to $1$ and the $U_i$'s are independent, we get
\[A_{1}^{2} = \frac{4n^2}{L^{4}} \mathbb{E}\cro{ \psi_{\tau_1, \tau_2}^2(U_1)}^2= \frac{4n^{2}}{L^{4}}\enspace.\]
The independence of the $U_i$'s and the Cauchy-Schwarz inequality applied twice also yields
\begin{align*}
\Bigg|    \mathbb{E} \Bigg[ & \sum_{i=2}^{n} \sum_{j=1}^{i-1} H_{L,\tau_1, \tau_2}(U_i, U_j) f_{i}(U_i) g_{j}(U_j)   \Bigg] \Bigg|\\
&\leq \frac{2}{L^2} \sum_{i=2}^{n} \sum_{j=1}^{i-1} \left| \int_{0}^{1} \psi_{\tau_1, \tau_2}(x) f_{i}(x) dx    \right|     \left|  \int_{0}^{1} \psi_{\tau_1, \tau_2}(y)  g_{j}(y) dy    \right| \\
&\leq \frac{2}{L^2} \sum_{i=2}^{n} \sqrt{\int_{0}^{1} f_{i}^2(x) dx}  \sum_{j=1}^{n-1} \sqrt{\int_{0}^{1}   g_{j}^2(y) dy} \\
&\leq \frac{2(n-1)}{L^2} \sqrt{\sum_{i=2}^{n} \underbrace{\int_{0}^{1} f_{i}^2(x) dx}_{= \mathbb{E}[f_{i}^2(U_i)]}}  \sqrt{\sum_{j=1}^{n-1}\underbrace{ \int_{0}^{1} g_{j}(y)^2 dy}_{= \mathbb{E}[g_{j}^2(U_j)]}} \enspace,
\end{align*} 
hence
$ A_{2} \leq 2n/L^2.$
Moreover,
\begin{align*}
A_{3}^{2} &= \frac{4n}{L^4} \sup_{y \in [0,1]} \int_{0}^{1}  \psi_{\tau_1, \tau_2}^2 (x) \psi_{\tau_1, \tau_2}^2 (y) dx\\
&=  \frac{4n}{L^4} \sup_{y \in [0,1]} \psi_{\tau_1, \tau_2}^2 (y)\\
&= \frac{4n}{L^{4}} \max \left( \frac{1- \tau_2 + \tau_1}{\tau_2 - \tau_1}~,~ \frac{\tau_2 - \tau_1}{1- \tau_2 + \tau_1}   \right)\enspace.
\end{align*}
To conclude,
$$A_{4} = \frac{2}{L^2} \left( \sup_{x \in [0,1]} \vert \psi_{\tau_1, \tau_2}(x) \vert \right)^2 = \frac{2}{L^2} \max \left( \frac{1- \tau_2 + \tau_1}{\tau_2 - \tau_1}~,~ \frac{\tau_2 - \tau_1}{1- \tau_2 + \tau_1}   \right)\enspace.$$
We finally obtain for all $x>0$ and for all $n \geq 2$
\begin{multline*}
 \mathbb{P} \Bigg(T'_{n,L,\tau^1,\tau^2}\geq \frac{C}{L^2} \Bigg( 2n \sqrt{x} + 2n x +2\sqrt{n\max \left( \frac{1- \tau_2 + \tau_1}{\tau_2 - \tau_1}~,~ \frac{\tau_2 - \tau_1}{1- \tau_2 + \tau_1}   \right)} x^{3/2}\\
  + 2 \max \left( \frac{1- \tau_2 + \tau_1}{\tau_2 - \tau_1}~,~ \frac{\tau_2 - \tau_1}{1- \tau_2 + \tau_1}   \right) x^{2}  \Bigg)   \Bigg)\leq 2.77 e^{-x}\enspace.
\end{multline*}
Then notice that for all $x >0$
$$ 2\sqrt{n\max \left( \frac{1- \tau_2 + \tau_1}{\tau_2 - \tau_1}~,~ \frac{\tau_2 - \tau_1}{1- \tau_2 + \tau_1}   \right)}x^{3/2} \leq nx + \max \left( \frac{1- \tau_2 + \tau_1}{\tau_2 - \tau_1}~,~ \frac{\tau_2 - \tau_1}{1- \tau_2 + \tau_1}   \right)x^2,$$
whereby
\begin{multline} \label{quantileT'_eq1}
 \mathbb{P} \left(T'_{n,L,\tau^1,\tau^2}\geq \frac{C}{L^2} \left( 2n \sqrt{x} + 3n x + 3 \max \left( \frac{1- \tau_2 + \tau_1}{\tau_2 - \tau_1}~,~ \frac{\tau_2 - \tau_1}{1- \tau_2 + \tau_1}   \right) x^{2}  \right)    \right)\\
  \leq 2.77 e^{-x}\enspace.
 \end{multline}
By convention, \eqref{quantileT'_eq1} also holds for $n$ in $\lbrace 0,1 \rbrace$ such that $T'_{n,L,\tau^1,\tau^2}=0.$ We then obtain for all $n$ in $\mathbb{N}$ and  $x= \log \left(  2.77 /u  \right)$ (which satisfies $x \geq 1$ for all $u$ in $(0,1)$)
\[\mathbb{P} \left(T'_{n,L,\tau^1,\tau^2} \geq \frac{C}{L^2} \!\! \left( 5n  \log \! \left(\!  \frac{2.77 }{u} \! \right)\!\! + 3\max \left( \frac{1- \tau_2 + \tau_1}{\tau_2 - \tau_1},\frac{\tau_2 - \tau_1}{1- \tau_2 + \tau_1}   \right)    \log^2\!\! \left(\!\frac{2.77 }{u} \! \right)\!\!  \right)     \!\!  \right) \leq u \enspace.\]
This allows to end the proof.
\end{proof}

\begin{lemma}[Conditional quantile bound for $\max/\min_{\tau\in [0,1-\ell]}{N(\tau, \tau + \ell]}$] \label{bquantile_maxminNbis_u}
Let $L \geq 1$ and $0<\ell<1$. For any $n$ in $\N\setminus\{0\}$, the $(1-\alpha)$-quantile $b_{n,\ell}^+(1-\alpha)$ of the conditional distribution of $\max_{\tau\in [0,1-\ell]}{N(\tau, \tau + \ell]}$ given $N_1=n$ under $\hzero$ satisfies
$$ b_{n,\ell}^+(1- \alpha) \leq \ell n + \frac{n}{2} g^{-1} \pa{\frac{32}{n} \log \pa{\frac{320}{\alpha}}}\enspace,$$
where $g$ is defined by \eqref{defg}. Moreover, $b_{0,\ell}^+(1-\alpha)=0.$

For any $n$ in $\N$, the $\alpha$-quantile $b_{n,\ell}^-(\alpha)$ of the conditional distribution of  $\min_{\tau\in [0,1-\ell]}{N(\tau, \tau+ \ell]}$ given $N_1=n$ under $\hzero$ satisfies
$$ b_{n,\ell}^-(\alpha) \geq  \ell n - 4\sqrt{2n \log \pa{\frac{320}{\alpha}} } \enspace.$$
\end{lemma}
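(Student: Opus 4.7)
\textbf{Proof proposal for Lemma \ref{bquantile_maxminNbis_u}.}

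The plan is to reduce the conditional problem to a statement about the empirical process of i.i.d. uniform random variables, and then to apply Mason--Shorack--Wellner type exponential inequalities from \cite{ShorackWellner}. First, recall that under $\hzero$, given $\lbrace N_1 = n\rbrace$ with $n \geq 1$, the jump points of $N$ have the distribution of an i.i.d.\ sample $(U_1,\ldots,U_n)$ of uniform variables on $[0,1]$. Writing $F_n$ for the corresponding empirical distribution function, one obtains the distributional identity
\[
\bigl(N(\tau, \tau+\ell]\bigr)_{\tau\in[0,1-\ell]} \ \big|\ N_1 = n \ \overset{d}{=}\ \bigl(n(F_n(\tau+\ell)-F_n(\tau))\bigr)_{\tau\in[0,1-\ell]}\enspace.
\]
The case $n=0$ is immediate since then $N \equiv 0$ and the max is $0$ while $\ell n - 4\sqrt{2n\log(320/\alpha)} \leq 0$.

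Then, for the upper quantile $b_{n,\ell}^+(1-\alpha)$, I would rely on a Bennett-type inequality for the oscillation of the empirical process,
\[
\P\!\left(\sup_{\tau\in[0,1-\ell]}\bigl( F_n(\tau+\ell)-F_n(\tau) -\ell\bigr) \geq \frac{\lambda}{2}\right) \leq 320 \exp\!\left(-\frac{n}{32}\, g(\lambda)\right)\enspace,
\]
which is of the exact form needed to obtain the stated bound by setting the right-hand side equal to $\alpha$ and solving in $\lambda$. Such an inequality is established in \cite{ShorackWellner} (Chapter 11, oscillation modulus inequalities) by combining Bennett's inequality for the binomial count $nF_n(\tau+\ell)-nF_n(\tau)\sim\mathrm{Bin}(n,\ell)$ with a discretisation/peeling argument on the variable $\tau$ along a dyadic grid of size $O(n)$, followed by a monotonicity control for the remainder. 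Inverting $g$ via the equation $g(\lambda) = (32/n)\log(320/\alpha)$ then produces $\lambda = g^{-1}\!\bigl((32/n)\log(320/\alpha)\bigr)$, yielding the announced upper bound on $b_{n,\ell}^+(1-\alpha)$.

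For the lower quantile $b_{n,\ell}^-(\alpha)$, the same scheme applies, but I would exploit the sub-Gaussian behaviour of the lower Bennett tail. Namely, using the elementary inequality $g(-x)\geq x^2/2$ for $x\in[0,1]$ (or equivalently the Hoeffding bound for $\mathrm{Bin}(n,\ell)$ in the lower direction), the corresponding oscillation inequality takes the form
\[
\P\!\left(\sup_{\tau\in[0,1-\ell]}\bigl(\ell - (F_n(\tau+\ell)-F_n(\tau))\bigr) \geq \frac{t}{n}\right) \leq 320 \exp\!\left(-\frac{t^{2}}{32 n}\right)\enspace,
\]
which, after equating the right-hand side to $\alpha$, gives $t = 4\sqrt{2n\log(320/\alpha)}$ and hence the claimed lower bound on $b_{n,\ell}^-(\alpha)$.

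The main difficulty will be not the chaining/peeling argument itself, which is standard, but the careful tracking of the explicit numerical constants $320$ and $32$: these constants encode the discretisation loss (number of grid points, overlap between intervals of length $\ell$, and the monotonicity slack). I would either quote the relevant inequalities verbatim from Chapter 11 of \cite{ShorackWellner} with their explicit constants, or re-derive them compactly by cutting $[0,1-\ell]$ into $\lceil 1/\ell\rceil$ consecutive blocks, applying Bennett pointwise on a dyadic sub-grid inside each block, and absorbing the monotonicity error into the factor $1/2$ in front of $g^{-1}$. The latter option gives full control on the constants at the price of a slightly tedious union-bound computation.
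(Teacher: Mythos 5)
Your proposal takes essentially the same route as the paper: condition on $\{N_1=n\}$ to reduce to the empirical process of $n$ i.i.d.\ uniform variables, bound the increment $n(F_n(\tau+\ell)-F_n(\tau))$ uniformly in $\tau$ by the oscillation modulus, and invoke the Mason--Shorack--Wellner exponential inequality (Bennett-type for the upper tail, sub-Gaussian for the lower tail) to obtain the constants $320$ and $32$. Two small remarks: the relevant result is Inequality 1 on page 545, Chapter~14 of \cite{ShorackWellner} (not Chapter~11), and the inequality there is stated for the full modulus $\sup_{0\le z\le 1/2}\sup_{t\in[0,1-z]}|\mathbb{U}_n(t+z)-\mathbb{U}_n(t)|$, so the fixed-$\ell$ bound you quote is obtained by first dominating the sup over $\tau$ at fixed $\ell\le 1/2$ by this double supremum, exactly as the paper does; your sketch of re-deriving the constants from scratch is unnecessary.
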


\begin{proof}
Let $n$ in $\mathbb{N}\setminus\set{0}$. If $U_1,\ldots,U_n$ denote independent uniform random variables on $(0,1)$, let us consider the empirical process $\mathbb{U}_n$ associated with these random variables, defined for $0 \leq t \leq 1$ by
\[ \mathbb{U}_n(t) = \sqrt{n} \pa{\frac{1}{n} \sum_{i=1}^{n} \mathds{1}_{X_i \leq t} -t}\enspace.\]
Let $\ell$ in $(0,1/2]$ and $x>0$ satisfying
\begin{equation} \label{bquantileDeltaLengthN1proofxg}
 x \geq n \ell + \frac{n}{2} g^{-1} \pa{\frac{32}{n} \log \pa{\frac{320}{\alpha}}}\enspace.
\end{equation}
Then, we may compute for all $\lambda_0$ in $(0,R)$
\begin{align*}
P_{\lambda_0} \Bigg(&\max_{t \in [0,1-\ell]} N(t,t+\ell] >x ~\Big\vert ~ N_1 =n\Bigg)\\
 &= P_{\lambda_0} \pa{\max_{t \in [0,1-\ell]} \pa{\sum_{i=1}^{n} \mathds{1}_{X_i \leq t+\ell} - \sum_{i=1}^{n} \mathds{1}_{X_i \leq t} } >x~\Big\vert ~ N_1=n} \\
&= P_{\lambda_0} \pa{\! \max_{t \in [0,1-\ell]} \sqrt{n}\pa{\! \frac{1}{n}\sum_{i=1}^{n} \mathds{1}_{X_i \leq t+\ell} -\frac{1}{n} \sum_{i=1}^{n} \mathds{1}_{X_i \leq t} -\ell \! } \! >\! \sqrt{n}\pa{\frac{x}{n}-\ell}\Big\vert  N_1=n \!} \\
&\leq P_{\lambda_0} \pa{\max_{t \in [0,1-\ell]} \vert \mathbb{U}_n(t+\ell) - \mathbb{U}_n(t) \vert > \sqrt{n}\pa{\frac{x}{n}-\ell}\Big\vert  N_1=n } \\
&\leq P_{\lambda_0} \pa{\max_{0 \leq z \leq \frac{1}{2}}\max_{t \in [0,1-z]} \vert \mathbb{U}_n(t+z) - \mathbb{U}_n(t) \vert > \sqrt{n}\pa{\frac{x}{n}-\ell}\Big\vert  N_1=n }\enspace.
\end{align*}
We shall use now an inequality for controlling the oscillations of the empirical process which is due to Mason, Shorack and Wellner, and which can be found in Chapter 14 of \cite{ShorackWellner} page 545 under the name of Inequality 1. We get
\begin{align*}
\sup_{\lambda_0 \in (0,R)} P_{\lambda_0} \pa{\max_{t \in [0,1-\ell]} N(t,t+\ell] >x ~\Big\vert ~ N_1 =n} &\leq 320 \exp \pa{-\frac{n}{32} g \pa{2\pa{\frac{x}{n}-\ell}   }} \\
 &\leq \alpha ~~\text{using (\ref{bquantileDeltaLengthN1proofxg})}\enspace.
 \end{align*}
 This yields $ b_{n,\ell}^+(1- \alpha)\leq n \ell + n g^{-1} \pa{32 \log \pa{320/\alpha}/n}/2$ for every $n$ in $\N\setminus\{0\}$, which is the first statement of Lemma \ref{bquantile_maxminNbis_u}. The fact that $b_{0,\ell}^+(1- \alpha)=0$ is obvious.
 
Now let again $n$ in $\mathbb{N}\setminus\set{0},$ $\varepsilon >0$ and $y$ in $\mathbb{R}$ satisfying 
\begin{equation} \label{bquantileDeltaLengthN1proofyg}
 y \leq n \ell - 4 \sqrt{ 2n\log \pa{\frac{320}{\alpha - \varepsilon}}} \enspace.
\end{equation}

We compute for all $\lambda_0$ in $(0,R)$
\begin{align*}
P_{\lambda_0} \Bigg(&\min_{t \in [0,1-\ell]} N(t,t+\ell] \leq  y ~\Big\vert ~ N_1 =n\Bigg) \\
&= 
 P_{\lambda_0} \pa{\min_{t \in [0,1-\ell]}  \pa{\mathbb{U}_n(t+\ell) - \mathbb{U}_n(t)}  \leq \sqrt{n}\pa{\frac{y}{n}-\ell}~\Big\vert~  N_1=n } \\
 &= 
 P_{\lambda_0} \pa{\max_{t \in [0,1-\ell]} \pa{ \mathbb{U}_n(t) - \mathbb{U}_n(t+\ell)}  \geq \sqrt{n}\pa{\ell-\frac{y}{n}}~\Big\vert~  N_1=n } \\
&\leq P_{\lambda_0} \pa{\max_{0 \leq z \leq \frac{1}{2}}\max_{t \in [0,1-z]} \pa{ \mathbb{U}_n(t) -\mathbb{U}_n(t+z)}  \geq \sqrt{n}\pa{\ell-\frac{y}{n}}~\Big\vert~  N_1=n }\enspace.
\end{align*}
Applying now the second part of  Inequality 1 page 545 in \cite{ShorackWellner}, we obtain
\begin{align*}
 \sup_{\lambda_0 \in (0,R)} P_{\lambda_0} \pa{\min_{t \in [0,1-\ell]} N(t,t+\ell]  \leq y ~\Big\vert ~ N_1 =n} &\leq 320 \exp \pa{-\frac{n}{32}\pa{\ell-\frac{y}{n}}^2   } \\
 &\leq \alpha - \varepsilon <\alpha   ~~\text{using (\ref{bquantileDeltaLengthN1proofyg})}\enspace.
 \end{align*}

This entails $  b_{n,\ell}^-(\alpha)> n \ell -4\sqrt{2n \log \pa{320/(\alpha- \varepsilon)}}$ for all $\varepsilon >0$. With $\varepsilon$ tending to $0$, we get $  b_{n,\ell}^-(\alpha) \geq n \ell -4\sqrt{2n \log \pa{320/\alpha}}$,  which leads to the second statement of Lemma \ref{bquantile_maxminNbis_u} with the obvious fact that $b_{0,\ell}^-(\alpha)=0.$
 \end{proof}

\begin{lemma}[Conditional quantile bound for $\sup_{\ell \in (0,1- \tau^{*})} S'_{\delta^*,\tau^*,\tau^*+\ell}(N)$] \label{QuantilessupShifted_u}
Let $L \geq 1$ and $n_0 \geq 1$. For all $0 \leq n \leq n_0 L$, the $(1-\alpha)$-quantile $s_{n,\delta^*,\tau^*,L}^{'+}(1-\alpha)$ of the conditional distribution of $\sup_{\ell \in (0,1- \tau^{*})}S'_{\delta^*,\tau^*,\tau^*+\ell}(N)$ given $N_1=n$ under $(H_0)$ with $S_{\delta^*,\tau^*,\tau^*+\ell}'(N)$ defined by \eqref{stat_alt5_u} satisfies
$$ s_{n,\delta^*,\tau^*,L}^{'+}(1-\alpha) \leq Q(n_0, \delta^{*}{}, \alpha)\enspace ,$$
where
$$ Q(n_0, \delta^{*}{}, \alpha) =  \log(2)  \frac{(6 n_0 + \vert \delta^{*}{} \vert) (6 n_0 + 2\vert \delta^{*}{} \vert /3)}{\delta^{*}{}^2} + \log \pa{\frac{\pi^2}{3 \alpha}} \frac{9n_0 + \vert \delta^{*}{} \vert}{3\vert \delta^{*}{} \vert} \enspace.$$

\end{lemma}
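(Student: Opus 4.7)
The approach is to work conditionally on $N_1 = n$ with $0 \le n \le n_0 L$. The case $n = 0$ is trivial: then $N(\tau^*, \tau^* + \ell] = 0$ almost surely, so $S'_{\delta^*, \tau^*, \tau^*+\ell}(N) = -|\delta^*| L \ell(1-\ell)/2 \le 0 \le Q(n_0, \delta^*, \alpha)$. For $n \ge 1$, the key observation, used repeatedly in Section \ref{Sec:unknownbaseline}, is that under $(H_0)$ the points of $N$ are distributed conditionally on $N_1 = n$ as an i.i.d.\ uniform sample of size $n$ on $[0,1]$; in particular $N(\tau^*, \tau^* + \ell]$ has a $\mathrm{Binom}(n, \ell)$ distribution. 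Writing $S'_{\delta^*, \tau^*, \tau^* + \ell}(N) = Y_\ell - |\delta^*| L \ell(1-\ell)/2$ with $Y_\ell := \mathrm{sgn}(\delta^*)(N(\tau^*, \tau^*+\ell] - n\ell)$, the goal reduces to controlling $\sup_{\ell \in (0, 1-\tau^*)} [Y_\ell - |\delta^*| L \ell(1-\ell)/2]$.

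The plan is then a dyadic peeling of the range of $\ell$: set $\ell_k = (1-\tau^*) 2^{-k+1}$ for $k \ge 1$, so that $(0, 1-\tau^*)$ is covered by the intervals $I_k = (\ell_{k+1}, \ell_k]$. On each $I_k$, the random process $\ell \mapsto N(\tau^*, \tau^* + \ell]$ is nondecreasing in $\ell$ and $\ell \mapsto n\ell$ is linear, so the supremum of $Y_\ell$ over $I_k$ can be bounded deterministically by the value of a centered binomial at one of the dyadic endpoints (chosen according to $\mathrm{sgn}(\delta^*)$) plus a correction of size $n(\ell_k - \ell_{k+1}) = n(1-\tau^*)2^{-k}$. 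At this dyadic endpoint I would apply the classical Bernstein/Bennett inequality for the binomial (as stated in \cite{BercuDelyonRio}): for every $u > 0$,
$$\mathbb{P}\Bigl( Y_{\ell_k} \ge \sqrt{2 n \ell_k(1-\ell_k)\,u} + u/3 \,\Big|\, N_1 = n \Bigr) \le e^{-u}.$$

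The final step is a union bound over levels $k \ge 1$, which I would distribute by choosing individual significance levels $\alpha_k = 6\alpha/(\pi^2 k^2)$ (or $3\alpha/(\pi^2 k^2)$ to also cover the pair of dyadic endpoints involved in the monotonicity argument, which is what produces the $\log(\pi^2/(3\alpha))$ factor in $Q$), so that $\sum_k \alpha_k = \alpha$, and setting $u_k = \log(1/\alpha_k)$. On each level, the shift $|\delta^*| L \ell(1-\ell)/2$ — which is of the same order $L(1-\tau^*) 2^{-k}$ as the deterministic correction up to a factor $|\delta^*|$ — must absorb the stochastic term $\sqrt{2 n \ell_k(1-\ell_k)\,u_k} + u_k/3$ plus the monotonicity correction $n(\ell_k - \ell_{k+1})$, uniformly in $k$ up to a residual that depends only on $n_0, |\delta^*|, \alpha$. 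Applying $\sqrt{ab} \le (a+b)/2$ to split the square root term, using the bound $n \le n_0 L$ to eliminate $L$, and summing the residuals over $k$ should yield exactly the two pieces of $Q(n_0, \delta^*, \alpha)$: the first $\log 2$-prefixed term from the constant residuals of the balancing step and the second from the logarithmic factors $u_k$.

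The main obstacle is careful tracking of numerical constants, particularly because $\ell(1-\ell)$ is not monotone in $\ell$, so the dyadic peeling must be adapted symmetrically around $\ell = 1/2$. This forces either a case split or a uniform estimate of the form $\ell(1-\ell) \ge \min(\ell, 1-\ell)/2$, and it is this choice that ultimately dictates the exact coefficients $6n_0$, $9n_0$, $2|\delta^*|/3$, the $\log 2$ factor, and the specific form $\pi^2/(3\alpha)$ of the logarithmic argument in the stated bound.
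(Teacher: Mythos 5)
There is a genuine gap in the proposed argument, and it lies precisely in the step you flag as needing only "careful tracking of numerical constants." With the dyadic partition $\ell_k = (1-\tau^*)2^{-k+1}$, the monotonicity correction on the interval $I_k=(\ell_{k+1},\ell_k]$ is $n(\ell_k - \ell_{k+1}) = n(1-\tau^*)2^{-k}$, while the shift available on $I_k$ is of order $\tfrac{|\delta^*|}{2}L\,\ell_{k+1}(1-\ell_{k+1}) \approx \tfrac{|\delta^*|}{4}L(1-\tau^*)2^{-k}$. Absorbing the correction into the shift therefore requires $n \lesssim \tfrac{|\delta^*|}{4}L$, but the only hypothesis is $n \le n_0 L$, and in the regime where the lemma is used (e.g.\ in the proof of Proposition~\ref{UBalt5_u} it is applied with $n_0 = 2R > 2|\delta^*|$) one has $n_0 \gg |\delta^*|/4$. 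So for small $k$ the residual $(n - c|\delta^*|L)\,(1-\tau^*)2^{-k}$ is of order $L$, not a constant depending on $(n_0,\delta^*,\alpha)$ alone, and the proof cannot close. In short: the dyadic scheme gives corrections that are too large at the coarse scales, where the shift is not yet big enough to compensate.

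The paper avoids this by replacing the dyadic grid with a partition on a \emph{logarithmic} scale adapted to $n$: setting $C(\delta^*,n_0)=|\delta^*|/(6n_0)$, it takes $\ell_k=\frac{3+2C}{3C^2}\frac{\log(k+1)}{n}$. With this choice the increments satisfy $n(\ell_k-\ell_{k-1}) \lesssim 1/k$ uniformly in $n$ and $L$, so the monotonicity corrections are summable and bounded by a pure constant. More importantly, the coefficient $C$ is tuned so that after applying $\sqrt{2ab}\le aC + b/(2C)$ to the Bernstein band, the term $Cn\ell_k(1-\ell_k)$ is exactly matched by the shift (via $3Cn_0=|\delta^*|/2$ and $n\le n_0L$), and the union-bound penalty $\tfrac{3+2C}{3C}\log k$ is \emph{algebraically identical} to $Cn\ell_{k-1}$, which is in turn dominated by $Cn\ell$ for $\ell>\ell_{k-1}$. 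That cancellation is what makes the bound $L$-free, and it is specific to a grid whose $k$-th node scales like $\log(k+1)/n$; it does not occur with a dyadic grid. The non-monotonicity of $\ell(1-\ell)$ is a comparatively minor issue, handled in the paper by an explicit symmetric split around $\ell=1/2$, and would not by itself rescue the dyadic scheme.
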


\begin{proof}

First recall that (see \eqref{stat_alt5_u})
\[S_{\delta^*,\tau^*,\tau^*+\ell}'(N)=  \mathrm{sgn}(\delta^{*}{}) \Big(N(\tau^*,\tau^*+\ell] - \ell N_1\Big) - \vert \delta^{*}{} \vert \ell (1-\ell)L/2 \enspace.\]

Since $N$ is an homogeneous Poisson process, the processes $(N_1, N(\tau^*,\tau^*+\ell])_{\ell \in (0,1-\tau^*]}$ and $(N_1, N(0,\ell])_{\ell \in (0,1-\tau^*]}$ have the same finite dimensional law and since $N$ is a right continuous process, we get that $\sup_{\ell \in (0,1- \tau^{*})} S'_{\delta^*,\tau^*,\tau^*+\ell}(N)$ is distributed as 
 $$ \sup_{\ell \in (0,1-\tau^*)} \left(  \mathrm{sgn}\pa{\delta^*}( N(0,\ell] - \ell N_1 ) - \frac{\vert \delta^{*}{} \vert}{2}\ell(1-\ell)L    \right) \enspace.$$ 

 As seen above, for $n\geq 1$ and conditionally on the event $\{N_1 =n\}$, the points of the process $N$ obey the same law as a $n$-sample $(U_1,\ldots,U_n)$ of i.i.d. random variables uniformly distributed on $(0,1)$. Therefore, considering the empirical distribution function $F_n$ associated with this sample defined  for $0 \leq t \leq 1$ by
\[ F_n(t) = \frac{1}{n} \sum_{i=1}^{n} \mathds{1}_{X_i \leq t}\enspace,\]
conditionally on the event $\{N_1 =n\}$, $\sup_{\ell \in (0,1- \tau^{*})}S_{\delta^*,\tau^*,\tau^*+\ell}'(N)$ is distributed as
$$\sup_{\ell \in (0,1- \tau^{*})} \pa{\mathrm{sgn}(\delta^{*}{}) \big(n F_n(\ell)- n\ell \big) - \frac{\vert \delta^{*}{} \vert}{2} \ell (1-\ell)L}\enspace.$$

Notice first that for all $n \geq 1$ and for all $x>0$
\begin{multline*}
 \mathbb{P} \pa{ \sup_{\ell \in (0,1- \tau^{*})} \pa{ \mathrm{sgn}\pa{\delta^*}( n F_n(\ell)- n\ell) - \frac{\vert\delta^{*}{}\vert}{2} \ell (1-\ell)L} > x } \\
  \leq \mathbb{P} \pa{ \sup_{\ell \in (0,1)} \pa{ \mathrm{sgn}\pa{\delta^*}( n F_n(\ell)- n\ell)  - \frac{\vert\delta^{*}{}\vert}{2} \ell (1-\ell)L} > x } \enspace.
\end{multline*}
Moreover, since the process $\big( - (nF_n(\ell) -n \ell)    \big)_{\ell \in (0,1)}$ has the same distribution as the process $\big( nF_n(1-\ell) -n (1-\ell)    \big)_{\ell \in (0,1)}$,  one has when $ \delta^* <0$
\begin{align*}
 \mathbb{P} \Bigg( \sup_{\ell \in (0,1)} &\pa{ \mathrm{sgn}\pa{\delta^*}( n F_n(\ell)- n\ell )  - \frac{\vert \delta^{*}{} \vert}{2} \ell (1-\ell)L} > x \Bigg)\\
 &= \mathbb{P} \pa{ \sup_{\ell \in (0,1)} \pa{  n F_n(1-\ell)- n(1-\ell)  - \frac{\vert \delta^{*}{} \vert}{2} \ell (1-\ell)L} > x } \\
 &= \mathbb{P} \pa{ \sup_{\ell \in (0,1)} \pa{  n F_n(\ell)- n\ell  - \frac{\vert \delta^{*}{} \vert}{2} \ell (1-\ell)L} > x }\enspace.
 \end{align*}

Therefore, whatever the sign of $\delta^*$,
\begin{multline*}
 \mathbb{P} \Bigg(\sup_{\ell \in (0,1- \tau^{*})} \pa{ \mathrm{sgn}\pa{\delta^*}( n F_n(\ell)- n\ell) - \frac{\vert\delta^{*}{}\vert}{2} \ell (1-\ell)L} > x \Bigg) \\
 \leq \mathbb{P} \pa{ \sup_{\ell \in (0,1)} \pa{ n F_n(\ell)- n\ell - \frac{\vert\delta^{*}{}\vert}{2} \ell (1-\ell)L} > x }\enspace,\\
\end{multline*}
and
\begin{multline}\label{QuantilessupShifted_uProof0}
 \mathbb{P} \Bigg(\sup_{\ell \in (0,1- \tau^{*})} \pa{ \mathrm{sgn}\pa{\delta^*}( n F_n(\ell)- n\ell) - \frac{\vert\delta^{*}{}\vert}{2} \ell (1-\ell)L} > x \Bigg) \\
 \leq  \mathbb{P} \pa{ \sup_{\ell \in (0,1/2]} \pa{  n F_n(\ell)- n\ell  - \frac{\vert \delta^{*}{} \vert}{2} \ell (1-\ell)L} > x } \\ 
 + \mathbb{P} \pa{ \sup_{\ell \in (1/2,1)} \pa{  n F_n(\ell)- n\ell  - \frac{\vert \delta^{*}{} \vert}{2} \ell (1-\ell)L} >x }\enspace.
\end{multline}
Let us now prove that on the one hand
\begin{equation} \label{QuantilessupShifted_uProof1}
 \mathbb{P} \pa{ \sup_{\ell \in (0,1/2]} \pa{  n F_n(\ell)- n\ell  - \frac{\vert \delta^{*}{} \vert}{2} \ell (1-\ell)L} > Q(n_0, \delta^{*}{}, \alpha) }\leq \frac{\alpha}{2}  \enspace,
 \end{equation}
and on the other hand
\begin{equation} \label{QuantilessupShifted_uProof2}
 \mathbb{P} \pa{ \sup_{\ell \in (1/2,1)} \pa{  n F_n(\ell)- n\ell  - \frac{\vert \delta^{*}{} \vert}{2} \ell (1-\ell)L} > Q(n_0, \delta^{*}{}, \alpha) }\leq \frac{\alpha}{2}  \enspace.
 \end{equation}

Let $1 \leq n \leq n_0 L$. Set $C(\delta^{*}{},n_0)=|\delta^{*}{}|/\left(6 n_0\right)$ and $C'(\delta^{*}{},n_0,n)=\left\lfloor \exp \pa{\frac{3n}{2}\frac{C(\delta^{*}{},n_0)^2}{3+2C(\delta^{*}{},n_0)}}\right\rfloor$.

For all $k$ in $\left\{0,\ldots, C'(\delta^{*}{},n_0,n)-1 \right\}$, we define
$$\ell_k=\frac{3+2C(\delta^{*}{},n_0)}{3C(\delta^{*}{},n_0)^2} \frac{\log(k+1)}{n} \enspace,$$
and 
$$\ell_{C'(\delta^{*}{},n_0,n)}=\pa{ \frac{3+2C(\delta^{*}{},n_0)}{3C(\delta^{*}{},n_0)^2} \frac{\log(C'(\delta^{*}{},n_0,n)+1)}{n}} \wedge 1 \enspace.$$
Notice that with such definitions,  $\ell_{k}$ belongs to $[0,1/2]$ for all $k$ in $\left\{0,\ldots, C'(\delta^{*}{},n_0,n)-1 \right\}$ and $\ell_{C'(\delta^{*}{},n_0,n)}$ belongs to $(1/2,1]$.

Applying Bernstein's inequality, as stated in equation (2.6) in \cite{BercuDelyonRio}[page 12], one obtains for every $x>0$ and every  $k$ in $\{1,\ldots,C'(\delta^{*}{},n_0,n)\}$
\[\P\left(n F_n(\ell_k) > n\ell_{k} +\sqrt{2n\ell_{k}(1-\ell_{k})(x+2\log k)}+ \frac{x+2\log k}{3} \right)\leq \frac{e^{-x}}{k^2}\enspace.\]

A union bound therefore gives for every $x>0$
\begin{multline*} \P\left(\forall k\in\{1,\ldots,C'(\delta^{*}{},n_0,n)\},\ n F_n(\ell_k) \leq n\ell_{k} +\sqrt{2n\ell_{k}(1-\ell_{k})(x+2\log k)}+ \frac{x+2\log k}{3}\right)\\
\geq 1-\frac{\pi^2}{6}e^{-x}\enspace.\end{multline*}
Using the inequality $\sqrt{2ab}\leq aC(\delta^{*}{},n_0) +{b}/(2C(\delta^{*}{},n_0))$ and the fact that $\ell \mapsto n F_n(\ell)$ is nondecreasing, we get
\begin{multline*}
\P\Bigg(\forall k\in\{1,\ldots,C'(\delta^{*}{},n_0,n)\},\ \forall \ell \in (\ell_{k-1},\ell_{k}],\ n F_n(\ell) \leq n\ell_{k}
+C(\delta^{*}{},n_0) n\ell_{k}(1-\ell_{k}) \\+\frac{x+2\log k}{2C(\delta^{*}{},n_0)}+ \frac{x+2\log k}{3}\Bigg)\geq 1-\frac{\pi^2}{6} e^{-x}\enspace.
\end{multline*}
Therefore, with probability larger than $1-\pi^2 e^{-x}/6$, for all $k$ in $\{1,\ldots,C'(\delta^{*}{},n_0,n)\}$ and all $\ell$ in $(\ell_{k-1},\ell_{k}]$,
\begin{align*}
n F_n(\ell)
\leq &n\ell_{k} +C(\delta^{*}{},n_0) n \ell_{k}(1-\ell_{k}) +x \frac{3+2C(\delta^{*}{},n_0)}{6C(\delta^{*}{},n_0)}+\log k\frac{3+2C(\delta^{*}{},n_0)}{3C(\delta^{*}{},n_0)}\\
 = &n\ell_{k} +C(\delta^{*}{},n_0) n \ell_{k}(1-\ell_{k}) +x \frac{3+2C(\delta^{*}{},n_0)}{6C(\delta^{*}{},n_0)}+ C(\delta^{*}{},n_0) n\ell_{k-1}\\ 
 < &n\ell_{k} +C(\delta^{*}{},n_0) n \ell_{k}(1-\ell_{k}) +x \frac{3+2C(\delta^{*}{},n_0)}{6C(\delta^{*}{},n_0)}+ C(\delta^{*}{},n_0) n \ell\\
 = &n \ell  +C(\delta^{*}{},n_0) n \ell(1-\ell) + C(\delta^{*}{},n_0) n\ell+ n(\ell_{k}-\ell) +C(\delta^{*}{},n_0) n \left( \ell_{k}(1-\ell_{k}) - \ell(1-\ell)\right) \\ 
 &+x \frac{3+2C(\delta^{*}{},n_0)}{6C(\delta^{*}{},n_0)}\\
  \leq &n \ell +C(\delta^{*}{},n_0) n \ell(1-\ell) + C(\delta^{*}{},n_0) n\ell + n(\ell_{k}-\ell) +C(\delta^{*}{},n_0) n (1-\ell)\left( \ell_{k}- \ell\right)\\
  &+  x \frac{3+2C(\delta^{*}{},n_0)}{6C(\delta^{*}{},n_0)}\\
< &n \ell +C(\delta^{*}{},n_0) n \ell(1-\ell) + C(\delta^{*}{},n_0) n\ell+ (1+C(\delta^{*}{},n_0)(1-\ell))  \frac{3+2C(\delta^{*}{},n_0)}{3C(\delta^{*}{},n_0)^2}\log 2 \\ 
& +  x \frac{3+2C(\delta^{*}{},n_0)}{6C(\delta^{*}{},n_0)}
\enspace.
\end{align*}
Finally, we have proved that with probability larger than $1-\pi^2 e^{-x}/6$, for all $\ell$ in $(0,1/2]$,
$$n F_n(\ell) \leq n \ell +3 C(\delta^{*}{},n_0) n_0 L \ell(1-\ell) + (1+C(\delta^{*}{},n_0))  \frac{3+2C(\delta^{*}{},n_0)}{3C(\delta^{*}{},n_0)^2}\log 2 +  x \frac{3+2C(\delta^{*}{},n_0)}{6C(\delta^{*}{},n_0)}\enspace,$$
hence
\begin{equation*}
 \mathbb{P} \pa{ \sup_{\ell \in (0,1/2]} \pa{  n F_n(\ell)- n\ell  - \frac{\vert \delta^{*}{} \vert}{2} \ell (1-\ell)L} \leq Q(n_0, \delta^{*}{},\alpha) } \geq 1-\frac{\alpha}{2} \enspace,
 \end{equation*}
 that is \eqref{QuantilessupShifted_uProof1}.
 
Define now $\ell'_k=1-\ell_k \enspace$ for all $k$ in $\left\{0,\ldots, C'(\delta^{*}{},n_0,n)\right\}$
and notice that $\ell'_{k}$ belongs to $[1/2,1]$ for all $k$ in $\left\{0,\ldots, C'(\delta^{*}{},n_0,n)-1 \right\}$ and  $\ell'_{C'(\delta^{*}{},n_0,n)}$ belongs to $[0,1/2).$

Applying Bernstein's inequality again, one obtains for every $x>0$
\begin{multline*}
\P\Bigg(\forall k\in\{1,\ldots,C'(\delta^{*}{},n_0,n)\},\  n F_n(\ell'_{k-1})  \leq n\ell'_{k-1} +\sqrt{2n \ell'_{k-1} (1-\ell'_{k-1} )(x+2\log k)}\\
+ \frac{x+2\log k}{3}\Bigg)\geq 1-\frac{\pi^2}{6}e^{-x}\enspace.
\end{multline*}

With the same computations as in the above case, we get 
\begin{multline*}
\P\Bigg(\forall k\in\{1,\ldots,C'(\delta^{*}{},n_0,n)\}, \forall \ell \in [\ell'_k, \ell'_{k-1}),\ n F_n(\ell ) \leq n \ell'_{k-1} +C(\delta^{*}{},n_0) n \ell'_{k-1}(1-\ell'_{k-1})\\
 +\frac{x+2\log k}{2C(\delta^{*}{},n_0)}+ \frac{x+2\log k}{3}\Bigg) \geq 1-\frac{\pi^2}{6}e^{-x}\enspace.
\end{multline*}
Therefore, with probability larger than $1-\pi^2 e^{-x}/6$, for all $k$ in $\{1,\ldots,C'(\delta^{*}{},n_0,n)\}$ and all $\ell$ in $[\ell'_{k},\ell'_{k-1} ),$
\begin{align*}
n F_n(\ell)
\leq &n \ell'_{k-1}  +C(\delta^{*}{},n_0) n \ell'_{k-1}  (1-\ell'_{k-1} ) +x \frac{3+2C(\delta^{*}{},n_0)}{6C(\delta^{*}{},n_0)}+\log k\frac{3+2C(\delta^{*}{},n_0)}{3C(\delta^{*}{},n_0)}\\
 = &n \ell'_{k-1}  +C(\delta^{*}{},n_0) n \ell'_{k-1}  (1-\ell'_{k-1} ) +x \frac{3+2C(\delta^{*}{},n_0)}{6C(\delta^{*}{},n_0)}+ C(\delta^{*}{},n_0) n(1-\ell'_{k-1})\\ 
< &n \ell'_{k-1}  +C(\delta^{*}{},n_0) n \ell'_{k-1} (1-\ell'_{k-1} ) +x \frac{3+2C(\delta^{*}{},n_0)}{6C(\delta^{*}{},n_0)}+ C(\delta^{*}{},n_0) n(1-\ell)\\
 < &n \ell +C(\delta^{*}{},n_0) n \ell(1-\ell) + C(\delta^{*}{},n_0) n(1-\ell)+ n(\ell'_{k-1} -\ell) +C(\delta^{*}{},n_0) n (1-\ell)\left( \ell'_{k-1} - \ell\right) \\
&+  x \frac{3+2C(\delta^{*}{},n_0)}{6C(\delta^{*}{},n_0)}\\
& \leq n \ell +C(\delta^{*}{},n_0) n \ell(1-\ell) + C(\delta^{*}{},n_0) n(1-\ell)+ (1+C(\delta^{*}{},n_0)(1-\ell))  \frac{3+2C(\delta^{*}{},n_0)}{3C(\delta^{*}{},n_0)^2}\log 2 \\ 
&+  x \frac{3+2C(\delta^{*}{},n_0)}{6C(\delta^{*}{},n_0)}\enspace.
\end{align*}
Finally, we have proved that with probability larger than $1-\pi^2 e^{-x}/6$, for all $\ell$ in $[1/2,1)$,
$$n F_n(\ell) \leq n \ell +3 C(\delta^{*}{},n_0) n_0 L \ell(1-\ell) + (1+C(\delta^{*}{},n_0))  \frac{3+2C(\delta^{*}{},n_0)}{3C(\delta^{*}{},n_0)^2}\log 2 +  x \frac{3+2C(\delta^{*}{},n_0)}{6C(\delta^{*}{},n_0)}\enspace,$$
hence
\begin{equation*}
 \mathbb{P} \pa{ \sup_{\ell \in [1/2,1)} \pa{  n F_n(\ell)- n\ell  - \frac{\vert \delta^{*}{} \vert}{2} \ell (1-\ell)L} \leq Q(n_0, \delta^{*}{},\alpha) } \geq 1-\frac{\alpha}{2} \enspace,
 \end{equation*}
 that is \eqref{QuantilessupShifted_uProof2}.
Combined with \eqref{QuantilessupShifted_uProof0}, the equations \eqref{QuantilessupShifted_uProof1} and \eqref{QuantilessupShifted_uProof2} conclude the proof using the obvious fact that $s_{0,\delta^*,\tau^*,L}^{'+}(1-\alpha) =0$.
 \end{proof}

\begin{lemma}[Conditional quantile bound for $\big| S'_{\tau_1,\tau_2}(N)\big|$] \label{QuantilesAbsS_u}
Let $L \geq 1$, $u$ in $(0,1),$ $\tau_1$ and $\tau_2$ such that $0 < \tau_1 < \tau_2 \leq 1$ and $n$ in $\mathbb{N}.$ The $(1-u)$-quantile $s'_{n,\tau_1,\tau_2}(1-u)$ of the conditional distribution of $\big| S'_{\tau_1,\tau_2}(N)\big| $ given $N_1=n$ under $(H_0)$ with $S'_{\tau_1,\tau_2}(N)$ defined by \eqref{stat_alt6_N1_u} satisfies
\[s'_{n,\tau_1,\tau_2}(1-u) \leq \frac{2}{3} \log(2 / u) + \sqrt{n(\tau_2 - \tau_1) (1- \tau_2 + \tau_1)} \sqrt{2 \log(2 / u)} \enspace.\]
\end{lemma}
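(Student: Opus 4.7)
The plan is to prove this via the standard conditioning trick already used throughout the paper: under $(H_0)$, conditionally on $N_1 = n$, the $n$ atoms of $N$ are i.i.d. uniform on $[0,1]$, hence $N(\tau_1,\tau_2]$ given $N_1 = n$ has a binomial distribution with parameters $(n, \tau_2 - \tau_1)$, free from the unknown baseline intensity $\lambda_0$. Consequently, $S'_{\tau_1,\tau_2}(N) = N(\tau_1,\tau_2] - (\tau_2-\tau_1) N_1$ conditioned on $N_1 = n$ is distributed as $\sum_{i=1}^n (Y_i - (\tau_2-\tau_1))$, where the $Y_i$ are i.i.d. Bernoulli random variables with parameter $\tau_2 - \tau_1$.

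The centered variables $Y_i - (\tau_2 - \tau_1)$ are bounded in absolute value by $1$ and have variance $(\tau_2 - \tau_1)(1 - \tau_2 + \tau_1)$, so the sum has variance $n(\tau_2 - \tau_1)(1 - \tau_2 + \tau_1)$. The next step is to apply Bernstein's inequality (as stated in equation (2.6) of \cite{BercuDelyonRio}, already invoked in the proof of Lemma \ref{QuantilessupShifted_u}) in its two-sided form: for any $x > 0$,
\[
\P\!\left(\left|\sum_{i=1}^n (Y_i - (\tau_2-\tau_1))\right| \geq \sqrt{2 n(\tau_2-\tau_1)(1-\tau_2+\tau_1)\,x} + \tfrac{x}{3}\right) \leq 2 e^{-x}\enspace.
\]
Using the elementary inequality $\sqrt{a+b} \leq \sqrt{a} + \sqrt{b}$ when inverting the Bernstein bound $\P(S \geq t) \leq \exp(-t^2/(2(v + t/3)))$, one may alternatively write this as $\P(|S| \geq \tfrac{2}{3} x + \sqrt{2 v x}) \leq 2 e^{-x}$, which matches the shape of the bound stated in the lemma.

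Choosing $x = \log(2/u)$ in the above inequality then yields
\[
\P\!\left(\left|S'_{\tau_1,\tau_2}(N)\right| \geq \tfrac{2}{3}\log(2/u) + \sqrt{2 n(\tau_2-\tau_1)(1-\tau_2+\tau_1)\log(2/u)}\;\Big|\; N_1 = n\right) \leq u\enspace,
\]
from which the claimed upper bound on $s'_{n,\tau_1,\tau_2}(1-u)$ follows immediately by definition of the conditional quantile. The argument is essentially a one-shot application of Bernstein's inequality after the conditioning reduction, so there is no real obstacle; the only point worth care is ensuring the correct constant $2/3$ (rather than the sharper $1/3$) comes out of the Bernstein inversion, which is why the weaker of the two equivalent formulations is invoked. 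The case $n = 0$ is trivial since $S'_{\tau_1,\tau_2}(N) = 0$ on $\{N_1 = 0\}$.
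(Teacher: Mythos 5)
Your proof is correct and takes essentially the same approach as the paper: condition on $N_1=n$ to reduce to a binomial variable, then apply a Bernstein--Bennett type concentration inequality for the centered sum. The only cosmetic difference is that the paper invokes Bennett's inequality (Theorem 2.28 in Bercu--Delyon--Rio) and then weakens it via the bound $g^{-1}(y)\leq 2y/3+\sqrt{2y}$ (which is precisely the Bernstein inversion), whereas you invoke the Bernstein form directly; both routes yield the identical final bound with the same constant $2/3$, and both handle $n=0$ trivially.
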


\begin{proof}
Let $n \geq 1$. Under $\hzero$ and conditionally on $N_1=n$, $N(\tau_1, \tau_2] $ follows a binomial distribution with parameters $(n, \tau_2 - \tau_1)$ and then $S'_{\tau_1, \tau_2}(N)=N(\tau_1 ,\tau_2] - n (\tau_2 - \tau_1) $. Applying Bennett's inequality as stated in Theorem 2.28 in \cite{BercuDelyonRio}, we obtain for all $x>0$
\begin{multline*}
\sup_{\lambda_0 \in \mathcal{S}^u_0[R]}P_{\lambda_0} \pa{\big| S'_{\tau_1,\tau_2}(N)\big|>x ~ \vert N_1=n  }\\\leq 2\exp \left(  -n (\tau_2 - \tau_1) (1- \tau_2 + \tau_1)  g \left(  \frac{x}{n (\tau_2 - \tau_1) (1- \tau_2 + \tau_1)}  \right)  \right),
\end{multline*}
where $g(y)= (1+y) \log (1+y)-y$ for all $y \geq 0.$
It directly follows that
 \[s'_{n,\tau_1,\tau_2}(1-u) \leq n (\tau_2 - \tau_1) (1-\tau_2 + \tau_1) g^{-1}\left( \frac{\log(2 / u)}{n (\tau_2 - \tau_1) (1- \tau_2 + \tau_1)} \right) \enspace.\]
 The inequality $g^{-1}(y) \leq 2 y/3 + \sqrt{2y}$ for all $y \geq 0$ and the fact that $s'_{0,\tau_1,\tau_2}(1-u)=0$ allow to conclude.

\end{proof}

\begin{lemma}[Conditional quantile bound for $\sup_{\tau \in (0,1)} S'_{\delta^*,\tau,1}(N)$] \label{QuantilessupShifted_ubis}
Let $L \geq 1$ and $n_0 \geq 1$. For all $0 \leq n \leq n_0 L$, the $(1-\alpha)$-quantile  $s_{n,\delta^*,L}^{'+}(1-\alpha)$ of the conditional distribution of $\sup_{\tau \in (0,1)} S'_{\delta^*,\tau,1}(N)$ given $N_1=n$ under $(H_0)$ with $S'_{\delta^*,\tau,1}(N)$ defined by \eqref{stat_alt5_u} satisfies
\[ s_{n,\delta^*,L}^{'+}(1-\alpha) \leq Q(n_0, \delta^{*}{}, \alpha)\enspace ,\]
where $ Q(n_0, \delta^{*}{}, \alpha)$ is defined in Lemma \ref{QuantilessupShifted_u}.
\end{lemma}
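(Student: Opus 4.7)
The plan is to reduce this statement to the analogous bound already established in Lemma \ref{QuantilessupShifted_u} via the conditioning trick followed by a distributional symmetry argument, avoiding any need for a fresh chaining-plus-Bernstein calculation.

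First I would use the conditioning trick: under $(H_0)$, given $\{N_1=n\}$ with $n\geq 1$, the points of $N$ are distributed as an i.i.d.\ uniform sample $U_1,\ldots,U_n$ on $[0,1]$, so that $N(\tau,1]= n - nF_n(\tau)$ where $F_n$ denotes the empirical distribution function of the $U_i$'s. Consequently, conditionally on $\{N_1=n\}$,
\[
S'_{\delta^*,\tau,1}(N)= -\mathrm{sgn}(\delta^*)\,\bigl(nF_n(\tau)-n\tau\bigr)-\frac{|\delta^*|}{2}(1-\tau)\tau L\enspace.
\]

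Second, I would invoke the symmetry $U_i\mapsto 1-U_i$, which preserves the uniform distribution: the process $(nF_n(\tau)-n\tau)_{\tau\in(0,1)}$ has the same finite-dimensional distribution as $(n(1-\tau)-nF_n((1-\tau)^-))_{\tau\in(0,1)}$, and modulo the usual left/right-continuity convention (which does not affect the supremum over $(0,1)$), after the change of variable $\ell=1-\tau$ one obtains
\[
\sup_{\tau\in(0,1)} S'_{\delta^*,\tau,1}(N)\overset{d}{=}\sup_{\ell\in(0,1)}\Bigl(\mathrm{sgn}(\delta^*)\bigl(nF_n(\ell)-n\ell\bigr)-\frac{|\delta^*|}{2}\ell(1-\ell)L\Bigr)\enspace,
\]
given $\{N_1=n\}$. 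This is exactly the quantity whose tail was controlled in the proof of Lemma~\ref{QuantilessupShifted_u} (where the supremum was taken over $\ell\in(0,1-\tau^*)$ and then enlarged to $\ell\in(0,1)$, and the two signs of $\delta^*$ were treated jointly after the same symmetry argument).

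Third, I would split the supremum over $(0,1)$ as the maximum of suprema over $(0,1/2]$ and $[1/2,1)$ and directly invoke the two bounds \eqref{QuantilessupShifted_uProof1} and \eqref{QuantilessupShifted_uProof2} established in the proof of Lemma~\ref{QuantilessupShifted_u}: each of them controls the corresponding half-range supremum by $Q(n_0,\delta^*,\alpha)$ with probability at least $1-\alpha/2$, uniformly in $0\leq n\leq n_0 L$, so that a union bound yields $s'^+_{n,\delta^*,L}(1-\alpha)\leq Q(n_0,\delta^*,\alpha)$. The case $n=0$ is immediate since both sides are zero.

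The main obstacle is not analytical but notational: one must verify carefully that the symmetry step and the change of variable $\ell=1-\tau$ genuinely map the present supremum onto the one already treated, in particular that the left-continuity subtleties of $F_n$ and the exclusion of the endpoints $0$ and $1$ do not widen the supremum beyond what \eqref{QuantilessupShifted_uProof1} and \eqref{QuantilessupShifted_uProof2} cover. Once this is checked, no new exponential inequality is needed and the conclusion follows by direct quotation of the previous lemma.
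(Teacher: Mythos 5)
Your proposal is correct and follows essentially the same route as the paper's own proof: condition on $\{N_1=n\}$ to pass to the empirical process of a uniform $n$-sample, exploit the distributional symmetry under $U_i\mapsto 1-U_i$ (equivalently, the time-reversal equivalence of $(N(\tau,1])_\tau$ and $(N(0,1-\tau])_\tau$) to identify the supremum with the one already handled in the proof of Lemma~\ref{QuantilessupShifted_u}, and then quote the half-range bounds \eqref{QuantilessupShifted_uProof1}--\eqref{QuantilessupShifted_uProof2} together with the trivial case $n=0$. The only surface difference is that the paper performs the time-reversal at the level of the Poisson process before conditioning rather than at the level of the uniform sample afterward, but the two are the same argument.
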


\begin{proof}
Notice first that
$\sup_{\tau \in (0,1)} S'_{\delta^*,\tau,1}(N)$ is distributed as 
 $$ \sup_{\tau \in (0,1)} \left(  \mathrm{sgn}\pa{\delta^*}( N(0,1-\tau] - (1-\tau) N_1 ) - \frac{\vert \delta^{*}{} \vert}{2}\tau(1-\tau)L    \right) \enspace.$$ 
Now, recall that for $n\geq 1$ and conditionally on the event $\{N_1 =n\}$, the points of the process $N$ obey the same law as a $n$-sample $(U_1,\ldots,U_n)$ of i.i.d. random variables uniformly distributed on $(0,1)$ and consider the empirical distribution function $F_n$ associated with this sample defined  for $0 \leq t \leq 1$ as in the proof of Lemma  \ref{QuantilessupShifted_u}. Then, conditionally on the event $\{N_1 =n\}$, $\sup_{\tau \in (0,1)} S'_{\delta^*,\tau,1}(N)$ is distributed as
$$\sup_{\tau \in (0,1)} \pa{\mathrm{sgn}(\delta^{*}{}) \big(n F_n(1-\tau)- n(1-\tau) \big) - \frac{\vert \delta^{*}{} \vert}{2} \tau (1-\tau)L}\enspace.$$
Since the process $(-(n F_n(1-\tau)- n(1-\tau)))_{\tau \in (0,1)}$ has the same distribution as the process $(nF_n(\tau) - n \tau)_{\tau \in (0,1)}$, whatever the sign of $\delta^*$, $\sup_{\tau \in (0,1)} S'_{\delta^*,\tau,1}(N)$ is distributed as
$$\sup_{\tau \in (0,1)} \pa{ n F_n(\tau)- n\tau - \frac{ \vert \delta^{*}{} \vert }{2} \tau (1-\tau)L}\enspace.$$
The proof now follows the same line as the proof of Lemma \ref{QuantilessupShifted_u}, just replacing $\ell$ by $\tau$ and using the fact that $s_{0,\delta^*,L}^{'+}(1-\alpha)=0$.
\end{proof}

\section*{Acknowledgements}
The authors want to thank Nicolas Verzelen for fruitful discussions.

The PhD grant of Fabrice Grela is funded by the french Région Bretagne and Direction Générale des Armées.

\bibliographystyle{apalike}
\bibliography{biblio_changepointPP}

\end{document}